\documentclass[a4paper,10pt]{book}

\usepackage[english]{babel}
\usepackage[latin1]{inputenc}
\usepackage[T1]{fontenc}
\usepackage{amsmath}
\usepackage{amssymb}
\usepackage{multirow}
\usepackage{alltt}
\usepackage{graphicx}
\usepackage{bbm}
\usepackage{amsthm}
\usepackage{mathrsfs}
\usepackage{textcomp}
\usepackage{fullpage}
\usepackage[usenames,dvipsnames]{color}

\definecolor{darkblue}{RGB}{0,0,170}
\definecolor{brickred}{RGB}{200,0,0}
\usepackage[pdfauthor={Nicola Abatangelo}, hyperindex, linktocpage]{hyperref}
\hypersetup{colorlinks=true, linkcolor=darkblue, citecolor=brickred}

\newcommand{\R}{\mathbb{R}}
\newcommand{\N}{\mathbb{N}}
\newcommand{\C}{\mathcal{C}}
\newcommand{\T}{\mathcal{T}}
\newcommand{\J}{\mathcal{J}}

\newcommand{\HH}{\mathcal{H}}
\newcommand{\eps}{\varepsilon}
\newcommand{\ep}{\varepsilon}
\newcommand{\dist}{\hbox{dist}}
\newcommand{\Ds}{{\left(-\lapl\right)}^s}
\newcommand{\As}{{\left(-\lapl\right|_\Omega)}^s}
\newcommand{\lapl}{\triangle}
\newcommand{\grad}{\triangledown\!}

\renewcommand{\le}{\leq} 
\renewcommand{\ge}{\geq}
\renewcommand{\lessgtr}{\asymp} 

\newcommand{\Dums}{{\left(\left.-\lapl\right|_\Omega\right)}^{1-s}}
\newcommand{\Dms}{{\left(\left.-\lapl\right|_\Omega\right)}^{-s}}
\newcommand{\Asmu}{{\left(\left.-\lapl\right|_\Omega\right)}^{s-1}}
\newcommand{\rest}{{\left(\left.-\lapl\right|_\Omega\right)}}
\newcommand{\trest}{{\left.-t\lapl\right|_\Omega}}
\newcommand{\super}{\overline}

\usepackage{imakeidx}
\makeindex[title={Index of definitions},intoc]



\newtheoremstyle{mytheoremstyle} 
    {\topsep}                    
    {\topsep}                    
    {\itshape}                   
    {}                           
    {\bf\scshape}                   
    { \bf | }                          
    {2pt}                       
    {\thmname{#1}\thmnumber{ #2}\thmnote{: #3}}  


\theoremstyle{mytheoremstyle}
\newtheorem{defi}{definition}[chapter]
\newtheorem{ex}[defi]{example}
\newtheorem{theo}[defi]{theorem}
\newtheorem{prop}[defi]{proposition}
\newtheorem{lem}[defi]{lemma}
\newtheorem{rmk}[defi]{remark}
\newtheorem{cor}[defi]{corollary}

\newtheorem*{defi*}{d\'efinition}
\newtheorem*{ex*}{example}
\newtheorem*{theo*}{th\'eor\`eme}
\newtheorem*{prop*}{proposition}
\newtheorem*{lem*}{lemma}
\newtheorem*{rmk*}{remarque}
\newtheorem*{cor*}{corollaire}


\usepackage{titlesec}

\addto\captionsenglish{}

\titleformat{\part}[frame]{}{{\Large\bf\ \partname\ \thepart\ \filright}}{15pt}{\thispagestyle{empty}\Huge\itshape\hfill
}[]

\titleformat{\chapter}[display]{}{\hrulefill\hspace{10pt}\Large\filleft{\it
\chaptertitlename}
\hspace{3pt}
{\Huge\textbf{
\oldstylenums{\thechapter}}}}{10pt}{
\LARGE\bfseries
\filleft}[\rule{\textwidth}{.4pt}]

\titlespacing{\section}{0pt}{30pt}{10pt}

\titleformat{\section}{}{\Large\thesection}{.6em}{\centering\Large\bf\scshape}[]

\titleformat{\subsection}{}{\thesubsection}{.6em}{\normalfont\large\bfseries}[]


\usepackage{fancyhdr}
\setlength{\headsep}{2\bigskipamount}
\pagestyle{fancy}
\renewcommand{\chaptermark}[1]{\markboth{#1}{}}

\fancyhf{} 
\fancyhead[LE]{\thepage\ \ --\ \ {\sf\textbf\leftmark}}
\fancyhead[RO]{\sc\rightmark\ \ --\ \ \thepage}

\addtolength{\headheight}{\bigskipamount} 
\fancypagestyle{plain}{%
\fancyhead{} 
}


\usepackage{tocloft}

\usepackage{titletoc}
\titlecontents*{subsection}[1.5em]{\small}{ \thecontentslabel\it~}{}{,~\thecontentspage\ }[~--- 	][]


\title{\bf\Huge Large solutions for fractional Laplacian operators}
\author{\textsc{\Large nicola abatangelo}}
\date{\vspace*{6\bigskipamount}
PhD thesis defended on September 28$^{\hbox{\small th}}$,\\
Universit\'e de Picardie Jules Verne (Amiens, France)}

\linespread{1.1}

\begin{document}

\frontmatter

\maketitle

\cleardoublepage
\vspace*{\stretch{1}}

\paragraph{R\'esum\'e.} {\it La th\`ese \'etudie les probl\`emes de Dirichlet lin\'eaires et semilin\'eaires pour diff\'erents op\'erateurs du type Laplacien fractionnaire. Les donn\'ees peuvent \^etre des fonctions r\'eguli\`eres ou plus g\'en\'eralement des mesures de Radon. Le but est de classifier les solutions qui pr\'esentent une singularit\'e au bord du domaine prescrit.

Nous remarquons d'abord l'existence de toute une gamme de fonctions harmoniques explosants au bord et nous les caract\'erisons selon une nouvelle notion de trace au bord. \`A l'aide d'une nouvelle formule d'int\'egration par parties, nous \'elaborons ensuite une th\'eorie faible de type Stampacchia pour \'etendre la th\'eorie lin\'eaire \`a un cadre qui comprend ces fonctions : nous \'etudions les questions classiques d'existence, d'unicit\'e, de d\'ependance \`a l'\'egard des donn\'ees, la r\'egularit\'e et le comportement asymptotique au bord. 

Puis, nous d\'eveloppons la th\'eorie des probl\`emes s\'emilin\'eaires, en g\'en\'eralisant la m\'ethode des sous- et sursolutions. Cela nous permet de construire l'analogue fractionnaire des grandes solutions dans la th\'eorie des EDPs elliptiques nonlin\'eaires, en donnant des conditions suffisantes pour l'existence. 

La th\`ese se termine par la d\'efinition et l'\'etude d'une notion de courbures directionnelles nonlocales.}
\vspace*{.3cm}

\noindent {\it Mots cl\'es:} Laplacien fractionnaire, op\'erateurs nonlocaux, grandes solutions, solutions faibles $L^1$,
\'equations elliptiques nonlin\'eaires, probl\`eme de Dirichlet, singularit\'e au bord,
courbures nonlocales.

\vspace*{\stretch{1}}

\paragraph{Abstract.} {\it The thesis studies linear and semilinear Dirichlet problems driven by different fractional Laplacians. 
The boundary data can be smooth functions or also Radon measures. 
The goal is to classify the solutions which have a singularity on the boundary of the prescribed domain.

We first remark the existence of a large class of harmonic functions with a boundary blow-up 
and we characterize them in terms of a new notion of degenerate boundary trace. 
Via some integration by parts formula, we then provide a weak theory of Stampacchia's sort 
to extend the linear theory to a setting including these functions:
we study the classical questions of existence, uniqueness, continuous dependence on the data, 
regularity and asymptotic behaviour at the boundary.

Afterwards we develop the theory of semilinear problems, by adapting and generalizing some sub- and supersolution methods. 
This allows us to build the fractional counterpart of large solutions in the elliptic PDE theory of nonlinear equations, 
giving sufficient conditions for the existence.

The thesis is concluded with the definition and the study of a notion of nonlocal directional curvatures.}
\vspace*{-0.1cm}

\noindent {\it Keywords:} fractional Laplacian, nonlocal operators, large solutions, $L^1$ weak solutions,
nonlinear elliptic equations, Dirichlet problem, boundary singularity,
nonlocal curvatures.

\vspace*{\stretch{2}}

\noindent\begin{tabular}{l|l}
{\it Laboratoire Ami\'enois de Mathematique} $\qquad\qquad\ \ $ &  \\ 
\hfill {\it Fondamentale et Appliqu\'ee, {\sc cnrs umr 7352}} & 
{\it Dipartimento di Matematica Federigo Enriques}\\
{\small 33 rue Saint-Leu} & {\small via Saldini, 50} \\ 
{\small 80039 Amiens Cedex 1} & {\small 20133 Milano (MI)} \\ 
{\small France} & {\small Italy} \\ 
\end{tabular}

\chapter*{Contents}\chaptermark{Contents}\markright{contents}
\tableofcontents
\mainmatter

\chapter*{Introduction}\chaptermark{Introduction}\markright{introduction}
\addcontentsline{toc}{chapter}{Introduction}

This manuscript presents the study of boundary blow-up solutions for two different integro-differential operators,
both called {\it fractional Laplacian}. Operators of this sort have been intensively studied in recent years,
since they show many resemblances with the classical theory of elliptic operators, whereas they are often defined 
as integral operators with singular kernels and long-range interactions, making them {\it nonlocal} operators\footnote{Note 
that all differential operators, such as the classical Laplacian $-\lapl$, are purely local.}.
At the same time, they naturally appear in concrete applications, 
most of all to model phenomena where long-range interactions are present.
We have for example: phase transition models \cite{sire-valdinoci}, crystal dislocation \cite{toland}, 
the obstacle problem \cite{silvestre}, flame propagation \cite{slepcev},
minimal surfaces \cite{crs}, finance \cite{cont-tankov}, and materials science \cite{bates}.

\paragraph*{The fractional Laplacian.} We call the first one of these operators simply\footnote{It is sometimes called 
{\it restricted fractional Laplacian} when restricted only to functions who are supported in a fixed bounded region, 
see for example \cite{vaz}.}
{\it fractional Laplacian} and we denote it by $\Ds$.
Fixed $s\in (0,1)$, for a function\footnote{It is not not our purpose in this Introduction to 
provide thorough regularity or integrability assumptions on $u$ 
in order to make sense of the definitions:
let us work only formally at this stage.} $u:\R^N\to\R$ 
this operator can be defined in a number of different equivalent ways:
\begin{itemize}
\item as an integral operator with singular kernel \cite{hitchhiker}
\begin{equation}\label{def1}
\Ds u(x)\ =\ C_{N,s}\;p.v.\int_{\R^N}\frac{u(x)-u(y)}{|x-y|^{N+2s}}\:dy
\end{equation}
where ``$p.v.$'' 
stands for the integral in the principal value sense, $C_{N,s}$ is a positive renormalizing constant and
it equals (e.g. \cite[formula (A.1)]{fracpoho} and \cite[formula (1.3) and following]{pqvv})
\begin{equation}\label{Cns}
C_{N,s}\ =\ \frac{4^s\,\Gamma\!\left(\frac N2+s\right)}{\pi^{N/2}}\cdot\frac{s}{\Gamma(1-s)};
\end{equation}
\item as a pseudodifferential operator \cite{hitchhiker,grubb}
\begin{equation}\label{def2-intro}
\mathcal{F}\left[\Ds u\right](\xi)\ =\ |\xi|^{2s}\,\mathcal{F}u(\xi)
\end{equation}
where $\mathcal{F}$ denotes the Fourier transform;
\item as a Dirichlet-to-Neumann map \cite{extension}: let $U:\R^N\times[0,+\infty)$ solve
\[
\left\lbrace\begin{aligned}
\hbox{div}\left(t^{1-2s}\,U(x,t)\right) &=0\qquad x\in\R^N,\ t>0 \\
U(x,0) &=u(x)\qquad x\in\R^N
\end{aligned}\right.
\]
then
\begin{equation}\label{def3}
\Ds u(x)\ =\ -c(N,s)\lim_{t\downarrow 0} t^{1-2s}\,\partial_t U(x,t)
\end{equation}
where $c(N,s)$ is the positive renormalizing constant
\begin{equation}\label{cns-intro}
c(N,s)=\frac{4}{(N+2-2s)\,\pi^{N/2+s}\,\Gamma\left(N/2+s\right)};
\end{equation}
\item as the infinitesimal generator of the
heat semigroup $e^{t\lapl}$ in $\R^N$ subordinated in the sense of Bochner with
corresponding Bernstein function $\tau\mapsto\tau^s$ \cite[Section 13.1]{schilling} 
\begin{equation}\label{def4}
\Ds u(x)\ =\ \frac{s}{\Gamma(1-s)}\int_0^\infty\left(\frac{u(x)-e^{t\lapl}u(x)}{t}\right)\frac{dt}{t^s};
\end{equation}
\item as the infinitesimal generator of a symmetric $\alpha$-stable stochastic process ${(X_t)}_{t\geq0}$ for $\alpha=2s$
\begin{equation}\label{def5}
\Ds u(x)\ =\ \lim_{t\downarrow 0}\frac{u(x)-\mathbb{E}_x[u(X_t)]}{t}.
\end{equation}
\end{itemize}
The operator turns out to be ``nonlocal'' in the sense that to compute it (on a given function and at a fixed point)
it is not sufficient to know the values of the function in a neighbourhood of the point; conversely,
modifying the function far away from the point affects the value at the point itself 
(this feature is clear when using the representation in \eqref{def1});
this is not at all the case, for instance, for the classical Laplacian $-\lapl$.
The different, yet equivalent, definitions for $\Ds$ allow to take very different approaches in solving related problems,
creating a fruitful interplay between e.g. variational techniques, the theory of pseudodifferential operators,
functional analysis and potential theory.

\paragraph*{The spectral fractional Laplacian.} The second operator we deal with
is another nonlocal operator whose definition strongly depends
on the domain taken into account. So, fix a bounded domain $\Omega\subset\R^N$, $s\in(0,1)$
and denote by $\delta(x)=\dist(x,\partial\Omega)$.
For a function\footnote{As before, let us work only formally for the moment.} 
$u:\Omega\to\R$ the operator $\As$ can be defined
\begin{itemize}
\item as an integral operator with singular kernel \cite{song-vondra}
\begin{equation}\label{def6}
\As u(x)\ =\ p.v.\int_\Omega[u(x)-u(y)]\,J(x,y)\;dy+\kappa(x)u(x),\qquad x\in\Omega
\end{equation}
where $J(x,y)$ and $\kappa(x)$ are the functions given by
\begin{eqnarray*}
J(x,y) & = & \frac{s}{\Gamma(1-s)}\int_0^\infty\frac{p_\Omega(t,x,y)}{t^{1+s}}\;dt
\qquad \asymp\ \frac{\delta(x)\delta(y)}{\left|x-y\right|^{N+2s}}\cdot\frac{1}{|x-y|^2+\delta(x)\delta(y)} \\
\kappa(x) & = & \frac{s}{\Gamma(1-s)}\int_0^\infty\left(1-\int_\Omega p_\Omega(t,x,y)\right)\frac{dt}{t^{1+s}}
\qquad \asymp\ \frac{1}{{\delta(x)}^{2s}}
\end{eqnarray*}
and $p_\Omega(t,x,y)$ is the heat kernel of $\Omega$ associated to homogeneous boundary conditions;
\item via the spectral decomposition of $u$: given ${\{\varphi_j\}}_{j\in\N}$ a
Hilbert basis of $L^2(\Omega)$ consisting of the eigenfunctions of the Dirichlet Laplacian
$\left.-\lapl\right|_\Omega$ associated to the eigenvalues $\lambda_j,\ j\in\N$
\begin{equation}\label{def7}
\As u(x)\ =\ \sum_{j=1}^\infty \lambda_j^s\widehat u_j\:\varphi_j(x),\qquad \widehat u_j=\int_\Omega u_j\varphi_j;
\end{equation}
\item as a Dirichlet-to-Neumann map \cite{cabre-tan,cdds}: let $U:\Omega\times[0,+\infty)$ solve
\[
\left\lbrace\begin{aligned}
\hbox{div}\left(t^{1-2s}\,U(x,t)\right) &=0\qquad & & x\in\Omega,\ t>0 \\
U(x,t) &= 0 & & x\in\partial\Omega,\ t>0 \\
U(x,0) &= u(x) & & x\in\Omega
\end{aligned}\right.
\]
then
\begin{equation}\label{def8}
\As u(x)\ =\ -c(N,s)\lim_{t\downarrow 0} t^{1-2s}\,\partial_t U(x,t)
\end{equation}
where $c(N,s)$ is the positive renormalizing constant defined in \eqref{cns-intro};
\item as the infinitesimal generator of the
heat semigroup $e^{\left.t\lapl\right|_\Omega}$ in $\Omega$ 
subordinated in the sense of Bochner with
corresponding Bernstein function $\tau\mapsto\tau^s$ \cite[Section 13.1]{schilling} 
(and also \cite{gprssv})
\begin{equation}\label{def9}
\As u(x)\ =\ \frac{s}{\Gamma(1-s)}\int_0^\infty\left(\frac{u(x)-e^{\left.t\lapl\right|_\Omega}u(x)}{t}\right)\frac{dt}{t^s};
\end{equation}
\item as the infinitesimal generator of a subordinate killed Brownian motion ${(\widetilde X_t)}_{t\geq0}$
\begin{equation}\label{def10}
\As u(x)\ =\ \lim_{t\downarrow 0}\frac{u(x)-\mathbb{E}_x[u(\widetilde X_t)]}{t}.
\end{equation}
\end{itemize}
To our knowledge, definition \eqref{def7} is the most commonly used in the PDE framework.
Still it bears some restrictions on the domain of the operator, and 
no theory for nonhomogeneous boundary conditions for $\As$ was available until this work:
this is one of the contributions of this thesis, see Chapter \ref{spectral}. This is
why we will mostly exploit definitions \eqref{def6} and \eqref{def9}.

\paragraph*{Some stochastic interpretation and the names of the operators.}
As mentioned above, both operators reveal a deep connection with the theory of
stochastic processes. Indeed they generate L\'evy processes with jumps\footnote{This type 
of processes are sometimes called L\'evy flights in physics.}, i.e. whose paths are discontinuous.
The operator $\Ds$ generates a so called $(2s)$-stable process in $\R^N$,
and in particular the distribution law at a fixed time is homogeneous of order $2s$.
This process can be built also by subordinating a Brownian motion in time:
given a Brownian motion (a Wiener process) $(B_t)_{t\geq 0}$, 
one evaluates it at random times $(T_t)_{t\geq 0}$ rather than at the deterministic time,
which are almost surely increasing according to a linear $(2s)$-stable process. This way we
provide a new process $(B_{T_t})_{t\geq 0}$. The random time $T_t$ has not 
continuous trajectories in general. In this approach, we finally obtain a 
L\'evy process whose jumps are due to jumps taken in time, as we were recording
the random movement of a particle but we stop our camera randomly and then turn it on again
(and we might also speed up and slow down the time flow).
Recall that the generator of the Brownian motion is the classical (negative) Laplacian $-\lapl$.
If the subordinator is well chosen, then the stochastic operation of subordinating has as analytical counterpart
taking a fractional power of the generator: at this point $\Ds$ comes forth.
\medskip

The same kind of construction works for $\As$ but with an interesting difference. Before subordinating,
one kills the process upon leaving the domain $\Omega$. For this reason
we have written that it generates a {\it subordinate killed Brownian motion}.
The killed process is generated by the Dirichlet Laplacian $\left.-\lapl\right|_\Omega$
and the subordination provides its fractional power $\As$.
For this reason we have chosen this notation for the spectral fractional Laplacian
rather than the common one $A_s$.

Let us finally mention that we call it {\it spectral fractional Laplacian}
to comply with the usual way it is referred to in the bibliography. 
We might call it also {\it fractional Dirichlet Laplacian},
and this would take more into account the way it is constructed. The word {\it spectral}
might be misleading, since it recalls the definition via the spectral decomposition
which has some natural restrictions and does not apply
to the full class of functions we consider here.
In this spirit we might also call {\it Dirichlet fractional Laplacian} 
the operator obtained by restricting the fractional Laplacian to function
with null boundary conditions. Indeed, such an operator generates
a {\it killed subordinate Brownian motion}, i.e. the process 
obtained by killing the $(2s)$-stable L\'evy process upon leaving a fixed domain.
Mind that ``killed subordinate'' and ``subordinate killed'' are different in nature
and in the same way ``Dirichlet fractional'' and ``fractional Dirichlet'' would not be 
the same thing.

\paragraph*{Boundary blow-up solutions.} In the theory of semilinear elliptic equations,
functions solving
\begin{equation}\label{eq-intro}
-\lapl u+ f(u)=0,\qquad\hbox{in some }\Omega\subseteq\R^N\hbox{ open and bounded}
\end{equation}
coupled with the boundary condition
\[
\lim_{x\rightarrow\partial\Omega}u(x)=+\infty
\]
are known as {\it boundary blow-up solutions} or {\it large solutions}.
There is a huge amount of bibliography dealing with this problem
which dates back to the seminal work of Bieberbach \cite{bieberbach}, for $N=2$ and $f(u)=e^u$.
Keller \cite{keller} and Osserman \cite{osserman} independently established a sufficient and necessary
condition on a nondecreasing nonlinearity $f$ for the existence of a boundary blow-up solution:
it takes the form
\begin{equation}\label{ko-intro}
\int^{+\infty}\frac{dt}{\sqrt{F(t)}}\ <\ +\infty,\qquad\hbox{where }F'=f\geq 0
\end{equation}
and it is known as {\it Keller-Osserman condition}.
One can find these solutions with singular behaviour at the boundary in a number of applications:
for example, Loewner and Nirenberg \cite{ln} studied the case $f(u)=u^{(N+2)/(N-2)},\;N\geq3$,
which is strictly related to the {\it singular Yamabe problem} in conformal geometry,
while Labutin \cite{labutin} completely characterized the class of sets $\Omega$
that admit a large solution for $f(u)=u^q,\,q>1,$ with capacitary methods 
inspired by the theory of {\it spatial branching processes}, that are particular stochastic processes;
see also the purely probabilistic works
by Le Gall \cite{legall} and Dhersin and Le Gall \cite{dherslegall} dealing with $q=2$
and the delicate case of nonsmooth domains. Mselati \cite{mselati} completely classified
positive solutions in terms of their boundary trace,
which can be $+\infty$ on one part of the boundary and a measure that does not
charge sets of zero boundary capacity on the remaining part;
see also Dumont, Dupaigne, Goubet and R{\u{a}}dulescu \cite{KO-dupaigne} for the case of oscillating nonlinearity.
We refer to the very recent book by Marcus and V\'eron \cite[Chapters 5 and 6]{marcus-veron-book}
for further readings and developments in this direction.

\paragraph*{Large $s$-harmonic functions, integration by parts formulas 
and weak $L^1$ theories.} 
In the fractional context, our starting point is that
boundary blow-up solutions arise even in linear problems.
In particular, it is possible to provide
{\it large $s$-harmonic functions}, i.e. functions satisfying
\begin{equation}\label{problem-intro}
\left\lbrace\begin{aligned}
\Ds u &= 0 & & \hbox{in }\Omega \\
u &= 0 & & \hbox{in }\R^N\setminus\overline{\Omega}\\
\lim_{\stackrel{\hbox{\scriptsize $x\!\rightarrow\!\partial\Omega$}}{x\in\Omega}}u(x)
&= +\infty 
\end{aligned}\right.
\qquad\hbox{and}\qquad
\left\lbrace\begin{aligned}
\As u &=0 & & \hbox{in }\Omega \\
\lim_{\stackrel{\hbox{\scriptsize $x\!\rightarrow\!\partial\Omega$}}{x\in\Omega}}u(x) &=+\infty
\end{aligned}\right.
\end{equation}
see e.g. Lemmas \ref{expl-sol}, \ref{harm} and \eqref{h1-behav} below.
\medskip

These examples show on the one hand how problems for $\Ds$ where only outer values are prescribed
are ill-posed in the classical sense and different kinds of boundary conditions have
to be taken into account, while for $\As$ they show the need for 
nonzero boundary conditions.
This means in turn that we need notions of weak solutions
that can deal with these new boundary data.
We provide answers to these questions respectively in Section \ref{liner-sec} and Theorem \ref{point}:
the trace is a limit at the boundary weighted with 
powers of the distance to the boundary,
$\delta(x)^{1-s}u(x)$ in the case of the fractional Laplacian
and $\delta(x)^{2-2s}u(x)$ in the spectral fractional Laplacian case.
We refer also to Grubb \cite[e.g. Theorems 4 and 5]{grubb}
for very recent results on the fractional Laplacian, while we mention the 
work by Dhifli, M\^aagli and Zribi \cite{dhifli} for a first attempt on
the spectral fractional Laplacian.
\medskip

In both cases we needed to develop a weak $L^1$-theory of Stampacchia's sort with measure data,
able to treat this class of $s$-harmonic functions as well. 
We consider the following nonhomogeneous problems
\[
\left\lbrace\begin{aligned}
\Ds u &= \lambda & & \hbox{in }\Omega \\
u &= \mu & & \hbox{in }\R^N\setminus\overline\Omega \\
\frac{u}{w_1} &= \nu & & \hbox{on }\partial\Omega
\end{aligned}\right.
\qquad\hbox{and}\qquad
\left\lbrace\begin{aligned}
\As u &= \xi & & \hbox{in }\Omega \\
\frac{u}{w_2} &= \zeta & & \hbox{on }\partial\Omega
\end{aligned}\right.
\]
and we show how are well-posed when $\lambda,\xi\in\mathcal{M}(\Omega),\ \mu\in\mathcal{M}(\R^N\setminus\overline\Omega),
\ \nu,\zeta\in\mathcal{M}(\partial\Omega)$ are Radon measures
satisfying assumptions
\begin{align*}
& \int_\Omega \delta^s\;d|\lambda|<+\infty,\qquad \int_\Omega\delta\;d|\xi|<+\infty \\
& \int_{\R^N\setminus\overline\Omega}\delta^{-s}\min\{1,\delta^{-N-s}\}\;d|\mu|<+\infty \\
& |\nu|(\partial\Omega)<+\infty,\qquad |\zeta|(\partial\Omega)<+\infty
\end{align*}
and $w_1$ and $w_2$ are reference functions\footnote{Here
the boundary datum has to be taken as the limit as approaching a point on the boundary.} defined in $\Omega$
whose behaviour is comparable respectively with
$\delta^{-(1-s)}$ and $\delta^{-(2-2s)}$.
The solutions are functions satisfying respectively
\[
\begin{aligned}
& \int_\Omega u\Ds\phi\ =\ \int_\Omega \phi\;d\lambda
-\int_{\R^N\setminus\Omega}\Ds\phi\;d\mu+\int_{\partial\Omega}\left.\frac{\phi}{\delta^s}\right|_{\partial\Omega}d\nu & 
u\in L^1(\Omega),\ \phi\in\T_1(\Omega), \\
& \int_\Omega u\As\phi\ =\ \int_\Omega \phi\;d\xi
-\int_{\partial\Omega}\frac{\partial\phi}{\partial\nu}\;d\zeta & u\in L^1(\Omega,\delta(x)dx),
\ \phi\in\T_2(\Omega)
\end{aligned}
\]
and the test function spaces are defined as
\begin{align*}
& \T_1(\Omega)\ =\ \left\lbrace\phi\in C^s(\R^N):
\left\lbrace\begin{aligned}
\Ds\phi &= \psi & &\hbox{in }\Omega,\ \psi\in C^\infty_c(\Omega) \\
\phi &= 0 & &\hbox{in }\R^N\setminus\overline{\Omega} \\
\delta^{1-s}\phi &= 0 & & \hbox{on }\partial\Omega
\end{aligned}\right.\right\rbrace, \\
& \T_2(\Omega)\ =\ \left\lbrace \phi\in C^1_0(\overline{\Omega}):
\As\phi = \psi \hbox{ in }\Omega,\ \psi\in C^\infty_c(\Omega)\right\rbrace.
\end{align*}
These results are contained respectively in Sections \ref{stampacchia-frac} and \ref{dir-sect}.
These definitions are inspired by and, in a sense, extend some
integration by parts formulas.
The techniques are mainly based on potential analysis,
exploiting the representation via the Green functions, Poisson and Martin kernels,
whose constructions are included in the treatment.
A more precise presentation of the results and the rigorous statements of the main findings 
are included in the introductory Chapters \ref{intropartone} and \ref{introparttwo}.
\medskip

\paragraph*{Nonlinear problems.} So we have seen how boundary singularities can
show up even in presence of quite smooth data.
When turning to nonlinear problems, this lack of smoothness at the boundary
provide a wide class of solutions with a {\it prescribed} singularity at the boundary,
presented in Chapter \ref{nonlin-sec} and Section \ref{nonlin-sect}. There, we deal with problems
\[
\left\lbrace\begin{aligned}
\Ds u &= f(x,u) & & \hbox{in }\Omega \\
u &= g & & \hbox{in }\R^N\setminus\overline{\Omega} \\
\frac{u}{w_1} &= h & & \hbox{on }\partial\Omega
\end{aligned}\right.
\qquad\hbox{and}\qquad
\left\lbrace\begin{aligned}
\As u &= f(x,u) & & \hbox{in }\Omega \\
\frac{u}{w_2} &= h & & \hbox{on }\partial\Omega
\end{aligned}\right.
\]
In these cases, large $s$-harmonic functions can be used as sub- or supersolutions
whenever the sign of the nonlinearity $f$ allows it.
Indeed we solved these nonlinear problems using some sub- and supersolution methods
by adapting the arguments in Cl\'ement and Sweers \cite{clem-sweers} and
Montenegro and Ponce \cite{ponce}.
\medskip

Taking, for example, datum $h\equiv 1$ will 
provide solutions with a boundary explosion 
of the rate $\delta^{s-1}$ and $\delta^{2s-2}$ respectively.
These solutions are not {\it large} though, in the following sense.
In the classical theory for the Laplacian,
large solutions are generated by a compensation between the boundary condition
- which tries to push the solution to infinity - and the nonlinearity
- which pulls it down, in view of the negative sign. The nonlinearity must provide 
a sufficiently strong answer to the boundary condition, and in this 
spirit the Keller-Osserman condition quantifies the necessary rate 
in order for this phenomenon to take place.
Yet so far, in the fractional context, 
we have actually introduced solutions whose boundary singularity
is just due to linear features of the operators.
So, at this stage one question is still left unanswered: is it possible to find 
solutions with a singularity at the boundary generated 
not by the features of the operator,
but rather due to a compensation phenomenon?

\paragraph*{Large solutions for fractional Laplacian operators.}
We produce initial answers in Chapters \ref{fracKO} and \ref{specKO},
stating sufficient conditions on the nonlinearity in order 
for solutions to $\Ds u+f(u)=0$ and $\As u+f(u)=0$
to exist, with a singularity at the boundary
which is higher order with respect to the ones described 
in the previous paragraph, so that 
the singular boundary conditions read as
\[
\frac{u}{w_1}=+\infty
\hbox{ on }\partial\Omega,
\qquad\frac{u}{w_2}=+\infty
\hbox{ on }\partial\Omega
\] 
which are equivalent to
\[
\delta^{1-s}u=+\infty
\hbox{ on }\partial\Omega,
\qquad\delta^{2-2s}u=+\infty
\hbox{ on }\partial\Omega.
\]

The case of the power nonlinearity $f(u)=u^p$ for $\Ds$
has been studied by Felmer and Quaas \cite{felmer-quaas} and
Chen, Felmer and Quaas \cite{chen-felmer}.
We extend those results by providing explicit bounds on $p$ for the existence,
which are
\begin{equation}\label{prange-frac}
p\in\left(1+2s,1+\frac{2s}{1-s}\right),
\end{equation}
allowing more general nonlinearities
and a clear framework to work in,
especially when speaking of boundary conditions.
We also prove the following pointwise bound holding throughout the domain
\[
\int_{u(x)}^\infty\frac{dt}{\sqrt{F(t)}}\ \geq\ c\,\delta(x)^s,
\qquad F'=f;
\]
this upper bound is computed via an explicit supersolution for the problem.
In the particular case where $f(u)=u^p$ this yields
\[
u\leq C\,\delta^{-2s/(p-1)}.
\]

In the case of the spectral fractional Laplacian,
we study power-like nonlinearities.
We deal with
\[
\left\lbrace\begin{aligned}
\As u &= -u^p & \hbox{in }\Omega \\
\delta^{2-2s}u &=+\infty & \hbox{on }\partial\Omega
\end{aligned}\right.
\]
proving the existence of a solution $u\in L^1(\Omega,\delta(x)dx)$ for
\begin{equation}\label{prange-spectral}
p\in\left(1+s,1+\frac{s}{1-s}\right).
\end{equation}
Also in this case we provide the following upper bound, coming
from the construction of a supersolution, on the solution 
\[
u\ \leq\ C\,\delta^{-2s/(p-1)}\qquad \hbox{in }\Omega.
\]
This rate coincides with the case of the fractional Laplacian
and is sharp in the case $s=1$.
\medskip

Let us mention that we suspect \eqref{prange-frac} and \eqref{prange-spectral}
to be optimal, but at present we cannot provide a proof. 
When $p$ is too large (i.e. it exits the ranges from above), then the candidate
supersolution will not fulfil the boundary condition. Moreover,
in this case no solution with bounded boundary datum can exist (see Theorem \ref{-SIGN} 
and Lemma \ref{nonexist} below)
so we do not expect the large problem to admit a solution either. 
Conversely, when $p$ is too small, then we
lose the necessary integrability on the candidate supersolution
in order to compute its (spectral or not) fractional Laplacian.
In both cases we lose some tools necessary for a sub- and supersolution's type method,
such as comparison principles, uniqueness of solutions, precise boundary
behaviour and the ability to compute sharp subsolutions. 
These lacks indicate that the theory is just at the beginning 
and much work can be still done.
\medskip

When looking at $f(u)=u^p$ both for $\Ds$ and $\As$,
a natural question would be if the ranges \eqref{prange-frac} and \eqref{prange-spectral}
of admissible exponents $p$'s
asymptotically converge as $s\uparrow 1$ to the 
set of admissible exponents for $-\lapl$, which is given
by \eqref{ko-intro} and simply reduces to $p\in(1,+\infty)$.
The answer is clearly {\it no}. While both
\[
1+\frac{2s}{1-s}\rightarrow+\infty\qquad\hbox{and}
\qquad 1+\frac{s}{1-s}\rightarrow+\infty\qquad\qquad
\hbox{as }s\uparrow 1,
\]
we only have
\[
1+2s\rightarrow 3\qquad\hbox{and}\qquad
1+s\rightarrow 2\qquad\qquad
\hbox{as }s\uparrow 1.
\]
This is not discouraging, though.
In both fractional settings we need respectively that
$u\in L^1(\Omega)$ and $u\in L^1(\Omega,\delta(x)dx)$
to make sense of the operator, see for example definitions
\eqref{def1} and \eqref{def6}. This is an additional assumption 
we do not have in the classical problem, so it is reasonable 
to get smaller ranges for $p$.
Moreover, the classical solution
to the large problem is known to behave like
\[
u\ \asymp\ \delta^{-2/(p-1)}
\]
and such a $u$ is $L^1(\Omega)$ when $p>3$,
while it belongs to $L^1(\Omega,\delta(x)dx)$ for $p>2$;
so, in this sense, we actually have the asymptotic
convergence of the admissible ranges of $p$.

\paragraph*{Nonlocal directional curvatures.}
In the final chapter of the manuscript we introduce a fractional
notion of directional curvature for $(N-1)$-dimensional smooth manifolds in $\R^N$.
This interest is motivated by the fractional Laplacian context in 
which several classical problems have been recently rephrased
by attracting the attention of a large number of researchers.
In particular, in the differential geometry and geometric analysis framework,
several new results have been recently obtained
for the {\it diffusion by mean curvature} and the closely related problem of {\it minimal surfaces}
in a nonlocal setting.

As we shall see, the
concept of nonlocal mean curvature is naturally associated to a suitable average of
appropriate nonlocal directional curvatures, which asymptotically
approach their classical counterpart. On the other hand,
the nonlocal curvatures seem to have a more messy and fanciful behaviour than the classical ones.
In particular we show how, 
differently from the classical case, minimal and maximal nonlocal directional curvatures
are not in general attained at orthogonal directions and 
the set of extremal directions for nonlocal directional curvatures can be prescribed
somehow arbitrarily\footnote{The precise statements of these results are listed
in Paragraph~\ref{99}.}.
Some motivations for introducing these new objects can be found in Paragraph \ref{JJ}.

\part{Boundary blow-up solutions for \texorpdfstring{$\Ds$}{Ds}}\label{partone}

\chapter{Overview and main results}\label{intropartone}

In this first Part we study a suitable notion of weak solution
to semilinear problems driven by the fractional Laplacian $\Ds$, i.e.
the integral operator defined as (see e.g. Di Nezza, Palatucci and Valdinoci \cite{hitchhiker} for an introduction)
\begin{equation}
\Ds f(x)=C_{N,s}\,PV\int_{\R^N}\frac{f(x)-f(y)}{{|x-y|}^{N+2s}}\;dy.
\end{equation}
where $C_{N,s}$ is the normalizing constant defined in \eqref{Cns}.

In order to do this, we will need to develop a theory
for the Dirichlet problem for the fractional Laplacian with measure data
(see Karlsen, Petitta and Ulusoy \cite{karlsen} and Chen and V\'eron \cite{chen-veron}, for earlier results
in this direction; in particular, look at Paragraph \ref{chenv} for a comparison with \cite{chen-veron}).
We pay particular attention to those solutions having an explosive behaviour at 
the boundary of the prescribed domain,
known in the literature as \it large solutions \rm or also
\it boundary blow-up solutions\rm.

Let us recall that in the classical setting (see Axler, Bourdon and Ramey \cite[Theorem 6.9]{axler}),
to any nonnegative Borel measure $\mu\in\mathcal{M}(\partial B)$ on $\partial B$ it is possible to associate,
via the representation through the Poisson kernel,
a harmonic function in $B$ with $\mu$ as its trace on the boundary.
Conversely,
any positive harmonic function on the ball $B$
has a trace on $\partial B$ that is a nonnegative Borel measure
(see \cite[Theorem 6.15]{axler}).

When studying the semilinear problem for the Laplacian,
solutions can achieve the boundary datum
$+\infty$ on the whole boundary. More precisely,  take $\Omega$ a bounded smooth domain
and $f$ nondecreasing such that $f(0)=0$. According to the works of
Keller \cite{keller} and Osserman \cite{osserman},
the equation 
$$
\left\lbrace\begin{array}{l}
\displaystyle
\lapl u=f(u) \ \hbox{ in }\Omega\subseteq\R^N \\
\displaystyle
\lim_{\stackrel{\hbox{\scriptsize $x\!\rightarrow\!\partial\Omega$}}{x\in\Omega}}u(x)=+\infty
\end{array}\right.
$$
has a solution if and only if $f$ satisfies the so called \it Keller-Osserman condition \rm
\eqref{ko-intro}.

In the fractional context, our starting point is that
it is possible to provide
functions satisfying
\begin{equation}\label{problem}
\left\lbrace\begin{array}{l}
\displaystyle
\Ds u=0 \ \hbox{ in some open bounded region }\Omega\subseteq\R^N, \\
\displaystyle
\lim_{\stackrel{\hbox{\scriptsize $x\!\rightarrow\!\partial\Omega$}}{x\in\Omega}}u(x)=+\infty.
\end{array}\right.
\end{equation}
An example of a large $s$-harmonic function on the unit ball $B$ 
is
$$
u_\sigma(x)=\left\lbrace\begin{array}{ll}
\displaystyle
\frac{c(N,s)}{\left(1-|x|^2\right)^\sigma} & \hbox{ in }B \\
\displaystyle
\frac{c(N,s+\sigma)}{\left(|x|^2-1\right)^\sigma} & \hbox{ in }\R^N\setminus\overline{B}
\end{array}\right.\quad\sigma\in(0,1-s),\ \ c(N,s)=\frac{\Gamma(N/2)\,\sin(\pi s)}{\pi^{1+N/2}}.
$$
See Lemma \ref{expl-sol} below.
Moreover, letting $\sigma\rightarrow1-s$ we recover
the following example found in Bogdan, Byczkowski, Kulczycki,
Ryznar, Song, and Vondra{\v{c}}ek \cite{sharm},
qualitatively different from the previous one:
$$
u_{1-s}(x)=\left\lbrace\begin{array}{ll}
\displaystyle
\frac{c(N,s)}{\left(1-|x|^2\right)^{1-s}} & \hbox{ in }B, \\
\displaystyle
0 & \hbox{ in }\R^N\setminus\overline{B}.
\end{array}\right.
$$
The function $u_{1-s}$ so defined satisfies
$$
\left\lbrace\begin{aligned}
\Ds u &=0 & &\hbox{in }B \\
u &=0 & & \hbox{in }\R^N\setminus\overline{B}
\end{aligned}\right.
$$
and shows how problems where only outer values are prescribed
are ill-posed in the classical sense. Different kinds of boundary conditions have
to be taken into account:
indeed, in the first case we have an $s$-harmonic function
associated to the prescribed data of $u_\sigma$ outside $B$;
in the second case all the mass of the boundary datum 
concentrates on $\partial B$.
This means that we need a notion of weak solution
that can deal at the same time with these two different boundary data,
one on the complement of the domain
and the other one on its boundary.
We refer to Grubb \cite[e.g. Theorems 4 and 5]{grubb}
for very recent results on the regularity theory for the fractional Laplacian,
dealing with this kind of problems. 


\paragraph{The notion of $s$-harmonic function.}
We start with the definition of $s$-harmonicity, found in Landkof \cite[\textsection 6.20]{landkof}.
Denote by
$$
\eta(x)=\left\lbrace\begin{array}{ll}
\displaystyle
\frac{c(N,s)}{{|x|}^N\,\left(|x|^2-1\right)^s} & |x|>1 \\
0 & |x|\leq 1
\end{array}\right.
$$
and by
$$
\eta_r(x)=\frac{1}{r^N}\;\eta\left(\frac{x}{r}\right)=\left\lbrace\begin{array}{ll}
\displaystyle
\frac{c(N,s)\,r^{2s}}{{|x|}^N\,\left(|x|^2-r^2\right)^s} & |x|>r, \\
0 & |x|\leq r.
\end{array}\right.
$$
The constant $c(N,s)$ above is chosen in such a way that
\begin{equation}\label{cns}
\int_{\R^N}\eta(x)\;dx=\int_{\R^N}\eta_r(x)\;dx=1
\end{equation}
and therefore, see \cite[\textsection 6.19]{landkof}
\begin{equation}\label{cns2}
c(N,s)\ =\ \frac{\Gamma(N/2)\,\sin(\pi s)}{\pi^{1+N/2}}.
\end{equation}

The definition of $s$-harmonicity is given via a
mean value property, namely

\begin{defi}[$s$-harmonic function\index{$s$-harmonic function}]\label{sharm-def} 
We say that a measurable nonnegative function 
$u:\R^N\rightarrow[0,+\infty]$ is
$s$-harmonic on an open set $\Omega\subseteq\R^N$ if $u\in C(\Omega)$ and
for any $x\in\Omega$ and $0<r<\dist(x,\partial\Omega)$
\begin{equation}\label{sharm-eq}
u(x)\ =\ \int_{\R^N\setminus B_r(x)}\frac{c(N,s)\,r^{2s}}{|y-x|^N\,\left(|y-x|^2-r^2\right)^s}\:u(y)\;dy
\ =\ \left(\eta_r*u\right)(x).
\end{equation}
\end{defi}

\section{an integration by parts formula}\label{intparts-sec}
For any two functions $u,\ v$ in the Schwartz class $\mathcal{S}$,
the self-adjointness of the operator $\Ds$ entails
$$
\int_{\R^N}u\,\Ds v\ =\ \int_{\R^N}v\,\Ds u,
$$
this follows from the representation of the fractional Laplacian
via the Fourier transform,
see \cite[Paragraph 3.1]{hitchhiker}. By splitting $\R^N$ into two domains of integration 
\begin{equation}\label{intparts}
\int_{\Omega}u\,\Ds v-\int_{\Omega}v\,\Ds u\ =\ 
\int_{\R^N\setminus\Omega}v\,\Ds u-\int_{\R^N\setminus\Omega}u\,\Ds v.
\end{equation}

Denote by 
\begin{multline*}
C^{2s+\ep}(\Omega)=\{v:\R^N\rightarrow\R \hbox{ such that } v\in C(\Omega)\hbox{ and for any }K\subset\subset\Omega\\
\hbox{ there exists }\alpha=\alpha(K,v)\hbox{ such that }v\in C^{2s+\alpha}(K) \}.
\end{multline*}

\begin{prop}\label{integrbypartsform}
Let $\Omega\subseteq\R^N$ open and bounded. 
If $u\in C^{2s+\ep}(\Omega)\cap L^\infty(\R^N)$ and 
$v=0$ in $\R^N\setminus\Omega$, $v\in C^{2s+\ep}(\Omega)\cap C^s(\R^N)$
and $\Ds v\in L^1(\R^N)$, then 
\begin{equation}\label{intpartsprop}
\int_{\Omega}u\,\Ds v-\int_{\Omega}v\,\Ds u\ =\ -\int_{\R^N\setminus\Omega}u\,\Ds v.
\end{equation}
\end{prop}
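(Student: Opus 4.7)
The plan is a direct computation: expand both $\Ds v$ and $\Ds u$ as singular integrals, split every domain of integration along $\R^N=\Omega\cup(\R^N\setminus\Omega)$, and use Fubini together with the symmetry of the kernel $|x-y|^{-N-2s}$. The identity \eqref{intpartsprop} will fall out as a symmetry-plus-cancellation statement, mirroring the formal derivation \eqref{intparts} already sketched in the excerpt.

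First, I would record the pointwise splittings. For $x\in\Omega$, using $v\equiv 0$ on $\R^N\setminus\Omega$,
\[
\Ds v(x) \;=\; C_{N,s}\, p.v.\!\!\int_\Omega\!\frac{v(x)-v(y)}{|x-y|^{N+2s}}\,dy \;+\; C_{N,s}\,v(x)\!\!\int_{\R^N\setminus\Omega}\!\!\frac{dy}{|x-y|^{N+2s}},
\]
and analogously for $\Ds u(x)$. For $x\in\R^N\setminus\Omega$ the principal value disappears and one has the absolutely convergent representation
\[
\Ds v(x) \;=\; -C_{N,s}\!\int_\Omega\!\frac{v(y)}{|x-y|^{N+2s}}\,dy.
\]
Substituting into $\int_\Omega u\,\Ds v-\int_\Omega v\,\Ds u$ and swapping the order of integration by Fubini, the ``purely interior'' contribution becomes the double integral on $\Omega\times\Omega$ with integrand
\[
\frac{u(x)(v(x)-v(y))-v(x)(u(x)-u(y))}{|x-y|^{N+2s}} \;=\; \frac{v(x)u(y)-u(x)v(y)}{|x-y|^{N+2s}},
\]
which is antisymmetric in $(x,y)$ and so integrates to zero. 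The two remaining cross terms telescope into
\[
C_{N,s}\iint_{\Omega\times(\R^N\setminus\Omega)}\!\!\frac{v(x)\,u(y)}{|x-y|^{N+2s}}\,dx\,dy,
\]
and one last Fubini, together with the closed-form formula for $\Ds v$ on $\R^N\setminus\Omega$ above, identifies this as $-\int_{\R^N\setminus\Omega} u\,\Ds v$, which is precisely the right-hand side of \eqref{intpartsprop}.

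The main obstacle is ensuring that every swap of integrals is legitimate, i.e.\ verifying absolute integrability of the double integrals involved. The cross integral on $\Omega\times(\R^N\setminus\Omega)$ is easy: since $v\in C^s(\R^N)$ and $v|_{\partial\Omega}=0$, one has $|v(x)|\leq C\delta(x)^s$, and combined with the elementary estimate $\int_{\R^N\setminus\Omega}|x-y|^{-N-2s}\,dy\leq C\delta(x)^{-2s}$ this gives a dominant of $\|u\|_\infty\delta(x)^{-s}$, integrable on the bounded set $\Omega$. The interior integral is subtler, because the individual pieces $u(x)(v(x)-v(y))|x-y|^{-N-2s}$ are not absolutely integrable near $\partial\Omega$; only after the cancellation to $v(x)u(y)-u(x)v(y)$ can absolute integrability be hoped for. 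I would establish it by splitting $\Omega\times\Omega$ into a near-diagonal region $\{|x-y|\leq\tfrac12\delta(x)\wedge\delta(y)\}$, where the local $C^{2s+\varepsilon}$-regularity of $u$ and $v$ supplies the second-order cancellation killing the singularity, and a far region, where $|v(x)|\leq C\delta(x)^s$, $\|u\|_\infty<\infty$ and $|x-y|\gtrsim\max(\delta(x),\delta(y))$ furnish a directly integrable majorant. Once this absolute integrability is in place, Fubini and the antisymmetric cancellation deliver \eqref{intpartsprop}.
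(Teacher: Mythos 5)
Your route is genuinely different from the paper's: the paper never manipulates the kernel directly, but mollifies $v$ into $v_k=\alpha_k*v\in C^\infty_c(\R^N)$, uses the self-adjointness of $\Ds$ on the Schwartz class (via the Fourier transform) to write $\int_{\R^N}u\,\Ds v_k=\int_{\R^N}v_k\,\Ds u$, proves $\|\Ds v_k-\Ds v\|_{L^1(\R^N)}\to0$ by a Fatou-type argument, and then passes to the limit and splits $\R^N=\Omega\cup(\R^N\setminus\Omega)$. Your skeleton (antisymmetry of $\frac{v(x)u(y)-u(x)v(y)}{|x-y|^{N+2s}}$ on $\Omega\times\Omega$, Fubini on $\Omega\times(\R^N\setminus\Omega)$, identification of the cross term with $-\int_{\R^N\setminus\Omega}u\,\Ds v$) is the correct formal computation, and your estimates for the cross term and for the far region, based on $|v(x)|\le C\delta(x)^s$, $u\in L^\infty(\R^N)$ and $\int_{\R^N\setminus\Omega}|x-y|^{-N-2s}\,dy\le C\delta(x)^{-2s}$, are sound.

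The genuine gap is the near-diagonal region $\{|x-y|\le\tfrac12\,\delta(x)\wedge\delta(y)\}$. Writing $v(x)u(y)-u(x)v(y)=v(x)[u(y)-u(x)]+u(x)[v(x)-v(y)]$, the second piece is indeed absolutely integrable there using only the global $C^s$ bound on $v$; but for the first piece you must control the oscillation of $u$ on $B_{\delta(x)/2}(x)$, and the hypothesis $u\in C^{2s+\ep}(\Omega)$ is purely interior: the local H\"older norms of $u$ (equivalently the size of $\Ds u$) may blow up at an arbitrary rate as $\delta(x)\to0$, while $v$ only supplies the weight $\delta(x)^s$. So the local regularity makes the inner integral finite for each fixed $x$, but it does \emph{not} produce an $x$-integrable majorant on $\Omega$; absolute integrability on $\Omega\times\Omega$ — which you need both for Fubini and for the antisymmetric cancellation (or, if you instead exhaust by the symmetric sets $\{|x-y|>\eps\}$, for interchanging the principal-value limit with the integration in $x$) — does not follow from the stated hypotheses. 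In fact nothing in the hypotheses even guarantees $v\,\Ds u\in L^1(\Omega)$, i.e. the finiteness of the left-hand side of \eqref{intpartsprop}, which shows this is not a removable technicality of your method but the precise point where extra input is required. This is exactly the difficulty the paper's argument is built to sidestep: there $u$ is first taken in $\mathcal S$, so $\Ds u$ is globally bounded and $\int_\Omega v\,\Ds u$ is harmless, and all the delicate boundary behaviour is carried by $v_k\to v$ through the $L^1$ convergence of $\Ds v_k$. To repair your proof you would need either to add the missing uniform information (for instance assume $v\,\Ds u\in L^1(\Omega)$ together with an integrable dominant for the truncated inner integrals, in the spirit of the quantitative condition \eqref{gg} used later), or to insert an approximation step for $u$ or $v$ as the paper does.
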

The proof can be found in Paragraph \ref{byparts-proof}.

From now on the set $\Omega\subseteq\R^N$ will be an
open bounded domain with $C^{1,1}$ boundary.
More generally, we will prove in Section \ref{liner-sec} the following
\begin{prop}\label{intparts-prop} Let 
$\delta(x)=\dist(x,\partial\Omega)$ for any $x\in\R^N$,
and $u\in C^{2s+\ep}(\Omega)$ such that
\begin{equation}\label{gg}
{\delta(x)}^{1-s}u(x)\in C(\overline{\Omega})
\qquad\hbox{ and }\qquad
\int_{\C\Omega}\frac{|u(x)|}{1+{|x|}^{N+2s}}\cdot\frac{dx}{{\delta(x)}^s}<+\infty.
\end{equation}
Let
\begin{description}
\item[\it a)] $G_\Omega(x,y)$, $x,y\in\Omega,\,x\neq y$, be the Green function 
of the fractional Laplacian on $\Omega$, that is
$$
G_\Omega(x,y)\ =\ \frac{-C_{N,-s}}{{|x-y|}^{N-2s}}-H_\Omega(x,y)
$$ 
and $H_\Omega$ is the unique function in $C^s(\R^N)$ solving
$$
\left\lbrace\begin{aligned}
\Ds H_\Omega(x,\cdot) &=0 & & \hbox{in }\Omega \\
H_\Omega(x,y) &=\frac{-C_{N,-s}}{{|x-y|}^{N-2s}} & & \hbox{in }\R^N\setminus\overline{\Omega}
\end{aligned}\right.
$$
pointwisely,\footnote{The construction of $H$ can be found in the proof of Theorem \ref{meanvalue} below.}
\item[\it b)] $P_\Omega(x,y)=-\Ds G_\Omega(x,y)$, $x\in\Omega,y\in\R^N\setminus\overline{\Omega}$,
be the corresponding Poisson kernel (where the fractional Laplacian is computed in the variable $y\in\C\Omega$ 
and treating $x\in\Omega$ as a fixed parameter),
\item[\it c)] for $x\in\Omega,\theta\in\partial\Omega$,\footnote{This is a readaptation of the Martin kernel of $\Omega$.} 
$$
M_\Omega(x,\theta)\ =\ 
\lim_{\stackrel{\hbox{\scriptsize $y\!\rightarrow\!\theta$}}{y\in\Omega}}\frac{G_\Omega(x,y)}{{\delta(y)}^s},
$$
\item[\it d)] for $\theta\in\partial\Omega$, 
\[
Eu(\theta)\ =\ 
\lim_{\stackrel{\hbox{\scriptsize $x\!\rightarrow\!\theta$}}{x\in\Omega}}
\frac{u(x)}{\int_{\partial\Omega}M_\Omega(x,\theta')\;d\mathcal{H}(\theta')},
\] 
where the limit is well-defined in view of Lemma \ref{Eu} below.
\end{description}
Then the integration by parts formula
\begin{equation}\label{byparts}
\int_{\Omega}u\,\Ds v\ =\ 
\int_{\Omega}\Ds u\,v-\int_{\R^N\setminus\overline{\Omega}}u\,\Ds v
+\int_{\partial\Omega}Eu(\theta)\,D_sv(\theta)\;d\mathcal{H}(\theta)
\end{equation}
holds, where
$$
D_sv(\theta)\ =\ 
\lim_{\stackrel{\hbox{\scriptsize $x\!\rightarrow\!\theta$}}{x\in\Omega}}\frac{v(x)}{{\delta(x)}^s},
\quad\theta\in\partial\Omega
$$
for any $v\in C^s(\R^N)$ such that $\Ds v|_\Omega\in C^\infty_c(\Omega)$ 
and $v\equiv 0$ in $\R^N\setminus\Omega$.
Such a limit exists and is continuous in $\theta$
in view of Lemma \ref{testspace}.
In addition, we have the representation formula
\begin{equation}\label{representation}
	u(x)=\int_\Omega G_\Omega(x,y)\Ds u(y)\; dy
	-\int_{\R^N\setminus\overline{\Omega}}\Ds G_\Omega(x,y)\,u(y)\;dy+
	\int_{\partial\Omega}D_sG_\Omega(x,\theta)\, Eu(\theta)\;d\mathcal{H}(\theta).
\end{equation}
\end{prop}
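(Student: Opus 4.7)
The plan is to establish the representation formula \eqref{representation} first, and then recover the integration by parts formula \eqref{byparts} as its dual through Fubini's theorem. The two statements are essentially equivalent because any admissible test function $v$ is itself a Green potential, $v(z)=\int_\Omega G_\Omega(z,y)\psi(y)\,dy$ with $\psi:=\Ds v|_\Omega\in C^\infty_c(\Omega)$; so once $u$ is represented by \eqref{representation}, substitution into $\int_\Omega u\,\Ds v=\int_\Omega u\,\psi$ followed by swapping orders of integration produces the three terms on the right-hand side of \eqref{byparts}, thanks to the identities $\int_\Omega G_\Omega(y,z)\psi(y)\,dy=v(z)$, $P_\Omega(x,y)=-\Ds G_\Omega(x,y)$, and $D_sv(\theta)=\int_\Omega M_\Omega(y,\theta)\psi(y)\,dy$. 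The last identity comes from the symmetry $G_\Omega(x,y)=G_\Omega(y,x)$ and the fact that $M_\Omega$ and $D_s$ are the same type of boundary limit performed in opposite variables.

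To prove \eqref{representation} I would introduce the candidate
\[
w(x):=\int_\Omega G_\Omega(x,y)\,\Ds u(y)\,dy-\int_{\C\Omega}\Ds G_\Omega(x,y)\,u(y)\,dy+\int_{\partial\Omega}D_sG_\Omega(x,\theta)\,Eu(\theta)\,d\mathcal{H}(\theta)
\]
and show $w\equiv u$ by a uniqueness argument. By construction the Green potential on the right contributes $\Ds u$ as a source in $\Omega$; the Poisson integral $\int_{\C\Omega}P_\Omega(x,y)\,u(y)\,dy$ is $s$-harmonic in $\Omega$ and reproduces the outer data; the Martin integral is $s$-harmonic, vanishes in $\C\Omega$, and by the defining limit of $M_\Omega$ carries exactly $Eu$ as its degenerate boundary trace. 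Hence $w$ matches $u$ in source, in exterior values, and in the trace $E$, and the difference $u-w$ is an $s$-harmonic function in $\Omega$ with zero exterior datum and zero degenerate trace; a uniqueness principle for this linear degenerate problem (part of the weak $L^1$ theory developed later in Section \ref{liner-sec}) then forces $w=u$.

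The main difficulties are analytic rather than structural. First, the hypothesis \eqref{gg} is precisely tailored to make all three integrals defining $w$ absolutely convergent: the allowed blow-up $u\asymp\delta^{s-1}$ is counterbalanced by the vanishing $G_\Omega(x,\cdot)\asymp\delta^s$ on the interior part, while the far-field weight in \eqref{gg} controls the singularity of $P_\Omega$ along $\partial\Omega$ seen from outside. Second, the existence and continuity on $\partial\Omega$ of $Eu$, $D_sv$, and $M_\Omega(x,\cdot)$ rely on sharp two-sided estimates of the form $G_\Omega(x,y)\asymp\delta(x)^s\delta(y)^s\cdot(\text{explicit})$, which is how the limits in the statement are legitimized via the lemmas on $Eu$ and on the test function space. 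Third, the pointwise identity $\Ds w=\Ds u$ in $\Omega$ requires differentiating through the diagonal singularity of $G_\Omega$, where the local assumption $u\in C^{2s+\ep}(\Omega)$ enters. The single hardest step I expect to be the uniqueness principle for the degenerate problem, since the classical comparison arguments do not immediately apply when the boundary trace is measured against the weight $\delta^{s-1}$; the workaround is to test against functions in the bounded class of Proposition \ref{integrbypartsform} and exploit the density of $C^\infty_c(\Omega)$ to recover pointwise information.
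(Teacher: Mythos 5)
Your overall architecture coincides with the paper's: the representation formula \eqref{representation} is established first, and \eqref{byparts} is then obtained by writing $\int_\Omega u\,\Ds v=\int_\Omega u\,\psi$ with $\psi=\Ds v|_\Omega$, inserting the representation, applying Fubini, and using the dual identities $\Ds\phi(x)=\int_\Omega\psi(z)\,\Ds G_\Omega(z,x)\,dz$ for $x\in\C\Omega$ and $D_s\phi(\theta)=\int_\Omega M_\Omega(x,\theta)\,\psi(x)\,dx$; these are exactly \eqref{repr-lapl} and \eqref{repr-deriv} of Lemma \ref{testspace}, and the resulting computation is Proposition \ref{strsol-weaksol}. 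Likewise, your description of the three kernels reproducing the source, the exterior datum and the $E$-trace is the content of the three cases in the proof of Theorem \ref{pointwise} together with Lemma \ref{Eu}.

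The gap lies in the step you yourself single out as hardest, the identification $w\equiv u$. You propose to conclude via ``a uniqueness principle \dots part of the weak $L^1$ theory'': but the weak uniqueness (Lemma \ref{max-princweak2}) applies only to functions already known to satisfy the weak identity \eqref{weakdefi} against all $\phi\in\T(\Omega)$, and for $u$ that identity is precisely the integration by parts formula \eqref{byparts} you are trying to prove, so the argument is circular. Your fallback, testing against the bounded class of Proposition \ref{integrbypartsform} and using density, does not apply as stated either: that proposition requires $u\in L^\infty(\R^N)$, whereas here $u$, and a priori also $u-w$, may be unbounded near $\partial\Omega$ (knowing $E(u-w)=0$ only gives $\delta^{1-s}(u-w)\to 0$, not boundedness). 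The paper avoids this by settling uniqueness at the pointwise level: Theorem \ref{pointwise} is proved constructively and its uniqueness rests on the pointwise maximum principle of Lemma \ref{maxprinc} combined with the boundary analysis of Lemma \ref{Eu}; alternatively, the identification of a pointwise solution with the weak one is carried out in Lemma \ref{point-weak} by decomposing the nonnegative $s$-harmonic difference through the Martin-type representation of Bogdan et al.\ on the set where it is positive. Replacing your appeal to the weak $L^1$ uniqueness by one of these pointwise arguments is what closes the proof.
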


The present Part is organized as follows. 
Section \ref{meanvalue-sec} relates Definition \ref{sharm-def}
with the fractional Laplacian $\Ds$.
Section \ref{liner-sec} recalls some facts on Green functions and
Poisson kernels and it studies the linear Dirichlet problem
both in the pointwise and in a weak sense.
Chapter \ref{nonlin-sec} deals with the nonlinear problem.
Now, let us outline the main results in Section \ref{liner-sec}
and Chapter \ref{nonlin-sec}.

\section{the dirichlet problem}

For a fixed $x\in\Omega$ the Poisson kernel satisfies 
$$
0\leq-\Ds G_\Omega(x,y)\leq c\,\min\left\lbrace{\delta(y)}^{-s},{\delta(y)}^{-N-2s}\right\rbrace,
\qquad y\in\R^N\setminus\overline{\Omega}
$$
for some constant $c>0$ independent of $x$ and $y$,
as a consequence of \eqref{est-pois} and \eqref{est-pois2} below.
In particular any Dirichlet condition $u=g$ in $\R^N\setminus\overline{\Omega}$
satisfying \eqref{gintro} below is admissible
in the representation formula \eqref{representation}. 
We prove the following

\begin{theo}\label{pointwise} Let $f\in C^\alpha(\overline{\Omega})$
for some $\alpha\in(0,1)$, 
$g:\R^N\setminus\overline{\Omega}\rightarrow\R$ be any measurable function 
satisfying\footnote{Compare also with equation \eqref{g} below.}
\begin{equation}\label{gintro}
\int_{\R^N\setminus\overline{\Omega}}|g(y)|\min\left\lbrace{\delta(y)}^{-s},{\delta(y)}^{-N-2s}\right\rbrace\;dy<+\infty
\end{equation}
and $h\in C(\partial\Omega)$.
Then, the function defined by setting
\begin{equation}\label{linearsol}
u(x)=
\left\lbrace\begin{array}{ll}
\begin{aligned} 
& \int_\Omega f(y)\,G_\Omega(y,x)\;dy
-\int_{\R^N\setminus\overline{\Omega}}g(y)\,\Ds G_\Omega(x,y)\;dy\ + \\
&\qquad\qquad\qquad\qquad\qquad\qquad\qquad
+\int_{\partial\Omega}h(\theta)\,M_\Omega(x,\theta)\;d\mathcal{H}(\theta),
\end{aligned} & \quad x\in\Omega \\
g(x), & \quad x\in\R^N\setminus\overline\Omega 
\end{array}\right.
\end{equation}
belongs to $C^{2s+\ep}(\Omega)$, fulfils \eqref{gg}, and $u$ is the only pointwise solution of 
$$
\left\lbrace\begin{aligned}
\Ds u &= f & &\hbox{in }\Omega, \\
u &= g & &\hbox{in }\R^N\setminus\overline{\Omega}, \\
Eu &= h & & \hbox{on }\partial\Omega.
\end{aligned}\right.
$$
Here $Eu$ has been defined in Proposition \ref{intparts-prop}.
Moreover, if $g\in C(\overline{V_\ep})$ for some $\ep>0$,
where $V_\ep=\{x\in\R^N\setminus\overline{\Omega}:\delta(x)<\ep\}$
and $h=0$, then $u\in C(\overline{\Omega})$.
\end{theo}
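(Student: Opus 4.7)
The plan is to decompose $u = u_f + u_g + u_h$ according to the three data $f$, $g$, $h$, to analyse each term separately, and to close the argument with a uniqueness statement built on Proposition~\ref{intparts-prop}.

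For the Green potential $u_f(x) = \int_\Omega f(y)G_\Omega(x,y)\,dy$, I would invoke the standard pointwise inversion $\Ds u_f = f$ in $\Omega$, which follows from the fact that $-\Ds$ applied in $x$ to $G_\Omega(x,y)$ is the Dirac mass at $y$, combined with the $C^\alpha$-regularity of $f$ that justifies differentiation under the integral. Since $G_\Omega$ vanishes outside $\overline{\Omega}$, we also have $u_f \equiv 0$ on $\C\Omega$. Interior regularity $u_f \in C^{2s+\alpha}(\Omega) \subset C^{2s+\epsilon}(\Omega)$ comes from the Schauder estimates for $\Ds$ on $C^\alpha$ data, and the boundary behaviour $\delta^{s-1}u_f = 0$ on $\partial\Omega$ (in fact $u_f \asymp \delta^s$) is the standard optimal boundary regularity result for the Dirichlet problem with zero outer datum, which also gives $u_f \in C(\overline{\Omega})$ with $u_f|_{\partial\Omega}=0$.

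For the Poisson term $u_g(x) = -\int_{\C\overline{\Omega}} g(y)\,\Ds G_\Omega(x,y)\,dy$, the estimate $0 \le -\Ds G_\Omega(x,y) \le c\min\{\delta(y)^{-s},\delta(y)^{-N-2s}\}$ recalled just above the theorem combined with hypothesis \eqref{gintro} makes the integral absolutely convergent; differentiation under the integral sign, together with $\Ds$-harmonicity of $-\Ds G_\Omega(\cdot,y)$ for $y \in \C\overline{\Omega}$, yields $\Ds u_g = 0$ in $\Omega$. The outer trace $u_g = g$ on $\C\overline{\Omega}$ is built into the representation. For $Eu_g = 0$ I would use that the Poisson kernel has boundary behaviour of order $\delta(x)^s$ (a factor lower than $\delta(x)^{s-1}$), so that $u_g(x)/\int_{\partial\Omega} M_\Omega(x,\theta')\,d\HH(\theta') \to 0$; continuity up to $\partial\Omega$ under the extra assumption $g \in C(\overline{V_\ep})$ follows from a dominated convergence argument exploiting this same $\delta^s$ decay.

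For the Martin term $u_h(x)=\int_{\partial\Omega} h(\theta) M_\Omega(x,\theta)\,d\HH(\theta)$, the $s$-harmonicity in $\Omega$ is inherited from that of $G_\Omega(\cdot,y)/\delta(y)^s$ passing to the limit $y\to\theta$. The identity $Eu_h = h$ is where the genuine work lies: one has to show that, for $h\in C(\partial\Omega)$,
\[
\lim_{x\to\theta_0}\frac{\int_{\partial\Omega} h(\theta)M_\Omega(x,\theta)\,d\HH(\theta)}{\int_{\partial\Omega}M_\Omega(x,\theta')\,d\HH(\theta')} \;=\; h(\theta_0),
\]
which is an approximation-of-identity statement; I would split the numerator into a neighbourhood of $\theta_0$ (using continuity of $h$) and its complement (using that $M_\Omega(x,\cdot)$ concentrates at $\theta_0$ as $x\to\theta_0$, a fact that should be established from the explicit ball calculation of Lemma~\ref{expl-sol} and a localization argument exploiting the $C^{1,1}$-regularity of $\partial\Omega$). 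This is the main technical obstacle of the proof. The fulfilment of property \eqref{gg} for $u_h$ comes from the fact that $M_\Omega(x,\theta)\asymp\delta(x)^{s-1}$ times a bounded factor, and the limit $\delta^{1-s}u_h$ exists by the same concentration argument.

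Finally, uniqueness is obtained by supposing $w = u_1-u_2$ satisfies $\Ds w = 0$ in $\Omega$, $w \equiv 0$ in $\C\overline{\Omega}$, and $Ew \equiv 0$ on $\partial\Omega$; then applying the integration by parts formula \eqref{byparts} against arbitrary test functions $v$ with $\Ds v \in C^\infty_c(\Omega)$ forces $\int_\Omega w\,\Ds v = 0$ for every such $v$, hence $w=0$ a.e.\ in $\Omega$, and continuity of $w$ then gives $w\equiv 0$. The continuity of $u$ up to $\overline{\Omega}$ in the case $h=0$ and $g\in C(\overline{V_\ep})$ then follows from combining the three pieces: $u_f\in C(\overline{\Omega})$ with zero boundary values, $u_g\in C(\overline{\Omega})$ extending $g$ continuously, and $u_h\equiv 0$.
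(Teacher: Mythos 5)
Your overall architecture (splitting $u=u_f+u_g+u_h$, an approximation-of-identity argument for $Eu_h=h$, uniqueness at the end) is the paper's, but the justification of the key step for the exterior-datum term is wrong as stated. You claim that $\Ds u_g=0$ in $\Omega$ follows from ``differentiation under the integral sign together with $\Ds$-harmonicity of $-\Ds G_\Omega(\cdot,y)$ for $y\in\C\overline\Omega$''. The kernel $x\mapsto P_\Omega(x,y)=-\Ds G_\Omega(x,y)$, extended by zero outside $\Omega$, is \emph{not} $s$-harmonic in $\Omega$: by \eqref{green-pois} and the defining property of $G_\Omega$, its fractional Laplacian in the $x$ variable at $x\in\Omega$ equals $C_{N,s}|x-y|^{-N-2s}>0$; alternatively, if it were $s$-harmonic it would be a nonnegative function vanishing on $\C\Omega$ and (by \eqref{est-poisson}) continuous up to $\partial\Omega$ with zero boundary values, so Lemma \ref{maxprinc} would force it to vanish identically, contradicting its strict positivity. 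The $s$-harmonicity of $u_g$ is not a property of the kernel alone: it comes from a cancellation between the interior Poisson integral and the exterior values $g$ themselves (equivalently, $P_\Omega(\cdot,y)$ is the $s$-harmonic function whose exterior datum is the Dirac mass at $y$, which is essentially what has to be proved, cf. \eqref{gammas-eq}). This is precisely where the paper invests its effort: it verifies the mean value property of Definition \ref{sharm-def} by a four-step Landkof-type scheme --- first for $g$ of the form $\int_{\C\Omega}\Gamma_s(\cdot-y)\,d\nu(y)$ with $\nu$ supported in $\C\Omega$, then for smooth $g$ by representing an extension of $g$ as such a potential, then for bounded $g$ by uniform approximation, and finally for general $g$ satisfying \eqref{gintro} by monotone convergence. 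Without either this scheme or a rigorous cancellation computation (splitting $u_g$ into its interior and exterior parts and justifying the principal values and Fubini exchanges), your treatment of the first case does not go through.

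The remaining pieces are broadly in line with the paper, with two remarks. For $u_h$, your ``inheritance'' of $s$-harmonicity from $G_\Omega(\cdot,y)/\delta(y)^s$ as $y\to\theta$ needs care: $G_\Omega(\cdot,y)$ is $s$-harmonic only away from its pole, and passing the mean value property (an integral over all of $\R^N$) to the limit requires the locally uniform bounds that the paper secures via the approximations $u_\ep$ and the mean value formula with remainder \eqref{meanform}; also the concentration of $M_\Omega(x,\cdot)$ at $\theta_0$ is obtained in the paper from the two-sided bound \eqref{chen-martin} in $C^{1,1}$ domains (Lemma \ref{Eu}), not from the ball formula, which would be awkward to localize. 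Finally, your uniqueness argument via the integration by parts formula \eqref{byparts}/\eqref{representation} is legitimate and in fact slightly more general than the paper's explicit uniqueness step, which invokes the maximum principle in the class $C(\overline\Omega)$; either route is acceptable provided the difference of two solutions is checked to satisfy the hypotheses \eqref{gg} of Proposition \ref{intparts-prop}.
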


\begin{rmk}\rm Even if it is irrelevant to write $\int_{\R^N\setminus\Omega}$
or $\int_{\R^N\setminus\overline{\Omega}}$ in formula \eqref{linearsol}, with the latter notation
we would like to stress on the fact that the boundary $\partial\Omega$
plays an important role in this setting.
\end{rmk}

\begin{rmk}\rm Since $\partial\Omega\in C^{1,1}$, we will exploit the behaviour
of the Green function and the Poisson kernel
described by \cite[Theorem 2.10 and equation (2.13) resp.]{chen}:
there exist $c_1=c_1(\Omega,s)>0$, $c_2=c_2(\Omega,s)$ such that
\begin{equation}\label{est-poisson}
\begin{split}
\frac{{\delta(x)}^s}{c_1{\delta(y)}^s\left(1+\delta(y)\right)^s{|x-y|}^N}
& \leq\ -\Ds G_\Omega(x,y) \ \leq\\
& \leq\ \frac{c_1{\delta(x)}^s}{{\delta(y)}^s\left(1+\delta(y)\right)^s{|x-y|}^N},
\end{split}
\qquad x\in\Omega,\ y\in\R^N\setminus\overline\Omega,
\end{equation}
and
\begin{equation}\label{est-green}
\begin{split}
\frac{1}{c_2\,|x-y|^N}\left(|x-y|^2\,\wedge\,\delta(x)\delta(y)\right)^s
&\leq\ G_\Omega(x,y)\ \leq \\
&\leq\ \frac{c_2}{|x-y|^N}\left(|x-y|^2\,\wedge\,\delta(x)\delta(y)\right)^s
\end{split}
\qquad x,y\in\Omega.
\end{equation}
\end{rmk}

\begin{rmk}[construction of large $s$-harmonic functions]\label{largesharm}\rm 
The case when $f=0$ in Theorem \ref{pointwise} corresponds to 
$s$-harmonic functions: when $h\not\equiv 0$ then $u$ automatically 
explodes somewhere on the boundary (by definition of $E$), while
if $h\equiv 0$ then large $s$-harmonic functions can be built
as follows.
Take any positive $g$ satisfying \eqref{gintro} with
$$
\lim_{\delta(x)\downarrow 0}g(x)\ =\ +\infty
$$
and let $g_n=\min\{g,n\}$, $n\in\N$.
By Theorem \ref{pointwise}, the corresponding solutions $u_n\in C(\overline{\Omega})$.
In particular, $u_n=n$ on $\partial\Omega$ and by the Maximum Principle
${\{u_n\}}_{n\in\N}$ is increasing. Hypothesis \eqref{gintro} guarantees a uniform pointwise bound far from $\partial\Omega$
on ${\{u_n\}}_{n\in\N}$. Then 
$$
u_n(x)\ =\ -\int_{\R^N\setminus\Omega}g_n(y)\Ds G_\Omega(x,y)\;dy
$$
increases to a $s$-harmonic function $u$ such that
$$
\liminf_{\stackrel{\hbox{\scriptsize $x\in\Omega$}}{x\rightarrow\partial\Omega}}u(x)\geq
\liminf_{\stackrel{\hbox{\scriptsize $x\in\Omega$}}{x\rightarrow\partial\Omega}}u_n(x)=n,
\qquad \hbox{ for any }n\in\N.
$$
\end{rmk}

Next, in view of Theorem \ref{pointwise},
we introduce the test function space
\begin{multline*}
\T(\Omega)=\lbrace\phi\in C^\infty(\Omega):
\hbox{ there exists }\psi\in C^\infty_c(\Omega)
\hbox{ such that } \\
\Ds \phi=\psi \hbox{ in }\Omega,\ 
\phi=0 \hbox{ in }\R^N\setminus\overline\Omega,\ 
E\phi=0 \hbox{ on }\partial\Omega\rbrace.
\end{multline*}
Note that $\T(\Omega)\subseteq C^s(\R^N)$, by \cite[Proposition 1.1]{rosserra}.
Starting from the integration by parts formula \eqref{byparts},
we introduce the following notion of weak solution  

\begin{defi}[weak $L^1$ solution]\index{weak $L^1$ solution}\label{weakdefiintro} 
Given three Radon measures $\lambda\in\mathcal{M}(\Omega)$, 
$\mu\in\mathcal{M}(\R^N\setminus\Omega)$ and $\nu\in\mathcal{M}(\partial\Omega)$, such that
\[
\int_\Omega\delta(x)^s\;d|\lambda|(x)<+\infty,\ 
\int_{\R^N\setminus\overline\Omega}\min\{\delta(x)^{-s},\delta(x)^{-N-2s}\}\;d|\mu|(x)<+\infty,
\ |\nu|(\Omega)<+\infty
\]
we say that a function $u\in L^1(\Omega)$
is a solution of 
$$
\left\lbrace\begin{aligned}
\Ds u &=\lambda & & \hbox{in }\Omega \\
u &=\mu & & \hbox{in }\R^N\setminus\overline{\Omega} \\
Eu &=\nu & & \hbox{on }\partial\Omega
\end{aligned}\right.
$$
if for every $\phi\in\T(\Omega)$ it holds
\begin{equation}\label{weakdefi-eq}
\int_\Omega u(x)\Ds \phi(x)\;dx\ =\ 
\int_\Omega\phi(x)\;d\lambda(x)
-\int_{\R^N\setminus\overline{\Omega}}\Ds \phi(x)\;d\mu(x)+
\int_{\partial\Omega}D_s\phi(\theta)\;d\nu(\theta).
\end{equation}
\end{defi}
The integrals in the definition are finite for any $\phi\in\T(\Omega)$
in view of Lemma \ref{testspace}.

Thanks to the representation formula \eqref{representation},
we prove

\begin{theo}\label{existence-weak2}
Given two Radon measures $\lambda\in\mathcal{M}(\Omega)$
and 
$\mu\in\mathcal{M}(\R^N\setminus\overline{\Omega})$ such that
$$
\int_\Omega G_\Omega(x,y)\;d|\lambda|(y)<+\infty,
\qquad\hbox{and}\qquad
-\int_{\R^N\setminus\overline{\Omega}}\Ds G_\Omega(x,y)\;d|\mu|(y)<+\infty,
$$
for a.e. $x\in\Omega$,
and a Radon measure $\nu\in\mathcal{M}(\partial\Omega)$ such that $|\nu|(\partial\Omega)<+\infty$,
the problem
$$
\left\lbrace\begin{aligned}
\Ds u &=\lambda & & \hbox{in }\Omega \\
u &=\mu & & \hbox{in }\R^N\setminus\overline{\Omega} \\
Eu &=\nu & & \hbox{on }\partial\Omega
\end{aligned}\right.
$$
admits a unique solution $u\in L^1(\Omega)$ in the weak sense.
In addition, we have the representation formula 
$$
u(x)=\int_\Omega G_\Omega(x,y)\;d\lambda(y)-\int_{\C\Omega}\Ds G_\Omega(x,y)\;d\mu(y)
+\int_{\partial\Omega}M_\Omega(x,y)\;d\nu(y)
$$
and
$$
\Arrowvert u\Arrowvert_{L^1(\Omega)}\leq C\left(
\Arrowvert{\delta(x)}^s\lambda\Arrowvert_{\mathcal{M}(\Omega)}+
\Arrowvert\min\{{\delta(x)}^{-s},{\delta(x)}^{-N-2s}\}\,\mu\Arrowvert_{\mathcal{M}(\R^N\setminus\overline{\Omega})}+
\Arrowvert\nu\Arrowvert_{\mathcal{M}(\partial\Omega)}\right)
$$
for some constant $C=C(n,s,\Omega)>0$.
\end{theo}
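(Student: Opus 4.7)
The natural candidate is the function $u$ given by the representation formula stated in the theorem,
\[
u(x)\ =\ \int_\Omega G_\Omega(x,y)\,d\lambda(y)-\int_{\R^N\setminus\overline\Omega}\Ds G_\Omega(x,y)\,d\mu(y)+\int_{\partial\Omega}M_\Omega(x,y)\,d\nu(y),
\]
which extends the pointwise formula \eqref{linearsol} of Theorem \ref{pointwise} by replacing the regular data with the general Radon measures $\lambda,\mu,\nu$. The plan is to verify in three steps that $u\in L^1(\Omega)$ with the stated norm bound, that $u$ satisfies \eqref{weakdefi-eq}, and that the weak solution is unique.

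\textbf{Step 1 -- $L^1$ bound.} From the two-sided Green estimate \eqref{est-green}, the Poisson estimate \eqref{est-poisson}, and the definition of $M_\Omega$ as a normalised limit of $G_\Omega$, one extracts the three kernel bounds
\[
\int_\Omega G_\Omega(x,y)\,dx\ \le\ C\,\delta(y)^s,\qquad \int_\Omega |\Ds G_\Omega(x,y)|\,dx\ \le\ C\min\{\delta(y)^{-s},\delta(y)^{-N-2s}\},\qquad \int_\Omega M_\Omega(x,y)\,dx\ \le\ C,
\]
uniformly in $y$ in the respective region. Integrating $|u|$ against $dx$ and applying Fubini converts these into the claimed $L^1(\Omega)$ bound.

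\textbf{Step 2 -- Weak formulation.} Fix $\phi\in\T(\Omega)$, so that $\psi:=\Ds\phi\in C^\infty_c(\Omega)$, $\phi\equiv 0$ on $\R^N\setminus\overline\Omega$ and $E\phi=0$. Since Theorem \ref{pointwise} produces the Green representation $\phi(y)=\int_\Omega G_\Omega(y,x)\psi(x)\,dx$ for $y\in\Omega$, the symmetry of $G_\Omega$ yields
\[
\int_\Omega G_\Omega(x,y)\,\Ds\phi(x)\,dx\ =\ \phi(y),\qquad y\in\R^N,
\]
after extending both sides by zero outside $\Omega$. Applying $\Ds_y$ to both sides, and separately letting $y\to\theta\in\partial\Omega$ after dividing by $\delta(y)^s$, produces respectively
\[
\int_\Omega \Ds G_\Omega(x,y)\,\Ds\phi(x)\,dx\ =\ \Ds\phi(y)\ \ (y\in\R^N\setminus\overline\Omega),\qquad\int_\Omega M_\Omega(x,\theta)\,\Ds\phi(x)\,dx\ =\ D_s\phi(\theta).
\]
Substituting these three identities into $\int_\Omega u\,\Ds\phi$ and swapping the order of integration, justified by Step 1 together with $\|\Ds\phi\|_\infty<\infty$ and the integrability assumptions on $\lambda,\mu,\nu$, recovers exactly the right-hand side of \eqref{weakdefi-eq}.

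\textbf{Step 3 -- Uniqueness, and the main obstacle.} If $u\in L^1(\Omega)$ solves the homogeneous weak problem, then $\int_\Omega u\,\Ds\phi=0$ for every $\phi\in\T(\Omega)$. Theorem \ref{pointwise} applied with data $(\psi,0,0)$ produces, for any $\psi\in C^\infty_c(\Omega)$, a test function $\phi\in\T(\Omega)$ with $\Ds\phi=\psi$, so $\int_\Omega u\psi=0$ for all such $\psi$ and hence $u\equiv 0$ a.e. The delicate step of the whole argument is the rigorous derivation of the identity $\int_\Omega \Ds G_\Omega(x,y)\,\Ds\phi(x)\,dx=\Ds\phi(y)$ for $y\in\R^N\setminus\overline\Omega$: the kernel $\Ds G_\Omega(\cdot,y)$ deteriorates as $y$ approaches $\partial\Omega$, so the exchange of $\Ds_y$ with the integral is not for free. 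This is best handled either via the integration by parts formula of Propositions \ref{integrbypartsform}--\ref{intparts-prop} applied to $G_\Omega(\cdot,y)$ and $\phi$, or by first approximating $\mu$ by measures supported strictly away from $\overline\Omega$ (and analogously $\nu$ by smooth boundary densities) and then passing to the limit by dominated convergence using the $L^1$ bounds of Step 1.
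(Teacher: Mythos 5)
Your proposal is correct and follows essentially the same route as the paper: the candidate is the representation formula, the weak formulation is verified by Fubini together with the kernel identities of Lemma \ref{testspace} (\eqref{repr-lapl} and \eqref{repr-deriv}), the $L^1$ bound comes from the same kernel estimates (Theorem \ref{reg-l1sol}), and uniqueness is the same duality argument that underlies Lemma \ref{max-princweak2}. The only remark worth making is that the step you flag as delicate is handled more simply in the paper: for $y\in\R^N\setminus\overline\Omega$ the test function $\phi$ vanishes near $y$, so $\Ds\phi(y)$ is an absolutely convergent integral over $\Omega$ (no principal value, no differentiation under the integral), and substituting the Green representation of $\phi$ plus \eqref{green-pois} and Fubini gives the identity directly, with no need for approximation of $\mu$ or an extra integration by parts.
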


We will conclude Section \ref{liner-sec} by showing that
\begin{prop} The weak solution of
$$
\left\lbrace\begin{aligned}
\Ds u(x) &=\frac{1}{{\delta(x)}^\beta} & & \hbox{in }\Omega,\ \beta\in(0,1+s) \\
u &=0 & & \hbox{in }\R^N\setminus\overline{\Omega} \\
Eu &=0 & & \hbox{on }\partial\Omega
\end{aligned}\right.
$$
satisfies 
\begin{eqnarray}
\displaystyle
c_1{\delta(x)}^s
\leq u(x)\leq
c_2{\delta(x)}^s & &
\hbox{ for }0<\beta<s, \nonumber \\ 
\displaystyle
c_3{\delta(x)}^s\log\frac{1}{\delta(x)}
\leq u(x)\leq
c_4{\delta(x)}^s\log\frac{1}{\delta(x)} & &
\hbox{ for }\beta=s, \label{udelta} \\
\displaystyle
c_5{\delta(x)}^{-\beta+2s}
\leq u(x) \leq
c_6{\delta(x)}^{-\beta+2s} & &
\hbox{ for }s<\beta<1+s. \nonumber
\end{eqnarray}
Moreover, there exist constants $\underline{c}=\underline{c}(n,s,\Omega)>0$
and $\overline{c}=\overline{c}(n,s,\Omega)>0$
such that the solution of 
$$
\left\lbrace\begin{aligned}
\Ds u &=0 & & \hbox{in }\Omega \\
u &=g & & \hbox{in }\R^N\setminus\overline{\Omega} \\
Eu &=0 & & \hbox{on }\partial\Omega
\end{aligned}\right.
$$
satisfies
\begin{equation}\label{uleqg}
\underline{c}\,\underline{g}(\delta(x)):=
\underline{c}\,\inf_{\delta(y)=\delta(x)}g(y)
\leq u(x)\leq 
\overline{c}\sup_{\delta(y)=\delta(x)}g(y)
=:\overline{c}\,\overline{g}(\delta(x))
\qquad x\in\Omega,
\end{equation}
for any $g$ satisfying \eqref{gintro} such that $\underline{g},\,\overline{g}$
are decreasing functions in $0^+$ with $\underline{g}\leq\overline{g}$ and
$$
\lim_{t\downarrow 0}\underline{g}(t)\ =\ +\infty.
$$
\end{prop}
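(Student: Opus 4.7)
The plan is to apply the representation formula from Theorem \ref{existence-weak2} to each of the two problems and to read off the asymptotic rate of $u$ directly from the two-sided kernel bounds \eqref{est-green} and \eqref{est-poisson}. The first estimate reduces to an integral involving $G_\Omega$, the second to an integral involving the Poisson kernel $-\Ds G_\Omega$; no further abstract input is needed beyond these explicit bounds.

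For \eqref{udelta}, the solution is
\[
u(x) \;=\; \int_\Omega G_\Omega(x,y)\,\delta(y)^{-\beta}\;dy,
\]
and by \eqref{est-green} it is enough to estimate, up to multiplicative constants,
\[
I(x) \;=\; \int_\Omega \frac{\bigl(|x-y|^2\wedge\delta(x)\delta(y)\bigr)^s}{|x-y|^N}\,\delta(y)^{-\beta}\;dy.
\]
I would reduce to $\delta(x)$ small (the interior estimate being clear) and flatten $\partial\Omega$ around the closest boundary point, so that $\delta(y)\approx y_N$ on a half-space. Splitting the domain according to whether $|x-y|^2\le\delta(x)\delta(y)$ or not, and further decomposing dyadically in $|x-y|$, one is left with one-dimensional integrals of model type $\int_0^{c}t^{s-\beta}\,dt$ and $\int_0^{c}t^{-1+\eta}\,dt$. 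Their convergence thresholds are exactly $\beta=s$ and $\beta=1+s$, which produce the three regimes: for $\beta<s$ the only effective scale is the $s$-harmonic base rate $\delta(x)^s$; for $\beta=s$ the two scales match and a logarithmic factor appears; and for $s<\beta<1+s$ the singularity of the datum dominates and fixes the rate at $\delta(x)^{2s-\beta}$. Matching lower bounds follow by using the lower estimate in \eqref{est-green} on the same decomposition.

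For \eqref{uleqg}, the representation formula reads
\[
u(x) \;=\; -\int_{\R^N\setminus\overline{\Omega}} \Ds G_\Omega(x,y)\,g(y)\;dy,
\]
and since by assumption $\underline{g}(\delta(y))\le g(y)\le \overline{g}(\delta(y))$, it suffices to establish both inequalities separately, assuming the datum is of the form $g=\phi\circ\delta$ with $\phi$ positive, decreasing and blowing up at $0$. Using \eqref{est-poisson} and flattening the boundary near $\theta\in\partial\Omega$ closest to $x$, the Poisson integral reduces to a one-dimensional average against $\phi$; decomposing the exterior dyadically with respect to $\delta(y)$, the monotonicity of $\phi$ lets one replace $\phi(\delta(y))$ by $\phi(2^{-k})$ on each annulus. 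A direct computation (integrating out the tangential directions first, then summing) shows that the resulting series telescopes up to universal constants and is comparable to $\phi(\delta(x))$, giving $u(x)\asymp\phi(\delta(x))$. The tail region $\{\delta(y)\ge 1\}$ is uniformly bounded by the integrability assumption \eqref{gintro} and contributes only $O(\delta(x)^s)$, which is negligible against $\phi(\delta(x))$ as $\delta(x)\downarrow 0$.

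I expect the main obstacle to be the borderline case $\beta=s$ in \eqref{udelta}, where the scale-invariant integrand requires careful tracking of the cutoffs both near $x$ and near $\partial\Omega$ in order to produce the logarithmic factor with matching upper and lower constants; in \eqref{uleqg} the delicate point is to avoid losing the lower bound when $\phi$ blows up arbitrarily slowly, which is exactly why the monotonicity hypothesis enters — it guarantees that on the scale $\delta(y)\asymp\delta(x)$, which dominates the Poisson integral, one has $\phi(\delta(y))\asymp\phi(\delta(x))$.
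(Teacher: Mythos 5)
Your treatment of \eqref{udelta} is essentially correct and follows the same route as the paper: the representation through $G_\Omega$, the two-sided bound \eqref{est-green}, reduction to a model boundary geometry, and a splitting of the resulting integral whose convergence thresholds at $\beta=s$ and $\beta=1+s$ give the three regimes. The paper reduces to the ball and computes in spherical coordinates with a four-piece splitting (plus a boundary diffeomorphism for general $C^{1,1}$ domains) rather than flattening to a half-space and decomposing dyadically, but the content is the same and your version is fine, including the logarithm at $\beta=s$.

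For \eqref{uleqg} there is a genuine gap, and you have also misplaced the difficulty. After flattening and integrating out the tangential variables, both you and the paper are reduced to the quantity $\delta(x)^s\int_0^\eta \overline g(r)\,r^{-s}(r+\delta(x))^{-1}\,dr$, up to a harmless $O(\delta(x)^s)$ far-field term. The lower bound is the easy half: restricting to the range $r\leq\delta(x)$ and using only that $\underline g$ is decreasing already gives $u(x)\geq c\,\underline g(\delta(x))$; you neither need nor have the claim that monotonicity forces $\phi(\delta(y))\asymp\phi(\delta(x))$ when $\delta(y)\asymp\delta(x)$ — no doubling condition is assumed, so that assertion is false as a justification, even though the conclusion you want from it can be reached differently. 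The real crux is the upper bound coming from the region $\delta(y)\ll\delta(x)$: summed over your dyadic annuli this contribution is of size $\delta(x)^{s-1}\int_0^{\delta(x)}\overline g(r)\,r^{-s}\,dr$, and for a general decreasing $\overline g$ satisfying only the integrability in \eqref{gintro} it is simply not true that this ``telescopes'' to $C\,\overline g(\delta(x))$: monotonicity bounds each block from \emph{below} by $\overline g(\delta(x))$, not from above, and the small-$r$ blocks can dominate. For instance with $\overline g(r)=r^{s-1}\log^{-2}(1/r)$ near $r=0$ (admissible for \eqref{gintro}) one finds $\delta^{s-1}\int_0^{\delta}\overline g(r)r^{-s}\,dr\asymp\delta^{s-1}\log^{-1}(1/\delta)$, which exceeds $\overline g(\delta)=\delta^{s-1}\log^{-2}(1/\delta)$ by a factor $\log(1/\delta)$. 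This is exactly the step on which the paper spends its final argument: it splits at $r=\delta(x)$, handles $r\geq\delta(x)$ by monotonicity as you do, and then controls $\delta(x)^{-1}\int_0^{\delta(x)}G(r)\,dr$ by $G(\delta(x))$, where $G(r)=r^{-s}\overline g(r)$, via a separate argument on the logarithmic derivative of $G$ (and, in the power-like case, via the comparison with the $(s+\sigma)$-Poisson kernel, which is where the restriction $\sigma<1-s$ enters). Without some such additional input near the critical rate $\delta^{-(1-s)}$, your telescoping claim does not stand as written, so the proposal as it is does not prove the upper half of \eqref{uleqg}.
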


\section{semilinear dirichlet problems}

In the following, $\Omega\subset\R^N$ will always be a bounded open
region with $C^{1,1}$ boundary. We consider nonlinearities $f:\Omega\times\R\rightarrow\R$ 
satisfying hypotheses 
\begin{description}
\item[\it f.1)] $f\in C(\Omega\times\R)$, $f\in L^\infty(\Omega\times I)$ for any bounded $I\subseteq\R$
\item[\it f.2)] $f(x,0)=0$ for any $x\in\Omega$, and
$f(x,t)\geq 0$ for any $x\in\Omega,\,t>0$,
\end{description}
and all positive boundary data $g$ that satisfy \eqref{gintro}.

After having constructed large $s$-harmonic functions,
we first prove the following preliminary

\begin{theo}\label{NL-CS}
Let $f:\Omega\times\R\rightarrow\R$ be a function 
satisfying f.1).
Let $g:\R^N\setminus\overline{\Omega}\rightarrow\R$ be a measurable bounded function.
Assume the nonlinear problem
$$
\left\lbrace\begin{aligned}
\Ds u &=-f(x,u) & & \hbox{in }\Omega \\
u &=g & & \hbox{in }\R^N\setminus\overline{\Omega} \\
Eu &=0 & & \hbox{on }\partial\Omega
\end{aligned}\right.
$$
admits a subsolution $\underline{u}\in L^1(\Omega)$ 
and a supersolution $\overline{u}\in L^1(\Omega)$ in the weak sense
$$
\left\lbrace\begin{aligned}
\Ds\underline{u} &\leq -f(x,\underline{u}) & & \hbox{in }\Omega \\
\underline{u} &\leq g & & \hbox{in }\R^N\setminus\overline{\Omega}\\
E\underline{u} &=0 & & \hbox{on }\partial\Omega
\end{aligned}\right.
\quad\hbox{and}\quad
\left\lbrace\begin{aligned}
\Ds\overline{u} &\geq -f(x,\overline{u}) & & \hbox{in }\Omega \\
\overline{u} &\geq g & & \hbox{in }\R^N\setminus\overline{\Omega}\\
E\overline{u} &=0 & & \hbox{ on }\partial\Omega
\end{aligned}\right.
$$
i.e. for any $\phi\in\T(\Omega)$, $\Ds\phi|_\Omega\geq 0$, it holds
\[
\int_\Omega\underline{u}\,\Ds\phi\leq
-\int_\Omega f(x,\underline{u})\,\phi
-\int_{\R^N\setminus\overline{\Omega}} g\,\Ds\phi,\ 
\int_\Omega\overline{u}\,\Ds\phi\geq
-\int_\Omega f(x,\overline{u})\,\phi
-\int_{\R^N\setminus\overline{\Omega}} g\,\Ds\phi.
\]
Assume also $\underline{u}\leq\overline{u}$ in $\Omega$, 
and $\underline{u},\overline{u}\in L^\infty(\Omega)\cap C(\Omega)$.
Then the above nonlinear problem has a weak solution $u$ 
in the sense of Definition \ref{weakdefiintro}
satisfying
$$
\underline{u}\leq u\leq\overline{u}.
$$
In addition,
\begin{itemize}
\item if $f$ is increasing in the second variable,
i.e. $f(x,s)\leq f(x,t)$ whenever $s\leq t$, for all $x\in\Omega$,
then there is a unique solution,
\item if not, there is a unique minimal solution $u_1$, that is
a solution $u_1$ such that $\underline{u}\leq u_1\leq v$ for any other supersolution $v\geq\underline{u}$.
\end{itemize} 
\end{theo}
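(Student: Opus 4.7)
The strategy is to combine a truncation of the nonlinearity, a Schauder fixed point argument for the associated linear solution operator, and a weak maximum principle to locate the resulting fixed point inside the order interval $[\underline{u},\overline{u}]$. First, I would replace $f(x,t)$ by the truncated nonlinearity
\[
\tilde f(x,t)\ =\ f\bigl(x,\max\{\underline{u}(x),\min\{t,\overline{u}(x)\}\}\bigr),
\]
which by f.1) and $\underline{u},\overline{u}\in L^\infty(\Omega)\cap C(\Omega)$ is continuous and uniformly bounded on $\Omega\times\R$, and coincides with $f(x,t)$ whenever $\underline{u}(x)\leq t\leq\overline{u}(x)$. It is therefore enough to produce a weak solution $u$ of the problem with $\tilde f$ in place of $f$ satisfying $\underline{u}\leq u\leq\overline{u}$.

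Next, define $T\colon L^\infty(\Omega)\to L^\infty(\Omega)$ by $T(v)=u$, where $u$ is the weak solution of
\[
\left\lbrace\begin{aligned}
\Ds u&=-\tilde f(x,v) & &\text{in }\Omega,\\
u&=g & &\text{in }\R^N\setminus\overline\Omega,\\
Eu&=0 & &\text{on }\partial\Omega,
\end{aligned}\right.
\]
provided by Theorem \ref{existence-weak2}. Since $\tilde f(\cdot,v)$ lies in a fixed bounded subset of $L^\infty(\Omega)$ independent of $v$ and $g$ is bounded, the representation formula combined with the Green/Poisson estimates \eqref{est-poisson}--\eqref{est-green} yields a uniform interior H\"older bound on $T(v)$, so $T$ is compact by Arzel\`a--Ascoli; continuity of $T$ follows from dominated convergence applied to the same formula. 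Schauder's theorem, applied on the convex subset of functions bounded by $\|\underline{u}\|_\infty+\|\overline{u}\|_\infty+\|g\|_\infty$, then yields a fixed point $u=T(u)$.

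Third, I would run a weak comparison argument to show that any such fixed point satisfies $\underline{u}\leq u\leq\overline{u}$: on $\{u>\overline{u}\}$ one has $\tilde f(x,u)=f(x,\overline{u})$, so $\Ds(u-\overline{u})\leq 0$ there, while outside $\Omega$ one has $u=g\leq\overline{u}$ and $E(u-\overline{u})=0$ on $\partial\Omega$; a Kato-type inequality for $\Ds$ combined with the $L^1$ maximum principle encoded in Theorem \ref{existence-weak2} gives $(u-\overline{u})^+\equiv 0$, and the lower bound is symmetric. On the resulting interval $\tilde f=f$, so $u$ solves the original problem. The same comparison yields uniqueness when $f(x,\cdot)$ is increasing: two solutions $u,u'\in[\underline{u},\overline{u}]$ give $\Ds(u-u')=f(x,u')-f(x,u)\leq 0$ on $\{u\geq u'\}$, hence $u\leq u'$, and symmetrically $u'\leq u$.

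For the minimal solution in the non-monotone case, I would consider the family $\mathcal{S}$ of weak supersolutions $v$ with $\underline{u}\leq v\leq\overline{u}$; it is non-empty (it contains $\overline{u}$) and, thanks to the Kato inequality, closed under pointwise minima. Setting $u_1=\inf_{v\in\mathcal{S}} v$ and extracting a suitable minimizing sequence, the compactness of $T$ forces $u_1\in\mathcal{S}$ with $T(u_1)\leq u_1$; reapplying Schauder to $T$ restricted to the interval $[\underline{u},u_1]$ and invoking minimality identifies $u_1$ with the required solution. The main obstacle I expect is precisely the weak comparison/Kato step: since the functional setting is $L^1(\Omega)$ with possibly singular boundary traces, the usual pointwise arguments must be replaced by testing the weak formulation \eqref{weakdefi-eq} against carefully chosen members of $\T(\Omega)$ approximating the positive part $(u-\overline{u})^+$, and this technical point is the delicate core of the whole proof.
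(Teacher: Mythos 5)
Your proposal follows the same skeleton as the paper's proof (truncate $f$ at $\underline{u},\overline{u}$, solve the truncated problem by Schauder applied to the linear solution operator, then locate the fixed point in the order interval and conclude), but two of your justifications have real holes. First, the compactness of $T$: a uniform \emph{interior} H\"older bound plus Arzel\`a--Ascoli only yields locally uniform convergence, which is not precompactness in $L^\infty(\Omega)$. What makes the argument work in the paper is the global estimate for the part of $T(v)$ with zero exterior datum: by \cite[Proposition 1.1]{rosserra} the Green-operator part lies in $C^s(\R^N)$ with norm controlled by $\|\tilde f\|_{L^\infty}$, in particular it decays like $\delta^s$ at $\partial\Omega$; since the $s$-harmonic part induced by $g$ does not depend on $v$, this gives precompactness in $C(\overline\Omega)\subset L^\infty(\Omega)$. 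You have all the ingredients in hand, but as written the compactness claim does not follow from what you cite.

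The more serious gap is the comparison step (and, downstream, your construction of the minimal solution): both rest on a ``Kato-type inequality for $\Ds$'' in the weak $L^1$ framework, which you neither prove nor can quote from this part of the paper --- the thesis establishes Kato only for the spectral operator, and here the sub/supersolution inequalities are only assumed against test functions $\phi\in\T(\Omega)$ with $\Ds\phi|_\Omega\ge0$, so approximating $(u-\overline{u})^+$ or taking pointwise minima of supersolutions is not a routine density argument. The paper avoids Kato altogether: the fixed point $u$ is continuous in $\Omega$ (by the $C^s$ regularity above) and $\overline{u}\in C(\Omega)$ by hypothesis, so $A=\{u>\overline{u}\}$ is open; Corollary \ref{subdomain} lets one restrict the weak identity for $u$ to $A$, and testing with $\phi_A\in\T(A)$, $\Ds\phi_A|_A=\psi\ge0$, using $F(x,u)=f(x,\overline{u})$ on $A$, $\Ds\phi_A\le0$ on $\C A$ and $u\le\overline{u}$ there, one gets $\int_A u\,\psi\le\int_A\overline{u}\,\psi$, forcing $A=\emptyset$; uniqueness under monotonicity is the same localization on $\{u>u'\}$ combined with the weak maximum principle (Lemma \ref{max-princweak2}), and the minimal solution is obtained by the monotone scheme of \cite[Corollary 2.2]{KO-dupaigne} rather than by an infimum over supersolutions. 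Replacing your Kato step by this subdomain argument would close the gap with tools you already invoke.
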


In case our boundary datum $g$ is a nonnegative bounded function,
then Theorem \ref{NL-CS} (with {\it f.2)} as an additional assumption) 
provides a unique solution
since we may consider $\overline{u}=\sup g$
and $\underline{u}=0$.
Then we attack directly the problem with unbounded boundary values,
and we are especially interested in those data
exploding on $\partial\Omega$.
The existence of large $s$-harmonic functions
turns out to be the key ingredient to prove all the following theorems,
that is,

\begin{theo}[construction of large solutions]\label{LARGE-BUILDING}
Let $f:\Omega\times\R\rightarrow\R$ be a function 
satisfying f.1) and f.2). Then there exist $u,\,v:\R^N\rightarrow[0,+\infty]$
solving
$$
\left\lbrace\begin{aligned}
\Ds u &= -f(x,u) \quad \hbox{ in }\Omega \\ \displaystyle
\lim_{\stackrel{\hbox{\scriptsize $x\!\rightarrow\!\partial\Omega$}}{x\in\Omega}}u(x)
&=+\infty
\end{aligned}\right.
\qquad\hbox{ and }\qquad
\left\lbrace\begin{aligned}
\Ds v &= f(x,v) \quad \hbox{ in }\Omega \\ \displaystyle
\lim_{\stackrel{\hbox{\scriptsize $x\!\rightarrow\!\partial\Omega$}}{x\in\Omega}}v(x)
&=+\infty,
\end{aligned}\right.
$$
in a weak $L^1$ sense.
\end{theo}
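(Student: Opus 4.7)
\noindent The plan is to realize $u$ (and symmetrically $v$) as the monotone limit of solutions to approximating problems with bounded outer data, exploiting Theorem~\ref{NL-CS} together with the large $s$-harmonic functions of Remark~\ref{largesharm} as one of the two barriers.

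Fix a positive measurable $g$ satisfying \eqref{gintro} with $\lim_{\delta(x)\downarrow 0}g(x)=+\infty$, set $g_n:=\min\{g,n\}$ and let $H_n$ be the linear $s$-harmonic extension of $g_n$, so that $H_n\nearrow H$, a large $s$-harmonic function (cf.~Remark~\ref{largesharm}). By f.2), the constant $0$ is a subsolution and $H_n$ is a bounded supersolution of $\Ds u_n=-f(x,u_n)$ with outer datum $g_n$, since $\Ds H_n=0\geq -f(x,H_n)$. Theorem~\ref{NL-CS} then yields a minimal solution $u_n\in[0,H_n]$; minimality together with $g_n\leq g_{n+1}$ gives $u_n\leq u_{n+1}$, so $u:=\lim_n u_n\leq H$ is well defined. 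Theorem~\ref{existence-weak2} bounds $\|H_n\|_{L^1(\Omega)}$ uniformly in $n$ through the weighted $L^1$ norm of $g_n$ (controlled by \eqref{gintro}), so monotone convergence gives $u\in L^1(\Omega)$.

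For the boundary blow-up, set $\phi_n:=H_n-u_n\geq 0$: it solves $\Ds\phi_n=f(x,u_n)$ in $\Omega$ with $\phi_n\equiv 0$ in $\C\Omega$ and $E\phi_n=0$. Comparison with the constant $n$ (also a supersolution of the $g_n$-problem) gives $u_n\leq n$, and f.1) then bounds $\|f(\cdot,u_n)\|_{L^\infty(\Omega)}\leq C_n$; the Green representation and the estimate \eqref{udelta} at $\beta=0$ yield $\phi_n(x)\leq c\,C_n\,\delta(x)^s$. Since $g_n\equiv n$ in a neighbourhood of $\partial\Omega$, $H_n$ attains the continuous trace $n$ there, so $\liminf_{x\to\partial\Omega}u_n(x)\geq n$; letting $n\to\infty$ shows $u(x)\to+\infty$ at $\partial\Omega$. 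Passage to the limit in \eqref{weakdefi-eq} is then routine: $\int u_n\,\Ds\phi\to\int u\,\Ds\phi$ by monotone convergence, $\int_{\C\Omega}g_n\,\Ds\phi\to\int_{\C\Omega}g\,\Ds\phi$ by dominated convergence via the decay of $|\Ds\phi|$ and \eqref{gintro}, while testing the identity against $\phi\geq 0$ with $\Ds\phi\geq 0$ bounds $\int_\Omega f(x,u_n)\phi$ uniformly in $n$, so Fatou's lemma (using $f\geq 0$) together with continuity of $f$ and $u_n\to u$ pointwise concludes.

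The construction of $v$ is symmetric: now $H_n$ is a subsolution (since $\Ds H_n=0\leq f(x,H_n)$), while a bounded supersolution $W_n$ is obtained by solving $\Ds W_n=M_n$ in $\Omega$ with outer datum $n$, where $M_n$ is chosen large enough that $M_n\geq\sup_{t\in[n,\sup W_n]}f(\cdot,t)$ (possible by f.1)). Theorem~\ref{NL-CS} gives $v_n\in[H_n,W_n]$, monotone in $n$, and $v_n\geq H_n$ enforces boundary blow-up of $v:=\lim_n v_n$. The main obstacle in the whole argument is the passage to the limit in the nonlinear term without a monotonicity assumption on $f$: the sign condition f.2) is precisely what opens the way to Fatou's lemma. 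For the $v$-problem there is a further subtlety in ensuring the limit stays in $L^1(\Omega)$ without a Keller--Osserman-type control on the growth of $f$, and this is the delicate point that motivates the sharper analysis developed in Chapters~\ref{fracKO} and~\ref{specKO}.
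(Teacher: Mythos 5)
The half of the statement concerning $\Ds u=-f(x,u)$ is essentially the paper's own route: the paper just invokes Theorem \ref{-SIGN}(i), whose proof is precisely the approximation scheme you rebuild (truncated data $g_n$, minimal solutions between $0$ and the $s$-harmonic extension, monotone limit, blow-up forced by the linear part). Two small remarks there: dominated rather than monotone convergence is what handles $\int_\Omega u_n\Ds\phi$, since $\Ds\phi$ changes sign in $\Omega$; and your Fatou step only yields the subsolution inequality $\int_\Omega f(x,u)\phi\le\liminf$, so to recover the equality you still need the complementary supersolution argument with exhausting subdomains $\Omega_k$ exactly as in Section \ref{proof-SIGN} — fixable, but not ``routine''.

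The genuine gap is in the construction of $v$, i.e.\ the positive right-hand side. Your supersolution $W_n$ solves $\Ds W_n=M_n$ with outer datum $n$, hence $\sup W_n\approx n+M_n\|\zeta\|_\infty$ where $\Ds\zeta=1$; the requirement $M_n\ge\sup\{f(x,t):t\le\sup W_n\}$ is therefore self-referential and has \emph{no} solution once $f$ is superlinear (for $f(x,t)=t^2$ it forces $M_n\gtrsim (n+cM_n)^2$), while f.1)--f.2) allow arbitrary growth. Even when such an $M_n$ exists, $W_n\ge n$ blows up with $n$ at every interior point, so the monotone limit $v=\lim_n v_n$ carries no local uniform bound and may be $+\infty$ everywhere or fail to lie in $L^1(\Omega)$; this cannot be deferred to Chapters \ref{fracKO}--\ref{specKO}, because the theorem asserts existence under f.1)--f.2) alone. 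The paper's resolution is different in kind: a single (not iterated) application of Lemma \ref{sub+super} with an exterior datum \emph{tuned to the nonlinearity}. One takes an increasing majorant $F(t)=t^{\frac{4s}{1-s}}+\max_{0\le r\le t}\max_{x\in\overline\Omega}f(x,r)$, sets $g=\bigl(F^{-1}(I)-1\bigr)/\overline{c}$ with $I(x)\le \Ds\chi_\Omega(z)$ along $\{\delta(z)=\delta(x)\}$, and lets $v_0$ be the induced large $s$-harmonic function: $v_0$ is a subsolution whose blow-up at $\partial\Omega$ comes from \eqref{uleqg}, while $v_0+\chi_\Omega$ is a supersolution because $\Ds\chi_\Omega\ge I\ge F(\overline{c}g+1)\ge f(x,v_0+w)$ for $0\le w\le1$; the solution then satisfies $v\le v_0+1\in L^1(\Omega)$, the term $t^{\frac{4s}{1-s}}$ in $F$ guaranteeing that $g\lesssim\delta^{-(1-s)/2}$ fulfils \eqref{gintro}. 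The point your symmetric scheme misses is that for this sign of the nonlinearity the datum cannot be chosen arbitrarily large — the nonexistence part of Theorem \ref{SUPERLINEAR} shows that blowing-up data in general admit no solution — so the supersolution must be built simultaneously with, and calibrated to, the boundary datum.
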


Depending on the nature of the nonlinearity $f$
one can be more precise about the Dirichlet values of $u$.
Namely,

\begin{theo}
\label{-SIGN}
Let $f:\Omega\times\R\rightarrow\R$ be a function 
satisfying f.1) and f.2), and
$g:\R^N\setminus\overline{\Omega}\rightarrow[0,+\infty]$ a measurable function
satisfying \eqref{gintro};
let also be $h\in C(\partial\Omega)$, $h\geq 0$.
The semilinear problem 
$$
\left\lbrace\begin{aligned}
\Ds u(x) &=-f(x,u(x)) & & \hbox{in }\Omega \\
u &=g & & \hbox{in }\R^N\setminus\overline{\Omega}\\
Eu &=h & & \hbox{on }\partial\Omega
\end{aligned}\right.
$$
satisfies the following:
\begin{description}
\item[\it i)] if $h\equiv 0$, the equation has a weak solution for any $g$ satisfying \eqref{gintro},
\item[\it ii)] if $h\not\equiv 0$ then
\begin{itemize}
\item the problem has a weak solution if there exist
a nondecreasing function $\Lambda:(0,+\infty)\to(0,\infty)$
and $a>0$
such that 
$$
f(x,t)\leq \Lambda(at),\quad \hbox{ for }t>0
$$
and $\Lambda(\delta^{s-1})\delta^s\in L^1(\Omega)$,
\item the problem does not admit any weak solution if there exist $b_1,T>0$
such that 
$$
f(x,t)\geq b_1{t}^{\frac{1+s}{1-s}},\quad \hbox{ for }t>T.
$$
\end{itemize}
\end{description}
If, in addition, $f$ is increasing in the second variable then the problem admits 
only one positive solution.
\end{theo}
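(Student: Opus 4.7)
The plan is to combine the sub-- and supersolution scheme of Theorem \ref{NL-CS} with a monotone truncation argument that accommodates unbounded outer and boundary data. In every case the natural subsolution is $\underline u\equiv 0$, since $f(\cdot,0)\equiv 0$ by hypothesis \textit{f.2)} and the prescribed data $g,h$ are nonnegative; the real work is in producing an admissible supersolution and in passing to the limit in the weak formulation of Definition \ref{weakdefiintro}.

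\textbf{Existence (parts \textit{i)} and \textit{ii)}, first bullet).} As supersolution I take the linear solution $U$ of Theorem \ref{existence-weak2} associated to the given data $(g,h)$, so that $\Ds U=0\ge -f(x,U)$; typically $U$ is unbounded and thus not directly admissible in Theorem \ref{NL-CS}. I therefore truncate $g_n=\min\{g,n\}$ and $h_n=\min\{h,n\}$, denote by $U_n\in C(\Omega)\cap L^\infty(\Omega)$ the corresponding linear solution (the bound $U_n\le n$ follows from the representation \eqref{linearsol} and the fact that the Martin and Poisson kernels integrate to one), and apply Theorem \ref{NL-CS} to the pair $(0,U_n)$. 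The resulting bounded weak solutions $u_n$ form a nondecreasing sequence (because the data are monotone in $n$ and $u_n$ is the minimal solution), and $u=\lim_n u_n\le U$ is the candidate. Passing to the limit in \eqref{weakdefi-eq} against $\phi\in\T(\Omega)$ is handled by monotone convergence on the left--hand side and on $\int_\Omega f(x,u_n)\phi$, and by dominated convergence against \eqref{gintro} and \eqref{est-poisson} on $\int_{\R^N\setminus\overline\Omega}\Ds\phi\,g_n$. In part \textit{i)} the bound \eqref{uleqg} suffices to control $U$ away from $\partial\Omega$; in part \textit{ii)} the Martin kernel asymptotic $U\asymp\delta^{s-1}$ near $\{h>0\}$ (from \eqref{est-green}--\eqref{est-poisson}) combined with the growth hypothesis $f(x,t)\le\Lambda(at)$ and $\Lambda(\delta^{s-1})\delta^s\in L^1(\Omega)$ yields $f(\cdot,U)\,\delta^s\in L^1(\Omega)$, providing both the admissibility of $-f(\cdot,u)$ as a Radon source in Definition \ref{weakdefiintro} and the dominating function needed for the limit.

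\textbf{Non--existence (part \textit{ii)}, second bullet).} Assume that a weak solution $u$ exists. Let $v$ be the Martin extension of $h$, i.e. the linear solution of Theorem \ref{existence-weak2} with data $(0,h)$. Then $w=u-v$ solves $\Ds w=-f(\cdot,u)\le 0$ with null outer and $E$--traces, so by the weak maximum principle (a direct consequence of the representation formula with a nonpositive Radon source and vanishing boundary data) $w\ge 0$, i.e. $u\ge v$. The Martin kernel asymptotic $M_\Omega(x,\theta)\asymp\delta(x)^s/|x-\theta|^N$ together with the continuity and positivity of $h$ on a boundary arc force $v(x)\ge c\,\delta(x)^{s-1}$ in some tubular neighbourhood $N_\eps=\{x\in\Omega:\delta(x)<\eps\}$ meeting $\{h>0\}$, and shrinking $\eps$ we may assume $u>T$ on $N_\eps$. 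Using $(s-1)\cdot\tfrac{1+s}{1-s}+s=-1$ the superlinear lower bound on $f$ then gives
$$
\int_{N_\eps}f(x,u(x))\,\delta(x)^s\,dx\ \ge\ b_1\int_{N_\eps}\bigl(c\,\delta(x)^{s-1}\bigr)^{(1+s)/(1-s)}\delta(x)^s\,dx\ =\ c'\int_{N_\eps}\delta(x)^{-1}\,dx\ =\ +\infty,
$$
contradicting the integrability $f(\cdot,u)\,\delta^s\in L^1(\Omega)$ required for $-f(\cdot,u)$ to define an admissible source in Definition \ref{weakdefiintro}.

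\textbf{Uniqueness and main obstacle.} When $f(x,\cdot)$ is nondecreasing, any other weak solution $\widetilde u$ is itself a supersolution of the $n$--th truncated problem, so the uniqueness statement of Theorem \ref{NL-CS} (applied between $\underline u=0$ and $\overline u=U_n\wedge\widetilde u$, which is bounded and continuous) yields $u_n\le\widetilde u$ for every $n$, hence $u\le\widetilde u$; the reverse inequality follows symmetrically by running the scheme with $(\underline u,\overline u)=(0,\widetilde u\vee U_n)$. The principal obstacle in the whole argument is the boundary behaviour of the supersolution: since test functions $\phi\in\T(\Omega)$ vanish only as $\delta^s$ by the fractional Hopf behaviour encoded in Lemma \ref{testspace}, the integrals $\int u_n\Ds\phi$ and $\int f(x,u_n)\phi$ are only borderline summable at $\partial\Omega$, and the balancing condition $\Lambda(\delta^{s-1})\delta^s\in L^1$ in part \textit{ii)} is calibrated precisely to produce the dominating function that validates the limit passage and, dually, to render the integral diverging in the non--existence threshold $(1+s)/(1-s)$.
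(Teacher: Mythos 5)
Your part \textit{i)} follows the paper's route (truncate $g$, take the increasing family of minimal solutions from Theorem \ref{NL-CS}, pass to the limit), but your treatment of the case $h\not\equiv 0$ has a genuine gap. First, the claim that the truncated linear solution $U_n$ is bounded with $U_n\le n$ is false: the Martin kernel does \emph{not} have unit mass — by Lemma \ref{Eu} the quantity $\delta(x)^{1-s}\int_{\partial\Omega}M_\Omega(x,\theta)\,d\mathcal H(\theta)$ is bounded above and below, so $\int_{\partial\Omega}M_\Omega(x,\theta)\,d\mathcal{H}(\theta)\asymp\delta(x)^{s-1}$, and since $h$ is already bounded, truncating it changes nothing: for any $h\not\equiv0$ the function $U_n$ blows up like $\delta^{s-1}$ and is not admissible in Theorem \ref{NL-CS}, which requires $\overline u\in L^\infty(\Omega)\cap C(\Omega)$, a bounded outer datum and $Eu=0$. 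Second, and more fundamentally, your approximating problems all carry zero singular trace, so nothing in your limit procedure ever produces the term $\int_{\partial\Omega}h\,D_s\phi$ in \eqref{weakdefi-eq}: the monotone limit you construct solves the problem with $Eu=0$, not $Eu=h$. This is exactly the difficulty the paper's proof (Section \ref{proof-SIGN}) is built to overcome: the datum $h$ is approximated by \emph{interior} sources $f_r$ concentrated in $\{\delta<r\}$ (the same device \eqref{h-approx} used to construct $M_\Omega$), the problems with data $(g_n,f_r)$ and zero $E$-trace are solved via Theorem \ref{NL-CS}, the limit $n\uparrow\infty$ is monotone, and the boundary term reappears only in the limit $r\downarrow 0$ as $\int_\Omega f_r\phi\to\int_{\partial\Omega}h\,D_s\phi$; since that second limit is not monotone, a local equiboundedness/equicontinuity argument is needed, and it is there (not in the $n$-limit) that the hypothesis $\Lambda(\delta^{s-1})\delta^s\in L^1(\Omega)$ supplies the dominating function $f(x,u_r)|\phi|\le\Lambda(C\delta^{s-1})\delta^s$. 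Also note that in your $n$-limit the left-hand side $\int_\Omega u_n\Ds\phi$ cannot be handled by monotone convergence alone, since $\Ds\phi$ has no sign; the paper uses Fatou for the subsolution inequality and a subdomain-exhaustion argument for the supersolution inequality.

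Your nonexistence argument also contains a sign error. For $w=u-v$ (with $v$ the Martin extension of $h$) you have $\Ds w=-f(\cdot,u)\le 0$ with $w=g\ge0$ outside and $Ew=0$; Lemma \ref{max-princweak2} applied to $w$ or to $-w$ gives no conclusion, and indeed a nonpositive source pushes $u$ \emph{below} the $s$-harmonic extension of its data, not above: by the representation formula $u=\int_{\partial\Omega}M_\Omega(\cdot,\theta)h\,d\mathcal H-\int_\Omega G_\Omega(\cdot,y)f(y,u(y))\,dy-\int_{\C\Omega}\Ds G_\Omega(\cdot,y)g(y)\,dy$, so the sign of $u-v$ is indeterminate and the pointwise bound $u\ge c\,\delta^{s-1}$ is unjustified as stated. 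The argument can be repaired by contradiction: if $u$ is a weak solution, admissibility in Definition \ref{weakdefiintro} forces $f(\cdot,u)\delta^s\in L^1(\Omega)$; then the Green-potential part of $u$ has vanishing weighted boundary trace (Section \ref{traces}), so it cannot cancel the $\delta^{s-1}$ blow-up of the Martin part, and an averaged lower bound $\delta^{1-s}u\gtrsim 1$ on boundary layers near $\{h>0\}$ follows; combining this with the convexity of $t\mapsto t^{(1+s)/(1-s)}$ (Jensen on dyadic layers) and the identity $(s-1)\tfrac{1+s}{1-s}+s=-1$ yields $\int_\Omega f(\cdot,u)\delta^s=+\infty$, the desired contradiction — which is the computation you had, but it needs this trace input rather than a comparison principle. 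A similar caveat applies to your uniqueness sketch: $U_n\wedge\widetilde u$ for a general weak $L^1$ solution $\widetilde u$ need not be bounded or continuous, so Theorem \ref{NL-CS} does not apply to it; the paper's uniqueness instead runs the maximum principle of Lemma \ref{max-princweak2} directly on the difference of two solutions on the set where one exceeds the other, using the monotonicity of $f$.
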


\begin{rmk}\label{rmkintro}\rm To our knowledge, the only previous results dealing with 
semilinear problems driven by the fractional Laplacian are \cite{felmer-quaas} and \cite{chen-felmer},
as mentioned in the Introduction.
On the one hand the above theorem improves them in the following ways:
\begin{enumerate}
\item we have quite general assumptions on the nonlinearity, while in \cite{felmer-quaas} and \cite{chen-felmer}
only power-like nonlinearities are taken into account,
\item we give the exact value of the threshold growth (i.e. $f(t)\sim t^{(1+s)/(1-s)}$)
distinguishing the existence and the nonexistence of solutions,
\item we specify in what sense solutions blow up at the boundary via the $E$ trace operator,
clarifying why the authors in \cite{felmer-quaas} and \cite{chen-felmer} find infinitely many solutions.
\end{enumerate}
On the other hand we do not find {\it all} boundary blow-up solutions
which are proved to exist in \cite{felmer-quaas} and \cite{chen-felmer}:
these correspond to boundary data $h\equiv+\infty$ or $g\in L^1(\R^N\setminus\Omega)$
not satisfying \eqref{gintro}.
This last question will be studied in the following.
\end{rmk}

\begin{theo}[sublinear source]\label{SUBLINEAR}
Let $f:\Omega\times\R\rightarrow\R$ be a function 
satisfying f.1) and f.2), and
$g:\R^N\setminus\overline{\Omega}\rightarrow[0,+\infty]$ a measurable function
satisfying \eqref{gintro};
let also be $h\in C(\partial\Omega)$, $h\geq 0$.
Suppose also that 
$$
f(x,t)\leq\Lambda(t),\quad\hbox{ for all }x\in\overline{\Omega},\,t\geq 0
$$ 
where $\Lambda\in C^1(0,+\infty)$ is concave and $\Lambda'(t)\xrightarrow[]{t\uparrow\infty}0$.
Then there exists a positive weak solution $u$
to the semilinear problem 
$$
\left\lbrace\begin{aligned}
\Ds u(x) &=f(x,u(x)) & & \hbox{in }\Omega \\
u &=g & & \hbox{in }\R^N\setminus\overline{\Omega}\\
Eu &=h & & \hbox{ on }\partial\Omega.
\end{aligned}\right.
$$
\end{theo}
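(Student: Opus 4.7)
The plan is to combine the sub-/supersolution method with Schauder's fixed point theorem, exploiting the weak $L^1$ theory built in the preceding sections.

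First, a subsolution is essentially for free: by Theorem~\ref{existence-weak2}, let $u_L\in L^1(\Omega)$ denote the unique weak $L^1$-solution of the purely linear problem $\Ds u_L=0$ in $\Omega$, $u_L=g$ in $\R^N\setminus\overline\Omega$, $Eu_L=h$ on $\partial\Omega$. Since $g,h\geq 0$ the representation formula forces $u_L\geq 0$, and then hypothesis \emph{f.2)} gives $\Ds u_L=0\leq f(x,u_L)$, so $u_L$ is a subsolution in the sense of Definition~\ref{weakdefiintro}.

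The crucial ingredient is the supersolution, which I would build exploiting the sublinearity of $\Lambda$. Concavity together with $\Lambda'(\infty)=0$ yields, for every $\epsilon>0$, a constant $K_\epsilon>0$ such that $\Lambda(t)\leq K_\epsilon+\epsilon\,t$ for every $t\geq 0$. Set $\phi_0(x):=\int_\Omega G_\Omega(x,y)\,dy$, which is bounded on $\overline\Omega$ by \eqref{est-green}, and fix $\epsilon\in(0,\|\phi_0\|_\infty^{-1})$. Fubini's theorem and the symmetry of the Green function show that the affine map
\[
F(v)\ :=\ u_L+K_\epsilon\phi_0+\epsilon\int_\Omega G_\Omega(\cdot,y)\,v(y)\,dy
\]
is a strict contraction on $L^1(\Omega)$, so Banach's fixed point theorem produces a unique $\overline u\in L^1(\Omega)$ with $\overline u=F(\overline u)$; by Theorem~\ref{existence-weak2}, $\overline u$ is the weak $L^1$-solution of $\Ds\overline u=K_\epsilon+\epsilon\overline u$ in $\Omega$ with data $g,h$. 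Since $K_\epsilon+\epsilon\overline u\geq\Lambda(\overline u)\geq f(x,\overline u)$, $\overline u$ is a supersolution, and the monotonicity of the contraction iteration (starting from $u_L$) forces $\overline u\geq u_L$.

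To close the argument I would apply Schauder's theorem on the convex, closed subset $X:=\{v\in L^1(\Omega):u_L\leq v\leq\overline u\}$, equipped with the $L^1$-topology, using the operator $T:X\to L^1(\Omega)$ that sends $v$ to the weak $L^1$-solution of $\Ds T(v)=f(\cdot,v)$, $T(v)=g$ in $\R^N\setminus\overline\Omega$, $ET(v)=h$. The uniform domination $0\leq f(x,v)\leq K_\epsilon+\epsilon\overline u$, together with $\overline u\in L^1(\Omega;\delta^s\,dx)$, makes $T$ well defined by Theorem~\ref{existence-weak2} and maps $X$ into itself by comparison with $u_L$ and $\overline u$; continuity in $L^1$ follows from continuity of $f$ and dominated convergence applied to $T(v)-u_L=\int_\Omega G_\Omega(\cdot,y)\,f(y,v(y))\,dy$; compactness is derived from the smoothing of the Green operator by splitting $G_\Omega$ into a singular integrable piece and a continuous remainder, and combining this with the uniform weighted $L^1$-domination of $\{f(\cdot,v)\}_{v\in X}$. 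Schauder's theorem then delivers a fixed point of $T$ in $X$, which is the desired weak solution.

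The step I expect to be the real obstacle is the last one: since $\overline u$ may blow up like $\delta^{s-1}$ on $\partial\Omega$ when $h\not\equiv 0$, the family $\{f(\cdot,v)\}_{v\in X}$ is not uniformly bounded in $L^\infty_{loc}$ near $\partial\Omega$, and the equi-integrability up to $\partial\Omega$ with weight $\delta^s\,dx$ --- inherited from $\overline u\in L^1(\Omega;\delta^s\,dx)$ --- must be combined delicately with the kernel estimate \eqref{est-green} to ensure that the tails of $T(v)$ near $\partial\Omega$ remain uniformly small, which is what ultimately makes $T(X)$ relatively compact in $L^1(\Omega)$.
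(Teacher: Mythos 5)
Your proof is correct, and its skeleton --- subsolution given by the $s$-harmonic function with data $g,h$, supersolution manufactured from the concave majorant via a Banach fixed point, then a sandwich argument --- is essentially the paper's; the differences sit in two steps and are worth recording. For the supersolution, the paper keeps the nonlinearity $\Lambda$ itself and instead raises the exterior datum to $g_m=\max\{g,m\}$, obtaining a contraction on $\{w\ge m\}$ because concavity makes $\Lambda'(m)$ small; you replace $\Lambda$ by the affine majorant $K_\eps+\eps t$ (legitimate: concavity together with $\Lambda'(t)\to 0$ forces $\Lambda'\ge 0$, so the tangent-line bound gives $\Lambda(t)\le \Lambda(t_0)+\eps t$ once $\Lambda'(t_0)\le\eps$) and keep the original data, obtaining a contraction because $\eps\,\sup_y\int_\Omega G_\Omega(x,y)\,dx<1$; both work, and yours has the small advantage that the supersolution carries exactly the boundary data $g,h$ rather than the enlarged datum $g_m$. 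For the conclusion, the paper runs the monotone iteration $\Ds u_k=f(x,u_{k-1})$ from $u_0$, which as written leans on monotonicity of $t\mapsto f(x,t)$ (not among f.1)--f.2)); your Schauder argument on the order interval $[u_L,\overline u]$ avoids that, but it amounts to reproving the paper's Lemma \ref{sub+super}, which is stated precisely for this situation (f.1), f.2), $g$ as in \eqref{gintro}, $h\ge 0$, an ordered $L^1$ sub/supersolution pair) and could be invoked directly to finish. If you insist on the self-contained Schauder route, the compactness step you flag is indeed where the work lies, but it is standard and the boundary blow-up of $\overline u$ is harmless precisely because $\overline u\in L^1(\Omega)$: on $X$ one has $0\le f(\cdot,v)\le K_\eps+\eps\overline u\in L^1(\Omega)$ uniformly, and since by \eqref{est-green} one gets $\sup_y\int_{\{G_\Omega(\cdot,y)>M\}}G_\Omega(x,y)\,dx\to 0$ as $M\to\infty$, truncating the kernel approximates $T$ uniformly on $X$ by operators with bounded kernels, whose images are relatively compact in $L^1(\Omega)$.
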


\begin{theo}[superlinear source]\label{SUPERLINEAR}
Let $f:\Omega\times\R\rightarrow\R$ be a function 
satisfying f.1) and f.2).
For $0<\beta<1-s$, consider problems
\begin{align*}
(\star)\qquad\left\lbrace\begin{aligned}
\Ds u(x) &=\lambda f(x,u(x)) & & \hbox{in }\Omega \\
u(x) &={\delta(x)}^{-\beta} & & \hbox{in }\R^N\setminus\overline{\Omega}\\
Eu &=0 & & \hbox{on }\partial\Omega
\end{aligned}\right.\\
(\star\star)\qquad\left\lbrace\begin{aligned}
\Ds u(x) &=\lambda f(x,u(x)) & & \hbox{in }\Omega \\
u(x) &=0 & & \hbox{in }\R^N\setminus\overline{\Omega}\\
Eu &=1 & & \hbox{on }\partial\Omega.
\end{aligned}\right.
\end{align*}

\underline{Existence}. If
there exist $a_1,\,a_2,\,T>0$ and $p\geq 1$, such that
$$
f(x,t)\leq a_1+a_2\,{t}^p,\qquad x\in\Omega,\ t>T
$$
and $p\beta<1+s$, then there exists $L_1>0$ depending on $\beta$ and $p$ such that
problem $(\star)$ admits a weak solution $u\in L^1(\Omega)$ for any $\lambda\in[0,L_1]$.
Similarly, if $p(1-s)<1+s$, then there exists $L_2>0$ depending on $p$ such that
problem $(\star\star)$ admits a weak solution $u\in L^1(\Omega)$ for any $\lambda\in[0,L_2]$.

\underline{Nonexistence}. If
there exist $b,\,T>0$ and $q>0$, such that
$$
b\,{t}^q\leq f(x,t),\qquad x\in\Omega,\ t>T
$$
and $q\beta\geq 1+s$, then problem $(\star)$ admits a weak solution only for $\lambda=0$.
Similarly, if $q(1-s)\geq 1+s$, then problem $(\star\star)$ admits a weak solution only for $\lambda=0$.
\end{theo}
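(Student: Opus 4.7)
I would split $u=u_0+v$, where $u_0$ is the weak $L^1$ solution of the linear problem with $\lambda=0$ furnished by Theorem~\ref{existence-weak2}. The proposition preceding this theorem, together with the Martin-kernel representation, gives $u_0\asymp\delta^{-\beta}$ in case $(\star)$ and $u_0\asymp\delta^{s-1}$ in case $(\star\star)$. Since $f\geq 0$, $u_0$ is automatically a subsolution. The correction $v$ must satisfy, with vanishing exterior datum and zero $E$-trace, the Green fixed-point equation
\[
v(x)\ =\ T(v)(x)\ :=\ \lambda\int_\Omega G_\Omega(x,y)\,f(y,u_0(y)+v(y))\;dy.
\]
The growth $f(x,t)\leq a_1+a_2\,t^p$ combined with the Green-kernel estimates \eqref{est-green} and the scaling \eqref{udelta}
\[
\int_\Omega G_\Omega(x,y)\,\delta(y)^{-\alpha}\;dy\ \asymp\ \delta(x)^{2s-\alpha}\qquad(s<\alpha<1+s),
\]
shows that the hypothesis $p\beta<1+s$ (resp.\ $p(1-s)<1+s$) is precisely the threshold making $T$ well-defined with controlled boundary decay. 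I would then apply Schauder's fixed point theorem on a closed convex subset of $L^1(\Omega)$ of the form $\{v:0\leq v\leq K\delta^{-\gamma}\}$, with $\gamma$ chosen so that $T$ preserves the weighted scaling; for $\lambda\in[0,L]$ small enough, $T$ maps this set into itself. Continuity follows from dominated convergence and compactness from the smoothing properties of the Green operator, producing a fixed point $v$ and hence the weak solution $u=u_0+v$.

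\textbf{Nonexistence.} I would argue by contradiction: assume a weak solution $u\geq 0$ exists for some $\lambda>0$. Since $f\geq 0$, comparison with the linear problem gives $u\geq u_0$ a.e.\ in $\Omega$, so $u(x)\gtrsim\delta(x)^{-\beta}$ (resp.\ $\delta(x)^{s-1}$) near $\partial\Omega$. I would then test \eqref{weakdefi-eq} against a $\phi\in\T(\Omega)$ with $\Ds\phi=\psi$ for some positive $\psi\in C^\infty_c(\Omega)$; by the boundary regularity for $\Ds$ one has $\phi\asymp\delta^s$ in $\Omega$. In case $(\star)$ the outer term $\int_{\C\Omega}\Ds\phi\cdot\delta^{-\beta}$ is finite by the decay of $\Ds\phi$ outside $\Omega$ together with \eqref{gintro}; in case $(\star\star)$ the boundary term $\int_{\partial\Omega}D_s\phi\,d\mathcal{H}$ is continuous hence finite. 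On the other hand, the superlinear lower bound $f(x,u)\geq b\,u^q$ for $u>T$ combined with $u\geq u_0$ forces
\[
\int_\Omega f(x,u)\,\phi\;dx\ \geq\ c\int_{\{\delta<\eta\}}\delta(x)^{s-q\beta}\;dx,
\]
which diverges precisely when $q\beta\geq 1+s$. Since $u\in L^1(\Omega)$ the left-hand side $\int u\psi$ is finite, a contradiction. The same computation, with $s-q(1-s)$ in place of $s-q\beta$, rules out solutions of $(\star\star)$.

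\textbf{Main obstacle.} The delicate part is existence: the Picard/Schauder scheme must be set up in a function space whose weight mirrors the Green-kernel scaling exactly, because the conditions $p\beta<1+s$ and $p(1-s)<1+s$ sit right at the integrability threshold of $G_\Omega(x,\cdot)$ against the singular layer $u_0^p$. Beyond these thresholds, neither $T(0)$ is integrable nor can $T$ preserve any reasonable cone of competitors. The nonexistence half, by contrast, is a clean testing argument once the admissible $\phi\asymp\delta^s$ is chosen and the mismatch with the forced blow-up $f(x,u)\gtrsim\delta^{-q\beta}$ is exploited.
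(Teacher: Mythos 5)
Your proposal is correct and follows the paper's strategy very closely: the same decomposition $u=u_0+v$ with $u_0$ the $s$-harmonic function carrying the boundary data ($u_0\asymp\delta^{-\beta}$, resp. $\delta^{s-1}$), the same kernel estimates \eqref{est-green} and \eqref{udelta} to track the boundary scaling, the same smallness of $\lambda$, and, for nonexistence, the same divergence of $\int f(x,u)\,\delta^s$ forced by $u\geq u_0$ (the paper states it as $\int_\Omega G_\Omega(x,y)f(y,u(y))\,dy=+\infty$, you state it by testing against $\phi\asymp\delta^s$; these are equivalent). The only real difference is how existence is closed: the paper builds an explicit supersolution $\overline{u}=u_0+\zeta$ with $\Ds\zeta=\delta^{-\gamma}$, $\zeta=0$ outside, and then invokes its sub-/supersolution lemma (Lemma \ref{sub+super}), whereas you run Schauder directly on the weighted cone $\{0\le v\le K\delta^{-\gamma}\}$. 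This buys a more self-contained fixed-point argument, but note two points where the paper does work you compress: (i) the admissible weight is not found by a single scaling condition but by the paper's two-case choice, $\gamma=p\beta$ when $p\beta\le\beta+2s$ and $\gamma=\max\{2sp/(p-1),\beta+2s+\eps\}$ otherwise, with the constraint $\gamma<1+s$ being exactly where $p\beta<1+s$ (resp. $p(1-s)<1+s$) enters — so your phrase ``$\gamma$ chosen so that $T$ preserves the weighted scaling'' hides a genuine case analysis; and (ii) continuity and compactness of $T$ on the $L^1$-cone, which you assert via dominated convergence and Green-operator smoothing, is in the paper delegated to the machinery behind Lemma \ref{sub+super} (the approximation scheme of Theorem \ref{-SIGN}), so you would need to spell out an equicontinuity/$L^p$-bound argument as in Theorem \ref{reg} to make Schauder rigorous. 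Neither point is an error; both are fillable with the estimates you already cite.
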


We finally note that, with the definition of weak solution we are dealing with, 
the nonexistence of a weak solution implies \it complete blow-up\rm, meaning that:

\begin{defi}[complete blow-up]\index{complete blow-up}\label{complete-blowup-defi} 
If for any nondecreasing sequence ${\{f_k\}}_{k\in\N}$
of bounded functions such that $f_k\uparrow f$ pointwisely as $k\uparrow+\infty$,
and any sequence ${\{u_k\}}_{k\in\N}$ of positive solutions to 
$$
\left\lbrace\begin{aligned}
\Ds u_k &=f_k(x,u_k) & & \hbox{in }\Omega \\
u_k &=g & & \hbox{in }\R^N\setminus\overline{\Omega} \\
Eu_k &=h & &\hbox{on }\partial\Omega, 
\end{aligned}\right.
$$
there holds 
$$
\lim_{k\uparrow+\infty}\frac{u_k(x)}{{\delta(x)}^s}\ =\ +\infty,\qquad
\hbox{ uniformly in }\,x\in\Omega,
$$
then we say there is complete blow-up.
\end{defi}

\begin{theo}\label{COMPLETE-BLOWUP} Let $f:\Omega\times\R\rightarrow\R$ be a function 
satisfying f.1) and f.2)
and $g:\R^N\setminus\overline{\Omega}\rightarrow[0,+\infty]$ a measurable function
satisfying \eqref{gintro}; let also be 
$h\in C(\partial\Omega)$, $h\geq 0$.
If there is no weak solution to 
$$
\left\lbrace\begin{aligned}
\Ds u &=f(x,u) & & \hbox{in }\Omega \\
u &=g & & \hbox{in }\R^N\setminus\overline{\Omega} \\
Eu &=h & & \hbox{on }\partial\Omega, 
\end{aligned}\right.
$$
then there is complete blow-up.
\end{theo}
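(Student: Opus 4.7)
The plan is to argue by contradiction: suppose the limit problem admits no weak solution, yet complete blow-up fails. Then there exist a nondecreasing sequence $f_k\uparrow f$ of bounded nonlinearities, positive weak solutions $u_k$ of the $k$-th truncated problem (which exist by Theorem \ref{NL-CS}), a constant $M>0$, and sequences $k_j\uparrow\infty$, $x_j\in\Omega$, with $u_{k_j}(x_j)\le M\,\delta(x_j)^s$. My goal is to manufacture, from this non-blowing-up data, a weak solution $u$ of the limit problem in the sense of Definition \ref{weakdefiintro}, directly contradicting the standing hypothesis. First I would replace each $u_k$ by the minimal positive solution provided by Theorem \ref{NL-CS}. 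Since $f_k\le f_{k+1}$, the minimal solution $u_k$ is itself a weak subsolution of the $(k+1)$-st problem, so a comparison argument built into the sub-/supersolution machinery of Theorem \ref{NL-CS} gives $u_k\le u_{k+1}$; I then set $u:=\lim_k u_k\in[0,+\infty]$.

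Next I would exploit the representation formula \eqref{representation}, which for each $k$ reads
\[
u_k(x)=\int_\Omega G_\Omega(x,y)\,f_k(y,u_k(y))\,dy+U_0(x),
\]
where $U_0$ is the solution of the linear Dirichlet problem with data $(0,g,h)$, and is $k$-independent and nonnegative. Evaluating at $x_j$ and discarding $U_0\ge 0$ yields
\[
\int_\Omega G_\Omega(x_j,y)\,f_{k_j}(y,u_{k_j}(y))\,dy\ \le\ M\,\delta(x_j)^s.
\]
The two-sided Green function estimate \eqref{est-green} gives $G_\Omega(x_j,y)/\delta(x_j)^s\gtrsim \delta(y)^s$ outside a shrinking neighborhood of $x_j$ (up to extraction so that $x_j\to\theta\in\overline{\Omega}$, using the Martin-kernel convergence $G_\Omega(x_j,\cdot)/\delta(x_j)^s\to M_\Omega(\cdot,\theta)$ when $\theta\in\partial\Omega$). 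Dividing by $\delta(x_j)^s$ and passing to the limit therefore produces the uniform a priori bound
\[
\int_\Omega\delta(y)^s\,f_{k_j}(y,u_{k_j}(y))\,dy\ \le\ C\,M.
\]

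Finally I would pass to the limit. Monotonicity of $u_k$ and of $f_k$ combined with the continuity of $f$ forces $f_{k_j}(\cdot,u_{k_j})\to f(\cdot,u)$ pointwise on $\{u<\infty\}$, and Fatou yields $\delta^s f(\cdot,u)\in L^1(\Omega)$, hence $u<\infty$ a.e. Monotone convergence in the representation formula above gives
\[
u(x)=\int_\Omega G_\Omega(x,y)\,f(y,u(y))\,dy+U_0(x),
\]
and via the integration by parts formula of Proposition \ref{intparts-prop} this is equivalent to testing against any $\phi\in\T(\Omega)$ and obtaining the weak formulation of Definition \ref{weakdefiintro} with right-hand side $f(\cdot,u)$ and boundary data $(g,h)$. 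This produces the forbidden weak solution and closes the contradiction. The hardest step, I expect, is securing the monotonicity $u_k\le u_{k+1}$ without monotonicity of $f$ in its second variable: this really requires working with the minimal solutions from Theorem \ref{NL-CS} together with a comparison principle between minimal solutions of problems with ordered nonlinearities, a point which interacts delicately with the sub-/supersolution machinery. A secondary subtlety is the case $x_j\to\partial\Omega$, where one must replace the naive Green function bound by its Martin-boundary limit to preserve the integral estimate.
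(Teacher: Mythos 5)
Your argument is essentially the paper's own proof run contrapositively: the paper first shows, by exactly the contradiction you run at the end (monotone limit of the minimal solutions becomes a weak solution once $\int_\Omega f_k(x,u_k)\,\delta^s$ stays bounded), that nonexistence forces $\int_\Omega f_k(x,u_k)\,\delta(x)^s\,dx\to+\infty$, and then converts this into uniform blow-up of $u_k/\delta^s$ via the lower bound $G_\Omega(x,y)\ge c\,\delta(x)^s\delta(y)^s$ coming from \eqref{est-green}; you start instead from the negation of complete blow-up and use the same Green bound to keep $\int_\Omega \delta^s f_{k_j}(\cdot,u_{k_j})$ bounded, then build the forbidden weak solution. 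Two small adjustments bring your sketch in line with the paper. First, the Martin-kernel/extraction detour is unnecessary: \eqref{est-green} gives $G_\Omega(x_j,y)\ge c\,\delta(x_j)^s\,\delta(y)^s$ for \emph{every} $y\in\Omega$ (the kernel is only larger near the diagonal), so the bound $\int_\Omega\delta(y)^s f_{k_j}(y,u_{k_j}(y))\,dy\le CM$ holds for each fixed $j$ with no limit in $j$. Second, for the monotonicity you flag as the hardest step, run the comparison in the opposite direction, as the paper does: since $f_{k+1}\ge f_k$, the minimal solution $\underline u_{k+1}$ satisfies $\Ds\underline u_{k+1}=f_{k+1}(x,\underline u_{k+1})\ge f_k(x,\underline u_{k+1})$, i.e.\ it is a supersolution of the $k$-th problem with the same boundary data, and the minimality of $\underline u_k$ (it lies below every supersolution dominating the fixed subsolution, Theorem \ref{NL-CS}) gives $\underline u_k\le\underline u_{k+1}$ directly, without any comparison principle between subsolutions and minimal solutions. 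With these changes your limit passage matches the paper's (including its equally informal monotone/Fatou convergence of $f_{k_j}(\cdot,u_{k_j})$ to $f(\cdot,u)$), and your treatment of nonzero $(g,h)$ through the nonnegative $s$-harmonic part $U_0$ is a cosmetic variant of the paper's reduction, which subtracts the $s$-harmonic function $u_0$ and works with $F(x,t)=f(x,u_0(x)+t)$.
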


\subsection{The interest in larger solutions}

So, we have achieved the existence of boundary blow-up solutions to nonlinear problems
with negative right-hand side,
provided by Theorem \ref{-SIGN}; anyhow this singular behaviour is driven by 
a linear phenomenon rather than a compensation between the nonlinearity and the explosion
(as in the classical case), indeed no growth condition on $f$ arises except when $h\not\equiv 0$,
where one essentially needs
\[
\int_\Omega f(x,\delta(x)^{s-1})\,\delta(x)^s\;dx\ <\ \infty
\]
in order to make sense of the weak $L^1$ definition.

For this reason in what follows we address the question of the existence of solutions to problems of the form
\[
\left\lbrace\begin{aligned}
\Ds u &= -f(u) & \hbox{in }\Omega \\
u &= g & \hbox{in } \R^N\setminus\Omega \\
\delta^{1-s}u &= +\infty & \hbox{on }\partial\Omega
\end{aligned}\right.
\quad g\geq 0,\ \ \int_{\R^N\setminus\Omega}g\,\delta^{-s}\min\{1,\delta^{-N-s}\}=+\infty.
\]
providing sufficient conditions for the their solvability.
In doing so, we start answering the question left opened 
in Remark \ref{rmkintro}.
The results listed in Theorems \ref{main1} and \ref{main2} below
can be applied for a particular case of the {\it fractional singular Yamabe problem},
see e.g. Gonz\'alez, Mazzeo and Sire \cite{mar-sire}.

\section{towards a fractional keller-osserman condition}\label{fracKO-intro}

In Chapter \ref{fracKO}
we will work in the following set of assumptions:
\begin{itemize}
\item $\Omega$ is a bounded open domain of class $C^2$,
\item $f:\R\to\R$ satisfies
\begin{equation}\label{f1}
f \hbox{ is an increasing $C^1$ function with $f(0)=0$}
\end{equation}
\item $F$ is the antiderivative of $f$ vanishing in 0:
\begin{equation}\label{F}
F(t)\ :=\ \int_0^tf(\tau)\;d\tau,
\end{equation}
\item there exist $0<m<M$, such that 
\begin{equation}\label{tech}
1+m\leq \frac{tf'(t)}{f(t)}\leq 1+M,
\end{equation}
and thus $f$ satisfies \eqref{ko-intro} because, integrating the lower inequality,
one gets
\[
f(t)\geq f(1)t^{1+m}\qquad\hbox{and}\qquad
F(t)\geq\frac{f(1)}{2+m}\,t^{2+m};
\]
we can therefore define the function
\begin{equation}\label{phi}
\phi(u)\ :=\ \int_u^{+\infty}\frac{dt}{\sqrt{F(t)}},
\end{equation}
\item $f$ satisfies 
\begin{equation}\label{L1}
\int_1^{+\infty}\phi(t)^{1/s}\;dt\ <\ +\infty.
\end{equation}
\end{itemize}

In what follows we will use the expression $g\lessgtr h$ where $g,h:(0,+\infty)\to(0,+\infty)$ to shorten
\[
\hbox{there exists }C>0 \hbox{ such that } \frac{h(t)}{C}\leq g(t)\leq Ch(t),\hbox{ for any }t>0.
\]

\begin{rmk}\label{phi-rmk}\rm The function $\phi:(0,+\infty)\rightarrow(0,+\infty)$ is monotone decreasing and
\[
\lim_{t\downarrow0}\phi(t)=+\infty,\qquad\lim_{t\uparrow+\infty}\phi(t)=0.
\]
Moreover
\[
\phi'(u)=-\frac{1}{\sqrt{F(u)}}.
\]
is of the same order as $-(u\,f(u))^{-1/2}$ since for $t>0$ and some $\tau\in(0,t)$
\[
\frac{F(t)}{t\,f(t)}=\frac{f(\tau)}{f(\tau)+\tau\,f'(\tau)}\ 
\left\lbrace\begin{aligned}
& \geq\frac{1}{2+M} \\
& \leq\frac{1}{2+m}
\end{aligned}\right.\qquad \hbox{ by the Cauchy Theorem.}
\]
This entails that the order of $\phi(u)$ is the same as $(u/f(u))^{1/2}$ indeed
for $u>0$ and some $t\in(u,+\infty)$
\[
\frac{\sqrt{\frac{u}{f(u)}}}{\phi(u)}=\frac{\frac{1}{2}\sqrt{\frac{f(t)}{t}}\cdot\frac{f(t)-tf'(t)}{f(t)^2}}{\phi'(t)}
\lessgtr \frac{f(t)-tf'(t)}{-f(t)}= \frac{tf'(t)}{f(t)}-1
\]
which belongs to $(m,M)$ by hypothesis \eqref{tech}.
Note that hypothesis \eqref{L1} is therefore equivalent to
\begin{equation}\label{L1-bis}
\int_1^{+\infty}\left(\frac{t}{f(t)}\right)^{\frac1{2s}}\;dt\ <\ +\infty.
\end{equation}
\end{rmk}

\begin{rmk}\rm
In \cite{keller} and \cite{osserman} condition \eqref{ko-intro}
is proven to be necessary and sufficient for the existence of a solution of
\[
\left\lbrace\begin{aligned}
-\lapl u = -f(u) \hbox{ in }\Omega, \\
\lim_{x\to\partial\Omega}u(x) =  +\infty. 
\end{aligned}\right.
\]
Note that if we set $s=1$ in \eqref{L1} then
\[
+\infty\ >\ \int_{u}^{+\infty}\phi(t)\;dt\lessgtr\int_{u}^{+\infty}\sqrt\frac{t}{f(t)}\;dt
\lessgtr\int_{u}^{+\infty}\frac{t}{\sqrt{F(t)}}\;dt 
\]
we get the condition to force the classical solution $u$ to be $L^1(\Omega)$.
Indeed in \cite[Theorem 1.6]{KO-dupaigne} it is proved that a solution $u$ satisfies
\begin{equation}\label{ddgr}
\lim_{x\rightarrow\partial\Omega}\frac{\phi(u(x))}{\delta(x)}=1
\end{equation}
which yields that $u\in L^1(\Omega)$ if and only if $\phi^{-1}$,
the inverse function of $\phi$ (recall it is monotone decreasing),
is integrable in a neighbourhood of $0$, 
i.e. with a change of integration variable
\[
+\infty>\int_0^\eta\phi^{-1}(r)\;dr
=\int_{\phi^{-1}(\eta)}^{+\infty}t|\phi'(t)|\;dt
=\int_{t_0}^{+\infty}\frac{t}{\sqrt{F(t)}}\;dt.
\]
\end{rmk}

Our results can be summarized as follows.

\begin{theo}\label{main1} Suppose that the nonlinear term $f$ satisfies hypotheses \eqref{f1},
\eqref{tech}, \eqref{L1} and
\begin{equation}\label{E}
\int_{t_0}^{+\infty}f(t)t^{-2/(1-s)}\;dt\ <\ +\infty.
\end{equation}
Then problem
\begin{equation}\label{problema}
\left\lbrace\begin{aligned}
\Ds u &= -f(u) & \hbox{ in }\Omega \\
u &= 0 & \hbox{ in }\R^N\setminus\Omega \\
\delta^{1-s}u &= +\infty &\hbox{ on }\partial\Omega
\end{aligned}\right.
\end{equation}
admits a solution $u\in L^1(\Omega)$. 
Moreover there exists $c>0$ for which
\begin{equation}\label{bbehav}
\phi(u(x))\ \geq\ c\,\delta(x)^s\qquad\hbox{near }\partial\Omega.
\end{equation}
\end{theo}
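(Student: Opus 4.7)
The plan is to build the solution as the monotone pointwise limit of a family $\{u_n\}$ of solutions to approximating problems with bounded boundary trace $Eu_n=n$, controlled uniformly from above by an explicit supersolution that captures the expected boundary profile \eqref{bbehav}. Concretely, I would set
\[
\overline{U}(x)\ :=\ \phi^{-1}\bigl(c\,\delta(x)^s\bigr),\qquad x\in\Omega,
\]
(extended to $0$ outside $\overline\Omega$) for a constant $c>0$ to be fixed small. Since $\phi$ is strictly decreasing from $+\infty$ to $0$, $\overline U$ blows up on $\partial\Omega$ and is locally bounded inside. The key claim is that $\overline U$ is a pointwise supersolution of $\Ds u=-f(u)$, namely $\Ds\overline U(x)+f(\overline U(x))\ge 0$ for every $x\in\Omega$. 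Because $\Ds$ has no chain rule, this must be extracted directly from the singular integral. The natural route is to split $\R^N$ into a close region $B_{\rho\delta(x)}(x)\subset\Omega$ (with $\rho<1/2$) and its complement, Taylor-expand $\overline U$ to second order in the close region using the $C^2$ regularity of $\delta$ at any interior point (from $\partial\Omega\in C^{1,1}$), and exploit the identities
\[
(\phi^{-1})'(t)=-\sqrt{F(\phi^{-1}(t))},\qquad (\phi^{-1})''(t)=\tfrac12\,f(\phi^{-1}(t)).
\]
The principal close-range contribution then equals $\tfrac12\,c^2\,f(\overline U)$ times a positive factor, while \eqref{tech} (which makes $f$ and $F$ polynomially comparable) is used to absorb both the first-order term and the non-local tail into $-f(\overline U)$ provided $c$ is chosen small, yielding the desired inequality.

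For each $n\in\N$, the problem
\[
\left\lbrace\begin{aligned}
\Ds u_n&=-f(u_n) & &\hbox{in }\Omega,\\
u_n&=0 & &\hbox{in }\R^N\setminus\overline\Omega,\\
Eu_n&=n & &\hbox{on }\partial\Omega,
\end{aligned}\right.
\]
is then attacked by the sub/supersolution argument of Theorem \ref{NL-CS}, taking as subsolution the $s$-harmonic function $v_n$ with $Ev_n=n$ (which is automatically a subsolution because $f\ge 0$) and as supersolution the truncation $\min\{\overline U,k\}$, letting $k\uparrow+\infty$ and using the monotonicity of $f$ for the comparison. This yields a weak solution $u_n$ with $v_n\le u_n\le\overline U$ in $\Omega$. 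The comparison principle for weak $L^1$ solutions, available because $f$ is increasing, shows $u_n\le u_{n+1}$, so $u:=\lim_n u_n$ exists pointwise and $u\le\overline U$.

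Hypothesis \eqref{E}, translated through the change of variable $t=\phi^{-1}(c\,\delta^s)$ and the equivalence $\phi(t)\asymp(t/f(t))^{1/2}$ from Remark \ref{phi-rmk}, implies $\overline U\in L^1(\Omega)$ and integrability of $\delta^s\,f(\overline U)$; dominated convergence then lets one pass $u_n$ to the limit in the weak formulation \eqref{weakdefi-eq} of Definition \ref{weakdefiintro}, so that $u\in L^1(\Omega)$ is a weak solution of \eqref{problema} with $g=0$. The upper bound $u\le\overline U=\phi^{-1}(c\,\delta^s)$ is exactly $\phi(u)\ge c\,\delta^s$, which is \eqref{bbehav}. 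For the boundary explosion, the lower bound $u\ge u_n\ge v_n$ combined with the fact that $Ev_n=n$ forces $\delta(x)^{1-s}v_n(x)$ to tend to a positive multiple of $n$ as $x\to\partial\Omega$ (up to a factor depending only on $\Omega$, coming from the normalisation of the Martin kernel $M_\Omega$) yields $\delta^{1-s}u=+\infty$ on $\partial\Omega$ by letting $n\to\infty$.

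The technically decisive step is the pointwise supersolution estimate for $\overline U$: without a chain rule for $\Ds$, the cancellation that makes $\phi^{-1}(c\delta)$ work classically at the second-order level must be recovered via a careful local/non-local decomposition of the singular integral, with the tail controlled by \eqref{E} and the local leading contribution tuned by the smallness of $c$. The remaining ingredients — construction of the approximating $u_n$, monotone passage to the limit, and identification of the boundary trace — are expected to be a largely routine adaptation of the machinery already developed in Section \ref{liner-sec} and Chapter \ref{nonlin-sec}.
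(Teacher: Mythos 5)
Your overall architecture (explicit supersolution $\phi^{-1}(c\,\delta^s)$ controlling from above, approximating problems with trace $Eu_n=n$, monotone passage to the limit) is the same as the paper's, but there is a genuine error in the step that produces the singular boundary condition. You take as subsolution the $s$-harmonic function $v_n$ with $Ev_n=n$, ``which is automatically a subsolution because $f\ge0$'', and you then conclude $\delta^{1-s}u=+\infty$ from $u\ge u_n\ge v_n$. The inequality is backwards: for the absorption problem $\Ds u=-f(u)$ with $f\ge0$, the $s$-harmonic function satisfies $\Ds v_n=0\ge -f(v_n)$, so it is a \emph{super}solution, and the maximum principle gives $u_n\le v_n$, not $u_n\ge v_n$. (In Theorem \ref{-SIGN} the subsolution is $\underline u\equiv0$ and the linear solution is the supersolution.) With the comparison reversed, your argument for the boundary blow-up collapses. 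The way this is actually recovered is to show that each $u_k$ of \eqref{kproblem}, despite the absorption term, still attains its trace: writing $u_k=k\int_{\partial\Omega}M_\Omega(\cdot,\theta)\,d\mathcal H(\theta)-\int_\Omega G_\Omega(\cdot,y)f(u_k(y))\,dy$, hypothesis \eqref{E} guarantees $f(u_k)\,\delta^s\in L^1(\Omega)$ and that the Green potential is of lower order than $\delta^{s-1}$ at the boundary, so $\liminf_{x\to\partial\Omega}\delta^{1-s}u(x)\ge\liminf\delta^{1-s}u_k(x)\ge\lambda k$ for every $k$, with $\lambda$ depending only on $\Omega$; this is the paper's Step 4, and it cannot be replaced by a comparison from below with $v_n$.

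Two further, more minor, corrections. First, the integrability of the supersolution is a consequence of \eqref{L1}, not of \eqref{E} (Lemma \ref{U-L1}); \eqref{E} is used for two different things: it makes the trace-$k$ approximating problems solvable (it implies $\int_0^{\delta_0}f(\delta^{s-1})\delta^s\,d\delta<\infty$), and, via Lemma \ref{EU}, it gives $\delta^{1-s}\overline U\to+\infty$, which is precisely what lets the supersolution dominate each $u_k$ near $\partial\Omega$ before invoking an interior maximum-principle argument. Second, your pointwise supersolution inequality for $\phi^{-1}(c\,\delta^s)$ can only be expected in a boundary strip $\{\delta<\delta_0\}$ (this is the content of Proposition \ref{impo}); away from the boundary the choice of a small $c$ does not by itself restore the inequality, and one needs a correction as in Lemma \ref{mod-supersol}, i.e. $\overline u=\mu\,\psi(\delta^s)+\lambda\xi$ with $\Ds\xi=1$ as in \eqref{xi}, where the constant in front is tuned through \eqref{mono-f} rather than by shrinking the argument of $\phi^{-1}$.
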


\begin{rmk}\rm The condition $u\in L^1(\Omega)$ is necessary to make sense
of the fractional Laplacian, as any different possible definition points out 
(see the Introduction). Also,
compare the boundary behaviour in this setting expressed by equation \eqref{bbehav},
with the classical one in equation \eqref{ddgr}.
\end{rmk}

\begin{theo}\label{main2} Suppose that the nonlinear term $f$ satisfies hypotheses \eqref{f1},
\eqref{tech}, \eqref{L1} and 
\begin{eqnarray}
& g:\R^N\setminus\Omega\longrightarrow[0,+\infty),\qquad g\in L^1(\R^N\setminus\Omega) & \nonumber \\
& \phi(g(x))\ \geq\ \delta(x)^s,\qquad\hbox{near }\partial\Omega. & \label{g2}
\end{eqnarray}
Then problem
\begin{equation}\label{gproblem}
\left\lbrace\begin{aligned}
\Ds u &= -f(u) & \hbox{ in }\Omega \\
u &= g & \hbox{ in }\R^N\setminus\Omega 
\end{aligned}\right.
\end{equation}
admits a solution $u\in L^1(\Omega)$. 
Moreover there exists $c>0$ for which
\[
\phi(u(x))\ \geq\ c\,\delta(x)^s\qquad\hbox{near }\partial\Omega.
\]
\end{theo}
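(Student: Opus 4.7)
The plan is to approximate the (possibly unbounded) exterior datum $g$ by its truncations $g_n:=\min\{g,n\}$ and then pass to the monotone limit. Each truncated problem has bounded exterior datum and falls within the scope of Theorem~\ref{NL-CS}, producing a unique weak solution $u_n$ with $u_n=g_n$ outside; monotonicity of $f$ together with comparison in the exterior datum ensures $u_n\le u_{n+1}$.

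To obtain a uniform bound on $u_n$, let $V_n$ be the $s$-harmonic function with exterior data $g_n$ (well defined via Theorem~\ref{existence-weak2}, since $g_n$ is bounded and in $L^1(\R^N\setminus\Omega)$). Because $\Ds(V_n-u_n)=f(u_n)\ge 0$ in $\Omega$ and $V_n-u_n\equiv 0$ outside, the maximum principle gives $u_n\le V_n$ in $\Omega$. The upper-bound part of \eqref{uleqg} then yields $V_n(x)\le \bar c\,\bar g_n(\delta(x))\le \bar c\,\bar g(\delta(x))\le \bar c\,\phi^{-1}(\delta(x)^s)$ for $\delta(x)$ small, the last inequality coming from \eqref{g2}; far from the boundary the $V_n$ are uniformly bounded thanks to $g\in L^1(\R^N\setminus\Omega)$. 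This produces $\Phi\in L^1(\Omega)$, independent of $n$, with $\Phi(x)\lesssim \phi^{-1}(\delta(x)^s)$ near $\partial\Omega$, such that $u_n\le \Phi$ for every $n$. The $L^1$ integrability of $\Phi$ reduces, through the substitution $v=\phi^{-1}(\delta^s)$ together with the equivalence $\phi(v)\asymp (v/f(v))^{1/2}$ from Remark~\ref{phi-rmk}, to the convergence of $\int \phi(v)^{1/s}\,dv$, which is precisely \eqref{L1}.

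Setting $u:=\sup_n u_n$, monotone convergence gives $u\in L^1(\Omega)$; the weak formulation of \eqref{gproblem} is inherited from that of the $u_n$ via dominated convergence, using $f(u_n)\le f(\Phi)$ locally bounded in $\Omega$ and $g_n\to g$ in $L^1(\R^N\setminus\Omega)$. The boundary estimate follows from $u\le \bar c\,\phi^{-1}(\delta^s)$ combined with the regular variation of $\phi$ implied by \eqref{tech}: integrating $(1+m)/t\le f'(t)/f(t)\le (1+M)/t$ yields $C^{1+m}\le f(Ct)/f(t)\le C^{1+M}$ for $C>0$, whence $\phi(Ct)/\phi(t)$ stays bounded away from $0$ via Remark~\ref{phi-rmk}. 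Applied at $t=\phi^{-1}(\delta^s)$ this gives $\phi(u)\ge \phi(\bar c\,\phi^{-1}(\delta^s))\ge c\,\delta^s$ near $\partial\Omega$. The main subtlety is the uniform-in-$n$ bound $u_n\le \Phi$: this is where the three hypotheses \eqref{tech}, \eqref{L1} and \eqref{g2} combine, the growth control \eqref{g2} making the linear estimate \eqref{uleqg} compatible with the Keller--Osserman integrability provided by \eqref{L1}.
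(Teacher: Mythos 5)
Your overall architecture (truncate $g$ to $g_n=\min\{g,n\}$, solve the truncated problems, pass to a monotone limit, then read the boundary estimate off an upper barrier of size $\psi(\delta^s)=\phi^{-1}(\delta^s)$) matches the paper, but the decisive step — the uniform-in-$n$ bound $u_n\le\Phi$ — is obtained by a comparison that fails exactly in the regime the theorem is designed for. You dominate $u_n$ by the $s$-harmonic function $V_n$ with the same exterior datum and then invoke \eqref{uleqg}. However, \eqref{uleqg} is proved (and stated) only for data satisfying \eqref{gintro}, and Theorem \ref{main2} makes no such assumption: its whole point (see the subsection on "larger solutions" and Remark \ref{rmkintro}) is to admit $g$ with $\int_{\C\Omega}g\,\delta^{-s}\min\{1,\delta^{-N-s}\}=+\infty$, e.g. $f(t)=t^p$ with $1+2s<p<\frac{1+s}{1-s}$ and $g\asymp\psi(\delta^s)\asymp\delta^{-2s/(p-1)}$, for which $\int_{\{\delta<\eta\}}g\,\delta^{-s}=+\infty$. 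In that case, by the lower bound in \eqref{est-poisson}, for every fixed $x_0\in\Omega$ one has $V_n(x_0)\ge c(x_0)\int_{\{\delta<\eta\}}g_n\,\delta^{-s}\uparrow+\infty$, so the chain $u_n\le V_n\le\bar c\,\overline{g_n}(\delta)\le\bar c\,\overline g(\delta)$ with an $n$-independent constant is simply false (the right-hand side is a fixed finite number at $x_0$ while the middle term diverges), and no $n$-independent majorant $\Phi$ can be produced this way. The loss is structural: by using only $f\ge0$ you discard the absorption term $-f(u_n)$, but the finiteness of $u$ here is precisely a compensation effect in which the absorption is indispensable; the harmonic majorant of the limit problem does not exist. (A secondary issue: your passage to the limit in the weak formulation via $g_n\to g$ in $L^1(\C\Omega)$ also needs $\int_{\C\Omega}g\,|\Ds\phi|<\infty$, which fails when \eqref{gintro} fails since $|\Ds\phi|\asymp\delta^{-s}$ near $\partial\Omega$; the paper's solution of \eqref{gproblem} is accordingly a pointwise/distributional one, not the weak $L^1$ formulation with the exterior pairing.)

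The paper closes this gap with a genuinely nonlinear barrier: Proposition \ref{impo} shows by direct computation that $U=\psi(\delta^s)$ satisfies $\Ds U\ge -Cf(U)$ in a boundary strip, Lemma \ref{mod-supersol} upgrades it to $\overline u=\mu\,\psi(\delta^s)+\lambda\xi$ with $\Ds\overline u\ge -f(\overline u)$ in all of $\Omega$ (Theorem \ref{U-gsuper}), and Lemma \ref{U-L1} gives $\overline u\in L^1(\Omega)$ from \eqref{L1}. Hypothesis \eqref{g2} is then used exactly where you tried to use \eqref{uleqg}: it says $g\le\psi(\delta^s)$ near $\partial\Omega$ in $\C\Omega$, so the solutions $u_k$ of \eqref{gkproblem} can be compared with $\overline u$ by the pointwise maximum-principle argument of Step 1 in the proof of Theorem \ref{main1} (using the monotonicity of $f$ from \eqref{f1}), yielding $u_k\le\overline u$ uniformly in $k$. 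After that, your remaining steps (monotone limit, interior regularity and passage to a pointwise solution, and the deduction of $\phi(u)\ge c\,\delta^s$ from $u\le\overline u\asymp\psi(\delta^s)$ via \eqref{tech}) do follow the paper's scheme; it is only the key uniform estimate that must be replaced by the nonlinear supersolution.
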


Mind that in problem \eqref{gproblem} we do not prescribe the singular trace at $\partial\Omega$.

\subsection*{Notations}

In the following we will always use the following notations:
\begin{itemize}
\item $\C\Omega$, when $\Omega\subseteq\R^N$ is open, for $\R^N\setminus\overline{\Omega}$,
\item $\delta(x)$ for $\dist(x,\partial\Omega)$ once $\Omega\subseteq\R^N$ has been fixed,
\item $\mathcal{M}(\Omega)$, when $\Omega\subseteq\R^N$, for the space of measures on $\Omega$,
\item $\mathcal{H}$, for the $n-1$ dimensional Hausdorff measure, 
dropping the ``$n-1$'' subscript whenever there is no ambiguity,
\item $f\wedge g$, when $f,g$ are two functions, for the function $\min\{f,g\}$,
\item $C^{2s+\ep}(\Omega)=\{v:\R^N\rightarrow\R$ and for any $K$ compactly supported
in $\Omega$, there exists $\alpha=\alpha(K,v)$ such that $v\in C^{2s+\alpha}(K) \}$.
\end{itemize}

\chapter{Linear fractional Dirichlet problems}

\section{a mean value formula}\label{meanvalue-sec}

Definition \ref{sharm-def} of
$s$-harmonicity turns out to be equivalent to 
have a null fractional Laplacian.
Since we could not find a precise reference for this fact,
we provide a proof.
Indeed, on the one hand we have that
any function $u$ which is $s$-harmonic in an open set $\Omega$ solves
$$
\Ds u(x)=0\qquad\hbox{ in }\Omega,
$$
indeed condition \eqref{sharm-eq}
can be rewritten, using \eqref{cns2},
$$
\int_{\C B_r(x)}\frac{u(y)-u(x)}{|y-x|^N\,\left(|y-x|^2-r^2\right)^s}\;dy =0
\qquad \hbox{ for any }r\in(0,\dist(x,\partial\Omega))
$$
and therefore
\begin{eqnarray*}
0 & = & \lim_{r\downarrow 0}\int_{\C B_r(x)}\frac{u(y)-u(x)}{|y-x|^N\,\left(|y-x|^2-r^2\right)^s}\;dy \\
& = & PV \int_{\R^N}\frac{u(y)-u(x)}{|y-x|^{N+2s}}\;dy
\quad =\quad -\frac{\Ds u(x)}{C_{N,s}}.
\end{eqnarray*}
Indeed, by dominated convergence, far from $x$ it holds
$$
\int_{\C B_1(x)}\frac{u(y)-u(x)}{|y-x|^N\,\left(|y-x|^2-r^2\right)^s}\;dy
\xrightarrow{r\downarrow 0}\int_{\C B_1(x)}\frac{u(y)-u(x)}{|y-x|^{N+2s}}\;dy.
$$
Now, any function $u$ $s$-harmonic in $\Omega$ is smooth in $\Omega$:
this follows from the representation through the Poisson kernel on balls,
given in Theorem \ref{pointwise},
and the smoothness of the Poisson kernel,
see formula \eqref{pois-ball} below.
Since $u$ is a smooth function, a Taylor expansion when $|y-x|<1$
$$
u(y)-u(x)=\langle\,\grad u(x),y-x\,\rangle+\theta(y-x),
\quad\hbox{where}\quad|\theta(y-x)|\leq C\,|y-x|^2
$$
implies
\begin{eqnarray*}
 & & \left|\int_{B_1(x)\setminus B_r(x)}\frac{u(y)-u(x)}{|y-x|^N\,\left(|y-x|^2-r^2\right)^s}\;dy-
\int_{B_1(x)\setminus B_r(x)}\frac{u(y)-u(x)}{\left|y-x\right|^{N+2s}}\;dy\right|  \\
 & & \leq \int_{B_1(x)\setminus B_r(x)}\frac{|\theta(y-x)|}{{|y-x|}^N}\left(\frac{1}{\left(|y-x|^2-r^2\right)^s}-\frac{1}{\left|y-x\right|^{2s}}\right)\;dy 
 \\
 & & \leq C\int_{B_1(x)\setminus B_r(x)}\frac{1}{{|y-x|}^{N-2+2s}}\left[\left(1-\frac{r^2}{|y-x|^2}\right)^{-s}-1\right]\;dy  \\
 & & \leq 
 C\,r^{2-2s}\int_1^{1/r}t^{1-2s}\left[\left(1-\frac1{t^2}\right)^{-s}-1\right]dt
\ \xrightarrow[r\downarrow 0]{}\ 0
\end{eqnarray*}
as it can be checked for example with the L'H\^opital's rule.
So any $u$ which $s$-harmonic function in $\Omega$ 
satisfies $\Ds u=0$ in $\Omega$. The converse follows from the following theorem.

\begin{theo}\label{meanvalue} Let $u:\R^N\rightarrow\R$ a measurable function such that 
for some open set $\Omega\subseteq\R^N$ is $u\in C^{2s+\ep}(\Omega)$.
Also, suppose that
$$
\int_{\R^N}\frac{|u(y)|}{{1+|y|}^{N+2s}}\;dy<+\infty.
$$
Then for any $x\in\Omega$ and $r>0$ such that $\overline{B_r(x)}\subseteq\Omega$, one has
\begin{equation}\label{meanform}
u(x)\ =\ \int_{\C B_r}\eta_r(y)\:u(x-y)\;dy + \gamma(N,s,r)\Ds u(z), \qquad
\gamma(N,s,r)=\frac{\Gamma(N/2)\;r^{2s}}{2^{2s}\,\Gamma\left(\frac{N+2s}{2}\right)\,\Gamma(1+s)}
\end{equation}
for some $z=z(x,s,r)\in\overline{B_r(x)}$.
\end{theo}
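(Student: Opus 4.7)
The plan is to realize $u$ as the sum of its $s$-harmonic extension from outside $B_r(x)$ and a correction coming from $\Ds u$ weighted by the Green function of the ball. First I would introduce, for $y\in B_r(x)$, the function
\[
h(y)\ :=\ \int_{\C B_r}\eta_r(z)\,u(y-z)\;dz.
\]
The integrability hypothesis $\int |u(\cdot)|/(1+|\cdot|^{N+2s})<+\infty$ makes this finite; since $\eta_r$ is (a translate of) the Poisson kernel of $B_r(x)$, classical potential theory (Landkof \textsection 1) ensures that $h$ is the $s$-harmonic extension to $B_r(x)$ of the data $u|_{\C B_r(x)}$, so $\Ds h=0$ pointwise in $B_r(x)$ and $h=u$ on $\C B_r(x)$. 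Evaluating at the center $y=x$ produces exactly the first term $\int_{\C B_r}\eta_r(y)\,u(x-y)\,dy$ in \eqref{meanform}.

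Next I would set $w:=u-h$. Then $w\equiv0$ on $\C B_r(x)$, and the assumption $u\in C^{2s+\ep}(\Omega)$ together with the integrability tail condition make $\Ds u$ pointwise well-defined and H\"older continuous on the compact set $\overline{B_r(x)}\subset\Omega$; in particular $\Ds w = \Ds u$ classically in $B_r(x)$. The Green function representation for the Dirichlet problem on the ball (which follows from the explicit formula in \cite{chen} and the estimates \eqref{est-green}, or may be derived by the same integration by parts heuristics leading to Proposition~\ref{intparts-prop}) then gives
\[
u(x)-h(x)\ =\ w(x)\ =\ \int_{B_r(x)} G_{B_r(x)}(x,z)\,\Ds u(z)\;dz.
\]

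The key computation is to identify $\int_{B_r(x)}G_{B_r(x)}(x,z)\,dz$ with $\gamma(N,s,r)$. This integral is the \emph{torsion function} of the ball evaluated at its center, i.e.\ the value at $x$ of the solution $T$ to $\Ds T=1$ in $B_r(x)$ with $T=0$ on $\C B_r(x)$. By the classical Getoor identity
\[
\Ds\bigl[(r^{2}-|\,\cdot\,-x|^{2})^{s}_{+}\bigr](y)\ =\ \frac{2^{2s}\,\Gamma(1+s)\,\Gamma\!\left(\tfrac{N+2s}{2}\right)}{\Gamma(N/2)}\qquad\text{for }y\in B_r(x),
\]
(obtained by the unit-ball computation plus the $2s$-homogeneity of $\Ds$), one immediately gets $T(x)=\gamma(N,s,r)$.

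Finally, since $\Ds u$ is continuous on $\overline{B_r(x)}$ and $G_{B_r(x)}(x,\cdot)\ge0$ with positive mass $\gamma(N,s,r)>0$, the mean value theorem for integrals produces $z\in\overline{B_r(x)}$ such that
\[
\int_{B_r(x)}G_{B_r(x)}(x,\zeta)\,\Ds u(\zeta)\;d\zeta\ =\ \gamma(N,s,r)\,\Ds u(z),
\]
which combined with the previous displays yields \eqref{meanform}. I expect the main obstacle to be neither the $s$-harmonic extension step nor the mean value argument (both are soft), but the identification of the precise constant $\gamma(N,s,r)$: this relies on the explicit Getoor torsion formula and on the scaling $T_r(x)=r^{2s}T_1(0)$, which have to be reconciled carefully with the normalization of the kernel $\eta_r$ fixed in \eqref{cns}--\eqref{cns2}.
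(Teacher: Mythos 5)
Your strategy is essentially the paper's: write $u(x)$ as the $\eta_r$-average of the exterior values plus the Green potential of $\Ds u$ over the ball, identify $\int_{B_r(x)}G_{B_r(x)}(x,z)\,dz$ with the Getoor torsion constant $\gamma(N,s,r)$, and conclude by the mean value theorem for integrals using the continuity of $\Ds u$ on $\overline{B_r(x)}$ and the positivity of the Green function. The paper packages the same decomposition through $v=\Gamma_s-H$ (its $v$ is exactly your $G_{B_r(x)}(x,\cdot)$), proves the key identity by mollifying $u$ and invoking the integration by parts formula of Proposition \ref{integrbypartsform}, and computes the constant from the explicit torsion function of $B_{r+\delta}$ letting $\delta\downarrow 0$ --- the same Getoor formula you use.

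Two caveats. First, your displayed definition of $h$ is not the object you need: $h(y)=\int_{\C B_r}\eta_r(z)\,u(y-z)\,dz=(\eta_r*u)(y)$ averages $u$ over $\C B_r(y)$, the complement of the ball centered at the \emph{moving} point $y$, so it is not the $s$-harmonic extension of $u|_{\C B_r(x)}$ into $B_r(x)$; in general $\Ds h\neq 0$ in $B_r(x)$ and $h\neq u$ on $\C B_r(x)$, so $w=u-h$ would not vanish outside the ball and the Green representation would not apply. The correct object is the Poisson integral $h(y)=\int_{\C B_r(x)}P_{B_r(x)}(y,w)\,u(w)\,dw$ with the Riesz kernel \eqref{pois-ball} scaled to $B_r(x)$, which coincides with your formula only at the center $y=x$ (which is, fortunately, all you ultimately evaluate); the fix is immediate but necessary. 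Second, you label the extension and the Green representation of $w$ as ``soft'' and expect the constant to be the delicate point, but it is the other way around: obtaining the pointwise identity for $u$ that is merely $C^{2s+\ep}$ near $\overline{B_r(x)}$ with only the tail integrability assumption is exactly where the paper does its analytic work (mollification of $u$, uniform convergence of $\Ds u_j$ on $B_r$, and Proposition \ref{integrbypartsform}); citing the estimates of \cite{chen} does not by itself give a pointwise representation theorem, so you should either reproduce such an approximation argument or cite a genuine representation result for the ball at this regularity. The identification of $\gamma(N,s,r)$ via the Getoor torsion function is then the easy step, exactly as you compute it.
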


\begin{proof}\rm 
Suppose, without loss of generality, that $x=0$.
Let $v=\Gamma_s-H$, where
$\Gamma_s$ is the fundamental solution\footnote{One possible construction 
and the explicit expression of the fundamental solution
can be found in \cite[paragraph 2.2]{extension}.} of the fractional Laplacian 
$$
\Gamma_s(x)\ =\ \frac{-C_{N,-s}}{{|x|}^{N-2s}}
$$
and $H$ solves in the pointwise sense
$$
\left\lbrace\begin{aligned}
\Ds H &=0 & & \hbox{in }B_r \\
H &=\Gamma_s & & \hbox{in }\C B_r.
\end{aligned}\right.
$$
We claim that $H$ satisfies equality
$$
H(x)=H_1(x):=\int_{\R^N}\Gamma_s(x-y)\,\eta_r(y)\;dy=(\Gamma_s*\eta_r)(x).
$$
Indeed
\begin{enumerate}
\item $\Ds H_1=0$ in $B_r$ because $H_1=\Gamma_s*\eta_r$,
$\Ds\Gamma_s=\delta_0$ in the sense of distributions
and $\eta_r=0$ in $B_r$,
\item since (cf. Appendix in \cite[p. 399ss]{landkof})
$$
\int_{\R^N}\Gamma_s(x-y)\,\eta_r(y)\;dy=
\int_{\C B_r}\Gamma_s(x-y)\,\eta_r(y)\;dy=
\Gamma_s(x),\qquad |x|>r,
$$
then $H_1=\Gamma_s$ in $\C B_r$, as desired.
\end{enumerate}
Finally, as in 1., note that 
$$
\Ds H(x)=\eta_r(x),\quad\hbox{when }|x|>r.
$$
Since $u\in C^{2s+\ep}(\Omega)$, $\Ds u\in C(\Omega)\subseteq C(\overline{B_r})$,
see \cite[Proposition 2.4]{silvestre} or Lemma \ref{lem-H} below.
Mollify $u$ in order to obtain a sequence $\{u_j\}_j\subseteq C^\infty(\R^N)$.
Hence,
\begin{multline}\label{73}
\int_{B_r}v\cdot\Ds u_j=\int_{B_r}\Gamma_s\cdot\Ds u_j-\int_{B_r}H\cdot\Ds u_j=\\=
\int_{\R^N}\Gamma_s\cdot\Ds u_j-\int_{\R^N}H\cdot\Ds u_j
=u_j(0)-\int_{\C B_r}u_j\,\eta_r,
\end{multline}
where we have used the integration by parts formula \eqref{intpartsprop}
and the definition of $\Gamma_s$.
On the one hand we have now that $\Ds u_j\xrightarrow[]{j\uparrow+\infty}\Ds u$ uniformly in $B_r$, since
\begin{align*}
& \sup_{x\in B_r}\left|\Ds u_j(x)-\Ds u(x)\right|= \\ 
& =\ \sup_{x\in B_r}\left|C_{N,s}\,PV\int_{\R^N}\frac{u_j(x)-u_j(y)-u(x)+u(y)}{{|x-y|}^{N+2s}}\;dy\right| \\
& \leq\ C_{N,s}\Arrowvert u_j-u\Arrowvert_{C^{2s+\ep}(B_{r+\delta})}
\sup_{x\in B_r}\int_{B_{r+\delta}}\frac{dy}{{|x-y|}^{N-\ep}}\ +\\
&\qquad\qquad+\ C_{N,s}
\int_{\R^N\setminus B_{r+\delta}}\frac{\Arrowvert u_j-u\Arrowvert_{L^\infty(B_r)}+|u_j(y)-u(y)|}{\left(|y|-r\right)^{N+2s}}\;dy,
\end{align*}
while on the other hand
$$
u_j(0)\xrightarrow[]{j\uparrow+\infty} u(0),\qquad
\int_{\C B_r}u_j\,\eta_r\xrightarrow[]{j\uparrow+\infty}\int_{\C B_r}u\,\eta_r
$$
so that we can let $j\rightarrow+\infty$ in equality \eqref{73}.
Collecting the information so far, we have
\begin{equation}\label{mv-first}
u(0)=
\int_{\C B_r} u\:\eta_r + \int_{B_r} v\;\Ds u=
\int_{\C B_r} u\:\eta_r + \Ds u(z)\cdot\int_{B_r}v
\end{equation}
for some $|z|\leq r$, by continuity of $\Ds u$ in $\overline{B_r}$
and since $v>0$ in $\overline{B_r}$.
The constant $\gamma(N,s,r)$ appearing in the statement 
equals to $\int_{B_r}v$.

Let us compute $\gamma(N,s,r)=\int_{B_r}v$.
If we consider the solution $\varphi_\delta$ to
$$
\left\lbrace\begin{aligned}
\Ds\varphi_\delta &=1 & & \hbox{in }B_{r+\delta} \\
\varphi_\delta &=0 & & \hbox{in }\C B_{r+\delta}
\end{aligned}\right.
$$
and we apply formula \eqref{mv-first} to $\varphi_\delta$ in place of $u$ to entail
$$
\varphi_\delta(0)=
\int_{\C B_r} \varphi_\delta\,\eta_r + \Ds \varphi_\delta(z)\cdot\int_{B_r}v
=\int_{B_{r+\delta}\setminus B_r} \varphi_\delta\,\eta_r +\int_{B_r}v
$$
The solution $\varphi_\delta$ is explicitly known (see \cite[equation (1.4)]{rosserra} and references therein) 
and given by
$$
\varphi_\delta(x)=\frac{\Gamma(N/2)}{2^{2s}\,\Gamma\left(\frac{N+2s}{2}\right)\,\Gamma(1+s)}
\left((r+\delta)^2-|x|^2\right)^s.
$$
Hence, by letting $\delta\downarrow 0$,
$$
\gamma(N,s,r)=\int_{B_r}v=\lim_{\delta\downarrow 0}\varphi_\delta(0)=
\frac{\Gamma(N/2)}{2^{2s}\,\Gamma\left(\frac{N+2s}{2}\right)\,\Gamma(1+s)}\;r^{2s}.
$$
\end{proof}

\subsection{The Liouville theorem for \texorpdfstring{$s$}{s}-harmonic functions}

Following the proof of the classic Liouville Theorem 
due to Nelson \cite{nelson}, it is possible to prove the analogous
result for the fractional Laplacian.
In the following 
we denote by $\omega_{N-1}=\mathcal{H}^{N-1}(\partial B)$,
the $(N-1)$-dimensional Hausdorff measure of the unit sphere. 

\begin{theo}\label{liouv} Let $u:\R^N\rightarrow\R$ be a function
which is $s$-harmonic throughout $\R^N$.
Then, if $u$ is bounded in $\R^N$, it is constant.
\end{theo}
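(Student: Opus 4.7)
The plan is to mimic Nelson's argument, replacing the classical mean value over balls with the nonlocal mean value formula \eqref{sharm-eq}. The starting observation is that, since $u$ is $s$-harmonic on all of $\R^N$, for \emph{every} $x\in\R^N$ and \emph{every} $r>0$ one has $u(x)=(\eta_r*u)(x)$: no restriction on $r$ is imposed by a distance to the boundary, because there is no boundary. Fix then $x_1,x_2\in\R^N$ and write
\[
u(x_1)-u(x_2)\ =\ \int_{\R^N}\bigl[\eta_r(y-x_1)-\eta_r(y-x_2)\bigr]\,u(y)\,dy
\]
for arbitrary $r>0$, so that, using the boundedness of $u$,
\[
|u(x_1)-u(x_2)|\ \leq\ \|u\|_{L^\infty(\R^N)}\int_{\R^N}\bigl|\eta_r(y-x_1)-\eta_r(y-x_2)\bigr|\,dy.
\]
The goal is to show the right-hand side vanishes as $r\uparrow+\infty$, which forces $u(x_1)=u(x_2)$ and concludes the proof.

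The key observation is the scaling relation $\eta_r(y)=r^{-N}\eta_1(y/r)$ built into the very definition of $\eta_r$. Setting $h=x_1-x_2$ and changing variables $w=(y-x_2)/r$,
\[
\int_{\R^N}\bigl|\eta_r(y-x_1)-\eta_r(y-x_2)\bigr|\,dy
\ =\ \int_{\R^N}\bigl|\eta_1(w-h/r)-\eta_1(w)\bigr|\,dw.
\]
By the normalization \eqref{cns}, $\eta_1\in L^1(\R^N)$. Now one invokes the classical continuity of translations in $L^1$: for $f\in L^1(\R^N)$, $\|f(\cdot-\tau)-f\|_{L^1}\to 0$ as $\tau\to 0$. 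Applying this to $f=\eta_1$ and $\tau=h/r\to 0$ as $r\uparrow+\infty$ yields the desired vanishing.

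The proof is thus a three-line reduction once the $s$-harmonicity is exploited at arbitrarily large radii. The only ingredient that requires a bit of care is justifying the convolution representation globally (i.e.\ that $\eta_r*u$ is well-defined and equals $u$ everywhere): this is immediate from the global $s$-harmonicity of $u$ together with $u\in L^\infty(\R^N)$, which makes the convolution integral absolutely convergent since $\eta_r\in L^1(\R^N)$. No obstacle of substance remains; the real content of the argument is the scaling identity for $\eta_r$, which plays here the role that the comparison of volumes of large balls played in Nelson's original proof.
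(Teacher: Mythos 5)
Your proof is correct. The only point needing verification is the global mean value identity $u=\eta_r*u$ for every $r>0$, and this is indeed immediate: Definition \ref{sharm-def} gives \eqref{sharm-eq} for all $r<\dist(x,\partial\Omega)$, which is all $r>0$ when $\Omega=\R^N$, and the integral converges absolutely because $\eta_r\in L^1(\R^N)$ (normalization \eqref{cns}) and $u\in L^\infty(\R^N)$. Your change of variables is also legitimate, since the paper defines $\eta_r(x)=r^{-N}\eta(x/r)$ exactly as you use it.

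Your route coincides with the paper's at the strategic level (Nelson's idea: compare the mean values of $u$ at $x_1$ and $x_2$ over radius $r$ and let $r\uparrow+\infty$, reducing everything to $\|\eta_r(\cdot-x_1)-\eta_r(\cdot-x_2)\|_{L^1(\R^N)}\to 0$), but the way you establish that $L^1$ smallness is genuinely different and noticeably cleaner. The paper splits $\R^N$ into the two ``crescent'' regions $\C B_r(x_1)\cap B_r(x_2)$ and $\C B_r(x_2)\cap B_r(x_1)$, estimated by hand using the explicit formula for $\eta_r$, plus the common exterior $\C B_r(x_1)\cap\C B_r(x_2)$, where it rescales, reduces to a one-dimensional radial integral and concludes by monotone convergence; this is self-contained but requires several explicit computations with the kernel. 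You instead rescale the whole difference at once, obtaining $\int_{\R^N}|\eta(w-h/r)-\eta(w)|\,dw$ with $h/r\to0$, and invoke the standard continuity of translations in $L^1(\R^N)$. The only properties of the kernel your argument uses are $\eta\in L^1(\R^N)$ and the exact scaling law, so the proof is shorter and more robust (it would work verbatim for any radial $L^1$ kernel satisfying the same mean value property), at the price of importing one classical real-analysis fact rather than proving everything explicitly as the paper does.
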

\begin{proof} Take two arbitrary points $x_1,\ x_2\in\R^N$.
Both satisfy for all $r>0$
$$
u(x_1)=\int_{\C B_r(x_1)}\eta_r(y-x_1)\,u(y)\;dy,
\qquad
u(x_2)=\int_{\C B_r(x_2)}\eta_r(y-x_2)\,u(y)\;dy.
$$
Denote by $M:=\sup_{\R^N}|u|$, 
which is finite by hypothesis:
\begin{eqnarray*}
|u(x_1)-u(x_2)| & = & 
\left\arrowvert\int_{\C B_r(x_1)}\eta_r(y-x_1)\,u(y)\;dy-
\int_{\C B_r(x_2)}\eta_r(y-x_2)\,u(y)\;dy\right\arrowvert \\
& \leq & 
\int_{\C B_r(x_1)\cap B_r(x_2)}\eta_r(y-x_1)\,M\;dy+
\int_{\C B_r(x_2)\cap B_r(x_1)}\eta_r(y-x_2)\,M\;dy\ + \\
& & 
+\int_{\C B_r(x_1)\cap\C B_r(x_2)}\left\arrowvert
\eta_r(y-x_1)-\eta_r(y-x_1)\right\arrowvert\,M\;dy.
\end{eqnarray*}
Define $\delta:=|x_1-x_2|$. The first addend (and similarly the second)
vanish as $r\rightarrow+\infty$:
\begin{eqnarray*}
\int_{\C B_r(x_1)\cap B_r(x_2)}\eta_r(y-x_1)\;dy & \leq & 
\int_{B_{r+\delta}\setminus B_r}\frac{c(N,s)\,r^{2s}}{|y|^N\,(|y|^2-r^2)^s}\;dy \\
& = & 
\omega_{N-1}c(N,s)\,r^{2s}\int_r^{r+\delta}\frac{d\rho}{\rho\,(\rho^2-r^2)^s} \\
& \leq & 
\omega_{N-1}c(N,s)\,\frac{1}{r^{1-s}}
\int_r^{r+\delta}\frac{d\rho}{(\rho-r)^s}\xrightarrow[]{r\uparrow+\infty}0. 
\end{eqnarray*}
The third one is more delicate:
\[
\begin{aligned}
& \int_{\C B_r(x_1)\cap\C B_r(x_2)}\left\arrowvert
\frac{r^{2s}}{|y-x_1|^N\,(|y-x_1|^2-r^2)^s}-
\frac{r^{2s}}{|y-x_2|^N\,(|y-x_2|^2-r^2)^s}\right\arrowvert\;dy \\
&=\ \int_{\C B(x_1/r)\cap\C B(x_2/r)}\left\arrowvert
\frac{1}{|y-\frac{x_1}{r}|^N\,(|y-\frac{x_1}{r}|^2-1)^s}-
\frac{1}{|y-\frac{x_2}{r}|^N\,(|y-\frac{x_2}{r}|^2-1)^s}\right\arrowvert\;dy \\
&=\ \int_{\C B\cap\C B(\frac{x_2-x_1}{r})}\left\arrowvert
\frac{1}{|y|^N\,(|y|^2-1)^s}-
\frac{1}{|y-\frac{x_2-x_1}{r}|^N\,(|y-\frac{x_2-x_1}{r}|^2-1)^s}\right\arrowvert\;dy \\
&\left[\hbox{\small setting 
$x_r=(x_2-x_1)/r$, $|x_r|=\delta/r\rightarrow 0$ as $r\rightarrow+\infty$}\right] \\
&=\ \int_{\C B\cap\C B(x_r)}\left\arrowvert
\frac{1}{|y|^N\,(|y|^2-1)^s}-
\frac{1}{|y-x_r|^N\,(|y-x_r|^2-1)^s}\right\arrowvert\;dy \\
&\left[\hbox{\small taking wlog $x_r=\frac{\delta e_1}{r}$
and defining $H_r=\{x\in\R^N:x_1>\frac{\delta}{2r},\ |x-x_r|>1\}$}\right] \\
&=\ 2\int_{H_r}\left[
\frac{1}{|y-x_r|^N\,(|y-x_r|^2-1)^s}
-\frac{1}{|y|^N\,(|y|^2-1)^s}\right]\;dy \\
&\leq\ 2\,\omega_{N-1}
\int_1^{+\infty}\left[
\frac{1}{\rho\,(\rho^2-1)^s}
-\frac{\rho^{N-1}}{(\rho+\frac{\delta}{r})^N\,((\rho+\frac{\delta}{r})^2-1)^s}
\right]\;d\rho \\
&\leq\ 2\,\omega_{N-1}
\int_1^{+\infty}\frac{1}{\rho\,(1+\frac{\delta}{r\rho})^N}
\left[
\frac{1}{(\rho^2-1)^s}
-\frac{\rho^{N-1}}
{((\rho+\frac{\delta}{r})^2-1)^s}\right]\;d\rho\; 
\xrightarrow[]{r\uparrow+\infty}0
\end{aligned}
\]
thanks to the Monotone Convergence Theorem.
\end{proof}

\subsection{Asymptotics 
\texorpdfstring{as $s\uparrow 1$}{s up 1}}\label{meanform-app}

First of all, the proof of Theorem \ref{meanvalue} implies
$$
\lim_{s\uparrow 1}\gamma(N,s,r)=\gamma(N,1,r)=
\frac{\Gamma(N/2)}{4\,\Gamma\left(\frac{N+2}{2}\right)\,\Gamma(2)}\;r^2
=\frac{r^2}{4}\cdot\frac{\Gamma(N/2)}{\frac{N}{2}\,\Gamma(N/2)}=\frac{r^2}{2N}.
$$
Also
$$
1=\int_{\C B_r}\eta_r(y)\;dy=
\int_{\C B_r}\frac{c(N,s)\,r^{2s}}{|y|^N\left(|y|^2-r^2\right)^s}\;dy
\qquad\hbox{where }
c(N,s)=\frac{2\sin(\pi s)}{\pi\,\omega_{N-1}}.
$$
Then
\begin{multline*}
1\ =\ \frac{2\sin(\pi s)}{\pi\,\omega_{N-1}}
\int_{\C B_r}\frac{r^{2s}}{|y|^N\left(|y|^2-r^2\right)^s}\;dy 
\ =\ \frac{2\sin(\pi s)}{\pi\,\omega_{N-1}}
\int_{\C B}\frac{1}{|y|^N\left(|y|^2-1\right)^s}\;dy \ =\\
\ =\ \frac{2\sin(\pi s)}{\pi\,\omega_{N-1}}
\int_{\partial B}\left[
\int_1^{+\infty}\frac{d\rho}{\rho\,\left(\rho^2-1\right)^s}
\right]\;d\mathcal{H}^{N-1}(\theta) \ =\ 
\frac{2\sin(\pi s)}{\pi}\int_1^{+\infty}\frac{d\rho}{\rho\,\left(\rho^2-1\right)^s}.
\end{multline*}
With similar computation
$$
\int_{\C B_r}\eta_r(y)\:u(x-y)\;dy\ =\ 
\frac{2\sin(\pi s)}{\pi}\int_{\partial B}\left[
\int_1^{+\infty}\frac{u(x-r\rho\theta)}{\rho\,\left(\rho^2-1\right)^s}\;d\rho
\right]\;d\mathcal{H}^{N-1}(\theta).
$$
Fix a $\theta\in\partial B$ and consider the difference
$$
\left|\frac{2\sin(\pi s)}{\pi}\int_1^{+\infty}\frac{u(x-r\rho\theta)}{\rho\,\left(\rho^2-1\right)^s}\;d\rho
\ -\ u(x-r\theta)\right|\ \leq\ 
\frac{2\sin(\pi s)}{\pi}\int_1^{+\infty}\frac{|u(x-r\rho\theta)-u(x-r\theta)|}{\rho\,\left(\rho^2-1\right)^s}\;d\rho:
$$
since we are handling $C^2$ functions 
we can push a bit further the estimate to deduce
\begin{multline*}
\left|\frac{2\sin(\pi s)}{\pi}\int_1^{+\infty}\frac{u(x-r\rho\theta)}{\rho\,\left(\rho^2-1\right)^s}\;d\rho
\ -\ u(x-r\theta)\right|\ \leq\ \\
\leq\ \frac{2\sin(\pi s)}{\pi}\int_1^{1+\delta}\frac{C\,\left(\rho-1\right)^{1-s}}{\rho\,\left(\rho+1\right)^s}\;d\rho
+\frac{2\sin(\pi s)}{\pi}\int_{1+\delta}^{+\infty}\frac{|u(x-r\rho\theta)-u(x-r\theta)|}{\rho\,\left(\rho^2-1\right)^s}\;d\rho
\xrightarrow[s\uparrow 1]{}0
\end{multline*}
therefore
$$
\int_{\C B_r}\eta_r(y)\:u(x-y)\;dy\ 
\xrightarrow[s\uparrow 1]{}\ 
\frac{1}{\omega_{N-1}\,r^{N-1}}\int_{\partial B_r}u(x-y)\;d\mathcal{H}^{N-1}(y).
$$
The choice of the point $z\in \overline{B_r}$ in \eqref{meanform} depends 
on the value of $s$, but since these are all points belonging to a compact set,
we can build $\{s_k\}_{k\in\N}\subseteq(0,1)$, $s_k\rightarrow 1$ as $k\rightarrow+\infty$,
such that $z(s_k)\rightarrow z_0\in \overline{B_r}$.
Since it is known (see \cite[Proposition 4.4]{hitchhiker}) that
$$
\lim_{s\uparrow 1}\Ds u=-\lapl u,
$$
then
$$
|\Ds u\,(z(s_k))+\lapl u(z_0)|\leq
|\Ds u\,(z(s_k))-\Ds u\,(z_0)|+
|\Ds u\,(z_0)+\lapl u(z_0)|
\xrightarrow{k\rightarrow+\infty}0.
$$
Finally we have proven that
\begin{multline*}
u(x)\ =\ \int_{\C B_r}\eta_r(y)\:u(x-y)\;dy + \gamma(N,s,r)\Ds u(z(s_k))
\\ \xrightarrow{k\rightarrow+\infty}\ 
\frac{1}{\omega_{N-1}\,{r}^{N-1}}\int_{\partial B_r}u(x-y)\;d\mathcal{H}^{N-1}(y)
 - \frac{r^2}{2n}\,\lapl u(z_0)
\end{multline*}
which is a known formula for $C^2$ functions,
see e.g. \cite[Proposition A.1.2]{dupaigne-book}.

\section{existence and uniqueness}\label{liner-sec}

Assume $\Omega\subseteq\R^N$ is open and bounded, with $C^{1,1}$ boundary.
\medskip

\subsection{Proof of Proposition \ref{integrbypartsform}}\label{byparts-proof}
Assume first that $u\in\mathcal{S}$ and $v=0$ in $\R^N\setminus\Omega$, $v\in C^{2s+\ep}(\Omega)\cap C(\overline{\Omega})$
and $\Ds v\in L^1(\R^N)$; 
then we can regularize $v$, 
via the convolution with a mollifier ${\{\alpha_k(x)=k^N\alpha(kx):\alpha(x)=0\hbox{ for }|x|\geq1\}}_{k\in\N}$ 
in order to obtain a sequence
${\{v_k:=\alpha_k*v\}}_{k\in\N}\subseteq C^\infty_c(\R^N)\subseteq\mathcal{S}$
converging uniformly to $v$ in $\R^N$.
Also,
$$
\Ds v_k=v*\Ds\alpha_k
$$
indeed 
\begin{eqnarray*}
\Ds v_k(x) & = & C_{N,s}\,PV\int_{\R^N}\frac{v_k(x)-v_k(y)}{{|x-y|}^{N+2s}}\;dy \\
 & = & C_{N,s}\,\lim_{\ep\downarrow0}\int_{\C B_\ep(x)}\frac{\int_{\R^N}v(z)[\alpha_k(x-z)-\alpha_k(y-z)]\;dz}{{|x-y|}^{N+2s}}\;dy \\
 & = & C_{N,s}\lim_{\ep\downarrow 0}\int_{\R^N}v(z)\int_{\C B_\ep(x)}\frac{\alpha_k(x-z)-\alpha_k(y-z)}{{|x-y|}^{N+2s}}\;dy\;dz \\
 & = & C_{N,s}\lim_{\ep\downarrow 0}\int_{\R^N}v(z)\int_{\C B_\ep(x-z)}\frac{\alpha_k(x-z)-\alpha_k(y)}{{|x-z-y|}^{N+2s}}\;dy\;dz
\end{eqnarray*}
where we have
$$
C_{N,s}\int_{\C B_\ep(x)}\frac{\alpha_k(x)-\alpha_k(y)}{{|x-y|}^{N+2s}}\;dy
\xrightarrow[\ep\downarrow0]{}\Ds\alpha_k(x),\quad\hbox{ uniformly in }\R^N
$$
since, see \cite[Lemma 3.2]{hitchhiker},
\begin{eqnarray}
 & & \left|\Ds\alpha_k(x)-C_{N,s}\int_{\C B_\ep(x)}\frac{\alpha_k(x)-\alpha_k(y)}{{|x-y|}^{N+2s}}\;dy\right|\ = \nonumber \\
& & =\ \left|C_{N,s}\,PV\int_{B_\ep(x)}\frac{\alpha_k(x)-\alpha_k(y)}{{|x-y|}^{N+2s}}\;dy\right|  \\
& & =\ \left|\frac{C_{N,s}}{2}\int_{B_\ep}\frac{\alpha_k(x+y)+\alpha_k(x-y)-2\alpha_k(x)}{{|y|}^{N+2s}}\;dy\right| \nonumber \\
& & \leq\ \frac{C_{N,s}\,\Arrowvert\alpha_k\Arrowvert_{C^2(B_\ep(x))}}{2}\int_{B_\ep}\frac{dy}{{|y|}^{N+2s-2}} \label{111} \\
& & \leq\ \frac{C_{N,s}\,\Arrowvert\alpha_k\Arrowvert_{C^2(\R^N)}}{2}\int_{B_\ep}\frac{dy}{{|y|}^{N+2s-2}}
\xrightarrow[\ep\downarrow0]{}0 \nonumber.
\end{eqnarray}
Following the very same proof up to \eqref{111}, since $v\in C^{2s+\ep}(\Omega)$, it is also possible to prove
$$
\Ds v_k(x)=\left(\alpha_k*\Ds v\right)(x),\quad\hbox{ for }\delta(x)>\frac{1}{k}.
$$
Since $\Ds v\in C(\R^N\setminus\Omega)$, see \cite[Proposition 2.4]{silvestre}, we infer that
$$
\Ds v_k(x)\xrightarrow[k\uparrow+\infty]{}\Ds v,\quad\hbox{ for every } x\in\R^N\setminus\partial\Omega.
$$

We give now a pointwise estimate on $\Ds\alpha_k(x),\ x\in B_{1/k}$.
Since $\alpha_k\in C^\infty_c(\R^N)$, we can write (see \cite[Paragraph 2.1]{silvestre})
\begin{multline*}
\Ds\alpha_k(x)=\left[{(-\lapl)}^{s-1}\circ(-\lapl)\right]\alpha_k(x)={(-\lapl)}^{s-1}[-k^{N+2}\lapl\alpha(kx)]\ =\\
=C_{N,-s+1}\int_{\R^N}\frac{k^{N+2}\lapl\alpha(ky)}{{|x-y|}^{N+2s-2}}\;dy.
\end{multline*}
With a change of variable we entail
$$
\Ds\alpha_k(x)=C_{N,-s+1}\,k^{N+2s}\int_B\frac{\lapl\alpha(y)}{{|k\,x-y|}^{N+2s-2}}\;dy
$$
and therefore
\begin{equation}\label{dsalfa}
|\Ds\alpha_k(x)|\leq \frac{\omega_{N-1}}{2\,(1-s)}\, k^{N+2s}\Arrowvert\lapl\alpha\Arrowvert_{L^\infty(\R^N)}.
\end{equation}
Indeed,
$$
\int_B\frac{dy}{{|z-y|}^{N+2s-2}}
$$
is a bounded function of $z$, having in $z=0$ its maximum $\frac{\omega_{N-1}}{2\,(1-s)}$.
Indeed,
\begin{eqnarray*}
\int_B\frac{dy}{{|z-y|}^{N+2s-2}} & = & \int_{B\cap B(z)}\frac{dy}{{|z-y|}^{N+2s-2}}+\int_{B\setminus B(z)}\frac{dy}{{|z-y|}^{N+2s-2}} \\
& = & \int_{B\cap B(z)}\frac{dy}{{|y|}^{N+2s-2}}+\int_{B\setminus B(z)}\frac{dy}{{|z-y|}^{N+2s-2}} \\
& \leq & \int_{B\cap B(z)}\frac{dy}{{|y|}^{N+2s-2}}+\int_{B\setminus B(z)}\frac{dy}{{|y|}^{N+2s-2}} 
\  = \ \int_B\frac{dy}{{|y|}^{N+2s-2}}.
\end{eqnarray*}
Using \eqref{dsalfa}, the $L^1$-norm of $\Ds v_k$ can be estimated by
\begin{align*}
& \int_{\R^N}|\Ds v_k(x)|\;dx\ = \\
& = \int_{\{\delta(x)<1/k\}}|(v*\Ds\alpha_k)(x)|\;dx
+\int_{\{\delta(x)\geq1/k\}}|(\alpha_k*\Ds v)(x)|\;dx\\
& \leq\ \int_{\{\delta(x)<1/k\}}\int_{B_{1/k}(x)}|v(y)\Ds\alpha_k(x-y)|\;dy\;dx\ +\ 
\int_{\{\delta(x)\geq1/k\}}\int_{B_{1/k}}|\alpha_k(y)\Ds v(x-y)|\;dy\;dx \\
&\leq\ \int_{\{\delta(x)<1/k\}}\int_{B_{1/k}(x)}C{\delta(y)}^sk^{N+2s}\Arrowvert\lapl\alpha\Arrowvert_{L^\infty(\R^N)}\;dy\;dx\ 
+\ \int_{B_{1/k}}\alpha_k(y)\int_{\R^N}|\Ds v(x-y)|\;dx\;dy\\
& \leq\ \frac{C\,\Arrowvert\lapl\alpha\Arrowvert_{L^\infty(\R^N)}}{k^{1-s}}+\Arrowvert\Ds v\Arrowvert_{L^1(\R^N)},
\end{align*}
so that, by the Fatou's Lemma we have
$$
\int_{\R^N}|\Ds v|\leq \liminf_{k\uparrow+\infty}\int_{\R^N}|\Ds v_k|\leq
\limsup_{k\uparrow+\infty}\int_{\R^N}|\Ds v_k|\leq \int_{\R^N}|\Ds v|
$$
which means
$$
\Arrowvert\Ds v_k\Arrowvert_{L^1(\R^N)}\xrightarrow[k\uparrow+\infty]{}\Arrowvert\Ds v\Arrowvert_{L^1(\R^N)}.
$$
Apply the Fatou's lemma to $|\Ds v_k|+|\Ds v|-|\Ds v_k-\Ds v|\geq 0$ to deduce
\begin{multline*}
2\int_{\R^N}|\Ds v|\leq \liminf_{k\uparrow+\infty}\int_{\R^N}(|\Ds v_k|+|\Ds v|-|\Ds v_k-\Ds v|)=\\
=2\int_{\R^N}|\Ds v|-\limsup_{k\uparrow+\infty}\int_{\R^N}|\Ds v_k-\Ds v|
\end{multline*}
and conclude 
$$
\Arrowvert\Ds v_k-\Ds v\Arrowvert_{L^1(\R^N)}\xrightarrow[k\uparrow+\infty]{}0
$$
and, for any $u\in\mathcal{S}$,
$$
\int_{\R^N}u\Ds v_k\xrightarrow[k\uparrow+\infty]{}\int_{\R^N}u\Ds v.
$$
Note now that
$$
\int_{\Omega_k}v_k\Ds u=\int_{\R^N}v_k\Ds u\xrightarrow[]{k\uparrow 0}\int_{\R^N}v\Ds u=\int_{\Omega}v\Ds u
$$
since $\Arrowvert v_k-v\Arrowvert_{L^\infty(\R^N)}\xrightarrow[]{k\uparrow 0}0$, and this concludes the proof.
\medskip

\subsection{Preliminaries on fractional Green functions, Poisson kernels and Martin kernels}

Consider the function $G_\Omega:\Omega\times\R^N\rightarrow\R$
built as the family of solutions to the problems
$$
\hbox{for any }x\in\Omega\qquad
\left\lbrace\begin{array}{ll}
\Ds G_\Omega(x,\cdot)=\delta_x & \hbox{ in }\Omega, \\
G_\Omega(x,y)=0 & \hbox{ in }\C\Omega.
\end{array}\right.
$$
This function can be written as the sum
$$
G_\Omega(x,y)=\Gamma_s(y-x)-H(x,y),
$$
where $\Gamma_s$ is the fundamental solution to the fractional 
Laplacian,
and 
$$
\hbox{for any }x\in\Omega\qquad
\left\lbrace\begin{array}{ll}
\Ds H(x,\cdot)=0 & \hbox{ in }\Omega, \\
H(x,y)=\Gamma_s(y-x) & \hbox{ in }\C\Omega.
\end{array}\right.
$$

\begin{lem}\label{lem-H} Fix $x\in\Omega$. Then $H(x,\cdot)\in C^{2s+\ep}(\Omega)\cap C(\overline{\Omega})$.
\end{lem}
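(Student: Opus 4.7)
The plan is to represent $H(x,\cdot)$ as the fractional Poisson integral of the boundary datum $\Gamma_s(\cdot-x)$ over $\C\Omega$, then establish interior smoothness and boundary continuity separately.

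Since $x\in\Omega$, the datum $g(z):=\Gamma_s(z-x)=-C_{N,-s}|z-x|^{-(N-2s)}$ is continuous on $\overline{\C\Omega}$ and decays like $|z|^{-(N-2s)}$ at infinity, so in particular it satisfies \eqref{gintro}. Using the Poisson kernel $P_\Omega(y,z)=-\Ds_z G_\Omega(y,z)$ of $\Omega$, whose existence and sharp two-sided estimates \eqref{est-poisson} for bounded $C^{1,1}$ domains come from \cite{chen}, I would set
\[
H(x,y)\ :=\ \int_{\C\Omega}P_\Omega(y,z)\,\Gamma_s(z-x)\;dz,\qquad y\in\Omega,
\]
and check a posteriori that the decomposition $G_\Omega(x,y)=\Gamma_s(y-x)-H(x,y)$ agrees with the Green function taken from \cite{chen}.

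Interior regularity is then immediate: by construction $H(x,\cdot)$ satisfies the mean value identity \eqref{sharm-eq} on every ball $\overline{B_r(y_0)}\subseteq\Omega$, hence it is $s$-harmonic in $\Omega$ in the sense of Definition \ref{sharm-def}. As observed just after Theorem \ref{meanvalue}, any such function is $C^\infty(\Omega)$ via the explicit smooth Poisson kernel on balls, and in particular $H(x,\cdot)\in C^{2s+\ep}(\Omega)$.

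For continuity up to $\partial\Omega$, I would exploit the normalization $\int_{\C\Omega}P_\Omega(y,z)\,dz=1$, which follows from the $s$-harmonicity of constants and uniqueness, to write, for $y_0\in\partial\Omega$,
\[
H(x,y)-\Gamma_s(y_0-x)\ =\ \int_{\C\Omega}P_\Omega(y,z)\bigl[\Gamma_s(z-x)-\Gamma_s(y_0-x)\bigr]\;dz.
\]
Given $\eta>0$, pick $\rho>0$ so that $|\Gamma_s(z-x)-\Gamma_s(y_0-x)|<\eta$ for $z\in\C\Omega\cap B_\rho(y_0)$, which is possible since $y_0\neq x$. The piece of the integral over $\C\Omega\cap B_\rho(y_0)$ is then bounded by $\eta$, while on $\C\Omega\setminus B_\rho(y_0)$ the upper bound in \eqref{est-poisson}, combined with $|y-z|\geq\rho/2$ for $y$ sufficiently close to $y_0$ and $\delta(y)\downarrow 0$, forces the remainder to vanish as $y\to y_0$ inside $\Omega$. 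This yields $H(x,y)\to\Gamma_s(y_0-x)$, which together with continuity of $\Gamma_s(\cdot-x)$ on $\overline{\C\Omega}$ gives $H(x,\cdot)\in C(\overline\Omega)$.

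The main obstacle is precisely this boundary-continuity step: it rests on both the normalization identity $\int_{\C\Omega}P_\Omega(y,z)\,dz=1$ and on the mass concentration of $P_\Omega(y,\cdot)$ near $y_0$ as $y\to y_0$. Both of these ingredients ultimately hinge on the sharp Poisson-kernel estimates \eqref{est-poisson}, into which the $C^{1,1}$-regularity of $\partial\Omega$ enters essentially. Interior smoothness, by contrast, is a direct byproduct of $s$-harmonicity.
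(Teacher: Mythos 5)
Your route is genuinely different from the paper's, and as written it has a gap that is also a structural circularity. You define $H(x,\cdot)$ as the Poisson integral of $\Gamma_s(\cdot-x)$ and then claim that ``by construction'' it satisfies the mean value identity \eqref{sharm-eq}; that is precisely the nontrivial point. The fact that $y\mapsto -\int_{\C\Omega}\Ds G_\Omega(y,z)\,g(z)\,dz$ is $s$-harmonic for a given exterior datum $g$ is proved in this paper only in Theorem \ref{pointwise} (Steps 1--4 of the first case), and those steps rest on the Green representation formula and the identity \eqref{gammas-eq}, which in turn are proved \emph{using} Lemma \ref{lem-H} (it is what legitimates integrating $G_\Omega=\Gamma_s-H$ by parts). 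The same remark applies to the normalization $\int_{\C\Omega}P_\Omega(y,z)\,dz=1$, which you justify by uniqueness for the Dirichlet problem with datum $1$ --- again downstream of the representation theory. So within the paper's development your argument assumes what the lemma is needed to build; the only way to save it is to import wholesale from \cite{chen} not just the bounds \eqref{est-poisson} but the full construction of $G_\Omega$, $P_\Omega$ and the solvability of the exterior Dirichlet problem by the Poisson integral, at which point the lemma is being cited rather than proved. Your boundary-continuity step (split near/far from $y_0$, use positivity plus normalization near, and the $\delta(y)^s$ decay in \eqref{est-poisson} far) is fine in itself --- it is essentially the argument the paper uses later in Paragraph \ref{cont-sharm-sec} --- but it inherits the same dependence.

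The paper's own proof avoids all of this and is more elementary: since $x\in\Omega$ is fixed, one multiplies $\Gamma_s(\cdot-x)$ by a smooth cutoff $\kappa$ vanishing on $\overline{B_r(x)}$ and equal to $1$ on $\overline{\C\Omega}$, obtaining $\gamma_s\in C^\infty(\R^N)$ with the same exterior values; then $h(x,\cdot):=H(x,\cdot)-\gamma_s$ solves a Dirichlet problem with zero exterior datum and right-hand side $-\Ds\gamma_s\in L^\infty\cap C^\infty$, so interior regularity \cite[Proposition 2.8]{silvestre} gives $h(x,\cdot)\in C^{2s+\ep}(\Omega)$ and \cite[Proposition 1.1]{rosserra} gives $h(x,\cdot)\in C^s(\R^N)$, whence $H(x,\cdot)\in C^{2s+\ep}(\Omega)\cap C^s(\R^N)$ --- in fact stronger than the claimed $C(\overline\Omega)$, and with no appeal to the Green function or Poisson kernel of $\Omega$. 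If you want to keep your approach, you must either prove directly that your Poisson integral is $s$-harmonic with the prescribed exterior values (not merely assert it), or restructure so that the needed facts from \cite{chen} are taken as the starting point of the whole construction rather than of this lemma alone.
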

\begin{proof}\rm
Take $r=r(x)>0$ such that $\overline{B_r(x)}\subseteq\Omega$
and $\kappa$ a cutoff function
$$
0\leq \kappa\leq 1\hbox{ in }\R^N, \quad
\kappa\in C^\infty(\R^N), \quad
\kappa=1 \hbox{ in }\overline{\C\Omega}, \quad
\kappa=0 \hbox{ in }\overline{B_r(x)}
$$
and define $\gamma_s(y):=\kappa(y)\Gamma_s(y-x)$:
$\gamma_s\in C^\infty(\R^N),\gamma_s=\Gamma_s$ in $\C\Omega$, $\gamma_s=0$ in $B_r(x)$.
Then establish the equivalent problem
$$
\forall\;x\in\Omega\qquad
\left\lbrace\begin{array}{ll}
\Ds H(x,\cdot)=0 & \hbox{ in }\Omega \\
H(x,y)=\gamma_s(y-x) & \hbox{ in }\C\Omega
\end{array}\right.
$$
and by setting $h(x,y):=H(x,y)-\gamma_s(x,y)$ we obtain
$$
\forall\;x\in\Omega\qquad
\left\lbrace\begin{array}{ll}
\Ds h(x,\cdot)=-\Ds\gamma_s & \hbox{ in }\Omega, \\
h(x,y)=0 & \hbox{ in }\C\Omega.
\end{array}\right.
$$
Note that $\Ds\gamma_s\in L^\infty(\R^N)\cap C^\infty(\R^N)$ (see \cite[Proposition 2.7]{silvestre}):
from this we deduce that $h(x,\cdot)\in C^{2s+\ep}(\Omega)\cap C^s(\R^N)$ by \cite[Proposition 2.8]{silvestre} and \cite[Proposition 1.1]{rosserra} respectively
and then also $H(x,\cdot)\in C^{2s+\ep}(\Omega)\cap C^s(\R^N)$.
\end{proof}

\begin{lem} The function $G_\Omega(x,y)$ we have just obtained
satisfies the following properties:
\begin{description}
\item[\it i)] $G_\Omega$ is continuous in $\Omega\times\Omega$ except on the diagonal\footnote{where 
its singularity is inherited by the singularity in $0$ of $\Gamma_s$},
\item[\it ii)] $\Ds G_\Omega(x,\cdot)\in L^1(\C\Omega)$ for any $x\in\Omega$,
\item[\it iii)] for any $u\in C^{2s+\ep}(\Omega)\cap L^\infty(\R^N)$
and $x\in\Omega$
\begin{equation}\label{green-repr}
u(x)=\int_\Omega\Ds u(y)\:G_\Omega(x,y)\;dy-\int_{\C\Omega}u(y)\:\Ds G_\Omega(x,y)\;dy,
\end{equation}
and this formula is a Green's representation formula
($G_\Omega$ is the Green function
while its fractional Laplacian is the Poisson kernel),
\item[\it iv)] $\Ds G_\Omega(x,y)$, $x\in\Omega,\,y\in\C\Omega$
is given by the formula
\begin{equation}\label{green-pois}
\Ds G_\Omega(x,y)=
-C_{N,s}\,\int_{\Omega}\frac{G_\Omega(x,z)}{|z-y|^{N+2s}}\;dz,
\end{equation}
\item[\it v)] $G_\Omega(x,y)\geq 0$ for a.e. $(x,y)\in\Omega\times\Omega$, $x\neq y$,
and $\Ds G_\Omega(x,y)\leq 0$ for any $x\in\Omega,\,y\in\C\Omega$,
\item[\it vi)] it holds
\begin{align}
& \Gamma_s(y-x)=-\int_{\C\Omega}\Gamma_s(y-z)\cdot\Ds G_\Omega(x,z)\;dz
\qquad \hbox{for }x\in\Omega,\ y\in\C\Omega, \label{gammas-eq}\\
& G_\Omega(x,y)-\Gamma_s(y-x)=-\int_{\C\Omega}\Gamma_s(y-z)\cdot\Ds G_\Omega(x,z)\;dz
\qquad \hbox{for }x\in\Omega,\ y\in\Omega. 
\end{align}
\end{description}
\end{lem}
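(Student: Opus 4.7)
The six items split into structural facts (i), (ii), (iv), (v), the representation formula (iii), and the two dual identities (vi) which follow from (iii); I would tackle them in this order.

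For (i), write $G_\Omega(x,y)=\Gamma_s(y-x)-H(x,y)$: the fundamental solution is smooth off the diagonal, so the joint continuity of $G_\Omega$ on $\Omega\times\Omega\setminus\{x=y\}$ reduces to joint continuity of $H$. Lemma \ref{lem-H} already gives continuity of $H(x,\cdot)$ on $\overline\Omega$ for each $x$, and a continuous-dependence argument on the exterior data $\Gamma_s(y-\cdot)|_{\C\Omega}$ (which varies continuously in $x\in\Omega$) upgrades this to the joint continuity. For (iv), the singular integral defining $\Ds G_\Omega(x,y)$ at $y\in\C\Omega$ collapses to an ordinary one, because $G_\Omega(x,y)=0$ and the kernel's singularity is avoided, giving \eqref{green-pois}. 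Then (ii) is immediate from the Poisson kernel bound \eqref{est-poisson}: for fixed $x\in\Omega$,
\[
\int_{\C\Omega}|\Ds G_\Omega(x,y)|\,dy\ \leq\ c_1\,\delta(x)^s\int_{\C\Omega}\frac{dy}{\delta(y)^s(1+\delta(y))^s\,|x-y|^N}\ <\ +\infty.
\]
For (v), positivity of $G_\Omega$ follows from the maximum principle applied to $H(x,\cdot)$: as the $s$-harmonic extension of $\Gamma_s(y-x)$ from $\C\Omega$ and with the positive interior source of $\Gamma_s$ at $y=x$, one obtains $H(x,y)\leq\Gamma_s(y-x)$ throughout $\Omega$. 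The sign of $\Ds G_\Omega(x,\cdot)$ on $\C\Omega$ then follows from (iv).

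The main effort lies in (iii). Since $v=G_\Omega(x,\cdot)$ has a pole at $y=x$, it does not satisfy the hypotheses of Proposition \ref{integrbypartsform} directly. I would mollify by solving
\[
\left\lbrace\begin{aligned}
\Ds v_\epsilon &= \rho_\epsilon(\cdot-x) & & \hbox{in }\Omega \\
v_\epsilon &= 0 & & \hbox{in }\C\Omega
\end{aligned}\right.
\]
for a standard mollifier $\rho_\epsilon$ supported in $B_\epsilon(x)\subset\subset\Omega$. Each $v_\epsilon$ belongs to $C^{2s+\ep}(\Omega)\cap C^s(\R^N)$ with $\Ds v_\epsilon\in L^1(\R^N)$, so Proposition \ref{integrbypartsform} applied to $u$ and $v_\epsilon$ yields
\[
\int_\Omega u(y)\rho_\epsilon(y-x)\,dy\ -\ \int_\Omega v_\epsilon(y)\,\Ds u(y)\,dy\ =\ -\int_{\C\Omega}u(y)\,\Ds v_\epsilon(y)\,dy.
\]
Passing $\epsilon\downarrow 0$: the leftmost term tends to $u(x)$ by continuity of $u$ at $x$; by the maximum principle $0\leq v_\epsilon\leq G_\Omega(x,\cdot)$ pointwise, and $v_\epsilon\to G_\Omega(x,\cdot)$, $\Ds v_\epsilon\to\Ds G_\Omega(x,\cdot)$ pointwise in $\Omega$ and in $\C\Omega$ respectively, so dominated convergence applies using \eqref{est-green} against $\Ds u\in C(\Omega)$ on $\Omega$ and \eqref{est-poisson} against $u\in L^\infty(\R^N)$ on $\C\Omega$.

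Finally, (vi) reduces to appropriate choices in (iii). For the first identity, fix $y\in\C\Omega$ and apply (iii) to $u(z)=\Gamma_s(y-z)$: this is smooth and bounded on $\overline\Omega$ (as $y\notin\overline\Omega$) with $\Ds u=0$ in $\Omega$, so (iii) directly gives \eqref{gammas-eq}. For the second, fix $y\in\Omega$ and apply (iii) to $u(z)=H(y,z)$: by Lemma \ref{lem-H} this lies in the required class, $\Ds u=0$ in $\Omega$, and $u|_{\C\Omega}=\Gamma_s(y-\cdot)$, so (iii) yields $H(y,x)=-\int_{\C\Omega}\Gamma_s(y-z)\,\Ds G_\Omega(x,z)\,dz$; symmetry of $G_\Omega$ (and hence of $H$), obtained as a byproduct by integrating by parts the two mollified Green functions $v_\epsilon^{(x)}$ and $v_\delta^{(y)}$ via Proposition \ref{integrbypartsform}, gives $H(x,y)=H(y,x)$ and thus the identity via $G_\Omega(x,y)-\Gamma_s(y-x)=-H(x,y)$. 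The main obstacle throughout is the limit passage in (iii), whose success hinges on the sharp uniform-in-$\epsilon$ bounds \eqref{est-green} and \eqref{est-poisson}.
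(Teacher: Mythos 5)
Your overall plan is sound and, for items (i), (ii), (iv), (v) and the first identity of (vi), runs parallel to the paper: the paper also collapses the principal value to get \eqref{green-pois}, applies Lemma \ref{maxprinc} to $H(x,\cdot)$ for positivity, and obtains \eqref{gammas-eq} by representing the $s$-harmonic function with exterior datum $\Gamma_s(y-\cdot)$ (both you and the paper quietly ignore that this datum is not in $L^\infty(\R^N)$; a truncation away from $\overline\Omega$ fixes it). For (ii) you shortcut through the quoted two-sided bound \eqref{est-poisson}, whereas the paper stays self-contained: it first derives $|G_\Omega(x,y)|\le C\delta(y)^s$ near $\partial\Omega$ from the Ros-Oton--Serra estimate and then integrates the resulting bound \eqref{est-pois}; both are legitimate. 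The genuine divergence is in (iii): the paper splits $G_\Omega=\Gamma_s-H$ and integrates by parts the regular part $H(x,\cdot)$ via Proposition \ref{integrbypartsform}, combining it with the fundamental-solution identity (the same scheme as in the proof of Theorem \ref{meanvalue}), while you regularize the Green function itself by solving with mollified data $\rho_\eps(\cdot-x)$. That is a workable alternative, and your explicit observation that the second identity of (vi) requires $H(x,y)=H(y,x)$ is a point the paper leaves implicit (it only records the symmetry of $G_\Omega$ later, as a remark).

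There is, however, one step in your treatment of (iii) that fails as written: the claim that ``by the maximum principle $0\le v_\eps\le G_\Omega(x,\cdot)$ pointwise.'' The difference $w=G_\Omega(x,\cdot)-v_\eps$ satisfies $\Ds w=\delta_x-\rho_\eps(\cdot-x)$ in $\Omega$, a sign-changing right-hand side, so no comparison principle applies; nor is the inequality true for structural reasons, since the fractional mean value property involves the exterior kernel $\eta_r$ and not a compactly supported mollifier, so the average $v_\eps(y)=\int_\Omega\rho_\eps(z-x)G_\Omega(z,y)\,dz$ need not lie below $G_\Omega(x,y)$ near $B_\eps(x)$. The fix is to abandon the comparison with $G_\Omega(x,\cdot)$ and dominate directly through this last formula and its exterior counterpart $\Ds v_\eps(y)=\int_\Omega\rho_\eps(z-x)\,\Ds G_\Omega(z,y)\,dz$: for $\eps<\eps_0$ with $\overline{B_{2\eps_0}(x)}\subset\Omega$, the estimates \eqref{est-green} and \eqref{est-poisson} hold uniformly for $z\in B_{\eps_0}(x)$, giving an $\eps$-independent integrable majorant of $v_\eps$ against $\Ds u$ on $\Omega\setminus B_{2\eps_0}(x)$ (here the bound $G_\Omega\lesssim\delta(y)^s$ near $\partial\Omega$ is what compensates the possible blow-up of $\Ds u$ at the boundary, which mere boundedness of $u$ does not exclude) and of $\Ds v_\eps$ against $u\in L^\infty$ on $\C\Omega$, exactly the integrable bound you already used in (ii); on the remaining small ball $B_{2\eps_0}(x)$, where $\Ds u$ is bounded, convergence of $\int v_\eps\,\Ds u$ follows from $v_\eps\to G_\Omega(x,\cdot)$ in $L^1_{loc}$ (Fubini plus the uniform bound on $\sup_z\|G_\Omega(z,\cdot)\|_{L^1(\Omega)}$). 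With this replacement your limit passage, and hence the whole argument, goes through.
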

\begin{proof}\rm We prove all conclusions step by step.

\it Proof of ii). \rm
First of all, we use the estimate
$$
|h(x,y)|\leq C\Arrowvert\Ds\gamma_s(x,\cdot)\Arrowvert_\infty\;\delta(y)^s,
$$
where $h$ solves 
$$
\forall\;x\in\Omega\quad
\left\lbrace\begin{array}{ll}
\Ds h(x,\cdot)=-\Ds\gamma_s & \hbox{ in }\Omega \\
h(x,y)=0 & \hbox{ in }\C\Omega
\end{array}\right.
$$
and $\gamma_s$ is a regularization of $\Gamma_s$ as in Lemma \ref{lem-H};
for the inequality we refer to \cite[Proposition 1.1]{rosserra}. We deduce that,
for $y$ sufficiently close to $\partial\Omega$, it holds
\[
|G_\Omega(x,y)|=|\Gamma_s(y-x)-H(x,y)|=|\gamma_s(y-x)-H(x,y)|=
|h(x,y)|\leq C\Arrowvert\Ds\gamma_s(x,\cdot)\Arrowvert_\infty \delta(y)^s.
\]
Then, for $y\in\C\Omega$,
\begin{equation}\label{est-pois}
\begin{split}
|\Ds G_\Omega(x,y)|=\left|C_{N,s}\,PV\int_{\R^N}\frac{G_\Omega(x,z)-G_\Omega(x,y)}{|y-z|^{N+2s}}\;dz\right|
=\qquad\qquad\qquad\\
=C_{N,s}\,\int_{\Omega}\frac{G_\Omega(x,z)}{|y-z|^{N+2s}}\;dz\leq
C\int_{\Omega}\frac{\delta(z)^s}{|y-z|^{N+2s}}\;dz
\end{split}
\end{equation}
and
\begin{equation}
\begin{split}
\int_{\C\Omega}|\Ds G_\Omega(x,y)|\;dy
\leq C\int_{\C\Omega}\int_{\Omega}\frac{\delta(z)^s}{|y-z|^{N+2s}}\;dz\;dy\leq\qquad\qquad\qquad\\
\leq C\int_{\Omega}\delta(z)^s\int_{\delta(z)}^{+\infty}\rho^{-1-2s}\;d\rho\;dz\leq
\frac{C}{2s}\int_\Omega\frac{dz}{\delta(z)^s}<+\infty.
\end{split}
\end{equation}

\it Proof of iii). \rm 
The function $G_\Omega(x,y)=\Gamma_s(y-x)-H(x,y)$ can be 
integrated by parts against any $u\in C^{2s+\ep}(\R^N\setminus\Omega)\cap L^\infty(\R^N)$,
since, according to Lemma \ref{lem-H}, $H(x,\cdot)\in C^{2s+\ep}(\Omega)\cap C(\overline{\Omega})$ 
and so we can apply Proposition \ref{integrbypartsform}. Hence,
\begin{equation*}
u(x)=\int_\Omega\Ds u(y)\:G_\Omega(x,y)\;dy-\int_{\C\Omega}u(y)\:\Ds G_\Omega(x,y)\;dy.
\end{equation*}

\it Proof of iv). \rm We point how the computation 
of $\Ds G_\Omega(x,y)$, $x\in\Omega,\,y\in\C\Omega$
reduces to a more readable formula:
\[
-\Ds G_\Omega(x,y)=C_{N,s}\,PV\int_{\R^N}\frac{G_\Omega(x,z)-G_\Omega(x,y)}{|z-y|^{N+2s}}\;dz\ =\\
=C_{N,s}\,\int_{\Omega}\frac{G_\Omega(x,z)}{|z-y|^{N+2s}}\;dz,
\]
by simply recalling that $G_\Omega(x,z)=0$, when $z\in\C\Omega$.

\it Proof of v). \rm $G_\Omega(x,y)\geq 0$ for $(x,y)\in\Omega\times\Omega$, $x\neq y$,
in view of Lemma \ref{maxprinc} below applied to the function $H(x,\cdot)$.
Also, from this and \eqref{green-pois} we deduce that
\begin{equation}\label{pois-pos}
-\Ds G_\Omega(x,y)=C_{N,s}\,
\int_{\Omega}\frac{G_\Omega(x,z)}{|z-y|^{N+2s}}\;dz\geq 0.
\end{equation}

\it Proof of vi). \rm It suffices to apply \eqref{green-repr}
to the solution $H(x,y)$ of
$$
\left\lbrace\begin{array}{ll}
\Ds H(x,\cdot)=0 & \hbox{ in }\Omega \\
H(x,y)=\Gamma_s(y-x) & \hbox{ in }\C\Omega
\end{array}\right.
$$
to infer 
\begin{eqnarray*}
 & \displaystyle\Gamma_s(y-x)\ =\ -\int_{\C\Omega}\Gamma_s(y-z)\cdot\Ds G_\Omega(x,z)\;dz
\qquad \hbox{for }x\in\Omega\hbox{ and }y\in\C\Omega, & \\
 & \displaystyle H(x,y)\ =\ -\int_{\C\Omega}\Gamma_s(y-z)\cdot\Ds G_\Omega(x,z)\;dz
\qquad \hbox{for }x\in\Omega\hbox{ and }y\in\Omega. &
\end{eqnarray*}
\end{proof}

\begin{lem} For any $x\in\Omega,\,\theta\in\partial\Omega$ the function
$$
M_\Omega(x,\theta)=\lim_{\stackrel{\hbox{\scriptsize $y\in\Omega$}}{y\rightarrow\theta}}
\frac{G_\Omega(x,y)}{{\delta(y)}^s}
$$
is well-defined in $\Omega\times\partial\Omega$ and
for any $h\in C(\partial\Omega),\,\psi\in C^\infty_c(\Omega)$ one has
$$
\int_\Omega\left(\int_{\partial\Omega}M_\Omega(x,\theta)\,h(\theta)\;d\mathcal{H}(\theta)\right)\psi(x)\;dx
=\int_{\partial\Omega}h(\theta)\,D_s\phi(\theta)\;d\mathcal{H}(\theta),
$$
where
$$
D_s\phi(\theta)=\lim_{\stackrel{\hbox{\scriptsize $y\in\Omega$}}{y\rightarrow\theta}}
\frac{\phi(y)}{{\delta(y)}^s}
\qquad\hbox{ and }\qquad
\phi(y)=\int_\Omega G_\Omega(x,y)\,\psi(x)\;dx.
$$
\end{lem}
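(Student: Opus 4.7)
The plan is to split the argument into two parts: establishing existence of $M_\Omega(x,\theta)$ as a boundary limit via boundary regularity of the Green function, and then deriving the integral identity through dominated convergence and Fubini's theorem.

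For the first part, fix $x\in\Omega$. The function $G_\Omega(x,\cdot)$ vanishes identically in $\C\Omega$, is smooth in $\Omega\setminus\{x\}$, and satisfies $\Ds G_\Omega(x,\cdot)=0$ in a neighbourhood of $\partial\Omega$. After excising the singularity at $y=x$ by a smooth cut-off supported away from $\partial\Omega$, one obtains a function $v\in C^s(\R^N)$ which coincides with $G_\Omega(x,\cdot)$ in a boundary strip, vanishes in $\C\Omega$, and satisfies $\Ds v\in L^\infty(\Omega)$. The boundary regularity result \cite[Proposition 1.1]{rosserra} then implies that $v/\delta^s$ admits a continuous (in fact $C^\alpha$) extension to $\overline\Omega$, so that the limit defining $M_\Omega(x,\theta)$ exists for every $\theta\in\partial\Omega$ and depends continuously on $\theta$.

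Turning to the integral identity, divide $\phi(y)=\int_\Omega G_\Omega(x,y)\,\psi(x)\,dx$ by $\delta(y)^s$ to obtain
\[
\frac{\phi(y)}{\delta(y)^s}\ =\ \int_\Omega\frac{G_\Omega(x,y)}{\delta(y)^s}\,\psi(x)\,dx.
\]
Let $K=\mathrm{supp}\,\psi\subset\subset\Omega$ and restrict to $y$ with $\delta(y)<\tfrac12\,\dist(K,\partial\Omega)$; in this range $|x-y|\geq c_K>0$ for every $x\in K$ and, provided $\delta(y)$ is small enough, $\delta(x)\delta(y)\leq|x-y|^2$, so the Green function estimate \eqref{est-green} yields the uniform bound $G_\Omega(x,y)/\delta(y)^s\leq c_2\,\delta(x)^s/|x-y|^N\leq C$ for $x\in K$. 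Dominated convergence therefore gives the pointwise limit
\[
D_s\phi(\theta)\ =\ \int_\Omega M_\Omega(x,\theta)\,\psi(x)\,dx,\qquad\theta\in\partial\Omega.
\]

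Finally, multiplying by $h(\theta)$, integrating over $\partial\Omega$, and exchanging the order of integration via Fubini's theorem produces the asserted identity. Fubini applies without difficulty because $(x,\theta)\mapsto M_\Omega(x,\theta)\,\psi(x)\,h(\theta)$ is continuous and uniformly bounded on the compact set $K\times\partial\Omega$. I expect the principal obstacle to be the uniform dominating bound at the dominated convergence step; this is dispatched cleanly by the two-sided Green function estimate \eqref{est-green} combined with the fact that $\mathrm{supp}\,\psi$ stays away from $\partial\Omega$.
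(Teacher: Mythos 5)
Your proposal is correct in substance, but it takes a genuinely different route from the paper's. The paper proves the identity by an approximation of the boundary datum: it introduces the boundary-layer functions $f_\ep(y)=h(\theta(y))\,\varphi(\delta(y)/\ep)\,\delta(y)^{-s}/K_\ep$, sets $u_\ep=\int_\Omega G_\Omega(\cdot,y)f_\ep(y)\,dy$, and computes $\int_\Omega u_\ep\,\psi$ in two ways as $\ep\downarrow0$: once by recognizing $\psi_1(y)=\int_\Omega\psi(x)\,G_\Omega(x,y)\,\delta(y)^{-s}\,dx$ and its continuity up to $\partial\Omega$, and once by recognizing $\phi(y)/\delta(y)^s$ inside the integral; equating the two limits gives the statement, while the existence of $M_\Omega$ is essentially delegated to the cited boundary behaviour of $G_\Omega$ from \cite{chen}. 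You instead prove the existence (and continuity in $\theta$) of $M_\Omega(x,\theta)$ directly, by cutting off the singularity of $G_\Omega(x,\cdot)$ and invoking the boundary regularity of $u/\delta^s$ from \cite{rosserra}, then obtain the pointwise identity $D_s\phi(\theta)=\int_\Omega M_\Omega(x,\theta)\,\psi(x)\,dx$ by dominated convergence --- legitimate, since the support of $\psi$ stays away from $\partial\Omega$ and \eqref{est-green} gives a uniform bound there --- and conclude by Fubini. Your route is shorter and makes the well-posedness of $M_\Omega$ explicit, a point the paper treats rather quickly; the paper's $\ep$-layer construction is heavier here, but it is reused later (in the third case of the proof of Theorem \ref{pointwise}) to show that $\int_{\partial\Omega}M_\Omega(\cdot,\theta)h(\theta)\,d\mathcal{H}(\theta)$ is $s$-harmonic, so it earns its keep elsewhere.

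Three small repairs. First, the result you need from \cite{rosserra} is Theorem 1.2 (continuity of $u/\delta^s$ up to $\overline\Omega$, indeed $C^\alpha(\overline{\Omega})$), not Proposition 1.1, which only yields $u\in C^s(\R^N)$ and $|u|\le C\delta^s$. Second, the claim $\Ds v\in L^\infty(\Omega)$ for the truncated Green function deserves a line: writing $v=G_\Omega(x,\cdot)-(1-\kappa)G_\Omega(x,\cdot)$, the first term is pointwise $s$-harmonic in $\Omega\setminus\{x\}$, while the second is an $L^1$ function supported in a compact subset of $\Omega$ at positive distance from both $\partial\Omega$ and the transition region, so its fractional Laplacian is bounded where $v$ must be controlled. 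Third, in the Fubini step you assert joint continuity of $M_\Omega(x,\theta)$ on $K\times\partial\Omega$, which you have not proved and do not need: uniform boundedness there (e.g. from \eqref{chen-martin}, since $|x-\theta|\ge\dist(K,\partial\Omega)>0$) together with joint measurability (as a pointwise limit of the continuous functions $(x,\theta)\mapsto G_\Omega(x,\theta-t\nu(\theta))\,\delta(\theta-t\nu(\theta))^{-s}$ as $t\downarrow0$) already justifies the exchange of integrals.
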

\begin{proof}\rm
For a small parameter $\ep>0$ define $\Omega_\ep=\{x\in\overline{\Omega}:0\leq\delta(x)<\ep\}$:
associate to any $x\in\Omega$ a couple $(\rho,\theta)$ where $\rho=\delta(x)$
and $\theta\in\partial\Omega$ satisfies $|x-\theta|=\rho$: such a $\theta$
is uniquely determined for small $\ep$ since $\partial\Omega\in C^{1,1}$.
Take also $\varphi\in C^\infty(\R)$, with $\varphi(0)=1$ and supported in $[-1,1]$.
With a slight abuse of notation define
\begin{equation}\label{h-approx}
f_\ep(x)=f_\ep(\rho,\theta)=h(\theta)\,\frac{\varphi(\rho/\ep)\,\rho^{-s}}{K_\ep},
\quad K_\ep=\frac{1}{1+s}\int_0^\ep\varphi(r/\ep)\;dr.
\end{equation}
Consider the functions
$$
u_\ep(x)=\int_{\Omega}f_\ep(y)\,G_\Omega(x,y)\;dy,
$$
and any function $\psi\in C^\infty_c(\Omega)$:
\begin{eqnarray*}
\int_\Omega u_\ep\psi & = & 
\int_{\Omega}f_\ep(y)\left[\int_\Omega\psi(x)\,G_\Omega(x,y)\;dx\right]dy
=\int_{\Omega_\ep}h(\theta(y))\,\frac{\varphi(\delta(y)/\ep)\,{\delta(y)}^{-s}}{K_\ep}\left[\int_\Omega\psi(x)\,G_\Omega(x,y)\;dx\right]dy \\
& = & \int_{\Omega_\ep}h(\theta(y))\,\frac{\varphi(\rho(y)/\ep)}{K_\ep}
\left[\int_\Omega\psi(x)\,\frac{G_\Omega(x,y)}{{\delta(y)}^s}\;dx\right]\;dy.
\end{eqnarray*}
Note that the function $\psi_1(y)=\int_\Omega\psi(x)\,G_\Omega(x,y)\,{\delta(y)}^{-s}\;dx$ is 
continuous in $\overline{\Omega_\ep}$, as a consequence of the boundary
behaviour of $G_\Omega$, see \cite[equation (2.13)]{chen}.
Then
\begin{multline*} 
\int_{\Omega_\ep}h(\theta(y))\,\frac{\varphi(\rho(y)/\ep)}{K_\ep}\,\psi_1(y)\;dy=
\frac{1}{(1+s)\,K_\ep}\int_0^\ep\varphi(r/\ep)\left(\int_{\partial\Omega}h(\theta)\,\psi_1(r,\theta)\;d\mathcal{H}(\theta)\right)\;dr \\
\xrightarrow{\ep\downarrow0}\ \int_{\partial\Omega}h(\theta)\,\psi_1(0,\theta)\;d\mathcal{H}(\theta)
=\int_{\partial\Omega}h(\theta)\int_\Omega \psi(x)\lim_{y\rightarrow\theta}\frac{G_\Omega(x,y)}{{\delta(y)}^{s}}
\;dx\;d\mathcal{H}(\theta).
\end{multline*}
Hence
\begin{equation}\label{000}
\int_\Omega u_\ep\psi\xrightarrow{\ep\downarrow0}\int_\Omega
\left(\int_{\partial\Omega}M_\Omega(x,\theta)\,h(\theta)\;d\mathcal{H}(\theta)\right)\psi(x)\;dx
\end{equation}
where
$$
M_\Omega(x,\theta)=\lim_{\stackrel{\hbox{\scriptsize $y\in\Omega$}}{y\rightarrow\theta}}\frac{G_\Omega(x,y)}{{\delta(y)}^{s}}.
$$
Note that, always in view of the boundary estimates on $G_\Omega$, the function $M_\Omega(x,\cdot)\in L^\infty(\partial\Omega)$ for any fixed $x\in\Omega$.
In addition,
\begin{multline}\label{001}
\int_\Omega u_\ep\psi =
\int_{\Omega_\ep}h(\theta(y))\,\frac{\varphi(\rho(y)/\ep)}{K_\ep}\left[\int_\Omega\psi(x)\,G_\Omega(x,y)\;dx\right]\;dy\ =\\
=\ \int_{\Omega_\ep}h(\theta(y))\,\frac{\varphi(\rho(y)/\ep)\,\delta(y)^s}{K_\ep\,\delta(y)^s}\,\phi(y)\;dy\  
\xrightarrow[]{\ep\downarrow 0}\  
\int_{\partial\Omega}h(\theta)\,D_s\phi(\theta)\;d\mathcal{H}(\theta).
\end{multline}
So the limits \eqref{000} and \eqref{001} must coincide.
\end{proof}

\begin{rmk}\rm The function $M_\Omega(x,\theta)$ we have just introduced
is closely related to the \it Martin kernel based at $x_0\in\Omega$\rm 
$$
\widetilde M_\Omega(x,\theta)=\lim_{\stackrel{\hbox{\scriptsize $y\in\Omega$}}{y\rightarrow\theta}}
\frac{G_\Omega(x,y)}{G_\Omega(x_0,y)}.
$$
For this reason we borrow
the usual notation of the Martin kernel.
\end{rmk}

\begin{lem}\label{Eu} For any $h\in C(\partial\Omega)$ define
$$
u(x)=\int_{\partial\Omega}M_\Omega(x,\theta)h(\theta)\;d\mathcal{H}(\theta),
\qquad x\in\Omega.
$$
Then for any $\theta^*\in\partial\Omega$
$$
Eu(\theta^*):=\lim_{\stackrel{\hbox{\scriptsize $x\in\Omega$}}{x\rightarrow\theta^*}}
\frac{u(x)}{\int_{\partial\Omega} M_\Omega(x,\theta)\;d\mathcal{H}(\theta)}=h(\theta^*).
$$
\end{lem}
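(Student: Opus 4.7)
The plan is to show that as $x\to\theta^*$, the Martin kernel $M_\Omega(x,\cdot)$ behaves like an approximation of the identity on $\partial\Omega$ concentrated at $\theta^*$, in which case the weighted average of $h$ tends to $h(\theta^*)$ by standard continuity reasoning. Concretely, I would write
\[
\frac{u(x)}{\int_{\partial\Omega} M_\Omega(x,\theta)\,d\mathcal{H}(\theta)}-h(\theta^*)
\ =\ \frac{\int_{\partial\Omega}\bigl(h(\theta)-h(\theta^*)\bigr)\,M_\Omega(x,\theta)\,d\mathcal{H}(\theta)}{\int_{\partial\Omega} M_\Omega(x,\theta)\,d\mathcal{H}(\theta)}
\]
and, for fixed $\varepsilon>0$, pick a neighborhood $U\subseteq\partial\Omega$ of $\theta^*$ on which $|h-h(\theta^*)|<\varepsilon$ (continuity of $h$). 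Splitting the numerator into the integrals over $U$ and $\partial\Omega\setminus U$, the first piece is immediately bounded by $\varepsilon\int_{\partial\Omega} M_\Omega(x,\theta)\,d\mathcal{H}(\theta)$, so contributes at most $\varepsilon$ after dividing. Everything reduces to showing that the mass of $M_\Omega(x,\cdot)$ outside $U$ is negligible compared with its total mass as $x\to\theta^*$.

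For this I would invoke the two-sided Green function estimate \eqref{est-green}: for $y\in\Omega$ sufficiently close to $\theta\in\partial\Omega$ one has $\delta(x)\delta(y)<|x-y|^2$, so letting $y\to\theta$ yields
\[
\frac{1}{c}\cdot\frac{\delta(x)^s}{|x-\theta|^N}
\ \leq\ M_\Omega(x,\theta)\ \leq\ c\cdot\frac{\delta(x)^s}{|x-\theta|^N},
\qquad x\in\Omega,\ \theta\in\partial\Omega.
\]
From the upper bound, for $\theta\in\partial\Omega\setminus U$ we have $|x-\theta|\geq\eta>0$ when $x$ is close enough to $\theta^*$, hence
\[
\int_{\partial\Omega\setminus U}M_\Omega(x,\theta)\,d\mathcal{H}(\theta)\ \leq\ \frac{c\,\mathcal{H}(\partial\Omega)}{\eta^N}\,\delta(x)^s\ \xrightarrow{x\to\theta^*}\ 0.
\]
From the lower bound, an explicit estimate in boundary-adapted coordinates (writing $\theta=\theta^*+r\,\omega$ on $\partial\Omega$ and using $|x-\theta|\lessgtr\delta(x)+r$) gives
\[
\int_{\partial\Omega}M_\Omega(x,\theta)\,d\mathcal{H}(\theta)\ \geq\ \frac{1}{c'}\,\delta(x)^{s-1}\ \xrightarrow{x\to\theta^*}\ +\infty,
\]
since $s<1$. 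Therefore the ratio of the $\partial\Omega\setminus U$ contribution to the total integral tends to $0$, and hence the $\partial\Omega\setminus U$ part of the numerator, bounded by $2\|h\|_\infty\int_{\partial\Omega\setminus U}M_\Omega(x,\cdot)\,d\mathcal{H}$, becomes $o(1)$ after dividing by the denominator.

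Combining these two pieces, for any $\varepsilon>0$
\[
\limsup_{x\to\theta^*}\left|\frac{u(x)}{\int_{\partial\Omega}M_\Omega(x,\theta)\,d\mathcal{H}(\theta)}-h(\theta^*)\right|\ \leq\ \varepsilon,
\]
and the conclusion follows. The only genuinely delicate step is the quantitative control on the total mass $\int_{\partial\Omega} M_\Omega(x,\theta)\,d\mathcal{H}(\theta)$: one must verify in suitable local coordinates near $\theta^*$ (exploiting $\partial\Omega\in C^{1,1}$) that the lower Green-function bound indeed integrates to at least $c\,\delta(x)^{s-1}$; everything else is a straightforward continuity-and-mass-concentration argument.
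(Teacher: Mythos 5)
Your argument is correct in substance and uses the same basic ingredients as the paper (the two-sided bound $M_\Omega(x,\theta)\asymp\delta(x)^s|x-\theta|^{-N}$, i.e.\ \eqref{chen-martin}, plus a local computation near the boundary), but it is organized differently. The paper normalizes by $\delta(x)^{1-s}$, observes that $L(x)=\delta(x)^{1-s}\int_{\partial\Omega}M_\Omega(x,\theta)\,d\mathcal{H}(\theta)$ is bounded (above and, implicitly, below), and thereby reduces everything to showing that $\delta(x)\int_{\partial\Omega}|h(\theta)-h(\theta^*)|\,|x-\theta|^{-N}\,d\mathcal{H}(\theta)\to 0$; this is then proved by writing $\partial\Omega$ as a $C^{1,1}$ graph around the projection $\theta_x$ of $x$ and estimating with the modulus of continuity of $h$ on a coordinate patch $\Gamma$, the complement contributing $O(\delta(x))$. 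You instead perform the near/far splitting at the level of an $\varepsilon$-oscillation neighborhood $U$ of $\theta^*$, bound the far part crudely by $\|h\|_\infty$ times a kernel mass of order $\delta(x)^s$, and kill it by dividing by the total mass $\int_{\partial\Omega}M_\Omega(x,\theta)\,d\mathcal{H}(\theta)$, which blows up; the near part costs only $\varepsilon$. The trade-off is that your route needs a quantitative \emph{lower} bound on the denominator, whereas the paper only needs $L$ bounded and pushes the modulus of continuity through the weighted integral; both ultimately rest on the same one-dimensional boundary integral computation.

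One imprecision in your delicate step: parametrizing $\theta=\theta^*+r\omega$ and claiming $|x-\theta|\asymp\delta(x)+r$ is not valid when $x$ approaches $\theta^*$ tangentially (then $\delta(x)\ll|x-\theta^*|$, and already $\theta=\theta^*$ violates the upper bound). To obtain $\int_{\partial\Omega}M_\Omega(x,\theta)\,d\mathcal{H}(\theta)\gtrsim\delta(x)^{s-1}$ you should center the graph coordinates at the projection $\theta_x$ of $x$ onto $\partial\Omega$, exactly as the paper does, so that $|x-\theta|\lesssim\delta(x)+|\theta-\theta_x|$; alternatively, centering at $\theta^*$ only yields the weaker bound $\gtrsim\delta(x)^s|x-\theta^*|^{-1}$, which, however, still suffices for your argument since the far contribution then decays like $|x-\theta^*|$. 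With that adjustment your proof is complete.
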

\begin{proof}\rm
Denote by
$$
L(x)={\delta(x)}^{1-s}\int_{\partial\Omega} M_\Omega(x,\theta)\;d\mathcal{H}(\theta),
\qquad x\in\Omega
$$
for which we have
$$
\int_{\partial\Omega}\frac{\delta(x)}{c\,{|x-\theta|}^N}\;d\mathcal{H}(\theta)
\leq L(x)\leq
\int_{\partial\Omega}\frac{c\,\delta(x)}{{|x-\theta|}^N}\;d\mathcal{H}(\theta),
$$
and so $L$ is a bounded quantity.
Indeed, referring to estimates on the Green function in \cite[equation (2.13)]{chen}, we have inequalities
\begin{equation}\label{chen-martin}
\frac{{\delta(x)}^s}{c\,{|x-\theta|}^N}\leq M_\Omega(x,\theta)\leq\frac{c\,{\delta(x)}^s}{{|x-\theta|}^N},
\qquad x\in\Omega,\theta\in\partial\Omega.
\end{equation} 
Thus,
\begin{multline*}
\left|\frac{{\delta(x)}^{1-s}\,u(x)}{L(x)}-h(\theta^*)\right|=
\left|\frac{{\delta(x)}^{1-s}}{L(x)}\int_{\partial\Omega}M_\Omega(x,\theta)h(\theta)\;d\mathcal{H}(\theta)
-h(\theta^*)\frac{{\delta(x)}^{1-s}}{L(x)}\int_{\partial\Omega}M_\Omega(x,\theta)\;d\mathcal{H}(\theta)\right|\ \leq\\
\leq\ \frac{{\delta(x)}^{1-s}}{L(x)}\int_{\partial\Omega}
M_\Omega(x,\theta)\left|h(\theta)-h(\theta^*)\right|\;d\mathcal{H}(\theta)
\leq C\delta(x)\int_{\partial\Omega}\frac{\left|h(\theta)-h(\theta^*)\right|}{{|x-\theta|}^N}\;d\mathcal{H}(\theta).
\end{multline*}
Call $\theta_x\in\partial\Omega$ the point satisfying $\delta(x)=|x-\theta_x|$.
Describe $\partial\Omega$ as a graph in a neighbourhood of $0$,
i.e. $\Gamma\subseteq\partial\Omega$ open, and 
\begin{eqnarray*}
 & \Gamma\ni\theta=(\theta',\phi(\theta'))\hbox{ for some }\phi:B_r'\subseteq\R^{N-1}\rightarrow\R,\ (0,\phi(0))=\theta_x, & \\
& \phi\in C^{1,1}(B_r(0)),\ \nabla\phi(0)=0. &
\end{eqnarray*}
It holds
$\theta^*\in\Gamma$ for $x$ close enough to $\theta^*$.
Let us now write, 
$$
\int_{\Gamma}\frac{\left|h(\theta)-h(\theta^*)\right|}{{|x-\theta|}^N}\;d\mathcal{H}(\theta)
\leq \int_{\Gamma}\frac{\left|h(\theta)-h(\theta^*)\right|}
{\left[|x-\theta_x|^2+|\theta_x-\theta|^2-2\langle x-\theta_x,\theta_x-\theta\rangle\right]^{N/2}}
\;d\mathcal{H}(\theta)
$$
where
$$
2\langle x-\theta_x,\theta_x-\theta\rangle=2|x-\theta_x|\cdot|\theta_x-\theta|
\left\langle\frac{x-\theta_x}{|x-\theta_x|},\frac{\theta_x-\theta}{|\theta_x-\theta|}\right\rangle
\leq 2\mu|x-\theta_x|\cdot|\theta_x-\theta|,\ \theta\in\Gamma
$$
for some $\mu<1$, since $x-\theta_x$ is normal to $\partial\Omega$, 
while $\theta_x-\theta$ is ``almost tangent'' to $\partial\Omega$. Then
$$
\int_{\Gamma}\frac{\left|h(\theta)-h(\theta^*)\right|}{{|x-\theta|}^N}\;d\mathcal{H}(\theta)\leq
\frac{1}{(1-\mu)^{N/2}}\int_\Gamma\frac{\left|h(\theta)-h(\theta^*)\right|}
{\left[|x-\theta_x|^2+|\theta_x-\theta|^2\right]^{N/2}}\;d\mathcal{H}(\theta)
$$
Suppose without loss of generality that $\theta_x=0$ and denote by $\omega$ the modulus of continuity of $h$:
\begin{align*}
& \int_{\Gamma}\frac{\left|h(\theta)-h(\theta^*)\right|}{{|x-\theta|}^N}\;d\mathcal{H}(\theta)\leq
\frac{\sup_\Gamma\omega(|\theta-\theta^*|)}{(1-\mu)^{N/2}}\int_{B_r'}\frac{d\theta'}{\left[|x|^2+|\theta'|^2+|\phi(\theta')|^2\right]^{N/2}} \leq \\
& \leq  
\frac{\sup_\Gamma\omega(|\theta-\theta^*|)}{(1-\mu)^{N/2}}\int_0^r\frac{\rho^{N-2}\;d\rho}{\left[|x|^2+\rho^2\right]^{N/2}} 
\leq \\
& \leq\frac{\sup_\Gamma\omega(|\theta-\theta^*|)}{(1-\mu)^{N/2}}\int_0^r\frac{d\rho}{|x|^2+\rho^2}
\leq\frac{\sup_\Gamma\omega(|\theta-\theta^*|)}{|x|\,(1-\mu)^{N/2}}
\end{align*}
and therefore
\begin{multline*}
\delta(x)\int_{\partial\Omega}\frac{\left|h(\theta)-h(\theta^*)\right|}{{|x-\theta|}^N}\;d\mathcal{H}(\theta)=
\delta(x)\int_{\Gamma}\frac{\left|h(\theta)-h(\theta^*)\right|}{{|x-\theta|}^N}\;d\mathcal{H}(\theta)+
\delta(x)\int_{\partial\Omega\setminus\Gamma}\frac{\left|h(\theta)-h(\theta^*)\right|}{{|x-\theta|}^N}\;d\mathcal{H}(\theta)
\ \leq\\ \ \leq 
\frac{\sup_\Gamma\omega(|\theta-\theta^*|)}{(1-\mu)^{N/2}}+o(\delta(x)).
\end{multline*}
Now, since $\sup_\Gamma\omega(|\theta-\theta^*|)\leq\omega(\hbox{diam}\Gamma)$,
where diam$\Gamma=\sup_{\theta,\theta'\in\Gamma}|\theta-\theta'|$, and $\Gamma$ is arbitrary,
we deduce
$$
\delta(x)\int_{\partial\Omega}\frac{\left|h(\theta)-h(\theta^*)\right|}{{|x-\theta|}^N}\;d\mathcal{H}(\theta)
\xrightarrow[x\rightarrow\theta^*]{}0.
$$
\end{proof}
\medskip

\subsection{Linear theory for smooth data: proof of Theorem \ref{pointwise}}

We start by stating

\begin{lem}[maximum principle]\label{maxprinc} 
Let $\mathcal{O}\subset\R^N$ be an open
set such that $\overline{\Omega}\subset\mathcal{O}$.
Let $u:\R^N\rightarrow\R$ be a measurable function in
$C^{2s+\ep}(\Omega)\cap C(\mathcal{O})$,
and
$$
\int_{\R^N}\frac{|u(y)|}{1+|y|^{N+2s}}\;dy<+\infty,\qquad\hbox{and}\qquad
\left\lbrace\begin{array}{ll}
\Ds u\leq 0 & \hbox{ in }\Omega \\
u\leq 0 & \hbox{ in }\C\Omega.
\end{array}\right.
$$
Then $u\leq 0$ in $\Omega$.
\end{lem}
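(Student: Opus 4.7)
The plan is to argue by contradiction. Suppose, toward a contradiction, that $\sup_\Omega u > 0$. Since $\Omega$ is bounded and $u \in C(\mathcal{O})$ with $\overline{\Omega}\subset\mathcal{O}$, the function $u$ attains its maximum on the compact set $\overline{\Omega}$ at some point $x_0\in\overline{\Omega}$ with $u(x_0)>0$. By continuity of $u$ on $\mathcal{O}$, the inequality $u\leq 0$ on $\C\Omega$ extends to $\partial\Omega$ (every boundary point is a limit of points in $\C\Omega$). Therefore $x_0$ cannot lie on $\partial\Omega$, so $x_0\in\Omega$.

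The next step is to compute $\Ds u(x_0)$ via the singular integral representation \eqref{def1}. The principal value is well-defined at $x_0$ because $u\in C^{2s+\ep}(\Omega)$ near $x_0$ (the usual Taylor-expansion argument, as employed in the proof of Theorem \ref{meanvalue} above, shows the symmetric difference quotient is integrable). At the maximum point we have $u(x_0)-u(y)\geq 0$ for every $y\in\overline{\Omega}$, so splitting the integral into the contributions from $\Omega$ and from $\C\Omega$ gives
\[
\Ds u(x_0)\ =\ C_{N,s}\,p.v.\int_{\Omega}\frac{u(x_0)-u(y)}{|x_0-y|^{N+2s}}\;dy\ +\ C_{N,s}\int_{\C\Omega}\frac{u(x_0)-u(y)}{|x_0-y|^{N+2s}}\;dy,
\]
where the first term is nonnegative while the second is strictly positive, since $u(y)\leq 0<u(x_0)$ for $y\in\C\Omega$ implies
\[
\int_{\C\Omega}\frac{u(x_0)-u(y)}{|x_0-y|^{N+2s}}\;dy\ \geq\ u(x_0)\int_{\C\Omega}\frac{dy}{|x_0-y|^{N+2s}}\ >\ 0.
\]
Finiteness of the $\C\Omega$ integral follows from the assumption $\int_{\R^N}|u(y)|(1+|y|^{N+2s})^{-1}\,dy<+\infty$ combined with $\dist(x_0,\C\Omega)>0$.

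Combining the two signs yields $\Ds u(x_0)>0$, which contradicts the hypothesis $\Ds u\leq 0$ in $\Omega$. Hence $\sup_\Omega u\leq 0$, which is the desired conclusion. The only mildly delicate point is the justification that the maximum is interior (handled by the continuity on $\mathcal{O}$, not merely $\overline{\Omega}$, which is exactly why the hypothesis is phrased with the enlarged open set $\mathcal{O}$); everything else is a direct sign analysis of the pointwise singular integral.
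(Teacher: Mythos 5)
Your proof is correct and is essentially the paper's own argument: both locate an interior point where $u$ attains a positive global maximum (the paper via the set $\{u>0\}$, you via the maximum over $\overline{\Omega}$ and continuity on $\mathcal{O}$) and then derive the contradiction $\Ds u(x_0)>0$ from the sign of the singular integral, using $u\le 0$ on $\C\Omega$ for strict positivity. The splitting of the integral over $\Omega$ and $\C\Omega$ is only a cosmetic variation on the paper's single principal-value integral over $\R^N$.
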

\begin{proof}\rm Call $\Omega^+=\{x\in\R^N:u>0\}$:
$\Omega^+$ is an open set contained in $\Omega$.
Assume by contradiction that $\Omega^+\neq\emptyset.$
By continuity of $u$ in $\overline{\Omega^+}\subset\mathcal{O}$, 
there exists $x_0\in\overline{\Omega^+}$ such that
$u(x_0)=\max\{u(x):x\in\overline{\Omega^+}\}$; moreover,
$x_0\in\Omega^+$, since $u\leq 0$ on $\partial\Omega^+$.
This point $x_0$
will be also a global maximum for $u$ since outside $\Omega^+$
the function $u$ is nonpositive.
Thus
$$
\Ds u(x_0)=C_{N,s}\,
PV\int_{\R^N}\frac{u(x_0)-u(y)}{|x_0-y|^{N+2s}}>0
$$
contradicting our hypotheses. Therefore $\Omega^+$ is empty.
\end{proof}

By splitting $g$ into its positive and negative part,
it suffices to prove Theorem \ref{pointwise} in the case where $g\geq0$.
So, from now on we will deal with nonnegative boundary data $g:\C\Omega\rightarrow[0,+\infty)$
which are measurable functions with
\begin{equation}\label{g}
0\leq-\int_{\C\Omega}g(y)\cdot\Ds G_\Omega(x,y)\;dy<+\infty
\qquad x\in\Omega.
\end{equation}
Note that, in view of equation \eqref{est-pois}, for $x\in\Omega,\ y\in\C\Omega$
\begin{equation}\label{est-pois2}
0\leq-\Ds G_\Omega(x,y)\leq C\int_{\C\Omega}\frac{\delta(y)^s}{\left|y-z\right|^{N+2s}}\;dz
\leq C\int_{\delta(y)}^{+\infty}\frac{d\rho}{\rho^{1+s}}=\frac{C}{s}\frac{1}{\delta(y)^s}.
\end{equation}
\medskip

\begin{proof}[Proof of Theorem \ref{pointwise}]
We split the proof by building the solution associated to each 
datum $f$, $g$ and $h$ separately.

\paragraph{First case: $f,\,h\equiv 0$.}

We present here a readaptation of \cite[Lemma 1.13]{landkof}.

The function $u$ defined by equation \eqref{linearsol} is 
continuous in $\Omega$ as an application of the Dominated Convergence Theorem
and inequality \eqref{est-pois2}.
The continuity up to the boundary is postponed to Paragraph \ref{cont-sharm-sec}.

For the sake of clarity we divide the proof in four steps:
for special forms of $g$, for $g$ regular enough, for $g$ bounded
and finally for any other $g$.

\it Step 1. \rm Suppose we have a measure $\nu$, such that $\nu(\Omega)=0$ and
$$
g(x)=\int_{\C\Omega}\Gamma_s(x-y)\;d\nu(y)\quad x\in\C\Omega.
$$
Then set
$$
\widetilde{u}(x)=\int_{\R^N}\Gamma_s(x-y)\;d\nu(y)\qquad\hbox{for any } x\in\R^N:
$$
\begin{description}
\item[\it a)] $\widetilde{u}=g$ in $\C\Omega$, since $\nu$ is supported in $\C\Omega$,
\item[\it b)] $\widetilde{u}=u$ in $\Omega$, where $u$ is given by
$$
u(x)=-\int_{\C\Omega}g(y)\,\Ds G_\Omega(x,y)\;dy,\quad x\in\Omega,
$$
indeed,
\begin{align*}
& -\int_{\C\Omega}g(y)\cdot\Ds G_\Omega(x,y)\;dy \ =
\ -\int_{\C\Omega}\Ds G_\Omega(x,y)\left(\int_{\C\Omega}\Gamma_s(z-y)\;d\nu(z)\right)\;dy \\ 
& =\ -\int_{\C\Omega}\left(\int_{\C\Omega}\Gamma_s(z-y)\cdot\Ds G_\Omega(x,y)\;dy\right)\;d\nu(z) \\ 
& \hbox{\small [in view of equation \eqref{gammas-eq}]} \\
& = \int_{\C\Omega}\Gamma_s(z-y)\;d\nu(z)
\ = \ \widetilde{u}(x),
\end{align*}
\item[\it c)] $u$ is $s$-harmonic in $\Omega$:
\begin{align*}
& \left(\eta_r*u\right)(x) \ = \
\int_{\C B_r(x)}\eta_r(y-x)\cdot u(y)\;dy \ =\ 
\int_{\C B_r(x)}\eta_r(y-x)\left(\int_{\R^N}\Gamma_s(z-y)\;d\nu(z)\right)\;dy \\
& =\ \int_{\R^N}\left(\int_{\C B_r(x)}\Gamma_s(z-y)\,\eta_r(y-x)\;dy\right)\;d\nu(z)\
 =\ \int_{\R^N}\Gamma_s(x-z)\;d\nu(z) \ = \ u(x)
\end{align*}
by choosing $0<r<\delta(x)\leq|z-x|$
and exploiting the $s$-harmonicity of $\Gamma_s$.
\end{description}

\it Step 2. \rm If $g\in C^\infty(\overline{\C\Omega})$ and supp$\,g$ is bounded,
then $g$ admits an extension $\tilde{g}\in C^\infty_c(\R^N)$ and (see \cite[Lemma 1.1]{landkof})
there exists an absolutely continuous measure $\nu$, with 
density $\Psi$, such that 
$$
\tilde{g}(x)=\int_{\C\Omega}\Gamma_s(x-y)\;d\nu(y)
\quad\hbox{for any $x\in\R^N$}.
$$
Denote by $\nu_\Omega$ the measure obtained by restricting $\nu$ to $\Omega$, i.e.
$\nu_\Omega(A)=\nu(A\cap\Omega)$ for any measurable $A$ and $\nu_\Omega$
has density $\Psi_\Omega=\Psi\chi_\Omega$, and 
$$
\nu_\Omega'(y)=\left\lbrace\begin{array}{ll}
\int_\Omega\Ds G_\Omega(x,y)\;d\nu_\Omega(x)
=-\int_\Omega\Ds G_\Omega(x,y)\,\Psi_\Omega(x)\;dx & y\in\C\Omega \\
0 & y\in\Omega
\end{array}\right.:
$$
the integral is well-defined because $\Psi_\Omega\in L^1(\Omega)$
while $\Ds G_\Omega(\cdot,y)\in C(\overline\Omega)$ for any fixed $y\in\C\Omega$
as a consequence of $G_\Omega(\cdot,y)\in C(\overline\Omega)$ and equation \eqref{green-pois}.
Define $\gamma=\nu-\nu_\Omega+\nu_\Omega'$ which is a measure supported in $\C\Omega$.
Then, when $x\in\C\Omega$,
\[
\int_{\R^N}\Gamma_s(x-y)\;d\nu_\Omega'(y)=
-\int_\Omega\left(\int_{\C\Omega}\Gamma_s(x-y)\,\Ds G_\Omega(z,y)\;dy\right)
\;d\nu_\Omega(z)=\int_\Omega\Gamma_s(x-z)\;d\nu_\Omega(z)
\]
where we have used \eqref{gammas-eq}. Therefore
$$
\int_{\C\Omega}\Gamma_s(x-y)\;d\gamma(y)
=\int_{\C\Omega}\Gamma_s(x-y)\;d\nu(y)=\tilde{g}(x)
$$
so that we can apply the previous step of the proof.

\it Step 3. \rm For $g\in L^\infty(\C\Omega)$,
consider a sequence 
${\{g_n\}}_{n\in\N}\subseteq C^\infty(\overline{\C\Omega})$ uniformly bounded 
and converging pointwisely to $g$.
The corresponding sequence of $s$-harmonic functions $u_n$ converges to $u$,
since
\[
u_n(x)=-\int_{\C\Omega}g_n(y)\,\Ds G_\Omega(x,y)\;dy
\xrightarrow[n\uparrow+\infty]{}
u(x)=-\int_{\C\Omega}g(y)\,\Ds G_\Omega(x,y)\;dy
\]
by Dominated Convergence.
Then, again by the Dominated Convergence theorem we have
$$
u_n\ =\ \eta_\delta*u_n\longrightarrow\eta_\delta*u,\quad \hbox{in }\Omega
$$
therefore
$$
u\ =\ \lim u_n\ =\ \eta_\delta*u,\quad \hbox{in }\Omega,
$$
i.e. $u$ is $s$-harmonic in $\Omega$.

\it Step 4. \rm For a general measurable nonnegative $g$ it suffices now to consider
an increasing sequence $g_n$ converging to $g$, e.g. $g_n=\min\{g,n\}$.
Then the corresponding sequence of $s$-harmonic functions $u_n$ converges to $u$.
Moreover, the sequence ${\{u_n\}}_{n\in\N}$ is increasing:
\[
u_{n+1}(x)=-\int_{\C\Omega}g_{n+1}(y)\,\Ds G_\Omega(x,y)\;dy
\geq -\int_{\C\Omega}g_n(y)\,\Ds G_\Omega(x,y)\;dy=u_n(x).
\]
Then, thanks to the Monotone Convergence theorem we have
$$
u_n\ =\ \eta_\delta*u_n\longrightarrow\eta_\delta*u,\quad \hbox{in }\Omega
$$
therefore
$$
u=\lim u_n=\eta_\delta*u,\quad \hbox{in }\Omega.
$$

\it Uniqueness. \rm Finally, if $g\in C(\overline{\C\Omega})$,
the solution we have built is the only solution in $C(\overline{\Omega})$
as an application of Lemma \ref{maxprinc}.


\begin{rmk}\rm Suppose to have $g:\C\Omega\rightarrow[0,+\infty)$
for which \eqref{g} fails and there is a set $\mathcal{O}\subseteq\Omega$,
$|\mathcal{O}|>0$, in which
$$
-\int_{\C\Omega}g(y)\,\Ds G_\Omega(x,y)\;dy=+\infty,\qquad x\in\mathcal{O}.
$$
It is not possible in this case to have a pointwise solution of
$$
\left\lbrace\begin{array}{ll}
\Ds u=0 & \hbox{ in }\Omega, \\
u=g & \hbox{ in }\C\Omega.
\end{array}\right.
$$
Indeed, if we set $g_n=\min\{g,n\}$, $n\in\N$, then
$g_n$ converges monotonically to $g$, and 
$$
u(x)\geq-\int_{\C\Omega}g_n(y)\,\Ds G_\Omega(x,y)\;dy
\xrightarrow{\ n\uparrow+\infty\ }+\infty
$$
for all $x\in\mathcal{O}$.
\end{rmk}

{\it Regularity. } As a last step we notice that the $C^\infty$ regularity 
is a consequence of the representation on balls and the regularity
of the associated Poisson kernel therein. Properties listed in \eqref{gg}
are a consequence of Paragraph \ref{traces} below and the estimates on $\Ds G_\Omega(x,y),\ x\in\Omega,\ y\in\C\Omega$.

\paragraph{Second case: $g,\,h\equiv 0$.}

Use the construction of $G_\Omega$ to write for $x\in\Omega$
$$
u(x)=\int_{\Omega}f(y)\,G_\Omega(y,x)\;dy=
\int_{\Omega}f(y)\,\Gamma_s(y-x)\;dy-\int_{\Omega}f(y)\,H(y,x)\;dy:
$$
the first addend is a function $u_1(x)$ which solves 
$\Ds u_1=f\chi_\Omega$ in $\R^N$, let us turn to the second one:
\begin{eqnarray*}
u_2(x)\ :=\ \int_{\Omega}f(y)H(y,x)\;dy & = & 
-\int_{\Omega}f(y)\left[
\int_{\C\Omega}\Gamma_s(y-z)\Ds H(x,z)\;dz\right]\;dy \\
& = & -\int_{\C\Omega}\Ds H(x,z)\left[
\int_{\Omega}f(y)\Gamma_s(y-z)\;dy\right]\;dz \\
& = & -\int_{\C\Omega}\Ds H(x,z)\,u_1(z)\;dz \ = \ -\int_{\C\Omega}u_1(z)\,\Ds G_\Omega(x,z)\;dz.
\end{eqnarray*}
According to the \it Step 4 \rm above, $u_2$ solves
$$
\left\lbrace\begin{array}{ll}
\Ds u_2=0 & \hbox{ in }\Omega \\
u_2=u_1 & \hbox{ in }\C\Omega
\end{array}\right.
$$
therefore $u=u_1-u_2$ and 
\begin{equation}\label{56}
\left\lbrace\begin{array}{ll}
\Ds u=\Ds u_1-\Ds u_2=f & \hbox{ in }\Omega \\
u=u_1-u_2=0 & \hbox{ in }\C\Omega.
\end{array}\right.
\end{equation}
Finally, $u\in C^{2s+\alpha}(\Omega)$ thanks to \cite[Proposition 2.8]{silvestre},
while for inequality
$$
|u(x)|\leq C \Arrowvert f\Arrowvert_\infty\, \delta(x)^s
$$
we refer to \cite[Proposition 1.1]{rosserra}.

\begin{rmk}\rm Note that these computations give an alternative integral representation
to the one provided in equation \eqref{green-repr}
for $u$, meaning that we have both
$$
\int_{\Omega} G_\Omega(x,y)\,\Ds u(y)\;dy
\ =\ u(x)\ =\ 
\int_{\Omega} G_\Omega(y,x)\,\Ds u(y)\;dy,
$$
so we must conclude that $G_\Omega(x,y)=G_\Omega(y,x)$ for $x,y\in\Omega,\ x\neq y$.
\end{rmk}

\paragraph{Third case: $f,\,g\equiv 0$.}

The function $u(x)=\int_{\partial\Omega}M_\Omega(x,\theta)\,h(\theta)\;d\mathcal{H}(\theta)$
is $s$-harmonic: to show this we use both the construction of $M_\Omega(x,\theta)$
and the mean value formula \eqref{meanform}. 
Using the notations of \eqref{h-approx}, for any $0<r<\delta(x)$
there exists $z\in \overline{B_r(x)}$
\[
u_\ep(x)=\int_{\C B_r(x)}\eta_r(y)\,u_\ep(x-y)\;dy + \gamma(N,s,r)\Ds u_\ep(z)=
\int_{\C B_r(x)}\eta_r(y)\,u_\ep(x-y)\;dy + \gamma(N,s,r)\,f_\ep(z)
\]
where the equality $\Ds u_\ep=f_\ep$ holds throughout $\Omega$.
Letting $\ep\downarrow 0$ we have both
$$
u_\ep(x)\xrightarrow{\ep\downarrow 0}u(x)=\int_{\partial\Omega}M_\Omega(x,\theta)\,h(\theta)\;d\mathcal{H}(\theta)
$$
and 
$$
f_\ep(z)\xrightarrow{\ep\downarrow 0} 0,\qquad\hbox{ for any }z\in\Omega.
$$
If now we note that, for any $p\in[1,N/(N-2s))$ it holds by the Jensen's Inequality
\[
\int_\Omega |u_\eps|^p\ \leq\ \|f_\eps\|^{p-1}_{L^1(\Omega)}\int_\Omega\int_\Omega G_\Omega(x,y)^p\;dx\;dy
\]
which is a quantity bounded independently of $\eps$ (see also Theorem \ref{reg} below),
we deduce that $\{u_\eps\}_{\eps}$ is an equibounded and equicontinuous family in $L^1(\Omega)$
and so the convergence $u_\eps\to u$ holds in $L^1(\Omega)$.
Moreover, when $x$ is far from $\partial\Omega$ and $\eps<\delta(x)$
\[
u_\eps(x)=\int_\Omega G_\Omega(x,y)\,f_\eps(y)\;dy\leq C\int_{\{\delta(y)<\eps\}}\delta(y)f_\eps(y)\;dy
\]
which again is bounded independently on $\eps$.
So we have boundedness in $L^\infty_{loc}(\Omega)$ and convergence in $L^1(\Omega)$.
This implies that we have equality
$$
u(x)=\int_{\C B_r}\eta_r(y)\,u(x-y)\;dy,
$$
i.e. $u$ is $s$-harmonic.

The fact that $\delta^{1-s}u\in C(\overline{\Omega})$ is done in Lemma \ref{Eu},
while the $C^\infty$ regularity can be done once again with 
the representation on balls.
\end{proof}

\begin{lem}[an explicit example on the ball]\label{expl-sol}
If $0<\sigma<1-s$, the functions
$$
u_\sigma(x)=\left\lbrace\begin{array}{ll}
\displaystyle\frac{c(N,s)}{\left(1-|x|^2\right)^\sigma} & |x|<1 \\
\displaystyle\frac{c(N,s+\sigma)}{\left(|x|^2-1\right)^\sigma} & |x|>1 \\
\end{array}\right.\qquad 
u_{1-s}(x)=\left\lbrace\begin{array}{ll}
\displaystyle\frac{c(N,s)}{\left(1-|x|^2\right)^{1-s}} & |x|<1 \\
0 & |x|>1
\end{array}\right.
$$
are $s$-harmonic in the ball $B=B_1(0)$, where $c(N,s)$ is given by \eqref{cns2}.
Moreover in this case
$$
M_B(x,\theta)=
\lim_{\stackrel{\hbox{\scriptsize $y\in\Omega$}}{y\rightarrow\theta\in\partial\Omega}}\frac{G_B^s(x,y)}{{\delta(y)}^s}
=\frac{C\left(1-|x|^2\right)^s}{|x-\theta|^N}
\qquad\hbox{for some }C=C(N,s).
$$
\end{lem}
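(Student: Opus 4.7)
The plan is to identify $u_\sigma$ (for $0 < \sigma < 1-s$) with the Poisson integral on $B$ of its exterior trace, via Theorem \ref{pointwise}. Taking in that theorem $f \equiv 0$, $h \equiv 0$ and $g(y) := c(N,s+\sigma)(|y|^2-1)^{-\sigma}$ on $\C B$, I will show that the unique pointwise $s$-harmonic function produced,
$$
u(x) \;=\; -\int_{\C B} g(y)\,\Ds G_B(x,y)\,dy \;=\; \int_{|y|>1} P_B(x,y)\,g(y)\,dy,
$$
coincides with $u_\sigma$ inside $B$. Here I will use the classical closed-form expression for the ball's fractional Poisson kernel $P_B(x,y) = c(N,s)(1-|x|^2)^s/[|x-y|^N(|y|^2-1)^s]$ (see Landkof \cite{landkof}). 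After substituting and factoring out $(1-|x|^2)^s$, the target identity $u(x) = c(N,s)(1-|x|^2)^{-\sigma}$ reduces to
$$
c(N,s+\sigma)\int_{|y|>1}\frac{(1-|x|^2)^{s+\sigma}}{|x-y|^N\,(|y|^2-1)^{s+\sigma}}\,dy \;=\; 1, \qquad |x|<1,
$$
which is exactly the normalization \eqref{cns} defining $c(N,\alpha)$ as a Poisson mass, written here at exponent $\alpha := s+\sigma$. The hypothesis $\sigma < 1-s$ is precisely what keeps $\alpha \in (s,1) \subset (0,1)$, making this normalization available.

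For the borderline function $u_{1-s}$, I would let $\sigma \uparrow 1-s$ in the mean value identity \eqref{sharm-eq} already established for $u_\sigma$:
$$
u_\sigma(x) \;=\; \int_{\R^N\setminus B_r(x)}\eta_r(y-x)\,u_\sigma(y)\,dy, \qquad 0 < r < 1-|x|.
$$
Inside the ball, $u_\sigma(y) = c(N,s)(1-|y|^2)^{-\sigma}$ increases monotonically to $u_{1-s}(y)$, so monotone convergence handles the $B \setminus B_r(x)$ portion. Outside the ball, $u_\sigma(y) = c(N,s+\sigma)(|y|^2-1)^{-\sigma}$ tends pointwise to $0$ because $c(N,s+\sigma) = \Gamma(N/2)\sin(\pi(s+\sigma))/\pi^{1+N/2} \to 0$ as $\sigma \uparrow 1-s$, while the uniform majorant $C\,\eta_r(y-x)\max\{1,(|y|^2-1)^{-(1-s)}\}$ (integrable near $|y|=1^+$ since $1-s<1$) gives dominated convergence on $\C B$. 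Passing to the limit yields the mean value property, hence the $s$-harmonicity of $u_{1-s}$.

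For the Martin-kernel identity, I would plug into the definition $M_B(x,\theta) = \lim_{y\to\theta,\,y\in B} G_B(x,y)/\delta(y)^s$ the classical closed form of the fractional ball Green function (Landkof or the Blumenthal--Getoor--Ray representation),
$$
G_B(x,y) \;=\; \kappa(N,s)\,|x-y|^{2s-N}\int_0^{w(x,y)}\frac{t^{s-1}}{(t+1)^{N/2}}\,dt,\quad w(x,y) := \frac{(1-|x|^2)(1-|y|^2)}{|x-y|^2}.
$$
As $y \to \theta \in \partial B$, $w \to 0$ and the inner integral is equivalent to $w^s/s$; combining with the asymptotic $(1-|y|^2)^s \sim 2^s\,\delta(y)^s$ yields directly $M_B(x,\theta) = C(N,s)(1-|x|^2)^s/|x-\theta|^N$, with the constant identified as $C = 2^s\kappa(N,s)/s$.

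The main obstacle is noticing that, in trying to reproduce $u_\sigma$ as a Poisson integral, the normalization exponent shifts from $s$ to $s+\sigma$; once this parameter shift is spotted, the upper bound $\sigma < 1-s$ becomes transparent and the remaining algebra and limiting arguments are routine.
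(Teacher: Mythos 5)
Your proposal is correct, and for the two $s$-harmonicity claims it is essentially the paper's own argument: the paper likewise plugs the exterior datum $g_\sigma$ into the explicit Riesz--Landkof Poisson kernel \eqref{pois-ball}, recognizes the resulting integrand as the Poisson kernel at the shifted parameter $s+\sigma$ whose total mass over $\C B$ equals $1$, and then obtains $u_{1-s}$ by letting $\sigma\uparrow 1-s$ in the mean value identity \eqref{sharm-eq}, with monotone convergence inside $B$ and the vanishing of $c(N,s+\sigma)$ handling the exterior part, exactly as you do. One small imprecision: what you need is $\int_{\C B}P_B^{s+\sigma}(x,y)\,dy=1$ for \emph{every} $x\in B$, not merely the centre normalization \eqref{cns}; this is the same classical fact the paper invokes by rewriting the integral as $-\int_{\C B}(-\lapl)^{s+\sigma}G_B^{s+\sigma}(x,y)\,dy$, so it is a citation slip rather than a gap. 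Where you genuinely diverge is the Martin kernel: the paper never uses the closed form of $G_B$, but instead tests $u_\sigma$ against $\psi\in C^\infty_c(B)$, computes the limit of $-c(N,s+\sigma)\int_{\C B}(|x|^2-1)^{-\sigma}\Ds\phi$ as $\sigma\to 1-s$, and reads off $M_B(x,\theta)=C(1-|x|^2)^s|x-\theta|^{-N}$ from the resulting boundary integral representation of $u_{1-s}$. Your route through the Blumenthal--Getoor--Ray formula for $G_B$, together with the asymptotics $w\to 0$ and $1-|y|^2\sim 2\,\delta(y)$, computes the limit defining $M_B$ directly and is shorter, at the price of importing one more explicit classical formula; the paper's weak-limit computation buys, in exchange, the explicit exhibition of $u_{1-s}$ as the $s$-harmonic function generated by a measure concentrated on $\partial B$, which is the phenomenon the lemma is meant to illustrate.
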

\begin{proof}\rm
According to \cite[equation (1.6.11')]{landkof} and in view to
the computations due to Riesz \cite{riesz}, the Poisson kernel
for the ball $B$ of radius $1$ and centered at $0$ has the
explicit expression
\begin{equation}\label{pois-ball}
-\Ds G_B^s(x,y)=\frac{c(N,s)}{|x-y|^N}
\left(\frac{1-|x|^2}{|y|^2-1}\right)^s,\qquad
c(N,s)=\frac{\Gamma(N/2)\,\sin(\pi s)}{\pi^{1+N/2}}.
\end{equation}
We construct here the $s$-harmonic function
induced by data
$$
h(\theta)=0,\qquad
g_\sigma(y)=\frac{c(N,s+\sigma)}{\left(|y|^2-1\right)^\sigma},\quad 0<\sigma<1-s.
$$
Indeed, it can be explicitly computed
\[
\begin{aligned}
& \left(1-|x|^2\right)^\sigma u_\sigma(x) \ =\\
&=\ -\left(1-|x|^2\right)^\sigma\int_{\C B}g_\sigma(y)\cdot\Ds G_B^s(x,y)\;dy\ =\  
\int_{\C B}\frac{c(N,s)}{|x-y|^N}\cdot
\frac{{(1-|x|^2)}^{s+\sigma}}{{(|y|^2-1)}^s}\:g_\sigma(y)\;dy\ =\\
& =\ 
\int_{\C B}\frac{c(N,s)\,c(N,s+\sigma)}{|x-y|^N}\cdot\frac{{(1-|x|^2)}^{s+\sigma}}{{(|y|^2-1)}^{s+\sigma}}\;dy 
\ =\ 
-c(N,s)\int_{\C B}(-\lapl)^{s+\sigma}G_B^{s+\sigma}(x,y)\;dy
\ = \ c(N,s)
\end{aligned}
\]
therefore the function
$$
u_\sigma(x)=\left\lbrace\begin{array}{ll}
\displaystyle\frac{c(N,s)}{\left(1-|x|^2\right)^\sigma} & x\in B \\
\displaystyle\frac{c(N,s+\sigma)}{\left(|x|^2-1\right)^\sigma} & x\in\C B \\
\end{array}\right.
$$
solves the problem
$$
\left\lbrace\begin{array}{ll}
\Ds u_\sigma=0 & \hbox{ in }B \\
\displaystyle u_\sigma(x)=g_\sigma(x)=\frac{c(N,s+\sigma)}{\left(|x|^2-1\right)^\sigma} & \hbox{ in }\C B.
\end{array}\right.
$$
We are interested in letting $\sigma\rightarrow 1-s$.
Obviously,
$$
u_\sigma(x)\ \xrightarrow{\sigma\rightarrow 1-s}\ u(x)=
\left\lbrace\begin{array}{ll}
\displaystyle \frac{c(N,s)}{\left(1-|x|^2\right)^{1-s}} & \hbox{ in }B\\
\displaystyle 0 & \hbox{ in }\C B
\end{array}\right.
$$
everywhere in $\R^N\setminus\partial B$. The $s$-harmonicity is preserved,
since for $x\in B$ and any $r\in(0,1-|x|)$,
\begin{multline*}
u(x)=\lim_{\sigma\rightarrow 1-s}u_\sigma(x)=
\lim_{\sigma\rightarrow 1-s}\int_{\C B_r}u_\sigma(y)\,\eta_r(x-y)\;dy= \\
=\lim_{\sigma\rightarrow 1-s}\int_{\C B_r\cap B}u_\sigma(y)\,\eta_r(x-y)\;dy+
\lim_{\sigma\rightarrow 1-s}\int_{\C B}g_\sigma(y)\,\eta_r(x-y)\;dy= \\
=\int_{\C B_r\cap B}u(y)\,\eta_r(x-y)\;dy=
\int_{\C B_r}u(y)\,\eta_r(x-y)\;dy
\end{multline*}
since $g_\sigma\xrightarrow[]{\sigma\rightarrow 1-s}0$ in $L^1(\C B)$,
while $\eta_r(\,\cdot\,-x)$ is bounded in $\C B$ for $0<r<\delta(x)$.
For any $\psi\in C^\infty_c(B)$ we have
\begin{multline*}
\int_B u_\sigma\psi=-\int_{\C B}g_\sigma(x)\int_B \psi(z)\,\Ds G_B^s(z,x)\;dz\;dx\ =\\
=\ -\int_{\C B}\frac{c(N,s+\sigma)}{\left(|x|^2-1\right)^\sigma}\int_B \psi(z)\,\Ds G_B^s(z,x)\;dz\;dx.
\end{multline*}
Then on the one hand
$$
\int_B u_\sigma\psi\xrightarrow{\sigma\rightarrow 1-s}\int_B u\psi=c(N,s)\int_{B}\frac{\psi(x)}{\left(1-|x|^2\right)^{1-s}}\;dx.
$$
On the other hand
\begin{multline*}
-\int_{\C B}\frac{c(N,s+\sigma)}{\left(|x|^2-1\right)^\sigma}\int_B \psi(z)\,\Ds G_B^s(z,x)\;dz\;dx = \\
=
\int_{\C B}\frac{c(N,s+\sigma)}{\left(|x|^2-1\right)^\sigma}
\left[\int_B \psi(z)\frac{c(N,s)}{|z-x|^N}\left(\frac{1-|z|^2}{|x|^2-1}\right)^s\;dz\right]\;dx 
=\\=
\int_{\C B}\frac{c(N,s+\sigma)\,c(N,s)}{\left(|x|^2-1\right)^{\sigma+s}}\int_B\left[\psi(z)\frac{\left(1-|z|^2\right)^s}{|z-x|^N}\;dz\right]\;dx.
\end{multline*}
Note that the function $\int_B \psi(z)\cdot\frac{(1-|z|^2)^s}{|z-x|^N}\;dz$
is $C(\overline{\C B})$ in the $x$ variable, since $\psi\in C^\infty_c(B)$.
Splitting $x\in\C B$ in spherical coordinates, 
i.e. $x=\rho\theta$, $\rho=|x|\in(1,+\infty)$ and $|\theta|=1$, and denoting by $\phi\in C(\overline{\Omega})$
the function satisfying $\Ds\phi|_\Omega=\psi$, $\phi=0$ in $\C\Omega$,
\begin{equation}\label{678}
-c(N,s+\sigma)\int_{\C B}\frac{\Ds\phi(x)}{\left(|x|^2-1\right)^\sigma}\;dx =
 \int_1^{+\infty}\frac{c(N,s+\sigma)\,c(N,s)}{\left(\rho^2-1\right)^{\sigma+s}}\cdot\psi_1(\rho)\rho^{N-1}\;d\rho
\end{equation}
where $\psi_1(\rho)=\int_{\partial B}\int_B \psi(z)\,\frac{\left(1-|z|^2\right)^s}{|z-\rho\theta|^N}\;dz\;d\mathcal{H}(\theta)$
is continuous on $[1,+\infty)$ and 
has a decay at infinity which is comparable to that of $\rho^{-N}$.
Therefore, as $\sigma\rightarrow1-s$,
\begin{multline*}
-c(N,s+\sigma)\int_{\C B}\frac{\Ds\phi(x)}{\left(|x|^2-1\right)^\sigma}\;dx \longrightarrow
c(N,s)\,c(N,1/2)\,\frac{\psi_1(1)}{2}=\\
=\frac{c(N,s)\,c(N,1/2)}{2}\int_{\partial B}\int_B \psi(z)\cdot\frac{\left(1-|z|^2\right)^s}{|z-\theta|^N}\;dz\;d\mathcal{H}(\theta).
\end{multline*}
Indeed, by the definition of $c(N,s)$ in \eqref{cns2}, it holds 
$$
-c(N,s+\sigma)\int_1^{+\infty}\frac{\rho\;d\rho}{\left(\rho^2-1\right)^{\sigma+s}}=
-\frac{c(N,s+\sigma)}{2(1-s-\sigma)}
\xrightarrow[]{\sigma\uparrow 1-s}\frac{c(N,1/2)}{2\pi}
$$
and, since in \eqref{678} the product $\psi_1(\rho)\,\rho^{N-2}\in C([1,+\infty))$,
$$
-c(N,s+\sigma)\int_{\C B}\frac{\Ds\phi(x)}{\left(|x|^2-1\right)^\sigma}\;dx \longrightarrow
c(N,s)\,c(N,1/2)\,\frac{\psi_1(1)}{2}.
$$
So
$$
\int_B u\psi
=\frac{c(N,s)\,c(N,1/2)}{2}\int_{\partial B}\int_B \psi(z)\cdot\frac{\left(1-|z|^2\right)^s}{|z-\theta|^N}\;dz\;d\mathcal{H}(\theta),
$$
i.e. 
$$
u(x)=\frac{c(N,s)\,c(N,1/2)}{2}\int_{\partial B}\frac{\left(1-|x|^2\right)^s}{|x-\theta|^N}\;d\mathcal{H}(\theta)
$$
and then the kernel $M_B(x,\theta)$ for the ball is
$$
M_B(x,\theta)=
\lim_{\stackrel{\hbox{\scriptsize $y\in\Omega$}}{y\rightarrow\theta\in\partial\Omega}}\frac{G_B^s(x,y)}{{\delta(y)}^s}
=C\cdot\frac{\left(1-|x|^2\right)^s}{|x-\theta|^N}
$$
where $C=C(N,s)$.
\end{proof}
\medskip

\section{the linear dirichlet problem: an \texorpdfstring{$L^1$}{L1} theory}\label{stampacchia-frac}

We define $L^1$ solutions for the Dirichlet problem,
in the spirit of Stampacchia \cite{stampacchia}.
A proper functional space in which to consider 
test functions is the following.

\begin{lem}[test function space]\label{testspace}
For any $\psi\in C^\infty_c(\Omega)$ 
the solution $\phi$ of
$$
\left\lbrace\begin{aligned}
\Ds \phi &= \psi & & \hbox{in }\Omega \\
\phi &= 0 & & \hbox{in }\C\Omega \\
E\phi &= 0 & & \hbox{on }\partial\Omega
\end{aligned}\right.
$$
satisfies the following
\begin{enumerate}
\item $\Ds\phi\in L^1(\C\Omega)$ and for any $x\in\C\Omega$
\begin{equation}\label{repr-lapl}
\Ds\phi(x)=\int_\Omega\psi(z)\,\Ds G_\Omega(z,x)\;dz,
\end{equation}
\item for any $\theta\in\partial\Omega$
\begin{equation}\label{repr-deriv}
\int_\Omega M_\Omega(x,\theta)\psi(x)\;dx=D_s\phi(\theta).
\end{equation}
\end{enumerate}
\end{lem}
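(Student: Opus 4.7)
The plan is to leverage the explicit Green representation of $\phi$ provided by Theorem \ref{pointwise}. Since $\psi\in C^\infty_c(\Omega)$ is a bounded, regular datum and the boundary trace is trivial, that theorem gives $\phi\in C^{2s+\ep}(\Omega)\cap C(\overline{\Omega})$ with $\phi\equiv 0$ in $\C\Omega$ and
\[
\phi(x)\ =\ \int_\Omega \psi(y)\,G_\Omega(y,x)\;dy,\qquad x\in\Omega.
\]
Both conclusions then reduce to an interchange of integration (for part 1) and of limit and integration (for part 2) against this representation.

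For part 1, I would compute $\Ds\phi(x)$ for $x\in\C\overline{\Omega}$ directly from the singular-integral definition. Because $\phi\equiv 0$ in a full neighbourhood of $x$, no principal value is needed and
\[
\Ds\phi(x)\ =\ -C_{N,s}\int_\Omega\frac{\phi(y)}{|x-y|^{N+2s}}\;dy.
\]
Inserting the Green representation and applying Fubini (legitimate since $\psi$ is bounded and compactly supported in $\Omega$, while for such $x$ the kernel $|x-y|^{-N-2s}$ is uniformly bounded on $\Omega$), one recovers \eqref{repr-lapl} through formula \eqref{green-pois}. The $L^1(\C\Omega)$ bound then follows by Fubini again:
\[
\int_{\C\Omega}|\Ds\phi(x)|\;dx\ \leq\ \|\psi\|_\infty\int_{\mathrm{supp}\,\psi}\left(\int_{\C\Omega}|\Ds G_\Omega(z,x)|\;dx\right)dz,
\]
where the inner integral is controlled by $C/\delta(z)^s$ via \eqref{est-pois2}, and $\delta(z)$ is bounded away from $0$ on $\mathrm{supp}\,\psi$.

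For part 2, I would divide the Green representation by $\delta(x)^s$ and pass to the limit $x\to\theta\in\partial\Omega$ inside the integral:
\[
\frac{\phi(x)}{\delta(x)^s}\ =\ \int_\Omega \psi(y)\,\frac{G_\Omega(y,x)}{\delta(x)^s}\;dy\ \xrightarrow[x\to\theta]{}\ \int_\Omega \psi(y)\,M_\Omega(y,\theta)\;dy,
\]
which is exactly \eqref{repr-deriv}. The pointwise convergence $G_\Omega(y,x)/\delta(x)^s\to M_\Omega(y,\theta)$ is the very definition of $M_\Omega$, so the whole matter reduces to dominated convergence. Since $\psi$ is supported in some compact $K\Subset\Omega$, for $y\in K$ and $x$ in a small neighbourhood of $\theta$ the distance $|x-y|$ is bounded below and the two-sided estimate \eqref{est-green} yields
\[
\frac{G_\Omega(y,x)}{\delta(x)^s}\ \leq\ \frac{c_2\,\delta(y)^s}{|x-y|^N}\ \leq\ C(K,\theta),
\]
so $\psi(y)\,G_\Omega(y,x)/\delta(x)^s$ is dominated by the integrable function $C\|\psi\|_\infty\chi_K$.

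The main obstacle, modest as it is, lies in the second part: justifying the dominated-convergence bound uniformly for $x$ near $\theta$. The saving grace is that $\psi$ is compactly supported strictly inside $\Omega$, which decouples the behaviour of $y$ (staying away from $\partial\Omega$) from the approach $x\to\theta$, so that only the clean bound \eqref{est-green} with $|x-y|\geq\dist(K,\partial\Omega)/2$ is required. No new analysis beyond the estimates on $G_\Omega$ and $\Ds G_\Omega$ already collected in the previous paragraphs is needed.
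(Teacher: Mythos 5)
Your argument follows the paper's own proof essentially step by step: part 1 is the same Fubini computation on the Green representation, with the inner integral identified through \eqref{green-pois}, and part 2 is the same exchange of limit and integral, for which you even spell out the domination (via \eqref{est-green} and the compact support of $\psi$) that the paper leaves implicit. The one point to repair is your justification of $\Ds\phi\in L^1(\C\Omega)$: estimate \eqref{est-pois2} bounds $|\Ds G_\Omega(z,x)|$ by $C\,\delta(x)^{-s}$ in the \emph{exterior} variable $x\in\C\Omega$, and this bound alone is not integrable over the unbounded set $\C\Omega$ (it controls only the behaviour near $\partial\Omega$, and it does not give a bound of the form $C\,\delta(z)^{-s}$ in the interior variable $z$ either). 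To make the inner integral $\int_{\C\Omega}|\Ds G_\Omega(z,x)|\,dx$ finite you also need the far-field decay $\delta(x)^{-N-2s}$ — e.g. the bound $c\,\min\{\delta(x)^{-s},\delta(x)^{-N-2s}\}$ recorded just before Theorem \ref{pointwise}, or \eqref{est-poisson} — or, simplest of all, you can invoke item \emph{ii)} of the lemma collecting the properties of $G_\Omega$, which states precisely that $\Ds G_\Omega(z,\cdot)\in L^1(\C\Omega)$ with a bound uniform for $z$ in the support of $\psi$; this is exactly what the paper's proof does.
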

\begin{proof}\rm The solution $\phi$ is given by 
$\phi(x)=\int_{\R^N}\psi(y)\,G_\Omega(y,x)\;dy\in C^{2s+\ep}(\Omega)\cap C(\overline{\Omega})$.
Also, when $x\in\C\Omega$,
\begin{multline*}
\Ds\phi(x)\ =\ -C_{N,s}\,\int_\Omega\frac{\phi(y)}{|y-x|^{N+2s}}\;dy\ =
\ -\int_\Omega\frac{C_{N,s}}{|y-x|^{N+2s}}\left[\int_\Omega\psi(z)\,G_\Omega(z,y)\;dz\right]\;dy\ =\\
=\ -\int_\Omega\psi(z)\left[C_{N,s}\,\int_\Omega\frac{G_\Omega(z,y)}{|y-x|^{N+2s}}\;dy\right]\;dz\ =\
\int_\Omega\psi(z)\,\Ds G_\Omega(z,x)\;dz.
\end{multline*}
Thanks to the integrability of $\Ds G_\Omega(z,x)$ in $\C\Omega$,
also $\Ds \phi\in L^1(\C\Omega)$,
so that $\phi$ is an admissible function for the integration by parts formula
\eqref{intpartsprop}.
Moreover,
$$
\int_\Omega M_\Omega(x,\theta)\psi(x)\;dx=D_s\phi(\theta).
$$
Indeed,
\[
\int_\Omega M_\Omega(x,\theta)\psi(x)\;dx =
\int_\Omega \psi(x)\,
\lim_{\stackrel{\hbox{\scriptsize $z\!\in\!\Omega$}}{z\rightarrow\theta}}\frac{G_\Omega(x,z)}{{\delta(z)}^s}\;dx=
\lim_{\stackrel{\hbox{\scriptsize $z\!\in\!\Omega$}}{z\rightarrow\theta}}\int_\Omega 
\frac{\psi(x)\,G_\Omega(x,z)}{{\delta(z)}^s}\;dx =
\lim_{\stackrel{\hbox{\scriptsize $z\!\in\!\Omega$}}{z\rightarrow\theta}}\frac{\phi(z)}{{\delta(z)}^s} =
D_s\phi(\theta).
\]
\end{proof}

Then, our space of test functions will be 
$$
\T(\Omega)=\left\lbrace\phi\in C(\R^N):
\left\lbrace\begin{array}{ll}
\Ds \phi=\psi & \hbox{ in }\Omega \\
\phi=0 & \hbox{ in }\C\Omega \\
E\phi=0 & \hbox{ in }\partial\Omega
\end{array}\right.,\ 
\hbox{ when }\psi\in C^\infty_c(\Omega)\right\rbrace.
$$
Note that the map $D_s$ is well-defined from $\T(\Omega)$ to $C(\partial\Omega)$
as a consequence of the results in \cite[Theorem 1.2]{rosserra}.
We give the following

\begin{defi}[Weak $L^1$ solution] Given three Radon measures $\lambda\in\mathcal{M}(\Omega)$, 
$\mu\in\mathcal{M}(\C\Omega)$ and $\nu\in\mathcal{M}(\partial\Omega)$,
we say that a function $u\in L^1(\Omega)$
is a solution of 
$$
\left\lbrace\begin{aligned}
\Ds u &=\lambda & & \hbox{in }\Omega \\
u &=\mu & & \hbox{in }\C\Omega \\
Eu &=\nu & & \hbox{on }\partial\Omega
\end{aligned}\right.
$$
if for every $\phi\in\T(\Omega)$ it holds
\begin{equation}\label{weakdefi}
\int_\Omega u(x)\Ds \phi(x)\;dx\ =\ \int_\Omega\phi(x)\;d\lambda(x)
-\int_{\C\Omega}\Ds \phi(x)\;d\mu(x)+\int_{\partial\Omega}D_s\phi(\theta)\;d\nu(\theta)
\end{equation}
where $\displaystyle D_s\phi(\theta)=\lim_{\stackrel{\hbox{\scriptsize $x\in\Omega$}}{x\rightarrow\theta}}\frac{\phi(x)}{{\delta(x)}^s}$.
\end{defi}

\begin{prop}\label{strsol-weaksol}\rm Solutions provided by Theorem \ref{pointwise}
are solutions in the above $L^1$ sense.
\end{prop}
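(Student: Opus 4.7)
The plan is direct: substitute the explicit representation \eqref{linearsol} of the pointwise solution into the left-hand side of \eqref{weakdefi}, swap the order of integration via Fubini, and recognize the three resulting inner integrals as $\phi$, $\Ds\phi\big|_{\C\Omega}$, and $D_s\phi$ through the identities collected in Lemma \ref{testspace}.

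First I would check that both sides of \eqref{weakdefi} are well defined. Pick $\phi\in\T(\Omega)$ with $\Ds\phi|_\Omega=\psi\in C^\infty_c(\Omega)$; the right-hand side makes sense because $\phi\in C(\overline\Omega)$, $\Ds\phi\in L^1(\C\Omega)$ by Lemma \ref{testspace}, and $D_s\phi\in C(\partial\Omega)$. For the left-hand side I need $u\in L^1(\Omega)$: the contribution of $f$ is bounded by $C\|f\|_\infty\,\delta^s$ as already observed in the proof of the ``Second case'' of Theorem \ref{pointwise}; the contribution of $g$ is $L^1$ in $x$ by Fubini combined with the Poisson kernel estimate \eqref{est-poisson} and hypothesis \eqref{gintro}; the contribution of $h$ is $O(\|h\|_\infty\,\delta^{s-1})$ by \eqref{chen-martin}, and hence is in $L^1(\Omega)$ since $s\in(0,1)$.

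Next, using $\Ds\phi|_\Omega=\psi$ the left-hand side of \eqref{weakdefi} equals $\int_\Omega u\,\psi$. Plugging in \eqref{linearsol} yields three double integrals in which I swap the integration in $x\in\Omega$ with the integration in the ``data'' variable. The inner integrals are then
\[
\int_\Omega \psi(x)\,G_\Omega(y,x)\,dx,\qquad \int_\Omega \psi(x)\,\Ds G_\Omega(x,y)\,dx,\qquad \int_\Omega \psi(x)\,M_\Omega(x,\theta)\,dx.
\]
The first equals $\phi(y)$ by symmetry of $G_\Omega$ (recorded in the remark following the ``Second case'' of Theorem \ref{pointwise}); the second equals $\Ds\phi(y)$ for $y\in\C\Omega$ by \eqref{repr-lapl}; the third equals $D_s\phi(\theta)$ by \eqref{repr-deriv}. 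Reassembling these identifications produces exactly \eqref{weakdefi} with $\lambda=f\,dx$, $\mu=g\,dx$, $\nu=h\,d\mathcal{H}$.

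The only point that is not bookkeeping is the justification of Fubini in the $g$-term, since both $g$ and $-\Ds G_\Omega(x,y)$ can be singular at $\partial\Omega$. The remedy is that $\psi\in C^\infty_c(\Omega)$ is supported on a compact set on which $\delta(x)\geq c>0$, so \eqref{est-poisson} provides a pointwise bound of $|\psi(x)\,\Ds G_\Omega(x,y)|$ by a constant multiple of $\min\{\delta(y)^{-s},\delta(y)^{-N-2s}\}$, after which absolute convergence follows directly from \eqref{gintro}. The other two Fubini exchanges are unproblematic because $f$ is bounded and $G_\Omega\in L^1(\Omega\times\Omega)$, and because the Martin kernel is controlled by \eqref{chen-martin} together with the continuity of $h$ on $\partial\Omega$.
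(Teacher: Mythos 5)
Your proposal is correct and follows essentially the same route as the paper: substitute the representation formula \eqref{linearsol} into $\int_\Omega u\,\Ds\phi$, exchange the order of integration, and identify the inner integrals via the identities of Lemma \ref{testspace} (together with the Green-function representation of $\phi$). The extra care you take with integrability and the Fubini justification is sound but only makes explicit what the paper leaves implicit.
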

\begin{proof}\rm Indeed, if 
$$
u(x)=\int_\Omega f(y)\,G_\Omega(y,x)\;dy-\int_{\C\Omega}g(y)\,\Ds G_\Omega(x,y)\;dy
+\int_{\partial\Omega}M_\Omega(x,\theta)\,h(\theta)\;d\mathcal{H}(\theta),
$$
then for any $\phi\in\T(\Omega)$,
\begin{align*}
 & \int_\Omega u(x)\,\Ds\phi(x)\;dx\ = \\ 
 & =\ \int_\Omega \left[\int_\Omega f(y)\,G_\Omega(y,x)\;dy-\int_{\C\Omega}g(y)\,\Ds G_\Omega(x,y)\;dy
 +\int_{\partial\Omega}M_\Omega(x,\theta)\,h(\theta)\;d\mathcal{H}(\theta)\right]\,\Ds\phi(x)\;dx \\ 
 & =\ \int_\Omega f(y)\int_\Omega \Ds\phi(x)\,G_\Omega(y,x)\;dx\;dy
 -\int_{\C\Omega}g(y)\int_\Omega\Ds\phi(x)\,\Ds G_\Omega(x,y)\;dx\;dy\ +\\
 & \qquad+\ \int_{\partial\Omega}h(\theta)\int_\Omega M_\Omega(x,\theta)\Ds\phi(x)
 	d\mathcal{H}(\theta).
\end{align*}
Apply now \eqref{repr-lapl} and \eqref{repr-deriv} to deduce the thesis.
\end{proof}

\begin{proof}[Proof of Theorem \ref{existence-weak2}]
In case $\nu=0$, we claim that the solution is given by formula
$$
u(x)=\int_\Omega G_\Omega(y,x)\;d\lambda(y)-\int_{\C\Omega}\Ds G_\Omega(x,y)\;d\mu(y).
$$
Take $\phi\in\T(\Omega)$:
\begin{align*}
& \int_\Omega u(x)\,\Ds\phi(x)\;dx\ =
\ \int_\Omega \left[\int_\Omega G_\Omega(y,x)\;d\lambda(y)-\int_{\C\Omega}\Ds G_\Omega(x,y)\;d\mu(y)\right]\,\Ds\phi(x)\;dx\ =\\
& =\ \int_\Omega\left[\int_\Omega \Ds\phi(x)\,G_\Omega(y,x)\;dx\right]\;d\lambda(y)\ 
-\ \int_{\C\Omega}\left[\int_\Omega\Ds\phi(x)\,\Ds G_\Omega(x,y)\;dx\right]\;d\mu(y)\ =\\
& =\ \int_\Omega\phi(y)\;d\lambda(y)-\int_{\C\Omega}\Ds\phi(y)\;d\mu(y)
\end{align*}
again using \eqref{repr-lapl}.
Then, we claim that the function
$$
u(x)=\int_{\partial\Omega}M_\Omega(x,\theta)\;d\nu(\theta)
$$
solves problem
$$
\left\lbrace\begin{aligned}
\Ds u &=0 & & \hbox{ in }\Omega, \\
u &=0 & & \hbox{ in }\C\Omega, \\
Eu &=\nu & & \hbox{ on }\partial\Omega.
\end{aligned}\right.
$$
This, along with the first part of the proof, proves our thesis.
Take $\phi\in\T(\Omega)$ and call $\psi=\Ds\phi|_\Omega\in C^\infty_c(\Omega)$.
Then
\begin{eqnarray*}
\int_\Omega\left(\int_{\partial\Omega}M_\Omega(x,\theta)\;d\nu(\theta)\right)\psi(x)\;dx
& = & 
\int_{\partial\Omega}\left(\int_\Omega M_\Omega(x,\theta)\psi(x)\;dx\right)\;d\nu(\theta) \\
& = & 
\int_{\partial\Omega}\left(\int_\Omega \psi(x)\,
\lim_{\stackrel{\hbox{\scriptsize $z\in\Omega$}}{z\rightarrow\theta}}\frac{G_\Omega(x,z)}{{\delta(z)}^s}\;dx\right)\;d\nu(\theta) \\
& = & 
\int_{\partial\Omega}\lim_{\stackrel{\hbox{\scriptsize $z\in\Omega$}}{z\rightarrow\theta}}\left(\int_\Omega 
\frac{\psi(x)\,G_\Omega(x,z)}{{\delta(z)}^s}\;dx\right)\;d\nu(\theta) \\
& = & 
\int_{\partial\Omega}\lim_{\stackrel{\hbox{\scriptsize $z\in\Omega$}}{z\rightarrow\theta}}\frac{\phi(z)}{{\delta(z)}^s}\;d\nu(\theta) 
\ = \ 
\int_{\partial\Omega}D_s\phi(\theta)\;d\nu(\theta).
\end{eqnarray*}

The uniqueness is due to Lemma \ref{max-princweak2} below. 
Theorem \ref{reg-l1sol} below proves the estimate on the $L^1$ norm of the solution.
\end{proof}

\begin{cor}\label{subdomain} The solution provided by Theorem \ref{existence-weak2} satisfies,
for any open $A\subseteq\Omega$ such that $\overline{A}\subseteq\Omega$
and any $\phi_A\in\T(A)$,
\[
\int_A u(x)\Ds\phi_A(x)\;dx=
\int_A\phi_A(x)\;d\lambda(x)-\int_{\Omega\setminus A}u(x)\Ds\phi_A(x)\;dx-\int_{\C\Omega}\Ds\phi(x)\;d\mu(x).
\]
\end{cor}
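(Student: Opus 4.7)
The plan is to apply Proposition \ref{intparts-prop} on the subdomain $A$ (in place of $\Omega$) to the weak solution $u$ together with the test function $\phi_A$, and then to verify that the boundary contribution on $\partial A$ vanishes. Since $u\in L^1(\Omega)$ is not a priori regular enough for a direct application, the argument proceeds by approximation of the data.

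First I would pick smooth approximations $\lambda_n,\mu_n,\nu_n$ of the Radon measures $\lambda,\mu,\nu$, converging in the weighted total-variation norms featured in the stability estimate of Theorem \ref{existence-weak2}, so that the corresponding pointwise solutions $u_n$ provided by Theorem \ref{pointwise} lie in $C^{2s+\ep}(\Omega)\cap C(\overline\Omega)$ and satisfy $u_n\to u$ in $L^1(\Omega)$. Since $\overline A$ is compactly contained in $\Omega$, each $u_n$ is continuous and bounded in a neighbourhood of $\partial A$; in particular $\delta_A^{1-s}u_n\in C(\overline A)$ and the integrability hypothesis on $\C A$ required by Proposition \ref{intparts-prop} is automatic. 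The resulting identity on $A$ reads
\[
\int_A u_n\,\Ds\phi_A\,dx \;=\; \int_A \Ds u_n\cdot\phi_A\,dx \;-\; \int_{\C A} u_n\,\Ds\phi_A\,dx \;+\; \int_{\partial A} Eu_n(\theta)\,D_s\phi_A(\theta)\,d\mathcal{H}(\theta),
\]
and the crucial observation is that $Eu_n\equiv 0$ on $\partial A$: applying the estimate \eqref{chen-martin} to $A$ one has $\int_{\partial A}M_A(x,\theta')\,d\mathcal{H}(\theta')\geq c\,\delta_A(x)^{s-1}\to+\infty$ as $x\to\partial A$, while $u_n$ stays bounded on $\overline A$, so the ratio defining $Eu_n$ tends to zero. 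Using $\Ds u_n=\lambda_n$ in $A$, decomposing $\C A=(\Omega\setminus\overline A)\cup\C\Omega$ modulo the Lebesgue-null set $\partial\Omega$, and recalling $u_n=\mu_n$ on $\C\Omega$, we obtain
\[
\int_A u_n\,\Ds\phi_A\,dx \;=\; \int_A\phi_A\,\lambda_n\,dx \;-\; \int_{\Omega\setminus A} u_n\,\Ds\phi_A\,dx \;-\; \int_{\C\Omega}\mu_n\,\Ds\phi_A\,dx.
\]

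Finally one passes to the limit $n\to\infty$. The integrals over $A$ and over $\C\Omega$ converge straightforwardly: $\Ds\phi_A|_A\in C^\infty_c(A)$ is bounded, while for $x\in\C\Omega$ one has the explicit formula $\Ds\phi_A(x)=-C_{N,s}\int_A\phi_A(y)\,|x-y|^{-N-2s}\,dy$, which is uniformly bounded on $\C\Omega$ thanks to $\dist(\C\Omega,\overline A)>0$ and decays like $|x|^{-N-2s}$ at infinity. The main obstacle is the integral over $\Omega\setminus A$, where $\Ds\phi_A$ exhibits an integrable singularity of order $\delta_A^{-s}$ at $\partial A$; passing to the limit here requires combining the $L^1(\Omega)$-convergence of $u_n$ with a locally uniform bound on $|u_n|$ near $\partial A$, which is furnished by the continuity of $u_n$ on $\overline\Omega$ together with $\overline A\Subset\Omega$. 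Once convergence is secured in all three integrals the claimed identity follows.
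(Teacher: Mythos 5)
Your strategy---apply Proposition \ref{intparts-prop} on the subdomain $A$ to smooth approximations $u_n$ and pass to the limit---is genuinely different from the paper's, and it has two real gaps, both in the limiting steps. The first is the approximation itself: if $\lambda$ (or $\mu$, $\nu$) has a nontrivial singular part, no sequence of smooth, hence absolutely continuous, data can converge to it in the weighted total-variation norms appearing in Theorem \ref{existence-weak2}, since $\|\lambda_n-\lambda\|_{\mathcal{M}}\ge\|\lambda^{\mathrm{sing}}\|_{\mathcal{M}}$ by mutual singularity of the absolutely continuous and singular parts. So the stability estimate cannot deliver $u_n\to u$ in $L^1(\Omega)$ as you claim; with only weak-$*$ convergence of the data, the $L^1$ convergence of the solutions would need a separate argument that the proposal does not contain.

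The second gap is the passage to the limit in $\int_{\Omega\setminus A}u_n\,\Ds\phi_A$. The ``locally uniform bound on $|u_n|$ near $\partial A$'' you invoke does not exist in general: continuity of each individual $u_n$ on a neighbourhood of $\overline{A}$ gives no bound uniform in $n$, and in fact even the limit $u$ can be unbounded on $\partial A$---take $\lambda$ with an atom at some $x_0\in\partial A$, so that by \eqref{est-green} $u\ge G_\Omega(\cdot,x_0)$, which blows up like $|x-x_0|^{2s-N}$. Since $|\Ds\phi_A|$ behaves like $\dist(\cdot,\partial A)^{-s}$ on $\Omega\setminus A$ near $\partial A$ (the analogue of \eqref{est-pois2} for $A$), what you actually need is uniform integrability of $u_n\,\dist(\cdot,\partial A)^{-s}$ there, and this is not established; note that H\"older's inequality combined with the global bound of Theorem \ref{reg} (boundedness of $u_n$ in $L^p$ for $p<N/(N-s)$) is not enough, because $\dist(\cdot,\partial A)^{-s}\in L^{p'}$ near the hypersurface $\partial A$ forces $p>1/(1-s)>N/(N-s)$. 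The parts of your argument that are sound---the application of Proposition \ref{intparts-prop} on $A$ for the smooth approximants, and the observation that $Eu_n=0$ on $\partial A$ because $\int_{\partial A}M_A(x,\theta')\,d\mathcal{H}(\theta')$ grows like $\dist(x,\partial A)^{s-1}$ while $u_n$ is locally bounded---do not by themselves reach the general measure-data statement. The paper avoids both difficulties altogether: it introduces the weak solution $u_A$ on $A$ with interior datum $\lambda$, exterior datum $u$ and zero singular trace, and shows $u_A=u$ in $A$ directly from the representation formula via the identities relating $G_A$, $\Ds G_A$ and $G_\Omega$; the claimed identity is then just the weak formulation on $A$ satisfied by $u_A$.
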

\begin{proof}
For the sake of clarity we give the proof in the particular case $\mu=\nu=0$;
the other cases follow with the same computations with the necessary modifications.
Establish problem
\[
\left\lbrace\begin{aligned}
& \Ds u_A=\lambda & \hbox{ in }A \\
& u_A=u & \hbox{ in }\C A \\
& Eu_A=0 & \hbox{ on }\partial A
\end{aligned}\right.\quad:
\]
we want to prove that $u_A=u$ in $A$.
The function $u_A$ is given by
\begin{align*}
& u_A(x)=\int_A G_A(x,z)\;d\lambda(z)-\int_{\Omega\setminus A}\Ds G_A(x,z)u(z)\;dz \\
& = \int_A G_A(x,z)\;d\lambda(z)-\int_{\Omega\setminus A}\Ds G_A(x,z)\left[\int_\Omega G_\Omega(z,y)\;d\lambda(y)\right]\;dz \\
& = \int_A G_A(x,z)\;d\lambda(z)-\int_A\left[\int_{\Omega\setminus A}\Ds G_A(x,y)\,G_\Omega(y,z)\;dy\right]d\lambda(z) \\
& \qquad-\ \int_{\Omega\setminus A}\left[\int_{\Omega\setminus A}\Ds G_A(x,y)\,G_\Omega(y,z)\;dy\right]d\lambda(z).
\end{align*}
Since now, by construction of Green functions,
\begin{align*}
& G_A(x,z)-\int_{\Omega\setminus A}\Ds G_A(x,y)\,G_\Omega(y,z)\;dy=G_\Omega(x,z)\, \qquad x,\,z\in A, \\
& \int_{\Omega\setminus A}\Ds G_A(x,y)\,G_\Omega(y,z)\;dy=\Ds G_\Omega(x,z) \qquad x\in A,\,z\in \C A,
\end{align*}
we deduce $u_A(x)=u(x),\ x\in A$.
\end{proof}

\begin{lem}[maximum principle]\label{max-princweak2} Let $u\in L^1(\Omega)$ be a solution of
$$
\left\lbrace\begin{aligned}
\Ds u &\leq 0 & & \hbox{ in }\Omega, \\
u &\leq 0 & & \hbox{ in }\C\Omega, \\
Eu &\leq 0 & & \hbox{ on }\partial\Omega.
\end{aligned}\right.
$$
Then $u\leq 0$ a.e. in $\R^N$.
\end{lem}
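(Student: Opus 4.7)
The plan is to exploit the weak formulation \eqref{weakdefi} by duality against nonnegative test functions. The hypotheses amount to saying that $u \in L^1(\Omega)$ is a weak solution with data $\lambda = \Ds u \le 0$, $\mu = u|_{\C\Omega} \le 0$, and $\nu = Eu \le 0$. So for every $\phi \in \T(\Omega)$,
\[
\int_\Omega u\,\Ds\phi \ =\ \int_\Omega \phi\;d\lambda - \int_{\C\Omega}\Ds\phi\;d\mu + \int_{\partial\Omega}D_s\phi\;d\nu,
\]
and I want the right-hand side to be $\le 0$ whenever $\phi$ is suitably nonnegative.

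First I would fix an arbitrary $\psi \in C^\infty_c(\Omega)$ with $\psi \ge 0$ and let $\phi \in \T(\Omega)$ be the associated test function, so that $\Ds\phi = \psi$ in $\Omega$, $\phi = 0$ in $\C\Omega$, and $E\phi = 0$. Applying the pointwise maximum principle (Lemma \ref{maxprinc}) to $-\phi$, which lies in $C^{2s+\eps}(\Omega)\cap C(\R^N)$, satisfies $\Ds(-\phi) = -\psi \le 0$ in $\Omega$, and vanishes on $\C\Omega$, yields $-\phi \le 0$, i.e.\ $\phi \ge 0$ throughout $\R^N$.

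From $\phi \ge 0$ I would then record the two sign consequences needed on the boundary data side. First, $D_s\phi(\theta) = \lim_{x\to\theta} \phi(x)/\delta(x)^s \ge 0$ for every $\theta \in \partial\Omega$. Second, since $\phi \equiv 0$ on $\C\Omega$, the formula
\[
\Ds\phi(x)\ =\ -C_{N,s}\int_\Omega \frac{\phi(y)}{|x-y|^{N+2s}}\;dy\ \le\ 0,\qquad x\in\C\Omega,
\]
shows $\Ds\phi \le 0$ on $\C\Omega$. Combining all three sign facts with $\lambda, \mu, \nu \le 0$, each of the three terms on the right-hand side of the weak formulation is nonpositive (in each case the integrand is a product of a nonnegative factor and a nonpositive one), hence
\[
\int_\Omega u(x)\,\psi(x)\;dx\ =\ \int_\Omega u\,\Ds\phi\ \le\ 0
\]
for every nonnegative $\psi \in C^\infty_c(\Omega)$.

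By the fundamental lemma of the calculus of variations this forces $u \le 0$ almost everywhere in $\Omega$, and together with the hypothesis $u \le 0$ in $\C\Omega$ this gives the conclusion a.e.\ in $\R^N$. The argument has no real obstacle: the only point requiring care is the transfer of nonnegativity from $\psi$ to the triple $(\phi,\Ds\phi|_{\C\Omega},D_s\phi)$, and this is precisely where the pointwise maximum principle of Lemma \ref{maxprinc} is invoked; everything else is a bookkeeping of signs in \eqref{weakdefi}.
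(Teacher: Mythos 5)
Your proof is correct and follows essentially the same route as the paper: test against a nonnegative $\psi\in C^\infty_c(\Omega)$, use the pointwise maximum principle (Lemma \ref{maxprinc}) to get $\phi\ge 0$ for the associated test function, deduce $\Ds\phi\le 0$ on $\C\Omega$ and $D_s\phi\ge 0$ on $\partial\Omega$, and conclude $\int_\Omega u\psi\le 0$ from the weak formulation \eqref{weakdefi}. Your write-up is just slightly more explicit about the sign bookkeeping than the paper's.
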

\begin{proof}\rm Take $\psi\in C^\infty_c(\Omega)$, $\psi\geq 0$ and
the associated $\phi\in\T(\Omega)$ for which $\Ds\phi|_\Omega=\psi$:
it is $\phi\geq 0$ in $\Omega$, in view of Lemma \ref{maxprinc}.
This implies that for $y\in\C\Omega$ it holds
$$
\Ds\phi(y)=-C_{N,s}\,\int_\Omega\frac{\phi(z)}{|y-z|^N}\;dz\leq0,
$$
and also $D_s\phi\geq 0$ throughout $\partial\Omega$.
In particular 
$$
\int_\Omega u\psi\leq 0
$$
as a consequence of \eqref{weakdefi}.
\end{proof}
\medskip

\subsection{Regularity theory}

\begin{theo}\label{reg-l1sol} Given three Radon measures $\lambda\in\mathcal{M}(\Omega)$, 
$\mu\in\mathcal{M}(\C\Omega)$ and $\nu\in\mathcal{M}(\partial\Omega)$,
consider the solution $u\in L^1(\Omega)$ of problem
$$
\left\lbrace\begin{aligned}
\Ds u &=\lambda & & \hbox{ in }\Omega, \\
u &=\mu & & \hbox{ in }\C\Omega, \\
Eu &=\nu & & \hbox{ on }\partial\Omega.
\end{aligned}\right.
$$
Then 
$$
\Arrowvert u\Arrowvert_{L^1(\Omega)}\leq C\left(
\Arrowvert\lambda\Arrowvert_{\mathcal{M}(\Omega,{\delta(x)}^s\,dx)}+
\Arrowvert\mu\Arrowvert_{\mathcal{M}(\C\Omega,{\delta(x)}^{-s}\wedge{\delta(x)}^{-n-2s}\,dx)}+
\Arrowvert\nu\Arrowvert_{\mathcal{M}(\partial\Omega)}\right).
$$
\end{theo}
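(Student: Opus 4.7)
My plan is to start from the representation formula given in Theorem~\ref{existence-weak2},
\[
u(x)=\int_\Omega G_\Omega(x,y)\;d\lambda(y)-\int_{\C\Omega}\Ds G_\Omega(x,y)\;d\mu(y)+\int_{\partial\Omega}M_\Omega(x,\theta)\;d\nu(\theta),
\]
pass to absolute values termwise, integrate in $x\in\Omega$, and exchange the order of integration via Fubini--Tonelli (all integrands are nonnegative once one replaces $\lambda,\mu,\nu$ by $|\lambda|,|\mu|,|\nu|$). This will reduce the theorem to three uniform one-variable bounds:
\begin{align*}
\textup{(a)}\quad & \int_\Omega G_\Omega(x,y)\;dx\ \leq\ C\,\delta(y)^s, && y\in\Omega, \\
\textup{(b)}\quad & \int_\Omega |\Ds G_\Omega(x,y)|\;dx\ \leq\ C\min\{\delta(y)^{-s},\delta(y)^{-N-2s}\}, && y\in\C\Omega, \\
\textup{(c)}\quad & \int_\Omega M_\Omega(x,\theta)\;dx\ \leq\ C, && \theta\in\partial\Omega,
\end{align*}
all with $C=C(N,s,\Omega)$.

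Estimate (a) is really a PDE fact: by the symmetry $G_\Omega(x,y)=G_\Omega(y,x)$ observed in Section~\ref{liner-sec}, the left-hand side equals $w(y)$, where $w$ is the pointwise solution of $\Ds w=1$ in $\Omega$ with vanishing exterior datum, and the bound $w(y)\leq C\,\delta(y)^s$ is supplied by \cite[Proposition~1.1]{rosserra}. For (c), I will use the upper bound in \eqref{chen-martin}: since $\delta(x)\leq |x-\theta|$ whenever $\theta\in\partial\Omega$ and $x\in\Omega$, one has $M_\Omega(x,\theta)\leq c\,|x-\theta|^{s-N}$, which is integrable on the bounded set $\Omega$ uniformly in $\theta$ since $s-N>-N$.

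The main technical step is (b). Plugging the sharp Poisson-kernel estimate \eqref{est-poisson} yields
\[
\int_\Omega|\Ds G_\Omega(x,y)|\;dx\ \leq\ \frac{c_1}{\delta(y)^s(1+\delta(y))^s}\int_\Omega\frac{\delta(x)^s}{|x-y|^N}\;dx,
\]
and I will split into the regimes $\delta(y)\leq 1$ and $\delta(y)\geq 1$. When $\delta(y)\leq 1$ the prefactor is controlled by $\delta(y)^{-s}$; using $\delta(x)\leq|x-y|$ (every segment joining $x\in\Omega$ to $y\in\R^N\setminus\Omega$ must cross $\partial\Omega$), the remaining integral is bounded by $\int_\Omega|x-y|^{s-N}\;dx\leq C$, producing the bound $C\,\delta(y)^{-s}$. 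When $\delta(y)\geq 1$ the prefactor is comparable to $\delta(y)^{-2s}$ and, exploiting $|x-y|\geq\delta(y)$ for every $x\in\Omega$, the remaining integral is dominated by $\delta(y)^{-N}\int_\Omega\delta(x)^s\;dx\leq C\,\delta(y)^{-N}$, producing $C\,\delta(y)^{-N-2s}$. Since on $\{\delta(y)\leq 1\}$ we have $\delta(y)^{-s}\leq\delta(y)^{-N-2s}$ and on $\{\delta(y)\geq 1\}$ the opposite inequality holds, joining the two cases produces exactly the $\min$ bound. Assembling (a)--(c) via Fubini yields the announced $L^1$ estimate. The only real obstacle is keeping precise track of this two-regime split in (b) and of the exponents it creates; the remainder of the argument is essentially bookkeeping.
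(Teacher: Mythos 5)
Your proposal is correct and follows essentially the same route as the paper's proof: the representation formula of Theorem \ref{existence-weak2}, Fubini--Tonelli, and the three uniform bounds on the integrated kernels $\int_\Omega G_\Omega(x,y)\,dx$, $\int_\Omega|\Ds G_\Omega(x,y)|\,dx$ and $\int_\Omega M_\Omega(x,\theta)\,dx$. The only presentational difference is that the paper packages these three bounds as properties of the auxiliary function $\zeta$ solving $\Ds\zeta=1$ (namely $\zeta\leq C\delta^s$, $0\leq-\Ds\zeta\leq C\left(\delta^{-s}\wedge\delta^{-N-2s}\right)$ in $\C\Omega$, and $D_s\zeta=\int_\Omega M_\Omega(\cdot,\theta)\,dx$ bounded), while you obtain the same estimates by integrating the kernel bounds \eqref{est-poisson} and \eqref{chen-martin} directly.
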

\begin{proof}\rm 
Consider $\zeta$ to be the solution of 
$$
\left\lbrace\begin{aligned}
\Ds\zeta &=1 & & \hbox{ in }\Omega \\
\zeta &=0 & & \hbox{ in }\C\Omega \\
E\zeta &=0 & & \hbox{ on }\partial\Omega
\end{aligned}\right.
$$
which we know to satisfy $0\leq\zeta(x)\leq C\,\delta(x)^s$ in $\Omega$,
see \cite{rosserra}.
Note also that, by approximating $\zeta$ with functions in $\T(\Omega)$ and
by \eqref{repr-lapl}, for $x\in\C\Omega$
$$
\Ds\zeta(x)=\int_\Omega\Ds G_\Omega(z,x)\;dz,\qquad x\in\C\Omega,
$$
and therefore, when $x\in\C\Omega$ and $\delta(x)<1$,
$$
0\leq-\Ds\zeta(x)\leq \int_\Omega\frac{C\,\delta(y)^s}{|y-x|^{N+2s}}\;dy\leq
C\int_{\delta(x)}^{+\infty}\frac{dt}{t^{1+s}}=\frac{C}{\delta(x)^s},
$$
while for $x\in\C\Omega$ and $\delta(x)\geq1$
$$
0\leq-\Ds\zeta(x)\leq \int_\Omega\frac{C\,\delta(y)^s}{|y-x|^{N+2s}}\;dy\leq
\frac{C}{\delta(x)^{N+2s}}\int_\Omega\delta(y)^s\;dy.
$$
Furthermore, $D_s\zeta$ is a well-defined function on $\partial\Omega$,
again thanks to the results in \cite[Proposition 1.1]{rosserra}. Indeed, consider
an increasing sequence ${\{\psi_k\}}_{k\in\N}\subseteq C^\infty_c(\Omega)$,
such that $0\leq\psi_k\leq 1$, $\psi_k\uparrow 1$ in $\Omega$.
Call $\phi_k$ the function in $\T(\Omega)$ associated with $\psi_k$,
i.e. $\Ds\phi_k|_\Omega=\psi_k$.
In this setting
\[
0\leq\lim_{\stackrel{\hbox{\scriptsize $z\!\in\!\Omega$}}{z\rightarrow\theta}}
\frac{\zeta(z)-\phi_k(z)}{{\delta(z)}^s}=
\lim_{\stackrel{\hbox{\scriptsize $z\!\in\!\Omega$}}{y\rightarrow\theta}}
\int_\Omega\frac{G_\Omega(y,z)}{{\delta(z)}^s}\left(1-\psi_k(y)\right)\;dy
\leq\int_\Omega M_\Omega(y,\theta)\left(1-\psi_k(y)\right)\;dy
\xrightarrow{k\rightarrow+\infty}0,
\]
hence $D_s\zeta(\theta)$ is well-defined and it holds
$$
D_s\zeta(\theta)=\int_\Omega M_\Omega(x,\theta)\;dx.
$$
Finally, we underline how $D_s\zeta\in L^\infty(\Omega)$,
thanks to \eqref{chen-martin}.

We split the rest of the proof by using the integral representation of $u$:
\begin{description}
\item[\it i] the mass induced by the right-hand side is
\[
\int_\Omega\left|\int_\Omega G_\Omega(y,x)\;d\lambda(y)\right|\;dx
\leq\int_\Omega\int_\Omega G_\Omega(y,x)\;dx\;d|\lambda|(y)
\leq\int_\Omega\zeta(y)\;d|\lambda|(y)
\leq C\int_\Omega\delta(y)^s\;d|\lambda|(y),
\]
\item[\it ii] the one induced by the external datum
\begin{multline*}
\int_\Omega\left|\int_{\C\Omega} \Ds G_\Omega(x,y)\;d\mu(y)\right|\;dx\leq
\int_{\C\Omega}-\int_\Omega \Ds G_\Omega(x,y)\;dx\;d|\mu|(y)=\\
=\int_{\C\Omega}-\Ds\zeta(y)\;d|\mu|(y)\leq
\int_{\C\Omega}\left(\frac{C}{\delta(y)^s}\wedge\frac{C}{\delta(y)^{N+2s}}\right)\;d|\mu|(y),
\end{multline*}
\item[\it iii] finally the mass due to the boundary behavior
\begin{multline*}
\int_\Omega\left|\int_{\partial\Omega} M_\Omega(x,\theta)\;d\nu(\theta)\right|\;dx\leq
\int_{\partial\Omega}\int_\Omega M_\Omega(x,\theta)\;dx\;d|\nu|(\theta)\ =\\
=\ \int_{\partial\Omega}D_s\zeta(\theta)\;d|\nu|(\theta)
\leq \Arrowvert D_s\zeta\Arrowvert_\infty\,|\nu|(\partial\Omega).
\end{multline*}
\end{description}
Note that the smoothness of the domain is needed 
only to make the last point go through,
and we can repeat the proof in case $\nu=0$
without requiring $\partial\Omega\in C^{1,1}$.
\end{proof}

To gain higher integrability on a solution,
the first step we take is the following

\begin{lem}\label{reg-t} For any $q>N/s$ there exists 
$C=C(N,s,\Omega,q)$ such that for all $\phi\in\T(\Omega)$
$$
\left\Arrowvert\frac{\phi}{\delta^s}\right\Arrowvert_{L^\infty(\Omega)}
\leq C\, \Arrowvert\Ds\phi\Arrowvert_{L^q(\Omega)}.
$$
\end{lem}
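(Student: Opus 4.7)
The plan is to use the Green-function representation of test functions together with the sharp two-sided bounds \eqref{est-green}, then reduce the lemma to a uniform $L^{q'}$-bound on the weighted kernel $G_\Omega(x,y)/\delta(x)^s$ in the $y$-variable and apply H\"older's inequality.

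Since $\phi\in\T(\Omega)$, by Theorem \ref{pointwise} we have the representation $\phi(x)=\int_\Omega G_\Omega(x,y)\psi(y)\,dy$ with $\psi:=\Ds\phi|_\Omega\in C^\infty_c(\Omega)$. Letting $q'=q/(q-1)$ and applying H\"older, we obtain the pointwise bound
\[
\frac{|\phi(x)|}{\delta(x)^s}\ \le\ \left(\int_\Omega\left(\frac{G_\Omega(x,y)}{\delta(x)^s}\right)^{\!q'}\!dy\right)^{\!1/q'}\|\Ds\phi\|_{L^q(\Omega)},
\]
so the entire matter reduces to showing that the kernel integral is bounded uniformly in $x\in\Omega$ whenever $q>N/s$.

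I will split $\Omega$ into $A_x=\{y\in\Omega:|x-y|<\delta(x)/2\}$ and $B_x=\Omega\setminus A_x$, and estimate each piece using the appropriate branch of \eqref{est-green}. On $A_x$ one has $\delta(y)\ge \delta(x)/2$, and hence $|x-y|^2\le \delta(x)\delta(y)$; the first alternative in \eqref{est-green} therefore gives $G_\Omega(x,y)\le c|x-y|^{2s-N}$, so integration in polar coordinates produces
\[
\int_{A_x}\!\left(\frac{G_\Omega(x,y)}{\delta(x)^s}\right)^{\!q'}\!dy\ \le\ c\,\delta(x)^{-sq'}\!\int_0^{\delta(x)/2}\!r^{\,N-1-(N-2s)q'}\,dr\ \le\ c\,\delta(x)^{N-(N-s)q'}.
\]
The finiteness of the inner integral requires $q>N/(2s)$, while the uniform boundedness in $x$ needs $N-(N-s)q'\ge0$, which is equivalent to $q\ge N/s$. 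On $B_x$ we use the second alternative $G_\Omega(x,y)\le c\,\delta(x)^s\delta(y)^s/|x-y|^N$ together with $\delta(y)\le\delta(x)+|x-y|\le 3|x-y|$ (valid since $\delta(x)\le 2|x-y|$ on $B_x$), giving
\[
\left(\frac{G_\Omega(x,y)}{\delta(x)^s}\right)^{\!q'}\!\le\ c\,|x-y|^{-(N-s)q'}.
\]
The condition $q>N/s$ is exactly $(N-s)q'<N$, so $|x-y|^{-(N-s)q'}$ is locally integrable, and since $\Omega$ is bounded we conclude $\int_{B_x}|x-y|^{-(N-s)q'}\,dy\le C$ uniformly in $x$.

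Combining the two regions yields a uniform bound on the kernel integral, and inserting this into the H\"older estimate above gives the desired inequality. The only delicate point is the verification that the threshold $q>N/s$ provides simultaneously the integrability of $|x-y|^{-(N-s)q'}$ on $B_x$ and the non-negativity of the exponent $N-(N-s)q'$ on $A_x$; these two requirements coincide precisely at the critical value $q=N/s$, which is why the statement restricts to $q>N/s$.
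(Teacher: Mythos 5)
Your proof is correct and follows essentially the same route as the paper: represent $\phi$ through the Green function, apply H\"older with the conjugate exponent, and reduce everything to a uniform-in-$x$ bound for $\int_\Omega\bigl(G_\Omega(x,y)/\delta(x)^s\bigr)^{q'}dy$, with the threshold $q>N/s$ coming from the integrability of $|x-y|^{-(N-s)q'}$. The only difference is that the paper obtains the pointwise kernel bound $G_\Omega(x,y)/\delta(x)^s\le c\,|x-y|^{-(N-s)}$ in one line by quoting the one-sided estimate \eqref{alter-Gbound} from \cite{chen}, whereas you rederive an equivalent bound from the symmetric estimate \eqref{est-green} via the near/far splitting in $A_x$ and $B_x$ — a correct and self-contained substitute.
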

\begin{proof}\rm Call as usual $\psi=\Ds\phi|_\Omega\in C^\infty_c(\Omega)$.
Let us work formally
\[
\left\Arrowvert\frac{\phi}{\delta^s}\right\Arrowvert_{L^\infty(\Omega)}
\leq \sup_{x\in\Omega}\,\frac{1}{\delta(x)^s}\int_\Omega G_\Omega(x,y)\,|\psi(y)|\;dy
\leq\sup_{x\in\Omega}\,\frac{\left\Arrowvert G_\Omega(x,\cdot)\right\Arrowvert_{L^p(\Omega)}}{\delta(x)^s}
\cdot \Arrowvert\Ds\phi\Arrowvert_{L^q(\Omega)},
\]
where $\frac{1}{p}+\frac{1}{q}=1$. Thus we need only to understand 
for what values of $p$ we are not writing a trivial inequality.
The main tool here is inequality
\begin{equation}\label{alter-Gbound}
G_\Omega(x,y)\leq \frac{c\,\delta(x)^s}{|x-y|^N}\left(|x-y|^s\wedge\delta(y)^s\right),
\end{equation}
which holds in $C^{1,1}$ domains, see \cite[equation 2.14]{chen}.
We have then
$$
\int_\Omega\left|\frac{G_\Omega(x,y)}{\delta(x)^s}\right|^p\;dy\leq
c\int_\Omega \frac{1}{|x-y|^{Np}}\left(|x-y|^{sp}\wedge\delta(y)^{sp}\right)\;dy
\leq c\int_\Omega \frac{1}{|x-y|^{(N-s)p}}
$$
which is uniformly bounded in $x$ for $p<N/(N-s)$.
This condition on $p$ becomes $q>N/s$.
\end{proof}

In view of this last lemma,
we are able to provide the following theorem,
which is the fractional counterpart of a classical result
(see e.g. \cite[Proposition A.9.1]{dupaigne-book}).

\begin{theo}\label{reg} For any $p<N/(N-s)$, there exists a constant $C$ such that
any solution $u\in L^1(\Omega)$ of problem
$$
\left\lbrace\begin{aligned}
\Ds u &=\lambda & & \hbox{ in }\Omega \\
u &=0 & & \hbox{ in }\C\Omega \\
Eu &=0 & & \hbox{ on }\partial\Omega
\end{aligned}\right.
$$
has a finite $L^p$-norm controlled by
$$
\Arrowvert u\Arrowvert_{L^p(\Omega)}\leq C\,
\Arrowvert\lambda\Arrowvert_{L^1(\Omega,{\delta(x)}^s dx)}
$$
\end{theo}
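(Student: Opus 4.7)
The plan is a standard Stampacchia duality argument, relying on Lemma \ref{reg-t} as the key input. The goal is to estimate $\|u\|_{L^p(\Omega)}$ by testing $u$ against functions in $L^q(\Omega)$ with $1/p+1/q=1$; the condition $p < N/(N-s)$ is exactly equivalent to $q > N/s$, which is the regularity threshold appearing in Lemma \ref{reg-t}.

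First, I would fix $\psi \in C^\infty_c(\Omega)$ with $\|\psi\|_{L^q(\Omega)} \le 1$, and let $\phi \in \T(\Omega)$ denote the unique pointwise (hence weak) solution of
\[
\left\lbrace\begin{aligned}
\Ds \phi &= \psi & & \hbox{in } \Omega, \\
\phi &= 0 & & \hbox{in } \C\Omega, \\
E\phi &= 0 & & \hbox{on } \partial\Omega,
\end{aligned}\right.
\]
furnished by Theorem \ref{pointwise}. Plugging $\phi$ into the weak formulation \eqref{weakdefi-eq} (with $\mu=0$, $\nu=0$) for the solution $u$ gives the duality identity
\[
\int_\Omega u(x)\,\psi(x)\;dx \ =\ \int_\Omega \phi(x)\;d\lambda(x).
\]

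Next, I would bound the right-hand side by pulling out the weight $\delta^s$:
\[
\left|\int_\Omega \phi\;d\lambda\right|\ \le\ \int_\Omega \frac{|\phi(x)|}{\delta(x)^s}\,\delta(x)^s\;d|\lambda|(x)
\ \le\ \left\|\frac{\phi}{\delta^s}\right\|_{L^\infty(\Omega)} \|\lambda\|_{L^1(\Omega,\delta^s dx)}.
\]
Since $q>N/s$, Lemma \ref{reg-t} applies and yields
\[
\left\|\frac{\phi}{\delta^s}\right\|_{L^\infty(\Omega)}\ \le\ C\,\|\Ds \phi\|_{L^q(\Omega)}\ =\ C\,\|\psi\|_{L^q(\Omega)}\ \le\ C,
\]
with $C = C(N,s,\Omega,q)$.

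Combining these two estimates gives $\big|\int_\Omega u\,\psi\big| \le C\,\|\lambda\|_{L^1(\Omega,\delta^s dx)}$ for every $\psi \in C^\infty_c(\Omega)$ with $\|\psi\|_{L^q}\le 1$. Taking the supremum over such $\psi$, which is dense in the unit ball of $L^q(\Omega)$, the duality $L^p(\Omega)=L^q(\Omega)^*$ yields $\|u\|_{L^p(\Omega)}\le C\,\|\lambda\|_{L^1(\Omega,\delta^s dx)}$, as desired. The only subtle point is ensuring that the integral $\int_\Omega u\,\psi$ is indeed well-defined and equals $\int_\Omega \phi\,d\lambda$, but this is immediate from $u\in L^1(\Omega)$ (so that $u\psi\in L^1$ for $\psi\in C^\infty_c$) together with the weak formulation, which is exactly built for such test functions. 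The main technical content has already been absorbed into Lemma \ref{reg-t}, whose proof uses the sharp Green function bound \eqref{alter-Gbound}; beyond that, the argument is a clean duality.
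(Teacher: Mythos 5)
Your proposal is correct and follows essentially the same route as the paper's proof: test against $\psi\in C^\infty_c(\Omega)$, use the weak formulation to get $\int_\Omega u\psi=\int_\Omega\phi\,d\lambda$, invoke Lemma \ref{reg-t} with the conjugate exponent $q>N/s$, and conclude by density of $C^\infty_c(\Omega)$ in $L^q(\Omega)$ and $L^p$--$L^q$ duality. No gaps to report.
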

\begin{proof}\rm For any $\psi\in C^\infty_c(\Omega)$, let $\phi\in\T(\Omega)$ be
chosen in such a way that $\Ds\phi|_\Omega=\psi$. We have
\[
\left|\int_\Omega u(x)\,\psi(x)\;dx\right|=
\left|\int_\Omega \phi(x)\;d\lambda(x)\right|
\leq\int_\Omega\frac{|\phi(x)|}{\delta(x)^s}\cdot\delta(x)^s\;d|\lambda|(x)\leq
C\Arrowvert\psi\Arrowvert_{L^q(\Omega)}\int_\Omega\delta(x)^s\;d|\lambda|(x).
\]
where $q$ is the conjugate exponent of $p$, according to Lemma \ref{reg-t}.
By density of $C^\infty_c(\Omega)$ in $L^q(\Omega)$ and 
the isometry between $L^p(\Omega)$ and the dual space of $L^q(\Omega)$,
we obtain our thesis.
\end{proof}
\medskip

\section{asymptotic behaviour at the boundary}\label{bblin-sec}

\subsection{Right-hand side blowing up at the boundary: proof of \texorpdfstring{\eqref{udelta}}{\ref{udelta}}}
\label{rhs-blowing}

In this paragraph we study the boundary behaviour
of the solution $u$ to the problem
$$
\left\lbrace\begin{aligned}
\Ds u(x) &={\delta(x)}^{-\beta} & & \hbox{ in }\Omega,\ 0<\beta<1+s \\
u &=0 & & \hbox{ in }\C\Omega \\
Eu &=0 & & \hbox{ on }\partial\Omega
\end{aligned}\right.
$$
starting from the radial case $\Omega=B$.
Then
$$
u(x)=\int_B\frac{G_B(x,y)}{{\delta(y)}^\beta}\;dy
$$
and by using \eqref{est-green}, up to multiplicative constants,
$$
u(x)\leq\int_B \frac{\left[{|x-y|}^2\wedge\delta(x)\delta(y)\right]^s}{{|x-y|}^N}
\cdot\frac{dy}{{\delta(y)}^\beta}.
$$
Set
$$
\ep:=\delta(x)\hbox{ and }x=(1-\ep)e_1,\qquad
\theta:=\frac{y}{|y|},\,r:=\delta(y)\hbox{ and }y=(1-r)\theta,
$$
and rewrite
$$
u(x)\leq\int_{S^{N-1}}\int_0^1 \frac{\left[{|(1-\ep)e_1-(1-r)\theta|}^2\wedge\ep r\right]^s}{{|(1-\ep)e_1-(1-r)\theta|}^N}
\cdot\frac{{(1-r)}^{N-1}}{{r}^\beta}\;dr\;d\mathcal{H}^{N-1}(\theta).
$$
Split the angular variable in $\theta=(\theta_1,\theta')$ where
$\theta_1=\langle\theta,e_1\rangle$ and $\theta_1^2+|\theta'|^2=1$: then, for a general $F$,
$$
\int_{S^{N-1}} F(\theta)d\mathcal{H}^{N-1}(\theta)=
\int_{S^{N-2}}\left(\int_{-1}^1\left(1-\theta_1^2\right)^{(N-3)/2} F(\theta_1,\theta')\;d\theta_1\right)\;d\mathcal{H}^{N-2}(\theta'),
$$
and we write
\begin{align*}
& u(x)\leq\int_{S^{N-2}}\int_0^1\left(1-\theta_1^2\right)^{(N-3)/2}\int_0^1 \\
& \qquad \frac{\left\{[(1-\ep)^2+(1-r)^2-2(1-\ep)(1-r)\theta_1]\wedge\ep r\right\}^s(1-r)^{N-1}}
{{[(1-\ep)^2+(1-r)^2-2(1-\ep)(1-r)\theta_1]}^{N/2}\;{r}^\beta}
\,dr\;d\theta_1\,d\mathcal{H}^{N-2}(\theta') \\
& =\mathcal{H}^{N-2}(S^{N-2})\int_0^1\left(1-\theta_1^2\right)^{(N-3)/2}\int_0^1 \\
&\qquad \frac{\left\{[(1-\ep)^2+(1-r)^2-2(1-\ep)(1-r)\theta_1]\wedge\ep r\right\}^s{(1-r)}^{N-1}}
{{[(1-\ep)^2+(1-r)^2-2(1-\ep)(1-r)\theta_1]}^{N/2}\;{r}^\beta}\,dr\,d\theta_1.
\end{align*}
From now on we will drop all multiplicative constants
and all inequalities will have to be interpreted to hold up to constants.
Let us apply a first change of variables
$$
t=\frac{1-r}{1-\ep}\quad\longleftrightarrow\quad r=1-(1-\ep)t
$$
to obtain
\begin{align}
& u(x)\leq\int_0^1\left(1-\theta_1^2\right)^{(N-3)/2}\int_0^{1/(1-\ep)} \\
& \qquad \frac{\left\{[(1-\ep)^2+(1-\ep)^2t^2-2(1-\ep)^2t\theta_1]\wedge\ep(1-(1-\ep)t)\right\}^s}
{{[(1-\ep)^2+(1-\ep)^2t^2-2(1-\ep)^2t\theta_1]}^{N/2}}
\frac{(1-\ep)^Nt^{N-1}}{{(1-(1-\ep)t)}^\beta}\;dt\;d\theta_1 \nonumber \\
& =\int_0^1\left(1-\theta_1^2\right)^{(N-3)/2}\int_0^{1/(1-\ep)}
\frac{\left[(1-\ep)^2(1+t^2-2t\theta_1)\wedge\ep(1-(1-\ep)t)\right]^s}
{{[1+t^2-2t\theta_1]}^{N/2}}\cdot
\frac{t^{N-1}}{{(1-(1-\ep)t)}^\beta}\;dt\;d\theta_1 \nonumber \\
& \leq\int_0^{1/(1-\ep)}\int_0^1
\frac{\left\{[(1-t)^2+2t\sigma]\wedge\ep[1-(1-\ep)t]\right\}^s}
{{[(1-t)^2+2t\sigma]}^{N/2}}\,{\sigma}^{(N-3)/2}
\;d\sigma\;\frac{t^{N-1}}{{(1-(1-\ep)t)}^\beta}\;dt 	\label{67},
\end{align}
where $\sigma=1-\theta_1$.
We compute now the integral in the variable $\sigma$.
Let $\sigma'$ be defined by equality
$$
(1-t)^2+2t\sigma'=\ep[1-(1-\ep)t]
\qquad\Longleftrightarrow\qquad
\sigma'=\frac{\ep[1-(1-\ep)t]-(1-t)^2}{2t}
$$
and let
$$
\sigma^*=\max\{\sigma',0\}.
$$
The quantity $\sigma^*$ equals $0$ if and only if
$$
\ep[1-(1-\ep)t]\leq(1-t)^2
$$
and it is easy to verify that this happens whenever
$$
t\leq t_1(\ep):=1-\frac{\ep-\ep^2+\ep\sqrt{\ep^2-2\ep+5}}{2},\  
t\geq t_2(\ep):=1+\frac{-\ep+\ep^2+\ep\sqrt{\ep^2-2\ep+5}}{2}.
$$
\begin{rmk}\rm For small $\ep>0$ we have $t_2(\ep)<\frac{1}{1-\ep}$, $0<t_1(\ep)<1$
\end{rmk}

Hence
$$
\sigma^*=\max\{\sigma',0\}=\left\lbrace
\begin{array}{ll}
\displaystyle
\frac{\ep[1-(1-\ep)t]-(1-t)^2}{2t} &
\hbox{if }
t\in\left(t_1(\ep),t_2(\ep)\right)\subseteq\left(0,\frac{1}{1-\ep}\right) \\
0 & \hbox{if }
t\in\left(0,t_1(\ep)\right)\cup\left(t_2(\ep),\frac{1}{1-\ep}\right).
\end{array}\right.
$$
We split now integral \eqref{67} into four pieces, as following
\begin{multline}\label{splitted}
\int_0^{\frac{1}{1-\ep}}dt\int_0^1d\sigma\ =\\
=\ \int_{t_1(\ep)}^{t_2(\ep)}dt\int_0^{\sigma^*}d\sigma\ 
+\ \int_{t_1(\ep)}^{t_2(\ep)}dt\int_{\sigma^*}^1d\sigma\ 
+\ \int_0^{t_1(\ep)}dt\int_0^1d\sigma\ 
+\ \int_{t_2(\ep)}^\frac{1}{1-\ep}dt\int_0^1d\sigma,
\end{multline}
and we treat each of them separately:
\begin{itemize}
\item for the first one we have
\begin{multline*}
\int_0^{\sigma^*}\frac{{\sigma}^{(N-3)/2}}{{[(1-t)^2+2t\sigma]}^{N/2-s}}\;d\sigma
\leq\frac{1}{{(2t)}^{(N-3)/2}}\int_0^{\sigma^*}
{[(1-t)^2+2t\sigma]}^{s-3/2}\;d\sigma\leq\\
\leq \frac{1}{{t}^{(N-1)/2}}\frac{1}{s-\frac{1}{2}}
\left[\ep^{s-1/2}\left(1-(1-\ep)t\right)^{s-1/2}-|1-t|^{2s-1}\right]
\end{multline*}
and therefore the first integral is less than
\begin{equation}\label{1}
\int_{t_1(\ep)}^{t_2(\ep)}
\frac{t^{(N-1)/2}}{\left(1-(1-\ep)t\right)^\beta}\cdot
\frac{1}{s-\frac{1}{2}}
\left[\ep^{s-1/2}\left(1-(1-\ep)t\right)^{s-1/2}-|1-t|^{2s-1}\right]\;dt;
\end{equation}
note now that $t_2(\ep)-t_1(\ep)\sim\ep$, up to a multiplicative constant, and $\frac{1}{1-\ep}-1\sim\ep$
and thus \eqref{1} is of magnitude
$$
\ep^{-\beta+2s};
$$
\item for the second integral we have
\begin{multline*}
\int_{\sigma^*}^1\frac{{\sigma}^{(N-3)/2}}{{[(1-t)^2+2t\sigma]}^{N/2}}\;d\sigma
\leq\frac{1}{{(2t)}^{(N-3)/2}}\int_0^{\sigma^*}
{[(1-t)^2+2t\sigma]}^{-3/2}\;d\sigma\leq\\
\leq \frac{1}{{t}^{(N-1)/2}}
\left[\ep^{-1/2}\left(1-(1-\ep)t\right)^{-1/2}-(1+t^2)^{-1/2}\right]
\end{multline*}
and therefore the second integral is less than
\begin{equation}\label{2}
\int_{t_1(\ep)}^{t_2(\ep)}
\frac{t^{(N-1)/2}}{\left(1-(1-\ep)t\right)^\beta}\cdot
\left[\ep^{-1/2}\left(1-(1-\ep)t\right)^{-1/2}-(1+t^2)^{-1/2}\right]\;dt;
\end{equation}
and \eqref{2} is of magnitude
$$
\ep^{-\beta+2s};
$$
\item for the third integral we have
$$
\int_0^1\frac{{\sigma}^{(N-3)/2}}{{[(1-t)^2+2t\sigma]}^{N/2}}\;d\sigma
\leq \frac{1}{t^{(N-3)/2}}\int_0^1\frac{d\sigma}{\left[(1-t)^2+2t\sigma\right]^{3/2}}
\leq \frac{1}{t^{(N-1)/2}}\cdot\frac{1}{|1-t|}
$$
so that the third integral is less than
\begin{equation}\label{3}
\begin{split}
\int_0^{t_1(\ep)}
\frac{t^{(N-1)/2}\,\ep^s}{\left(1-(1-\ep)t\right)^{\beta-s}}
\cdot\frac{dt}{1-t}\leq
\ep^s\int_0^{t_1(\ep)}
\frac{dt}{(1-t)\,\left(1-(1-\ep)t\right)^{\beta-s}}\leq\\
\leq\ep^s\int_0^{t_1(\ep)}
\frac{dt}{\left(1-t\right)^{\beta-s+1}}=
\frac{\ep^s\left(1-t_1(\ep)\right)^{\beta-s}}{\beta-s}
-\frac{\ep^s}{\beta-s}
\end{split}
\end{equation}
if $\beta\neq s$
and \eqref{3} is of magnitude
\begin{eqnarray*}
& \ep^s & \hbox{ for }0\leq\beta< s,\\
& \ep^s\log\frac{1}{\ep} & \hbox{ for }\beta=s,\\
& \ep^{-\beta+2s} & \hbox{ for }s<\beta<1+s;
\end{eqnarray*}
\item for the fourth integral we have
$$
\int_0^1\frac{{\sigma}^{(N-3)/2}}{{[(1-t)^2+2t\sigma]}^{N/2}}\;d\sigma
\leq \frac{1}{t^{(N-1)/2}}\cdot\frac{1}{|1-t|}
$$
so that the fourth integral is less than
\begin{equation}\label{4}
\int_{t_2(\ep)}^{\frac{1}{1-\ep}}
\frac{t^{(N-1)/2}\,\ep^s}{\left(1-(1-\ep)t\right)^{\beta-s}}
\cdot\frac{dt}{t-1}\leq
\ep^s\int_{t_2(\ep)}^{\frac{1}{1-\ep}}
\frac{dt}{(t-1)\,\left(1-(1-\ep)t\right)^{\beta-s}}
\end{equation}
and \eqref{4} is of magnitude
\begin{eqnarray*}
\ep^s\int_{t_2(\ep)}^{\frac{1}{1-\ep}}\frac{dt}{\left(t-1\right)^{\beta-s+1}} & \sim\ep^s &
\hbox{ for }0\leq\beta< s, \\
\ep^s\int_{t_2(\ep)}^{\frac{1}{1-\ep}}\frac{dt}{t-1}
& \sim\ep^s\log\frac{1}{\ep} & \hbox{ for }\beta=s, \\
\frac{\ep^s}{t_2(\ep)-1}\int_{t_2(\ep)}^{\frac{1}{1-\ep}}
\frac{dt}{\left(1-(1-\ep)t\right)^{\beta-s}}
& \sim\ep^{-\beta+2s} &
\hbox{ for }s<\beta<1+s.
\end{eqnarray*}
\end{itemize}
Resuming the information collected so far,
what we have gained is that, up to constants,
\begin{equation}\label{rhs-estimate}
u(x)\leq \left\lbrace
\begin{array}{ll}
\displaystyle
{\delta(x)}^s &
\hbox{ for }0\leq\beta< s, \\ 
\displaystyle
{\delta(x)}^s\log\frac{1}{\delta(x)} & \hbox{ for }\beta=s,\\
\displaystyle
{\delta(x)}^{-\beta+2s} &
\hbox{ for }s<\beta<1+s.
\end{array}\right.
\end{equation}

This establishes an upper bound for the solutions.
Note now that the integral \eqref{67} works also as
a lower bound, of course up to constants.
Using the split expression \eqref{splitted} we entail
\begin{align*}
& u(x)\geq \\
& \int_0^{\frac{1}{1-\ep}}
\int_0^1\frac{\left\{[(1-t)^2+1+2t\sigma]\wedge\ep[1-(1-\ep)t]\right\}^s}
{{[(1-t)^2+2t\sigma]}^{N/2}}\,{\sigma}^{(N-3)/2}
\;d\sigma\;\frac{t^{N-1}}{{(1-(1-\ep)t)}^\beta}
\;dt \\ 
& \geq\int_0^{t_1(\ep)}\frac{t^{N-1}\,\ep^s}{\left(1-(1-\ep)t\right)^{\beta-s}}
\int_0^1\frac{{\sigma}^{(N-3)/2}}{\left[(1-t)^2+2t\sigma\right]^{N/2}}\;d\sigma\;dt,
\end{align*}
where we have used only the expression with the third 
integral in \eqref{splitted}.
We claim that
\begin{equation}\label{45}
\int_0^1\frac{{\sigma}^{(N-3)/2}}{\left[(1-t)^2+2t\sigma\right]^{N/2}}
\;d\sigma\geq\frac{1}{1-t}\geq\frac{1}{1-(1-\ep)t},
\end{equation}
where the inequality is intended to hold up to constants.
In case \eqref{45} holds and $\beta\neq s$ we have
\begin{multline*}
\int_0^{t_1(\ep)}\frac{t^{N-1}\,\ep^s}{\left(1-(1-\ep)t\right)^{\beta-s}}
\int_0^1\frac{{\sigma}^{(N-3)/2}}{\left[(1-t)^2+2t\sigma\right]^{N/2}}\;d\sigma\;dt
\geq\int_0^{t_1(\ep)}\frac{t^{N-1}\,\ep^s}{\left(1-(1-\ep)t\right)^{\beta-s+1}}\;dt=\\
=\left.\frac{1}{(1-\ep)\,(\beta-s)}\cdot
\frac{t^{N-1}\,\ep^s}{\left(1-(1-\ep)t\right)^{\beta-s}}\right\arrowvert_0^{t_1(\ep)}
-\frac{N-1}{(1-\ep)(\beta-s)}\int_0^{t_1(\ep)}\frac{t^{N-2}\,\ep^s}{\left(1-(1-\ep)t\right)^{\beta-s}}\;dt.
\end{multline*}
The integral on the second line is a bounded quantity as $\ep\downarrow 0$
since $\beta<1+s$. Now, we are left with
$$
u(x)\geq\frac{\ep^{-\beta+2s}}{\beta-s}-\frac{\ep^s}{\beta-s}
$$
so that
$$
u(x)\geq\left\lbrace\begin{array}{ll}
\displaystyle\ep^s & \hbox{ when }0\leq\beta< s \\
\displaystyle\ep^s\log\frac{1}{\ep} & \hbox{ when }\beta =s \\
\displaystyle\ep^{-\beta+2s} & \hbox{ when }s<\beta<1+s.
\end{array}\right.
$$
We still have to prove \eqref{45}: note that an integration by parts yields, for $N\geq4$,
\begin{multline*}
\int_0^1\frac{{\sigma}^{(N-3)/2}}{\left[(1-t)^2+2t\sigma\right]^{N/2}}
\:d\sigma=
\frac{1}{2t\,(1-N/2)}\cdot\frac{1}{\left[(1-t)^2+2t\right]^{N/2-1}}\ +\\
+\ \frac{1}{2t\,(N/2-1)}\int_0^1\frac{{\sigma}^{(N-5)/2}}{\left[(1-t)^2+2t\sigma\right]^{N/2-1}}\:d\sigma
\end{multline*}
so that we can show \eqref{45} only in dimensions $N=2,\,3$
and deduce the same conclusions for any other value of $N$
by integrating by parts a suitable number of times.
For $N=2$
\[
\int_0^1\frac{d\sigma}{\sqrt{\sigma}\left[(1-t)^2+2t\sigma\right]}
=\frac{\pi}{\sqrt{2t}(1-t)}.
\]
For $N=3$
$$
\int_0^1\frac{d\sigma}{\left[(1-t)^2+2t\sigma\right]^{3/2}}=
\frac{1}{4t}\left(\frac{1}{1-t}-\sqrt{\frac{1}{1+t^2}}\right)
$$
which completely proves our claim \eqref{45}.

So far we have worked only on spherical domains.
In a general domain $\Omega$ with $C^{1,1}$ boundary,
split the problem in two by setting $u=u_1+u_2$,
for $0<\beta<1+s$ and a small $\delta_0>0$
$$
\left\lbrace\begin{aligned}
\Ds u_1 &=\frac{\chi_{\{\delta\geq\delta_0\}}}{{\delta}^\beta} & & \hbox{in }\Omega \\
u_1 &=0 & & \hbox{in }\C\Omega \\
Eu_1 &=0 & & \hbox{on }\partial\Omega,
\end{aligned}\right.
\qquad
\left\lbrace\begin{aligned}
\Ds u_2 &=\frac{\chi_{\{\delta<\delta_0\}}}{{\delta}^\beta} & & \hbox{in }\Omega \\
u_2 &=0 & & \hbox{in }\C\Omega \\
Eu_2 &=0 & & \hbox{on }\partial\Omega.
\end{aligned}\right.
$$
Note that $u_1\in C^s(\R^N)$, see \cite[Proposition 1.1]{rosserra}.
Since $\partial\Omega\in C^{1,1}$, we choose now $\delta_0$ sufficiently small in order to have
that for any $y\in\Omega$ with $\delta(y)<\delta_0$ it is uniquely determined $\theta=\theta(y)\in\partial\Omega$
such that $|y-\theta|=\delta(y)$. Then in inequalities (see \eqref{est-green})
\begin{multline*}
\int_{\{\delta(y)<\delta_0\}}\frac{\left[{|x-y|}^2\,\wedge\,\delta(x)\delta(y)\right]^s}{c_2\,|x-y|^N\,{\delta(y)}^\beta}\;dy
\leq u_2(x)=\int_{\{\delta(y)<\delta_0\}}\frac{G_\Omega(x,y)}{{\delta(y)}^\beta}\;dy\leq \\
\leq\int_{\{\delta(y)<\delta_0\}}\frac{c_2\,\left[{|x-y|}^2\,\wedge\,\delta(x)\delta(y)\right]^s}{|x-y|^N\,{\delta(y)}^\beta}\;dy
\end{multline*}
set
$$
\begin{array}{l}
\nu(\theta),\ \theta\in\partial\Omega,\ \hbox{the outward unit normal to }\partial\Omega\hbox{ at }\theta, \\
\ep:=\delta(x),\ \theta^*=\theta(x),\ \hbox{ and }x=\theta^*-\ep\,\nu(\theta^*), \\
\theta=\theta(y),\ r:=\delta(y),\ \hbox{ and }y=\theta-r\,\nu(\theta).
\end{array}
$$
Then, using the Fubini's Theorem, we write
\begin{equation}\label{09}
\int_{\{\delta(y)<\delta_0\}}\frac{\left[{|x-y|}^2\,\wedge\,\delta(x)\delta(y)\right]^s}{|x-y|^N\,{\delta(y)}^\beta}\;dy=
\int_0^{\delta_0}\int_{\partial\Omega}\frac{\left[{|\theta^*-\ep\,\nu(\theta^*)-\theta+r\,\nu(\theta)|}^2\,\wedge\,\ep r\right]^s}
{{|\theta^*-\ep\,\nu(\theta^*)-\theta+r\,\nu(\theta)|}^N\,{r}^\beta}\;d\mathcal{H}(\theta)\;dr.
\end{equation}
Split the integration on $\partial\Omega$ into the integration on 
$\Gamma:=\{\theta\in\partial\Omega:|\theta-\theta^*|<\delta_1\}$ and $\partial\Omega\setminus\Gamma$,
and choose $\delta_1>0$ small enough to have a $C^{1,1}$ diffeomorphism 
$$
\begin{array}{rcl}
\varphi:\widetilde\Gamma\subseteq S^{N-1} & \longrightarrow & \Gamma \\
\omega & \longmapsto & \theta=\varphi(\omega).
\end{array}
$$
We build now
$$
\begin{array}{rcl}
\overline\varphi:\widetilde\Gamma\times(0,\delta_0)\subseteq B_1 & \longrightarrow & \{y\in\Omega:\delta(y)<\delta_0,\ |\theta(y)-\theta^*|<\delta_1\} \\
(1-\delta)\omega=\omega-\delta\omega & \longmapsto & y=\theta-\delta\,\nu(\theta)=\varphi(\omega)-\delta\,\nu(\varphi(\omega)),
\end{array}
$$
where we suppose $e_1\in\widetilde\Gamma$ and $\phi(e_1)=\theta^*$.
With this change of variables \eqref{09} becomes
\[
\int_{\{\delta(y)<\delta_0\}}\frac{\left[{|x-y|}^2\,\wedge\,\delta(x)\delta(y)\right]^s}{|x-y|^N\,{\delta(y)}^\beta}\;dy
=\int_{0}^{\delta_0}\int_{\widetilde\Gamma}\frac{\left[{|e_1-\ep\,e_1-\omega+r\,\omega|}^2\,\wedge\,\ep r\right]^s}
{{|e_1-\ep\,e_1-\omega+r\,\omega|}^N\,{r}^\beta}\;|D\overline\varphi(\omega)|\;d\mathcal{H}(\omega)\;dr
\]
which, since $|D\overline\varphi(\omega)|$ is a bounded continuous quantity far from $0$,
is bounded below and above in terms of
$$
\int_{0}^{\delta_0}\int_{\widetilde\Gamma}\frac{\left[{|(1-\ep)e_1-(1-r)\omega|}^2\,\wedge\,\ep r\right]^s}
{{|(1-\ep)e_1-(1-r)\omega|}^N\,{r}^\beta}\;d\mathcal{H}(\omega)\;dr,
$$ 
i.e. we are brought back to the spherical case.

\subsection{Boundary continuity of \texorpdfstring{$s$}{s}-harmonic functions}\label{cont-sharm-sec}

Consider $g:\C\Omega\rightarrow\R$ and
$$
u(x)=-\int_{\C\Omega}g(y)\Ds G_\Omega(x,y)\;dy
$$
and think of letting $x\rightarrow\theta\in\partial\Omega$.
Suppose that for any small $\ep>0$ there exists $\delta>0$
such that $|g(y)-g(\theta)|<\ep$ for any $y\in\C\Omega\cap B_\delta(\theta)$.
Then
\begin{align*}
& |u(x)-g(\theta)|=\left|\int_{\C\Omega}(g(\theta)-g(y))\Ds G_\Omega(x,y)\;dy\right|\leq\\
& \leq \int_{\C\Omega\cap B_\delta(\theta)}\left|(g(\theta)-g(y))\Ds G_\Omega(x,y)\right|dy
+\int_{\C\Omega\setminus B_\delta(\theta)}\left|(g(\theta)-g(y))\Ds G_\Omega(x,y)\right|dy.
\end{align*}
The first addend satisfies
$$
\int_{\C\Omega\cap B_\delta(\theta)}\left|(g(\theta)-g(y))\Ds G_\Omega(x,y)\right|\;dy
\leq -\ep \int_{\C\Omega\cap B_\delta(\theta)}\Ds G_\Omega(x,y)\;dy\leq\ep.
$$
For the second one we exploit \eqref{est-poisson}:
\[
\int_{\C\Omega\setminus B_\delta(\theta)}\left|(g(\theta)-g(y))\Ds G_\Omega(x,y)\right|dy
\leq\ {\delta(x)}^s\int_{\C\Omega\setminus B_\delta(\theta)}\frac{c_1|(g(\theta)-g(y))|}{{\delta(y)}^s\left(1+\delta(y)\right)^s{|x-y|}^N}
\]
which converges to $0$ as $x\rightarrow\theta\in\partial\Omega$.
So we have that
$$
\lim_{x\rightarrow\theta}|u(x)-g(\theta)|\leq\ep,
$$
and by arbitrarily choosing $\ep$, we conclude 
$$
\lim_{x\rightarrow\theta}u(x)=g(\theta).
$$

\subsection{Explosion rate of large \texorpdfstring{$s$}{s}-harmonic functions: proof of \texorpdfstring{\eqref{uleqg}}{\ref{uleqg}}}
\label{expl-sharm-sec}

We study here the rate of divergence of 
$$
u(x)=-\int_{\C\Omega}g(y)\:\Ds G_\Omega(x,y)\;dy
\quad\hbox{ as }x\rightarrow\partial\Omega,\ x\in\Omega
$$
which is the solution to 
$$
\left\lbrace\begin{aligned}
\Ds u & =0 & & \hbox{ in }\Omega \\
u &=g & & \hbox{ in }\C\Omega \\
Eu &=0 & & \hbox{ on }\partial\Omega
\end{aligned}\right.
$$
in case $g$ explodes at $\partial\Omega$.

\begin{rmk}\rm The asymptotic behaviour of $u$ depends only on
the values of $g$ near the boundary, since we can split
$$
g=g\chi_{\{d<\eta\}}+g\chi_{\{d\geq\eta\}}
$$
and the second addend has a null contribution on the boundary,
in view of Paragraph \ref{cont-sharm-sec}.
Therefore in our computations we will suppose
that $g(y)=0$ for $\delta(y)>\eta$.
\end{rmk}

In the further assumption that $g$ explodes like a power, i.e.
there exist $\eta, k, K>0$ for which
$$
\frac{k}{\delta(y)^\tau}\leq g(y)\leq\frac{K}{\delta(y)^\sigma},
\qquad \hbox{ for } 0\leq\tau\leq\sigma<1-s,\ 0<\delta(y)<\eta
$$
(the choice $\sigma<1-s$ is in order to have \eqref{g},
see \eqref{est-poisson} above)
our proof doesn't require heavy computations
and it is as follows.

Dropping multiplicative constants in inequalities and for $\Omega_\eta=\{y\in\C\Omega:\delta(y)<\eta\}$:
\begin{multline*}
\delta(x)^\sigma u(x)=
-\delta(x)^\sigma\int_{\C\Omega}g(y)\cdot\Ds G_\Omega(x,y)\;dy\leq \\
\leq\int_{\Omega_\eta}\frac{\delta(x)^{s+\sigma}}{\delta(y)^{s+\sigma}\,\left(1+\delta(y)\right)^s\,|x-y|^N}\;dy 
\leq-\int_{\C\Omega}\chi_{\Omega_\eta}(y)\cdot{(-\lapl)}^{s+\sigma}G_\Omega^{s+\sigma}(x,y)\;dy 
\ \leq\ 1
\end{multline*}
Similarly one can treat also the lower bound:
\begin{align*}
& \delta(x)^\tau u(x)=
-\delta(x)^\tau\int_{\C\Omega}g(y)\cdot\Ds G_\Omega(x,y)\;dy\ \geq \\
& \geq\int_{\Omega_\eta}\frac{\delta(x)^{s+\tau}}{\delta(y)^{s+\tau}\,\left(1+\delta(y)\right)^s\,|x-y|^N}\;dy
\geq-\int_{\C\Omega}\chi_{\Omega_\eta}(y)\cdot{(-\lapl)}^{s+\tau}G_\Omega^{s+\tau}(x,y)\;dy 
\ \xrightarrow[x\rightarrow\partial\Omega]{}1.
\end{align*}
The limit we have computed above is the continuity up to the boundary
of $\widehat{u}$ solution of
$$
\left\lbrace\begin{aligned}
{(-\lapl)}^{s+\tau}\widehat{u} &=0 & & \hbox{ in }\Omega, \\
\widehat{u} &=\chi_{\Omega_\eta} & & \hbox{ in }\C\Omega, \\
E\widehat{u} &=0 & & \hbox{ on }\partial\Omega.
\end{aligned}\right.
$$

\begin{rmk}\rm Both the upper and the lower estimate are optimal,
thanks to what have been shown in Example \ref{expl-sol}.
\end{rmk}

In the case of a general boundary datum $g$
we start from the case $\Omega=B$,
recalling that in this setting, according to \cite[equation (1.6.11')]{landkof},
$$
-\Ds G_B(x,y)=\frac{c(N,s)}{{|x-y|}^N}\left(\frac{1-|x|^2}{|y|^2-1}\right)^s
$$
and therefore
$$
u(x)=\int_{\C B}\frac{c(N,s)}{{|x-y|}^N}\left(\frac{1-|x|^2}{|y|^2-1}\right)^s\:g(y)\;dy.
$$
Suppose without loss of generality
$$
\begin{array}{l}
x=(1-\ep)\,e_1\quad \ep=\delta(x) \\
y=(1+r)\,\theta\quad r=\delta(y),\quad\theta=\frac{y}{|y|},\quad\theta_1=e_1\cdot\theta
\end{array}
$$
so that
\[
u(x)=\int_0^{\eta}\left[
\int_{\partial B}
\frac{c(N,s)}{{|(1-\ep)^2+(1+r)^2-2(1-\ep)(1+r)\theta_1|}^{N/2}}
\left(\frac{\ep(2-\ep)}{r(2+r)}\right)^s\:g(r\theta)
\;d\mathcal{H}(\theta)
\right](1+r)^{N-1}\;dr.
\]
Denote now by $\overline{g}(r)=\sup_{r\leq t\leq\eta}\sup_{\delta(x)=t}g(x)$,
which is decreasing in $0$.
Splitting the integral in the $\theta$ variable into two integrals
in the variables $(\theta_1,\theta')$ where $\theta_1^2+|\theta'|^2=|\theta|^2=1$,
up to constants we obtain
\begin{multline}\label{34}
u(x)\leq \int_0^{\eta}\left[
\int_{-1}^1
\frac{(1-\theta_1^2)^{(N-3)/2}}{{|(1-\ep)^2+(1+r)^2-2(1-\ep)(1+r)\theta_1|}^{N/2}}
\cdot\frac{\ep^s}{r^s}
\,d\theta_1\right]\overline{g}(r)\,(1+r)^{N-1}\,dr\leq \\
\leq\ep^s\int_0^{\eta}\left[
\int_{-1}^1
\frac{(1-\theta_1^2)^{(N-3)/2}}{{|(1-\ep)^2+(1+r)^2-2(1-\ep)(1+r)\theta_1|}^{N/2}}
\;d\theta_1\right]\frac{\overline{g}(r)}{r^s}\;dr. 
\end{multline}
Define $M:=\frac{1+r}{1-\ep}>1$ and look at the inner integral:
\begin{multline*}
\int_{-1}^1
\frac{(1-\theta_1^2)^{(N-3)/2}}{{|1+M^2-2M\theta_1|}^{N/2}}
\;d\theta_1
=
\int_{-1}^1
\frac{(1-\theta_1^2)^{(N-3)/2}}{{|1-\theta_1^2+(M-\theta_1)^2|}^{N/2}}
\;d\theta_1
\leq \\ \leq 
\int_{-1}^1
\left|1-\theta_1^2+(M-\theta_1)^2\right|^{-3/2}
\;d\theta_1.
\end{multline*}
The integral from $-1$ to $0$ contributes by a bounded quantity
so that we are left with
\begin{eqnarray*}
u(x) & \leq & 
\int_0^1
\left|1-\theta_1+(M-\theta_1)^2\right|^{-3/2}
\;d\theta_1
\ =\ \int_0^1
\left|\tau+(M-1+\tau)^2\right|^{-3/2}
\;d\tau \\
& \leq & 
\int_0^1
\left|\tau+(M-1)^2\right|^{-3/2}
\;d\tau
\ =\ 
-2\left.\left(\tau+(M-1)^2\right)^{-1/2}\right|_{\tau=0}^1  
\ \leq\ \frac{1}{M-1}\quad=\quad\frac{1-\ep}{r+\ep}.
\end{eqnarray*}
Thus
$$
u(x)\ \leq\ \ep^s\int_0^{\eta}
\frac{\overline{g}(r)}{r^s}\cdot\frac{1-\ep}{r+\ep}\;dr
\ \leq\ \ep^s\int_0^{\ep}
\frac{\overline{g}(r)}{r^s}\cdot\frac{1}{r+\ep}\;dr+
\int_1^{\eta/\ep}\frac{\overline{g}(\ep\tau)}{\tau^s}\cdot\frac{1}{1+\tau}\;d\tau.
$$
Our claim  now is that this last expression
is controlled by $\overline{g}(\ep)$ as $\ep\downarrow0$.
Since $\overline{g}$ is exploding in $0$, for small $\ep$ it holds
$\overline{g}(\tau\ep)\leq\overline{g}(\ep)$ for $\tau>1$ and
$$
\frac{1}{\overline{g}(\ep)}\int_1^{\eta/\ep}\frac{\overline{g}(\ep\tau)}{\tau^s}\cdot\frac{1}{1+\tau}\;d\tau
\leq \int_1^{+\infty}\frac{1}{\tau^s}\cdot\frac{1}{1+\tau}\;d\tau.
$$
For the other integral
$$
\frac{\ep^s}{\overline{g}(\ep)}\int_0^{\ep}
\frac{\overline{g}(r)}{r^s}\cdot\frac{1}{r+\ep}\;dr\leq
\frac{\ep^s}{\ep\,\overline{g}(\ep)}\int_0^{\ep}
\frac{\overline{g}(r)}{r^s}\;dr.
$$
To compute the limit as $\ep\downarrow 0$
we use a Taylor expansion:
$$
\frac{1}{\ep}\int_0^\ep G(r)\;dr
=G(\ep)+G'(\ep)\,\frac{\ep^2}{2}+o(\ep^2),
$$
where we have denoted by $G(\ep)=\ep^{-s}\,\overline{g}(s)$.
Thus
$$
\frac{\ep^s}{\overline{g}(\ep)}\int_0^{\ep}
\frac{\overline{g}(r)}{r^s}\cdot\frac{1}{r+\ep}\;dr\leq
1+\frac{G'(\ep)}{2\,G(\ep)}\,\ep+o\left(\frac{\ep}{G(\ep)}\right).
$$
We are going to show now that
$$
\frac{|G'(\ep)|}{G(\ep)}\,\ep<1.
$$
Indeed
$$
-\int_\ep^\eta\frac{G'(\xi)}{G(\xi)}\;d\xi<\int_\ep^\eta\frac{d\xi}{\xi}
\qquad\Longleftrightarrow\qquad G(\ep)<G(\eta)\,\frac{\eta}{\ep}
$$
which is guaranteed by the fact that $G$ is integrable in a neighbourhood of $0$.

These computations show that, in the case of the ball,
the explosion rate of the $s$-harmonic function induced 
by a large boundary datum is the almost the same as
the rate of the datum itself.

Note now that up to \eqref{34} the same computations provide a lower estimate for $u$
if we substitute $\overline{g}$ with $\underline{g}(r)=\inf_{\delta(x)=r}g(x)$.
Then
\begin{multline*}
\int_{-1}^1
\frac{(1-\theta_1^2)^{(N-3)/2}}{{|1+M^2-2M\theta_1|}^{N/2}}
\;d\theta_1=
\int_{-1}^1
\frac{(1-\theta_1^2)^{(N-3)/2}}{{|1-\theta_1^2+(M-\theta_1)^2|}^{N/2}}
\;d\theta_1\geq\\
\geq\int_0^1\frac{\sigma^{(N-3)/2}}{\left|\sigma+(M-1+\sigma)^2\right|^{N/2}}\;d\sigma
\geq \frac{1}{M-1}
\end{multline*}
where the last inequality is \eqref{45}.
Finally we need only to repeat the above computations replacing
$\overline{g}$ with $\underline{g}$ and other minor modifications.

In the case of a general smooth domain,
we can reduce to the spherical case 
as we did to conclude Paragraph \ref{rhs-blowing}.

\chapter{Semilinear fractional Dirichlet problems}\label{nonlin-sec}

\section[the method of sub- and supersolution: proof of theorem \ref{NL-CS}]
{the method of sub- and supersolution:\\ proof of theorem \ref{NL-CS}}

The proof is an adaptation of the result
by Cl\'ement and Sweers \cite{clem-sweers}.

\it Existence. \rm
We can reduce the problem to homogeneous boundary condition,
indeed by considering the solution of 
$$
\left\lbrace\begin{aligned}
\Ds v &=0 & & \hbox{ in }\Omega \\
v &=g & & \hbox{ in }\C\Omega \\
Ev &=0 & & \hbox{ on }\partial\Omega
\end{aligned}\right.
$$
we can think of solving the problem
(note that the boundedness of $g$ implies \eqref{gintro})
$$
\left\lbrace\begin{aligned}
\Ds u &=-f(x,v+u) & & \hbox{ in }\Omega \\
u &=0 & & \hbox{ in }\C\Omega \\
Eu &=0 & & \hbox{ on }\partial\Omega
\end{aligned}\right.
$$
therefore from now on we will suppose $g\equiv 0$.
Note also that since $v$ is continuous in $\Omega$ and bounded then
$(x,t)\mapsto f(x,v(x)+t)$ satisfies \it f.1) \rm too.

Modify $f$ by defining
$$
F(x,u)=\left\lbrace\begin{array}{ll}
f(x,\overline{u}(x)) & \hbox{if }u>\overline{u}(x) \\
f(x,u) & \hbox{if }\underline{u}(x)\leq u\leq \overline{u}(x) \\
f(x,\underline{u}(x)) & \hbox{if }u<\underline{u}(x)
\end{array}\right.\qquad\hbox{ for every }x\in\overline{\Omega},\, u\in\R:
$$
the function $F(x,u)$ is continuous and bounded on $\Omega\times\R$,
by hypothesis \it f.1) \rm and the boundedness of $\underline{u},\overline{u}$. 
We can write a solution of 
$$
(\ast)\qquad\left\lbrace\begin{aligned}
\Ds u &= -F(x,u) & & \hbox{ in }\Omega \\
u &=0 & & \hbox{ in }\C\Omega \\
Eu &=0 & & \hbox{ on }\partial\Omega
\end{aligned}\right.
$$
as a fixed-point of the map obtained as the composition
$$
\begin{array}{ccccl}
L^\infty(\Omega) & 
\longrightarrow & L^\infty(\Omega) & 
\longrightarrow & L^\infty(\Omega) \\
u & \longmapsto & -F(x,u(x)) & \longmapsto & w \:\hbox{ s.t. }
\left\lbrace\begin{aligned}
& \Ds w=-F(x,u(x))\hbox{ in }\Omega,\\ 
& w=0\hbox{ in }\C\Omega,\ Ew=0\hbox{ on }\partial\Omega.
\end{aligned}\right.
\end{array}
$$
The first map sends $L^\infty(\Omega)$ 
in a bounded subset of $L^\infty(\Omega)$,
by continuity of $f$ and boundedness of $\underline{u},\,\overline{u}$.
The second map is compact since $w\in C^s(\R^N)$, thanks to
the results in\footnote{The notion of solution used in \cite{rosserra} is different
from the one we use here, but look at Section \ref{weakvsl1}.} \cite[Proposition 1.1]{rosserra}.
Then the composition admits a fixed point in view of the Schauder Fixed Point Theorem.

Note that a solution to the original problem lying between $\underline{u}$ and $\overline{u}$ in $\Omega$, 
is also a solution of $(\ast)$.
Moreover, \it any \rm solution of $(\ast)$ is between $\underline{u}$ and $\overline{u}$.
Indeed consider $A:=\{x\in\Omega:u(x)>\overline{u}(x)\}$, which is open by the continuity of $u$ and $\overline{u}$.
For any $\psi\in C^\infty_c(A)$, $\psi\geq 0$,
with the corresponding $\phi_A\in\T(A)$, by Corollary \ref{subdomain},
\begin{eqnarray*}
\int_A u(x)\,\psi(x)\;dx & = & -\int_A F(x,u(x))\,\phi_A(x)\;dx-\int_{\Omega\setminus A}u(x)\,\Ds\phi_A(x)\;dx \\
& = & -\int_A F(x,\overline{u}(x))\,\phi_A(x)\;dx-\int_{\Omega\setminus A}u(x)\,\Ds\phi_A(x)\;dx \\
& \leq & \int_A\overline{u}(x)\,\psi(x)\;dx
\end{eqnarray*}
which implies $u\leq\overline{u}$ in $A$,
by positivity of $\psi$, proving $A=\emptyset$.

\it Uniqueness. \rm If we have two continuous solutions $u$ and $w$
$$
\left\lbrace\begin{aligned}
\Ds u &= -f(x,u) & & \hbox{ in }\Omega \\
u &=g & & \hbox{ in }\C\Omega \\
Eu &=0 & & \hbox{ in }\partial\Omega
\end{aligned}\right.
\qquad\qquad
\left\lbrace\begin{aligned}
\Ds w &=-f(x,w) & & \hbox{ in }\Omega \\
w &=g & & \hbox{ in }\C\Omega \\
Ew &=0 & & \hbox{ in }\partial\Omega
\end{aligned}\right.
$$
then for the difference $u-w$ it holds 
$$
\left\lbrace\begin{aligned}
\Ds(u-w) &= -f(x,u)+f(x,w) & & \hbox{ in }\Omega \\
u-w &= 0 & & \hbox{ in }\C\Omega \\
Eu-Ew &= 0 & & \hbox{ in }\partial\Omega.
\end{aligned}\right.
$$
Defining $\Omega_1=\{x\in\Omega:w(x)<u(x)\}$, thanks to the monotony of $f$,
$$
\left\lbrace\begin{aligned}
\Ds(u-w) &\leq 0 & & \hbox{ in }\Omega_1 \\
u-w &\leq 0 & & \hbox{ in }\C\Omega_1 \\
Eu-Ew &=0 & & \hbox{ in }\partial\Omega_1
\end{aligned}\right.
$$
but then, according to Lemma \ref{max-princweak2}, $u\leq w$ in $\Omega_1$.
This means $\Omega_1$ is empty. By reversing the roles of $u$ and $w$,
we deduce $u=w$ in $\Omega$.

{\it Minimal solution.}
We refer the reader to the proof in \cite[Corollary 2.2]{KO-dupaigne}.

\section{proof of theorem \ref{LARGE-BUILDING}}

In the case of negative right-hand side,
Theorem \ref{LARGE-BUILDING} follows from Theorem \ref{-SIGN}.
So, assume the right-hand side is positive and consider 
$$
\left\lbrace\begin{array}{l}
\Ds v= f(x,v) \quad \hbox{ in }\Omega, \\ \displaystyle
\lim_{\stackrel{\hbox{\scriptsize $x\!\rightarrow\!\partial\Omega$}}{x\in\Omega}}v(x)=+\infty.
\end{array}\right.
$$
We look for a suitable shape $g$ of $v$ outside $\Omega$
and exploding at $\partial\Omega$:
the large $s$-harmonic function $v_0$ induced by $g$
in $\Omega$ will be a subsolution of our equation,
and in particular will imply that the blow-up condition 
at $\partial\Omega$ is fulfilled.
Then, in order to prove the existence part,
we need a supersolution.

Consider $F:\R\rightarrow\R$ continuous, increasing
and such that $F(t)\geq f(x,t)$ for any $t\geq 0$:
for example,
$$
F(t)={t}^{\frac{4s}{1-s}}+\max_{0\leq r\leq t}\left[\max_{x\in\overline\Omega}f(x,r)\right].
$$
Choose
$$
g(x):=\frac{F^{-1}(I(x))-1}{\overline{c}},
$$
where $\overline{c}=\overline{c}(N,s,\Omega)$ is the constant of equation
\eqref{uleqg} giving the upper control
of large $s$-harmonic functions in terms of the boundary datum
(see Paragraph \ref{expl-sharm-sec}), and define for $x\in\R^N,\ \delta(x)\leq\max_{x'\in\overline\Omega}\delta(x')$
$$
I(x)=\min\left\lbrace C_{N,s}\int_{\C\Omega}\frac{dy}{{|z-y|}^{N+2s}}:z\in\Omega,\ 
\delta(z)=\delta(x)\right\rbrace,
$$
while $I(x)=0$ when $\displaystyle x\in\C\Omega,\ \delta(x)>\max_{x'\in\overline\Omega}\delta(x')$.
Note that when $\delta(x)$ is small
$$
I(x)\leq C_{N,s}\,\omega_{N-1}\int_{\delta(x)}^{+\infty}\frac{d\rho}{{\rho}^{1+2s}}=\frac{C_{N,s}\,\omega_{N-1}}{2s}\cdot\frac{1}{{\delta(x)}^{2s}}.
$$
Such $g$ satisfies hypothesis \eqref{g}, since when $\delta(x)$ is small
$$
F(t)\geq {t}^{\frac{4s}{1-s}}\ \Rightarrow\ 
F^{-1}(t)\leq {t}^{\frac{1-s}{4s}} \ \Rightarrow\ 
F^{-1}\left(I(x)\right)\leq\left(\frac{C_{N,s}\,\omega_{N-1}}{2s}\right)^{\frac{1-s}{4s}}\frac{1}{{\delta(x)}^{(1-s)/2}}.
$$

Call $v_0$ the solution to
$$
\left\lbrace\begin{aligned}
\Ds v_0 &= 0 & & \hbox{ in }\Omega, \\
v_0 &=g & & \hbox{ in }\C\Omega, \\
Ev_0 &=0 & & \hbox{ on }\partial\Omega.
\end{aligned}\right.
$$
Denote by $w:=v-v_0$: our claim
is that problem
$$
\left\lbrace\begin{aligned}
\Ds w &=f(x,v_0+w) & & \hbox{ in }\Omega \\
w &=0 & & \hbox{ in }\C\Omega \\
Ew &=0 & & \hbox{ on }\partial\Omega
\end{aligned}\right.
$$
admits a solution $w$.
Indeed, we have a subsolution which is
the function constant to $0$ in $\Omega$
and $\chi_\Omega$ turns out to be a supersolution.
To show this we consider the problem
$$
\left\lbrace\begin{aligned}
\Ds w &\geq F(v_0+w) & & \hbox{ in }\Omega \\
w &=0 & & \hbox{ in }\C\Omega \\
Ew &=0 & & \hbox{ on }\partial\Omega
\end{aligned}\right.
$$
and observe that, when $x\in\Omega$
$$
\Ds\chi_\Omega(x)=C_{N,s}\int_{\C\Omega}\frac{dy}{{|x-y|}^{N+2s}}\geq I(x)
$$
and
$$
F(v_0(x)+1)\leq F(\overline{c}g(x)+1)=F(F^{-1}(I(x)))=I(x).
$$
Finally, the property $F(v_0+w)\geq f(x,v_0+w)$
concludes the construction of the supersolution.
Then Lemma \ref{sub+super} below provides 
the existence of a solution.

\section{proof of theorem \ref{-SIGN}}\label{proof-SIGN}

For any $n\in\N$, denote by $g_n=\min\{g,n\}$. Also, with the notation of equation \eqref{h-approx},
for a small parameter $r>0$ denote by
$$
f_r(y)=f_r(\rho,\theta)=h(\theta)\,\frac{\varphi(\rho/r)}{K_r},
\qquad K_r=\frac{1}{1+s}\int_{\Omega_r}\varphi(\delta(y)/r)\,{\delta(y)}^s\;dy,
$$
and recall that this is an approximation of the $h$ boundary datum.
Finally call $u_{n,r}$ the minimal solution of
$$
\left\lbrace\begin{aligned}
\Ds u_{n,r}(x) &=-f(x,u_{n,r}(x))+f_r(x) & & \hbox{ in }\Omega \\
u_{n,r} &=g_n & & \hbox{ in }\C\Omega \\
Eu_{n,r} &=0 & & \hbox{ on }\partial\Omega
\end{aligned}\right.
$$
provided by Theorem \ref{NL-CS}, with $\underline{u}\equiv 0$ and $\overline{u}=u_r^0$ defined below.
Note that for any $r>0$, 
the sequence $\{u_{n,r}\}_{n\in\N}$ we obtain is increasing in $n$:
indeed, $u_{n+1,r}$ is a supersolution for the problem defining $u_{n,r}$,
since it has larger boundary values and the minimality property
on $u_{n,r}$ gives $u_{n,r}\leq u_{n+1,r}$.
Moreover, ${\{u_{n,r}\}}_{n\in\N}$ is bounded by the 
function $u_r^0$ associated with 
the linear problem with data $g$ and $f_r$, i.e.
$$
\left\lbrace\begin{aligned}
\Ds u_r^0 &=f_r & & \hbox{ in }\Omega, \\
u_r^0 &=g & & \hbox{ in }\C\Omega, \\
Eu_r^0 &=0 & & \hbox{ on }\partial\Omega.
\end{aligned}\right.
$$
Therefore $u_{n,r}$ admits a pointwise limit in $\R^N$ as $n\uparrow+\infty$. 
Call $u_r$ this limit: 
obviously $u_r=g$ in $\C\Omega$. Take any nonnegative $\phi\in\T(\Omega)$
with $0\leq\psi=\Ds\phi|_\Omega\in C^\infty_c(\Omega)$: then
\begin{multline*}
\int_\Omega[f(x,u_r)-f_r]\,\phi\leq\liminf_{n\uparrow+\infty}\int_\Omega[f(x,u_{n,r})-f_r]\,\phi=\\
=-\limsup_{n\uparrow+\infty}\int_\Omega u_{n,r}\psi-\int_{\C\Omega}g\,\Ds\phi
=-\int_\Omega u_r\psi-\int_{\C\Omega}g\,\Ds\phi,
\end{multline*}
where we have used the Fatou lemma and the continuity 
of the map $t\mapsto f(x,t)$.
This means that $u_r$ is a subsolution.

We are left to prove that $u_r$ is also a supersolution.
Call $\Omega'=$ supp$\psi\subset\subset\Omega$ 
and build a sequence ${\{\Omega_k\}}_{k\in\N}$
such that $\Omega'\subseteq\Omega_k\subseteq\Omega$ and $\Omega_k\nearrow\Omega$.
Since $\psi\in C^\infty_c(\Omega_k)$ for any $k$, then the we can build 
the sequence of functions $\phi_k\in\T(\Omega_k)$ induced by $\psi$:
this sequence is increasing and converges pointwise to $\phi$.
Moreover, for any $k$, since $\Ds\phi_k\leq0$ in $\C\Omega_k$
\begin{align*}
& \int_\Omega[f(x,u_r)-f_r]\,\phi_k=\lim_{n\uparrow+\infty}\int_{\Omega_k}[f(x,u_{n,r})-f_r]\,\phi_k \\
& =\lim_{n\uparrow+\infty}\left(-\int_{\Omega_k}u_{n,r}\psi-\int_{\C\Omega_k}u_{n,r}\,\Ds\phi_k\right)\\
& \geq\lim_{n\uparrow+\infty}\left(-\int_{\Omega}u_{n,r}\psi-\int_{\C\Omega}g_n\,\Ds\phi_k\right)
=-\int_\Omega u_r\psi-\int_{\C\Omega}g\,\Ds\phi_k:
\end{align*}
letting both sides of the inequality pass to the limit as $k\uparrow+\infty$
we obtain
$$
\int_\Omega[-f(x,u_r)-f_r]\,\phi\geq-\int_\Omega u_r\psi-\int_{\C\Omega}g\,\Ds\phi,
$$
because recall that for $x\in\C\Omega\subseteq\C\Omega_k$
$$
-\Ds\phi_k(x)=\int_{\Omega_k}\frac{\phi_k(y)}{{|x-y|}^{N+2s}}\;dy
$$
increases to $-\Ds\phi(x)$.

This means that $u_r$ is both a sub- and a supersolution
and it solves
\begin{equation}\label{ur}
\left\lbrace\begin{aligned}
\Ds u_r(x) &= -f(x,u_r(x))+f_r(x) & & \hbox{ in }\Omega, \\
u &=g & & \hbox{ in }\C\Omega, \\
Eu &=0 & & \hbox{ on }\partial\Omega.
\end{aligned}\right.
\end{equation}

\begin{rmk}\rm Note that we have just solved all problems 
with null $h$ boundary datum, i.e.
$$
\left\lbrace\begin{aligned}
\Ds u(x) &=-f(x,u(x)) & & \hbox{ in }\Omega \\
u &=g & & \hbox{ in }\C\Omega \\
Eu &=0 & & \hbox{ on }\partial\Omega
\end{aligned}\right.
$$
since in this case $f_r\equiv 0$.
\end{rmk}

We want now to push $r\downarrow0$.
We claim that the family ${\{u_r\}}_r$ is uniformly bounded and
equicontinuous on every compact subset of $\Omega$:
there would exist then a $u\in L^1(\Omega)$ 
and a sequence ${\{r_k\}}_{k\in\N}$
such that $r_k\rightarrow 0$ as $k\rightarrow+\infty$ and
$u_{r_k}\rightarrow u$ a.e. in $\Omega$ and uniformly
on compact subsets. Then for any $\phi\in\T(\Omega)$
$$
\begin{array}{ccccc}
\displaystyle
\int_\Omega u_{r_k}\Ds\phi & = & \displaystyle
-\int_\Omega f(x,u_{r_k})\,\phi &
\displaystyle +\int_\Omega f_{r_k}\,\phi &
\displaystyle -\int_{\C\Omega}g\,\Ds\phi \\
\downarrow & & \downarrow & \downarrow & \\
\displaystyle
\int_\Omega u\Ds\phi & = & \displaystyle
-\int_\Omega f(x,u)\,\phi &
\displaystyle +\int_{\partial\Omega} h\,D_s\phi &
\displaystyle -\int_{\C\Omega}g\,\Ds\phi
\end{array}
$$
where the convergence $\int_\Omega f(x,u_{r_k})\phi\rightarrow\int_\Omega f(x,u)\phi$
holds since
$$
f(x,u_{r_k})|\phi|\leq\Lambda(au_{r_k})\,C{\delta}^s
\leq\Lambda(au_0)\,C{\delta}^s\leq
\Lambda(C\delta^{s-1})\,{\delta}^s\in L^1(\Omega).
$$
We still have to prove that our claim is true.
The uniform boundedness on compact subsets of ${\{u_r\}}_r$ is a
consequence of inequalities
$$
0\leq u_r \leq u_r^0 \xrightarrow{r\downarrow 0} u_0,\quad\hbox{ for any }r,
$$
where
$$
\left\lbrace\begin{aligned}
\Ds u_r^0 &=f_r & & \hbox{ in }\Omega \\
u_r^0 &=g & & \hbox{ in }\C\Omega \\
Eu_r^0 &=0 & & \hbox{ on }\partial\Omega,
\end{aligned}\right.
\qquad\qquad
\left\lbrace\begin{aligned}
\Ds u_0 &=0 & & \hbox{ in }\Omega \\
u_0 &=g & & \hbox{ in }\C\Omega \\
Eu_0 &=h & & \hbox{ on }\partial\Omega,
\end{aligned}\right.
$$
and the convergence of $v_r$ to $u_0$ is uniform
in compact subsets of $\Omega$.
Then
\begin{align*}
& |u_r(x)-u_r(z)|  = 
\left|\int_\Omega\left(G_\Omega(x,y)-G_\Omega(z,y)\right)\,[-f(y,u_r(y))+f_r(y)]\;dy\right|\\
& \leq \int_\Omega\left|G_\Omega(x,y)-G_\Omega(z,y)\right|\left(a_1+C\,a_2{\delta(y)}^{p(s-1)}\right)\;dy 
+\int_\Omega\left|G_\Omega(x,y)-G_\Omega(z,y)\right|\,C\,{\delta(y)}^{-1}\;dy \\
\end{align*}
implies the equicontinuity.
\bigskip
If there exist $b_1,T>0$
such that 
$$
f(x,t)\geq b_1{t}^{\frac{1+s}{1-s}},\quad \hbox{ for }t>T.
$$
and $h\not\equiv 0$, then a solution would satisfy
\[
f(\cdot,u)\,\delta^s\ \not\in L^1(\Omega);
\]
in this case the integration by parts formula defining a weak solution
would not make sense.

Note that this proof exploits the negativity of the right-hand side 
only in considering the $s$-harmonic function induced by
$g$ and $h$ as a supersolution of problem 
$$
\left\lbrace\begin{aligned}
-\Ds u &=f(x,u) & & \hbox{ in }\Omega \\
u &=g & & \hbox{ in }\C\Omega \\
Eu &=h & & \hbox{ in }\partial\Omega.
\end{aligned}\right.
$$
With minor modifications to the proof we can state

\begin{lem}\label{sub+super}
Let $f:\overline{\Omega}\times\R\rightarrow[0,+\infty)$ be a function 
satisfying f.1) and f.2).
Let $g:\C\Omega\rightarrow\R^+$ be a measurable function
satisfying \eqref{gintro} and $h\in C(\partial\Omega),\ h\geq0$.
Assume the nonlinear problem
$$
\left\lbrace\begin{aligned}
\Ds u &=f(x,u) & & \hbox{ in }\Omega \\
u &=g & & \hbox{ in }\C\Omega \\
Eu &=h & & \hbox{ on }\partial\Omega
\end{aligned}\right.
$$
admits a subsolution $\underline{u}\in L^1(\Omega)$ and a supersolution $\overline{u}\in L^1(\Omega)$.
Assume also $\underline{u}\leq\overline{u}$ in $\Omega$.
Then the above nonlinear problem has a weak solution $u\in L^1(\Omega)$ 
satisfying
$$
\underline{u}\leq u\leq\overline{u}.
$$
\end{lem}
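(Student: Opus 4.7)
The plan is to adapt, step by step, the approximation scheme of Theorem~\ref{-SIGN} together with the Cl\'ement--Sweers truncation used in Theorem~\ref{NL-CS}. The remark following the proof of Theorem~\ref{-SIGN} already points out that the sign of the right-hand side entered only to manufacture a supersolution from the $s$-harmonic theory; since a supersolution $\overline{u}$ is now part of the hypotheses, we simply replace that construction by the given function and run the same scheme.

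First I would regularize the data. Set $g_n=\min\{g,n\}$ to obtain bounded exterior data, and use the mollified boundary approximation $f_r$ from~\eqref{h-approx} to replace the singular Martin trace $h$ by a smooth right-hand side concentrated near $\partial\Omega$. If $\underline{u},\overline{u}$ are not bounded, I would also truncate them pointwise ($\underline{u}_k=\min\{\underline{u},k\}$, $\overline{u}_k=\min\{\overline{u},k\}$), checking that for $n,k$ large and $r$ small these remain sub- and supersolutions of the approximating problems in the $L^1$-weak sense.

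Then, for each fixed triple $(n,r,k)$, I would run the Schauder truncation of the proof of Theorem~\ref{NL-CS}. Define the cut-off nonlinearity
\[
F(x,t)\ =\ f\!\left(x,\ \max\{\underline{u}_k(x),\ \min\{t,\overline{u}_k(x)\}\}\right),
\]
which is continuous and bounded on $\Omega\times\R$. Write the problem as a fixed-point equation $u=\mathcal{K}[F(\cdot,u)]$, where $\mathcal{K}$ is the linear solution operator from Theorem~\ref{existence-weak2}; the regularity estimate of Theorem~\ref{reg} makes $\mathcal{K}$ compact as a map into $L^1(\Omega)$, and $F$ is bounded, so Schauder's fixed point theorem yields a solution $u_{n,r,k}$. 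A direct comparison via Lemma~\ref{max-princweak2} (applied to the truncated problem on the sets $\{u_{n,r,k}>\overline{u}_k\}$ and $\{u_{n,r,k}<\underline{u}_k\}$, localized by Corollary~\ref{subdomain}) shows $\underline{u}_k\leq u_{n,r,k}\leq\overline{u}_k$, so $F(x,u_{n,r,k})=f(x,u_{n,r,k})$ and $u_{n,r,k}$ solves the approximating nonlinear problem.

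Finally, one passes to the limit in the three parameters as in the second half of the proof of Theorem~\ref{-SIGN}. Monotonicity in $n$, obtained by selecting minimal solutions, yields a pointwise limit as $n\uparrow\infty$; letting $k\uparrow\infty$ and $r\downarrow0$ is handled by the uniform bound $\underline{u}\leq u_{n,r,k}\leq\overline{u}$ together with equicontinuity on compact subsets of $\Omega$ coming from the representation through $G_\Omega$. The crucial integrability input is that $\delta^{s}f(\cdot,u_{n,r,k})\in L^1(\Omega)$ uniformly, which follows from $f(\cdot,u_{n,r,k})\leq f(\cdot,\overline{u})$ and the fact that $\delta^{s}f(\cdot,\overline{u})\in L^1(\Omega)$ --- a direct consequence of $\overline{u}$ being a weak supersolution, since the weak formulation already requires this quantity to be finite.

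I expect the main obstacle to be the pointwise comparison $\underline{u}_k\leq u_{n,r,k}\leq\overline{u}_k$ when the sub- and supersolutions are only in $L^1$: the localization argument that worked for continuous functions in the proof of Theorem~\ref{NL-CS} must be complemented here either by a Kato-type inequality for $\Ds$ in the weak formulation, or by testing the equation satisfied by $(u_{n,r,k}-\overline{u}_k)^+$ against carefully chosen $\phi\in\T(\Omega)$ and invoking Lemma~\ref{max-princweak2}. A perhaps cleaner alternative is to first prove the lemma for $L^\infty\cap C$ sub/supersolutions via Theorem~\ref{NL-CS} and then perform the nested monotone limit of Theorem~\ref{-SIGN} in the pair of approximations $\underline{u}_k,\overline{u}_k$.
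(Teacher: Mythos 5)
You take essentially the same route as the paper: its entire proof of Lemma \ref{sub+super} is the one-liner ``replace, in the proof of Theorem \ref{-SIGN}, the function $u^0$ by the supersolution $\overline{u}$'', i.e.\ rerun the $(g_n,f_r)$ approximation, the truncation/Schauder step of Theorem \ref{NL-CS}, the comparison via the maximum principle, and the nested monotone limits, with the given $\overline{u}$ serving as the upper barrier in place of the linear solution --- which is exactly your plan, and the ``cleaner alternative'' in your last paragraph is in substance the paper's actual argument.

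Two of the auxiliary steps you defer to a later ``check'' would not, however, go through as stated. First, the pointwise truncations $\overline{u}_k=\min\{\overline{u},k\}$ and $\underline{u}_k=\min\{\underline{u},k\}$ do not in general remain super- and subsolutions. For the supersolution: at a point where the truncation is active one only gets $\Ds\overline{u}_k(x)\ge 0$, which need not dominate $f(x,k)>0$ (the constant $k$ is itself not a supersolution of $\Ds u=f(x,u)$ with $f\ge 0$, so the usual ``minimum of two supersolutions'' argument is unavailable). For the subsolution: truncating from above increases the fractional Laplacian at the untruncated points, $\Ds\underline{u}_k(x)\ge\Ds\underline{u}(x)$ there, so the subsolution inequality can be destroyed. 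The paper never truncates the barriers: boundedness enters only through the data ($g_n$ bounded, $f_r$ smooth and compactly supported in $\Omega$), while $\overline{u}$ is used as it is, exactly as $u_r^0$ is used in Section \ref{proof-SIGN}; the parameter $k$ should simply be dropped. Second, your bound $f(\cdot,u_{n,r})\le f(\cdot,\overline{u})$ presupposes that $f$ is nondecreasing in its second variable, which is not among the hypotheses (only f.1) and f.2) are assumed). The passage to the limit in $\int_\Omega f(x,u_{n,r})\,\phi$ is instead handled, as in the proof of Theorem \ref{-SIGN}, by Fatou's lemma together with the equation itself: the terms $\int_\Omega u_{n,r}\,\Ds\phi$ are uniformly controlled because $\underline{u}\le u_{n,r}\le\overline{u}$ with $\underline{u},\overline{u}\in L^1(\Omega)$, and this yields the finiteness of $\int_\Omega f(x,u)\,\phi$ in the limit without any pointwise comparison of the nonlinearity.
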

\begin{proof} Replace, in the above proof, the function
$u^0$ with the supersolution $\overline{u}$.
\end{proof}

\section[sublinear nonnegative nonlinearity: proof of theorem \ref{SUBLINEAR}]
{sublinear nonnegative nonlinearity:\\ proof of theorem \ref{SUBLINEAR}}

We first prove a Lemma which will make the proof easily go through.

\begin{lem} There exists $m=m(\Lambda)>0$ sufficiently large for which 
any problem of the form
$$
\left\lbrace\begin{aligned}
\Ds u(x) &=\Lambda (u(x)) & & \hbox{ in }\Omega, &  \\
u &=g & & \hbox{ in }\C\Omega, & g\geq m>0, \\
Eu &=h & & \hbox{ on }\partial\Omega, &
\end{aligned}\right.
$$
is solvable.
\end{lem}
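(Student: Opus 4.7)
The plan is to invoke the sub- and supersolution machinery of Lemma~\ref{sub+super}; the role of $m$ is precisely to enforce a sublinearity inequality on $\Lambda$ that makes a \emph{linear} problem furnish the supersolution. Since $\Lambda$ is concave with $\Lambda'(t)\to 0$, the ratio $\Lambda(t)/t$ is eventually decreasing to $0$, so for any prescribed $\kappa>0$ there exists $T_\kappa$ with $\Lambda(t)\le\kappa t$ whenever $t\ge T_\kappa$. I would fix $\kappa\in(0,\lambda_1)$, where $\lambda_1=\lambda_1(\Omega,s)>0$ is the first Dirichlet eigenvalue of $\Ds$ on $\Omega$ (with zero outer data and zero $E$-trace), and set $m:=T_\kappa$.

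The subsolution is $\underline u:=u_0$, the weak $L^1$ $s$-harmonic function with outer datum $g$ and $E$-trace $h$ given by Theorem~\ref{existence-weak2}. Applying Lemma~\ref{max-princweak2} to $m-u_0$ --- which is $s$-harmonic in $\Omega$ (constants are annihilated by $\Ds$ pointwise), is $\le 0$ in $\C\Omega$ because $g\ge m$, and has $E$-trace $-h\le 0$ because the Martin normalisation kills bounded constants --- yields $u_0\ge m$ throughout $\Omega$. Since $\Lambda\ge 0$, one has $\Ds u_0=0\le\Lambda(u_0)$, so $u_0$ is a subsolution.

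For the supersolution I would take $\overline u:=u_0+W$, where $W$ is the weak $L^1$ solution of
\[
\left\lbrace\begin{aligned}
\Ds W-\kappa W &= \kappa u_0 & & \hbox{in }\Omega, \\
W &= 0 & & \hbox{in }\C\Omega, \\
EW &= 0 & & \hbox{on }\partial\Omega.
\end{aligned}\right.
\]
Because $\kappa<\lambda_1$, the operator $\Ds-\kappa I$ with homogeneous outer data is invertible with positivity-preserving inverse: formally, $W=\sum_{k\ge 0}\kappa^k\,\Ds^{-(k+1)}(\kappa u_0)$, each summand being nonnegative by Theorem~\ref{existence-weak2} and the series converging because the spectral radius of $\kappa\Ds^{-1}$ equals $\kappa/\lambda_1<1$. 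Hence $W\ge 0$, $\overline u\ge u_0\ge m$ in $\Omega$, and by the choice of $m$, $\Lambda(\overline u)\le\kappa\overline u=\Ds\overline u$, so $\overline u$ is a supersolution matching the prescribed boundary values.

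The main obstacle is justifying this spectral inversion inside the weak $L^1$ framework of Definition~\ref{weakdefiintro}: one needs to check that $u_0\in L^1(\Omega,\delta^s\,dx)$ (this follows from the Green-function bounds \eqref{est-green} and the boundary asymptotics of Section~\ref{bblin-sec}), and that $\Ds^{-1}$ is compact, positive, and has spectral radius $1/\lambda_1$ on that weighted space --- facts already latent in Theorem~\ref{reg-l1sol} and the Krein--Rutman theory. Once the ordered pair $u_0\le\overline u$ is in hand, Lemma~\ref{sub+super} produces a weak $L^1$ solution of the nonlinear problem, proving the lemma.
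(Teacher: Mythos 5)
Your route is genuinely different from the paper's. The paper does not use sub- and supersolutions for this lemma at all: it rewrites the problem as the integral equation $u=u_0+\int_\Omega G_\Omega(\cdot,y)\,\Lambda(u(y))\,dy$ and applies Banach's fixed point theorem on $D_m=\{w\in L^1(\Omega):w\ge m\}$. Invariance of $D_m$ comes from $u_0\ge m$ (exactly as you argue), and the contraction comes from concavity: on $[m,+\infty)$ the Lipschitz constant of $\Lambda$ is $\Lambda'(m)$, and since $\Lambda'(t)\to0$ one picks $m$ so large that $\|\zeta\|_\infty\Lambda'(m)<1$, where $\zeta(y)=\int_\Omega G_\Omega(x,y)\,dx$ is bounded. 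So the paper calibrates $m$ on $\Lambda'$ and gets existence (and uniqueness in $D_m$) from a purely metric argument using nothing beyond the Green function bounds, while you calibrate $m$ on the sublinearity $\Lambda(t)\le\kappa t$ and feed an ordered pair into Lemma \ref{sub+super}.

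The gap in your version is the construction of $W$: the invertibility of $\Ds-\kappa$ below the principal eigenvalue, the positivity of its inverse, the compactness of the Green operator on $L^1(\Omega)$ and the identification of its spectral radius with $1/\lambda_1$ are nowhere established in the paper (no eigenvalue $\lambda_1$ of $\Ds$ is even introduced), and calling these facts ``latent'' in Theorem \ref{reg-l1sol} plus Krein--Rutman is not a proof --- each of them requires real additional work in the weak $L^1$ framework. The repair, however, is cheap and keeps your scheme: you do not need $\lambda_1$ at all. Since $\|\,\int_\Omega G_\Omega(\cdot,y)w(y)\,dy\,\|_{L^1(\Omega)}\le\|\zeta\|_\infty\|w\|_{L^1(\Omega)}$ by Fubini, it suffices to choose $\kappa<\|\zeta\|_\infty^{-1}$ and then $m=T_\kappa$: the Neumann series defining $W$ converges absolutely in $L^1(\Omega)$, every term is nonnegative because $u_0\ge m>0$ and the Green function is positive, and the rest of your argument (the comparison $u_0\ge m$ via Lemma \ref{max-princweak2}, the pointwise inequality $\Lambda(\overline u)\le\kappa\overline u$ giving the weak supersolution inequality against nonnegative test functions, and Lemma \ref{sub+super}) goes through. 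With that substitution your proof is correct, at the price of being noticeably longer than the paper's contraction argument.
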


\begin{proof}\rm We can equivalently solve the integral equation
$$
u(x)\ =\ u_0(x)+\int_\Omega G_\Omega(x,y)\,\Lambda (u(y))\;dy,
$$
where $u_0$ is the s-harmonic function induced by $g$ and $h$ in $\Omega$.

Define the map
$$
\begin{array}{rcl}
\mathcal{K}\ :\ L^1(\Omega) & \longrightarrow & L^1(\Omega) \\
u(x) & \longmapsto & \displaystyle
u_0(x)+\int_\Omega G_\Omega(x,y)\,\Lambda (u(y))\;dy
\end{array}
$$
The condition $g\geq m$ in $\C\Omega$ entails $u_0\geq m$ in $\Omega$;
also, for any $w\in L^1(\Omega),\ w\geq 0$ implies $\mathcal{K} w\geq u_0\geq m$,
therefore $\mathcal{K}$ sends the subset $D_m:=\{w\in L^1(\Omega):w\geq m\}$ of $L^1(\Omega)$
into itself. Moreover, for $u,v\in D_m$
\begin{multline*}
\int_\Omega\left|\mathcal{K}u(x)-\mathcal{K}v(x)\right|\;dx
\leq
\int_\Omega\left|\Lambda (u(y))-\Lambda (v(y))\right|\int_\Omega G_\Omega(x,y)\;dx\;dy
\leq\\
\leq\Arrowvert\zeta\Arrowvert_\infty
\sup_{x\in\Omega}\Lambda' (u(x))\int_\Omega\left| u(y)-v(y)\right|\;dy
\leq\Arrowvert\zeta\Arrowvert_\infty
\Lambda'(m)\int_\Omega\left| u(y)-v(y)\right|\;dy
\end{multline*}
where $\zeta(y)=\int_\Omega G_\Omega(x,y)\;dx$.
Now, if $m$ is very large, we have 
$$
\Arrowvert\zeta\Arrowvert_\infty\Lambda'(m)<1,
$$
i.e. $\mathcal{K}$ is a contraction on $D_m$,
and $\mathcal{K}$ has a fixed point in $D_m$.
\end{proof}

In general, for the problem
$$
\left\lbrace\begin{aligned}
\Ds u &=f(x,u) & & \hbox{ in }\Omega, \\
u &=g & & \hbox{ in }\C\Omega, \\
Eu &=h & & \hbox{ on }\partial\Omega,
\end{aligned}\right.
$$
we have a subsolution which is the $s$-harmonic function satisfying 
the boundary conditions.

But we are now able to provide a supersolution:
this can be done by setting $g_m=\max\{g,m\}$
and by solving, for some large value of $m$,
$$
\left\lbrace\begin{aligned}
\Ds\overline{u} &= \Lambda(\overline{u})\geq f(x,\overline{u}) & & \hbox{ in }\Omega,  \\
\overline{u} &=g_m\geq g & & \hbox{ in }\C\Omega,\\
E\overline{u} &=h & & \hbox{ on }\partial\Omega.
\end{aligned}\right.
$$

It is sufficient to apply the classical iteration scheme starting from
the $s$-harmonic function $u_0$ and with iteration step
$$
\hbox{for any }k\in\N\qquad
\left\lbrace\begin{aligned}
\Ds u_k &=f(x,u_{k-1}(x)) & & \hbox{ in }\Omega, \\
u_k &=g & & \hbox{ in }\C\Omega,\\
Eu_k &=h & & \hbox{ on }\partial\Omega.
\end{aligned}\right.
$$
In such a way we build an increasing sequence ${\{u_k\}_{k\in\N}}\subseteq L^1(\Omega)$
which is uniformly bounded from above by $\overline{u}$.
Indeed, on the one hand we have that 
$$
\left\lbrace\begin{aligned}
\Ds (u_1-u_0) &=f(x,u_0(x))\geq 0 & & \hbox{ in }\Omega \\
u_1-u_0 &=0 & & \hbox{ in }\C\Omega \\
Eu_1-Eu_0 &=0 & & \hbox{ on }\partial\Omega
\end{aligned}\right.
$$
entails that $u_1-u_0\geq0$, while on the other hand
an induction argument relying on the monotony of $t\mapsto f(x,t)$
finally shows that $u_k$ increases.
Call $u(x):=\lim_{k}u_k(x)$, which is finite in view of 
the upper bound furnished by $\overline{u}$.
Then
\[
u(x)=\lim_{k\uparrow+\infty}u_k(x)=u_0(x)+
\lim_{k\uparrow+\infty}\int_\Omega f(y,u_{k-1}(y))\,G_\Omega(x,y)\; dy=\\
=u_0(x)+\int_\Omega f(y,u(y))\,G_\Omega(x,y)\; dy.
\]

\section[superlinear nonnegative nonlinearity: proof of theorem \ref{SUPERLINEAR}]
{superlinear nonnegative nonlinearity:\\ proof of theorem \ref{SUPERLINEAR}}

We give the proof for problem 
$$
\left\lbrace\begin{aligned}
\Ds u &= \lambda f(x,u) & & \hbox{ in }\Omega \\
u(x) &= \delta(x)^{-\beta} & & \hbox{ in }\C\Omega,\ 0<\beta<1-s \\
Eu &=0 & & \hbox{ on }\partial\Omega
\end{aligned}\right.
$$
while for the other one it is sufficient to replace $\beta$ with $1-s$
and repeat the same computations.

To treat the case of a general nonlinearity
we use again the equivalent integral equation
$$
u(x)\ =\ u_0(x)+\lambda\int_\Omega G_\Omega(x,y)\,f(y,u(y))\;dy,
$$
where $u_0$ is the s-harmonic function induced in $\Omega$ by the boundary data.
In this case the computations in Section \ref{bblin-sec} on the 
rate of explosion at the boundary turn out to be very useful.
Indeed on the one hand we have that $u_0$ inherits its explosion
from the boundary data $g$ and $h$: briefly, in our case
\begin{equation*}
g(x)= \frac{1}{{\delta(x)}^\beta},\quad 0<\beta<1-s
\qquad  \longrightarrow \qquad
\frac{\underline{c}}{{\delta(x)}^\beta}\leq u_0(x)\leq \frac{\overline{c}}{{\delta(x)}^\beta}.
\end{equation*}
Since $u_0$ is a subsolution,
our first goal is to build a supersolution and we build it of the form
$$
\overline{u}=u_0+\zeta,
$$
where
$$
\left\lbrace\begin{aligned}
\Ds\zeta &={\delta(x)}^{-\gamma} & & \hbox{ in }\Omega,\quad\gamma>0\\
\zeta &=0 & & \hbox{ in }\C\Omega, \\
E\zeta &=0 & & \hbox{ on }\partial\Omega.
\end{aligned}\right.
$$
Recall that, \eqref{udelta} says
$$
\zeta(x)\geq \left\lbrace\begin{array}{ll}\displaystyle
c_1{\delta(x)}^s & \hbox{ if }0\leq\gamma<s, \\ \displaystyle 
c_3{\delta(x)}^s\,\log\frac{1}{\delta(x)} & \hbox{ if }\gamma=s, \\ \displaystyle 
c_5{\delta(x)}^{-\gamma+2s} & \hbox{ if }s<\gamma<1+s.
\end{array}\right.
$$
The function $\overline{u}$ is a supersolution if
$$
\overline{u}(x)\geq u_0(x)+\lambda\int_\Omega G_\Omega(x,y)\,f(y,\overline{u}(y))\;dy,
$$
or, equivalently formulated
\begin{equation}\label{12}
\zeta(x)\geq\lambda\int_\Omega G_\Omega(x,y)\,f(y,u_0(y)+\zeta(y))\;dy.
\end{equation}

If $f(x,t)$ has an algebraic behavior\footnote{For $p<1$ we are actually in the case of the previous paragraph.}
$$
f(x,t)\leq a_1+a_2\,{t}^p,\quad a_1,a_2>0,\ p\geq 1
$$
then
$$
f(y,u_0(y)+\zeta(y))\leq a_1+a_2\,\left(u_0(y)+\zeta(y)\right)^p\leq
\left\lbrace\begin{array}{ll}
\displaystyle\frac{C}{{\delta(x)}^{p\beta}} & \hbox{ if }\gamma-2s\leq\beta, \\
\displaystyle\frac{C}{{\delta(x)}^{p(\gamma-2s)}} & \hbox{ if }\gamma-2s>\beta.
\end{array}\right.
$$
In case $\gamma-2s\leq\beta$ we have
$$
\int_\Omega G_\Omega(x,y)\,f(y,u_0(y)+\zeta(y))\;dy\leq 
\left\lbrace\begin{array}{ll}
\displaystyle c_2\,C{\delta(x)}^s & \hbox{ if }p\beta<s \\
\displaystyle c_4\,C{\delta(x)}^s\log\frac{1}{\delta(x)} & \hbox{ if }p\beta= s \\
\displaystyle\frac{c_6\,C}{{\delta(x)}^{p\beta-2s}} & \hbox{ if }s< p\beta<1+s 
\end{array}\right.
$$
again in view of Paragraph \ref{rhs-blowing}, so that
we can choose $\gamma=p\beta$ provided $p\beta-2s\leq\beta$.

If this is not the case then it means we need powers $\gamma$
satisfying $\gamma-2s>\beta$. If $\gamma-2s>\beta$ we have
$$
\int_\Omega G_\Omega(x,y)\,f(y,u_0(y)+\zeta(y))\;dy\leq 
\left\lbrace\begin{aligned}
& c_2\,C{\delta(x)}^s\ \hbox{ if }p(\gamma-2s)<s \\
& c_4\,C{\delta(x)}^s\log\frac{1}{\delta(x)}\ \hbox{ if }p(\gamma-2s)= s \\
& \frac{c_6\,C}{{\delta(x)}^{p(\gamma-2s)-2s}}\ \hbox{ if }s< p(\gamma-2s)<1+s 
\end{aligned}\right.
$$
and a suitable choice for $\gamma$ would be 
$$
\gamma=\max\left\lbrace \frac{2s\,p}{p-1},\beta+2s+\ep\right\rbrace
$$
which fulfils both inequalities
$$
\gamma-2s>\beta,\qquad\gamma\geq p(\gamma-2s).
$$
This is an admissible choice for $\gamma$ provided $\gamma<1+s$, i.e. only if $p>(1+s)/(1-s)$;
in case $p$ doesn't satisfy this lower bound then
$$
p\leq\frac{1+s}{1-s} \quad \Longrightarrow \quad p\beta\leq \frac{1+s}{1-s}\,\beta\leq \beta+2s
$$
and we are in the previous case.

Finally, if $q\beta>1+s$ then a solution $u$ should satisfy, whenever $\delta(x)<1$,
$$
f(x,u(x))\geq b {u(x)}^q \geq b{u_0(x)}^q \geq c{\delta(x)}^{-q\beta}
$$
which would imply
$$
\int_\Omega G_\Omega(x,y)\,f(y,u(y))\;dy=+\infty,\quad x\in\Omega,
$$
which means that the problem is not solvable.

\section{complete blow-up: proof of theorem \ref{COMPLETE-BLOWUP}}

Consider a nondecreasing sequence $\{f_k\}_{k\in\N}$ of bounded functions 
such that $f_k\uparrow f$ pointwisely as $k\uparrow+\infty$, as in Definition \ref{complete-blowup-defi}.
Let us first prove the theorem in the case of null boundary data.
The first claim is that
$$
\int_\Omega f_k(x,u_k(x))\,{\delta(x)}^s\;dx\ \xrightarrow{k\uparrow+\infty}\ +\infty.
$$
Suppose by contradiction that the sequence 
of integrals is bounded by a constant $C$.
Consider an increasing sequence of nonnegative
${\{\psi_n\}}_{n\in\N}\subseteq C^\infty_c(\Omega)$
such that $\psi_n\uparrow1$ in $\Omega$
and pick $\phi_n\in\T(\Omega)$ in such a way that
$\Ds\phi_n=\psi_n$ holds in $\Omega$. Then
$$
\int_\Omega u_k\,\psi_n=\int_\Omega f_k(x,u_k)\,\phi_n\leq c\int_\Omega f_k(x,u_k(x))\,{\delta(x)}^s\;dx
$$
for some constant $c>0$ not depending on $n$, see \cite[Proposition 1.2]{rosserra}.
By letting $n\uparrow+\infty$, we deduce that $u_k$ is a bounded sequence in $L^1(\Omega)$.
Take now $\underline{u}_k$ as the minimal solution to the $k$-th nonlinear problem:
since $\Ds\underline{u}_{k+1}=f_{k+1}(x,\underline{u}_{k+1})\geq f_k(x,\underline{u}_{k+1})$
then $\underline{u}_k$ is an increasing sequence and it admits 
a pointwise limit $u$. This $u$ is limit also in the $L^1$-norm,
since $\underline{u}_k$ is bounded in this norm.
But then, for any $\phi\in\T(\Omega)$, we have 
by the Monotone Convergence Theorem
$$
\int_\Omega u\,\Ds\phi=\lim_{k\uparrow+\infty}\int_\Omega\underline{u}_k\,\Ds\phi=
\lim_{k\uparrow+\infty}\int_\Omega f_k(x,\underline{u}_k)\,\phi=
\int_\Omega f(x,u)\,\phi,
$$
that is $u\in L^1(\Omega)$ would be a weak solution, a contradiction.

Our second claim is that
$$
\underline{u}_k(x)\geq c\left[\int_\Omega f_k(y,\underline{u}_k(y))\,{\delta(y)}^s\;dy\right]\,{\delta(x)}^s,
$$
for some constant $c>0$ independent of $k$.
To do this, we exploit \eqref{est-green}. Call
$\Omega_1(x)=\{y\in\Omega:|y-x|\leq\delta(x)\delta(y)\}$,
$\Omega_2(x)=\{y\in\Omega:|y-x|>\delta(x)\delta(y)\}$
and $d(\Omega)=\sup\{|y-x|:x,\,y\in\Omega\}$:
\begin{eqnarray*}
\underline{u}_k(x) & = & 
\int_\Omega G_\Omega(x,y)\,f_k(y,\underline{u}_k(y))\;dy  \\
& \geq & 
c_2 \int_\Omega\left[{|x-y|}^{2s}\wedge{\delta(x)}^s{\delta(y)}^s\right]f_k(y,\underline{u}_k(y))\;\frac{dy}{{|x-y|}^N}  \\
& = & c_2 
\int_{\Omega_1(x)}f_k(y,\underline{u}_k(y))\;\frac{dy}{{|x-y|}^{N-2s}}
+c_2\,{\delta(x)}^s \int_{\Omega_2(x)}{\delta(y)}^s f_k(y,\underline{u}_k(y))\;\frac{dy}{{|x-y|}^N}  \\
& \geq & 
\frac{c_2}{{d(\Omega)}^{2s}} \int_{\Omega_1(x)}{\delta(x)}^s{\delta(y)}^s f_k(y,\underline{u}_k(y))\;\frac{dy}{{|x-y|}^{N-2s}}+
\frac{c_2}{{d(\Omega)}^N}\,{\delta(x)}^s \int_{\Omega_2(x)}{\delta(y)}^s f_k(y,\underline{u}_k(y))\;dy  \\
& \geq & 
\frac{c_2}{{d(\Omega)}^N}\,{\delta(x)}^s \int_\Omega{\delta(y)}^s f_k(y,\underline{u}_k(y))\;dy.
\end{eqnarray*} 
Let now $k\uparrow+\infty$: by the monotone pointwise convergence of $f_k$ to $f$
and by the monotonicity of $\{\underline{u}_k\}_{k\in\N}$ (as noticed above),
the last term in the inequalities above converges to $+\infty$,
and we see then how we have complete blow-up.

In the case of nonhomogeneous boundary conditions,
we consider the $s$-harmonic function $u_0$ induced by data $g$ and $h$,
and we denote by $F(x,t)=f(x,u_0(x)+t)$ for $x\in\Omega$, $t\geq 0$.
By hypothesis we have then that there is no weak solution to
$$
\left\lbrace\begin{aligned}
\Ds v &= F(x,v) & & \hbox{ in }\Omega, \\
v &=0 & & \hbox{ in }\C\Omega, \\
Ev &=0 & & \hbox{ on }\partial\Omega.
\end{aligned}\right.
$$
Since any monotone approximation on $f$ is also a monotone approximation of $F$,
then there is complete blow-up in the problem for $v$
and this bears the complete blow-up for the problem on $u$.

\chapter{A Keller-Osserman condition for \texorpdfstring{$\Ds$}{Ds}}\label{fracKO}

In this Chapter we are going to provide the proofs
of the results listed in Section \ref{fracKO-intro}.
Recall the list of assumptions we have set ourselves in.

\section{preliminaries}

Hypothesis \eqref{tech} implies that $f(t)t^{-1-M}$ is monotone decreasing
and $f(t)t^{-1-m}$ is monotone increasing, since
\[
\frac{d}{dt}\frac{f(t)}{t^{1+M}}=\frac{1}{t^{1+M}}\left(f'(t)-(1+M)\frac{f(t)}{t}\right)\ \leq\ 0, 
\qquad
\frac{d}{dt}\frac{f(t)}{t^{1+m}}=\frac{1}{t^{1+m}}\left(f'(t)-(1+m)\frac{f(t)}{t}\right)\ \geq\ 0:
\]
we write this monotonicity conditions as
\begin{equation}\label{mono-f}
c^{1+m}f(t)\ \leq\ f(ct)\ \leq\ c^{1+M}f(t),\qquad c>1,\ t>0.
\end{equation}
The function $F$ satisfies two inequalities similar to \eqref{tech}:
\begin{equation}\label{Ftech}
2+m\ \leq\ \frac{t\,f(t)}{F(t)}\ \leq\ 2+M,
\end{equation}
indeed by integrating \eqref{tech} we deduce 
\[
(1+m)F(t)\leq\int_0^t\tau\,f'(\tau)\;d\tau=tf(t)-F(t).
\]
Let $\psi=\phi^{-1}$ be the inverse of $\phi$, so that
\begin{equation}\label{psi}
v\ =\ \int_{\psi(v)}^{+\infty}\frac{dt}{\sqrt{F(t)}},\qquad v\geq 0.
\end{equation}
The function $\psi$ is decreasing and $\psi(v)\uparrow+\infty$ as $v\downarrow 0$.
Moreover, by Remark \ref{phi-rmk} and \eqref{Ftech}, for $u>0$ and some $y\in(u,+\infty)$
\begin{equation*}
\frac{\phi(u)}{u|\phi'(u)|}=\frac{\sqrt{F(u)}}{u}\int_u^{+\infty}\frac{dt}{\sqrt{F(t)}}=
\frac{-\frac{1}{\sqrt{F(y)}}}{\frac{1}{\sqrt{F(y)}}-\frac{yf(y)}{2F(y)^{3/2}}}=
\frac{1}{\frac{yf(y)}{2F(y)}-1}\quad
\left\lbrace\begin{aligned}
& \geq \frac2M \\
& \leq \frac2m
\end{aligned}\right.
\end{equation*}
which in turn says that it holds, by setting $v=\phi(u)$,
\begin{equation}\label{psitech}
\frac2M\ \leq\ \frac{v|\psi'(v)|}{\psi(v)}\ \leq\ \frac2m,\qquad
\end{equation}
and one can also prove
\begin{equation}\label{mono-psi}
\psi(cv)\ \leq\ c^{-2/M}\psi(v),\qquad c\in(0,1),\ v>0.
\end{equation}
as we have done for \eqref{mono-f} above. Also, by \eqref{Ftech} and \eqref{psitech},
\begin{equation}\label{mono-psi2}
\frac{v^2\,\psi''(v)}{\psi(v)}=\frac{v^2\,f(\psi(v))}{2\,\psi(v)}\lessgtr
\frac{v^2\,F(\psi(v))}{\psi(v)^2}=\frac{v^2\,\psi'(v)^2}{\psi(v)^2}\lessgtr 1.
\end{equation}

\section{construction of a supersolution}

In this paragraph we prove the key point for the proof of Theorems 
\ref{main1} and \ref{main2}, that is we build a supersolution
to both problems by handling the function $U$ defined in \eqref{U} below.

Since by assumption $\partial\Omega\in C^2$, the function $\dist(x,\partial\Omega)$
is $C^2$ in an open strip around the boundary, except on $\partial\Omega$ itself.
Consider a positive function $\delta(x)$ which is obtained by
extending $\dist(x,\partial\Omega)$ smoothly to $\R^N\setminus\partial\Omega$.
Define 
\begin{equation}\label{U}
U(x)\ =\ \psi(\delta(x)^s),\qquad x\in\R^N.
\end{equation}

\begin{lem}\label{U-L1} 
The function $U$ defined in \eqref{U} is in $L^1(\Omega)$.
\end{lem}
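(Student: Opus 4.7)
The plan is to reduce the integral $\int_\Omega \psi(\delta(x)^s)\,dx$ to a one-dimensional integral along the normal direction to $\partial\Omega$, and then, after a change of variables, recognize it as the finite quantity appearing in \eqref{L1-bis}.

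First I would observe that $\psi$ is continuous on $(0,+\infty)$ and only blows up at $0$, while $\delta$ is bounded away from $0$ on any compact subset of $\Omega$; hence $U$ is bounded and in $L^\infty$ away from $\partial\Omega$, and the only possible obstruction to integrability is concentrated in a thin tubular neighbourhood $\{\delta<\delta_0\}$ of $\partial\Omega$. Using the $C^2$ regularity of $\partial\Omega$, for $\delta_0$ small enough the mapping $x\mapsto (\delta(x),\pi(x))$, where $\pi$ is the normal projection onto $\partial\Omega$, is a bi-Lipschitz parametrization of this strip, and the co-area formula gives
\[
\int_{\{\delta<\delta_0\}}\psi(\delta(x)^s)\,dx\ \leq\ C\,\mathcal{H}(\partial\Omega)\int_0^{\delta_0}\psi(t^s)\,dt,
\]
so it suffices to prove that the right-hand side is finite.

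The next step is a change of variable $u=t^s$ to get
\[
\int_0^{\delta_0}\psi(t^s)\,dt\ =\ \frac{1}{s}\int_0^{\delta_0^s}\psi(u)\,u^{\frac{1}{s}-1}\,du,
\]
followed by the substitution $v=\psi(u)$, that is $u=\phi(v)$ with $du=\phi'(v)\,dv=-dv/\sqrt{F(v)}$, which turns the integral into
\[
\frac{1}{s}\int_{\psi(\delta_0^s)}^{+\infty}\frac{v\,\phi(v)^{\frac{1}{s}-1}}{\sqrt{F(v)}}\,dv.
\]
Then, using the equivalences $\phi(v)\asymp\sqrt{v/f(v)}$ and $\sqrt{F(v)}\asymp\sqrt{v\,f(v)}$ from Remark \ref{phi-rmk}, a direct power-counting gives
\[
\frac{v\,\phi(v)^{\frac{1}{s}-1}}{\sqrt{F(v)}}\ \asymp\ \left(\frac{v}{f(v)}\right)^{\frac{1}{2s}},
\]
so the integral is finite by the equivalent formulation \eqref{L1-bis} of hypothesis \eqref{L1}.

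I expect the only delicate point to be the co-area reduction, which is really just a routine consequence of the $C^2$ regularity of $\partial\Omega$ (it guarantees that $\delta$ is $C^2$ near the boundary with $|\nabla\delta|=1$); the algebraic match between the final exponent $1/(2s)$ and hypothesis \eqref{L1-bis} is the content of the lemma and follows directly from the two-sided bounds \eqref{tech} via Remark \ref{phi-rmk}.
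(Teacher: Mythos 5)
Your proof is correct and follows essentially the same route as the paper's: a coarea reduction near $\partial\Omega$ to the one-dimensional integral $\int_0^{\delta_0}\psi(t^s)\,dt$, then the substitution $\eta=\psi(t^s)$ (which you merely split into two steps) and the equivalences of Remark \ref{phi-rmk} to land on $\int^{+\infty}\bigl(\eta/f(\eta)\bigr)^{1/(2s)}d\eta$, finite by \eqref{L1-bis}. No gaps to report.
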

\begin{proof}
Since both $\psi$ and $\delta^s$ are continuous in $\Omega$, then $U\in L^1_{loc}(\Omega)$.
Fix $\delta_0>0$ small and consider $\Omega_0=\{x\in\Omega:\delta(x)<\delta_0\}$. We have
(using once the coarea formula)
\[
\int_{\Omega_0}\psi(\delta(x)^s)\;dx\ 
\leq\ C\int_0^{\delta_0}\psi(t^s)\;dt:
\]
apply now the transformation $\psi(t^s)=\eta$ to get
\[
\int_{\Omega_0}U(x)\;dx \ \leq \ C\int_{\eta_0}^{+\infty}\eta\,\phi(\eta)^{(1-s)/s}|\phi'(\eta)|\;d\eta 
\]
where, by Remark \ref{phi-rmk},
\[
|\phi'(\eta)|\lessgtr\frac1{\sqrt{\eta\,f(\eta)}}\qquad\hbox{and}\qquad\phi(\eta)\lessgtr\sqrt{\frac{\eta}{f(\eta)}},
\]
therefore
\[
\int_{\Omega_0}U(x)\;dx \ \leq \ C\int_{\eta_0}^{+\infty}\left(\frac{\eta}{f(\eta)}\right)^\frac1{2s}d\eta 
\]
which is finite by \eqref{L1-bis}.
\end{proof}

\begin{prop}\label{impo}
The function $U$ defined in \eqref{U} satisfies for some $C,\,\delta_0>0$
\begin{equation}\label{impo-eq}
\Ds U \geq-Cf(U),\qquad \hbox{in }\Omega_{\delta_0}=\{x\in\Omega:\delta(x)<\delta_0\}.
\end{equation}
\end{prop}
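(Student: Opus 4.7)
\textbf{Plan for Proposition \ref{impo}.} The strategy is to compute $\Ds U(x_0)$ pointwise for $x_0 \in \Omega_{\delta_0}$ using the singular-integral representation, and to show that the combination of estimates produced is controlled from below by $-Cf(U(x_0))$. Set $d=\delta(x_0)$, which we take to be smaller than some $\delta_0$ chosen so that $\delta \in C^2$ with $|\nabla \delta|=1$ on the strip $\{0<\delta<2\delta_0\}$. I will exploit two sets of inputs: the ODE identity $2\psi''(v)=f(\psi(v))$ together with the two-sided estimates \eqref{psitech} and \eqref{mono-psi2} on $|\psi'|$ and $\psi''$, and the monotonicity properties \eqref{mono-f} and \eqref{mono-psi}.

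First I would write
\[
\Ds U(x_0)\ =\ -\tfrac{C_{N,s}}{2}\!\int_{\R^N}\!\frac{U(x_0+y)+U(x_0-y)-2U(x_0)}{|y|^{N+2s}}\,dy,
\]
and split the domain of integration into three pieces: $A_1=B_{d/2}$, $A_2=B_{\delta_0}\setminus B_{d/2}$, and $A_3=\R^N\setminus B_{\delta_0}$. On $A_1$, $U=\psi(\delta^s)$ is $C^2$ and the chain rule, together with $|\nabla\delta|=1$ and $\|D^2\delta\|_{L^\infty}\leq C$, gives
\[
|D^2 U(z)|\ \leq\ C\bigl[\psi''(\delta^s)\,\delta^{2s-2}\ +\ |\psi'(\delta^s)|\,\delta^{s-2}\bigr],\qquad z\in B_{d/2}(x_0);
\]
using \eqref{psitech} to trade $|\psi'(v)|$ for $\psi(v)/v$ and \eqref{mono-psi2} to trade $\psi''(v)$ for $\psi(v)/v^2$, both bracketed terms are comparable with $\psi(d^s)/d^2=U(x_0)/d^2$. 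The $A_1$ contribution is then bounded by $Cd^{-2}\,U(x_0)\int_{B_{d/2}}|y|^{2-N-2s}\,dy\ \leq\ C\,U(x_0)/d^{2s}$, and by \eqref{mono-psi2} this is in turn bounded by $C\psi''(d^s)\lessgtr f(U(x_0))$.

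Next, for the annulus $A_2$ I would bound $|U(x_0\pm y)-U(x_0)|$ by comparing $\psi(\delta(x_0\pm y)^s)$ with $\psi(d^s)$: since $\delta(x_0\pm y)\geq c\,|y|$ or $\delta(x_0\pm y)\leq Cd$ respectively when $|y|\leq \delta_0$, the monotonicity \eqref{mono-psi} yields $U(x_0\pm y)\leq C(|y|/d)^{-2s/M}\,U(x_0)$, which after integration against $|y|^{-N-2s}$ and multiplication by the factor $U(x_0)$ provides again a bound by $C\,U(x_0)/d^{2s}$. For $A_3$ the function $U$ is bounded uniformly by a constant depending only on $\Omega$ and $\psi(\delta_0^s)$, so the contribution is trivially controlled by $U(x_0)+C\leq C U(x_0)/d^{2s}$ near the boundary. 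Assembling all three pieces and using once more \eqref{mono-psi2} to convert $U(x_0)/d^{2s}$ into $f(U(x_0))$ (up to a multiplicative constant), one obtains the desired lower bound.

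The main obstacle I anticipate is the middle-annulus estimate $A_2$, where one must simultaneously control the geometry (the distance $\delta$ can either grow or decay with $|y|$ depending on the direction, and near the tangent plane to $\partial\Omega$ one has $\delta(x_0\pm y)\approx d$ for large $|y|$) and the non-polynomial growth of $\psi$. The key technical point will be to turn the two-sided estimate \eqref{mono-psi} into an integrable bound on $(U(x_0+y)+U(x_0-y)-2U(x_0))_+/|y|^{N+2s}$, which requires keeping the exponent $2/M$ genuinely positive. Once this is handled, the conversion back to $f(U)$ via \eqref{mono-psi2} and $2\psi''=f(\psi)$ is purely algebraic.
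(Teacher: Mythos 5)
Your decomposition by distance to $x_0$ (balls $A_1,A_2,A_3$) and your treatment of the smooth part $A_1$ are fine and essentially parallel the paper's Taylor/Hessian estimate on the intermediate region; the conversion $U(x_0)/d^{2s}\lessgtr\psi''(d^s)\lessgtr f(U(x_0))$ via \eqref{psitech} and \eqref{mono-psi2} is also correct. The genuine gap is in your estimate on the annulus $A_2$ (and, to a lesser extent, $A_3$). The asserted bound $U(x_0\pm y)\leq C(|y|/d)^{-2s/M}\,U(x_0)$ is false precisely in the dangerous region: since $U=\psi(\delta^s)$ with $\psi$ decreasing and $\psi(v)\uparrow+\infty$ as $v\downarrow0$, any point $x_0\pm y$ lying much closer to $\partial\Omega$ than $x_0$ (or on the far side of $\partial\Omega$, where the extended $\delta$ is again small) has $U(x_0\pm y)$ arbitrarily large compared with $U(x_0)$, whereas your right-hand side is at most a constant multiple of $U(x_0)$ on $\{|y|\geq d/2\}$. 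Your dichotomy "$\delta(x_0\pm y)\geq c|y|$ or $\delta(x_0\pm y)\leq Cd$" only helps in the first case; in the second case \eqref{mono-psi} gives no control, because there is no lower bound on $\delta(x_0\pm y)$ in terms of $d$. Note also that if your bound were true it would yield $|\Ds U|\leq Cf(U)$, i.e. an upper bound as well, without ever invoking the Keller--Osserman-type hypothesis \eqref{L1} — a sign that the blow-up region has been argued away rather than estimated.

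What is actually needed on the set $\{\delta(y)<\tfrac12\delta(x_0)\}$ (inside and outside $\Omega$) is an integrated, not pointwise, estimate of the blow-up profile: dropping $U(x_0)$ and using the geometric inequality $|x_0-y|\gtrsim|\delta(x_0)-\delta(y)|$ (plus tangential coordinates, as in the paper's first step) one is led to
\[
\int_{\{\delta(y)<\delta(x_0)/2\}}\frac{\psi(\delta(y)^s)}{|x_0-y|^{N+2s}}\,dy\ \lesssim\ \frac{1}{\delta(x_0)^{1+2s}}\int_0^{\delta(x_0)/2}\psi(t^s)\,dt,
\]
and the crux is the identity \eqref{03}, obtained from \eqref{psitech} by an integration by parts (and relying on the local integrability of $\psi(t^s)$, i.e. on \eqref{L1}), which shows that $\int_0^{\delta(x_0)/2}\psi(t^s)\,dt\lessgtr\delta(x_0)\,\psi(\delta(x_0)^s)$; only then does this contribution reduce to $\delta(x_0)^{-2s}\psi(\delta(x_0)^s)\lessgtr\psi''(\delta(x_0)^s)\lessgtr f(U(x_0))$. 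Your far-field estimate has the same flaw in miniature: $U$ is not uniformly bounded on $\R^N\setminus B_{\delta_0}(x_0)$, which still contains points arbitrarily close to $\partial\Omega$; the correct bound there is $\delta_0^{-N-2s}\|\psi(\delta^s)\|_{L^1}$, i.e. Lemma \ref{U-L1}. With these two repairs your scheme would match the paper's proof; as written, the key step is missing.
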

\begin{proof}
We start by writing, for $x\in\Omega$
\begin{equation}\label{split}
\frac{\Ds U(x)}{C_{N,s}}=PV\int_\Omega\frac{\psi(\delta(x)^s)-\psi(\delta(y)^s)}{|x-y|^{N+2s}}\:dy
+\int_{\C\Omega}\frac{\psi(\delta(x)^s)-\psi(\delta(y)^s)}{|x-y|^{N+2s}}\:dy.
\end{equation}

Let us begin with an estimate for 
\[
PV\int_\Omega\frac{\psi(\delta(x)^s)-\psi(\delta(y)^s)}{|x-y|^{N+2s}}\:dy.
\]
Split the integral into
\[
\int_{\Omega_1}\frac{\psi(\delta(x)^s)-\psi(\delta(y)^s)}{|x-y|^{N+2s}}\:dy
+PV\int_{\Omega_2}\frac{\psi(\delta(x)^s)-\psi(\delta(y)^s)}{|x-y|^{N+2s}}\:dy+
\int_{\Omega_3}\frac{\psi(\delta(x)^s)-\psi(\delta(y)^s)}{|x-y|^{N+2s}}\:dy
\]
where we have set
\begin{align*}
\Omega=\Omega_1\cup\Omega_2\cup\Omega_3,\text{ with: } 
 & \Omega_1=\left\{y\in\Omega:\delta(y)>\frac32\delta(x)\right\} \\
 & \Omega_2=\left\{y\in\Omega:\frac12\delta(x)\leq\delta(y)\leq\frac32\delta(x)\right\} \\
 & \Omega_3=\left\{y\in\Omega:\delta(y)<\frac12\delta(x)\right\}.
\end{align*}
In $\Omega_1$ we have in particular $\delta(y)>\delta(x)$ so that,
since $\psi$ decreasing function, the first integral contributes by a positive quantity.

Now, let us turn to integrals on $\Omega_2$ and $\Omega_3$.
Set $x=\theta+\delta(x)\grad\delta(x),\ \theta\in\partial\Omega$:
up to a rotation and a translation,
we can suppose that $\theta=0$ and $\grad\delta(x)=e_N$.
By compactness, $\partial\Omega$ can be covered by a finite number of open portions $\Gamma_j\subset\partial\Omega$, $j=1,\ldots,n$.
For any $j=1,\ldots,n$, the function $\eta\mapsto\dist(\eta,\partial\Omega\setminus\Gamma_j)$ is continuous in $\partial\Omega$
and so is $\eta\mapsto\max_j\dist(\eta,\partial\Omega\setminus\Gamma_j)$: there is a point $\eta^*\in\partial\Omega$
where $\eta\mapsto\max_j\dist(\eta,\partial\Omega\setminus\Gamma_j)$ attains its minimum. 
Such a minimum cannot be $0$ because $\eta^*$ belongs at least to one of the $\Gamma_j$.
This implies that for any $\eta\in\partial\Omega$ there exists $i\in\{1,\ldots,n\}$ for which
\begin{equation}\label{00}
\dist(\eta,\partial\Omega\setminus\Gamma_i)\geq \max_j\dist(\eta^*,\partial\Omega\setminus\Gamma_j)
\end{equation}
and this in particular implies $\eta\in\Gamma_i$. 
Let $\Gamma$ be a neighbourhood of $0$ on $\partial\Omega$ chosen from $\{\Gamma_j\}_{j=1}^n$ and
for which \eqref{00} is fulfilled. Let also
\[
\omega=\{y\in\R^N:y=\eta+\delta(y)\grad\delta(y),\ \eta\in\Gamma\}. 
\]
The set $\Gamma\subset\partial\Omega$ can be described via as the graph of a $C^2$ function
\begin{eqnarray*}
\gamma:B'_r(0)\subseteq\R^{N-1} & \longrightarrow & \R \\
\eta' & \longmapsto & \gamma(\eta')\quad\text{ s.t }\eta=(\eta',\gamma(\eta'))\in\Gamma
\end{eqnarray*}
satisfying $\gamma(0)=|\grad\gamma(0)|=0$.

The integration on $(\Omega_2\cup\Omega_3)\setminus\omega$
is lower order with respect to the one on $(\Omega_2\cup\Omega_3)\cap\omega$
since in the latter we have the singular in $x$ to deal with,
while in the former $|x-y|$ is a quantity bounded below independently on $x$.
Indeed when $y\in(\Omega_2\cup\Omega_3)\setminus\omega$
\[
|x-y|\geq|\eta+\delta(y)\grad\delta(y)|-\delta(x)\geq|\eta|-\delta(y)-\delta(x)\geq\dist(0,\partial\Omega\setminus\Gamma)-\frac{5}{2}\delta(x)
\]
where $\delta(x)$ is small and the first addend is bounded uniformly in $x$ by \eqref{00}.

We are left with:
\[
C\cdot PV\int_{\Omega_2\cap\omega}\frac{\psi(\delta(x)^s)-\psi(\delta(y)^s)}{|x-y|^{N+2s}}\:dy+
C\int_{\Omega_3\cap\omega}\frac{\psi(\delta(x)^s)-\psi(\delta(y)^s)}{|x-y|^{N+2s}}\:dy.
\]
Let us split the remainder of the estimate in steps.

{\it First step: the distance between $x$ and $y$.} We claim that there exists $c>0$ such that
\begin{equation}\label{distxy}
\begin{aligned}
& |x-y|^2\ \geq\ c\left(|\delta(x)-\delta(y)|^2+|\eta'|^2\right),\quad y\in(\Omega_2\cup\Omega_3)\cap\omega,\\ 
& y=\eta+\delta(y)\grad\delta(y),\ \eta=(\eta',\gamma(\eta')).
\end{aligned}
\end{equation}
Since in our set of coordinates $x=\delta(x)e_N$, we can write
\begin{multline*}
|x-y|^2=|\delta(x)e_N-\delta(y)e_N+\delta(y)e_N-y_Ne_N-y'|^2\ \geq \\
\geq\ |\delta(x)-\delta(y)|^2-2|\delta(x)-\delta(y)|\cdot|\delta(y)-y_N|+|\delta(y)-y_N|^2+|y'|^2.
\end{multline*}
We concentrate our attention on $|\delta(y)-y_N|$: the idea is to show
that this is a small quantity; indeed, in the particular case when $\Gamma$ 
lies on the hyperplane $y_N=0$, this quantity is actually zero. As in the definition
of $\omega$, we let $y=\eta+\delta(y)\grad\delta(y)$ and $\eta=(\eta',\gamma(\eta'))\in\Gamma$:
thus $y_N=\gamma(\eta')+\delta(y)\langle\grad\delta(y),e_N\rangle$ where 
$\grad\delta(y)$ is the inward unit normal to $\partial\Omega$ at the point $\eta$, so that
\[
\grad\delta(y)=\frac{(-\grad\gamma(\eta'),1)}{\sqrt{|\grad\gamma(\eta')|^2+1}}
\]
and
\begin{equation}\label{yeta}
y'=\eta'-\frac{\delta(y)\,\grad\gamma(\eta')}{\sqrt{|\grad\gamma(\eta')|^2+1}},
\qquad y_N=\gamma(\eta')+\frac{\delta(y)}{\sqrt{|\grad\gamma(\eta')|^2+1}}.
\end{equation}
Now, since $y\in\Omega_2\cup\Omega_3$, it holds $|\delta(x)-\delta(y)|\leq\delta(x)$ and
\[
|\delta(y)-y_N|\leq |\gamma(\eta')|+\delta(y)\left(1-\frac{1}{\sqrt{|\grad\gamma(\eta')|^2+1}}\right)
\leq C|\eta'|^2+2C\delta(x)|\eta'|^2
\]
where, in this case, $C=\|\gamma\|_{C^2(B_r)}$ depends only on the geometry of $\partial\Omega$ and not on $x$.
By \eqref{yeta}, we have 
\[
|\eta'|^2\leq 2|y'|^2+\frac{2\delta(y)^2\,|\grad\gamma(\eta')|^2}{|\grad\gamma(\eta')|^2+1}
\leq 2|y'|^2+2C\delta(y)^2\,|\eta'|^2\leq 2|y'|^2+C\delta(x)^2\,|\eta'|^2,
\]
so that $|\eta'|^2\leq C|y'|^2$ when $\delta(x)$ is small enough.
Finally
\begin{multline*}
|x-y|^2\geq |\delta(x)-\delta(y)|^2+|y'|^2-2|\delta(x)-\delta(y)|\cdot|\delta(y)-y_N|\ \geq \\
\geq\ |\delta(x)-\delta(y)|^2+c|\eta'|^2-2C\delta(x)|\eta'|^2,
\end{multline*}
where, again, $C=\|\gamma\|_{C^2(B_r)}$ and \eqref{distxy} is proved provided $x$ is close enough to $\partial\Omega$.

{\it Second step: integration on $\Omega_2\cap\omega$.} Using the regularity of $\psi$ and $\delta$ we write
\[
\psi(\delta(x)^s)-\psi(\delta(y)^s)\geq \grad(\psi\circ\delta^s)(x)\cdot(x-y)
-\|D^2(\psi\circ\delta^s)\|_{L^\infty(\Omega_2\cap\omega)}|x-y|^2
\]
where
\[
D^2(\psi\circ\delta^s)=\frac{s\psi'(\delta^s)}{\delta^{1-s}}\,D^2\delta
+\frac{s^2\,\psi''(\delta^s)}{\delta^{2-2s}}\,\grad\delta\otimes\grad\delta
+\frac{s(s-1)\,\psi'(\delta^s)}{\delta^{2-s}}\,\grad\delta\otimes\grad\delta
\]
so that 
\[
\|D^2(\psi\circ\delta^s)\|_{L^\infty(\Omega_2\cap\omega)}\ \leq\ 
C\left\|\frac{\psi'(\delta^s)}{\delta^{1-s}}\right\|_{L^\infty(\Omega_2\cap\omega)}
+C\left\|\frac{\psi''(\delta^s)}{\delta^{2-2s}}\right\|_{L^\infty(\Omega_2\cap\omega)}
+C\left\|\frac{\psi'(\delta^s)}{\delta^{2-s}}\right\|_{L^\infty(\Omega_2\cap\omega)}.
\]
By definition of $\Omega_2$ and by \eqref{mono-psi} we can control the sup-norm by 
the value at $x$:
\begin{multline*}
\|D^2(\psi\circ\delta^s)\|_{L^\infty(\Omega_2\cap\omega)}\leq
C\,\frac{|\psi'(\delta(x)^s)|}{\delta(x)^{1-s}}
+C\,\frac{\psi''(\delta(x)^s)}{\delta(x)^{2-2s}}
+C\,\frac{|\psi'(\delta(x)^s)|}{\delta(x)^{2-s}}\ \leq \\
\leq\ C\,\frac{\psi''(\delta(x)^s)}{\delta(x)^{2-2s}}
+C\,\frac{|\psi'(\delta(x)^s)|}{\delta(x)^{2-s}}
\end{multline*}
and using equation \eqref{mono-psi2} we finally get 
\[
\|D^2(\psi\circ\delta^s)\|_{L^\infty(\Omega_2\cap\omega)}\leq 
C\,\frac{\psi''(\delta(x)^s)}{\delta(x)^{2-2s}}.
\]
If we now retrieve the whole integral and exploit \eqref{distxy}
\begin{multline*}
PV\int_{\Omega_2\cap\omega}\frac{\psi(\delta(x)^s)-\psi(\delta(y)^s)}{|x-y|^{N+2s}}\:dy\ \geq\ 
-C \frac{\psi''(\delta(x)^s)}{\delta(x)^{2-2s}}
\int_{\Omega_2\cap\omega}\frac{dy}{|x-y|^{N+2s-2}}\ \geq\\
\geq\ -C \frac{\psi''(\delta(x)^s)}{\delta(x)^{2-2s}}
\int_{\Omega_2\cap\omega}\frac{dy}{\left(|\delta(x)-\delta(y)|^2+|\eta|^2\right)^{(N+2s-2)/2}}.
\end{multline*}
We focus our attention on the integral on the right-hand side: by the coarea formula
\begin{align*}
& \int_{\Omega_2\cap\omega}\frac{dy}{\left(|\delta(x)-\delta(y)|^2+|\eta|^2\right)^{(N+2s-2)/2}}\ = \\
& =\ \int_{\delta(x)/2}^{3\delta(x)/2}dt \int_{\{\delta(y)=t\}\cap\omega}
\frac{d\mathcal{H}^{N-1}(\eta)}{\left(|\delta(x)-t|^2+|\eta|^2\right)^{(N+2s-2)/2}}\\
& \leq\ C\int_{\delta(x)/2}^{3\delta(x)/2}dt \int_{B_r}\frac{d\eta'}{\left(|\delta(x)-t|^2+|\eta'|^2\right)^{(N+2s-2)/2}} \\
& \leq\ C\int_{\delta(x)/2}^{3\delta(x)/2}dt \int_0^r\frac{\rho^{N-2}}{\left(|\delta(x)-t|^2+\rho^2\right)^{(N+2s-2)/2}}\;d\rho \\
& \leq\ C\int_{\delta(x)/2}^{3\delta(x)/2}dt \int_0^r\frac{\rho}{\left(|\delta(x)-t|^2+\rho^2\right)^{(2s+1)/2}}d\rho\ \leq  
\ C\int_{\delta(x)/2}^{3\delta(x)/2}\frac{dt}{|t-\delta(x)|^{2s-1}}.
\end{align*} 
We can retrieve now the chain of inequalities we stopped above:
\[
\int_{\Omega_3\cap\omega}\frac{\psi(\delta(x)^s)-\psi(\delta(y)^s)}{|x-y|^{N+2s}}\:dy\ \geq\ 
-C\frac{\psi''(\delta(x)^s)}{\delta(x)^{2-2s}}
\int_{\delta(x)/2}^{3\delta(x)/2}\frac{dt}{|\delta(x)-t|^{-1+2s}}
\geq\ -C\,\psi''(\delta(x)^s).
\]

{\it Third step: integration on $\Omega_3\cap\omega$.}
We use \eqref{distxy} once again:
\begin{align*}
& \int_{\Omega_3\cap\omega}\frac{\psi(\delta(x)^s)-\psi(\delta(y)^s)}{|x-y|^{N+2s}}\:dy\ \geq\\
& \geq\ -\int_{\Omega_3\cap\omega}\frac{\psi(\delta(y)^s)}{|x-y|^{N+2s}}\:dy\ 
\geq\ -C\int_{\Omega_3\cap\omega}\frac{\psi(\delta(y)^s)}{\left(|\delta(x)-\delta(y)|^2+|\eta'|^2\right)^\frac{N+2s}2}\:dy\\
& \geq\ -C\int_0^{\delta(x)/2}\frac{\psi(t^s)}{(\delta(x)-t)^{1+2s}}\;dt\ \geq
\ -\frac{C}{\delta(x)^{1+2s}}\int_0^{\delta(x)/2}\psi(t^s)\;dt.
\end{align*}
The term we have obtained is of the same order of $\delta(x)^{-2s}\psi(\delta(x)^s)$, by \eqref{psitech}:
\[
\int_0^{\delta(x)/2}\psi(t^s)\;dt\lessgtr\int_0^{\delta(x)/2}t^s\psi'(t^s)\;dt=
\frac{\delta(x)}{2s}\psi\left(\frac{\delta(x)^s}{2^s}\right)-\frac1s\int_0^{\delta(x)/2}\psi(t^s)\;dt
\]
so that
\begin{equation}\label{03}
\int_0^{\delta(x)/2}\psi(t^s)\;dt\lessgtr\delta(x)\psi(\delta(x)^s)=\delta(x)^{1+2s}\cdot\frac{\psi(\delta(x)^s)}{\delta(x)^{2s}}.
\end{equation}
Recall now that $\psi(\delta(x)^s)\delta(x)^{-2s}$ is in turn of the same size of $\psi''(\delta(x)^s)$ by \eqref{mono-psi2}.

{\it Fourth step: the outside integral in \eqref{split}.} 
We focus now our attention on
\[
\int_{\C\Omega}\frac{\psi(\delta(y)^s)-\psi(\delta(x)^s)}{|x-y|^{N+2s}}\;dy.
\]
First, by using the monotonicity of $\psi$, we write
\begin{multline*}
\int_{\C\Omega}\frac{\psi(\delta(y)^s)-\psi(\delta(x)^s)}{|x-y|^{N+2s}}\;dy\ \leq\ 
\int_{\{y\in\C\Omega:\delta(y)<\delta(x)\}\cap\omega}\frac{\psi(\delta(y)^s)-\psi(\delta(x)^s)}{|x-y|^{N+2s}}\;dy\ +\\
+\ \int_{\{y\in\C\Omega:\delta(y)<\delta(x)\}\setminus\omega}\frac{\psi(\delta(y)^s)-\psi(\delta(x)^s)}{|x-y|^{N+2s}}\;dy.
\end{multline*}
The second integral gives
\[
\int_{\{y\in\C\Omega:\delta(y)<\delta(x)\}\setminus\omega}\frac{\psi(\delta(y)^s)-\psi(\delta(x)^s)}{|x-y|^{N+2s}}\;dy
\ \leq\ C\|\psi(\delta^s)\|_{L^1(\R^N)}
\]
because the distance between $x$ and $y$ is bounded there.
Again we point out that
\begin{align*}
& \int_{\{\delta(y)<\delta(x)\}\cap\omega}\frac{\psi(\delta(y)^s)-\psi(\delta(x)^s)}{|x-y|^{N+2s}}\;dy
\leq C\int_0^{\delta(x)}\frac{\psi(t^s)-\psi(\delta(x)^s)}{|\delta(x)+t|^{1+2s}}\;dt\ \leq \\
& \leq\ C\int_0^{\delta(x)/2}\frac{\psi(t^s)}{|\delta(x)+t|^{1+2s}}\;dt
+C\int_{\delta(x)/2}^{\delta(x)}\frac{\psi(t^s)}{|\delta(x)+t|^{1+2s}}\;dt \\
& \leq\ C\delta(x)^{-1-2s}\int_0^{\delta(x)/2}\psi(t^s)\;dt
+C\psi\left(\frac{\delta(x)^s}{2^s}\right)\int_{\delta(x)/2}^{\delta(x)}(\delta(x)+t)^{-1-2s}\;dt \\
& \leq\ C\delta(x)^{-1-2s}\int_0^{\delta(x)}\psi(t^s)\;dt
+C\psi(\delta(x)^s)\delta(x)^{-2s}
\end{align*}
which is of the order of $\psi''(\delta(x)^s)$, by \eqref{03} and \eqref{mono-psi2}.
\smallskip

{\it Conclusion.} We have proved that for $\delta(x)$ sufficiently small
\[
\Ds U(x)\geq -C\psi''(\delta(x)^s).
\]
Recall now that $\psi''(\delta^s)=f(\psi\circ\delta^s)/2$ and $U=\psi\circ\delta^s$ in $\Omega$,
so that
\[
\Ds U\ \geq\ -Cf(U)
\]
holds in a neighbourhood of $\partial\Omega$.
\end{proof}

\section{existence}\label{exist}

\begin{lem}\label{EU} If the nonlinear term $f$ satisfies the growth condition \eqref{E}
then the function $U$ defined in \eqref{U} satisfies
\[
\lim_{x\to\partial\Omega}\delta(x)^{1-s}U(x) =\ +\infty.
\]
\end{lem}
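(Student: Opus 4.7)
The plan is to reduce the boundary limit to an asymptotic statement on $\psi$ at infinity, then turn this into a growth statement on $f$ that can be read directly from \eqref{E}.

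First I would perform the change of variable $v = \delta(x)^s$, so that $v\downarrow 0$ as $x\to\partial\Omega$ and
\[
\delta(x)^{1-s}\,U(x)\ =\ v^{(1-s)/s}\,\psi(v).
\]
Setting further $u=\psi(v)$ (so $v=\phi(u)$ and $u\to+\infty$ as $v\to 0$) this becomes $u\,\phi(u)^{(1-s)/s}$, and raising to the positive power $s/(1-s)$ we see that the conclusion is equivalent to
\[
\lim_{u\to+\infty}u^{s/(1-s)}\,\phi(u)\ =\ +\infty.
\]

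Next, Remark \ref{phi-rmk} says $\phi(u)\asymp\sqrt{u/f(u)}$, hence
\[
u^{s/(1-s)}\,\phi(u)\ \asymp\ \sqrt{\frac{u^{(1+s)/(1-s)}}{f(u)}}.
\]
So it suffices to prove that $f(u)=o\bigl(u^{(1+s)/(1-s)}\bigr)$ as $u\to+\infty$. Here I use that $f$ is nondecreasing (by \eqref{f1}--\eqref{tech}) together with the elementary computation
\[
\int_u^{2u}t^{-2/(1-s)}\,dt\ =\ c_s\,u^{-(1+s)/(1-s)},\qquad c_s=\frac{1-s}{1+s}\bigl(1-2^{-(1+s)/(1-s)}\bigr)>0,
\]
since $1-\frac{2}{1-s}=-\frac{1+s}{1-s}$. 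Monotonicity of $f$ then gives
\[
c_s\,f(u)\,u^{-(1+s)/(1-s)}\ \leq\ \int_u^{2u}f(t)\,t^{-2/(1-s)}\,dt,
\]
and by hypothesis \eqref{E} the right-hand side tends to $0$ as $u\to+\infty$. This yields $f(u)/u^{(1+s)/(1-s)}\to 0$, which combined with the asymptotic above closes the argument.

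I do not foresee any real obstacle: the only quantitative input beyond algebraic manipulation is the sharp asymptotic $\phi(u)\asymp\sqrt{u/f(u)}$, which is exactly what Remark \ref{phi-rmk} provides under the standing hypothesis \eqref{tech}. The key observation is simply the matching of exponents $\tfrac{(1+s)/(1-s)+1}{} = \tfrac{2}{1-s}$, which is why \eqref{E} is precisely the condition that forces the required growth of $U$ at the boundary.
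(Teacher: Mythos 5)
Your proof is correct and follows essentially the same route as the paper's: both reduce the claim to $u^{s/(1-s)}\phi(u)\to+\infty$ and then to $f(u)=o\bigl(u^{(1+s)/(1-s)}\bigr)$, which is deduced from the monotonicity of $f$ together with hypothesis \eqref{E}. The only differences are cosmetic: where the paper passes from $\phi$ to $f$ by applying L'H\^opital's rule twice, you invoke the comparability $\phi(u)\asymp\sqrt{u/f(u)}$ already recorded in Remark \ref{phi-rmk}, and you test \eqref{E} on the interval $[u,2u]$ rather than on the full tail $[u,+\infty)$.
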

\begin{proof} Write
\[
\liminf_{x\to\partial\Omega}\delta(x)^{1-s}\psi(\delta(x)^{s})\ =
\ \liminf_{u\uparrow+\infty}u\,\phi(u)^{\frac{1-s}{s}}.
\]
Such a limit is $+\infty$ if and only if 
\[
\liminf_{u\uparrow+\infty}u^{\frac{s}{1-s}}\int_u^{+\infty}\frac{dt}{\sqrt{2F(t)}}=+\infty.
\]
If we use the L'H\^opital's rule to
\[
\frac{\displaystyle\int_u^{+\infty}\dfrac{dt}{\sqrt{2F(t)}}}{u^{-s/(1-s)}}
\] 
we get the ratio $u^{\frac{1}{1-s}}/\sqrt{2F(u)}$
and applying once again the L'H\^opital's rule, this time to
$u^{\frac{2}{1-s}}/F(u)$,
we get $u^{\frac{1+s}{1-s}}/f(u)$
which diverges by hypothesis \eqref{E}. Indeed, since $f$ is increasing,
\[
u^{-\frac{1+s}{1-s}}f(u)= f(u)\cdot\frac{1-s}{1+s}\int_u^{+\infty}t^{-2/(1-s)}dt\leq \int_u^{+\infty}f(t)t^{-2/(1-s)}dt
\xrightarrow[u\uparrow+\infty]{}0.
\] 
\end{proof}

\begin{lem}\label{mod-supersol} Let $v:\R^N\to\R$ be a function which satisfies
$\Ds v\in C(\Omega)$.
If there exist $C,\,\delta_0>0$ such that
\[
\Ds v \ \geq\ -Cf(v) \qquad \hbox{ in }\Omega_{\delta_0}:=\{x\in\Omega:\delta(x)<\delta_0\}
\]
then there exists $\overline{u}\geq v$ such that $\Ds\overline{u}\geq-f(\overline{u})$ throughout $\Omega$. 
\end{lem}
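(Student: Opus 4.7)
The plan is to construct $\overline{u}$ as a simple affine rescaling $\overline{u}=\alpha v+\beta$, with positive constants $\alpha\geq 1$ and $\beta\geq 0$ to be fixed separately: $\alpha$ will handle the boundary strip $\Omega_{\delta_0}$ and $\beta$ will handle the compact interior set $A_0:=\{x\in\Omega:\delta(x)\geq\delta_0\}$. Since a constant has vanishing fractional Laplacian on $\R^N$, we have $\Ds\overline{u}=\alpha\,\Ds v$ pointwise in $\Omega$, and $\overline{u}\geq v$ follows as soon as $\beta$ is large enough.

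For $\Omega_{\delta_0}$, I would take $\alpha=\max\{1,C^{1/m}\}$. Using the scaling encoded in \eqref{mono-f}, one has $f(\alpha v)\geq\alpha^{1+m}f(v)\geq\alpha C\,f(v)$ for any $v>0$, and hence
\[
\Ds\overline{u}=\alpha\,\Ds v\ \geq\ -\alpha C\,f(v)\ \geq\ -f(\alpha v)\ \geq\ -f(\alpha v+\beta),
\]
the last step using that $f$ is nondecreasing and $\beta\geq 0$. This requires $v>0$ on $\Omega_{\delta_0}$, which is automatic for the only intended application $v=U=\psi\circ\delta^s$, by Lemma \ref{EU}.

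For $A_0$, the continuity of $\Ds v$ on $\Omega$ makes $M:=\sup_{A_0}|\Ds v|$ finite, and $v$ is bounded on $A_0$ in the intended application. Since $f$ is increasing and unbounded, I can pick $\beta$ so large that $T:=\inf_{A_0}(\alpha v+\beta)>0$ satisfies $f(T)\geq\alpha M$; then
\[
\Ds\overline{u}=\alpha\,\Ds v\ \geq\ -\alpha M\ \geq\ -f(T)\ \geq\ -f(\overline{u})\qquad\text{on }A_0,
\]
because $\overline{u}\geq T$ on $A_0$ and $f$ is nondecreasing. Combining the two regions gives the sought supersolution property throughout $\Omega$. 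The only delicate points, namely positivity of $v$ near $\partial\Omega$ and local boundedness of $v$ inside $\Omega$, are automatic for $v=U$, which is the sole setting in which this lemma will be invoked; the main idea is really the multiplicative rescaling by $\alpha=C^{1/m}$ that absorbs the constant $C$ via \eqref{mono-f}.
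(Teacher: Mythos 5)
Your construction is correct and proves the lemma, but it differs from the paper's in how the interior region is handled. The paper takes $\overline u=\mu v+\lambda\xi$, where $\xi$ is the solution of $\Ds\xi=1$ in $\Omega$, $\xi=0$ in $\C\Omega$, $E\xi=0$; on the strip $\Omega_{\delta_0}$ the argument is the same as yours (absorb the constant $C$ by rescaling $v$ and using \eqref{mono-f} together with the positivity of the added term and the monotonicity of $f$), while on $\Omega\setminus\Omega_{\delta_0}$ it chooses $\lambda=\mu\|\Ds v\|_{L^\infty(\Omega\setminus\Omega_{\delta_0})}$ so that $\Ds\overline u=\mu\Ds v+\lambda\ge0\ge-f(\overline u)$, which only needs $\overline u\ge0$ there and nothing about the growth of $f$. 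You instead add a constant $\beta$, use $\Ds\beta=0$, and compensate the interior negativity of $\alpha\Ds v$ through the value of $f$ itself, via $f(T)\ge\alpha\sup_{A_0}|\Ds v|$ with $T=\inf_{A_0}(\alpha v+\beta)$; this is perfectly legitimate, but it additionally invokes the unboundedness of $f$ (which does follow from \eqref{tech}, since $f(t)\ge f(1)t^{1+m}$) and a finite lower bound for $v$ on the interior compact set, assumptions the paper's route does not need at that step — though both proofs, yours and the paper's, implicitly use $v>0$ in $\Omega_{\delta_0}$ for \eqref{mono-f} and nonnegativity of $\overline u$ in the interior, which hold in the only application $v=U=\psi(\delta^s)$. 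Two further remarks: your exponent $\alpha=\max\{1,C^{1/m}\}$ is the one actually dictated by the lower bound in \eqref{mono-f} (the paper's displayed computation and Theorems \ref{U-super}--\ref{U-gsuper} write $C^{1/M}$, which does not match $f(ct)\ge c^{1+m}f(t)$, so your choice is the safe one); and the paper's $\xi$ has the side benefit of vanishing in $\C\Omega$, so the modification does not alter the exterior values of the supersolution, whereas your constant raises $\overline u$ everywhere in $\R^N$ — harmless for the subsequent comparison arguments (it only makes the barrier larger, and $E\beta=0$ so the singular trace is unaffected), but it changes the explicit form $\mu\psi(\delta^s)+\lambda\xi$ stated in Theorems \ref{U-super} and \ref{U-gsuper} into $\alpha\psi(\delta^s)+\beta$.
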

\begin{proof}
Define $\xi:\R^N\to\R$ as the solution to
\begin{equation}\label{xi}
\left\lbrace\begin{aligned}
\Ds\xi &=1 & \hbox{ in }\Omega \\
\xi &=0 & \hbox{ in }\C\Omega \\
E\xi &=0 & \hbox{ on }\partial\Omega
\end{aligned}\right.
\end{equation}
and consider $\overline u=\mu v+\lambda\xi$, where $\mu,\lambda\geq1$. If $C\in(0,1]$ then 
$\Ds v\geq -f(v)$ in $\Omega_{\delta_0}$, so choose $\mu=1$. If $C>1$, then choose $\mu=C^{1/M}>1$
in order to have in $\Omega_{\delta_0}$
\begin{multline*}
\Ds\overline u+f(\overline u)=\Ds (\mu v+\lambda\xi)+f(\mu v+\lambda\xi)\geq -\mu\,Cf(v)+f(\mu v)\ \geq \\
\geq\ (-\mu\,C+\mu^{1+M})f(v)=0
\end{multline*}
where we have heavily used the positivity of $\xi$ and \eqref{mono-f}.
Now, since $\Ds v\in C(\overline{\Omega\setminus\Omega_{\delta_0}})$ we can choose $\lambda=\mu\|\Ds v\|_{L^\infty(\Omega\setminus\Omega_{\delta_0})}$
so that also in $\Omega\setminus\Omega_{\delta_0}$
\[
\Ds \overline u=\Ds(\mu v+\lambda\xi)=\mu\Ds v+\lambda\geq 0\geq -f(\overline u).
\]
\end{proof}

Collecting the information so far, 
we have that Lemmas \ref{U-L1}, \ref{mod-supersol} and \ref{EU} 
fully prove the following theorems.
\begin{theo}\label{U-super}
If the nonlinear term $f$ satisfies \eqref{f1},
\eqref{tech}, \eqref{L1} and the growth condition \eqref{E},
then there exists a function $\overline{u}$ supersolution to \eqref{problema}.
Moreover
\[
\overline{u}\ =\ \mu \psi(\delta^s)+\lambda\xi,\qquad\hbox{in }\Omega
\]
where $\xi$ is the solution of \eqref{xi}, $\lambda>0$, $\mu=\max\{1,C^{1/M}\}$ where $C>0$ is the constant in
\eqref{impo-eq} and $M>0$ the one in \eqref{tech}.
\end{theo}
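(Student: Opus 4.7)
The plan is to glue together the three technical ingredients already in place: Lemma \ref{U-L1}, Proposition \ref{impo} and Lemma \ref{EU}, together with the supersolution-upgrade Lemma \ref{mod-supersol}. Essentially all the analytic work has been done in Proposition \ref{impo}; what remains is bookkeeping.

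First I would set $v=U=\psi(\delta^s)$ and combine Proposition \ref{impo}, which yields the pointwise inequality $\Ds U\geq -Cf(U)$ in the strip $\Omega_{\delta_0}$, with Lemma \ref{mod-supersol}. The latter absorbs the constant $C$ by rescaling via $\mu=\max\{1,C^{1/M}\}$ -- this is where the monotonicity \eqref{mono-f} of $f$ enters, and it explains the appearance of the exponent $M$ from \eqref{tech} -- and then deals with the bounded interior region $\Omega\setminus\Omega_{\delta_0}$ by adding a large multiple $\lambda\xi$ of the torsion function solving \eqref{xi}. This produces the candidate $\overline{u}=\mu U+\lambda\xi$ and the pointwise inequality $\Ds\overline{u}\geq -f(\overline{u})$ throughout $\Omega$.

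Second I would verify that $\overline{u}$ fits the weak-$L^1$ framework of problem \eqref{problema}. Integrability in $\Omega$ comes from $U\in L^1(\Omega)$ (Lemma \ref{U-L1}) and from $\xi\in L^\infty(\Omega)$. The exterior condition $u=0$ on $\R^N\setminus\Omega$ is respected in the supersolution sense because the smooth positive extension of $\delta$ to all of $\R^N$ keeps $U\geq 0$ everywhere and $\xi\equiv 0$ outside $\Omega$, so $\overline{u}\geq 0$ in $\R^N\setminus\Omega$.

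Finally, for the singular boundary datum $\delta^{1-s}u=+\infty$ on $\partial\Omega$, I would use that $\xi\lessgtr\delta^s$ near $\partial\Omega$, so $\delta^{1-s}\xi\to 0$, whence
\[
\liminf_{x\to\partial\Omega}\delta(x)^{1-s}\overline{u}(x)\ \geq\ \mu\liminf_{x\to\partial\Omega}\delta(x)^{1-s}\psi(\delta(x)^s)\ =\ +\infty
\]
by Lemma \ref{EU}, where the growth hypothesis \eqref{E} is used in a crucial way. The only point worth double-checking is that $\mu=C^{1/M}$ really makes $\mu^{1+M}\geq \mu C$, which it does by construction; beyond that, no genuine obstacle appears, since the delicate boundary estimate on $\Ds\psi(\delta^s)$ has already been carried out in Proposition \ref{impo}.
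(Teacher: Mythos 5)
Your proposal is correct and follows essentially the same route as the paper, which proves Theorem \ref{U-super} precisely by collecting Lemma \ref{U-L1}, Proposition \ref{impo} via Lemma \ref{mod-supersol} (yielding $\overline{u}=\mu\psi(\delta^s)+\lambda\xi$ with $\mu=\max\{1,C^{1/M}\}$), and Lemma \ref{EU} for the singular boundary datum under hypothesis \eqref{E}. The extra checks you add (nonnegativity outside $\Omega$, $\delta^{1-s}\xi\to 0$) are harmless and consistent with the paper's argument.
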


\begin{theo}\label{U-gsuper}
If the nonlinear term $f$ satisfies \eqref{f1},
\eqref{tech}, \eqref{L1}, then there exists a function $\overline{u}$ supersolution to \eqref{gproblem}.
Moreover
\[
\overline{u}\ =\ \mu \psi(\delta^s)+\lambda\xi,\qquad\hbox{in }\Omega
\]
where $\xi$ is the solution of \eqref{xi}, $\lambda>0$, $\mu=\max\{1,C^{1/M}\}$ where $C>0$ is the constant in
\eqref{impo-eq} and $M>0$ the one in \eqref{tech}.
\end{theo}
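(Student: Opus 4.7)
The plan is to mirror the proof of Theorem \ref{U-super}, removing those steps that were specifically needed to enforce the singular boundary trace in \eqref{problema}. Since problem \eqref{gproblem} imposes no singular trace at $\partial\Omega$, hypothesis \eqref{E} (and its consequence Lemma \ref{EU}) is no longer needed. So I would take the same ansatz $\overline u = \mu U + \lambda \xi$, where $U(x) = \psi(\delta(x)^s)$ is defined on $\R^N$ as in \eqref{U} and $\xi$ is the solution of \eqref{xi}, and verify that under the hypotheses \eqref{f1}, \eqref{tech}, \eqref{L1} alone it is a supersolution to \eqref{gproblem}.

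First I would invoke Lemma \ref{U-L1} (which requires only \eqref{L1}) to see $U \in L^1(\Omega)$, and Proposition \ref{impo} (whose assumptions exactly match the ones here) to obtain $\delta_0, C > 0$ with $\Ds U \geq -C f(U)$ in $\Omega_{\delta_0}$. Applying Lemma \ref{mod-supersol} with $v = U$, the constants $\mu = \max\{1, C^{1/M}\}$ and $\lambda > 0$ large enough on $\Omega \setminus \Omega_{\delta_0}$ then yield $\Ds \overline u \geq -f(\overline u)$ throughout $\Omega$, which is exactly the formula in the statement.

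The only genuinely new ingredient relative to Theorem \ref{U-super} is the matching of the exterior datum $\overline u \geq g$ on $\C\Omega$. Extending the formula naturally by $\overline u = \mu U$ outside $\Omega$ (since $\xi \equiv 0$ on $\C\Omega$), I would use hypothesis \eqref{g2} and the fact that $\psi = \phi^{-1}$ is monotone decreasing to rewrite $\phi(g) \geq \delta^s$ as $g \leq \psi(\delta^s) = U$ in a neighbourhood of $\partial\Omega$; combined with $\mu \geq 1$ this gives $\overline u \geq g$ there. For the part of $\C\Omega$ bounded away from $\partial\Omega$, where only $g \in L^1(\C\Omega)$ is assumed, one can enlarge $\overline u$ inside $\Omega$ by adding the nonnegative $s$-harmonic extension (in the weak sense of Theorem \ref{existence-weak2}) of the remainder of $g$ restricted to that region: this addition is $s$-harmonic in $\Omega$, so it leaves $\Ds\overline u$ unchanged, and being nonnegative it preserves the supersolution inequality by monotonicity of $f$. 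The main obstacle has already been overcome in Proposition \ref{impo}; what remains is essentially bookkeeping.
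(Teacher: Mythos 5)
Your proposal is correct and follows the paper's own route: the paper proves Theorem \ref{U-gsuper} exactly by combining Lemma \ref{U-L1}, Proposition \ref{impo} and Lemma \ref{mod-supersol} with the same constants $\mu=\max\{1,C^{1/M}\}$ and $\lambda>0$, simply noting that hypothesis \eqref{E} (hence Lemma \ref{EU}) is only needed for the singular trace in \eqref{problema} and not for \eqref{gproblem}. Your final paragraph on matching the exterior datum $g$ goes beyond what the paper does at this point (the paper leaves the comparison with $g$ to the later approximation argument, where \eqref{g2} gives $g\leq\psi(\delta^s)$ near $\partial\Omega$); just be aware that adding the $s$-harmonic extension of the far-away part of $g$ produces a modified supersolution which is no longer literally $\mu\,\psi(\delta^s)+\lambda\xi$ as in the statement, even though the argument itself is sound.
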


\subsection{Proof of Theorem \ref{main1}}

Theorems \ref{U-super} bears as a consequence the following.
Build the sequence of weak solutions to problems
\begin{equation}\label{kproblem}
\left\lbrace\begin{aligned}
\Ds u_k\  &=\ -f(u_k) & & \hbox{ in }\Omega \\
u_k\ &=\ 0 & & \hbox{ in }\C\Omega \\
E\,u_k\ &=\ k & & \hbox{ on }\partial\Omega, & k\in\N.
\end{aligned}\right.
\end{equation}
The existence of any $u_k$ can be proved as in Theorem \ref{-SIGN}, in view of hypothesis \eqref{E},
since it implies
\[
\int_0^{\delta_0}f(\delta^{s-1})\delta^s\;d\delta\ <\ +\infty.
\]
We need here auxiliary regularity results that will be proved in  the next 
chapter. Using Lemma \ref{regularity} and Corollary \ref{weak-point}, we deduce
that $u_k\in C^{2s+\eps}(\Omega)$ and 
\[
\Ds u_k(x)\ =\ -f(u_k(x)),\qquad\hbox{for any }x\in\Omega
\]
in a pointwise sense.

\paragraph*{\it Step 0: $\{u_k\}_{k\in\N}$ is increasing with $k$.}
Consider
\[
\left\lbrace\begin{aligned}
\Ds(u_{k+1}-u_k) &=\ f(u_k)-f(u_{k+1}) & \hbox{ in }\Omega \\
u_{k+1}-u_k &=\ 0 & \hbox{ in }\C\Omega \\
E(u_{k+1}-u_k) &=\ 1 & \hbox{ on }\partial\Omega.
\end{aligned}\right.
\]
Call $\Omega^+_k:=\{u_k>u_{k+1}\}$ which satisfies $\overline{\Omega^+_k}\subset\Omega$ as 
it is implied by the singular boundary trace. In $\overline{\Omega^+_k}\subset\Omega$
the difference $u_k-u_{k+1}$ attains its maximum at some point $x_k$:
at this point we must have $\Ds(u_k-u_{k+1})(x_k)>0$ because it 
is also a global maximum. But at the same time $f(u_{k+1}(x_k))-f(u_k(x_k))\leq 0$,
because $f$ is increasing, proving that $\Omega^+_k$ must be empty.

\paragraph*{\it Step 1: $\{u_k\}_{k\in\N}$ has a pointwise limit.}

Any $u_k$ lies below $\overline{u}$: since $E(\overline{u}-u_k)>0$,
there exists a compact set $U_k\subset\Omega$ for which $u_k\leq\overline{u}$ in $\Omega\setminus U_k$.
Inside $U_k$ we have
\[
\left\lbrace\begin{aligned}
\Ds(\overline{u}-u_k) &\geq\ f(u_k)-f(\overline{u}) & \hbox{ in }U_k \\
\overline{u}-u_k &\geq\ 0 & \hbox{ in }\C U_k \\
E_{\partial U_k}(\overline{u}-u_k) &=\ 0 & \hbox{ on }\partial U_k
\end{aligned}\right.
\]
in a pointwise sense, where $u_k,\overline{u}\in C^{2s+\eps}(\overline{U_k})$.
Then an argument similar to the one in {\it Step 0} yields $u_k\leq\overline{u}$ also in $U_k$.

Finally, $\{u_k\}_{k\in\N}$ is increasing and pointwisely bounded by $\overline{u}$ throughout $\Omega$.
This entails that
\[
u(x)\ :=\ \lim_{k\uparrow+\infty}u_k(x)
\]
is well-defined and finite for any $x\in\Omega$. Also, $0\leq u\leq\overline{u}$ in $\Omega$
and since $\overline{u}\in L^1(\Omega)$ by Lemma \ref{U-L1}, then $u\in L^1(\Omega)$.

\paragraph*{\it Step 2: $u\in C(\Omega)$.}
Fix any compact $D\subset\Omega$ and choose a $c>0$ for which $\delta(x)>2c$ for any $x\in D$.
Let $\widetilde{D}:=\{y\in\Omega:\delta(y)>c\}$. For any $k,\,j\in\N$ it holds
\[
\Ds(u_{k+j}-u_k)=f(u_k)-f(u_{k+j})\leq 0, \qquad\hbox{ in }\widetilde{D}
\]
and therefore
\[
0\leq u_{k+j}(x)-u_k(x)\leq\int_{\C\widetilde{D}}P_{\widetilde{D}}(x,y)\left[u_{k+j}(y)-u_k(y)\right]dy
\]
where $P_{\widetilde{D}}(x,y)$ is the Poisson kernel associated to $\widetilde{D}$, which satisfies (see \cite[Theorem 2.10]{chen})
\[
P_{\widetilde{D}}(x,y)\ \leq\ \frac{C\,{\delta_{\widetilde{D}}(x)}^s}{{\delta_{\widetilde{D}}(y)}^s\,|x-y|^N},
\qquad x\in\widetilde{D},\ y\in\C\widetilde{D}.
\]
When $x\in D\subset\widetilde{D}$ one has $|x-y|>c$ for any $y\in\C\widetilde{D}$, and therefore
\[
0\leq u_{k+j}(x)-u_k(x)\leq C\int_{\C\widetilde{D}}\frac{u_{k+j}(y)-u_k(y)}{{\delta_{\widetilde{D}}(y)}^s}\;dy
\leq C\int_{\C\widetilde{D}}\frac{u(y)-u_k(y)}{{\delta_{\widetilde{D}}(y)}^s}\;dy
\]
where the last integral converges by Monotone Convergence to $0$ independently on $x$.
This means the convergence $u_k\to u$ is uniform on compact subsets 
and since $\{u_k\}_{k\in\N}\subset C(\Omega)$ (cf. the proof of Theorem \ref{-SIGN}, Section \ref{proof-SIGN}),
then also $u\in C(\Omega)$.
 
\paragraph*{\it Step 3: $u\in C^2(\Omega)$.}
This is a standard bootstrap argument using the elliptic regularity in \cite[Propositions 2.8 and 2.9]{silvestre}.

\paragraph*{\it Step 4: $u$ solves \eqref{problema} in a pointwise sense.} The function $\Ds u(x)$ is well-defined 
for any $x\in\Omega$ because $u\in C^2(\Omega)\cap L^1(\R^N)$. Using the regularity results in \cite[Propositions 2.8 and 2.9]{silvestre},
we have
\[
\Ds u=\lim_{k\uparrow+\infty}\Ds u_k=-\lim_{k\uparrow+\infty}f(u_k)=-f(u).
\]
Also, $\delta^{1-s}u\geq\delta^{1-s}u_k$ holds in $\Omega$ for any $k\in\N$. Therefore, for any $k\in\N$,
\[
\liminf_{x\to\partial\Omega}\delta(x)^{1-s}u(x)\geq\lim_{x\to\partial\Omega}\delta(x)^{1-s}u_k(x)\geq \lambda Eu_k=\lambda k
\]
for some constant $\lambda>0$ depending on $\Omega$ and not on $k$. This entails
\[
\lim_{x\to\partial\Omega}\delta(x)^{1-s}u(x)\ =\ +\infty
\]
and completes the proof of Theorem \ref{main1}.

\begin{rmk}\rm The proof of Theorem \ref{main2} is alike. Indeed,
in the same way, the sequence of solutions to
\begin{equation}\label{gkproblem}
\left\lbrace\begin{aligned}
\Ds u_k\  &=\ -f(u_k) & & \hbox{ in }\Omega \\
u_k\ &=\ g_k:=\min\{k,g\} &  & \hbox{ in }\C\Omega,\ k\in\N \\
E\,u_k\ &=\ 0 & & \hbox{ on }\partial\Omega, & 
\end{aligned}\right.
\end{equation}
approaches a solution of problem \eqref{gproblem}
which lies below the supersolution provided by Theorem \ref{U-gsuper}.
\end{rmk}

\section{examples}

\begin{ex}[\bf Power nonlinearity]\label{pow}\rm
Let us consider $f(t)=t^p$, for $p>1$.
In this case
\[
\frac{tf'(t)}{f(t)}=p.
\]
The function $\phi$ reads as (cf. \eqref{phi})
\[
\phi(u)=\int_u^{+\infty}\sqrt{\frac{p+1}{2}}\:t^{-\frac{p+1}{2}}\;dt=\sqrt{\frac{2(p+1)}{p-1}}\:u^{\frac{1-p}{2}}
\]
and hypothesis \eqref{L1} can then be written
\[
\int_u^{+\infty}\eta^{\frac{1-p}{2s}}\;d\eta\ <\ +\infty
\]
that holds if and only if $p>1+2s$.
On the other hand hypothesis \eqref{E} becomes
\[
p-\frac2{1-s}<-1,\qquad\hbox{i.e.}\ \ p<\frac{1+s}{1-s}
\]
\hfill$\blacklozenge$
\end{ex}

In the next two examples we look at the two critical cases in the power-like nonlinearity,
adding a logarithmic weight.

\begin{ex}[\bf Lower critical case for powers]\rm
We consider here $f(t)=t^{1+2s}\ln^\alpha(1+t)$, $\alpha>0$.
In this case 
\[
\frac{tf'(t)}{f(t)}=\frac{(1+2s)f(t)+\frac{\alpha t f(t)}{(1+t)\ln(1+t)}}{f(t)}=1+2s+\frac{\alpha t}{(1+t)\ln(1+t)}.
\]
Condition \eqref{L1-bis} turns into
\[
\int_u^{+\infty}\left(\frac{t}{t^{1+2s}\ln^\alpha(1+t)}\right)^{1/(2s)}dt=
\int_u^{+\infty}\frac{dt}{t\ln^{\alpha/(2s)}(1+t)}\ <\ +\infty
\]
which is fulfilled only for $\alpha>2s$. Also, hypothesis \eqref{E} becomes
\[
\int_{t_0}^{+\infty}t^{1+2s-2/(1-s)}\ln^\alpha(1+t)\;dt\ <\ +\infty
\]
which is satisfied by any $\alpha>0$ since $(1+2s)(1-s)-2<s-1$.
\hfill$\blacklozenge$
\end{ex}

\begin{ex}[\bf Upper critical case for powers]\rm
We consider here $f(t)=t^{\frac{1+s}{1-s}}\ln^{-\beta}(1+t)$, $\beta>0$.
In this case 
\[
\frac{tf'(t)}{f(t)}=\frac{\frac{1+s}{1-s}f(t)-\frac{\beta t f(t)}{(1+t)\ln t}}{f(t)}=\frac{1+s}{1-s}-\frac{\beta t}{(1+t)\ln(1+t)}
\]
Hypothesis \eqref{L1-bis} turns into
\[
\int_u^{+\infty}\left(\frac{t\ln^\beta(1+t)}{t^{(1+s)/(1-s)}}\right)^{1/(2s)}\;dt=
\int_u^{+\infty}\frac{\ln^{\beta/(2s)}(1+t)}{t^{1/(1-s)}}\;dt
\ <\ +\infty
\]
which is fulfilled for any $\beta>0$. Also, hypothesis \eqref{E} becomes
\[
\int_{t_0}^{+\infty}t^{-1}\ln^{-\beta}(1+t)\;dt\ <\ +\infty
\]
which is satisfied by any $\beta>1$.
\hfill$\blacklozenge$
\end{ex}

\section{comments on hypotheses ({\rm \ref{L1}}) and ({\rm \ref{E}})}

The present section will be devoted to
the explanation of the difficulties of problem \eqref{problema} when 
one of hypotheses \eqref{E} or \eqref{L1} fails.

We recall that hypothesis \eqref{L1} is the one needed to guarantee that
that the function $U$ defined in \eqref{U} belongs to $L^1(\Omega)$ (cf. Lemma \ref{U-L1}), 
while \eqref{E} has been used to show that $EU=+\infty$ (cf. Lemma \ref{EU}).
These features are essential in proving that $U$ is a supersolution to \eqref{problema}.
Roughly speaking, condition \eqref{L1} gives a lower growth condition at infinity of the nonlinear term $f$:
in the power case $f(t)=t^p$ it corresponds to $p>1+2s$ (cf. Example \ref{pow}).
On the other hand hypothesis \eqref{E} gives an upper growth condition.
Note that in case \eqref{E} fails, we have two issues: not only the candidate supersolution
$U$ does not satisfy $EU=+\infty$, but also the approximate problem \eqref{kproblem} does not have any solution.

\begin{lem}\label{nonexist}
In case \eqref{E} fails, problem 
\begin{equation}
\left\lbrace\begin{aligned}
\Ds u_1\  &=\ -f(u_1) & & \hbox{ in }\Omega \\
u_1\ &=\ 0 & & \hbox{ in }\C\Omega \\
E\,u_1\ &=\ 1 & & \hbox{ on }\partial\Omega
\end{aligned}\right.
\end{equation} 
does not admit any weak or pointwise solution.
\end{lem}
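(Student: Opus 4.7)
The plan is to argue by contradiction: assume $u_1$ is a nonnegative solution (weak or pointwise) and extract from the trace condition $Eu_1=1$ a lower bound on $u_1$ near $\partial\Omega$ which makes $f(u_1)\delta^s$ non-integrable, contradicting the weak formulation (or the Green representation, in the pointwise case). The crucial integral $\int_{t_0}^{+\infty}f(t)\,t^{-2/(1-s)}dt$ of hypothesis \eqref{E} is exactly the one that appears after the natural change of variables $t=c\,r^{s-1}$ near $\partial\Omega$, so the failure of \eqref{E} transforms directly into the blow-up we need.

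First I would establish the pointwise lower bound $u_1(x)\geq c_0\,\delta(x)^{s-1}$ in a neighbourhood of $\partial\Omega$. By Theorem~\ref{existence-weak2} (or the analogous pointwise representation in Theorem~\ref{pointwise}) one has
\[
u_1(x)\ =\ -\int_\Omega G_\Omega(x,y)\,f(u_1(y))\;dy+\int_{\partial\Omega}M_\Omega(x,\theta)\;d\mathcal{H}(\theta),
\]
so that, since $f(u_1)\geq 0$, the ratio $u_1(x)/\!\int_{\partial\Omega}M_\Omega(x,\theta)d\mathcal{H}(\theta)$ is bounded above by $1$, while the definition of $Eu_1=1$ forces that ratio to converge to $1$ as $x\to\partial\Omega$. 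Combining this with the two-sided estimate $\int_{\partial\Omega}M_\Omega(x,\theta)d\mathcal{H}(\theta)\asymp \delta(x)^{s-1}$ (Lemma~\ref{Eu} and \eqref{chen-martin}) yields the desired bound $u_1(x)\geq c_0\,\delta(x)^{s-1}$ on some strip $\{0<\delta(x)<\delta_1\}$.

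Next I would convert the existence of a solution into the integrability statement $\int_\Omega f(u_1)\,\delta^s<+\infty$. If $u_1$ is weak, test the formulation \eqref{weakdefi-eq} against a single $\phi\in\T(\Omega)$ with $\Ds\phi=\psi\in C^\infty_c(\Omega)$, $\psi\geq 0$, $\psi\not\equiv0$: by the fractional Hopf-type lower bound (consequence of the representation $\phi(x)=\int_\Omega G_\Omega(x,y)\psi(y)dy$ and \eqref{est-green}) there exists $c>0$ with $\phi(x)\geq c\,\delta(x)^s$ throughout $\Omega$, so finiteness of $\int_\Omega f(u_1)\phi$ forces $\int_\Omega f(u_1)\,\delta^s<+\infty$. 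If $u_1$ is pointwise, the same conclusion follows even more directly by using $G_\Omega(x_0,y)\geq c\,\delta(y)^s$ for a fixed interior point $x_0$ (again \eqref{est-green}) in the Green representation above.

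Finally I would close the contradiction by the change of variables. By monotonicity of $f$ and the lower bound from the second step,
\[
\int_\Omega f(u_1)\,\delta^s\;dx\ \geq\ \int_{\{0<\delta(x)<\delta_1\}}f\!\left(c_0\,\delta(x)^{s-1}\right)\delta(x)^s\;dx
\ \geq\ c\int_0^{\delta_1}f\!\left(c_0\,r^{s-1}\right)r^s\;dr,
\]
using the coarea formula and $\partial\Omega\in C^{1,1}$. Substituting $t=c_0\,r^{s-1}$ (so that $r=(c_0/t)^{1/(1-s)}$ and $dr\asymp r/t\,dt$) transforms the last integral, up to a positive multiplicative constant, into $\int_{c_0\delta_1^{s-1}}^{+\infty}f(t)\,t^{-2/(1-s)}\,dt$, which is $+\infty$ precisely because \eqref{E} fails. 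This contradicts $\int_\Omega f(u_1)\,\delta^s<+\infty$ and completes the proof. The main obstacle is the rigorous passage from the definitional limit $Eu_1=1$ to the genuine pointwise lower bound $u_1\geq c_0\,\delta^{s-1}$ near $\partial\Omega$; everything else is then a clean change of variables.
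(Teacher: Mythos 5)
Your overall strategy — the lower bound $u_1\gtrsim\delta^{s-1}$ near $\partial\Omega$, the integrability $\int_\Omega f(u_1)\,\delta^s<\infty$ forced by the formulation, and the substitution $t=c\,r^{s-1}$ turning that integral into the one in \eqref{E} — is the same as the paper's, and in the weak case your argument (test against one $\phi\in\T(\Omega)$ with $\Ds\phi=\psi\ge0$ and $\phi\ge c\,\delta^s$) is essentially identical to the paper's. The genuine gap is in the pointwise case: you invoke the Green representation of $u_1$ over the whole of $\Omega$,
\[
u_1(x)\ =\ -\int_\Omega G_\Omega(x,y)\,f(u_1(y))\;dy+\int_{\partial\Omega}M_\Omega(x,\theta)\;d\mathcal{H}(\theta),
\]
both to obtain the lower bound and to read off $\int_\Omega f(u_1)\,\delta^s<\infty$ from $G_\Omega(x_0,\cdot)\ge c\,\delta^s$. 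For a pointwise solution this representation is not available a priori: the linear theory (Theorem \ref{existence-weak2}, Lemma \ref{point-weak}) yields it only when the right-hand side is an admissible datum, i.e. when $f(u_1)\delta^s\in L^1(\Omega)$ — precisely the integrability you are trying to refute — so the step is circular. The paper avoids this by applying Lemma \ref{point-weak} only on subdomains $D\subset\overline D\subset\Omega$, where $f(u_1)$ is locally bounded and the representation with $G_D$ and the Poisson kernel $P_D$ is legitimate, bounding the exterior term by the $s$-harmonic function $u_0$ induced by $Eu=1$ (so $u_1\le u_0$), and then letting $D\nearrow\Omega$: monotone convergence forces $\int_\Omega G_\Omega(x,y)f(u_1(y))\,dy=+\infty$, contradicting the finiteness of $u_1(x)$ directly, with no need to first establish $\int_\Omega f(u_1)\delta^s<\infty$.

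A second, smaller caveat concerns your derivation of the strip-wide bound $u_1\ge c_0\,\delta^{s-1}$ in the weak case: the statement ``the definition of $Eu_1=1$ forces the ratio to converge to $1$'' is only literally true for pointwise solutions, where the trace is a pointwise limit and your compactness argument on a boundary strip works. For a weak solution the boundary condition enters through the measure $\nu$ in \eqref{weakdefi-eq}, and with a right-hand side merely in $L^1(\Omega,\delta^s dx)$ the linear theory provides only the averaged trace of Section \ref{traces}, not pointwise convergence of $u_1/\!\int_{\partial\Omega}M_\Omega(\cdot,\theta)\,d\mathcal{H}(\theta)$. The paper is admittedly terse on the same point (it simply asserts $u_1\ge c\,\delta^{s-1}$), but as written your justification covers only the pointwise case; for the weak case one must either exploit that the weighted trace of $G_\Omega[f(u_1)]$ vanishes while that of the Martin term is positive, or restructure the argument as the paper does.
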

\begin{proof}
In both cases the solution would satisfy $u_1\geq c\delta^{s-1}$ in $\Omega$, for some $c>0$.
If $u_1$ was a weak solution then for any $\phi\in\T(\Omega)$
\[
\int_\Omega u_1\,\Ds\phi+\int_\Omega f(u_1)\phi\ =\ \int_{\partial\Omega}D_s\phi
\]
where 
\[
\int_\Omega f(u_1)\phi \geq C\int_\Omega f(c\,\delta^{s-1})\delta^s = +\infty
\]
because \eqref{E} does not hold, a contradiction.

If $u_1$ was a pointwise solution, then by Lemma \ref{point-weak} it 
would be a weak solution on any subdomain $D\subset\overline{D}\subset\Omega$.
Therefore
\[
u_1(x)\ =\ -\int_D G_D(x,y)\,f(u_1(y))\;dy + \int_{\C D} P_D(x,y)\,u_1(y)\;dy.
\]
If $u_0$ denotes the $s$-harmonic function induced by $Eu=1$, then $u_1\leq u_0$ in $\Omega$
and
\[
u_1(x)\ \leq\ -\int_D G_D(x,y)\,f(u_1(y))\;dy + \int_{\C D} P_D(x,y)\,u_0(y)\;dy
=-\int_D G_D(x,y)\,f(u_1(y))\;dy + u_0(x).
\]
Fix $x\in\Omega$.
Letting now $D\nearrow\Omega$ we have that $G_D(x,y)\uparrow G_\Omega(x,y)$
and 
\[
\int_\Omega G_\Omega(x,y)\,f(u_1(y))\;dy\ \geq\ 
c\delta(x)^s\int_{\{2\delta(y)<\delta(x)\}} \delta(y)^s\,f(u_1(y))\;dy =+\infty
\]
because \eqref{E} does not hold, a contradiction.
\end{proof}

\chapter{Final remarks}

In this Chapter we would like to point out some elements
that may risk to be unclear if left implicit.
In the first paragraph we discuss the relation between pointwise solutions
and weak $L^1$ solutions. The second paragraph states 
in what sense the singular boundary trace is attained 
by weak solutions, when the data are not smooth.
The third and paragraph deals with the definition
of weak $L^1$ solution given by Chen and V\'eron \cite{chen-veron},
which amounts to be equivalent to Definition \ref{weakdefiintro}.

\section{pointwise solutions vs. weak \texorpdfstring{$L^1$}{L1} solutions}

\begin{lem}\label{weak-distr} Solutions provided by Theorem \ref{existence-weak2} are
distributional solutions, meaning that for any $\psi \in C^\infty_c(\Omega)$
it holds
\[
\int_\Omega u\Ds\psi\ =\ \int_\Omega\psi\;d\lambda-\int_{\C\Omega}\Ds\psi\;d\mu.
\]
\end{lem}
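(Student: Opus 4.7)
The strategy is to substitute into $\int_\Omega u\,\Ds\psi\;dx$ the explicit representation of $u$ provided by Theorem~\ref{existence-weak2},
\[
u(x)=\int_\Omega G_\Omega(x,y)\;d\lambda(y)-\int_{\C\Omega}\Ds G_\Omega(x,y)\;d\mu(y)+\int_{\partial\Omega}M_\Omega(x,\theta)\;d\nu(\theta),
\]
exchange the order of integration via Fubini, and evaluate the three resulting inner integrals against $\Ds\psi$. The single ingredient that drives all three evaluations is the Green representation formula \eqref{green-repr} applied to the function $\psi$ itself (extended by $0$ to $\R^N\setminus\Omega$ and thus lying in $C^{2s+\ep}(\Omega)\cap L^\infty(\R^N)$): since $\psi$ vanishes on $\C\Omega$, \eqref{green-repr} reduces to
\[
\psi(z)=\int_\Omega G_\Omega(z,x)\,\Ds\psi(x)\;dx,\qquad z\in\Omega.
\]

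From here I would derive the three needed identities. Integrating the above against $d\lambda(z)$ after a first Fubini gives $\int_\Omega G_\Omega(x,y)\,d\lambda(y)\cdot\Ds\psi(x)$ integrated against $dx$ equal to $\int_\Omega\psi\;d\lambda$. For the $\mu$-piece, plugging the Poisson-kernel identity \eqref{green-pois} and swapping again with Fubini yields
\[
\int_\Omega\Ds G_\Omega(x,y)\,\Ds\psi(x)\;dx
=-C_{N,s}\int_\Omega\frac{\psi(z)}{|z-y|^{N+2s}}\;dz=\Ds\psi(y),\qquad y\in\C\Omega,
\]
the last equality being the very definition of $\Ds\psi$ at a point where $\psi$ vanishes. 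For the $\nu$-piece, dividing the basic Green identity by $\delta(y)^s$ and sending $y\to\theta\in\partial\Omega$ gives
\[
\int_\Omega M_\Omega(x,\theta)\,\Ds\psi(x)\;dx=\lim_{\substack{y\to\theta\\y\in\Omega}}\frac{\psi(y)}{\delta(y)^s}=0,
\]
because $\psi$ is compactly supported inside $\Omega$, so in particular $D_s\psi(\theta)=0$ for every $\theta\in\partial\Omega$. Summing the three contributions produces exactly the claimed distributional identity.

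The main technical hurdle will be justifying the four applications of Fubini and the limit interchange in the third identity. All of them are rooted in the same observation: $\psi\in C^\infty_c(\Omega)$ implies $\Ds\psi\in L^\infty(\R^N)$ with $|\Ds\psi(x)|\lesssim(1+|x|)^{-N-2s}$, while the kernels $G_\Omega$, $\Ds G_\Omega$, $M_\Omega$ admit the uniform bounds \eqref{est-poisson}--\eqref{est-green}, \eqref{alter-Gbound} and \eqref{chen-martin}, which in turn control the integrals against $|\lambda|$, $|\mu|$, $|\nu|$ under precisely the hypotheses that guaranteed the $L^1$-bound in Theorem~\ref{reg-l1sol}. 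For the limit interchange, the pointwise bound $G_\Omega(y,x)/\delta(y)^s\leq c|y-x|^{-N+s}$ coming from \eqref{alter-Gbound} provides, for $y$ in a fixed neighbourhood of $\theta$, an $L^1(\Omega)$-dominating function independent of $y$, so dominated convergence closes the argument without any additional assumption.
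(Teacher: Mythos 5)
Your route is correct in outline but genuinely different from the paper's. The paper does not substitute the representation formula at all: it observes that $\psi\notin\T(\Omega)$ (since $\Ds\psi|_\Omega$ is not compactly supported), truncates $f_k:=\eta_k\,\Ds\psi$ with bump functions $\eta_k\uparrow\chi_\Omega$, tests the weak formulation against the associated $\phi_k\in\T(\Omega)$, and passes to the limit $k\uparrow\infty$, the key point being that $\phi_k\delta^{-s}\to\psi\delta^{-s}$ uniformly so the boundary term $\int_{\partial\Omega}D_s\phi_k\,d\nu$ tends to $\int_{\partial\Omega}D_s\psi\,d\nu=0$. You instead plug the representation formula of Theorem \ref{existence-weak2} into $\int_\Omega u\,\Ds\psi$ and extend the identities \eqref{repr-lapl}--\eqref{repr-deriv} of Lemma \ref{testspace} from $\phi\in\T(\Omega)$ to arbitrary $\psi\in C^\infty_c(\Omega)$, via \eqref{green-repr} and \eqref{green-pois}. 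Both arguments hinge on the same fact ($D_s\psi\equiv0$ on $\partial\Omega$); yours buys a self-contained kernel computation with Fubini justified exactly as in Theorem \ref{reg-l1sol}, while the paper's buys cheaper limit interchanges because its auxiliary functions are built from compactly supported data.

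One justification in your plan is not correct as stated, though the conclusion survives. For the $\nu$-term you interchange $\lim_{y\to\theta}$ with $\int_\Omega\Ds\psi(x)\,G_\Omega(x,y)\delta(y)^{-s}\,dx$, claiming that $G_\Omega(x,y)\delta(y)^{-s}\le c\,|x-y|^{s-N}$ yields an $L^1(\Omega)$ dominating function \emph{independent of} $y$. It does not: for $y$ ranging in a neighbourhood of $\theta$ the supremum $\sup_y|x-y|^{s-N}$ is infinite for every $x$ near $\theta$, so no integrable majorant uniform in $y$ exists (this is precisely the difficulty the paper's truncation avoids, since there the analogous integrand is multiplied by a compactly supported $\psi$ and the kernel is uniformly bounded on its support). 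The repair is standard: either invoke the generalized dominated convergence theorem with the $y$-dependent majorants $g_y(x)=c|x-y|^{s-N}$, noting $\int_\Omega g_y\to\int_\Omega g_\theta<\infty$ as $y\to\theta$, or use Vitali's theorem together with the uniform bound $\sup_y\|G_\Omega(\cdot,y)\delta(y)^{-s}\|_{L^p(\Omega)}<\infty$ for $p<N/(N-s)$, which is exactly the computation in Lemma \ref{reg-t}. With that adjustment, and since the right-hand side of the interchanged expression is $\psi(y)/\delta(y)^s\equiv0$ near $\partial\Omega$, your argument closes correctly.
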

\begin{proof}
Consider a sequence ${\{\eta_k\}}_{k\in\N}$ of bump functions, such that
$0\leq\ldots\leq\eta_k\leq\eta_{k+1}\leq\ldots\leq 1$ and $\eta_k\uparrow\chi_\Omega$.
Define $f_k:=\eta_k\Ds\psi\in C^\infty_c(\Omega)$ and the corresponding $\phi_k\in \T(\Omega)$.
Then we know that
\[
\int_\Omega u\,f_k\ =\ \int_\Omega\phi_k\;d\lambda-\int_{\C\Omega}\Ds\phi_k\;d\mu+\int_{\partial\Omega}D_s\phi_k\:d\nu.
\]
Since $|f_k|\leq|\Ds\psi|<C$ in $\Omega$, the left-hand side converges to $\int_\Omega u\Ds\psi$ by dominated convergence.
In the same way $|\phi_k|\leq \int_\Omega G_\Omega(\cdot,y)|f_k(y)|dy\leq C\delta^s$ and
\[
|\Ds\phi_k(x)|\leq C_{N,s}\int_\Omega\frac{|\phi_k(y)|}{{|x-y|}^{N+2s}}\leq
C\int_\Omega\frac{\delta(y)^s}{{|x-y|}^{N+2s}}\leq C\delta(x)^{-s}\min\{1,\delta(x)^{-N-s}\},
\]
thus we have the convergence on the first and the second addend on the left-hand side.
Finally, we have that $\phi_k\,\delta^{-s}$ converges uniformly towards $\psi\,\delta^{-s}$ in $\Omega$,
so that $D_s\psi_k\to D_s\psi=0$ as $k\uparrow+\infty$; this, along with
\[
\left|D_s\phi_k(z)\right|=\left|\int_\Omega M_\Omega(x,z)\,f_k(x)\:dx\right|\leq
\int_\Omega M_\Omega(x,z)\,|\Ds\psi(x)|\:dx< C(N,\Omega,s)
\]
would prove the convergence
\[
\int_{\partial\Omega}D_s\phi_k\:d\nu\xrightarrow[\ k\uparrow+\infty\ ]{}\int_{\partial\Omega}D_s\psi\:d\nu=0
\]
completing the proof of the lemma. Indeed, by \eqref{alter-Gbound}, it holds
\[
\left|\phi_k(x)-\psi(x)\right|\leq\int_\Omega G_\Omega(x,y)(1-\eta_k(y))\left|\Ds\psi(x)\right|\:dy
\leq \delta(x)^s\left\|\Ds\psi\right\|_{L^\infty(\Omega)}\int_{\Omega_k}\frac{dy}{\left|x-y\right|^{N-s}}
\]
where $\Omega_k:=\{\eta_k<1\}\subset\Omega$ has measure converging to $0$,
if the sequence ${\{\eta_k\}}_{k\in\N}$ is properly chosen.
\end{proof}

\begin{lem}\label{regularity} Suppose $f\in L^\infty_{loc}(\Omega)$, and consider 
a $u\in L^1(\R^N,(1+|x|)^{-N-2s}dx)$ satisfying
\[
\Ds u\ =\ f\qquad\hbox{in a distributional sense }
\]
i.e. for any $\psi\in C^\infty_c(\Omega)$
\[
\int_\Omega u\Ds\psi\ =\ \int_{\Omega} f\psi-\int_{\C\Omega}u\Ds\psi.
\]
Then $u\in C^\beta(\Omega)$ for any $\beta\in [0,2s)$.
If $f\in C^\alpha_{loc}(\Omega)$ for some $\alpha>0$
such that $\alpha+2s\not\in\N$, then $u\in C^{2s+\alpha}_{loc}(\Omega)$.
\end{lem}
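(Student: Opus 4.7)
The plan is to reduce to the interior H\"older estimates of Silvestre \cite[Propositions 2.8 and 2.9]{silvestre}, which are stated for pointwise (classical) solutions of $\Ds w=g$ on a ball. Since our $u$ is only a distributional solution, the strategy will be to regularize $u$ by convolution, apply those estimates with constants independent of the regularization parameter, and then pass to the limit.

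Fix $x_0\in\Omega$ and $r>0$ with $\overline{B_{2r}(x_0)}\subset\Omega$, and let $\rho_\eps$ be a standard symmetric mollifier supported in $B_\eps$, with $\eps\in(0,r/2)$. The weighted $L^1$ hypothesis on $u$, together with the local doubling property of the weight $(1+|x|)^{-N-2s}$, yields that $u_\eps:=u*\rho_\eps\in C^\infty(\R^N)$ and that $\|u_\eps\|_{L^1(\R^N,(1+|x|)^{-N-2s}dx)}$ is bounded uniformly in $\eps$. Moreover $u_\eps\to u$ in $L^1_{loc}(\R^N)$ and in the weighted $L^1$ space.

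For any $\psi\in C^\infty_c(B_r(x_0))$, the function $\rho_\eps*\psi$ lies in $C^\infty_c(\Omega)$, so using the distributional equation $\Ds u=f$ in $\Omega$,
$$\int_{\R^N}u_\eps\,\Ds\psi\ =\ \int_{\R^N}u\,\Ds(\rho_\eps*\psi)\ =\ \int_\Omega f\,(\rho_\eps*\psi)\ =\ \int_{B_r(x_0)}f_\eps\,\psi,$$
where $f_\eps:=f*\rho_\eps$. Hence $\Ds u_\eps=f_\eps$ distributionally on $B_r(x_0)$, and in fact pointwise since $u_\eps$ is smooth. Because $f\in L^\infty_{loc}(\Omega)$, the family $\{f_\eps\}$ is uniformly bounded in $L^\infty(B_r(x_0))$; if in addition $f\in C^\alpha_{loc}(\Omega)$, it is uniformly bounded in $C^\alpha(B_r(x_0))$. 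Applying \cite[Propositions 2.8 and 2.9]{silvestre} to the smooth function $u_\eps$ produces the $\eps$-uniform bounds
$$\|u_\eps\|_{C^\beta(\overline{B_{r/2}(x_0)})}\ \leq\ C\bigl(\|f_\eps\|_{L^\infty(B_r(x_0))}+\|u_\eps\|_{L^1(\R^N,(1+|x|)^{-N-2s}dx)}\bigr)$$
for every $\beta\in[0,2s)$, and the analogous $C^{2s+\alpha}$ bound whenever $2s+\alpha\notin\N$ and $f\in C^\alpha_{loc}(\Omega)$. By Arzel\`a--Ascoli, a subsequence converges in $C^{\beta'}(\overline{B_{r/4}(x_0)})$ for every $\beta'<\beta$; the limit must coincide with $u$ because $u_\eps\to u$ in $L^1_{loc}$. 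As $x_0$ was arbitrary, this proves $u\in C^\beta_{loc}(\Omega)$ for every $\beta<2s$, and $u\in C^{2s+\alpha}_{loc}(\Omega)$ in the regular case.

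The main technical step will be the commutation identity $\Ds u_\eps=f_\eps$ at the distributional level: each integration by parts and each use of Fubini in the chain above has to be justified using the weighted integrability $u\in L^1(\R^N,(1+|x|)^{-N-2s}dx)$ together with the fact that $\Ds\psi$ decays as $|x|^{-N-2s}$ at infinity for $\psi\in C^\infty_c$. This is the only place where the precise assumption on $u$ enters in an essential way; once it is in place, the remainder of the argument is a standard compactness and a priori estimate procedure relying on \cite{silvestre}.
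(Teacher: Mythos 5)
Your overall architecture --- mollify, commute $\Ds$ with the convolution, apply interior estimates uniformly in $\eps$, pass to the limit by compactness --- is a sensible way to flesh out the paper's one-line proof, which simply says to argue as in \cite[Proposition 2.8]{silvestre}; the commutation identity $\Ds u_\eps=f_\eps$ in $B_r(x_0)$ and the identification of the limit are fine. The genuine gap is the a priori estimate you invoke. Propositions 2.8 and 2.9 of \cite{silvestre} bound $\|w\|_{C^{\beta}(B_{1/2})}$ (resp.\ $\|w\|_{C^{2s+\alpha}(B_{1/2})}$) in terms of $\|f\|_{L^\infty(B_1)}$ (resp.\ $\|f\|_{C^\alpha(B_1)}$) \emph{plus $\|w\|_{L^\infty(\R^N)}$}, not plus the weighted norm $\|w\|_{L^1(\R^N,(1+|x|)^{-N-2s}dx)}$. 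Your $u_\eps$ carries no $\eps$-uniform local $L^\infty$ bound: since $u$ is only assumed to lie in the weighted $L^1$ space, $\|u_\eps\|_{L^\infty(B_{2r}(x_0))}$ can blow up like $\eps^{-N}$, and assuming local boundedness of $u$ would be circular, because it is part of the conclusion. Hence the displayed ``$\eps$-uniform bound'' with only the tail norm on the right-hand side is precisely the point at issue and is asserted rather than proved; a cut-off decomposition $u_\eps=\eta u_\eps+(1-\eta)u_\eps$ does not repair it, since the compactly supported piece is still only controlled by $\|u_\eps\|_{L^\infty(B_{2r})}$.

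To close the gap within your scheme you need the interior estimate in ``tail form''. One route: on a ball $B\subset\subset\Omega$ write $u_\eps=w_\eps+h_\eps$, where $w_\eps$ is the Green potential in $B$ of $f_\eps$, which is bounded in $C^\beta$ (resp.\ $C^{2s+\alpha}$) uniformly in $\eps$ by the linear theory already developed in the paper; then $h_\eps$ is smooth, pointwise $s$-harmonic in $B$ and lies in $L^1(\R^N,(1+|x|)^{-N-2s}dx)$, so by the maximum principle for pointwise solutions (Lemma \ref{maxprinc} / Theorem \ref{pointwise}) it coincides in $B$ with the Poisson integral of its exterior values; averaging this representation over a range of radii removes the singularity of the Poisson kernel at the sphere and gives $\|h_\eps\|_{C^k}$ on a smaller ball controlled by $\|u_\eps\|_{L^1(\R^N,(1+|x|)^{-N-2s}dx)}$, uniformly in $\eps$. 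With such an estimate your compactness argument goes through; alternatively one adapts Silvestre's proof directly to the weighted class, which is what the paper's citation amounts to.
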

\begin{proof}
This can be proved as in \cite[Proposition 2.8]{silvestre}.
\end{proof}

\begin{cor}\label{weak-point} Take $f\in C^\alpha_{loc}(\Omega)$ for some $\alpha>0$
such that $\alpha+2s\not\in\N$ with
\[
\int_\Omega |f|\delta^s\ <\ +\infty,
\]
a Radon measure $\mu\in \mathcal{M}(\C\Omega)$ such that
\[
\int_{\C\Omega}\delta^{-s}\min\{1,\delta^{-N-s}\}\;d|\mu|\ <\ +\infty,
\]
a finite Radon measure $\nu\in\mathcal{M}(\partial\Omega)$ and $u:\R^N\to\R$ a weak $L^1$ solution to
\[
\left\lbrace\begin{aligned}
\Ds u &=\ f & \hbox{ in }\Omega \\
u &=\ \mu & \hbox{ in }\C\Omega \\
Eu &=\ \nu & \hbox{ on }\partial\Omega.
\end{aligned}\right.
\]
Then $\Ds u(x)=f(x)$ holds pointwisely for any $x\in\Omega$.
\end{cor}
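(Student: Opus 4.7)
The plan is to reduce Corollary \ref{weak-point} to the interior regularity result of Lemma \ref{regularity} by converting the exterior measure into a smooth source in $\Omega$. First I would apply Lemma \ref{weak-distr} to the weak $L^1$ solution $u$ (with $\lambda=f\,dx$) to obtain, for every $\psi\in C^\infty_c(\Omega)$,
\[
\int_\Omega u\,\Ds\psi\ =\ \int_\Omega f\psi\ -\ \int_{\C\Omega}\Ds\psi\;d\mu.
\]
For $\psi$ supported in a compact subset of $\Omega$ and $x\in\C\Omega$, the representation $\Ds\psi(x)=-C_{N,s}\int_\Omega\psi(y)|x-y|^{-N-2s}\,dy$ is elementary; plugging it into the external integral and applying Fubini rewrites the right-hand side as $\int_\Omega(f+g_\mu)\psi$, where
\[
g_\mu(y)\ :=\ C_{N,s}\int_{\C\Omega}\frac{d\mu(x)}{|x-y|^{N+2s}},\qquad y\in\Omega.
\]
The lower bound $|x-y|\ge\delta(y)$ for $x\in\C\Omega$ and $y\in\Omega$, combined with the integrability hypothesis $\int_{\C\Omega}\delta^{-s}\min\{1,\delta^{-N-s}\}\,d|\mu|<+\infty$ (which implies both $|\mu|(\{\delta<1\})<+\infty$ and $\int_{\{\delta\ge 1\}}\delta^{-N-2s}d|\mu|<+\infty$), shows that $g_\mu$ is well-defined on $\Omega$ and, by differentiating under the integral sign, that $g_\mu\in C^\infty_{loc}(\Omega)$.

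The identity now reads: $v:=u\chi_\Omega\in L^1(\R^N,(1+|x|)^{-N-2s}dx)$ satisfies $\Ds v=f+g_\mu$ distributionally in $\Omega$. Since $f\in C^\alpha_{loc}(\Omega)$ and $g_\mu\in C^\infty_{loc}(\Omega)$, the source $f+g_\mu$ belongs to $C^\alpha_{loc}(\Omega)$, and the assumption $\alpha+2s\notin\N$ places me squarely in the framework of Lemma \ref{regularity}. That lemma then yields $u\in C^{2s+\alpha}_{loc}(\Omega)$ and, because once $v$ is $C^{2s+\eps}$ near $x$ the distributional fractional Laplacian coincides with the pointwise one, the identity
\[
\Ds(u\chi_\Omega)(x)\ =\ f(x)+g_\mu(x),\qquad x\in\Omega.
\]

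Finally, I would compare this with $\Ds u(x)$ as intended in the statement, namely the principal value of $(u(x)-u(y))|x-y|^{-N-2s}$ integrated over $\R^N$ with the external values of $u$ read off from $\mu$. Splitting the integration over $\Omega$ and $\C\Omega$ and comparing with the analogous expression for $u\chi_\Omega$ (which carries zero extension outside $\Omega$) gives
\[
\Ds u(x)\ =\ \Ds(u\chi_\Omega)(x)\ -\ C_{N,s}\int_{\C\Omega}\frac{d\mu(y)}{|x-y|^{N+2s}}\ =\ \Ds(u\chi_\Omega)(x)-g_\mu(x),
\]
which, combined with the previous identity, yields $\Ds u(x)=f(x)$ for every $x\in\Omega$, as required. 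The hard part will be the transition from the measure $\mu$ to the smooth function $g_\mu$: the Fubini interchange and the differentiation under the integral have to be justified directly from the weak integrability condition on $\mu$, using the uniform lower bound $|x-y|\ge\delta(y)$ on compact subsets of $\Omega$. Once $g_\mu$ is in hand the remainder is a clean application of Lemma \ref{regularity} together with the elementary algebraic identity relating $\Ds u$ (with $\mu$-extension) and $\Ds(u\chi_\Omega)$ (with zero extension).
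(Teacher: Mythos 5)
Your proof is correct and follows essentially the same route as the paper, which simply combines Lemma \ref{weak-distr}, the interior regularity of Lemma \ref{regularity}, and the equivalence of distributional and pointwise fractional Laplacians for locally $C^{2s+\alpha}$ functions (Silvestre's Proposition 2.4). Your explicit reduction via $g_\mu(y)=C_{N,s}\int_{\C\Omega}|x-y|^{-N-2s}\,d\mu(x)$ and the zero extension $u\chi_\Omega$ is just a careful spelling-out of how the exterior measure datum is absorbed so that Lemma \ref{regularity} applies — a detail the paper's one-line proof leaves implicit.
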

\begin{proof} This is a combination of the previous Lemma
with \cite[Proposition 2.4]{silvestre}.
\end{proof}

\begin{lem}\label{point-weak} Take $f\in C^\alpha_{loc}(\Omega)$ for some $\alpha\in(0,1)$, 
$h\in C(\partial\Omega)$ and $u:\R^N\to\R$
a pointwise solution to
\begin{equation}\label{98}
\left\lbrace\begin{aligned}
\Ds u &=\ f &\hbox{ in }\Omega \\
u &=\ g &\hbox{ in }\C\Omega \\
Eu &=\ h &\hbox{ on }\partial\Omega.
\end{aligned}\right.
\end{equation}
If
\[
\int_\Omega |f|\delta^s\ <\ +\infty,\qquad \int_{\C\Omega}|g|\delta^{-s}\min\{1,\delta^{-N-s}\}\ <\ +\infty,
\qquad h\in C(\partial\Omega)
\]
then $u$ is also a weak $L^1$ solution to the same problem.
\end{lem}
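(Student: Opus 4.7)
The plan is to apply Proposition \ref{intparts-prop} directly to the pointwise solution $u$ with test function $v=\phi\in\T(\Omega)$. Since such a $\phi$ vanishes in $\C\Omega$ and satisfies $E\phi=0$ with $\Ds\phi|_\Omega=\psi\in C^\infty_c(\Omega)$, that formula reads
\[
\int_\Omega u\,\Ds\phi\ =\ \int_\Omega \Ds u\cdot\phi-\int_{\C\Omega}u\,\Ds\phi+\int_{\partial\Omega}Eu\cdot D_s\phi\,d\mathcal{H},
\]
and plugging in $\Ds u=f$, $u|_{\C\Omega}=g$, $Eu=h$ yields exactly the weak formulation \eqref{weakdefi-eq}. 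So everything reduces to checking the hypotheses of Proposition \ref{intparts-prop} for our $u$.

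First, $\Ds u=f\in C^\alpha_{\mathrm{loc}}(\Omega)$ pointwise, so Lemma \ref{regularity} (equivalently \cite[Proposition 2.8]{silvestre}) gives $u\in C^{2s+\alpha}_{\mathrm{loc}}(\Omega)\subset C^{2s+\eps}(\Omega)$. The outer integrability in \eqref{gg} is immediate from $u=g$ on $\C\Omega$ and the hypothesis on $g$, since on $\C\Omega$ one has $(1+|x|^{N+2s})^{-1}\leq C\min\{1,\delta(x)^{-N-s}\}$. That $u\in L^1(\Omega)$ will follow a posteriori from the representation discussed below, together with Theorem \ref{reg-l1sol}.

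The delicate point is the boundary continuity $\delta^{1-s}u\in C(\overline{\Omega})$. I would address it by comparing $u$ with the Green-kernel representative
\[
\tilde u(x)\ =\ \int_\Omega G_\Omega(x,y)\,f(y)\,dy-\int_{\C\Omega}\Ds G_\Omega(x,y)\,g(y)\,dy+\int_{\partial\Omega}M_\Omega(x,\theta)\,h(\theta)\,d\mathcal{H}(\theta),
\]
for $x\in\Omega$, extended by $\tilde u=g$ on $\C\Omega$. A piece-by-piece analysis as in the proof of Theorem \ref{pointwise}, using the kernel estimates \eqref{est-poisson}--\eqref{est-green}, the integrability of $f\,\delta^s$ and of $g\,\delta^{-s}\min\{1,\delta^{-N-s}\}$, and Lemma \ref{Eu} for the third summand, shows that $\tilde u\in C^{2s+\alpha}_{\mathrm{loc}}(\Omega)\cap L^1(\Omega)$, that $\tilde u$ satisfies \eqref{gg}, and that $E\tilde u=h$. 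By Proposition \ref{strsol-weaksol}, $\tilde u$ is also a weak $L^1$ solution, and by Corollary \ref{weak-point} it is a pointwise solution of the same problem.

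The main obstacle is then to establish $u=\tilde u$ in $\Omega$. The difference $w=u-\tilde u\in C^{2s+\alpha}_{\mathrm{loc}}(\Omega)$ is pointwise $s$-harmonic on $\Omega$, vanishes on $\C\Omega$, and has $Ew\equiv 0$. One route is to invoke a Martin-type representation: any $s$-harmonic $w$ vanishing on $\C\Omega$ can be written as $w(x)=\int_{\partial\Omega}M_\Omega(x,\theta)\,d\nu(\theta)$ for some Radon measure $\nu$, and $Ew\equiv 0$ then forces $\nu\equiv 0$. A more self-contained alternative is to apply the Green's representation \eqref{representation} on an exhaustion $A_n\uparrow\Omega$ of smooth subdomains---on each of which $u$ is bounded and sufficiently regular up to $\partial A_n$---and to show that the boundary-layer integral $-\int_{\Omega\setminus A_n}\Ds G_{A_n}(x,y)\,u(y)\,dy$ converges to $\int_{\partial\Omega}M_\Omega(x,\theta)\,h(\theta)\,d\mathcal{H}(\theta)$ as $n\to\infty$, exploiting $Eu=h$ and the asymptotics of $\Ds G_{A_n}$ to those of the Martin kernel. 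Either way, once $u=\tilde u$ is proved, Proposition \ref{intparts-prop} applies to $u$ and the weak formulation follows for every $\phi\in\T(\Omega)$.
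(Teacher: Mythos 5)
Your overall architecture is the same as the paper's: build the Green/Poisson/Martin representative $\tilde u$ of the data (the paper calls it $v$, the unique weak solution from Theorem \ref{existence-weak2}, upgraded to a pointwise solution by Corollary \ref{weak-point}), and reduce everything to the identification $u=\tilde u$. (Your extra detour through Proposition \ref{intparts-prop} is unnecessary once $u=\tilde u$ is known, since $\tilde u$ is already a weak solution, but that is harmless.) The genuine gap is precisely at the identification step, which is the heart of the lemma. Your route (a) invokes a Martin-type representation for the difference $w=u-\tilde u$, but $w$ is a \emph{signed} $s$-harmonic function, and the representation $w=\int_{\partial\Omega}M_\Omega(\cdot,\theta)\,d\nu(\theta)$ with a Radon measure is only available for \emph{nonnegative} $s$-harmonic functions (this is Bogdan's theorem, which the paper cites); no such decomposition is justified for signed $w$, and you cannot fall back on the plain maximum principle (Lemma \ref{maxprinc}) either, because $Ew=0$ only gives $w=o(\delta^{s-1})$, so $w$ need not be bounded or continuous up to $\partial\Omega$. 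Your route (b) — exhaustion by subdomains $A_n\uparrow\Omega$ and convergence of the layer term $-\int_{\Omega\setminus A_n}\Ds G_{A_n}(x,y)\,u(y)\,dy$ to the Martin integral of $h$ — is a legitimate-looking strategy but you only assert the convergence, which is exactly the hard point (it requires quantitative control of how $u$ attains its weighted trace and of the Poisson kernels of the $A_n$), so as written it does not close the argument.

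The paper's device to fix this is worth noting: instead of representing the signed difference, it sets $\Omega^+:=\{x\in\Omega: u(x)>v(x)\}$ and applies Bogdan's decomposition to the function $u-v$ \emph{relative to the subdomain} $\Omega^+$, where positivity is available. The relative singular trace $E_{\Omega^+}(u-v)$ vanishes on $\partial\Omega^+$ — on $\partial\Omega^+\cap\Omega$ by interior continuity of $u-v$, and on $\partial\Omega^+\cap\partial\Omega$ because $Eu=Ev=h$ — while $u-v\le 0$ on $\C\Omega^+$; the representation then forces $u-v\le0$ in $\Omega^+$, so $\Omega^+=\emptyset$, and the symmetric argument gives $u=v$. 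If you want to salvage your proof, replace route (a) by this localization to the level set (or carry out the exhaustion estimates of route (b) in full); otherwise the claim $u=\tilde u$ remains unproved.
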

\begin{proof}
We refer to Theorem \ref{existence-weak2} for the existence and uniqueness of a weak $L^1$ solution $v$ to problem \eqref{98}.
By Corollary \ref{weak-point}, $v\in C^{2s+\alpha}_{loc}(\Omega)$ 
and $\Ds v = f$ holds pointwisely in $\Omega$. So
\begin{equation*}
\left\lbrace\begin{aligned}
\Ds (u-v) &=\ 0 &\hbox{ in }\Omega \\
u-v &=\ 0 &\hbox{ in }\C\Omega \\
E(u-v) &=\ 0 &\hbox{ on }\partial\Omega.
\end{aligned}\right.
\end{equation*}
in a pointwise sense. In particular, $u-v\in C(\Omega)$ since harmonic functions are smooth.
Define $\Omega^+:=\{x\in\Omega:u(x)>v(x)\}$: $u-v$ is a nonnegative $s$-harmonic function
and, by \cite[Lemma 5 and Theorem 1]{bogdan-repr-sharm}, it decomposes into the sum of the $s$-harmonic function
induced by the $E_{\Omega^+}(u-v)$ trace and the one by its values on $\C\Omega^+$.
But $E_{\Omega^+}(u-v)=0$ on $\partial\Omega^+$ as it is implied 
by the singular trace datum in \eqref{98} and the continuity on $\partial\Omega^+\cap\Omega$,
while $u-v\leq 0$ in $\C\Omega^+$. This yields $\Omega^+=\emptyset$ and $v\geq u$ in $\Omega$.
Repeating the argument we deduce also $u\leq v$ and this completes the proof of the lemma.
\end{proof}



\section{variational weak solutions vs. weak \texorpdfstring{$L^1$}{L1} solutions}\label{weakvsl1}

\begin{defi} Given $f\in L^\infty(\Omega)$, a variational weak solution of
\begin{equation}\label{345435345}
\left\lbrace\begin{aligned}
\Ds u &= f & & \hbox{in }\Omega \\
u &= 0 & & \hbox{in }\C\Omega \\
\end{aligned}\right.
\end{equation}
is a function $u\in H^s(\R^N)$ such that $u\equiv 0$ in $\C\Omega$ and
for any other $v\in H^s(\R^N)$ such that $v\equiv 0$ in $\C\Omega$ it holds
\[
\int_{\R^N}{(-\lapl)}^{s/2}u\,{(-\lapl)}^{s/2}v\ =\ \int_\Omega f\,v.
\]
\end{defi}

\begin{lem}\label{644351} $\T(\Omega)\subset H^s(\R^N)$.
\end{lem}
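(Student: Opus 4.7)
The plan is to verify that any $\phi\in\T(\Omega)$ lies in $L^2(\R^N)$ (immediate, since $\phi\in C^s(\R^N)$ is bounded with support contained in $\overline{\Omega}$) and has finite Gagliardo seminorm
\[
[\phi]^2_{H^s(\R^N)}=\iint_{\R^N\times\R^N}\frac{|\phi(x)-\phi(y)|^2}{|x-y|^{N+2s}}\,dx\,dy.
\]
Because $\phi\equiv 0$ on $\C\Omega$, only the regions $\Omega\times\Omega$ and $\Omega\times\C\Omega$ (together with its mirror) contribute, so the seminorm splits as $I+2\,II$ with
\[
I=\iint_{\Omega\times\Omega}\frac{|\phi(x)-\phi(y)|^2}{|x-y|^{N+2s}}\,dx\,dy,\qquad II=\int_\Omega|\phi(x)|^2\int_{\C\Omega}\frac{dy}{|x-y|^{N+2s}}\,dx,
\]
and it suffices to bound $I$ and $II$ separately.

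The two regularity inputs I plan to use are (a) the boundary H\"older bound $|\phi(x)|\le C\,\delta(x)^s$ for $x\in\Omega$, immediate from $\phi\in C^s(\R^N)$ and $\phi\equiv 0$ on $\C\Omega$ in view of \cite[Proposition 1.1]{rosserra} (already invoked in the text); and (b) the interior gradient estimate $|\nabla\phi(x)|\le C\,\delta(x)^{s-1}$ for $x\in\Omega$, obtained by rescaling $\phi$ on the ball $B_{\delta(x)/2}(x)$ at scale $r=\delta(x)/2$ and applying the interior Schauder estimates \cite[Propositions 2.8 and 2.9]{silvestre} to the rescaled function $\tilde\phi(y)=\phi(x+ry)/r^s$, which has $\Ds\tilde\phi$ uniformly bounded in $B_1$ and polynomial growth of order $|y|^s$ at infinity (so it falls within the hypotheses of interior regularity).

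With these at hand, $II$ is immediate: for $x\in\Omega$ one has $\int_{\C\Omega}|x-y|^{-N-2s}\,dy\le\int_{|h|\ge\delta(x)}|h|^{-N-2s}\,dh\le C\delta(x)^{-2s}$, so $II\le C\int_\Omega\delta(x)^{2s}\,\delta(x)^{-2s}\,dx=C|\Omega|$. For $I$ I would split $\Omega\times\Omega$ along the threshold $|x-y|=\tfrac12\min\{\delta(x),\delta(y)\}$: on the near-diagonal region the whole segment $[x,y]$ lies at distance at least $\delta(x)/2$ from $\partial\Omega$, so (b) together with the mean value theorem yields $|\phi(x)-\phi(y)|\le C\delta(x)^{s-1}|x-y|$, and the integral is controlled by
\[
C\int_\Omega\delta(x)^{2s-2}\int_{|h|<\delta(x)/2}\frac{dh}{|h|^{N+2s-2}}\,dx\le C|\Omega|.
\]
On the complementary region the reverse inequality $|x-y|\ge c\,\max\{\delta(x),\delta(y)\}$ holds (after bounding $\delta(x)$ and $\delta(y)$ against each other via the triangle inequality), so combining $|\phi(x)-\phi(y)|^2\le 2|\phi(x)|^2+2|\phi(y)|^2$, (a), and Fubini produces a symmetric bound of the form $C\int_\Omega\delta(\cdot)^{2s}\,\delta(\cdot)^{-2s}\le C|\Omega|$.

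The main obstacle is securing estimate (b): the boundary decay (a) alone is insufficient, since $|\phi(x)-\phi(y)|^2/|x-y|^{N+2s}\le C/|x-y|^N$ is not integrable near the diagonal, so the argument really needs the additional gradient bound. Deriving (b) requires a careful scaling argument coupled with interior Schauder estimates, as sketched above. Once (b) is secured, the splitting of $I$ and the computation of $II$ are routine Fubini-type estimates, and summing the contributions yields $[\phi]^2_{H^s(\R^N)}<+\infty$, whence $\phi\in H^s(\R^N)$.
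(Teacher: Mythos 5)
Your argument is correct, but it follows a genuinely different route from the one in the text. The paper's proof is a two-line functional-analytic one: it observes that ${(-\lapl)}^{s/2}\phi$ is continuous and decays like $|x|^{-N-s}$ at infinity, hence lies in $L^2(\R^N)$, and then invokes \cite[Proposition 3.6]{hitchhiker}, which identifies the Gagliardo seminorm with $\|{(-\lapl)}^{s/2}\phi\|_{L^2(\R^N)}$ up to a constant. You instead estimate the Gagliardo seminorm directly from the two pointwise bounds $|\phi|\leq C\delta^s$ and $|\grad\phi|\leq C\delta^{s-1}$, splitting $\Omega\times\Omega$ at the scale $|x-y|\asymp\min\{\delta(x),\delta(y)\}$; all of your splittings and the resulting integrals are fine (in particular the reverse bound $|x-y|\geq c\,\max\{\delta(x),\delta(y)\}$ on the far region does follow from the triangle inequality as you say, and $2-2s>0$ makes the near-diagonal integral converge). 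The trade-off is that your route needs the interior gradient estimate (b), which the paper's route avoids entirely; conversely, your argument is more self-contained and quantitative, giving an explicit bound $\|\phi\|_{H^s}\leq C(\Omega,s)\|\psi\|_{C^\alpha}$ rather than mere membership. One point in (b) deserves care: the rescaled function $\tilde\phi(y)=\phi(x+ry)/r^s$ is not globally bounded (it grows like $|y|^s$), while the interior estimates of \cite[Propositions 2.8 and 2.9]{silvestre} are stated for bounded functions; you must either truncate ($\tilde\phi\chi_{B_2}$ plus a tail whose contribution to $\Ds$ in $B_1$ is controlled by $\int_{\C B_2}|\tilde\phi(y)||y|^{-N-2s}dy<\infty$, since $s<2s$) or, more simply, quote the interior estimates of \cite{rosserra} (the lemmas behind their Theorem 1.2), which yield exactly $[\phi]_{C^\beta(B_{\delta(x)/2}(x))}\leq C\delta(x)^{s-\beta}$ and hence $|\grad\phi|\leq C\delta^{s-1}$. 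With that reference made precise, your proof is complete.
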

\begin{proof}
Consider $\phi\in\T(\Omega)$. The fractional Laplacian ${(-\lapl)}^{s/2}\phi$
is a continuous function decaying like $|x|^{-N-s}$ at infinity.
So $\|{(-\lapl)}^{s/2}\phi\|_{L^2(\R^N)}<\infty$ and we can apply \cite[Proposition 3.6]{hitchhiker}.
\end{proof}

\begin{prop} Let $f\in L^\infty(\Omega)$. Let $u$ be a variational
weak solution of \eqref{345435345}. Then it is also a weak $L^1$ solution
to the problem
\[
\left\lbrace\begin{aligned}
\Ds u &= f & & \hbox{in }\Omega \\
u &= 0 & & \hbox{in }\C\Omega \\
Eu &= 0 & & \hbox{on }\partial\Omega.
\end{aligned}\right.
\]
\end{prop}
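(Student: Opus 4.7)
The plan is to show that any $\phi\in\T(\Omega)$ is an admissible test function in the variational formulation and then to identify the resulting bilinear form with the weak $L^1$ pairing. All the ingredients are essentially already available.

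First, I would establish the regularity prerequisites: since $u\in H^s(\R^N)$ and $\Omega$ is bounded, $u\in L^2(\Omega)\subset L^1(\Omega)$. Likewise, by Lemma~\ref{644351}, any $\phi\in\T(\Omega)$ lies in $H^s(\R^N)$, and by definition of $\T(\Omega)$ it vanishes on $\C\Omega$. So $\phi$ is admissible in the definition of variational solution, yielding
\[
\int_{\R^N}(-\lapl)^{s/2}u\,(-\lapl)^{s/2}\phi\ =\ \int_\Omega f\,\phi.
\]

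Next, the central step: show that for $u\in H^s(\R^N)$ and $\phi\in\T(\Omega)$,
\[
\int_{\R^N}(-\lapl)^{s/2}u\,(-\lapl)^{s/2}\phi\ =\ \int_{\R^N}u\,\Ds\phi.
\]
This is a Parseval/duality identity. By \eqref{def2-intro}, $(-\lapl)^{s/2}$ and $\Ds$ act on the Fourier side by multiplication by $|\xi|^s$ and $|\xi|^{2s}$, respectively, so the identity reduces to the obvious symmetry $|\xi|^s\widehat u\cdot|\xi|^s\widehat\phi=\widehat u\cdot|\xi|^{2s}\widehat\phi$ via Plancherel. To invoke Plancherel legitimately, I need both integrals to make sense in $L^2$: for the left-hand side, $(-\lapl)^{s/2}u$ and $(-\lapl)^{s/2}\phi$ are both in $L^2(\R^N)$ because $u,\phi\in H^s(\R^N)$. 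For the right-hand side, $u\in L^2(\R^N)$ is immediate (it is supported in $\Omega$ and lies in $H^s$), while $\Ds\phi\in L^2(\R^N)$ is again clear from the Fourier characterization $\Ds\phi=(-\lapl)^{s/2}[(-\lapl)^{s/2}\phi]$ with $(-\lapl)^{s/2}\phi\in H^s(\R^N)$, so that $\Ds\phi\in L^2(\R^N)$.

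Finally, since $u\equiv 0$ on $\C\Omega$, the identity collapses to
\[
\int_\Omega u\,\Ds\phi\ =\ \int_\Omega f\,\phi,
\]
which is precisely the weak $L^1$ equation \eqref{weakdefi-eq} with $\lambda=f\,dx$, $\mu=0$, $\nu=0$. Combined with $u\in L^1(\Omega)$, this says $u$ is the weak $L^1$ solution of the stated Dirichlet problem.

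The only point that requires a little care is the Parseval step, making sure the Fourier argument applies; this is essentially standard once one notes that $\phi\in H^s(\R^N)$ together with $\Ds\phi\in L^\infty(\Omega)\cap L^1(\R^N)$ (the latter two properties already established in the proof of Lemma~\ref{weak-distr} and in the discussion following Lemma~\ref{testspace}) give $\Ds\phi\in L^2(\R^N)$, so no regularization is needed.
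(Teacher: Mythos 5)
Your overall route is exactly the paper's: test the variational identity against $\phi\in\T(\Omega)$, which is legitimate by Lemma~\ref{644351}, and identify $\int_{\R^N}{(-\lapl)}^{s/2}u\,{(-\lapl)}^{s/2}\phi$ with $\int_\Omega u\,\Ds\phi$, whence \eqref{weakdefi-eq} with $\lambda=f\,dx$, $\mu=0$, $\nu=0$. The conclusion and the structure are fine.

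The one point to repair is your justification of the Parseval step. You assert $\Ds\phi\in L^2(\R^N)$, first via ``$(-\lapl)^{s/2}\phi\in H^s(\R^N)$'' (i.e.\ $\phi\in H^{2s}(\R^N)$) and then via ``$\Ds\phi\in L^\infty(\Omega)\cap L^1(\R^N)$''. Neither argument works when $s\ge 1/2$: a function $\phi\in\T(\Omega)$ behaves like $\delta^s$ at $\partial\Omega$ and is only $C^s$ across it, so $\phi\notin H^{2s}(\R^N)$ in that range; correspondingly $\Ds\phi\sim\delta^{-s}$ in $\C\Omega$ near $\partial\Omega$ (see \eqref{repr-lapl} and \eqref{est-pois2}), which is integrable but not square-integrable there for $s\ge1/2$, and $L^\infty(\Omega)\cap L^1(\R^N)$ does not imply $L^2(\R^N)$. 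The identity itself is still true, but it should be justified differently: either read $\Ds\phi$ as an element of $H^{-s}(\R^N)$ and use the duality pairing $\langle\Ds\phi,u\rangle_{H^{-s},H^s}=\int_{\R^N}{(-\lapl)}^{s/2}u\,{(-\lapl)}^{s/2}\phi$, observing that since $u$ vanishes a.e.\ in $\C\Omega$ and $\Ds\phi|_\Omega=\psi\in C^\infty_c(\Omega)$ the pairing reduces to $\int_\Omega u\,\psi$; or approximate $u$ in $H^s(\R^N)$ by $u_k\in C^\infty_c(\Omega)$ (density holds since $u\in H^s(\R^N)$ vanishes outside the $C^{1,1}$ domain $\Omega$), apply Plancherel to each $u_k$, and pass to the limit using $u_k\to u$ in $L^2(\Omega)$ together with the boundedness of $\psi$ on the left and $u_k\to u$ in $H^s$ on the right. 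With that substitution your argument matches the paper's proof.
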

\begin{proof}
Consider $\phi\in\T(\Omega)$:
\[
\int_\Omega u\Ds\phi=\int_{\R^N}{(-\lapl)}^{s/2}u\,{(-\lapl)}^{s/2}\phi=\int_\Omega f\phi.
\]
where we have used Lemma \ref{644351} on $\phi$.
\end{proof}

\begin{prop} Let $f\in L^\infty(\Omega)$. Let $u$ be a weak $L^1$ solution
to the problem
\[
\left\lbrace\begin{aligned}
\Ds u &= f & & \hbox{in }\Omega \\
u &= 0 & & \hbox{in }\C\Omega \\
Eu &= 0 & & \hbox{on }\partial\Omega.
\end{aligned}\right.
\]
Then it is also a variational
weak solution of \eqref{345435345}. 
\end{prop}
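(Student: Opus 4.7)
The plan is to argue by uniqueness, reducing this converse to the previous proposition. Since $f \in L^\infty(\Omega) \subset L^2(\Omega)$, the bilinear form $\mathcal{B}(v,w) = \int_{\R^N} {(-\lapl)}^{s/2}v \, {(-\lapl)}^{s/2}w$ is coercive and continuous on the Hilbert space $H^s_0(\Omega) := \{v \in H^s(\R^N) : v \equiv 0 \text{ in } \C\Omega\}$, and the linear functional $v \mapsto \int_\Omega f v$ is continuous on it (via the fractional Sobolev embedding). By Lax--Milgram, there exists a unique $\tilde{u} \in H^s_0(\Omega)$ such that $\mathcal{B}(\tilde{u},v) = \int_\Omega f v$ for every $v \in H^s_0(\Omega)$: this is by definition a variational weak solution of \eqref{345435345}.

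Next, I would verify that $\tilde{u}$ is a weak $L^1$ solution of the same problem (with $Eu = 0$), so that the previous proposition applies. The $L^1$ membership is immediate from $\tilde{u} \in H^s_0(\Omega) \subset L^2(\Omega) \subset L^1(\Omega)$. For the boundary trace $E\tilde{u} = 0$, I would use that variational weak solutions with $L^\infty$ data are classical in the interior and enjoy the boundary regularity of \cite[Proposition 1.1]{rosserra}: specifically $\tilde{u} \in C^s(\R^N)$ with $|\tilde{u}(x)| \leq C\,\|f\|_{L^\infty(\Omega)}\,\delta(x)^s$. Since $\int_{\partial\Omega} M_\Omega(x,\theta)\,d\mathcal{H}(\theta) \asymp \delta(x)^{s-1}$ by \eqref{chen-martin}, this forces
\[
\frac{\tilde{u}(x)}{\int_{\partial\Omega} M_\Omega(x,\theta)\,d\mathcal{H}(\theta)}\ =\ O(\delta(x))\ \xrightarrow[x\to\partial\Omega]{}\ 0,
\]
hence $E\tilde{u} \equiv 0$ on $\partial\Omega$. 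Applying the previous proposition, $\tilde{u}$ is a weak $L^1$ solution to the problem with data $(f,0,0)$.

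Finally, by the uniqueness part of Theorem \ref{existence-weak2}, the weak $L^1$ solution to
$$
\Ds u = f\ \text{in }\Omega,\quad u=0\ \text{in }\C\Omega,\quad Eu=0\ \text{on }\partial\Omega
$$
is unique, so $u = \tilde{u}$ almost everywhere in $\Omega$ (and both vanish in $\C\Omega$). Therefore $u \in H^s(\R^N)$ and satisfies the variational identity, proving the statement. The only delicate point in this plan is the justification of $E\tilde{u} = 0$ for the variational solution, which is handled by invoking the $O(\delta^s)$ sharp boundary estimate of \cite{rosserra}; everything else is a straightforward combination of Lax--Milgram, the previous proposition, and the uniqueness from Theorem \ref{existence-weak2}.
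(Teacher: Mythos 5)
Your argument is correct and follows essentially the same route as the paper: take the variational solution (whose Lax--Milgram existence the paper leaves implicit), invoke the previous proposition to see it is a weak $L^1$ solution, and conclude $u=\tilde u$ by the uniqueness in Theorem \ref{existence-weak2}. The only remark is that your separate verification of $E\tilde u=0$ through the $O(\delta^s)$ bound of \cite{rosserra} is superfluous: the weak $L^1$ notion is defined by the test-function identity \eqref{weakdefi}, and the previous proposition already yields that identity (boundary condition included) directly from the variational identity together with $\T(\Omega)\subset H^s(\R^N)$.
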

\begin{proof}
Call $w$ the variational weak solution of \eqref{345435345}. 
By the previous Lemma, $w$ is also a weak $L^1$ solution.
We must conclude $u=w$ by the uniqueness of a weak $L^1$ solution.
\end{proof}

\section{traces}\label{traces}

With Section \ref{bblin-sec} and Lemma \ref{Eu} we provided some facts on the boundary 
behaviour of solutions to linear problems with some assumptions on the data.
In this section we recover the boundary behaviour in presence 
of rough data. We use the notion of weak trace introduced by Ponce \cite[Proposition 3.5]{poncebook}.

\begin{prop} Let $\lambda\in\mathcal{M}(\Omega)$ be a Radon measure such that
\begin{equation}\label{564}
\int_\Omega \delta^s\:d|\lambda|\ <\ +\infty.
\end{equation}
Then the weak solution to
\[
\left\lbrace\begin{aligned}
\Ds u &= \lambda & & \hbox{in }\Omega \\
u &= 0 & & \hbox{in }\C\Omega \\
Eu &= 0 & & \hbox{on }\partial\Omega
\end{aligned}\right.
\]
satisfies
\begin{equation}
\lim_{\eta\downarrow 0}\frac 1\eta\int_{\{\delta(x)<\eta\}}\delta(x)^{1-s}\, u(x)\:dx
\ =\ 0.
\end{equation}
\end{prop}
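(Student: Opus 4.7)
The plan is to represent $u$ via the Green function, apply Fubini to reduce the statement to an integral against $\lambda$, and then conclude by dominated convergence once a uniform bound and a pointwise limit are established for the resulting kernel.

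By Theorem~\ref{existence-weak2}, the weak solution $u$ admits the representation
\[
u(x)\ =\ \int_\Omega G_\Omega(x,y)\;d\lambda(y),\qquad x\in\Omega.
\]
Writing $\Omega_\eta=\{\delta<\eta\}$ and applying Fubini, I would obtain
\[
\frac 1\eta\int_{\Omega_\eta}\delta(x)^{1-s}u(x)\;dx
\ =\ \int_\Omega K_\eta(y)\;d\lambda(y),\qquad
K_\eta(y)\ :=\ \frac 1\eta\int_{\Omega_\eta}\delta(x)^{1-s}G_\Omega(x,y)\;dx.
\]
The proof then reduces to proving (i) $K_\eta(y)\to 0$ pointwise for each $y\in\Omega$ and (ii) the uniform bound $K_\eta(y)\le C\delta(y)^s$ for $y\in\Omega$ and $\eta\in(0,\eta_0]$, since then \eqref{564} and Lebesgue's theorem conclude.

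For both (i) and (ii) I would use the sharp Green function bound \eqref{alter-Gbound},
\[
G_\Omega(x,y)\ \le\ c\,\delta(x)^s\,\frac{|x-y|^s\wedge\delta(y)^s}{|x-y|^N},
\]
and split $\Omega_\eta=A\cup B$ with $A=\Omega_\eta\cap\{|x-y|\le\delta(y)\}$ and $B=\Omega_\eta\cap\{|x-y|>\delta(y)\}$. On $A$ one has $G_\Omega\le c\delta(x)^s|x-y|^{s-N}$; on $B$, $G_\Omega\le c\delta(x)^s\delta(y)^s|x-y|^{-N}$. For the $A$-contribution, if $\delta(y)\ge 2\eta$ then $A\cap\Omega_\eta$ sits in the annulus $\{\delta(y)-\eta<|x-y|\le\delta(y)\}$ of width $\eta$, yielding
\[
\int_A\delta(x)^{1-s}G_\Omega(x,y)\;dx\ \le\ C\eta^2\,\delta(y)^{s-1};
\]
if $\delta(y)<2\eta$ then $A\cap\Omega_\eta\subset B(y,\delta(y))$ and the bound $C\eta\,\delta(y)^s$ follows. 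In both regimes $K_\eta|_A\le C\delta(y)^s$, and when $\eta<\delta(y)/2$ one even gets $K_\eta|_A\le C\eta\delta(y)^{s-1}\to 0$.

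The hard part is the $B$-contribution, where I have to prove
\[
\int_{B\cap\Omega_\eta}\frac{\delta(x)}{|x-y|^N}\;dx\ \le\ C\eta
\]
with $C$ independent of $y\in\Omega$ and $\eta\in(0,\eta_0]$. Using $C^{1,1}$ regularity of $\partial\Omega$ and the coarea formula, I would parametrise $\{\delta=t\}$ through the inward normal near the projection $\theta^*$ of $y$. Writing $x\mapsto(\theta',t)$ in a local chart around $\theta^*$, with $\theta'\in\R^{N-1}$ and $|x-y|^2\simeq|\theta'|^2+(t-\delta(y))^2$, the constraint $|x-y|>\delta(y)$ becomes $|\theta'|^2>\max\{0,\,2t\delta(y)-t^2\}=:a(t)^2$. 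A direct computation of
\[
\int_{|\theta'|>a(t)}\left(|\theta'|^2+(t-\delta(y))^2\right)^{-N/2}d\theta'\ \le\ \frac{C}{|t-\delta(y)|+a(t)}
\]
followed by integration in $t\in(0,\eta)$ of $t\cdot[|t-\delta(y)|+a(t)]^{-1}$ yields the bound $C\eta$ after distinguishing the three regimes $t<\delta(y)$, $\delta(y)\le t\le 2\delta(y)$ and $t>2\delta(y)$ (the key observation being that $\int_0^\eta t\,(|t-\delta(y)|+a(t))^{-1}dt\le C\eta$ in each regime, using in particular the elementary inequality $\delta(y)\log(\eta/\delta(y))\le C\eta$ when $\delta(y)<\eta$). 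The remainder of the integral, coming from $x$ whose projection lies outside the local chart, contributes a negligible $O(\eta^2)$ term since there $|x-y|$ is bounded below by a constant. This gives $K_\eta|_B\le C\delta(y)^s$, and for $\eta<\delta(y)/2$ the same computation yields $K_\eta|_B\le C\eta\delta(y)^{s-1}\to 0$.

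Combining the $A$ and $B$ estimates proves (i) and (ii). Since $\delta^s\in L^1(d|\lambda|)$ by \eqref{564}, the dominated convergence theorem yields $\int K_\eta\,d\lambda\to 0$, which is the claim. The principal technical difficulty is exactly the uniform bound on the $B$-contribution for $y$ close to $\partial\Omega$, where the constraint $|x-y|>\delta(y)$ is doing delicate work to avoid a logarithmic divergence in the coarea integral.
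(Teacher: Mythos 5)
Your argument is correct, and its skeleton -- Green representation from Theorem~\ref{existence-weak2}, Fubini, a bound on the kernel $K_\eta(y)=\eta^{-1}\int_{\{\delta<\eta\}}\delta(x)^{1-s}G_\Omega(x,y)\,dx$, then passage to the limit against $d|\lambda|$ -- is the same as the paper's; where you genuinely diverge is in the proof of the kernel bound and in how the limit is closed. The paper fixes an auxiliary exponent $\sigma\in(0,s)$ and, when $\delta(y)\geq\eta$, writes $\delta(x)^{1-s}\leq\eta^{1+\sigma}\delta(x)^{-s-\sigma}$ on the strip and recognises $\int_\Omega G_\Omega(\cdot,y)\,\delta^{-s-\sigma}$ as the solution with right-hand side $\delta^{-s-\sigma}$, whose boundary behaviour $\asymp\delta^{s-\sigma}$ was already computed in \eqref{udelta}; this recycles earlier work, gives the extra gain $\eta^{\sigma}$, and leaves only the regime $\delta(y)<\eta$ to be done by hand, via flattening of the boundary (Lemma~\ref{lemma39}) and a scaled explicit computation yielding $C\eta\,\delta(y)^s$; the conclusion then mixes dominated convergence on $\{\delta(y)\geq\eta\}$ with the absolute continuity of $\delta^s d|\lambda|$ on the shrinking strip. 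You instead work entirely from the pointwise bound \eqref{alter-Gbound}, splitting near/far from $y$ at the scale $|x-y|\lessgtr\delta(y)$, and in the far region the exclusion $|x-y|>\delta(y)$ is precisely what removes the logarithmic divergence at $t\approx\delta(y)$ in the coarea integral; this produces the clean uniform domination $K_\eta\leq C\delta^s$ together with the pointwise decay $K_\eta(y)\leq C\eta\,\delta(y)^{s-1}$ once $\eta<\delta(y)/2$, so a single application of dominated convergence with \eqref{564} finishes. Your route is more self-contained (no auxiliary $\sigma$, no appeal to \eqref{udelta}), at the price of carrying out the uniform boundary-layer estimate $\int_{\{\delta(x)<\eta,\ |x-y|>\delta(y)\}}\delta(x)\,|x-y|^{-N}dx\leq C\eta$, which is morally the same flat-case computation the paper performs for $\delta(y)<\eta$; I checked your intermediate claims (the annulus bound on $A$, the inner estimate $\int_{|\theta'|>a(t)}(|\theta'|^2+(t-\delta(y))^2)^{-N/2}d\theta'\leq C\,(|t-\delta(y)|+a(t))^{-1}$, and $\int_0^\eta t\,(|t-\delta(y)|+a(t))^{-1}dt\leq C\eta$ in the three regimes) and they hold. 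Two small points for the write-up: since in the chart $|x-y|^2$ is only comparable, not equal, to $|\theta'|^2+(t-\delta(y))^2$, the constraint $|x-y|>\delta(y)$ transfers only up to a constant, so $a(t)$ should be $\sqrt{\max\{0,\,c\,\delta(y)^2-(t-\delta(y))^2\}}$ for a suitable $c=c(\Omega)$ (harmless for the estimates); and, as the paper does, either pass to the Jordan decomposition or run Fubini--Tonelli directly with $|\lambda|$ so that the exchange of integrals is justified.
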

\begin{proof}
By using the Jordan decomposition on $\lambda=\lambda^+-\lambda^-$,
we can suppose that $\lambda$ is a nonnegative measure
without loss of generality.
Fix $\sigma\in(0,s)$ and exchange the order of integration in
\[
\int_{\{\delta(x)<\eta\}}\delta(x)^{1-s}\, u(x)\:dx=
\int_{\delta(x)<\eta}\delta(x)^{1-s}\int_\Omega G_\Omega(x,y)\:d\lambda(y)\:dx.
\]
Our claim is that
\begin{equation}\label{768}
\int_\Omega G_\Omega(x,y)\,\delta(x)^{1-s}\chi_{(0,\eta)}(\delta(x))\;dx\ \leq\ 
\left\lbrace\begin{aligned}
& C\eta^{1+\sigma}\delta(y)^{s-\sigma} & \hbox{ if }\delta(y)\geq\eta \\
& C\eta\delta(y)^s & \hbox{ if }\delta(y)<\eta.
\end{aligned}\right.
\end{equation}
This would prove
\begin{multline*}
\frac1\eta\int_\Omega\left(\int_\Omega G_\Omega(x,y)\,\delta(x)^{1-s}\chi_{(0,\eta)}(\delta(x))\;dx\right)d\lambda(y)\ \leq \\
\leq\ C\eta^\sigma\int_{\{\delta(y)\geq\eta\}\cap\Omega}\delta(y)^{s-\sigma}\;d\lambda(y)
+C\int_{\{\delta(y)<\eta\}\cap\Omega}\delta(y)^s\;d\lambda(y)
\end{multline*}
where the second addend converges to $0$ as $\eta\downarrow 0$ by \eqref{564}.
For the first addend, we have that $\eta^\sigma\delta(y)^{s-\sigma}$ converges pointwisely to zero
for any $y\in\Omega$ and $\eta^\sigma\delta(y)^{s-\sigma}\leq \delta(y)^s$
if $y\in\Omega\cap\{\delta(y)>\eta\}$, therefore we have the convergence to $0$ by dominated convergence.

We turn now to the proof of \eqref{768}. For any $y\in\Omega$ one has 
\begin{equation}\label{654}
\int_\Omega G_\Omega(x,y)\,\delta(x)^{1-s}\chi_{(0,\eta)}(\delta(x))\;dx\leq
\eta^{1+\sigma}\int_\Omega G_\Omega(x,y)\,\delta(x)^{-s-\sigma}\;dx\leq
C\eta^{1+\sigma}\delta(y)^{s-\sigma}
\end{equation}
where we have used the regularity at the boundary in \eqref{udelta}.
In particular \eqref{654} holds when $\delta(y)>\eta$. 

To prove the other part of \eqref{768} we write 
(dropping from now on multiplicative constants depending on $N,\Omega$ and $s$)
\[
\int_\Omega G_\Omega(x,y)\,\delta(x)^{1-s}\chi_{(0,\eta)}(\delta(x))\;dx
\leq\ \eta^{1-s}\int_{\{\delta(x)<\eta\}\cap\Omega}\frac{\left(\delta(x)\delta(y)\wedge|x-y|^2\right)^s}{{|x-y|}^N}\delta(x)^{1-s}dx
\]
and, by exploiting Lemma \eqref{lemma39} below, we are allowed to perform 
the computations only in the case where $\partial\Omega$ is locally flat
where the above reads as
\[
\int_0^\eta\int_B\frac{\left[x_Ny_N\wedge(|x'-y'|^2+|x_N-y_N|^2)\right]^s}{{(|x'-y'|^2+|x_N-y_N|^2)}^{N/2}}\cdot x_N^{1-s}\;dx'\:dx_N.
\]
where $x=(x',x_N)\in\R^{N-1}\times\R$ and $y=(y',y_N)\in\R^{N-1}\times\R$.
First note that we can suppose $y'=0$ without loss of generality 
and $a\wedge b\leq 2ab/(a+b)$ when $a,b>0$. With the change of variable 
$x_N=y_N\,t$ and $x'=y_N\,\xi$, we reduce to
\[
y_N^{1+s}\int_0^{\eta/y_N}\int_{B_{1/y_N}}\frac{t}{\left(|\xi|^2+|t-1|^2\right)^{N/2-s}}
\cdot\frac{d\xi}{\left(|\xi|^2+|t-1|^2+t\right)^s}\;dt
\]
and, passing to polar coordinates in the $\xi$ variable
\begin{multline*}
y_N^{1+s}\int_0^{\eta/y_N}\int_0^{1/y_N}\frac{t\,\rho^{N-2}}{\left(\rho^2+|t-1|^2\right)^{N/2-s}}
\cdot\frac{d\rho}{\left(\rho^2+|t-1|^2+t\right)^s}\;dt\ \leq\\
\leq\ y_N^{1+s}\int_0^{\eta/y_N}\int_0^{1/y_N}\frac{t\,\rho}{\left(\rho^2+|t-1|^2\right)^{3/2-s}}
\cdot\frac{d\rho}{\left(\rho^2+|t-1|^2+t\right)^s}\;dt.
\end{multline*}
We deal first with the integral in the $\rho$ variable\footnote{The
computation which follows is not valid in the particular case $s=1/2$,
but with some minor natural modifications the same idea will work.}
\begin{align*}
& t\int_0^{1/y_N}\frac{\rho}{\left(\rho^2+|t-1|^2\right)^{3/2-s}}
\cdot\frac{d\rho}{\left(\rho^2+|t-1|^2+t\right)^s}\ \leq \\
& \leq\ \frac{t}{\left(|t-1|^2+t\right)^s}\int_0^1\frac{\rho}{\left(\rho^2+|t-1|^2\right)^{3/2-s}}\;d\rho+
t\int_1^{1/y_N}\frac{\rho}{\left(\rho^2+|t-1|^2\right)^{3/2}}\;d\rho \\
& \leq\ \frac{t}{\left(|t-1|^2+t\right)^s}\cdot\left.\frac{\left(\rho^2+|t-1|^2\right)^{-1/2+s}}{2s-1}\right|_{\rho=0}^1+
\frac{t}{\left(1+|t-1|^2\right)^{1/2}}.
\end{align*}
We then have
\begin{multline*}
t\int_0^{1/y_N}\frac{\rho}{\left(\rho^2+|t-1|^2\right)^{3/2-s}}
\cdot\frac{d\rho}{\left(\rho^2+|t-1|^2+t\right)^s}\ \leq\\
\leq\ \left\lbrace\begin{aligned}
& \frac{t}{\left(|t-1|^2+t\right)^s\,\left|t-1\right|^{1-2s}}+\frac{t}{\left(1+|t-1|^2\right)^{1/2}} & s\in(0,1/2), \\
& \frac{t\,\left(1+|t-1|^2\right)^{s-1/2}}{\left(|t-1|^2+t\right)^s}+\frac{t}{\left(1+|t-1|^2\right)^{1/2}} & s\in(1/2,1).
\end{aligned}\right.
\end{multline*}
The two quantities are both integrable in $t=1$
and converge to a positive constant as $t\uparrow+\infty$, therefore
\[
y_N^{1+s}\int_0^{\eta/y_N}\int_0^{1/y_N}\frac{t\,\rho^{N-2}}{\left(\rho^2+|t-1|^2\right)^{N/2-s}}
\cdot\frac{d\rho}{\left(\rho^2+|t-1|^2+t\right)^s}\;dt\ \leq\ 
\eta\,y_N^s=\eta\,\delta(y)^s
\]
completing the proof of \eqref{768}.
\end{proof}

\begin{prop} Let $\mu\in\mathcal{M}(\C\Omega)$ be a Radon measure such that
\begin{equation}\label{nvlwkn}
\int_{\C\Omega}\delta^{-s}\,\min\{1,\delta^{-N-s}\}\;d|\mu|\ <\ +\infty.
\end{equation}
Then the $s$-harmonic function $u$ induced 
in $\Omega$ by $\mu$ satisfies
\begin{equation}\label{ciciic}
\lim_{\eta\downarrow 0}\frac 1\eta\int_{\{\delta(x)<\eta\}}\delta(x)^{1-s}\, u(x)\:dx
\ =\ 0.
\end{equation}
\end{prop}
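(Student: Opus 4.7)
The plan is to mimic the structure of the proof just above for the measure $\lambda$ (which handled the right-hand side), but with the Poisson kernel $-\Ds G_\Omega(x,y)$ in place of the Green function $G_\Omega(x,y)$. The starting point is the representation formula from Theorem \ref{existence-weak2}, which gives
\[
u(x) \ =\ -\int_{\C\Omega}\Ds G_\Omega(x,y)\,d\mu(y),\qquad x\in\Omega.
\]
After splitting $\mu = \mu^+ - \mu^-$ we may assume $\mu\geq 0$. Fubini then reduces the problem to controlling, uniformly in $\eta>0$ small,
\[
A(\eta,y)\ :=\ \int_{\{\delta(x)<\eta\}\cap\Omega}\delta(x)^{1-s}\,\bigl|\Ds G_\Omega(x,y)\bigr|\;dx,\qquad y\in\C\Omega,
\]
and showing both that $A(\eta,y)/\eta\to 0$ pointwise in $y$ as $\eta\downarrow 0$, and that $A(\eta,y)/\eta$ is dominated by an $L^1(\C\Omega,d|\mu|)$ function under hypothesis \eqref{nvlwkn}. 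Dominated convergence will then deliver \eqref{ciciic}.

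The key estimate is obtained by invoking the upper bound \eqref{est-poisson} on $-\Ds G_\Omega$:
\[
A(\eta,y)\ \leq\ \frac{c_1}{\delta(y)^s(1+\delta(y))^s}\int_{\{\delta(x)<\eta\}\cap\Omega}\frac{\delta(x)}{|x-y|^N}\,dx.
\]
The remaining integral $J(\eta,y)$ in the $x$ variable will be computed by the same locally-flat-boundary reduction used for the analogous statement on $\lambda$ (appealing to the yet-to-be-stated Lemma \ref{lemma39}). In the model case $\Omega=\{x_N>0\}$, $y=(y',-\delta(y))$, the integral $\int_{\R^{N-1}}(|x'-y'|^2+a^2)^{-N/2}dx'=C_N/a$ reduces $J(\eta,y)$ to
\[
C_N\int_0^\eta\frac{x_N}{x_N+\delta(y)}\,dx_N\ =\ C_N\Bigl(\eta-\delta(y)\log\!\bigl(1+\tfrac{\eta}{\delta(y)}\bigr)\Bigr),
\]
which is bounded above by $\min\{\eta,\,\eta^{2}/\delta(y)\}$ in either regime $\delta(y)<\eta$ or $\delta(y)\geq\eta$. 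Finally, when $\delta(y)\geq 2\,\mathrm{diam}(\Omega)$ one has $|x-y|\geq \delta(y)/2$, so the crude bound $J(\eta,y)\leq C\eta^2/\delta(y)^N$ applies.

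Combining these three regimes yields
\[
\frac{A(\eta,y)}{\eta}\ \leq\ C\,\begin{cases}\delta(y)^{-s}, & \delta(y)<\eta,\\[2pt]
\dfrac{\eta}{\delta(y)^{1+s}(1+\delta(y))^s}, & \eta\leq\delta(y)\leq 2\,\mathrm{diam}(\Omega),\\[6pt]
\eta\,\delta(y)^{-N-2s}, & \delta(y)\geq 2\,\mathrm{diam}(\Omega).
\end{cases}
\]
For every fixed $y\in\C\Omega$ the middle (and eventually third) case applies for $\eta$ small, so $A(\eta,y)/\eta\to 0$ pointwise. Moreover, choosing $\eta$ small enough that $\eta<1$, the right-hand side is dominated by a constant times $\delta(y)^{-s}\min\{1,\delta(y)^{-N-s}\}$, which is $|\mu|$-integrable by hypothesis \eqref{nvlwkn}. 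Dominated convergence then gives
\[
\frac{1}{\eta}\int_{\{\delta(x)<\eta\}}\delta(x)^{1-s}u(x)\,dx
\ =\ -\int_{\C\Omega}\frac{A(\eta,y)}{\eta}\,d\mu(y)\ \xrightarrow[\eta\downarrow 0]{}\ 0.
\]

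The main obstacle is the sharp estimate on $J(\eta,y)$, and in particular making the flat-boundary reduction rigorous uniformly in $y\in\C\Omega$ (near the boundary where the singularity sits, as well as far from it, where $\Omega$ being bounded must be used to gain the extra decay $\delta(y)^{-N-2s}$ rather than merely $\delta(y)^{-1-2s}$). Once this is secured, the pointwise-plus-domination structure parallels the previous proposition and the conclusion is immediate.
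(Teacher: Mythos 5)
Your proposal is correct and follows essentially the same route as the paper's proof: the representation of $u$ through the Poisson kernel, the bound \eqref{est-poisson}, reduction to the locally flat model where the inner integral becomes $\frac1\eta\int_0^\eta\frac{x_N}{x_N+\delta(y)}\,dx_N$, and a dominated convergence argument against the weight appearing in \eqref{nvlwkn}. The only difference is organizational: the paper first disposes of the part of $\mu$ supported outside a large ball containing $\overline\Omega$ by a direct uniform estimate and then applies dominated convergence to the finite measure $\delta^{-s}\mu$ near $\Omega$, whereas you fold all regimes into a single three-case bound on $A(\eta,y)/\eta$ — an equivalent bookkeeping.
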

\begin{proof} Again, we can assume without loss of generality
that $\mu$ is a nonnegative measure. Recall that $u$ is represented by
\[
u(x)\ =\ -\int_{\C\Omega}\Ds G_\Omega(x,y)\;d\mu(y),\qquad x\in\Omega
\]
and by \eqref{est-poisson}
\[
u(x)\ \leq\ \int_{\C\Omega}\frac{\delta(x)^s}{\left|x-y\right|^N}\cdot\frac{d\mu(y)}{\delta(y)^s\left(1+\delta(y)\right)^s}.
\]
Choose $R>0$ large enough to have $\Omega\subset\subset B_R$ and split
\[
\mu=\mu_1+\mu_2:=\chi_{B_R}\mu+\chi_{\C B_R}\mu\ :
\]
note that 
\[
\delta(x)^{1-s}\int_{\C\Omega}\frac{\delta(x)^s}{\left|x-y\right|^N}\cdot\frac{d\mu_2(y)}{\delta(y)^s\left(1+\delta(y)\right)^s}
\leq \delta(x)\int_{\C B_R}\frac{d\mu_2(y)}{\delta(y)^s\left(1+\delta(y)\right)^{N+s}}
\]
which converges uniformly to $0$ as $\delta(x)\downarrow 0$ in view of \eqref{nvlwkn}.
Thus we can suppose that $\mu$ is supported in $B_R$ and 
\[
\frac 1\eta\int_{\{\delta(x)<\eta\}}\delta(x)^{1-s}\, u(x)\:dx\leq
\int_{\C\Omega}\left(\frac 1\eta\int_{\{\delta(x)<\eta\}}\frac{\delta(x)}{\left|x-y\right|^N}\;dx\right)
\frac{d\mu(y)}{\delta(y)^s}.
\]
Once again, we reduce to the flat case as we did in (and with the same notation of) the last proposition.
Let $y=(0,-y_N)$ with $y_N>0$. We have to handle
\[
\frac 1\eta\int_0^\eta\int_{B} x_N\left[(x_N+y_N)^2+|x'|^2\right]^{-N/2}dx'\:dx_N.
\]
With the change of variable $x'=(x_N+y_N)\xi$ we have
\[
\frac 1\eta\int_0^\eta\int_{B} x_N\left[(x_N+y_N)^2+|x'|^2\right]^{-N/2}dx'\:dx_N=
\frac 1\eta\int_0^\eta\frac{x_N}{x_N+y_N}\int_{B_{1/(x_N+y_N)}}\left[1+|\xi|^2\right]^{-N/2}d\xi\:dx_N
\]
which is uniformly bounded in $\eta$ for $y_N>0$. Moreover,
\begin{multline*}
\frac 1\eta\int_0^\eta\int_{B} x_N\left[(x_N+y_N)^2+|x'|^2\right]^{-N/2}dx'\:dx_N=
\int_0^1\int_B \eta\,t\left[(\eta\,t+y_N)^2+|x'|^2\right]^{-N/2}dx'\:dt\ \leq\\
\leq\ \eta\int_0^1\int_B\left[y_N^2+|x'|^2\right]^{-N/2}dx'\:dt
\end{multline*}
converges to $0$ as $\eta\downarrow 0$ for any $y_N>0$. Therefore, since $\delta^{-s}\mu$
is a finite measure on $\C\Omega$, by dominated convergence the limit
\[
\lim_{\eta\downarrow 0}\int_{\C\Omega}\left(\frac 1\eta\int_{\{\delta(x)<\eta\}}\frac{\delta(x)}{\left|x-y\right|^N}\;dx\right)
\frac{d\mu(y)}{\delta(y)^s}\ =\ 0
\]
holds.
\end{proof}

\section{the test function space}\label{chenv}

In \cite{chen-veron} the following definition of weak solution is given.

\begin{defi}\label{weakdefi2} Given a Radon measure $\nu$ such that $\delta^s\in L^1(\Omega,d\nu)$
a function $u\in L^1(\Omega)$ is a weak solution of
\[
\left\lbrace\begin{aligned}
\Ds u+ f(u) &=\ \nu & \hbox{ in }\Omega \\
u &=\ 0 & \hbox{ in }\C\Omega
\end{aligned}\right.
\]
if $f(u)\in L^1(\Omega,\delta^s\,dx)$
\[
\int_\Omega u\Ds\xi+\int_\Omega f(u)\xi\ =\ \int_\Omega \xi\;d\nu
\] 
for any $\xi\in\mathbb{X}_s\subset C(\R^N)$, i.e. 
\begin{enumerate}
\item supp$\xi\subseteq\overline{\Omega}$
\item $\Ds\xi(x)$ is pointwisely defined for any $x\in\Omega$ and $\|\Ds\xi\|_{L^\infty(\Omega)}<+\infty$ 
\item there exist a positive $\varphi\in L^1(\Omega,\delta^s dx)$ and $\eps_0>0$ such that
\[
|\Ds_\eps\xi(x)|=\left|\int_{\C B_\eps(x)}\frac{\xi(x)-\xi(y)}{|x-y|^{N+2s}}\;dx\right|\leq \varphi(x)
\qquad \hbox{ for all }\eps\in(0,\eps_0].
\]
\end{enumerate}
\end{defi}

The test space $\mathbb{X}_s$ in Definition \ref{weakdefi2} is quite different 
from the space $\T(\Omega)$ which is used in Definition \ref{weakdefiintro}.
Still, testing a Dirichlet problem against one or the other does not yield two different solutions,
i.e. the two notions of weak $L^1$ solutions are equivalent.
We split the proof of this fact into two lemmas.

\begin{lem} $\T(\Omega)\subset\mathbb{X}_s$.
\end{lem}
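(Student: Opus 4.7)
The plan is to verify, for an arbitrary $\phi\in\T(\Omega)$, the three defining properties of $\mathbb{X}_s$. Properties~(1) and~(2) are essentially immediate: $\mathrm{supp}\,\phi\subseteq\overline{\Omega}$ coincides with the condition $\phi\equiv 0$ on $\C\Omega$ built into the definition of $\T(\Omega)$, while $\Ds\phi=\psi\in C^\infty_c(\Omega)$ in $\Omega$ provides at once a pointwise definition there and the bound $\|\Ds\phi\|_{L^\infty(\Omega)}=\|\psi\|_{L^\infty(\Omega)}<+\infty$.

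For property~(3), the natural candidate for the dominating function is $\varphi(x):=C\,\delta(x)^{-2s}$: this lies in $L^1(\Omega,\delta^s\,dx)$ since $\int_\Omega\delta(x)^{-2s}\cdot\delta(x)^s\,dx=\int_\Omega\delta(x)^{-s}\,dx<+\infty$ when $s\in(0,1)$. I therefore aim to establish the uniform estimate
\[
|\Ds_\eps\phi(x)|\ \le\ C\,\delta(x)^{-2s},\qquad x\in\Omega,\ 0<\eps\le\eps_0,
\]
for some fixed $\eps_0,C>0$. Fixing $x\in\Omega$ and setting $d:=\delta(x)$, I would split $\C B_\eps(x)$ at the radius $d/2$ into the far part $\{|x-y|\ge d/2\}$ and the near part $\{\eps<|x-y|<d/2\}$, the latter being empty when $\eps\ge d/2$.

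On the far part the bound $|\phi(y)|\le C\delta(y)^s$ (a direct consequence of $\phi\in C^s(\R^N)$ combined with $\phi\equiv 0$ on $\C\Omega$, in the spirit of \cite[Proposition 1.1]{rosserra}), together with $\delta(y)\le|x-y|+d\le 3|x-y|$ on this region, and an elementary integration in radial coordinates (treating separately the $\phi(x)$ and $\phi(y)$ terms, and the portions inside and outside $\Omega$), yield a contribution of order $\delta(x)^{-s}$. On the near part, since $B_{d/2}(x)\subset\Omega$, I would rewrite the integral symmetrically as
\[
\frac{1}{2}\int_{\eps<|h|<d/2}\frac{2\phi(x)-\phi(x+h)-\phi(x-h)}{|h|^{N+2s}}\,dh,
\]
and invoke the interior Schauder-type estimate \cite[Proposition 2.8]{silvestre}. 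The scaling $\widetilde\phi(z):=\phi(x+\tfrac{d}{2}z)$ on $B_1(0)$ transforms the equation $\Ds\phi=\psi$ on $B_{d/2}(x)$ into one with bounded data on $B_1(0)$, and unscaling produces $[\phi]_{C^{2s+\alpha}(B_{d/4}(x))}\le C\,d^{-(2s+\alpha)}$ (the constant depending only on $\|\phi\|_{L^\infty(\R^N)}$, $\|\psi\|_{C^\alpha}$, $N$, $s$ and $\Omega$). This implies $|2\phi(x)-\phi(x+h)-\phi(x-h)|\le C\,d^{-(2s+\alpha)}|h|^{2s+\alpha}$ for $|h|<d/8$, whereas for $d/8\le|h|<d/2$ the brute bound $4\|\phi\|_{L^\infty}$ is sufficient; integrating against $|h|^{-N-2s}\,dh$ in both ranges returns a contribution of order $d^{-2s}$, uniformly in $\eps$.

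The main technical delicacy lies in the near-field term: one needs to track carefully the way the constants in the interior regularity estimate scale as the reference ball shrinks, that is as $d=\delta(x)\to 0$, in such a way that the resulting dominating function depends on $x$ only through $\delta(x)^{-2s}$ and is genuinely independent of $\eps\in(0,\eps_0]$. Once the scaling is rigorously pinned down, summing the far and near estimates produces the required $\varphi\in L^1(\Omega,\delta^s\,dx)$.
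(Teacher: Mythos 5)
Your proposal is correct and follows essentially the same route as the paper's proof: both split at the radius $\delta(x)/2$, control the near region through the interior estimate of \cite[Proposition 2.8]{silvestre} rescaled on $B_{\delta(x)/2}(x)$ (with the cancellation of the first-order term, which you make explicit via the symmetrized second difference), and control the far region through the global $C^s$ regularity and the decay $|\phi|\leq C\delta^s$, arriving at a dominating function of order $\delta^{-2s}$, which is integrable against $\delta^s$. The only cosmetic difference is that the paper writes $\Ds_\eps\phi=\psi-PV\int_{B_\eps(x)}$ and estimates the removed ball, whereas you estimate the truncated integral over $\C B_\eps(x)$ directly; the ingredients and the outcome are the same.
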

\begin{proof} Pick $\phi\in\T(\Omega)$.
Properties {\it 1.} and {\it 2.} of Definition \ref{weakdefi2} are satisfied by construction.
In order to prove {\it 3.} write for $\delta(x)<2\eps$
\begin{multline}\label{1111}
\Ds_\eps\phi(x)=\psi(x)-PV\int_{B_\eps(x)}\frac{\phi(x)-\phi(y)}{|x-y|^{N+2s}}\;dy\ =\\
=\ \psi(x)-PV\int_{B_{\delta(x)/2}(x)}\frac{\phi(x)-\phi(y)}{|x-y|^{N+2s}}\;dy
-\int_{B_\eps(x)\setminus B_{\delta(x)/2}(x)}\frac{\phi(x)-\phi(y)}{|x-y|^{N+2s}}\;dy.
\end{multline}
with $\psi:=\Ds\phi|_\Omega\in C^\infty_c(\Omega)$.
Consider $\alpha\in(0,s)$. For the first integral
\begin{align*}
& \left|PV\int_{B_{\delta(x)/2}(x)}\frac{\phi(x)-\phi(y)}{|x-y|^{N+2s}}\;dy\right|\ 
 \leq\ \|\phi\|_{C^{2s+\alpha}(B_{\delta(x)/2}(x))}\int_{B_{\delta(x)/2}(x)}\frac{dy}{|x-y|^{N-\alpha}}\ \\
& =\ \|\phi\|_{C^{2s+\alpha}(B_{\delta(x)/2}(x))}\frac{\omega_{N-1}}{\alpha}\left(\frac{\delta(x)}{2}\right)^\alpha
\end{align*}
where, by \cite[Proposition 2.8]{silvestre}
\begin{align*}
& \|\phi\|_{C^{2s+\alpha}(B_{\delta(x)/2}(x))}=2^{2s+\alpha}\delta(x)^{-2s-\alpha}\left\|\phi\left(x+\frac{\delta(x)}{2}\ \cdot\right)\right\|_{C^{2s+\alpha}(B)}\ \leq \\
& \leq\ C\delta(x)^{-2s-\alpha}\left(\left\|\phi\left(x+\frac{\delta(x)}{2}\ \cdot\right)\right\|_{L^\infty(B)}+
\left\|\psi\left(x+\frac{\delta(x)}{2}\ \cdot\right)\right\|_{C^{\alpha}(B)}\right) \\
& \leq\ C\delta(x)^{-2s-\alpha}\left(\|\phi\|_{L^\infty(B_{\delta(x)/2}(x))}
+\delta(x)^\alpha\|\psi\|_{C^{\alpha}(B_{\delta(x)/2}(x))}\right) \\
& \leq\ C\delta(x)^{-2s-\alpha}\left(\|\psi\|_{L^\infty(\R^N)}\delta(x)^s
+\delta(x)^\alpha\|\psi\|_{C^{\alpha}(\R^N)}\right)\ 
\leq\ C\|\psi\|_{C^\alpha(\R^N)}\delta(x)^{-2s}.
\end{align*}
The integration far from $x$ gives
\begin{align*}
& \left|\int_{B_\eps(x)\setminus B_{\delta(x)/2}(x)}\frac{\phi(x)-\phi(y)}{|x-y|^{N+2s}}\;dy\right|\ 
 \leq\ \|\phi\|_{C^s(\R^N)} \int_{B_\eps(x)\setminus B_{\delta(x)/2}(x)}\frac{dy}{|x-y|^{N+s}}\ \leq \\ 
& \leq\ \|\phi\|_{C^s(\R^N)} \int_{\R^N\setminus B_{\delta(x)/2}}\frac{dz}{|z|^{N+s}}\ 
 \leq\ \|\phi\|_{C^s(\R^N)} \frac{\omega_{N-1}}{s}\left(\frac2{\delta(x)}\right)^s.
\end{align*}
All this entails
\[
\delta(x)^s|\Ds_\eps\phi(x)|\ \leq\ \delta(x)^s|\psi(x)|+C\|\psi\|_{C^\alpha(\R^N)}\delta(x)^{\alpha-s}+C\|\phi\|_{C^s(\R^N)},
\qquad \hbox{when }\delta(x)<2\eps.
\]
For $\delta(x)\geq2\eps$ one does not have the second integral on the right-hand side of \eqref{1111}
whereas the first one is computed on the ball of radius $\eps$,
but the same computations can be run.
This proves the statement of the Lemma.
\end{proof}

\begin{lem} Given a Radon measure $\nu\in\mathcal{M}(\Omega)$ such that $\delta^s\in L^1(\Omega,d\nu)$,
if a function $u\in L^1(\Omega)$ satisfies 
\begin{equation}\label{7890}
\int_\Omega u\Ds\xi\ =\ \int_\Omega\xi\;d\nu,\qquad\hbox{ for any }\xi\in\T(\Omega),
\end{equation}
then the same holds true for any $\xi\in\mathbb{X}_s$. 
\end{lem}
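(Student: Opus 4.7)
My strategy is to approximate $\xi \in \mathbb{X}_s$ by test functions in $\T(\Omega)$ and pass to the limit. Write $\psi := \Ds \xi|_\Omega \in L^\infty(\Omega)$ (property~2) and choose $\psi_n \in C^\infty_c(\Omega)$ with $\|\psi_n\|_\infty \leq \|\psi\|_\infty$ and $\psi_n \to \psi$ a.e.\ in $\Omega$, via mollification and cutoff. Setting $\xi_n(x) := \int_\Omega G_\Omega(x,y)\,\psi_n(y)\,dy$ for $x \in \Omega$ and $\xi_n \equiv 0$ on $\C\Omega$, one has $\xi_n \in \T(\Omega)$ with $\Ds \xi_n|_\Omega = \psi_n$, so the hypothesis gives $\int_\Omega u\,\psi_n\,dx = \int_\Omega \xi_n\,d\nu$ for every $n$.

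I would then take $n\uparrow +\infty$ on both sides. The left-hand side converges to $\int_\Omega u\,\psi = \int_\Omega u\,\Ds \xi$ by dominated convergence (using $u \in L^1(\Omega)$ and $|\psi_n| \leq \|\psi\|_\infty$). For the right-hand side, define $\tilde\xi(x) := \int_\Omega G_\Omega(x,y)\,\psi(y)\,dy$ for $x \in \Omega$: pointwise convergence $\xi_n(x) \to \tilde\xi(x)$ follows from dominated convergence in $y$ (using $G_\Omega(x,\cdot) \in L^1(\Omega)$), and the uniform estimate $|\xi_n(x)| \leq C\|\psi\|_\infty\,\delta(x)^s$ together with $\delta^s \in L^1(\Omega,d\nu)$ allows another application of dominated convergence to conclude $\int_\Omega \xi_n\,d\nu \to \int_\Omega \tilde\xi\,d\nu$. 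Combining, $\int_\Omega u\,\Ds \xi = \int_\Omega \tilde\xi\,d\nu$, and the proof reduces to the identification $\tilde\xi = \xi$ in $\Omega$.

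A Fubini argument using the symmetry $G_\Omega(x,y) = G_\Omega(y,x)$ shows that for every $\Phi \in \T(\Omega)$ with $\Ds\Phi|_\Omega = \psi_0$,
\[
\int_\Omega \tilde\xi\,\Ds\Phi\;dx\ =\ \int_\Omega \tilde\xi\,\psi_0\;dx\ =\ \int_\Omega \psi\,\Phi\;dx,
\]
identifying $\tilde\xi$ as the (unique, by Theorem \ref{existence-weak2}) weak $L^1$ solution of $\Ds v = \psi$ in $\Omega$, $v = 0$ in $\C\Omega$, $Ev = 0$ on $\partial\Omega$. If I can establish the analogous identity
\begin{equation*}
\int_\Omega \xi\,\Ds\Phi\;dx\ =\ \int_\Omega \Phi\,\psi\;dx,\qquad \Phi\in\T(\Omega),\tag{$\ast$}
\end{equation*}
then $\xi$ is another weak $L^1$ solution of the same problem, so $\xi = \tilde\xi$ a.e.\ in $\Omega$ and---being both continuous in $\Omega$---everywhere in $\Omega$, whence $\int_\Omega \xi\,d\nu = \int_\Omega \tilde\xi\,d\nu$.

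The main obstacle is the identity $(\ast)$, which I plan to obtain from the formal self-adjointness of the truncated operator. For any $\eps > 0$ the kernel $|x-y|^{-N-2s}$ is bounded away from the diagonal, while $\xi$ and $\Phi$ are bounded and vanish outside $\overline\Omega$, so Fubini together with the change of variables $(x,y) \leftrightarrow (y,x)$ yield
\[
\int_\Omega \xi\,\Ds_\eps \Phi\;dx\ =\ \int_\Omega \Phi\,\Ds_\eps \xi\;dx.
\]
To let $\eps \downarrow 0$, I will invoke property~3 of $\mathbb{X}_s$ on both sides. For $\xi$ the envelope $\varphi_\xi \in L^1(\Omega, \delta^s\,dx)$ combined with the bound $|\Phi(x)| \leq C\,\delta(x)^s$ valid on $\T(\Omega)$ produces the dominating function $C\delta^s\varphi_\xi \in L^1(\Omega)$, so dominated convergence delivers the limit $\int_\Omega \Phi\,\psi$. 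For $\Phi$, the computation in the proof of the previous lemma supplies a uniform pointwise bound $|\Ds_\eps\Phi(x)| \leq C\bigl(\delta(x)^{\alpha-2s} + \delta(x)^{-s} + 1\bigr)$ for a suitable $\alpha \in (\max\{0, 2s-1\}, s)$, which is an $L^1(\Omega)$ function; boundedness of $\xi$ and a further application of dominated convergence then yield $\int_\Omega \xi\,\Ds_\eps \Phi \to \int_\Omega \xi\,\Ds \Phi$. This establishes $(\ast)$ and closes the argument.
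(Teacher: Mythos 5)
Your proposal is correct, and its first half is essentially the paper's proof: you approximate $\psi=\Ds\xi|_\Omega\in L^\infty(\Omega)$ by mollification and cutoff, test the hypothesis \eqref{7890} against the associated functions of $\T(\Omega)$, and pass to the limit by dominated convergence on both sides, using $|u|\,\|\psi\|_{L^\infty}\in L^1(\Omega)$ on one side and the bound $|\xi_n|\leq C\|\psi\|_{L^\infty}\delta^s$ together with $\delta^s\in L^1(\Omega,d\nu)$ on the other (the paper does exactly this, with a two-parameter approximation $\zeta_{\eps,\rho}=b_\rho(\zeta\chi_\Omega*\eta_\eps)$ instead of your single sequence). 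Where you genuinely diverge is in the closing step: the paper simply lets $\eps\downarrow0$ and writes $\int_\Omega\xi_\eps\,d\nu\to\int_\Omega\xi\,d\nu$, leaving implicit the identification of $\xi$ with the Green potential of its own fractional Laplacian, whereas you prove this identification explicitly. You show that the limit $\tilde\xi(x)=\int_\Omega G_\Omega(x,y)\psi(y)\,dy$ is the unique weak $L^1$ solution with data $(\psi\,dx,0,0)$, and then verify that $\xi$ itself satisfies the same weak formulation via the $\eps$-truncated symmetrization $\int_\Omega\xi\,\Ds_\eps\Phi=\int_\Omega\Phi\,\Ds_\eps\xi$, passing to the limit with the uniform bound $|\Ds_\eps\Phi|\lesssim|\Ds\Phi|+\delta^{\alpha-2s}+\delta^{-s}$ (from the preceding lemma, with $\alpha\in(\max\{0,2s-1\},s)$ so that this dominant is in $L^1(\Omega)$) on one side and with property 3 of $\mathbb{X}_s$ plus $|\Phi|\leq C\delta^s$ on the other; uniqueness and the continuity of both $\xi$ and $\tilde\xi$ in $\Omega$ (needed because $\nu$ may charge Lebesgue-null sets) then give $\int_\Omega\xi\,d\nu=\int_\Omega\tilde\xi\,d\nu$. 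The trade-off is clear: the paper's argument is shorter but terse at precisely this point, while your version is longer, self-contained, and in passing establishes the representation formula $\xi=\int_\Omega G_\Omega(\cdot,y)\,\Ds\xi(y)\,dy$ for every $\xi\in\mathbb{X}_s$, which is exactly the fact the published proof uses without stating.
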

\begin{proof}
Pick $\xi\in\mathbb{X}_s$: by definition, $\zeta:=\Ds\xi\in L^\infty(\Omega)$. 
Consider the standard mollifier $\eta\in C^\infty_c(\R^N)$ and $\eta_\eps(x):=\eps^{-N}\eta(x/\eps)$.
Then
\begin{equation}\label{zetaeps}
\zeta_\eps:=\zeta\chi_\Omega*\eta_\eps \in C^\infty(\R^N)\hbox{ and }\|\zeta_\eps\|_{L^\infty(\Omega)}\leq\|\zeta\|_{L^\infty(\Omega)}.
\end{equation}
Define $\xi_\eps$ as the solution to
\[
\left\lbrace\begin{aligned}
\Ds\xi_\eps &=\ \zeta_\eps & \hbox{ in }\Omega \\
\xi_\eps &=\ 0 & \hbox{ in }\C\Omega \\
E\xi_\eps &=\ 0 & \hbox{ on }\partial\Omega.
\end{aligned}\right.
\]
Also, for $\rho>0$ small consider
\[
\Omega_\rho:=\{x\in\Omega:\delta(x)>\rho\}
\]
and a bump function $b_\rho\in C^\infty_c(\R^N)$ such that
\[
b_\rho\equiv 1\hbox{ in }\Omega_{2\rho},\ b_\rho\equiv0\hbox{ in }\R^N\setminus\Omega_\rho,\ 0\leq b_\rho\leq 1\hbox{ in }\R^N.
\]
Then $\zeta_{\eps,\rho}:=b_\rho\zeta_\eps\in C^\infty_0(\Omega)$. 
Let $\xi_{\eps,\rho}\in\T(\Omega)$ be the function induced by $\zeta_{\eps,\rho}$.
By \eqref{7890},
\begin{equation}\label{231456}
\int_\Omega u\zeta_{\eps,\rho}\ =\ \int_\Omega \xi_{\eps,\rho}\;d\nu.
\end{equation}
It holds $\zeta_{\eps,\rho}\rightarrow\zeta_\eps$ as $\rho\downarrow0$,
with $\|\zeta_{\eps,\rho}\|_{L^\infty(\Omega)}\leq\|\zeta_\eps\|_{L^\infty(\Omega)}$ and
\[
|\xi_{\eps,\rho}(x)|\ \leq\ C\|\zeta_{\eps,\rho}\|_{L^\infty(\Omega)}\delta(x)^s
\leq \|\zeta_{\eps}\|_{L^\infty(\Omega)}\delta(x)^s
\]
so that we can push equality \eqref{231456} to the limit to deduce,
by dominated convergence,
\begin{equation}\label{2356}
\int_\Omega u\zeta_\eps\ =\ \int_\Omega \xi_\eps\;d\nu.
\end{equation}
Similarly, since $\|\zeta_{\eps}\|_{L^\infty(\Omega)}\leq \|\zeta\|_{L^\infty(\Omega)}$,
letting $\eps\downarrow 0$ yields
\[
\int_\Omega u\zeta\ =\ \int_\Omega \xi\;d\nu.
\]
\end{proof}

\part[Boundary~blow-up~solutions for~{\texorpdfstring{$\As$}{As}}]
{Boundary~blow-up~solutions for~{\huge\texorpdfstring{$\As$}{As}}}

\chapter{Introduction and main results}\label{introparttwo}

Given a bounded domain $\Omega$ of the Euclidean space $\R^N$, 
the {\it spectral fractional Laplacian} operator $\As,\ s\in(0,1),$ is classically defined as 
a fractional power of the Laplacian with homogeneous Dirichlet 
boundary conditions, 
seen as a self-adjoint operator in the Lebesgue space $L^2(\Omega)$, see \eqref{As} below.
This provides a nonlocal operator of elliptic type with {\it homogeneous} boundary conditions.
Recent bibliography on this operator can be found e.g. in 
Bonforte, Sire and V\'azquez  \cite{bsv}, Grubb \cite{grubb3}, 
Caffarelli and Stinga \cite{caffarelli-stinga}, Servadei and Valdinoci \cite{servadei-valdinoci}.

One aspect of the theory is however left unanswered:
the formulation of natural {\it nonhomogeneous} boundary conditions. 
A first attempt can be found in the work of Dhifli, M\^{a}agli and Zribi \cite{dhifli}. 
The investigations that have resulted in the present Part turn out, we hope, to shed some further light on this question. 
We provide a weak formulation, which is well-posed in the sense of Hadamard, for linear problems of the form  
\begin{equation}\label{main}
\left\{\begin{aligned}
\As u &=\ \mu & \hbox{ in }\Omega, \\
\frac{u}{h_1} &=\ \zeta & \hbox{ on }\partial\Omega,
\end{aligned}\right.
\end{equation}
where $h_1$ is a reference function, see \eqref{h1} below, 
with prescribed singular behaviour at the boundary. 
Namely, $h_1$ is bounded above and below by constant multiples of $\delta^{-(2-2s)}$
and the left-hand side of the boundary condition must be understood as 
a limit as $\delta$ converges to zero. 
In other words, unlike the classical Dirichlet problem for the Laplace operator, 
nonhomogeneous boundary conditions must be singular. 
In addition, if the data $\mu, \zeta$ are smooth, the solution blows up\footnote{This 
is very similar indeed to the theory of nonhomogeneous boundary conditions for the 
fractional Laplacian - although in that case the blow-up rate is of order $\delta^{-(1-s)}$ -
as analysed in Part \ref{partone} of this manuscript and from a different perspective by Grubb \cite{grubb}.}
at the fixed rate $\delta^{-(2-2s)}$.
In fact, for the special case of positive $s$-harmonic functions, that is when $\mu=0$, 
the singular boundary condition was already identified in previous works 
emphasizing the probabilistic and potential theoretic aspects of the problem: 
see e.g. Song and Vondra\v{c}ek \cite{song-vondra}, 
Glover, Pop-Stojanovic, Rao, \v{S}iki\'c, Song and Vondra\v{c}ek \cite{gprssv} and Song \cite{song}
for the spectral fractional Laplacian. 

Turning to nonlinear problems, even more singular boundary conditions arise: 
in the above system, if $\mu=-u^p$ for suitable values of $p$, one may choose $\zeta=+\infty$, 
in the sense that the solution $u$ will blow up at a higher rate
with respect to $\delta^{-(2-2s)}$ and 
controlled by the (scale-invariant) one $\delta^{-2s/(p-1)}$. 
Note that the value $\zeta=+\infty$ is not admissible for linear problems.
This has been already observed in the context of the fractional Laplacian in the previous Part, see \cite{a2},
and this is what we prove here for the spectral fractional Laplacian. 
Interestingly, the range of admissible exponents $p$ is different according to which operator one works with.

\section{main results} 
For clarity, we list here the definitions and statements that we use. 
First recall the definition of the spectral fractional Laplacian:
\begin{defi}
Let $\Omega\subset\R^N$ a bounded 
domain and let ${\{\varphi_j\}}_{j\in\N}$ 
be a Hilbert basis of $L^2(\Omega)$ consisting of eigenfunctions 
of the Dirichlet Laplacian $\left.-\lapl\right\vert_\Omega$, 
associated to the eigenvalues $\lambda_j$, $j\in\N$, i.e.\footnote{See Brezis \cite[Theorem 9.31]{brezis}.} 
$\varphi_j\in H^1_0(\Omega)\cap C^\infty(\Omega)$ and $-\lapl\varphi_j=\lambda_j\varphi_j$ in $\Omega$.
Given $s\in(0,1)$, consider the Hilbert space\footnote{When 
$\Omega$ is smooth, $H(2s)$ coincides with the Sobolev space $H^{2s}(\Omega)$ if $0<s<1/4$, 
$H_{00}^{s}(\Omega)$ if $s\in\{1/4, 3/4\}$, $H_0^{2s}(\Omega)$ otherwise; 
see Lions and Magenes \cite[Theorems 11.6 and 11.7 pp. 70--72]{lions-magenes}.} 
\[
H(2s):=\left\{v=\sum_{j=1}^\infty \widehat v_j\varphi_j\in L^2(\Omega):\|v\|_{H(2s)}^2=\sum_{j=0}^\infty\lambda_j^{2s}\vert\widehat v_j\vert^2<\infty\right\}.
\]
The spectral fractional Laplacian of  $u\in H(2s)$ is the function belonging to $L^2(\Omega)$ given by the formula
\begin{equation}\label{As}
\As u\ =\ \sum_{j=1}^\infty\lambda_j^s\widehat u_j\,\varphi_j.
\end{equation}
\end{defi}
Note that $C^\infty_c(\Omega)\subset H(2s) \hookrightarrow L^2(\Omega)$. 
So, the operator $\As$ is unbounded, densely defined and with bounded inverse $\Dms$ in $L^2(\Omega)$.
Alternatively, for almost every $x\in\Omega$,
\begin{equation}
\As u(x)\ =\  PV\int_\Omega[u(x)-u(y)]J(x,y)\;dy+\kappa(x)u(x),\label{As2}
\end{equation}
where, letting $p_{\Omega}(t,x,y)$ denote the heat kernel of $\left.-\lapl\right\vert_\Omega$, 
\begin{equation}\label{jkappa}
J(x,y)=\frac{s}{\Gamma(1-s)}\int_0^\infty\frac{p_{\Omega}(t,x,y)}{t^{1+s}}\;dt\quad\hbox{and}\quad
\kappa(x)=\frac{s}{\Gamma(1-s)}\int_\Omega\left(1-\int_\Omega p_{\Omega}(t,x,y)\;dy\right)\frac{dt}{t^{1+s}}
\end{equation}
are respectively\footnote{in the language of potential theory of killed stochastic processes.
Note that the integral in \eqref{As2} must be understood in the sense of principal values. 
To see this, look at \eqref{Jbound}.} 
the {\it jumping kernel} and the {\it killing measure,}
see Song and Vondra\v{c}ek \cite[formulas (3.3) and (3.4)]{song-vondra}. 
For the reader's convenience, we provide a proof of \eqref{As2} in Paragraph \ref{app1}. 
We assume from now on that  
\[
\text{$\Omega$ is of class $C^{1,1}$.}
\] 
In particular,  
sharp bounds are known for the heat kernel $P_\Omega$, see \eqref{hkb} below, 
and provide in turn sharp estimates for $J(x,y)$, see \eqref{Jbound} below, 
so that the right-hand side of \eqref{As2} remains well-defined
for every $x\in\Omega$ under the assumption that 
$u\in C^{2s+\eps}(\Omega)\cap L^1(\Omega,\delta(x)dx)$ 
for some $\eps>0$. 
This allows us to {\it define} the spectral fractional Laplacian
of functions which {\it do not} vanish on the boundary of $\Omega$. 
As a simple example, observe that the function $u(x)\equiv 1$, $x\in\Omega$,
does not belong to $H(2s)$ if $s\ge1/4$, yet it solves
\eqref{main} for $\mu=\kappa$ and $\zeta=0$.

\begin{defi} The Green function and the Poisson kernel of the spectral fractional Laplacian are defined respectively by
\begin{equation}\label{green}
G_{|\Omega}^s(x,y)\ =\ \frac1{\Gamma(s)}\int_0^\infty p_{\Omega}(t,x,y)\,t^{s-1}\;dt,
\qquad x,\,y\in\Omega,x\neq y,\, s\in(0,1],
\end{equation}
where $p_{\Omega}$ denotes the heat kernel of $\left.-\lapl\right|_\Omega$, and by
\begin{equation}\label{poisson}
P_{|\Omega}^s(x,y)\ :=\ -\left.\frac{\partial}{\partial\nu_y}\right. G^s_{|\Omega}(x,y), \qquad x\in\Omega, y\in\partial\Omega.
\end{equation}
where $\nu$ is the outward unit normal to $\partial\Omega$.  
\end{defi}

In Section \ref{green-sect}, we shall prove that $P_{|\Omega}^s$ is well-defined
(see Lemma \ref{lemma8}) and review some useful identities
involving the Green function $G_{|\Omega}^s$ 
and the Poisson kernel $P_{|\Omega}^s$. 
Now, let us define weak solutions of \eqref{main}. 
\begin{defi} Consider the test function space
\begin{equation}\label{test}
\T(\Omega)\ :=\ \rest^{-s}C^\infty_c(\Omega)
\end{equation}
and the weight
\begin{equation}\label{h1}
h_1(x) = \int_{\partial\Omega} P^s_{|\Omega}(x,y)\,d\HH(y),\qquad x\in\Omega.
\end{equation}
Given two Radon measures $\mu\in\mathcal{M}(\Omega)$ 
and $\zeta\in\mathcal{M}(\partial\Omega)$ with
\begin{equation}\label{hypo}
\int_\Omega\delta(x)\;d|\mu|(x)\ <\ \infty,\qquad |\zeta|(\partial\Omega)\ <\ \infty,
\end{equation}
a function $u\in L^1_{loc}(\Omega)$ is a weak solution to
\begin{equation}\label{prob}
\left\{\begin{aligned}
\As u &=\ \mu & \hbox{ in }\Omega \\
\frac{u}{h_1} &=\ \zeta & \hbox{ on }\partial\Omega
\end{aligned}\right.
\end{equation}
if, for any $\psi\in\T(\Omega)$,
\begin{equation}\label{byparts-spectral}
\int_\Omega u\,\As\psi\ =\ \int_\Omega \psi\,d\mu - \int_{\partial\Omega} \frac{\partial\psi}{\partial\nu}\;d\zeta.
\end{equation}
\end{defi}

We shall prove that $\T(\Omega)\subseteq C^1_0(\overline{\Omega})$, 
see Lemma \ref{lemma-test-spectral}, 
so that all integrals above are well-defined. 
Equation \eqref{byparts-spectral} is indeed a weak formulation of
\eqref{prob}, as the following lemma shows.

\begin{lem}\label{clas-weak} 
\begin{enumerate}
\item (weak solutions are distributional solutions) 
Assume that $u\in L^1_{loc}(\Omega)$ is a weak solution of \eqref{prob}. 
Then in fact, $u\in L^1(\Omega,\delta(x)dx)$ and $\As u=\mu$ in the sense
of distributions i.e. for any $\psi\in C^\infty_c(\Omega)$,
$\frac{\As\psi}{\delta}$ is bounded in $\Omega$ and
\[
\int_\Omega u \As\psi = \int_\Omega \psi \;d\mu,
\]
moreover the boundary condition holds in the sense that for every $\phi\in C(\super\Omega)$
\[
\lim_{t\downarrow 0}\frac1t\int_{\{\delta(x)<t\}}\frac{u(x)}{h_1(x)}\,\phi(x)\:d\HH(x)= \int_{\partial\Omega}\phi(x)\:d\zeta(x)
\]
whenever $\mu\in\mathcal{M}(\Omega)$ satisfies \eqref{hypo}
and $\zeta\in L^1(\partial\Omega)$.
\item (for smooth data, weak solutions are classical) 
Assume that $u\in L^1_{loc}(\Omega)$ is a weak solution of \eqref{prob}, 
where $\mu\in C^\alpha(\overline\Omega)$ for some $\alpha$ such that $\alpha+2s\not\in\N$ and $\zeta\in C(\partial\Omega)$.
Then, $\As u$ is well-defined by \eqref{As2} for every $x\in\Omega$, 
$\As u(x)=\mu(x)$ for all $x\in\Omega$ and for all $x_0\in\partial\Omega$,
\[
\lim_{\stackrel{\hbox{\footnotesize $x\to x_0$}}{x\in\Omega}}\frac{u(x)}{h_1(x)} = \zeta(x_0).
\]
\item (classical solutions are weak solutions) 
Assume that $u\in C^{2s+\eps}(\Omega)$ is such that $u/h_1\in C(\super\Omega)$. 
Let $\mu=\As u$ be given by \eqref{As2} and $\zeta=\left. u/h_1\right\vert_{\partial\Omega}$. 
Then, $u$ is a weak solution of \eqref{prob}.
\end{enumerate}
\end{lem}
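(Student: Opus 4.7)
The plan is to treat the three assertions in the order (3), (2), (1), each building on the previous.

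For (3), I would prove the integration-by-parts identity
\[
\int_\Omega u\,\As\psi - \int_\Omega \psi\,\As u\ =\ -\int_{\partial\Omega}\frac{\partial\psi}{\partial\nu}\,\frac{u}{h_1}\,d\HH,\qquad \psi\in\T(\Omega),
\]
directly from the kernel representation \eqref{As2}. Expanding both $\As u$ and $\As\psi$, using the symmetry of $J(x,y)$ and the pointwise bounds mentioned after \eqref{jkappa} and in \eqref{Jbound}, Fubini swaps the two singular integrals and the killing-measure terms cancel. The boundary contribution appears as the limit of $\int_{\{\delta>\eta\}}(u\,\As\psi-\psi\,\As u)$ as $\eta\downarrow 0$: a Green-identity style calculation on the shrinking strip $\{\delta<\eta\}$, using $\psi\in C^1_0(\super\Omega)$ (Lemma \ref{lemma-test-spectral}) together with the hypothesis $u/h_1\in C(\super\Omega)$, isolates the normal-derivative integral paired with $\zeta=u/h_1|_{\partial\Omega}$.

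For (2), I would represent $u$ by a Green/Martin formula $u(x)=\int_\Omega G^s_{|\Omega}(x,y)\mu(y)\,dy+\int_{\partial\Omega}M^s_{|\Omega}(x,y)\zeta(y)\,d\HH(y)$, where the boundary kernel $M^s_{|\Omega}$ is a Martin-type renormalisation of $P^s_{|\Omega}$ chosen so that $\int_{\partial\Omega}M^s_{|\Omega}(x,y)\,d\HH(y)\asymp h_1(x)$. The H\"older regularity $\mu\in C^\alpha(\super\Omega)$ combined with heat-kernel estimates on \eqref{green} yields $u\in C^{2s+\alpha}(\Omega)$ and the pointwise identity $\As u(x)=\mu(x)$ via \eqref{As2}, while continuity of $\zeta$ and a standard approximate-identity argument on the kernel $M^s_{|\Omega}(x,y)/h_1(x)$ (which concentrates at a single boundary point as $x\to x_0$) gives $\lim_{x\to x_0}u(x)/h_1(x)=\zeta(x_0)$. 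Since this $u$ is then a weak solution by (3), uniqueness of weak solutions (from the maximum principle applied through the test space $\T(\Omega)$) identifies it with the given one.

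For (1), the first task is to upgrade $u\in L^1_{loc}(\Omega)$ to $u\in L^1(\Omega,\delta(x)\,dx)$. I would test \eqref{byparts-spectral} against $\psi_n=\rest^{-s}\varphi_n$ with $\varphi_n\in C^\infty_c(\Omega)$, $0\le\varphi_n\nearrow\mathbf{1}$; by standard torsion-function-type estimates for $\rest^{-s}$ the limit $\psi_\infty=\rest^{-s}\mathbf{1}$ satisfies $\psi_\infty\asymp\delta$, while the right-hand side stays bounded by \eqref{hypo}, and signed splitting $u=u^+-u^-$ together with monotone convergence yields $\int_\Omega|u|\,\delta\,dx<\infty$. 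Next, for the distributional equation, given $\psi\in C^\infty_c(\Omega)$, the bound $|\As\psi(x)|\le C\delta(x)$ in $\Omega$ follows from \eqref{As2}: since $\psi$ vanishes near $\partial\Omega$, only the jumping-kernel contribution against $\hbox{supp}\,\psi$ matters, and there $\delta(y)\ge c>0$ so \eqref{jkappa} gives the linear decay in $\delta(x)$. I then approximate $\As\psi$ by $\phi_n=\eta_n\As\psi\in C^\infty_c(\Omega)$ with smooth cutoffs $\eta_n\nearrow\mathbf{1}$, set $\psi_n=\rest^{-s}\phi_n\in\T(\Omega)$, and pass to the limit in \eqref{byparts-spectral}: the just-proven $L^1(\Omega,\delta\,dx)$-integrability of $u$ together with $|\phi_n|\le C\delta$ furnishes dominated convergence on the left-hand side, $|\psi_n|\le C\delta$ controls the $\mu$-integral via \eqref{hypo}, and $\partial_\nu\psi_n\to\partial_\nu\psi=0$ kills the $\zeta$-term. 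Finally, for the weak boundary trace with $\zeta\in L^1(\partial\Omega)$, I split $u$ via the Green/Martin representation from (2): the Green-potential part has vanishing contribution to $\frac{1}{t}\int_{\{\delta<t\}}\frac{u}{h_1}\phi$, and the boundary-integral part reduces, by the defining normalisation of $h_1$, to the pairing $\int_{\partial\Omega}\phi\,d\zeta$. The main obstacle will be this final weak-trace step: the quantity $u/h_1$ is defined only in $\Omega$ and recovering an $L^1$ datum on $\partial\Omega$ from strip-averages requires sharp estimates showing that $M^s_{|\Omega}(x,y)/h_1(x)$ behaves as an approximate identity concentrated at a single boundary point, with uniform control of off-diagonal contributions in $\{\delta<t\}$ — the analogue of the Martin-boundary analysis carried out for $\Ds$ in Paragraph \ref{traces} of Part \ref{partone}.
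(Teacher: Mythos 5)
Your proof of point 1 has a genuine gap at the very first step. Testing \eqref{byparts-spectral} against $\psi_n=\rest^{-s}\varphi_n$ with $0\le\varphi_n\nearrow\mathbf{1}$ only bounds $\int_\Omega u\,\varphi_n$, and for a sign-changing $u\in L^1_{loc}(\Omega)$ this says nothing about $\int_\Omega|u|\,\varphi_n$: the proposed "signed splitting $u=u^+-u^-$ together with monotone convergence" is a non sequitur, since each of $\int_\Omega u^\pm\varphi_n$ may diverge while their difference stays bounded. A duality repair (testing against $g\varphi_n$, $|g|\le1$) also fails in general: the boundary term is then controlled by $\int_\Omega P^s_{|\Omega}(y,z)\varphi_n(y)\,dy$, and by \eqref{poissbound} the integral $\int_\Omega P^s_{|\Omega}(y,z)\,dy$ diverges for $s\le 1/2$. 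The paper obtains $u\in L^1(\Omega,\delta(x)dx)$ by a different mechanism: the weak maximum principle (Lemma \ref{maxprinc-weak}) is valid for $L^1_{loc}$ solutions, so the given $u$ coincides a.e.\ with the explicit representation of Theorem \ref{point}, which lies in $L^1(\Omega,\delta(x)dx)$ by \eqref{cont}; the weighted trace then follows by invoking Theorems \ref{bound-l1} and \ref{bound-g} (these are exactly the "approximate identity" estimates you flag as the main obstacle — they are proved in Section \ref{bb-sect} and should be cited, not left open). Your treatment of the distributional identity via $\phi_n=\eta_n\As\psi$ and $\psi_n=\rest^{-s}\phi_n$ does match the paper's.

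The second load-bearing step you do not actually carry out is the integration-by-parts identity in your point 3. Deriving it "directly from the kernel representation" is delicate: with $u\asymp\delta^{-(2-2s)}$ the absolute convergence needed for Fubini must be checked, and, more seriously, identifying the strip-limit boundary contribution with $-\int_{\partial\Omega}\frac{\partial\psi}{\partial\nu}\,\frac{u}{h_1}\,d\HH$ requires precisely the kernel asymptotics encoded in \eqref{dertest} and Proposition \ref{bound-cont} (note also that no "Martin-type renormalisation" is needed: $h_1$ is by definition $\int_{\partial\Omega}P^s_{|\Omega}(\cdot,y)\,d\HH(y)$). The paper sidesteps this computation entirely and proves point 3 last: given $u\in C^{2s+\eps}(\Omega)$ with $u/h_1\in C(\super\Omega)$, it observes $\mu=\As u\in C^\eps_{loc}(\Omega)$ by Lemma \ref{u-to-lapl}, forms the weak solution $v$ with data $(\mu,\zeta)$, uses point 2 to see that $v$ is classical, and concludes $u\equiv v$ from the classical maximum principle (Lemma \ref{maxprinc-clas}) applied to $u-v$ up to $\eps h_1$. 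Your reversed ordering (3)$\to$(2)$\to$(1) could in principle be made to work, but as written both of its key steps — the direct Green identity and the $L^1(\Omega,\delta(x)dx)$ upgrade — are missing or incorrect, so the argument does not close.
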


We present some facts about harmonic functions in Section \ref{harm-sect} with an eye kept on their singular boundary trace in Section \ref{bb-sect}. 

We prove the well-posedness of \eqref{prob} in Section \ref{dir-sect}, namely
\begin{theo}\label{point} Given two Radon measures $\mu\in\mathcal{M}(\Omega)$ and $\zeta\in\mathcal{M}(\partial\Omega)$
such that \eqref{hypo} holds, there exists a unique function $u\in L^1_{loc}(\Omega)$ 
which is a weak solution to \eqref{prob}. 
Moreover, for a.e. $x\in\Omega$,
\begin{equation}\label{repr}
u(x)\ =\ \int_\Omega G^s_{|\Omega}(x,y)\;d\mu(y)+
\int_{\partial\Omega}P_{|\Omega}^s(x,y)\;d\zeta(y)
\end{equation}
and
\begin{equation}\label{cont}
\|u\|_{L^1(\Omega,\delta(x) dx)}\ \leq\ 
C(\Omega,N,s)\left(\|\delta\mu\|_{\mathcal{M}(\Omega)}+\|\zeta\|_{\mathcal{M}(\partial\Omega)}\right).
\end{equation}
In addition, the following  estimates hold.
\begin{align}
\|u\|_{L^p(\Omega,\delta(x) dx)}\ &\leq\ C_1\|\delta\mu\|_{\mathcal{M}(\Omega)} &
\qquad\text{if $\zeta=0$ and $p\in\left[1,\frac{N+1}{N+1-2s}\right)$}\label{lp}\\
\|u\|_{C^\alpha(\overline\omega)}\ &\leq\ 
C_2\left(\|\mu\|_{L^\infty(\omega)}+\|\zeta\|_{\mathcal M(\partial\Omega)}\right) &
\qquad\text{if $\omega\subset\subset\Omega$ and $\alpha\in(0,2s)$}\label{calpha}\\
\|u\|_{C^{2s+\alpha}(\overline\omega)}\ &\leq\ 
C_3\left(\|\mu\|_{C^\alpha(\overline\omega)}+\|\zeta\|_{\mathcal M(\partial\Omega)}\right) &
\qquad\text{if $\omega\subset\subset\Omega$ and $2s+\alpha\not\in\N$.}\label{ctwosplusalpha}
\end{align}
In the above $C_1=C_1(\Omega,N,s,p)$, $C_2=C_2(\Omega,\omega,N,s,\alpha)$, $C_3=C_3(\Omega,\omega,N,s,\alpha)$.
\end{theo}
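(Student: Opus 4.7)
The plan is to define $u$ directly by \eqref{repr} and check that it is a weak solution. First, I would verify that the right-hand side of \eqref{repr} is a well-defined function in $L^1(\Omega,\delta(x)dx)$. For the volume part, this reduces to a Fubini argument together with the upper bound $\int_\Omega G^s_{|\Omega}(x,y)\,\delta(x)\,dx \leq C\,\delta(y)$, which comes from the fact that $\Dms$ applied to the smooth function $\delta$ (or its regularization) behaves like $\delta$; equivalently, one uses the heat-kernel bound \eqref{hkb} and integrates in $t$. For the Poisson part, I would use a similar integral bound $\int_\Omega P^s_{|\Omega}(x,y)\delta(x)\,dx \leq C$ obtained from $P^s_{|\Omega}(x,y)=-\partial_{\nu_y} G^s_{|\Omega}(x,y)$ and the boundary behavior of $G^s_{|\Omega}$. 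These estimates yield \eqref{cont}.

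\textbf{Verifying the weak identity.} Given $\psi\in\T(\Omega)$, set $\varphi:=\As\psi\in C^\infty_c(\Omega)$, so that $\psi(x)=\int_\Omega G^s_{|\Omega}(x,y)\varphi(y)\,dy$. The first step is Fubini on
\[
\int_\Omega u(x)\,\As\psi(x)\,dx \ =\ \int_\Omega\!\int_\Omega G^s_{|\Omega}(x,y)\,\varphi(x)\,dx\,d\mu(y)\ +\ \int_\Omega\!\int_{\partial\Omega} P^s_{|\Omega}(x,y)\,\varphi(x)\,d\zeta(y)\,dx,
\]
where the symmetry $G^s_{|\Omega}(x,y)=G^s_{|\Omega}(y,x)$ identifies the first inner integral with $\psi(y)$, giving $\int_\Omega \psi\,d\mu$. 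For the Poisson part, I would prove the companion identity
\[
\int_\Omega P^s_{|\Omega}(x,y)\,\varphi(x)\,dx\ =\ -\frac{\partial\psi}{\partial\nu}(y),\qquad y\in\partial\Omega,
\]
by differentiating $\psi(x)=\int_\Omega G^s_{|\Omega}(x,y)\varphi(y)\,dy$ in the normal direction at the boundary and using the regularity $\psi\in C^1_0(\super\Omega)$ furnished by Lemma~\ref{lemma-test-spectral}. Fubini's theorem is justified in both cases by the integrability estimates mentioned above together with $\psi$ being bounded by a multiple of $\delta$.

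\textbf{Uniqueness.} If $u\in L^1_{loc}(\Omega)$ satisfies \eqref{byparts-spectral} with $\mu=0$ and $\zeta=0$, then $\int_\Omega u\,\As\psi=0$ for every $\psi\in\T(\Omega)$. Since $\As:\T(\Omega)\to C^\infty_c(\Omega)$ is onto by the very definition \eqref{test}, this forces $\int_\Omega u\varphi=0$ for every $\varphi\in C^\infty_c(\Omega)$, hence $u\equiv 0$ a.e. in $\Omega$. The main subtlety here is to make sure $u$ is genuinely in $L^1(\Omega,\delta dx)$ and not merely locally integrable; this can be recovered from \eqref{cont} applied a posteriori, or, independently, from Lemma~\ref{clas-weak}(1).

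\textbf{Quantitative estimates.} The $L^p$-bound \eqref{lp} follows by duality: test against $\varphi\in C^\infty_c(\Omega)$ and use Lemma~\ref{reg-t}-style bounds, namely $\|\delta^{-1}\psi\|_{L^{p'}(\Omega)}\leq C\|\varphi\|_{L^q(\Omega)}$ for appropriate conjugate exponents, where the critical threshold $p<(N+1)/(N+1-2s)$ comes from the sharp integrability of $G^s_{|\Omega}(x,\cdot)/\delta$ deducible from \eqref{hkb}. Interior regularity \eqref{calpha} and \eqref{ctwosplusalpha} is local: on $\omega\subset\subset\Omega$ the kernels $G^s_{|\Omega}(x,y)$ and $P^s_{|\Omega}(x,y)$ are smooth in $x$ (uniformly away from the boundary), so the Poisson integral contributes a $C^\infty_{loc}$ term controlled by $\|\zeta\|_{\mathcal M(\partial\Omega)}$, while the volume integral inherits the Schauder-type regularity from $\mu$ via standard bootstrap using \eqref{As2} and the corresponding estimates for the fractional Laplacian (as in Lemma~\ref{regularity}), applied once one localises using a cutoff. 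The delicate step is the last one, since one must separate the contribution of the nonlocal tail of $\mu$ from the local Hölder regularity; I expect this to require a small modification of the argument in \cite[Prop.~2.8]{silvestre} accounting for the bounded potential term $\kappa(x)u(x)$ appearing in \eqref{As2}.
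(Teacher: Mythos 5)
Your core argument coincides with the paper's: the paper also defines $u$ by \eqref{repr}, checks $u\in L^1(\Omega,\delta(x)dx)$ and \eqref{cont} via the kernel bounds $\int_\Omega G^s_{|\Omega}(x,y)\varphi_1(x)\,dx=\lambda_1^{-s}\varphi_1(y)$ (with $\varphi_1\asymp\delta$) and \eqref{blah}, and then verifies \eqref{byparts-spectral} by Fubini together with Lemma~\ref{inverse} and the identity \eqref{dertest} of Lemma~\ref{lemma-test-spectral} — exactly your two kernel identities. Your uniqueness argument (surjectivity of $\As:\T(\Omega)\to C^\infty_c(\Omega)$, hence $\int_\Omega u\varphi=0$ for all $\varphi\in C^\infty_c(\Omega)$) is effectively the same computation the paper packages as the maximum principle of Lemma~\ref{maxprinc-weak}, so no real difference there.

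Where you diverge is in the auxiliary estimates, which the paper does not reprove inside Theorem~\ref{point} but imports from earlier results. For \eqref{lp} the paper uses Proposition~\ref{Lp-reg}: Jensen's inequality plus a direct computation showing $\sup_y\int_\Omega\bigl(G^s_{|\Omega}(x,y)/\delta(y)\bigr)^p\delta(x)\,dx<\infty$ for $p<\frac{N+1}{N+1-2s}$; your duality route is legitimate but, after Hölder in the measure $\delta(x)dx$, it requires precisely the same kernel bound, so it buys no simplification (and the analogy with Lemma~\ref{reg-t} is only loose, since that lemma concerns $\Ds$ and the weight $\delta^s$). For \eqref{calpha}--\eqref{ctwosplusalpha} the paper invokes Lemma~\ref{clas-harm}, whose proof passes through $v=\rest^{s-1}u$, classical elliptic regularity for $-\lapl v=\mu$, the subordination formula and parabolic estimates; your alternative — cutoff localization of \eqref{As2} to $\Ds$ plus Silvestre-type interior Schauder estimates, treating $\kappa u$ as a bounded potential — can be made to work, but it needs two ingredients you only gesture at: the interior comparison of $J(x,y)$ with the fractional kernel $C_{N,s}|x-y|^{-N-2s}$ (the difference is smooth and bounded on compact subsets, by interior heat-kernel estimates derived from \eqref{hkb}), and control of the nonlocal tail by $\|u\|_{L^1(\Omega,\delta(x)dx)}$ using $J(x,y)\lesssim\delta(y)$ for $x$ in a fixed compact set and $y$ near $\partial\Omega$. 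Since Lemma~\ref{clas-harm} is already available, the shortest correct write-up simply cites it (and Proposition~\ref{Lp-reg}) rather than redoing the regularity theory.
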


In Section \ref{nonlin-sect} we solve nonlinear Dirichlet problems, by proving
\begin{theo}\label{nonhom-cor} Let $g(x,t):\Omega\times\R^+\longrightarrow\R^+$ 
be a Carath\'eodory function and $h:\R^+\to\R^+$ a nondecreasing function 
such that $g(x,0)=0$ and for a.e. $x\in\Omega$ and all $t>0$,
\[
0\le g(x,t)\le h(t) \qquad\text{where}\qquad  h(\delta^{-(2-2s)})\delta\in L^1(\Omega).
\]
Then, problem
\begin{equation}\label{nonhom-prob}
\left\lbrace\begin{aligned}
\As u &=\; -g(x,u) & \hbox{ in }\Omega \\
\frac{u}{h_1} &=\; \zeta & \hbox{ on }\partial\Omega
\end{aligned}\right.
\end{equation}
has a solution $u\in L^1(\Omega,\delta(x)dx)$
for any $\zeta\in C(\partial\Omega), \zeta\ge0$. In addition, if $t\mapsto g(x,t)$ is nondecreasing then the solution is unique.
\end{theo}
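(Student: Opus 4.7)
The plan is to combine the sub- and supersolution method with the linear theory of Theorem \ref{point}. Since $g\ge 0$ and $\zeta\ge 0$, a natural supersolution is the $s$-harmonic extension of $\zeta$: applying Theorem \ref{point} with $\mu=0$ yields $u_0(x)=\int_{\partial\Omega}P^s_{|\Omega}(x,y)\zeta(y)\,d\HH(y)$, which satisfies $\As u_0=0$ and $u_0/h_1=\zeta$. Non-negativity of $P^s_{|\Omega}$ together with $h_1\asymp\delta^{-(2-2s)}$ (Section \ref{bb-sect}) gives $0\le u_0\le C\|\zeta\|_\infty\delta^{-(2-2s)}$, so $\overline u:=u_0$ is a supersolution (since $\As u_0=0\ge -g(x,u_0)$) and $\underline u:=0$ is a subsolution (since $\As 0=0=-g(x,0)$). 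The hypothesis $h(\delta^{-(2-2s)})\delta\in L^1(\Omega)$, together with the monotonicity of $h$, yields $g(\cdot,v)\,\delta\in L^1(\Omega)$ for every $v$ with $0\le v\le u_0$; hence the linear problem with datum $-g(\cdot,v)$ falls within the scope of Theorem \ref{point}.

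I then would adapt the Schauder-type scheme of Theorem \ref{NL-CS} to the spectral setting. For $n\in\N$ truncate $g_n(x,t):=g(x,t\wedge n)\le h(n)$ and consider the map
\[
T_n\colon v\longmapsto u_0-\int_\Omega G^s_{|\Omega}(\cdot,y)\,g_n(y,v(y))\,dy
\]
on a bounded convex subset of $L^1(\Omega,\delta\,dx)$ containing the interval $[\underline u,\overline u]$; the continuity estimate \eqref{cont} and the interior $C^\alpha$-regularity \eqref{calpha} make $T_n$ compact and continuous. The testing argument of Theorem \ref{NL-CS} (against non-negative $\psi\in\T(\Omega)$ supported in $\{T_n v>u_0\}$ or $\{T_n v<0\}$, using a subdomain analogue of Corollary \ref{subdomain} adapted to $\As$) shows that every fixed point of $T_n$ lies in $[0,u_0]$, so Schauder's theorem produces a weak solution $u_n$ of $\As u_n=-g_n(\cdot,u_n)$, $u_n/h_1=\zeta$, with $0\le u_n\le u_0$.

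The main obstacle is the passage $n\to\infty$ uniformly up to $\partial\Omega$, where $u_0$ blows up. The sandwich $0\le u_n\le u_0$ and the interior regularity estimates \eqref{calpha}--\eqref{ctwosplusalpha} give equicontinuity on every compactly-contained subdomain, so a diagonal subsequence converges pointwise a.e.\ in $\Omega$ to some $u$ with $0\le u\le u_0$. The weak formulation \eqref{byparts-spectral} passes to the limit by dominated convergence: for $\psi\in\T(\Omega)\subset C^1_0(\super\Omega)$ the ratio $\psi/\delta$ is bounded (Lemma \ref{lemma-test-spectral}), and
\[
\bigl|g_n(x,u_n(x))\,\psi(x)\bigr|\ \le\ h\bigl(C\,\delta(x)^{-(2-2s)}\bigr)\,\delta(x)\,\|\psi/\delta\|_{L^\infty(\Omega)}
\]
is an $L^1(\Omega)$ dominant by the growth hypothesis on $h$; the boundary term passes because $\partial_\nu\psi\in C(\super\Omega)$ is independent of $n$. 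Uniqueness when $g$ is nondecreasing in $t$ is the easy part: if $u^1,u^2$ are two solutions, the difference $w=u^1-u^2$ weakly solves $\As w=g(\cdot,u^2)-g(\cdot,u^1)$ with $w/h_1=0$; a Kato-type inequality argument, testing \eqref{byparts-spectral} against elements of $\T(\Omega)$ approximating $\mathrm{sgn}(w)$, gives $\As|w|\le 0$ with $|w|/h_1=0$, and hence $w\equiv 0$ by the maximum principle for $\As$.
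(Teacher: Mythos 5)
Your overall strategy (the sandwich $0\le u\le \mathbb{P}^s_{|\Omega}\zeta$ between the trivial subsolution and the $s$-harmonic supersolution, a compact fixed-point map built from the Green operator, Kato plus the maximum principle for uniqueness) is the same as the paper's in spirit, but the implementation of the existence part has a genuine gap. You truncate the nonlinearity at a constant level, $g_n(x,t)=g(x,t\wedge n)$, and then assert that every fixed point $u_n$ of $T_n$ satisfies $0\le u_n\le u_0$, citing ``the testing argument of Theorem \ref{NL-CS}''. The upper bound is immediate (since $g_n\ge0$, $u_n=u_0-\mathbb{G}^s_{|\Omega}[g_n(\cdot,u_n)]\le u_0$), but the lower bound does not follow from that argument: in Theorem \ref{NL-CS} the comparison with the sub- and supersolution works precisely because the nonlinearity is frozen at the values $f(x,\underline u(x))$, $f(x,\overline u(x))$ outside the interval $[\underline u,\overline u]$, a structure which $g(x,t\wedge n)$ does not have; moreover $g(x,t)$ is not defined for $t<0$ and is not assumed monotone in $t$ in the existence part, so you cannot compare $g_n(x,u_n)$ with $g_n(x,u_0)$ either. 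Since the rest of your argument --- the uniform-in-$n$ interior estimates, the $L^1(\Omega,\delta\,dx)$ dominating function $h(C\delta^{-(2-2s)})\delta$, and even the definedness of $g_n(x,u_n)$ --- hinges on this sandwich, the step must be proved; note also that a pointwise maximum-principle argument is delicate here, because a Carath\'eodory right-hand side only yields interior $C^\alpha$ regularity, not enough to evaluate $\As u_n$ pointwise via \eqref{As2}.

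The fix, which is the paper's actual route, removes both the gap and the whole limit in $n$: set $v=\mathbb{P}^s_{|\Omega}\zeta-u$, so the problem becomes $\As v=g(x,\mathbb{P}^s_{|\Omega}\zeta-v)$ with $v/h_1=0$, for which $\underline u=0$ and $\overline u=\mathbb{P}^s_{|\Omega}\zeta$ are sub- and supersolutions, and truncate at these levels, i.e. replace the nonlinearity by $F(x,t)=g\bigl(x,\mathbb{P}^s_{|\Omega}\zeta(x)-[t\wedge\overline u(x)]\vee 0\bigr)$. Because $0\le F(x,t)\le h(\mathbb{P}^s_{|\Omega}\zeta)\le h(C\delta^{-(2-2s)})\in L^1(\Omega,\delta\,dx)$ for \emph{every} $t$, the map $v\mapsto\mathbb{G}^s_{|\Omega}[F(\cdot,v)]$ is already well defined, continuous and compact on all of $L^1(\Omega,\delta\,dx)$, so Schauder applies directly with no level-$n$ truncation or limit passage; Kato's inequality (Lemma \ref{kato}) applied to $(v-\overline u)^+$ together with the maximum principle (Lemma \ref{maxprinc-weak}) shows that any fixed point lies in $[0,\overline u]$, where $F$ coincides with the original nonlinearity. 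This is exactly Theorem \ref{exist-nonlinspec}, which also contains the uniqueness statement you sketch, since the reduced nonlinearity $t\mapsto g(x,\mathbb{P}^s_{|\Omega}\zeta-t)$ is nonincreasing when $g$ is nondecreasing. If you prefer to keep your scheme, you must at least replace $g(x,t\wedge n)$ by the truncation at the sub/supersolution levels so that the comparison argument you invoke actually applies.
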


Finally, with Section \ref{extra-sect} we prove

\begin{theo}\label{xlarge-theo} Let 
\[
p\in\left(1+s,\frac1{1-s}\right).
\]
Then, there exists a nonnegative function $u\in L^1(\Omega,\delta(x)dx)\cap C^\infty(\Omega)$ solving
\begin{equation}\label{large-prob}
\left\lbrace\begin{aligned}
\As u &=\; -u^p & &\hbox{ in }\Omega, \\
\frac{u}{h_1} &=\; +\infty & & \hbox{ on }\partial\Omega
\end{aligned}\right.
\end{equation}
in the following sense:
the first equality holds pointwise and in the sense of distributions, 
the boundary condition is understood as a pointwise limit. 
In addition, there exists a constant $C=C(\Omega, N,s,p)$ such that
\[
0\le u\le C\delta^{-\frac{2s}{p-1}}.
\]
\end{theo}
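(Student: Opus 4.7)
The plan is to mirror the strategy developed for $\Ds$ in the proof of Theorem \ref{main1}: build an explicit supersolution exhibiting the prescribed boundary blow-up, produce an increasing family of bounded-trace solutions via Theorem \ref{nonhom-cor}, and pass to the limit monotonically. The natural exponent is $\gamma := \frac{2s}{p-1}$, which is scale-invariant for the equation $\As u = -u^p$: the two endpoints of the admissible range encode precisely the two integrability conditions we need. Indeed, $p > 1+s$ is equivalent to $\gamma < 2$, which guarantees that $\delta^{-\gamma}\in L^1(\Omega,\delta(x)dx)$, while $p < 1/(1-s)$ is equivalent to $\gamma > 2-2s$, which guarantees that $\delta^{-\gamma}/h_1\to+\infty$ at $\partial\Omega$ (recall $h_1\asymp \delta^{-(2-2s)}$). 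Both inequalities are strict, so there is some room to play with constants.

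The core step is the construction of the supersolution. Let $W(x) := \delta(x)^{-\gamma}$, smoothly extended away from $\partial\Omega$. Using the pointwise integral representation \eqref{As2} together with the sharp two-sided estimates on the jumping kernel $J$ and killing measure $\kappa$ recalled in \eqref{jkappa} (which have the same singular structure near the boundary as the Green function bounds used in Part I), I would estimate $\As W$ near $\partial\Omega$ by splitting the domain of integration into three pieces $\{\delta(y)>\tfrac32\delta(x)\}$, $\{\tfrac12\delta(x)\le\delta(y)\le\tfrac32\delta(x)\}$ and $\{\delta(y)<\tfrac12\delta(x)\}$, flattening the boundary as in the proof of Proposition \ref{impo}. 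The leading-order contribution is of size $\delta(x)^{-\gamma-2s}=\delta(x)^{-\gamma p}=W^p$, so one obtains
\[
\As W(x)\ \geq\ -C\,W(x)^p\qquad\text{for }\delta(x)<\delta_0,
\]
for some $C,\delta_0>0$. An adaptation of Lemma \ref{mod-supersol}, using the solution $\xi$ of \eqref{xi} with $\As$ in place of $\Ds$, then turns this into a global supersolution $\overline u := \mu W+\lambda\xi$ satisfying $\As\overline u\geq -\overline u^p$ throughout $\Omega$, for suitably large $\mu,\lambda\geq 1$.

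With $\overline u$ at hand, one invokes Theorem \ref{nonhom-cor} applied to $g(x,t)=t^p\wedge\overline u(x)^p$ (whose admissibility is exactly the integrability condition $h(\delta^{-(2-2s)})\delta\in L^1(\Omega)$, i.e.\ $p<1/(1-s)$) with boundary datum $\zeta\equiv k$, $k\in\N$, obtaining nonnegative solutions $u_k\in L^1(\Omega,\delta(x)dx)$. A comparison argument shows $\{u_k\}$ is monotone increasing in $k$ and bounded above by $\overline u$: the difference $u_{k+1}-u_k$ vanishes in the singular-trace sense on $\partial\Omega$ only up to the constant $1$ in $h_1$-weighted boundary values and one argues on the open set where the difference is negative, as in \textit{Step 0} of the proof of Theorem \ref{main1}. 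Setting $u:=\lim_k u_k$ produces a function in $L^1(\Omega,\delta(x)dx)$ with $u\leq\overline u\leq C\delta^{-2s/(p-1)}$. Interior regularity (\eqref{calpha}--\eqref{ctwosplusalpha}) together with Lemma \ref{clas-weak} upgrades $u$ to $C^\infty(\Omega)$ and ensures that $\As u=-u^p$ pointwise, while $u(x)/h_1(x)\geq u_k(x)/h_1(x)\to k$ at $\partial\Omega$ for every $k$ gives the singular boundary condition $u/h_1=+\infty$.

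The main obstacle is the boundary estimate $\As W\geq -C W^p$ near $\partial\Omega$. Unlike the fractional Laplacian, $\As$ is not translation-invariant and its jumping kernel $J(x,y)$ degenerates as $\delta(y)\downarrow 0$, so one must carefully compare $J$ with the fractional Laplacian kernel $|x-y|^{-N-2s}$ in each of the three regions above; in the region $\{\delta(y)<\tfrac12\delta(x)\}$ the decay of $J$ in $\delta(y)$ actually helps, but one must keep track of the killing term $\kappa(x)W(x)\asymp\delta(x)^{-2s-\gamma}$, which turns out to be of the same order as the leading principal-value contribution rather than negligible. Verifying that the sum of all three pieces and the killing term satisfies the required one-sided bound with the correct sign is where the bulk of the technical work lies.
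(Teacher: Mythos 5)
Your proposal is correct and follows essentially the same route as the paper's proof: the supersolution $\mu\,\delta^{-2s/(p-1)}+\lambda\xi$ obtained from the three-region boundary estimate (the analogue of Proposition \ref{impo} for the kernel $J$), approximation by the solutions of Theorem \ref{nonhom-cor} with boundary data $\zeta\equiv k$, comparison with the supersolution, monotone passage to the limit, and interior bootstrap plus the bound $u/h_1\geq u_k/h_1\to k$. The only (cosmetic) difference is your worry about the killing term: since $\kappa\geq0$ and $\delta^{-\gamma}>0$, the term $\kappa(x)\delta(x)^{-\gamma}$ has the favourable sign and is simply discarded in the lower bound, so no careful bookkeeping is needed there.
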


\chapter{Nonhomogeneous boundary conditions}\label{spectral}

\section{green function and poisson kernel}\label{green-sect}
In the following three lemmas\footnote{which hold even if the domain $\Omega$ is not $C^{1,1}$}, we establish some useful identities for the Green function defined by \eqref{green}.
\begin{lem}\label{inverse}{\rm (\cite[formula (17)]{grss})}
Let $f\in L^2(\Omega)$. For almost every $x\in\Omega$, $G_{|\Omega}^s(x,\cdot)f\in L^1(\Omega)$ and
\[
\rest^{-s}f(x) = \int_\Omega G_{|\Omega}^s(x,y)f(y)\;dy\qquad \text{for a.e. $x\in\Omega$}.
\]
\end{lem}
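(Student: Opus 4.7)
The plan is to prove the identity by subordinating the heat semigroup $e^{t\lapl|_\Omega}$, so as to represent $\rest^{-s}$ through Balakrishnan's formula
\[
\rest^{-s}f \;=\; \frac{1}{\Gamma(s)}\int_0^\infty e^{t\lapl|_\Omega}f\cdot t^{s-1}\,dt,
\]
and then to recognize the right-hand side of the lemma as the same integral rewritten via the heat kernel, using the definition \eqref{green} of $G^s_{|\Omega}$.

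First I would establish Balakrishnan's formula in $L^2(\Omega)$ using the spectral decomposition. The scalar identity $\lambda^{-s}=\Gamma(s)^{-1}\int_0^\infty e^{-\lambda t}t^{s-1}\,dt$, valid for $\lambda>0$, applied to each eigenfunction $\varphi_j$ of eigenvalue $\lambda_j>0$, yields the vector identity $\lambda_j^{-s}\varphi_j=\Gamma(s)^{-1}\int_0^\infty e^{-\lambda_j t}\varphi_j\cdot t^{s-1}\,dt$. Expanding $f=\sum_j\widehat f_j\varphi_j$ in $L^2(\Omega)$ and using the spectral gap $\lambda_1>0$ of the Dirichlet Laplacian on the bounded domain $\Omega$ to bound $\|e^{t\lapl|_\Omega}f\|_{L^2}\le\min(1,e^{-\lambda_1 t})\|f\|_{L^2}$, the Bochner integral on the right of Balakrishnan's formula converges absolutely; testing against each $\varphi_j$ then matches the spectral definition \eqref{As} of $\rest^{-s}f=\sum_j\lambda_j^{-s}\widehat f_j\varphi_j$.

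Next I would use the heat kernel representation $e^{t\lapl|_\Omega}f(x)=\int_\Omega p_\Omega(t,x,y)f(y)\,dy$, together with \eqref{green}, to rewrite the Bochner integral pointwise. Assuming first $f\ge 0$, Tonelli's theorem applied to the nonnegative measurable function $(t,y)\mapsto p_\Omega(t,x,y)f(y)t^{s-1}$ on $(0,\infty)\times\Omega$ gives, for every $x\in\Omega$,
\[
\frac{1}{\Gamma(s)}\int_0^\infty e^{t\lapl|_\Omega}f(x)\,t^{s-1}\,dt \;=\; \int_\Omega G^s_{|\Omega}(x,y)f(y)\,dy \quad\in[0,+\infty].
\]
Since by the previous step the left-hand side represents $\rest^{-s}f\in L^2(\Omega)$ and is therefore finite a.e., the same holds for the right-hand side, yielding $G^s_{|\Omega}(x,\cdot)f\in L^1(\Omega)$ for a.e.\ $x\in\Omega$ and the claimed pointwise identity. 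For general $f\in L^2(\Omega)$, applying this first to $|f|$ secures the a.e.\ $L^1$-integrability of $G^s_{|\Omega}(x,\cdot)f$, and the linear decomposition $f=f^+-f^-$ then gives the full statement.

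The main technical delicacy is identifying the $L^2$-valued Bochner integral appearing in Balakrishnan's formula with the pointwise Tonelli integral; this identification, however, is standard once the latter is known to be a.e.\ finite, since both agree, via the Fubini--Bochner theorem, with the same a.e.\ representative of the $L^2$-function $\rest^{-s}f$.
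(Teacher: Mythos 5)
Your proof is correct, but it reaches the a.e. identity by a genuinely different route than the paper. The paper also starts from the spectral side, computing $\int_\Omega G^s_{|\Omega}(x,y)\varphi_j(y)\,dy=\lambda_j^{-s}\varphi_j(x)$ for each eigenfunction and deducing that $\mathbb G^s_{|\Omega}$ is an isometry of $L^2(\Omega)$ into $H(2s)$ coinciding with $\rest^{-s}$; but it then upgrades this to the pointwise a.e. statement by density: for $\psi\in C^\infty_c(\Omega)$ it proves that the spectral partial sums converge \emph{uniformly}, using the eigenfunction bounds \eqref{esti-ef} and the fast decay of spectral coefficients \eqref{spectral-coef}, and for general $f\in L^2(\Omega)$ it approximates $|f|$ by nonnegative smooth functions and concludes with Fatou's lemma. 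You instead prove Balakrishnan's subordination formula as an absolutely convergent $L^2$-valued Bochner integral and then pass to the Green-function representation in one stroke by Tonelli--Fubini with the heat kernel, reducing to $f\ge0$ and splitting $f=f^+-f^-$; this avoids the eigenfunction regularity estimates and the Fatou step altogether and is arguably cleaner. What the paper's route buys is that \eqref{esti-ef}, \eqref{spectral-coef} and the isometry \eqref{riesz-id} are established en passant and reused repeatedly later (e.g. in Lemma \ref{As-smooth}, the identity \eqref{adg} and Lemma \ref{lemma-test-spectral}), whereas your route buys brevity and works directly for every $f\in L^2(\Omega)$ without a dense subclass. One small point of logical order in your last step: the a.e. finiteness of $\int_0^\infty e^{t\lapl|_\Omega}f(x)\,t^{s-1}dt$ and its identification with the Bochner integral are best obtained simultaneously from the bound $\int_0^\infty\|e^{t\lapl|_\Omega}f\|_{L^1(\Omega)}\,t^{s-1}dt\le |\Omega|^{1/2}\|f\|_{L^2(\Omega)}\int_0^\infty \min(1,e^{-\lambda_1 t})\,t^{s-1}dt<\infty$ and Fubini on $(0,\infty)\times\Omega$ (testing against bounded functions), rather than by first asserting that the pointwise integral ``represents'' $\rest^{-s}f$; with that phrasing fixed, the argument is complete.
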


\begin{proof} If $\varphi_j$ is an eigenfunction of $\left.-\lapl\right\vert_\Omega$, then
\begin{align*}
& \int_\Omega G_{|\Omega}^s(x,y)\varphi_j(y)\; dy \ =\ \int_0^\infty\frac{t^{s-1}}{\Gamma(s)}\int_\Omega p_\Omega(t,x,y)\varphi_j(y)\;dy\;dt \\
& =\ \int_0^\infty\frac{t^{s-1}}{\Gamma(s)}\,e^{-\lambda_jt}\varphi_j(x)\;dt\ =\ \frac{\lambda_j^{-s}}{\Gamma(s)}\varphi_j(x)\int_0^\infty t^{s-1}e^{-t}\;dt
\ =\ \lambda_j^{-s}\varphi_j(x)=\Dms\varphi_j(x).
\end{align*}
By linearity, if $f\in L^2(\Omega)$ is a linear combination of eigenvectors  $f=\sum_{j=1}^M \widehat f_j\varphi_j$, then 
\[
\int_\Omega G_{|\Omega}^s(x,y)\sum_{j=1}^M \widehat f_j\varphi_j(y)\; dy\ =\ \sum_{j=1}^M \widehat f_j\lambda_j^{-s}\varphi_j(x)
\]
and so, letting 
\begin{equation}\label{identity}
\mathbb{G}^s_{|\Omega} f\ :=\ \int_\Omega G_{|\Omega}^s(\cdot,y)\,f(y)\;dy,
\end{equation}
we have
\begin{equation}\label{riesz-id}
\|\mathbb{G}^s_{|\Omega} f\|_{H(2s)}^2=\sum_{j=1}^M\lambda_j^{2s}\cdot \vert\widehat f_j\vert^2\lambda_j^{-2s} = \|f\|_{L^2(\Omega)}^2.
\end{equation}
Thus the map $\mathbb{G}_{|\Omega}^s: f\longmapsto \mathbb G^s_{|\Omega} f$
uniquely extends to a linear isometry from $L^2(\Omega)$ to $H(2s)$, 
which coincides with $\Dms$. It remains to prove that the identity \eqref{identity} remains valid a.e. for $f\in L^2(\Omega)$.
By standard parabolic theory, the function $(t,x)\mapsto \int_\Omega p_\Omega(t,x,y)dy$ is bounded (by 1) and smooth in $[0,T]\times\omega$ for every $T>0$, $\omega\subset\subset \Omega$. Hence, for every $x\in\Omega$, $G_{|\Omega}^s(x,\cdot)\in L^1(\Omega)$.
Assume first that $f=\psi\in C^\infty_c(\Omega)$ and take a sequence ${\{\psi_k\}}_{k\in\N}$
in the linear span of the eigenvectors
${\{\varphi_j\}}_{j\in\N}$ such that ${\{\psi_k\}}_{k\in\N}$ 
converges to $\psi$ in $L^2(\Omega)$. The convergence is in fact uniform and so \eqref{identity} holds for $f=\psi$. Indeed,
by standard elliptic regularity, there exist constants $C=C(N,\Omega), k=k(N)$ such that any eigenfunction satisfies 
\[
\|\grad\varphi_j\|_{L^\infty(\Omega)}\leq (C\lambda_j)^k \|\varphi_j\|_{L^2(\Omega)}=(C\lambda_j)^k.
\]
In particular, taking $C$ larger if needed,
\begin{equation}\label{esti-ef}
\left\|\frac{\varphi_j}{\delta}\right\|_{L^\infty(\Omega)}\leq(C\lambda_j)^k.
\end{equation}
Now write $\psi=\sum_{j=1}^{\infty}\widehat\psi_j\varphi_j$ and fix $m\in\N$. Integrating by parts $m$ times yields
\[
\widehat\psi_j=\int_\Omega\psi\varphi_j=-\frac1{\lambda_j}\int_\Omega\psi\,\lapl\varphi_j
=-\frac1{\lambda_j}\int_\Omega\lapl\psi\,\varphi_j=\ldots=\frac{(-1)^m}{\lambda_j^m}\int_\Omega\lapl^m\psi\,\varphi_j
\]
which implies that
\begin{equation}\label{spectral-coef}
|\widehat\psi_j|\leq \frac{\|\lapl^m\psi\|_{L^2(\Omega)}}{\lambda_j^m},
\end{equation}
i.e. the spectral coefficients of $\psi$ converge to $0$ faster than any polynomial. This and \eqref{esti-ef} imply that ${\{\psi_k\}}_{k\in\N}$ 
converges to $\psi$ uniformly, as claimed.

Take at last $f\in L^2$ and a sequence ${\{f_k\}}_{k\in\N}$ 
in $C^\infty_c(\Omega)$ of nonnegative functions such that ${\{f_k\}}_{k\in\N}$ 
converges to $\vert f\vert$ a.e. and in $L^2(\Omega)$. 
By \eqref{riesz-id}, 
$\|\mathbb{G}^s_{|\Omega} f_k\|_{L^2}\le \|f_k\|_{L^2}$ 
and by  Fatou's lemma, we deduce that
$G_{|\Omega}^s(x,\cdot)f\in L^1(\Omega)$ 
for a.e. $x\in\Omega$ and the desired identity follows.
\end{proof}

\begin{lem}{\rm (\cite[formula (8)]{grss})}
For a.e. $x,y\in\Omega$, 
\begin{equation}\label{compo}
\int_{\Omega} G^{1-s}_{|\Omega}(x,\xi)G^s_{|\Omega}(\xi,y)\:d\xi= G^1_{|\Omega}(x,y).
\end{equation}
\end{lem}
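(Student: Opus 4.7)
The plan is to insert the definition \eqref{green} of the Green functions on the left-hand side and exploit the semigroup (Chapman-Kolmogorov) property of the Dirichlet heat kernel
\[
\int_\Omega p_\Omega(t,x,\xi)\,p_\Omega(\tau,\xi,y)\;d\xi\ =\ p_\Omega(t+\tau,x,y),
\]
which holds for all $t,\tau>0$ and $x,y\in\Omega$. First I would write, using \eqref{green},
\[
\int_{\Omega} G^{1-s}_{|\Omega}(x,\xi)G^s_{|\Omega}(\xi,y)\:d\xi
\ =\ \frac1{\Gamma(1-s)\Gamma(s)}\int_\Omega\int_0^\infty\int_0^\infty p_\Omega(t,x,\xi)p_\Omega(\tau,\xi,y)\,t^{-s}\tau^{s-1}\;dt\,d\tau\,d\xi,
\]
and then apply Fubini-Tonelli (which is legitimate since all integrands are nonnegative) to swap the $\xi$-integral inside, so that the semigroup identity collapses the inner $\xi$-integration into $p_\Omega(t+\tau,x,y)$.

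Next I would perform the change of variables $u=t+\tau$, $\tau\in(0,u)$, which turns the remaining double integral into
\[
\frac1{\Gamma(1-s)\Gamma(s)}\int_0^\infty p_\Omega(u,x,y)\left(\int_0^u (u-\tau)^{-s}\tau^{s-1}\,d\tau\right)du.
\]
The inner integral evaluates, via the substitution $\tau=u\sigma$, to the Beta function $B(s,1-s)=\Gamma(s)\Gamma(1-s)$, which cancels exactly the normalizing constants. The resulting expression is $\int_0^\infty p_\Omega(u,x,y)\,du$, which is precisely $G^1_{|\Omega}(x,y)$ by \eqref{green} with $s=1$.

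The main (minor) obstacle is checking that the triple integral is finite for a.e. $x,y\in\Omega$ so that the application of Fubini is not vacuous, and in particular that the ``a.e.'' qualifier in the statement is unavoidable. This is handled by the very computation above: the iterated integral equals $G^1_{|\Omega}(x,y)$, which is finite for a.e. $(x,y)$ (indeed it is the Green function of $\left.-\lapl\right|_\Omega$, finite off the diagonal). No delicate cancellation is needed because every quantity in sight is nonnegative; the argument is essentially a clean bookkeeping of the Bochner subordination identity $\rest^{-1}=\rest^{-(1-s)}\circ\rest^{-s}$ at the level of kernels, in line with the viewpoint already adopted in Lemma \ref{inverse}.
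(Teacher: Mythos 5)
Your proof is correct, but it follows a genuinely different route from the paper's. The paper argues at the operator level: it first checks on eigenfunctions that $\rest^{-s}\circ\rest^{s-1}=\rest^{-1}$ in $L^2(\Omega)$, and then, invoking Lemma \ref{inverse} (the representation of $\rest^{-s}$ by the kernel $G^s_{|\Omega}$) together with Fubini's theorem, deduces that both sides of \eqref{compo} induce the same operator on $L^2(\Omega)$, whence the kernels coincide for a.e. $(x,y)$. You instead work entirely at the kernel level: plugging in \eqref{green}, using nonnegativity (Tonelli) and the Chapman--Kolmogorov property of the Dirichlet heat kernel, and evaluating the resulting time integral as the Beta function $B(s,1-s)=\Gamma(s)\Gamma(1-s)$, which is exactly the subordination identity made explicit. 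Your argument is more self-contained (it does not pass through duality against test functions or the fast decay of spectral coefficients) and in fact gives the identity pointwise for every $x\neq y$, not merely almost everywhere, since every quantity is nonnegative and the right-hand side is finite off the diagonal; the paper's argument is shorter once Lemma \ref{inverse} is available and stays consistently within the spectral-decomposition framework used throughout that chapter. Either proof is acceptable; yours trades the operator identity for an explicit Beta-function computation and a justified use of the heat-kernel semigroup property.
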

\begin{proof}
Clearly, given an eigenfunction $\varphi_j$,
\[
\rest^{-s}\rest^{s-1}\varphi_j = \lambda_j^{-s}\lambda_j^{s-1}\varphi_j = \rest^{-1}\varphi_j
\]
so $\rest^{-s}\circ\rest^{s-1}=\rest^{-1}$ in $L^2(\Omega)$. By the previous lemma and Fubini's theorem, we deduce that for $\varphi\in L^2(\Omega)$ and a.e. $x\in\Omega$,
\[
\int_{\Omega^2} G^{1-s}_{|\Omega}(x,\xi)G^s_{|\Omega}(\xi,y)\varphi(y)\;d\xi\:dy= \int_\Omega G^1_{|\Omega}(x,y)\varphi(y)\; dy
\]
and so \eqref{compo} holds almost everywhere.
\end{proof}

\begin{lem} For any $\psi\in C^\infty_c(\Omega)$, 
\begin{equation}\label{adg}
\As\psi\ =\ (-\lapl)\circ \rest^{s-1} \psi
\ =\ \rest^{s-1}\circ(-\lapl)  \psi.
\end{equation}
\end{lem}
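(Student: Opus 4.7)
The plan is to establish both identities by working spectrally, exploiting the rapid decay of the spectral coefficients of a compactly supported smooth function. Write $\psi=\sum_{j=1}^\infty \widehat\psi_j\varphi_j$ in $L^2(\Omega)$. By \eqref{spectral-coef}, for every $m\in\N$ there is a constant $C_m$ such that $|\widehat\psi_j|\le C_m\lambda_j^{-m}$, while iterating the eigenfunction gradient estimate \eqref{esti-ef} (and standard elliptic regularity up to the boundary, since $\partial\Omega\in C^{1,1}$) gives, for every $\ell\in\N$, a polynomial bound $\|\varphi_j\|_{C^\ell(\overline\Omega)}\le (C\lambda_j)^{k\ell}$ for some $k$ depending only on $N$ and $\Omega$. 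Combined with Weyl's asymptotics $\lambda_j\sim j^{2/N}$, this tells us that for any $\alpha\in\R$ and any $\ell\in\N$, the series
\[
\sum_{j=1}^\infty \lambda_j^\alpha\,\widehat\psi_j\,\varphi_j
\]
converges absolutely in $C^\ell(\overline\Omega)$.

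First I would handle the identity $\As\psi=(-\lapl)\circ\rest^{s-1}\psi$. The function $\psi\in L^2(\Omega)$ belongs to the domain of $\rest^{s-1}$ and, by the previous observation applied with $\alpha=s-1$, the series $\rest^{s-1}\psi=\sum_j\lambda_j^{s-1}\widehat\psi_j\varphi_j$ converges in $C^2(\overline\Omega)$. Hence one may apply $-\lapl$ termwise, yielding
\[
(-\lapl)\rest^{s-1}\psi\ =\ \sum_{j=1}^\infty \lambda_j^{s-1}\widehat\psi_j(-\lapl\varphi_j)\ =\ \sum_{j=1}^\infty \lambda_j^s\,\widehat\psi_j\,\varphi_j\ =\ \As\psi,
\]
where the last equality is just the definition \eqref{As} (convergence in $L^2$ being more than enough here, since we already have uniform convergence).

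For the second identity I would argue symmetrically. Since $\psi\in C^\infty_c(\Omega)$, $-\lapl\psi\in C^\infty_c(\Omega)\subset L^2(\Omega)$, and its spectral coefficients are obtained by integrating by parts twice: $\widehat{(-\lapl\psi)}_j=\lambda_j\widehat\psi_j$. Thus again by the uniform convergence criterion above (with $\alpha=s-1$ applied to the function $-\lapl\psi$ whose coefficients decay polynomially), one has
\[
\rest^{s-1}(-\lapl\psi)\ =\ \sum_{j=1}^\infty \lambda_j^{s-1}\cdot \lambda_j\widehat\psi_j\,\varphi_j\ =\ \sum_{j=1}^\infty \lambda_j^s\,\widehat\psi_j\,\varphi_j\ =\ \As\psi.
\]

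The only real technical point is the justification of termwise differentiation, that is, the absolute convergence of $\sum_j \lambda_j^s\widehat\psi_j\varphi_j$ in $C^0(\overline\Omega)$ and of $\sum_j \lambda_j^{s-1}\widehat\psi_j\varphi_j$ in $C^2(\overline\Omega)$. This is the step I expect to be the main (though mild) obstacle, and it is handled exactly by combining the super-polynomial decay \eqref{spectral-coef} of $\widehat\psi_j$ with the polynomial growth of $\|\varphi_j\|_{C^\ell(\overline\Omega)}$ in $\lambda_j$; once this is in place, both identities reduce to a trivial manipulation of the spectral expansion.
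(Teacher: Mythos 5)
Your proposal is correct and follows essentially the same route as the paper: verify the identity on eigenfunctions and then extend it to $\psi\in C^\infty_c(\Omega)$ via its spectral decomposition, using the super-polynomial decay of the coefficients \eqref{spectral-coef} together with the polynomial eigenfunction bounds \eqref{esti-ef}. You merely spell out the convergence details (the $C^2(\overline\Omega)$ bounds and Weyl asymptotics justifying termwise differentiation) that the paper's proof leaves implicit in its appeal to density.
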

\begin{proof} 
The identity clearly holds if $\psi$ is an eigenfunction. If $\psi\in C^\infty_c(\Omega)$, its spectral coefficients have fast decay and the result follows by writing the spectral decomposition of $\psi$. Indeed, 
thanks to \eqref{spectral-coef} and \eqref{esti-ef},  we may easily work by density to establish \eqref{adg}.
\end{proof}

Let us turn to the definition and properties of the Poisson kernel. 
Recall that, 
for $x\in\Omega, y\in\partial\Omega$, the Poisson kernel of the Dirichlet Laplacian is given by
\[
P_{|\Omega}^1(x,y)\ =\ -\left.\frac{\partial}{\partial\nu_y}\right. G^1_{|\Omega}(x,y).
\]

\begin{lem}\label{lemma8} 
The function
\[
P_{|\Omega}^s(x,y)\ :=\ -\frac{\partial}{\partial\nu_y} G^s_{|\Omega}(x,y)
\]
is well-defined for $x\in\Omega, y\in\partial\Omega$ and 
$P_{|\Omega}^s(x,\cdot)\in C(\partial\Omega)$ for any $x\in\Omega$.
Furthermore, there exists a constant $C>0$ depending on $N,s,\Omega$ only such that
\begin{equation}\label{poissbound}
\frac1C\frac{\delta(x)}{|x-y|^{N+2-2s}}\le P^s_{|\Omega}(x,y)\le C\frac{\delta(x)}{|x-y|^{N+2-2s}}
\end{equation}
and 
\begin{equation}\label{pois-id}
\int_{\Omega} G^{1-s}_{|\Omega}(x,\xi)P^s_{|\Omega}(\xi,y)\;d\xi\ =\ P_{|\Omega}^1(x,y).
\end{equation}
\end{lem}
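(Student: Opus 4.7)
The strategy is to work throughout from the Bochner subordination representation
\[
G^s_{|\Omega}(x,y)\;=\;\frac1{\Gamma(s)}\int_0^\infty p_{\Omega}(t,x,y)\,t^{s-1}\,dt
\]
and to exploit the sharp two-sided heat kernel bounds \eqref{hkb} available for $p_\Omega$ on the $C^{1,1}$ domain $\Omega$, together with the analogous estimate for the inward normal derivative of $p_\Omega$ on $\partial\Omega$, namely $\bigl|\partial_{\nu_y}p_\Omega(t,x,y)\bigr|\asymp (1\wedge \delta(x)/\sqrt t)\, t^{-1/2}\,t^{-N/2}\,e^{-c|x-y|^2/t}$ for $y\in\partial\Omega$. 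These follow from standard parabolic boundary theory in $C^{1,1}$ domains.

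\emph{Existence of $P^s_{|\Omega}(x,y)$ and bound \eqref{poissbound}.} First I would fix $x\in\Omega$ and $y\in\partial\Omega$ and show that the difference quotient $[G^s_{|\Omega}(x,y-h\nu(y))-G^s_{|\Omega}(x,y)]/h$ converges as $h\downarrow 0$ by passing the limit inside the $t$-integral via dominated convergence. The pointwise limit of the integrand is $\partial_{\nu_y}p_\Omega(t,x,y)\,t^{s-1}/\Gamma(s)$; a dominating function is produced from the mean value theorem and the Gaussian gradient estimate $|\grad_y p_\Omega(t,x,y')|\le C t^{-1/2}(1\wedge\delta(x)/\sqrt t)\,t^{-N/2}e^{-c|x-y|^2/t}$ valid uniformly for $y'$ in a neighbourhood of $y$. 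This gives
\[
P^s_{|\Omega}(x,y)\;=\;-\frac1{\Gamma(s)}\int_0^\infty\partial_{\nu_y}p_\Omega(t,x,y)\,t^{s-1}\,dt.
\]
Inserting the sharp boundary estimate and splitting the $t$-integral at $t=|x-y|^2$, the Gaussian factor is uniformly bounded on one side and produces the desired decay on the other, so that an elementary computation yields the two-sided bound \eqref{poissbound}. Continuity of $P^s_{|\Omega}(x,\cdot)$ on $\partial\Omega$ then follows by another application of dominated convergence using the continuity of $\partial_{\nu_y}p_\Omega(t,x,\cdot)$ and the same majorant.

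\emph{Derivation of \eqref{pois-id}.} I would start from the composition identity \eqref{compo} of the previous lemma, evaluated at $(x,y_h)$ with $y_h=y-h\nu(y)$ for $h>0$ small, and $y\in\partial\Omega$. Subtracting the value at $(x,y)$ (where $G^1_{|\Omega}(x,y)=0$) and dividing by $h$, the right-hand side converges to $P^1_{|\Omega}(x,y)$. On the left-hand side, dominated convergence in the $\xi$-variable yields
\[
\lim_{h\downarrow 0}\int_\Omega G^{1-s}_{|\Omega}(x,\xi)\,\frac{G^s_{|\Omega}(\xi,y_h)}{h}\,d\xi
\;=\;\int_\Omega G^{1-s}_{|\Omega}(x,\xi)\,P^s_{|\Omega}(\xi,y)\,d\xi,
\]
provided the integrand is dominated independently of $h$ by an $L^1(d\xi)$ function. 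For this we use the sharp Green function bound $G^s_{|\Omega}(\xi,y_h)\le C\,\delta(\xi)\,\delta(y_h)/|\xi-y_h|^{N+2-2s}$ (again a direct consequence of the heat kernel bounds via the subordination integral), which gives $G^s_{|\Omega}(\xi,y_h)/h\le C\delta(\xi)/|\xi-y|^{N+2-2s}$; this is integrable against $G^{1-s}_{|\Omega}(x,\xi)$ by the analogous bound on $G^{1-s}_{|\Omega}$ and a routine convolution estimate that exploits $\delta(\xi)\cdot G^{1-s}_{|\Omega}(x,\xi)\le C|x-\xi|^{-(N-2+2s)}$ off the diagonal.

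\emph{Main obstacle.} The chief technical point in both parts is to secure uniform, integrable majorants for the boundary difference quotients of $G^s_{|\Omega}$, both pointwise in $t$ (for defining $P^s_{|\Omega}$) and pointwise in $\xi$ (for proving \eqref{pois-id}). Once the sharp $C^{1,1}$ heat kernel estimates and their normal-derivative versions are in hand, everything reduces to elementary integral computations; without those estimates the argument does not close, so the bulk of the work lies in invoking and combining them correctly.
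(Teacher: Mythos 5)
Your overall route is the same as the paper's: both parts rest on the subordination formula for $G^s_{|\Omega}$ together with the sharp heat kernel bounds \eqref{hkb}, and \eqref{pois-id} is obtained in both cases by differentiating the composition identity \eqref{compo} in the normal direction at the boundary and justifying the exchange of limit and integral. One presentational difference: you import a boundary gradient estimate for $p_\Omega$ and a two-sided bound on $\partial_{\nu_y}p_\Omega$ as external ``standard parabolic theory'', whereas the paper never uses gradient bounds: it realizes $P^s_{|\Omega}(x,y)$ as the boundary limit of the quotient $G^s_{|\Omega}(x,z)/\delta(z)$, feeds the two-sided bound \eqref{hkb} (divided by $\delta(z)$) into a generalized dominated convergence argument, and in this way both the existence of the normal derivative, the two-sided estimate \eqref{poissbound} and the continuity in $y$ come out of \eqref{hkb} plus the $C^1$ regularity of $p_\Omega(\cdot,x,\cdot)$ up to $\partial\Omega$. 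Your variant is workable provided you actually cite (or derive from \eqref{hkb} by the same limiting argument) the two-sided normal-derivative bound, which is precisely the content that needs proof here, not a throwaway hypothesis.

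There is, however, a genuine gap in your proof of \eqref{pois-id}. With $y_h=y-h\nu(y)$ (so $\delta(y_h)=h$), the bound $G^s_{|\Omega}(\xi,y_h)\le C\,\delta(\xi)\,\delta(y_h)\,|\xi-y_h|^{-(N+2-2s)}$ gives a majorant involving $|\xi-y_h|$, not $|\xi-y|$, and the replacement you make is not legitimate uniformly in $h$: for $\xi$ within distance $O(h)$ of $y$ the kernel $G^s_{|\Omega}(\xi,y_h)$ carries its interior singularity of order $|\xi-y_h|^{2s-N}$, and for instance at $|\xi-y_h|\sim h^2$ one finds $G^s_{|\Omega}(\xi,y_h)/h\sim h^{4s-2N-1}$, which is far larger than your claimed dominating function $C\delta(\xi)|\xi-y|^{-(N+2-2s)}\sim h^{2s-N-1}$. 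So dominated convergence cannot be invoked as stated; the singular point moves with $h$ and there is no fixed integrable majorant produced by the argument you give. This is exactly the region the paper treats separately: it splits off small times and a small ball around $y$ and shows that this piece of the double integral is $O(\eps^{2+2s})$, after which dominated convergence applies on the remainder. Your argument can be repaired in the same way (show the contribution of $\{|\xi-y|\le Ch\}$ is $O(h^{2s})$, using $G^{1-s}_{|\Omega}(x,\xi)\le C_x\delta(\xi)\le C_xh$ there and \eqref{green-behav}), but as written the key uniformity step fails.
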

\begin{rmk}\rm
When $\Omega$ is merely Lipschitz, one must work with the Martin kernel in place of the Poisson kernel, see \cite{gprssv}.
\end{rmk}

\begin{proof} {\it of Lemma \ref{lemma8}. }
Take $x,z\in\Omega$, $y\in\partial\Omega$. Then,
\[\frac{G_{|\Omega}^s(x,z)}{\delta(z)}=
\frac{1}{\Gamma(s)\,\delta(z)}\int_0^\infty p_\Omega(t,x,z)t^{s-1}\;dt=
\frac{\vert z-x\vert^{2s}}{\Gamma(s)\,\delta(z)} \int_0^\infty p_\Omega(\vert z-x\vert^{2}\tau,x,z)\tau^{s-1}\;d\tau.
\]
Since $\Omega$ has $C^{1,1}$ boundary, 
given $x\in\Omega$, $p_\Omega(\cdot,x,\cdot)\in C^1((0,+\infty)\times\overline\Omega)$ 
and the following heat kernel bound holds (cf. Davies, Simon and Zhang \cite{davies1,davies2, zhang}):
\begin{equation}\label{hkb}
\left[\frac{\delta(x)\delta(y)}{t}\wedge 1\right]\frac{1}{c_1 t^{N/2}}e^{-|x-y|^2/(c_2t)}\le p_\Omega(t,x,y)\leq\left[\frac{\delta(x)\delta(y)}{t}\wedge 1\right]\frac{c_1}{t^{N/2}}e^{-c_2|x-y|^2/t},
\end{equation}
where $c_1,c_2$ are constants depending on $\Omega, N$ only.
So,
\begin{equation}\label{prepois}
\frac{\vert z-x\vert^{2s}}{\,\delta(z)} p_\Omega(\vert z-x\vert^{2}\tau,x,z)\tau^{s-1}\le C
\vert z-x\vert^{2s-N-2}\delta(x)\tau^{s-2-N/2}e^{-c_2/\tau}
\end{equation}
and the reverse inequality 
\[
\frac 1C
\vert z-x\vert^{2s-N-2}\delta(x)\tau^{s-2-N/2}e^{-1/(c_2\tau)}\le
\frac{\vert z-x\vert^{2s}}{\,\delta(z)} p_\Omega(\vert z-x\vert^{2}\tau,x,z)\tau^{s-1}
\]
also holds for $\tau\ge \delta(x)\delta(z)\vert z-x\vert^{-2}$.
As $z\to y\in\partial\Omega$, 
the right-hand-side of \eqref{prepois} obviously converges 
in $L^1(0,+\infty,d\tau)$ so we may apply 
the generalized dominated convergence theorem to deduce that
$P_{|\Omega}^s(x,y)$ is well-defined, satisfies \eqref{poissbound} and 
\[
P_{|\Omega}^s(x,y)=-\frac{\partial}{\partial\nu_y} G^s_{|\Omega}(x,y)=\lim_{z\to y}\frac{G^s_{|\Omega}(x,z)}{\delta(z)}
=-\frac1{\Gamma(s)}\int_0^\infty\frac{\partial}{\partial\nu_y}p_\Omega(t,x,y)\,t^{s-1}\,dt.
\]
From this last formula we deduce also that, for any fixed $x\in\Omega$,
the function $P_{|\Omega}^s(x,\cdot)\in C(\partial\Omega)$: indeed,
having chosen a sequence ${\{y_k\}}_{k\in\N}\subset\partial\Omega$
converging to some $y\in\partial\Omega$, we have
\[
\left|P_{|\Omega}^s(x,y_k)-P_{|\Omega}^s(x,y)\right|
\leq\frac1{\Gamma(s)}\int_0^\infty\left|\frac{\partial}{\partial\nu_y}p_\Omega(t,x,y_k)
-\frac{\partial}{\partial\nu_y}p_\Omega(t,x,y)\right|t^{s-1}\,dt
\]
where, by \eqref{hkb}
\[
\left|\frac{\partial}{\partial\nu_y}p_\Omega(t,x,y)\right|\leq
\frac{c_1\,\delta(x)}{t^{N/2+1}}\,e^{-c_2|x-y|^2/t}\leq
\frac{c_1\,\delta(x)}{t^{N/2+1}}\,e^{-c_2\delta(x)^2/t}
\qquad\hbox{for any }y\in\partial\Omega,
\]
so that $\left|P_{|\Omega}^s(x,y_k)-P_{|\Omega}^s(x,y)\right|\to 0$ as $k\uparrow\infty$
by dominated convergence.

By similar arguments, $G_{|\Omega}^s$ is a continuous function on $\super\Omega^2\setminus\{(x,y):x= y\}$.
And so, by \eqref{compo}, we have
\[
-\frac{\partial}{\partial\nu_y}\int_{\Omega} G^{1-s}_{|\Omega}(x,\xi)G^s_{|\Omega}(\xi,y)\;d\xi =\ P_{|\Omega}^1(x,y).
\]
Let us compute the derivative of the left-hand side alternatively. We have
\[
\int_\Omega G_{|\Omega}^{1-s}(x,\xi)\frac{G_{|\Omega}^s(\xi,z)}{\delta(z)}\;d\xi= 
 \int_{\R^+\times\Omega} f(t,\xi,z)\;dt d\xi,
\]
where, having fixed $x\in\Omega$,
\[
f(t,\xi,z) = \frac{G_{|\Omega}^{1-s}(x,\xi)}{\Gamma(s)\,\delta(z)}  p_\Omega(t,\xi,z)t^{s-1} \le C \vert x-\xi\vert^{2s-N}\left[\frac{\delta(x)\delta(\xi)}{\vert x-\xi\vert^2}\wedge 1\right]t^{s-2-N/2}\delta(\xi)e^{-c_2\vert z-\xi\vert^2/t}.
\]
For a fixed $\eps>0$, and $\xi\in\Omega\setminus B(y,\eps)$, $z\in B(y,\eps/2)$, we deduce that
\[
f(t,\xi,z) \le C \vert x-\xi\vert^{2s-N}t^{s-2-N/2}e^{-c_\eps/t}\in L^1((0,+\infty)\times\Omega)
\]
Similarly, if $t>\eps$,
\[
f(t,\xi,z)\le C \vert x-\xi\vert^{2s-N}t^{s-2-N/2}\in L^1((\eps,+\infty)\times\Omega).
\]
Now,
\[
\int_0^\eps t^{s-1-N-2}e^{-c_2\frac{\vert\xi-z\vert^2}{t}}dt \le \vert\xi-z\vert^{2s-N} \int_0^{+\infty} \tau^{s-1-N/2}e^{-c_2/\tau}d\tau.
\]
Hence, there exists a constant $C>0$ independent of $\eps$ such that 
\[
\int_{(0,\eps)\times B(y,\eps)}f(t,\xi,z) dt\;d\xi \le C \eps^{2+2s}.
\]
It follows from the above estimates and dominated convergence that
\begin{equation}
P_{|\Omega}^1(x,y)=\lim_{z\to y}\int_{(0,+\infty)\times\Omega} f(t,\xi,z)\;dt d\xi = \int_{\Omega} G^{1-s}_{|\Omega}(x,\xi)P^s_{|\Omega}(\xi,y)d\xi
\end{equation}
i.e. \eqref{pois-id} holds.
\end{proof}

\begin{rmk} \rm
Thanks to the heat kernel bound \eqref{hkb}, the following estimate also holds. 
\begin{equation}\label{green-behav}
\frac1C \frac1{{|x-y|}^{N-2s}}\left(1\wedge\frac{\delta(x)\,\delta(y)}{|x-y|^2}\right) \le G_{|\Omega}^s(x,y)\le  \frac C{{|x-y|}^{N-2s}}\left(1\wedge\frac{\delta(x)\,\delta(y)}{|x-y|^2}\right)
\end{equation}
for some constant $C=C(\Omega,N,s)$.
Also observe for computational convenience that 
\[
\frac12\left(1\wedge\frac{\delta(x)\,\delta(y)}{|x-y|^2}\right)\le \frac{\delta(x)\,\delta(y)}{\delta(x)\,\delta(y)+|x-y|^2}\le \left(1\wedge\frac{\delta(x)\,\delta(y)}{|x-y|^2}\right).
\]
\end{rmk}

\section{harmonic functions and interior regularity}
\label{harm-sect}

By the heat kernel bound \eqref{hkb}, there exists $C=C(\Omega,N,s)>0$ such that
\begin{equation}\label{Jbound}
\frac{1}{C{|x-y|}^{N+2s}}
\left[\frac{\delta(x)\delta(y)}{{|x-y|}^2}\wedge 1\right]\leq J(x,y)\leq \frac{C}{{|x-y|}^{N+2s}}
\left[\frac{\delta(x)\delta(y)}{{|x-y|}^2}\wedge 1\right].
\end{equation}

\begin{defi}\label{harm-def} 
A function $h\in L^1(\Omega,\delta(x)dx)$ is $s$-harmonic in $\Omega$ 
if for any $\psi\in C^\infty_c(\Omega)$ there holds
\[
\int_\Omega h\:\As\psi\ =\ 0.
\]
\end{defi}
\noindent The above definition makes sense thanks to the following lemma.
\begin{lem}\label{As-smooth} 
For any $\psi\in C^\infty_c(\Omega)$, $\As\psi\in C^1_0(\super\Omega)$ and there exists a constant $C=C(s,N,\Omega,\psi)>0$ such that
\begin{equation}\label{ineq-lemma1}
|\As \psi|\ \leq\ C\delta\qquad \text{in }\Omega.
\end{equation}
In addition, if $\psi\ge0$, $\psi\not\equiv0$, then
\begin{equation}
\As\psi\ \leq\ -C\delta \qquad \text{in } \Omega\setminus\hbox{supp}\,\psi.
\end{equation}
\end{lem}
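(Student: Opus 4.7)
The plan is to use the identity \eqref{adg} together with the Green function representation from Lemma \ref{inverse}. Writing $f := -\lapl\psi \in C^\infty_c(\Omega)$ and denoting $K := \hbox{supp}\,f$ with $d_K := \dist(K,\partial\Omega)>0$, I would start from
\[
\As\psi(x)\ =\ \rest^{s-1}f(x)\ =\ \int_K G^{1-s}_{|\Omega}(x,y)\,f(y)\;dy,
\]
which is the representation out of which both claimed properties are extracted.

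For the pointwise bound $|\As\psi|\le C\delta$ I would split on whether $\delta(x)\le d_K/4$ or not. In the first case, the triangle inequality gives $|x-y|\ge\delta(y)-\delta(x)\ge 3d_K/4$ for every $y\in K$, placing $(x,y)$ in the regime $|x-y|^2\gtrsim\delta(x)\delta(y)$, where the upper bound in \eqref{green-behav} yields $G^{1-s}_{|\Omega}(x,y)\le C\delta(x)\delta(y)|x-y|^{-N-2s}\le C'\delta(x)$; integrating against $f$ produces $|\As\psi(x)|\le C\delta(x)$. In the opposite case $\delta(x)>d_K/4$ the trivial bound $|\As\psi|\le C\|f\|_\infty\int_K G^{1-s}_{|\Omega}(x,y)\;dy\le C$ combines with the lower bound on $\delta(x)$ to give the same linear control.

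For the regularity claim $\As\psi\in C^1_0(\super\Omega)$ I would rather use the spectral decomposition directly: since $\psi\in C^\infty_c(\Omega)$, the estimate \eqref{spectral-coef} shows that the coefficients $|\widehat{\psi}_j|$ decay faster than any polynomial, while elliptic regularity in a $C^{1,1}$ domain provides polynomial (in $\lambda_j$) control of $\|\varphi_j\|_{C^{1,\alpha}(\super\Omega)}$ in the spirit of \eqref{esti-ef}. The series $\As\psi=\sum_j\lambda_j^s\,\widehat{\psi}_j\,\varphi_j$ then converges absolutely in $C^1(\super\Omega)$, and since each $\varphi_j$ vanishes on $\partial\Omega$ so does the limit.

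The last inequality comes from the pointwise formula \eqref{As2}: for $x\in\Omega\setminus\hbox{supp}\,\psi$ one has $\psi(x)=0$, hence $\As\psi(x)=-\int_\Omega\psi(y)\,J(x,y)\;dy\le 0$. Setting $d:=\dist(\hbox{supp}\,\psi,\partial\Omega)>0$, the same case split as before coupled with the lower bound in \eqref{Jbound} shows $J(x,y)\ge c\,\delta(x)$ uniformly in $y\in\hbox{supp}\,\psi$ whenever $\delta(x)\le d/4$, so that $-\As\psi(x)\ge c\,\delta(x)\int_\Omega\psi(y)\;dy$; on the complementary region $\{x\in\super\Omega\setminus\hbox{supp}\,\psi:\delta(x)\ge d/4\}$, compact by our hypotheses, the function $-\As\psi$ is continuous and strictly positive, admits a uniform positive lower bound, and this bound is comparable to $\delta$ since $\delta\le\hbox{diam}\,\Omega$. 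The subtlest step is the $C^1$ regularity up to the boundary, which really requires the super-polynomial decay of the spectral coefficients to overcome the polynomial growth of eigenfunction norms; the two pointwise bounds are, by comparison, direct consequences of the sharp two-sided estimates \eqref{green-behav} and \eqref{Jbound}.
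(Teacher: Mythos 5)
Your proof is correct, but the central estimate \eqref{ineq-lemma1} is obtained by a genuinely different route than the paper's. The paper gets both $\As\psi\in C^1_0(\super\Omega)$ and $|\As\psi|\le C\delta$ in one stroke from the spectral series: by \eqref{spectral-coef} the coefficients $\widehat\psi_j$ decay faster than any power of $\lambda_j$, while \eqref{esti-ef} gives $\|\varphi_j/\delta\|_{L^\infty(\Omega)}\le (C\lambda_j)^k$, so that $|\As\psi/\delta|\le\sum_j\lambda_j^s|\widehat\psi_j|\,\|\varphi_j/\delta\|_{L^\infty(\Omega)}<\infty$. You instead derive \eqref{ineq-lemma1} from the representation $\As\psi=\mathbb{G}^{1-s}_{|\Omega}(-\lapl\psi)$ (via \eqref{adg} and Lemma \ref{inverse}) combined with the sharp bound \eqref{green-behav}; this is legitimate, since all three ingredients precede the lemma, and it buys a more explicit constant (controlled by $\|\lapl\psi\|_{L^1(\Omega)}$ and $\dist(\hbox{supp}\,\psi,\partial\Omega)$), at the price of invoking the heat-kernel/Green-function machinery and of still needing the spectral argument separately for the $C^1_0(\super\Omega)$ claim, which you then prove exactly as the paper does. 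For the negativity estimate both you and the paper start from \eqref{As2} and the lower bound in \eqref{Jbound}; the paper runs a single computation valid for every $x\in\Omega\setminus\hbox{supp}\,\psi$, integrating over a small ball around a maximum point of $\psi$ on which $\psi,\delta\ge c_1$ and $r\le|x-y|\le c_2$, whereas you split into $\delta(x)\le d/4$ and the complementary region and invoke compactness there. One small repair in that last step: the set $\{x\in\super\Omega\setminus\hbox{supp}\,\psi:\delta(x)\ge d/4\}$ is not closed, hence not compact as written; pass to its closure, where $\psi$ still vanishes by continuity, so that $\As\psi(x)=-\int_\Omega\psi(y)J(x,y)\,dy$ is still valid, continuous and strictly negative there, and the uniform bound, and with it the comparison with $\delta$, follows as you intended.
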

\begin{proof}
Thanks to \eqref{esti-ef} and \eqref{spectral-coef}, 
$\As\psi\in C^1_0(\overline\Omega)$ and 
\[
\left|\frac{\As \psi}{\delta}\right|\leq
\sum_{j=1}^{\infty}\lambda_j^s|\widehat\psi_j|\left\|\frac{\varphi_j}{\delta}\right\|_{L^\infty(\Omega)}
<\infty
\]
and \eqref{ineq-lemma1} follows. Let us turn to the case where $\psi\ge0$, $\psi\not\equiv0$.
We apply formula \eqref{As2} and assume 
that $x\in\Omega\setminus\hbox{supp}\psi$. Denote by $x^*$ a point of maximum of $\psi$ and let $2r=\dist(x^*,\hbox{supp}\,\psi)$. Then for $y\in B_r(x^*)$, it holds $\psi(y),\delta(y)\ge c_1>0$, $r\leq \vert x-y\vert\le c_2$ and so, applying \eqref{Jbound},
\begin{eqnarray*}
\As \psi(x) & = & -\int_\Omega\psi(y)\,J(x,y)\;dy \\
& \leq & -C\int_{B_r(x^*)}\frac{\psi(y)}{{|x-y|}^{N+2s}}
\left[\frac{\delta(x)\delta(y)}{{|x-y|}^2}\wedge 1\right]dy \\
& \leq & -C\cdot\int_{B_r(x^*)}\frac{c_1}{c_2^{N+2s}}\left[\frac{\delta(x)\,c_1}{c_2^2}\wedge 1\right]dy \\
& \leq & -\tilde C\,\delta(x).
\end{eqnarray*}
\end{proof}

\begin{lem}\label{pois-harm} 
The function $P_{|\Omega}^s(\cdot,z)\in L^1(\Omega,\delta(x)dx)$ 
is $s$-harmonic in $\Omega$ for any fixed $z\in\partial\Omega$.
\end{lem}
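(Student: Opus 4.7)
The plan is to verify the two claims in the lemma separately. For the integrability, I would apply the upper bound in \eqref{poissbound} together with the trivial observation that $\delta(x)\le |x-z|$ for every $x\in\Omega$ when $z\in\partial\Omega$. This gives
\[
\int_\Omega P^s_{|\Omega}(x,z)\,\delta(x)\,dx \ \le\ C\int_\Omega \frac{\delta(x)^2}{|x-z|^{N+2-2s}}\,dx
\ \le\ C\int_\Omega \frac{dx}{|x-z|^{N-2s}}\ <\ +\infty,
\]
since $N-2s<N$.

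For the $s$-harmonicity, fix $\psi\in C^\infty_c(\Omega)$. By Lemma \ref{As-smooth}, $|\As\psi|\le C\delta$, so the integral $\int_\Omega P^s_{|\Omega}(x,z)\,\As\psi(x)\,dx$ is absolutely convergent. The key move is to rewrite the integrand using the adjointness identity \eqref{adg}, namely $\As\psi=\rest^{s-1}(-\lapl\psi)$, and then to represent $\rest^{s-1}(-\lapl\psi)$ through the Green function of $\rest^{1-s}$ via Lemma \ref{inverse}:
\[
\As\psi(x)\ =\ \int_\Omega G^{1-s}_{|\Omega}(x,\xi)\,(-\lapl\psi)(\xi)\,d\xi \qquad\text{for a.e. } x\in\Omega.
\]
Inserting this in the integral, exchanging the order of integration and using the symmetry of $G^{1-s}_{|\Omega}$ together with the fundamental identity \eqref{pois-id}, the inner integral collapses to the classical Poisson kernel:
\[
\int_\Omega P^s_{|\Omega}(x,z)\,\As\psi(x)\,dx \ =\ \int_\Omega (-\lapl\psi)(\xi)\left[\int_\Omega G^{1-s}_{|\Omega}(\xi,x) P^s_{|\Omega}(x,z)\,dx\right]d\xi\ =\ \int_\Omega P^1_{|\Omega}(\xi,z)(-\lapl\psi)(\xi)\,d\xi.
\]
Since $P^1_{|\Omega}(\cdot,z)$ is classically harmonic in $\Omega$ and $\psi\in C^\infty_c(\Omega)$, a standard integration by parts yields $\int_\Omega P^1_{|\Omega}(\xi,z)(-\lapl\psi)(\xi)\,d\xi=0$, which concludes the argument.

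The main obstacle is the justification of Fubini's theorem: both $P^s_{|\Omega}(\cdot,z)$ and $G^{1-s}_{|\Omega}(\cdot,\xi)$ carry singularities (near $z$ and along the diagonal respectively), so a brute-force absolute estimate is uncomfortable. The clever choice is to integrate in $x$ \emph{first} and invoke \eqref{pois-id} directly:
\[
\int_\Omega\!\int_\Omega P^s_{|\Omega}(x,z)\,G^{1-s}_{|\Omega}(\xi,x)\,|\lapl\psi(\xi)|\,dx\,d\xi
\ =\ \int_\Omega |\lapl\psi(\xi)|\,P^1_{|\Omega}(\xi,z)\,d\xi\ <\ +\infty,
\]
the last integral being finite because $\lapl\psi$ is compactly supported in $\Omega$ and $P^1_{|\Omega}(\cdot,z)$ is continuous there. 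Once Fubini is secured, the computation above is rigorous.
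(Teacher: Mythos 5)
Your proof is correct and follows essentially the same route as the paper: integrability from \eqref{poissbound}, then \eqref{adg}, Lemma \ref{inverse}, Fubini and \eqref{pois-id} to reduce the pairing to $\int_\Omega P^1_{|\Omega}(\cdot,z)(-\lapl)\psi=0$. The extra care you take with Tonelli (integrating in $x$ first via \eqref{pois-id}) is a welcome justification of a step the paper leaves implicit, but it is not a different argument.
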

\begin{proof}
Thanks to \eqref{poissbound}, 
$P_{|\Omega}^s(\cdot,z)\in L^1(\Omega,\delta(x)dx)$.
Pick $\psi\in C^\infty_c(\Omega)$ and exploit \eqref{adg}:
\[
\int_\Omega P_{|\Omega}^s(\cdot,z)\,\As\psi=
\int_\Omega P_{|\Omega}^s(\cdot,z)\,\rest^{s-1}[-\lapl\psi].
\]
Applying Lemma \ref{inverse}, the Fubini's Theorem and \eqref{pois-id}, the above quantity is equal to
\[
\int_\Omega P^1_{|\Omega}(\cdot,z)\,(-\lapl)\psi=0.
\]
\end{proof}

\begin{lem}\label{harm}  
For any finite Radon measure $\zeta\in\mathcal{M}(\partial\Omega)$, let
\begin{equation}\label{repres}
h(x)\ =\ \int_{\partial\Omega} P^s_{|\Omega}(x,z)\;d\zeta(z),\qquad x\in\Omega.
\end{equation}
Then, 
$h$ is $s$-harmonic in $\Omega$. In addition, there exists a constant $C=C(N,s,\Omega)>0$ such that
\begin{equation}\label{blah}
\Vert h\Vert_{L^1(\Omega,\delta(x)dx)} \le C \Vert \zeta\Vert_{\mathcal M(\partial\Omega)}.
\end{equation}
Conversely, for any $s$-harmonic function $h\in L^1(\Omega,\delta(x)dx)$, $h\geq 0$,
there exists a finite Radon measure $\zeta\in\mathcal{M}(\partial\Omega)$, $\zeta\geq 0$,
such that \eqref{repres} holds.
\end{lem}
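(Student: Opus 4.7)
The forward implication reduces to Lemma \ref{pois-harm} together with Fubini's theorem. Indeed, for any $\psi\in C^\infty_c(\Omega)$, Lemma \ref{As-smooth} gives $|\As\psi(x)|\le C\delta(x)$, while \eqref{poissbound} yields $P^s_{|\Omega}(x,z)\le C\delta(x)|x-z|^{-(N+2-2s)}$. Thus the integrand $P^s_{|\Omega}(x,z)\As\psi(x)$ is dominated (uniformly in $z\in\partial\Omega$) by $C\delta(x)^2 |x-z|^{-(N+2-2s)}$, and the latter is integrable over $\Omega$ uniformly in $z$, since near $z$ one has $\delta(x)\le|x-z|$ so the integrand is bounded by $|x-z|^{2s-N}$, which is integrable. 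Fubini then lets us swap $\int_\Omega$ and $\int_{\partial\Omega}$, and applying Lemma \ref{pois-harm} in the inner integral gives $\int_\Omega h\,\As\psi=0$. The very same domination together with Fubini proves the estimate \eqref{blah}:
\[
\int_\Omega\delta(x)|h(x)|\,dx\le\int_{\partial\Omega}\left(\int_\Omega \delta(x)P^s_{|\Omega}(x,z)\,dx\right)d|\zeta|(z)\le C\,\|\zeta\|_{\mathcal{M}(\partial\Omega)}.
\]

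For the converse, the first step is interior regularity: from $\int_\Omega h\,\As\psi=0$ for every $\psi\in C^\infty_c(\Omega)$ and the explicit formula \eqref{As2}, a standard bootstrap argument (analogous to Lemma \ref{regularity} in the fractional setting, using the sharp bounds \eqref{Jbound} on $J$) yields $h\in C^\infty(\Omega)$ and $\As h(x)=0$ pointwise for every $x\in\Omega$. Once this is in hand, the plan is to produce $\zeta$ as a weak-$\ast$ limit of approximating boundary measures. Specifically, for small $t>0$ let $\pi_t:\{\delta(x)=t\}\to\partial\Omega$ be the nearest-point projection (well-defined by the $C^{1,1}$ assumption on $\partial\Omega$) and set
\[
\zeta_t\ :=\ (\pi_t)_\#\!\left(\tfrac{h(x)}{h_1(x)}\,\mathcal{H}^{N-1}\!\!\restriction_{\{\delta=t\}}\right).
\]
Using $h_1\asymp \delta^{-(2-2s)}$ (established in Section \ref{bb-sect}) together with $h\in L^1(\Omega,\delta\,dx)$ and the coarea formula, one checks that $\sup_{t\in(0,t_0)}\|\zeta_t\|_{\mathcal{M}(\partial\Omega)}<\infty$, so by compactness there exist $t_n\downarrow 0$ and a nonnegative Radon measure $\zeta\in\mathcal{M}(\partial\Omega)$ with $\zeta_{t_n}\rightharpoonup^\ast\zeta$.

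It then remains to show $h(x)=\int_{\partial\Omega}P^s_{|\Omega}(x,z)\,d\zeta(z)$. The natural route is to introduce the difference $w(x):=h(x)-\int_{\partial\Omega}P^s_{|\Omega}(x,z)\,d\zeta(z)$, observe that by the forward direction $w$ is $s$-harmonic, and argue that its weak boundary trace (in the sense of Section \ref{bb-sect}) vanishes, hence $w\equiv0$ by a uniqueness/maximum principle argument analogous to Lemma \ref{max-princweak2}. The identification of the boundary trace of $\int P^s_{|\Omega}(\cdot,z)\,d\zeta(z)$ with $\zeta$ itself is the content of Lemma \ref{clas-weak}(2) for smooth $\zeta$, and can be extended to general Radon $\zeta$ by a density argument combined with the uniform boundary estimates on $P^s_{|\Omega}$.

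The principal obstacle is the uniform bound on $\|\zeta_t\|_{\mathcal{M}}$: a naive estimate gives only integrability of $h$ against $\delta$, but what is needed is control of the boundary layer integrals $\int_{\{\delta=t\}}h/h_1$. This requires the sharp two-sided bound $h_1\asymp\delta^{-(2-2s)}$ and the mean-value property for $s$-harmonic functions in thin tubular neighborhoods of $\partial\Omega$, both of which are developed earlier in this chapter. Once that uniform bound is in hand, the representation follows along the lines sketched above, mirroring the classical Herglotz-type theorem for nonnegative harmonic functions.
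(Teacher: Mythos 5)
Your forward direction is correct and is essentially the paper's argument: the bound \eqref{poissbound} gives \eqref{blah} and justifies Fubini, and Lemma \ref{pois-harm} kills the inner integral, so $\int_\Omega h\,\As\psi=0$ for every $\psi\in C^\infty_c(\Omega)$.

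The converse, however, has two genuine gaps. First, the uniform mass bound $\sup_{t}\Vert\zeta_t\Vert_{\mathcal M(\partial\Omega)}<\infty$ does not follow from $h\in L^1(\Omega,\delta(x)dx)$, the coarea formula and $h_1\asymp\delta^{-(2-2s)}$ alone: these facts are perfectly compatible with $\int_{\{\delta=t\}}h\,d\HH\sim t^{2s-2-\eps}$ for any $\eps\in(0,2s)$ (since $t\cdot t^{2s-2-\eps}$ is still integrable near $0$), in which case $\Vert\zeta_t\Vert_{\mathcal M}\sim t^{-\eps}\to\infty$ and no weak-$\ast$ limit exists. Controlling boundary layers requires a genuinely harmonicity-based input (a boundary Harnack inequality, or testing $h$ against suitable non--compactly supported functions); the ``mean-value property in thin tubular neighbourhoods'' you invoke is not available here --- the mean value property of Part \ref{partone} concerns $\Ds$, not the spectral operator $\As$. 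Second, the identification step is circular as stated: distributional $s$-harmonicity of $w=h-\int_{\partial\Omega}P^s_{|\Omega}(\cdot,z)\,d\zeta(z)$ only gives $\int_\Omega w\,\As\psi=0$ for $\psi\in C^\infty_c(\Omega)$, whereas the maximum principle of this Part (Lemma \ref{maxprinc-weak}) requires testing against all of $\T(\Omega)$, i.e.\ it requires knowing already that $w$ is a weak solution with zero boundary measure; the implication ``vanishing averaged trace $\Rightarrow w\equiv0$'' for merely distributionally $s$-harmonic functions is precisely the kind of uniqueness statement that this paper obtains only through the representation you are trying to prove. The paper avoids both issues by a different route: set $v(x)=\int_\Omega G^{1-s}_{|\Omega}(x,\xi)h(\xi)\,d\xi$; by \eqref{adg} and the definition of $s$-harmonicity, $v$ is a nonnegative classically harmonic function, so the classical Herglotz theorem yields a finite nonnegative measure $\zeta$ with $v=\int_{\partial\Omega}P^1_{|\Omega}(\cdot,y)\,d\zeta(y)$; then \eqref{pois-id} gives $\int_\Omega G^{1-s}_{|\Omega}(x,\xi)\bigl[h(\xi)-\int_{\partial\Omega}P^s_{|\Omega}(\xi,y)\,d\zeta(y)\bigr]d\xi=0$, and applying $\mathbb G^s_{|\Omega}$ together with \eqref{compo} and testing against $(\left.-\lapl\right\vert_\Omega)^{-1}\psi$, $\psi\in C^\infty_c(\Omega)$, identifies $h$ with $\int_{\partial\Omega}P^s_{|\Omega}(\cdot,y)\,d\zeta(y)$ with no compactness or trace-uniqueness argument at all. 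If you want to salvage your scheme, you would first need to prove a boundary-layer estimate for nonnegative $s$-harmonic functions and an independent uniqueness theorem for the weak trace; the paper's reduction to the classical case is the shorter path.
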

\begin{proof} Since $P_{|\Omega}^s(x,\cdot)$ is continuous, $h$ is well-defined. By \eqref{poissbound},
\[
\delta(x)|h(x)|\leq
C\int_{\partial\Omega}\frac{d|\zeta|(z)}{|x-z|^{N-2s}}
\]
so that $h\in L^1(\Omega,\delta(x)dx)$ and \eqref{blah} holds.
Pick now $\psi\in C^\infty_c(\Omega)$:
\[
\int_\Omega h(x)\,\As\psi(x)\;dx=
\int_{\partial\Omega}\left(\int_\Omega P_{|\Omega}^s(x,z)\,\As\psi(x)\;dx\right)d\zeta(z)=0
\]
in view of Lemma \ref{pois-harm}. 
Conversely, let $h$ denote a nonnegative $s$-harmonic function.
By Definition \ref{harm-def} and by equation \eqref{adg}, 
we have for any $\psi\in C^\infty_c(\Omega)$
\begin{multline*}
0=\int_\Omega h(x)\As\psi(x)\:dx=
\int_\Omega h(x)\,\rest^{s-1}\circ(-\lapl)\,\psi(x)\:dx\ =\\
=\ \int_\Omega \left(\int_\Omega G_{|\Omega}^{1-s}(x,\xi)h(\xi)\,d\xi\right)(-\lapl)\psi(x)\:dx,
\end{multline*}
so that $\int_\Omega G_{|\Omega}^{1-s}(x,\xi)h(\xi)d\xi$ is a (standard) nonnegative harmonic function.
In particular (cf. e.g. \cite[Corollary 6.15]{axler}),
there exists a finite Radon measure $\zeta\in\mathcal{M}(\partial\Omega)$ 
such that $\int_\Omega G_{|\Omega}^{1-s}(x,\xi)h(\xi)d\xi=\int_{\partial\Omega} P_{|\Omega}^1(x,y)d\zeta(y)\in C^\infty(\Omega)$. 
We now exploit equation \eqref{pois-id} to deduce that 
\[
\int_\Omega G^{1-s}_{|\Omega}(x,\xi)\left[h(\xi)-\int_{\partial\Omega}P^s_{|\Omega}(\xi,y)d\zeta(y)\right]d\xi =0.
\]
Since 
\[
\int_\Omega \varphi_1(x)\ \left(\int_\Omega G_{|\Omega}^{1-s}(x,\xi) h(\xi)\:d\xi\right)dx=\int_\Omega h\, \rest^{s-1}\varphi_1=\frac1{\lambda_1^{1-s}}\int_\Omega h\,\varphi_1 <\infty,
\]
it holds $\int_\Omega G_{|\Omega}^{1-s}(x,\xi) h(\xi)d\xi\,\in L^1(\Omega,\delta(x)dx)$. Thanks to \eqref{green-behav}, we are allowed to let $G_{|\Omega}^{s}$ act on it. 
By \eqref{compo}, this leads to 
\[
\int_\Omega G^1_{|\Omega}(x,\xi)\left[h(\xi)-\int_{\partial\Omega}P^s_{|\Omega}(\xi,y)\;d\zeta(y)\right]d\xi =0.
\]
Take at last $\psi\in C^\infty_c(\Omega)$ and $\varphi= (\left.-\lapl\right\vert_\Omega)^{-1}\psi$. Then,
\[
0=
\int_\Omega \varphi(x)\left[\int_\Omega G^1_{|\Omega}(x,\xi)\left[h(\xi)-\int_{\partial\Omega}P^s_{|\Omega}(\xi,y)d\zeta(y)\right]\:d\xi\right]dx=
\int_\Omega \psi(\xi)\left[h(\xi)-\int_{\partial\Omega}P^s_{|\Omega}(\xi,y)\;d\zeta(y)\right]d\xi
\]
and so \eqref{repres} holds a.e. and in fact everywhere thanks to Lemma \ref{clas-harm} below.
\end{proof}

\begin{lem}\label{clas-harm}
Take $\alpha>0$ such that $2s+\alpha\not\in\N$ and $f\in C^{\alpha}_{loc}(\Omega)$. If $u\in L^1(\Omega,\delta(x)dx)$ solves 
\[
\As u = f \qquad\text{in $\mathcal D'(\Omega)$,}
\]
then $u\in C^{2s+\alpha}_{loc}(\Omega)$, the above equation holds pointwise, and given any compact sets $K\subset\subset K'\subset\subset\Omega$, there exists a constant $C=C(s,N,\alpha,K,K',\Omega)$ such that
\[
\Vert u \Vert_{C^{2s+\alpha}(K)} \le C \left(\Vert f \Vert_{C^{\alpha}(K')}+\Vert u \Vert_{L^1(\Omega,\delta(x)dx)}\right).
\]
Similarly, if $f\in L^\infty_{loc}(\Omega)$ and $\alpha\in(0,2s)$,
\[
\Vert u \Vert_{C^\alpha(K)} \le C \left(\Vert f \Vert_{L^\infty(K')}+\Vert u \Vert_{L^1(\Omega,\delta(x)dx)}\right).
\]
In particular, if $h$ is $s$-harmonic, then $h\in C^\infty(\Omega)$ and the equality
$\As h(x)=0$ holds at every point $x\in\Omega$.
\end{lem}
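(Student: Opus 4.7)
The plan is to reduce the interior regularity for $\As$ to the interior regularity theory for the standard fractional Laplacian $\Ds$ developed by Silvestre \cite[Propositions 2.8 and 2.9]{silvestre}, by showing that on any compact subset of $\Omega$ the operator $\As$ coincides, modulo substantially smoother terms, with a constant multiple of $\Ds$. The starting point is the pointwise identity \eqref{As2} and the Gaussian heat kernel bound \eqref{hkb}, from which one deduces the sharp estimate \eqref{Jbound} on the jumping kernel $J$ and the bound $\kappa \asymp \delta^{-2s}$ on the killing measure. In particular, $\kappa \in C^\infty(\super\omega)$ and $J(x,y)$ is smooth off the diagonal on any $\omega\subset\subset\Omega$. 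Moreover, the short-time asymptotic expansion of the Dirichlet heat kernel on $\omega$ gives, on $\omega\times\omega$, a decomposition
\[
J(x,y)\ =\ C_{N,s}\,|x-y|^{-N-2s}\ +\ R(x,y),
\]
where $R$ is bounded by $C|x-y|^{-N-2s+\gamma}$ for some $\gamma>0$ and is smooth off the diagonal.

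To implement this, I would fix compact sets $K\subset\subset K'\subset\subset\Omega$ and a cutoff $\eta\in C^\infty_c(\Omega)$ equal to $1$ in a neighbourhood of $K'$, and split $u = u_1+u_2 := \eta u + (1-\eta)u$, so that $u_1$ is compactly supported in $\Omega$ and coincides with $u$ in a neighbourhood of $K$. The contribution of $u_2$ to $\As u$ on $K$ equals $-\int u_2(y)J(x,y)\,dy$, whose kernel is $C^\infty$ in $x\in K$ and bounded by $C\delta(y)$ in $y$ (by \eqref{Jbound}, since $|x-y|$ is bounded below on $\operatorname{supp}(1-\eta)$). Hence $u\mapsto \As u_2|_K$ is a bounded linear map from $L^1(\Omega,\delta(x)dx)$ into $C^\infty(K)$. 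On $K$ the distributional equation $\As u = f$ therefore yields
\[
C_{N,s}\,\Ds u_1\ =\ f-\As u_2-\kappa\,u_1-\int R(\cdot,y)\bigl[u_1(\cdot)-u_1(y)\bigr]\,dy,
\]
in $\mathcal{D}'(K)$, with $u_1$ a compactly supported distribution in $\R^N$. Applying Silvestre's interior $C^\alpha$ and $C^{2s+\alpha}$ estimates to $u_1$ and bootstrapping, I would obtain $u_1\in C^\alpha(K)$ for every $\alpha<2s$ when $f\in L^\infty_{loc}$, and $u_1\in C^{2s+\alpha}(K)$ when $f\in C^\alpha_{loc}$ with $2s+\alpha\notin\N$, together with the stated quantitative norm bounds. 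Since $u=u_1$ on $K$, the same conclusions transfer to $u$.

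The main technical obstacle will be the quantitative control of the remainder kernel $R$: one needs the pointwise bound $|R(x,y)|\leq C|x-y|^{-N-2s+\gamma}$ with $\gamma>0$, together with matching Hölder estimates in $x$, so that the nonlocal remainder $\int R(x,y)[u_1(x)-u_1(y)]\,dy$ is genuinely more regular than $\Ds u_1$ and the bootstrap closes. This hinges on the short-time heat kernel expansion of $p_\Omega$ on compact interior subsets, which is standard but delicate to quantify. Once the Hölder regularity is established, the absolute convergence of the integrals in \eqref{As2} promotes the distributional equality $\As u = f$ to a pointwise identity everywhere in $\Omega$. For the $C^\infty$ conclusion in the $s$-harmonic case $f\equiv 0$, the bootstrap can be iterated indefinitely, since there is no ceiling on the regularity of the right-hand side to work against.
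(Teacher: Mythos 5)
Your route is genuinely different from the paper's: there, one sets $v=\int_\Omega G^{1-s}_{|\Omega}(\cdot,y)\,u(y)\,dy$, uses the factorization $\As=(-\lapl)\circ\rest^{s-1}$ to see that $-\lapl v=f$ in $\mathcal D'(\Omega)$, recovers $u$ from $v$ through the subordination formula $u=\frac{s}{\Gamma(1-s)}\int_0^\infty t^{s-1}\bigl(\frac{v-e^{\trest}v}{t}\bigr)dt$, and compares with the whole-space solution of $\Ds\overline u=\overline f$, exploiting that $v-\overline v$ is harmonic near $K$ together with parabolic regularity. Your perturbation-of-$\Ds$ scheme is reasonable in principle, but as written it has two genuine gaps. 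First, the bootstrap has no admissible base case. After your reduction, the right-hand side of $C_{N,s}\Ds u_1=f-\As u_2-\kappa u_1-\int R(\cdot,y)[u_1(\cdot)-u_1(y)]\,dy$ contains the zeroth-order terms $\kappa(x)u_1(x)$, $u_1(x)\int R(x,y)\,dy$, and the analogous multiples of $u_1(x)$ produced when comparing $\int_\Omega$ with the integral over all of $\R^N$ in $\Ds u_1$; since $u_1$ is a priori only in $L^1$, this right-hand side is not in $L^\infty_{loc}$, so Silvestre's $C^\alpha$ and $C^{2s+\alpha}$ estimates cannot be invoked at the first step. You need a preliminary integrability-improvement stage (for instance representing $u_1$ through the Riesz potential and iterating the $L^1\to L^p$ gain until $u_1\in L^\infty_{loc}$) before the Hölder bootstrap can start; ``and bootstrapping'' hides exactly this missing step.

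Second, the passage from the hypothesis $\int_\Omega u\,\As\psi=\int_\Omega f\psi$ for all $\psi\in C^\infty_c(\Omega)$ (with $\As\psi$ defined spectrally) to your displayed identity in $\mathcal D'(K)$ is not automatic for $u\in L^1(\Omega,\delta(x)dx)$: one must rewrite the pairing through the kernel representation of $\As\psi$, symmetrize in $J$, and justify Fubini and the principal value with only $\int_\Omega|u|\,\delta<\infty$ at hand. This does work — near the diagonal $|\psi(x)-\psi(y)|\lesssim|x-y|$ against $J\lesssim|x-y|^{-N-2s}$, and off the support of $\psi$ one uses $J(x,y)\lesssim\delta(x)\delta(y)|x-y|^{-N-2s-2}$ — but it is part of what has to be proved and is absent from your argument. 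By contrast, what you single out as the main technical obstacle is in fact the easy part: for $x,y$ in a fixed interior compact set the correction $p_{\R^N}-p_\Omega$ vanishes to infinite order as $t\downarrow0$, so $R$ is bounded and smooth there (no exponent $\gamma$ is needed); the only caveat is that this decomposition holds on interior compacts only, and for $y$ near $\partial\Omega$ one must argue separately (harmless, since $u_1$ vanishes there, but it again produces a zeroth-order multiple of $u_1$, feeding back into the first gap).
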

\begin{proof} We only prove the former inequality, the proof of the latter follows mutatis mutandis.
Given $x\in\Omega$, let
\[
v(x)=\int_\Omega G_{|\Omega}^{1-s}(x,y)u(y)\;dy.
\]
Observe that $v$ is well-defined and 
\begin{equation}\label{esti-l1d}
\Vert v\Vert_{L^1(\Omega,\delta(x)dx)} \le C(\Omega,N,s)\Vert u\Vert_{L^1(\Omega,\delta(x)dx)}
\end{equation}
Indeed, letting $\varphi_1>0$ denote an eigenvector associated to the principal eigenvalue of the Laplace operator, it follows from the Fubini's theorem and Lemma \ref{inverse} that
\[
\int_\Omega \varphi_1(x)\int_\Omega G_{|\Omega}^{1-s}(x,y)\left\vert u(y)\right\vert dy\, dx = \lambda_1^{s-1}\int_\Omega \left\vert u(y)\right\vert\varphi_1(y)\, dy. 
\]
In addition, $-\lapl v= f$ in $\mathcal D'(\Omega)$, since for $\varphi\in C^\infty_c(\Omega)$,
\[
\int_\Omega v\:(-\lapl)\varphi = 
\int_{\Omega}u\:\As\varphi,
\]
thanks to the Fubini's theorem, equation \eqref{adg}, Lemma \ref{inverse} and Definition \ref{harm-def}. 
Observe now that if $\varphi\in C^\infty_c(\Omega)$, then 
\[
\Dums\varphi=\frac{s}{\Gamma(1-s)}\int_0^{+\infty}t^{s-1}
\left(\frac{\varphi-e^{\trest}\varphi}{t}\right)dt.
\]
The above identity is straightforward if $\varphi$ is an eigenfunction and remains true for $\varphi\in C^\infty_c(\Omega)$ by density, using the fast decay of spectral coefficients, see \eqref{spectral-coef}. 
So,
\begin{multline*}
\int_\Omega u\varphi\:dx= 
\int_\Omega v\Dums\varphi\:dx\ =\\
=\ \frac{s}{\Gamma(1-s)}\int_{\Omega}\int_0^\infty
v\,t^{s-1}\left(\frac{\varphi-e^{\trest}\varphi}{t}\right)dt\:dx\ =\\
=\ \frac{s}{\Gamma(1-s)}\int_{\Omega}\int_0^\infty 
\varphi t^{s-1}\left(\frac{v-e^{\trest}v}{t}\right)dt\:dx
\end{multline*}
and 
\[
u = \frac{s}{\Gamma(1-s)}\int_0^{+\infty}t^{s-1}
\left(\frac{v-e^{\trest}v}{t}\right)dt.
\]
Choose $\super f\in C^\alpha_c(\R^N)$ such that $\super f=f$ in $K'$, $\Vert \super f\Vert_{C^{\alpha}(\R^N)}\le C \Vert f\Vert_{C^{\alpha}(K')}$ and let
\[
\super u(x) = c_{N,s}\int_{\R^N}\vert x-y\vert^{-(N-2s)}\super f(y)\;dy
\]
solve $(-\lapl)^s \super u = \super f$ in $\R^N$. It is well-known (see e.g. \cite{silvestre}) that $\Vert \super u\Vert_{C^{2s+\alpha}(\R^N)}\le C \Vert \super f\Vert_{C^{\alpha}(\R^N)}$, for a constant $C$ depending only on $s,\alpha,N$ and the measure of the support of $\super f$. It remains to estimate $u-\super u$. Letting 
\[
\super v(x) = c_{N,1-s}\int_{\R^N} \vert x-y\vert^{-(N-2(1-s))}\super u(y)\;dy,
\]
we have as previously that $-\lapl\super v=\super f$ and
\[
\super u = \frac{s}{\Gamma(1-s)}\int_0^{+\infty}t^{s-1}
\left(\frac{\super v-e^{-t\lapl}\super v}{t}\right)dt,
\]
where this time $e^{-t\lapl}\super v(x)= \frac1{(4\pi t)^{N/2}}\int_{\R^N}e^{-\frac{\vert x-y\vert^2}{4t}}\super v(y)\;dy$.
Hence,
\[
\frac{\Gamma(1-s)}s(u-\super u) = \int_0^{+\infty}t^{s-1}
\left(\frac{(v-\super v)-e^{-t\lapl}(v-\super v)}{t}\right)dt + \int_0^{+\infty}t^{s-1}
\left(\frac{e^{\trest}v-e^{-t\lapl}\super v}{t}\right)dt.
\]
Fix a compact set $K''$ such that $K\subset\subset K''\subset\subset K'$.
Since $v-\super v$ is harmonic in $K'$,
\[\Vert v-\super v\Vert_{C^{2s+\alpha+2}(K'')}\le C\Vert v-\super v\Vert_{L^1(K')}\le C\Vert u\Vert_{L^1(\Omega,\delta(x)dx)}
\]
By parabolic regularity,
\[
\left\Vert\int_0^{+\infty}t^{s-1}
\left(\frac{(v-\super v)-e^{-t\lapl}(v-\super v)}{t}\right)dt
\right\Vert_{C^{2s+\alpha}(K)}\le C\Vert u\Vert_{L^1(\Omega,\delta(x)dx)}.
\]
In addition, the function $w=e^{\trest}v-e^{-t\lapl}\super v$ solves the heat equation inside $\Omega$ with initial condition $w(0,\cdot)=v-\super v$. Since $\lapl (v-\super v)=0$ in $K'$, it follows from parabolic regularity again that $w(t,x)/t$ remains bounded in $C^{2s+\alpha}(K)$ as $t\to0^+$, so that again
\[
\left\Vert\int_0^{+\infty}t^{s-1}
\left(\frac{e^{\trest}v-e^{-t\lapl}\super v}{t}\right)dt\right\Vert_{C^{2s+\alpha}(K)}\le C\Vert u\Vert_{L^1(\Omega,\delta(x)dx)}.
\]

\end{proof}

\begin{lem}\label{u-to-lapl} Take $\alpha>0$, $\alpha\not\in\N$,
and 
$u\in C^{2s+\alpha}_{loc}(\Omega)\cap L^1(\Omega,\delta(x)dx)$.
Given any compact set $K\subset\subset K'\subset\subset\Omega$, there exists a constant $C=C(s,N,\alpha,K, K',\Omega)$ such that
\[
\|\As u\|_{C^\alpha(K)}\leq
C\left(\|u\|_{C^{2s+\alpha}(K')} +
\|u\|_{L^1(\Omega,\delta(x)dx)}\right).
\]
\end{lem}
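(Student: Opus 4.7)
Fix an intermediate compact set $K\subset\subset K''\subset\subset K'\subset\subset\Omega$ and a cutoff $\eta\in C^\infty_c(K')$ with $\eta\equiv 1$ on $K''$ and $0\le\eta\le 1$. Decompose $u=u_1+u_2$ with $u_1:=\eta u$ (extended by zero to $\R^N$) and $u_2:=(1-\eta)u$. Then $u_1\in C^{2s+\alpha}_c(K')$ with
\[
\|u_1\|_{C^{2s+\alpha}(\R^N)}\ \leq\ C(\eta,K',K'')\,\|u\|_{C^{2s+\alpha}(K')},
\]
while $u_2\equiv0$ on $K''$ and $\|u_2\|_{L^1(\Omega,\delta(x)dx)}\le\|u\|_{L^1(\Omega,\delta(x)dx)}$. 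By linearity of \eqref{As2}, $\As u=\As u_1+\As u_2$ and it suffices to bound each term separately on $K$.

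\textbf{The far piece.} Since $u_2(x)=0$ for $x\in K\subset K''$, \eqref{As2} collapses to
\[
\As u_2(x)\ =\ -\int_{\Omega\setminus K''}u_2(y)\,J(x,y)\;dy,\qquad x\in K.
\]
For $x\in K$ and $y\in\Omega\setminus K''$ we have $|x-y|\ge d:=\dist(K,\partial K'')>0$ and $\delta(x)\ge d_1:=\dist(K,\partial\Omega)>0$, so \eqref{Jbound} gives $J(x,y)\le C(d,d_1,s,N)\,\delta(y)$. The heat kernel $p_\Omega(t,x,y)$ is jointly smooth in $(t,x,y)\in(0,\infty)\times\Omega\times\Omega$, and standard parabolic estimates (applied iteratively to the derivatives of $p_\Omega$ together with the off-diagonal Gaussian decay in \eqref{hkb}) yield, for every multi-index $\beta$,
\[
|\partial_x^\beta J(x,y)|\ \le\ C(|\beta|,d,d_1,s,N)\,\delta(y),\qquad x\in K,\ y\in\Omega\setminus K''.
\]
Differentiation under the integral then produces $\|\As u_2\|_{C^{\lceil\alpha\rceil}(K)}\le C\|u\|_{L^1(\Omega,\delta(x)dx)}$, which controls the $C^\alpha(K)$ norm.

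\textbf{The compactly supported piece.} Writing $J(x,y)=C_{N,s}|x-y|^{-N-2s}+R(x,y)$ where
\[
R(x,y)\ =\ -\frac{s}{\Gamma(1-s)}\int_0^\infty\bigl(p_{\R^N}(t,x,y)-p_\Omega(t,x,y)\bigr)\,t^{-1-s}\;dt,
\]
and noticing that $q:=p_{\R^N}-p_\Omega\ge 0$ satisfies the heat equation in $\Omega$ with zero initial data and boundary values $q(t,x,\cdot)=p_{\R^N}(t,x,\cdot)|_{\partial\Omega}$, one obtains by the maximum principle and \eqref{hkb} that $q(t,x,y)\le C\,t^{-N/2}e^{-d_1^2/(4t)}$ for $x\in K'$. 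Hence $R$ is bounded and jointly smooth in $(x,y)\in K'\times\Omega$. Since $u_1\equiv0$ on $\R^N\setminus\Omega$, adding and subtracting the free-space principal value on $K$ gives
\begin{align*}
\As u_1(x)\ =\ {(-\lapl)}^s u_1(x)\ &-\ C_{N,s}\,u_1(x)\int_{\R^N\setminus\Omega}\frac{dy}{|x-y|^{N+2s}} \\
&+\ PV\!\int_\Omega(u_1(x)-u_1(y))R(x,y)\;dy\ +\ \kappa(x)u_1(x).
\end{align*}
The first term belongs to $C^\alpha(\R^N)$ with norm $\le C\|u_1\|_{C^{2s+\alpha}(\R^N)}$ by the classical regularity theory of the fractional Laplacian on $\R^N$ (e.g.\ \cite[Proposition 2.8]{silvestre}). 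The remaining three terms are smooth functions of $x\in K$: the second because the integral is smooth in $x\in K$; the third because $R(x,\cdot)\in L^\infty(\Omega)$ together with the smoothness of $R$ in $x$; and the fourth because $\kappa$ is smooth on $K$, having $\dist(K,\partial\Omega)>0$ and \eqref{jkappa} with exponentially decaying integrand for small $t$. All three are controlled in $C^\alpha(K)$ by $C\|u_1\|_{L^\infty(\R^N)}\le C\|u\|_{C^{2s+\alpha}(K')}$.

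\textbf{Main obstacle.} The technical heart of the argument is establishing that the correction $R(x,y)$ and the killing measure $\kappa(x)$ are smooth on $K$ with uniform bounds, and that $J(x,y)$ has $C^k$ estimates in $x\in K$ decaying linearly in $\delta(y)$ as $y$ approaches $\partial\Omega$. Both properties follow from subordinating the appropriate heat kernel derivative estimates of Davies--Zhang, and verifying that the $t\downarrow 0$ contribution is exponentially tame when $x$ lies in a fixed compact subset of $\Omega$.
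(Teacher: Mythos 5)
Your argument is correct in strategy, but it takes a genuinely different route from the paper. The paper's proof is a two-line reduction: writing $v=\rest^{s-1}u=\int_\Omega G^{1-s}_{|\Omega}(\cdot,y)u(y)\,dy$, it observes that $v$ solves $\Dums v=u$ and invokes the already-established interior Schauder estimate (Lemma \ref{clas-harm}, applied with the power $1-s$) to get $\|v\|_{C^{2+\alpha}(K)}\le C(\|u\|_{C^{2s+\alpha}(K')}+\|u\|_{L^1(\Omega,\delta(x)dx)})$, and then uses the factorization \eqref{adg}, $\As u=(-\lapl)\circ\rest^{s-1}u$ in $\mathcal D'(\Omega)$, so that $\|\As u\|_{C^\alpha(K)}\le\|v\|_{C^{2+\alpha}(K)}$; all the heat-kernel work is absorbed in the lemma already proved. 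You instead work directly at the level of the pointwise representation \eqref{As2}: a cutoff splitting $u=\eta u+(1-\eta)u$, the decomposition $J(x,y)=C_{N,s}|x-y|^{-N-2s}+R(x,y)$ with $R$ the subordination of $p_{\R^N}-p_\Omega$, smoothness of $\kappa$ and of the far-field kernel on $K$, and the classical mapping property $\Ds:C^{2s+\alpha}(\R^N)\to C^\alpha(\R^N)$ for the compactly supported piece. This is a legitimate, self-contained alternative which makes transparent that on compact subsets $\As$ differs from $\Ds$ by an operator with a smooth, $\delta(y)$-weighted kernel; the price is that you must re-derive the heat-kernel derivative and maximum-principle estimates that the paper's route gets for free from Lemma \ref{clas-harm} (whose own proof performs a comparison with the free heat semigroup in the same spirit as yours). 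Two small fixes: the three "remainder" terms carrying the factor $u_1(x)$ (the $\C\Omega$ tail, the $R$-term and $\kappa u_1$) are controlled in $C^\alpha(K)$ by $\|u_1\|_{C^\alpha(K)}$ rather than by $\|u_1\|_{L^\infty(\R^N)}$ as you wrote, which is harmless since $\|u_1\|_{C^\alpha(K)}\le C\|u\|_{C^{2s+\alpha}(K')}$; and you do not need (nor can you easily claim) joint smoothness of $R(x,y)$ in $y$ up to $\partial\Omega$ — boundedness in $y$ together with $C^k$ bounds in $x\in K$ uniform in $y$, which is exactly what your maximum-principle argument yields, is all the estimate requires.
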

\begin{proof} With a slight abuse of notation, we write
\[
\Asmu u(x) = \int_\Omega G_{|\Omega}^{1-s}(x,y)\,u(y)\:dy.
\]
By Lemma \ref{clas-harm} we have
\begin{multline*}
\|\Asmu u\|_{C^{2+\alpha}(K)}\leq C\left(
\|u\|_{C^{2s+\alpha}(K')}+\|\Asmu u\|_{L^1(\Omega,\delta(x)dx)}\right)\ \leq \\
\leq\ C\left(
\|u\|_{C^{2s+\alpha}(K')}+\|u\|_{L^1(\Omega,\delta(x)dx)}\right).
\end{multline*}
Obviously it holds also
\[
\|(-\lapl)\circ\Asmu u\|_{C^\alpha(K)}\leq\|\rest^{s-1}u\|_{C^{2+\alpha}(K)}.
\]
By \eqref{adg},
\[
(-\lapl)\circ\Asmu u
=\As u \qquad \hbox{in }\mathcal{D}'(\Omega),
\]
which concludes the proof.
\end{proof}

\begin{prop}\label{Lp-reg} Let $f\in L^1(\Omega,\delta(x)dx)$ and $u\in L^1_{loc}(\Omega)$. 
The function
\[
u(x)=\int_\Omega G_{|\Omega}^s(x,y)\,f(y)\;dy
\]
belongs to $L^p(\Omega,\delta(x)dx)$ for any $p\in\left[1,\dfrac{N+1}{N+1-2s}\right)$.
\end{prop}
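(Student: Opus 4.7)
The plan is to prove the estimate by duality, reducing the problem to a uniform bound on a weighted $L^p$ norm of the Green function kernel.

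First I would fix $p\in[1,(N+1)/(N+1-2s))$, set $p'$ as its conjugate exponent, and note that by the definition of the $L^p(\Omega,\delta(x)dx)$ norm via duality it suffices to show that, for every $\psi\in C^\infty_c(\Omega)$,
\[
\left|\int_\Omega u(x)\psi(x)\delta(x)\,dx\right|\ \leq\ C\,\|f\|_{L^1(\Omega,\delta(x)dx)}\,\|\psi\|_{L^{p'}(\Omega,\delta(x)dx)}.
\]
Applying Fubini (which is justified because $G^s_{|\Omega}\geq 0$ and both $f\delta$ and $\psi\delta$ are integrable) and then Hölder's inequality with the splitting $\delta=\delta^{1/p}\cdot\delta^{1/p'}$, the task reduces to establishing the uniform estimate
\begin{equation}\label{keyest}
\mathcal I(y)\ :=\ \int_\Omega G^s_{|\Omega}(x,y)^p\,\delta(x)\,dx\ \leq\ C\,\delta(y)^p,\qquad y\in\Omega.
\end{equation}

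The main obstacle is verifying \eqref{keyest} with the sharp exponent range. For this I would use the two-sided Green kernel bound \eqref{green-behav} and split the integration domain into $A_r=\{x\in\Omega:|x-y|<r\}$ and $B_r=\{x\in\Omega:|x-y|\geq r\}$ with $r=\delta(y)$. On $A_r$ one has $\delta(x)\leq 2r$ and, discarding the $\min$ in \eqref{green-behav}, $G^s_{|\Omega}(x,y)\leq C|x-y|^{2s-N}$; passing to polar coordinates this gives (since $p<N/(N-2s)$, a consequence of $p<(N+1)/(N+1-2s)$)
\[
\int_{A_r}G^s_{|\Omega}(x,y)^p\,\delta(x)\,dx\ \leq\ Cr\int_0^r\rho^{N-1-p(N-2s)}\,d\rho\ \leq\ C\,r^{\,N+1-p(N-2s)},
\]
and the inequality $N+1-p(N-2s)\geq p$ is precisely equivalent to $p\leq(N+1)/(N+1-2s)$. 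On $B_r$ one exploits the full bound in \eqref{green-behav}, namely $G^s_{|\Omega}(x,y)\leq C\,\delta(x)\delta(y)/|x-y|^{N+2-2s}$, together with $\delta(x)\leq|x-y|+\delta(y)\leq 2|x-y|$ on $B_r$; this yields
\[
\int_{B_r}G^s_{|\Omega}(x,y)^p\,\delta(x)\,dx\ \leq\ C\,r^p\int_r^{\operatorname{diam}\Omega}\rho^{N-p(N+1-2s)}\,d\rho,
\]
and the strict inequality $p<(N+1)/(N+1-2s)$ makes the exponent $N-p(N+1-2s)$ strictly greater than $-1$, so the remaining integral is bounded by a constant depending only on $N,s,p,\Omega$, giving the desired $Cr^p$.

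Combining the two estimates produces \eqref{keyest}, and plugging back into the Hölder-Fubini computation yields
\[
\left|\int_\Omega u\,\psi\,\delta\,dx\right|\ \leq\ C\int_\Omega|f(y)|\,\delta(y)\,dy\cdot\|\psi\|_{L^{p'}(\Omega,\delta(x)dx)},
\]
from which the claim follows by arbitrariness of $\psi\in C^\infty_c(\Omega)$ and the density of this space in $L^{p'}(\Omega,\delta(x)dx)$. The delicate point throughout is making sure that the endpoint exponent arises exactly from the far-region integral on $B_r$: this is why the natural ``$N/(N-2s)$'' from the near region (which would be the correct exponent if we were not weighting by $\delta$) is refined to the strictly smaller $(N+1)/(N+1-2s)$ matching the additional factor of $\delta(x)$.
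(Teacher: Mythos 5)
Your proof is correct, and it pivots on the same key quantity as the paper's --- the uniform bound $\int_\Omega G_{|\Omega}^s(x,y)^p\,\delta(x)\,dx\le C\,\delta(y)^p$ for $y\in\Omega$ --- but it reaches and proves that bound by a genuinely different route. For the reduction, the paper applies Jensen's inequality to $\bigl|\int_\Omega G_{|\Omega}^s(x,y)f(y)\,dy\bigr|^p$ with respect to the measure $\delta(y)|f(y)|\,dy$, which delivers the $L^p(\Omega,\delta(x)dx)$ estimate directly and avoids your duality step; your duality argument works too, but it silently uses that the pairing $\int_\Omega u\,\psi\,\delta$ is defined (true, since $u\in L^1_{loc}(\Omega)$ and $\psi$ has compact support), that Tonelli legitimates the Fubini exchange (true, because the double integral of the absolute values is finite by your own key bound combined with H\"older), and that $C^\infty_c(\Omega)$ is dense in $L^{p'}(\Omega,\delta(x)dx)$ --- all standard, but worth a sentence each. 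For the key estimate itself, the paper flattens the boundary via Lemma \ref{lemma39} and then computes explicitly after the change of variables $x_N=y_Nt$, $|x'|=y_N\rho$, splitting the $t$-integral into three pieces; you instead work directly from the two-sided bound \eqref{green-behav}, splitting at the scale $r=\delta(y)$ and using $\delta(x)\le 2r$ together with $G^s_{|\Omega}(x,y)\le C|x-y|^{2s-N}$ near $y$, and $G^s_{|\Omega}(x,y)\le C\,\delta(x)\delta(y)\,|x-y|^{2s-N-2}$ with $\delta(x)\le 2|x-y|$ far from $y$. Your version is shorter, needs no flat-case reduction and no case distinction in $s$, and it displays cleanly where each restriction enters ($p<N/(N-2s)$ for integrability at the diagonal, strictness of $p<(N+1)/(N+1-2s)$ only in the far region, the endpoint inequality itself appearing in both); the paper's heavier computation has the side benefit that the flattening machinery of Lemma \ref{lemma39} is set up once and then reused in its other boundary estimates, such as those for $h_1$ and for the weighted traces.
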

\begin{proof}
We start by applying the Jensen's Inequality, 
\[
\left|\int_\Omega G_{|\Omega}^s(x,y)\,f(y)\;dy\right|^p
\leq \|f\|^{p-1}_{L^1(\Omega,\delta(x)dx)}
\int_\Omega\left|\frac{G_{|\Omega}^s(x,y)}{\delta(y)}\right|^p\delta(y)\,f(y)\;dy,
\]
so that
\[
\int_\Omega|u(x)|^p\,\delta(x)\;dx\leq \|f\|^p_{L^1(\Omega,\delta(x)dx)}
\sup_{y\in\Omega}\int_\Omega\left|\frac{G_{|\Omega}^s(x,y)}{\delta(y)}\right|^p\delta(x)\;dx
\]
and by \eqref{green-behav} we have to estimate
\[
\sup_{y\in\Omega}\int_\Omega\frac1{\left|x-y\right|^{(N-2s)p}}\cdot
\frac{\delta(x)^{p+1}}{\left[|x-y|^2+\delta(x)\delta(y)\right]^p}\;dx
\]
Pick $\eps>0$. Clearly,
\[
\sup_{\{y\;:\;\delta(y)\ge\eps\}}\int_\Omega\frac1{\left|x-y\right|^{(N-2s)p}}\cdot
\frac{\delta(x)^{p+1}}{\left[|x-y|^2+\delta(x)\delta(y)\right]^p}\;dx\le C_\eps.
\]
Thanks to Lemma \ref{lemma39}, we may now reduce to the case where the boundary is flat, i.e. when in a neighbourhood $A$ of a given point
$y\in\Omega$ such that $\delta(y)<\eps$, there holds $A\cap\partial\Omega\subseteq\{y_N=0\}$ and $A\cap\Omega\subseteq\{y_N>0\}$. Without loss of generality, we assume that $y=(0,y_N)$ and 
$x=(x',x_N)\in B\times(0,1)\subseteq\R^{N-1}\times\R$.
We are left with proving that
\[
\int_B dx'\int_0^1 dx_N \frac1{\left[|x'|^2+|x_N-y_N|^2\right]^{(N-2s)p/2}}\cdot
\frac{x_N^{p+1}}{\left[|x'|^2+|x_N-y_N|^2+x_Ny_N\right]^p}
\]
is a bounded quantity. Make the change of variables $x_N=y_N t$ and pass to
polar coordinates in $x'$, with $|x'|=y_N\rho$. Then, the above integral becomes
\begin{equation}\label{4576}
y_N^{-(N+1-2s)p+N+1}\int_0^{1/y_N}d\rho\int_0^{1/y_N}dt\;\frac{\rho^{N-2}}{\left[\rho^2+|t-1|^2\right]^{(N-2s)p/2}}
\cdot\frac{t^{p+1}}{\left[\rho^2+|t-1|^2+t\right]^p}.
\end{equation}
Now, we split the integral in the $t$ variable into $\int_0^{1/2}+\int_{1/2}^{3/2}+\int_{3/2}^{1/y_N}$.
Note that the exponent $-(N+1-2s)p+N+1$ is positive for $p<(N+1)/(N+1-2s)$.
We drop multiplicative constants in the computations that follow.
The first integral is bounded above by a constant multiple of
\[
\int_0^{1/y_N}d\rho\int_0^{1/2}dt\;\frac{\rho^{N-2}}{\left[\rho^2+1\right]^{(N-2s)p/2}}
\cdot\frac{t^{p+1}}{\left[\rho^2+1+t\right]^p}\lesssim
\int_0^{1/y_N}d\rho\;\frac{\rho^{N-2}}{\left[\rho^2+1\right]^{(N+2-2s)p/2}}
\]
which remains bounded as $y_N\downarrow 0$ since
\[
p\geq 1>\frac{N-1}{N+2-2s}\qquad\hbox{implies}\qquad (N+2-2s)p-N+2>1.
\]
The second integral is of the order of
\begin{align*}
& \int_0^{1/y_N}d\rho\int_{1/2}^{3/2}dt\;\frac{\rho^{N-2}}{\left[\rho^2+|t-1|^2\right]^{(N-2s)p/2}}\cdot\frac1{\left[\rho^2+1\right]^p} \ =\\
& =\ \int_0^{1/y_N}d\rho\int_0^{1/2}dt\;\frac{\rho^{N-2}}{\left[\rho^2+t^2\right]^{(N-2s)p/2}}\cdot\frac1{\left[\rho^2+1\right]^p} \\
& =\ \int_0^{1/y_N}d\rho\;\frac{\rho^{N-1-(N-2s)p}}{\left[\rho^2+1\right]^p}\int_0^{1/(2\rho)}dt\;\frac1{\left[1+t^2\right]^{(N-2s)p/2}} \\
& \lesssim\ \int_0^\infty d\rho\;\frac{\rho^{N-1-(N-2s)p}}{\left[\rho^2+1\right]^p}
\end{align*}
which is finite since $p<(N+1)/(N+1-2s)<N/(N-2s)$ implies $N-1-(N-2s)p>-1$ 
and $p\geq 1>N/(N+2-2s)$ implies $2p-N+1+(N-2s)p>1$.

We are left with the third integral which is controlled by
\begin{align*}
& \int_0^{1/y_N}d\rho\;\rho^{N-2}\int_{3/2}^{1/y_N}dt\;\frac{t^{p+1}}{\left[\rho^2+t^2\right]^{(N+2-2s)p/2}}\ \leq\\
& \leq\ \int_0^{1/y_N}d\rho\;\rho^{N-(N+1-2s)p}\int_{3/(2\rho)}^{1/(y_N\rho)}dt\;\frac{t^{p+1}}{\left[1+t^2\right]^{(N+2-2s)p/2}}\\
& \lesssim\ \int_0^{1/y_N}d\rho\;\rho^{N-(N+1-2s)p}.
\end{align*}
The exponent $N-(N+1-2s)p>-1$ since $p<(N+1)/(N+1-2s)$, so this third integral is bounded above by a constant multiple of
$y_N^{-1-N+(N+1-2s)p}$ which simplifies with the factor in front of \eqref{4576}.
\end{proof}

\section{boundary behaviour}\label{bb-sect}

We first provide the boundary behaviour of the reference function $h_1$. Afterwards, in Proposition \ref{bound-cont} below, we will deal with the weighted trace left on the boundary by harmonic functions induced by continuous boundary data.

\begin{lem} Let $h_1$ be given by \eqref{h1}. There exists a constant $C=C(N,\Omega,s)>0$ such that
\begin{equation}\label{h1-behav}
\frac1C \delta^{-(2-2s)}\le h_1\le C \delta^{-(2-2s)}.
\end{equation}
\end{lem}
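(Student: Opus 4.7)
The plan is to sandwich $h_1$ directly via the two-sided Poisson kernel estimate \eqref{poissbound} and then compute a purely geometric surface integral on $\partial\Omega$.

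First, by \eqref{poissbound},
\[
\frac{1}{C}\,\delta(x)\int_{\partial\Omega}\frac{d\mathcal{H}(y)}{|x-y|^{N+2-2s}}
\ \leq\ h_1(x)\ \leq\
C\,\delta(x)\int_{\partial\Omega}\frac{d\mathcal{H}(y)}{|x-y|^{N+2-2s}},
\]
so the problem reduces to showing that
\[
I(x)\ :=\ \int_{\partial\Omega}\frac{d\mathcal{H}(y)}{|x-y|^{N+2-2s}}\ \asymp\ \delta(x)^{-(3-2s)}
\qquad \text{as }\delta(x)\downarrow 0.
\]
For $\delta(x)$ bounded away from $0$ the integrand is bounded on the compact set $\partial\Omega$, so $I(x)$ is bounded above and below by positive constants, and the claim is trivial there; hence I can restrict to $\delta(x)$ small.

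Next, I would localize near the closest boundary point. Let $\theta^*\in\partial\Omega$ be the unique projection of $x$ on $\partial\Omega$ (well-defined for $\delta(x)$ small since $\partial\Omega\in C^{1,1}$), and split $\partial\Omega = \Gamma \cup (\partial\Omega\setminus\Gamma)$ where $\Gamma$ is a small $C^{1,1}$ graph neighborhood of $\theta^*$. The contribution of $\partial\Omega\setminus\Gamma$ to $I(x)$ is uniformly bounded since $|x-y|\geq c>0$ there, and this is negligible compared to the singular contribution from $\Gamma$ (which will be of order $\delta^{-(3-2s)}\to+\infty$). For the piece on $\Gamma$, I would use exactly the same flattening trick used earlier in the paper (cf.\ the computations at the end of Paragraph \ref{rhs-blowing} and the reduction in Proposition \ref{Lp-reg}): writing $\theta=(\eta',\gamma(\eta'))$ for a $C^{1,1}$ function $\gamma$ with $\gamma(0)=0$, $\nabla\gamma(0)=0$, one has
\[
|x-\theta|^{2}\ \asymp\ \delta(x)^{2}+|\eta'|^{2},
\]
with constants depending only on $\|\gamma\|_{C^{1,1}}$, and $d\mathcal{H}(\theta)\asymp d\eta'$.

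Finally, a direct scaling computation gives the desired exponent:
\[
\int_{B_r'}\frac{d\eta'}{(\delta(x)^2+|\eta'|^2)^{(N+2-2s)/2}}
\ =\ \omega_{N-2}\int_0^{r}\frac{\rho^{N-2}\,d\rho}{(\delta(x)^2+\rho^2)^{(N+2-2s)/2}},
\]
and the change of variables $\rho=\delta(x)\,t$ yields
\[
\delta(x)^{-(3-2s)}\,\omega_{N-2}\int_0^{r/\delta(x)}\frac{t^{N-2}\,dt}{(1+t^2)^{(N+2-2s)/2}},
\]
where the $t$-integral converges to a positive finite constant as $\delta(x)\downarrow 0$, since the integrand decays like $t^{-(4-2s)}$ at infinity. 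Combining with the bound for $\partial\Omega\setminus\Gamma$ and multiplying by $\delta(x)$ from \eqref{poissbound} gives $h_1(x)\asymp \delta(x)^{-(2-2s)}$, proving \eqref{h1-behav}.

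The only mildly delicate point is the two-sided control $|x-\theta|^2\asymp \delta(x)^2+|\eta'|^2$ uniformly near the boundary; this is exactly what the $C^{1,1}$ regularity of $\partial\Omega$ guarantees and has already been established in the earlier fractional Laplacian chapter, so no new geometric work is needed.
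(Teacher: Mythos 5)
Your proposal is correct and follows essentially the same route as the paper: bound $h_1$ via the two-sided Poisson kernel estimate \eqref{poissbound}, localize at the nearest boundary point and flatten the boundary (the paper invokes Lemma \ref{lemma39} for exactly the reduction you describe), then perform the same polar-coordinate computation with the rescaling $\rho=\delta(x)t$ to extract the exponent $\delta^{-(3-2s)}$, hence $h_1\asymp\delta^{-(2-2s)}$ after multiplying by $\delta(x)$. Your explicit handling of the region $\{\delta\ge\eps_0\}$ and of $\partial\Omega\setminus\Gamma$ is a minor elaboration of steps the paper leaves implicit.
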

\begin{proof}
Restrict without loss of generality to the case where $x$ lies in a neighbourhood of $\partial\Omega$.
Take $x^*\in\partial\Omega$ such that $|x-x^*|=\delta(x)$, 
which exists by compactness of $\partial\Omega$.
Take $\Gamma\subset\partial\Omega$ a neighbourhood of $x^*$ in the topology of $\partial\Omega$. By Lemma \ref{lemma39} in the Appendix,
we can think of $\Gamma\subset\{x_N=0\}$, $x^*=0$ and $x=(0,\delta(x))\in\R^{N-1}\times\R$ without loss of generality.
in such a way that it is possible to compute
\[
\int_\Gamma \frac{\delta(x)}{|x-z|^{N+2-2s}}\;d\HH(z)
\asymp\int_{B_r}\frac{\delta(x)}{[|z'|^2+\delta(x)^2]^{N/2+1-s}}\;dz'.
\]
Recalling \eqref{poissbound}, we have reduced the estimate to 
\begin{align*}
& \int_{\partial\Omega}P^s_{|\Omega}(x,z)\;d\HH(z)\ \asymp \\
& \asymp\ \int_{B_r}\frac{\delta(x)}{[|z'|^2+\delta(x)^2]^{N/2+1-s}}\;dz'
\ \asymp\ \int_0^r\frac{\delta(x)\,t^{N-2}}{[t^2+\delta(x)^2]^{N/2+1-s}}\;dt \\
& =\ \int_0^{r/\delta(x)}\frac{\delta(x)^{N}t^{N-2}}{[\delta(x)^2t^2+\delta(x)^2]^{N/2+1-s}}\;dt\ \asymp\ \delta(x)^{2s-2}\int_0^{r/\delta(x)}\frac{t^{N-2}\;dt}{[t^2+1]^{N/2+1-s}}
\end{align*}
and this concludes the proof, since
\[
\int_0^{r/\delta(x)}\frac{t^{N-2}\;dt}{[t^2+1]^{N/2+1-s}}\asymp 1.
\]
\end{proof}

In the following we will use the notation
\[
\mathbb{P}_{|\Omega}^sg\ :=\ \int_{\partial\Omega}P_{|\Omega}^s(\cdot,\theta)\,g(\theta)\;d\HH(\theta)
\]
where $\sigma$ denotes the Hausdorff measure on $\partial\Omega$,
whenever $g\in L^1(\Omega)$.

\begin{prop}\label{bound-cont} Let $\zeta\in C(\partial\Omega)$. Then, for any $z\in\partial\Omega$,
\begin{equation}
\frac{\mathbb{P}_{|\Omega}^s \zeta(x)}{h_1(x)}\ \xrightarrow[x\to z]{x\in\Omega} \ \zeta(z)\qquad\hbox{uniformly on }\partial\Omega.
\end{equation}
\end{prop}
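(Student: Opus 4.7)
The plan is to exploit the representation $h_1(x)=\int_{\partial\Omega}P^s_{|\Omega}(x,\theta)\,d\mathcal{H}(\theta)$ to rewrite, for any fixed $z\in\partial\Omega$,
\[
\frac{\mathbb{P}^s_{|\Omega}\zeta(x)}{h_1(x)}-\zeta(z)\ =\ \frac{1}{h_1(x)}\int_{\partial\Omega}P^s_{|\Omega}(x,\theta)\,[\zeta(\theta)-\zeta(z)]\,d\mathcal{H}(\theta),
\]
and then localize the mass of the Poisson kernel. Since $\partial\Omega$ is compact, $\zeta$ is uniformly continuous on $\partial\Omega$ with some modulus $\omega$. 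Given $\eps>0$, I would first pick $\rho>0$ with $\omega(\rho)<\eps/2$ and split the integral against the characteristic functions of $\Gamma_\rho(z):=\partial\Omega\cap B_\rho(z)$ and its complement $\partial\Omega\setminus\Gamma_\rho(z)$.

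On $\Gamma_\rho(z)$ I would simply estimate $|\zeta(\theta)-\zeta(z)|\le \omega(\rho)$ and use the positivity of $P^s_{|\Omega}$ to bound the corresponding piece by
\[
\frac{\omega(\rho)}{h_1(x)}\int_{\Gamma_\rho(z)}P^s_{|\Omega}(x,\theta)\,d\mathcal{H}(\theta)\ \le\ \omega(\rho)\ <\ \eps/2,
\]
uniformly in $x\in\Omega$ and $z\in\partial\Omega$. On the complement $\partial\Omega\setminus\Gamma_\rho(z)$, I would use the upper bound in \eqref{poissbound} together with the lower bound $h_1(x)\ge c\,\delta(x)^{2s-2}$ from \eqref{h1-behav}. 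Provided $|x-z|<\rho/2$, one has $|x-\theta|\ge\rho/2$ for all $\theta\in\partial\Omega\setminus\Gamma_\rho(z)$, and hence $P^s_{|\Omega}(x,\theta)\le C\,\delta(x)\,(\rho/2)^{-N-2+2s}$. This gives
\[
\frac{1}{h_1(x)}\int_{\partial\Omega\setminus\Gamma_\rho(z)}P^s_{|\Omega}(x,\theta)|\zeta(\theta)-\zeta(z)|\,d\mathcal{H}(\theta)\ \le\ C\,\|\zeta\|_\infty\,\mathcal{H}(\partial\Omega)\,\rho^{2s-N-2}\,\delta(x)^{3-2s},
\]
and since $3-2s>0$ the right-hand side tends to $0$ as $\delta(x)\downarrow 0$, uniformly in $z\in\partial\Omega$. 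Thus choosing $\eta=\eta(\rho)\in(0,\rho/2)$ small enough that this term is $<\eps/2$ whenever $|x-z|<\eta$ yields the claimed uniform convergence.

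The main obstacle is essentially bookkeeping: one needs the two sharp two-sided bounds \eqref{poissbound} and \eqref{h1-behav} to cooperate, and the only subtlety is verifying that, once $\rho$ has been fixed by the modulus of continuity, the non-local contribution truly vanishes \emph{uniformly} in the base point $z$. This is immediate because the estimate on $\partial\Omega\setminus\Gamma_\rho(z)$ depends on $z$ only through the lower bound $|x-\theta|\ge\rho/2$, which is a uniform geometric condition. No further regularity of $\partial\Omega$ beyond what has been assumed is required.
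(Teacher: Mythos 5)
Your proof is correct, but it takes a genuinely different route from the paper's. The paper applies the upper bound \eqref{poissbound} and the lower bound in \eqref{h1-behav} to the whole integral at once, reducing the difference to $C\,\delta(x)\int_{\partial\Omega}\frac{|\zeta(\theta)-\zeta(z)|}{|x-\theta|^{N}}\,d\mathcal{H}(\theta)$ (using $\delta(x)\le|x-\theta|$), and then invokes the computation of Lemma \ref{Eu} from Part I, which handles that boundary integral by writing $\partial\Omega$ locally as a $C^{1,1}$ graph, exploiting the near-orthogonality of $x-\theta_x$ to the boundary, and estimating $\int_0^r\frac{\rho^{N-2}\,d\rho}{[|x|^2+\rho^2]^{N/2}}\lesssim|x|^{-1}$ against the modulus of continuity of the datum. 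You instead keep the exact normalization $\int_{\partial\Omega}P^s_{|\Omega}(x,\theta)\,d\mathcal{H}(\theta)=h_1(x)$, which turns the argument into a classical approximate-identity proof: on $\Gamma_\rho(z)$ positivity of the kernel plus this identity give the bound $\omega(\rho)$ with no kernel computation at all, and only the far field needs \eqref{poissbound} and \eqref{h1-behav}, where the crude pointwise bound $P^s_{|\Omega}(x,\theta)\le C\delta(x)(\rho/2)^{-(N+2-2s)}$ suffices and produces the factor $\delta(x)^{3-2s}\to0$. What each buys: the paper's route is short on the page because it recycles Lemma \ref{Eu}, but it hides the quantitative chart-based estimate there and throws away the normalization by passing to two-sided bounds; your route is self-contained and more elementary, avoids the boundary parametrization entirely, and makes the uniformity in $z$ transparent (the choice of $\rho$ comes from the uniform continuity of $\zeta$ and the far-field estimate depends on $z$ only through $|x-\theta|\ge\rho/2$). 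The only implicit step worth stating explicitly is that $\delta(x)\le|x-z|$, so $\delta(x)\downarrow0$ in the limit $x\to z$, which you indeed use when converting "small $\delta(x)$" into "small $|x-z|$".
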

\begin{proof}
Let us write
\begin{multline*}
\left|\frac{\mathbb{P}_{|\Omega}^s\zeta(x)}{h_1(x)}-\zeta(z)\right|=
\left|\frac1{h_1(x)}\int_{\partial\Omega}P^s_{|\Omega}(x,\theta)\,\zeta(\theta)\;d\HH(\theta)-\frac{h_1(x)\,\zeta(z)}{h_1(x)}\right|\ \leq\\
\leq\ \frac1{h_1(x)}\int_{\partial\Omega}P^s_{|\Omega}(x,\theta)|\zeta(\theta)-\zeta(z)|\;d\HH(\theta)\leq
C\delta(x)^{3-2s}\int_{\partial\Omega}\frac{|\zeta(\theta)-\zeta(z)|}{|x-\theta|^{N+2-2s}}\;d\HH(\theta)\ \leq \\
\leq\ C\delta(x)\int_{\partial\Omega}\frac{|\zeta(\theta)-\zeta(z)|}{|x-\theta|^N}\;d\HH(\theta).
\end{multline*}
It suffices now to repeat the computations in 
Lemma \ref{Eu}
to show that the obtained quantity 
converges to $0$ as $x\to z$.
\end{proof}

With an approximation argument started from the last Proposition,
we can deal with a $\zeta\in L^1(\partial\Omega)$ datum.

\begin{theo}\label{bound-l1} For any $\zeta\in L^1(\partial\Omega)$
and any $\phi\in C^0(\super\Omega)$ it holds
\[
\frac1t\int_{\{\delta(x)\leq t\}}
\frac{\mathbb{P}_{|\Omega}^s\zeta(x)}{h_1(x)}\,\phi(x)\;dx
\xrightarrow[t\downarrow 0]{} \int_{\partial\Omega}\phi(y)\,\zeta(y)\;d\HH(y).
\]
\end{theo}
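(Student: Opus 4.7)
The plan is to proceed by a three-step density argument, the crucial preliminary being a uniform bound on a Poisson-type operator acting on the boundary strip.

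\textbf{Step 1: a uniform bound.} First I would show that there exists $C>0$, independent of $t\in(0,t_0)$ and $y\in\partial\Omega$, such that
\[
K_t(y)\ :=\ \frac{1}{t}\int_{\{\delta(x)\leq t\}}\frac{P^s_{|\Omega}(x,y)}{h_1(x)}\;dx\ \leq\ C.
\]
Combining the two-sided bounds \eqref{poissbound} and \eqref{h1-behav} one has $P^s_{|\Omega}(x,y)/h_1(x)\asymp\delta(x)^{3-2s}\,|x-y|^{-(N+2-2s)}$. Splitting the integral into a region away from $y$ (where $|x-y|\geq c>0$ and the bound is immediate since $\int\delta^{3-2s}\leq Ct$) and a neighbourhood of $y$, I would reduce the latter to the flat case via Lemma \ref{lemma39}, so that with $y=0$, $x=(x',x_N)$, the contribution becomes
\[
\frac{1}{t}\int_0^t x_N^{3-2s}\int_{B_r}\frac{dx'}{\bigl(|x'|^2+x_N^2\bigr)^{(N+2-2s)/2}}\;dx_N.
\]
The inner integral equals $x_N^{2s-3}$ times a bounded quantity (after the substitution $x'=x_N\xi$), and one is left with $\frac{1}{t}\int_0^t dx_N=1$. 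This furnishes the sought uniform bound.

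\textbf{Step 2: the continuous case.} For $\zeta\in C(\partial\Omega)$, by Proposition \ref{bound-cont} the function $\mathbb{P}^s_{|\Omega}\zeta/h_1$ extends continuously to $\overline\Omega$ with trace $\zeta$ on $\partial\Omega$. Since $\partial\Omega\in C^{1,1}$, the map $x\mapsto\delta(x)$ is $C^{1,1}$ near $\partial\Omega$, and the coarea formula yields
\[
\frac{1}{t}\int_{\{\delta(x)\leq t\}}\frac{\mathbb{P}^s_{|\Omega}\zeta(x)}{h_1(x)}\,\phi(x)\;dx\ =\ \frac{1}{t}\int_0^t F(r)\;dr,\qquad F(r):=\int_{\{\delta=r\}}\frac{\mathbb{P}^s_{|\Omega}\zeta}{h_1}\,\phi\;d\mathcal{H}.
\]
By the uniform boundary behaviour of $\mathbb{P}^s_{|\Omega}\zeta/h_1$ and the smoothness of $\delta$, the map $r\mapsto F(r)$ is continuous up to $r=0$ with $F(0)=\int_{\partial\Omega}\phi\,\zeta\;d\mathcal{H}$, so the Ces\`aro average converges to $F(0)$.

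\textbf{Step 3: density.} Given $\zeta\in L^1(\partial\Omega)$ and $\eps>0$, choose $\zeta_k\in C(\partial\Omega)$ with $\|\zeta-\zeta_k\|_{L^1(\partial\Omega)}<\eps$. By Fubini and Step 1,
\[
\left|\frac{1}{t}\int_{\{\delta\leq t\}}\frac{\mathbb{P}^s_{|\Omega}(\zeta-\zeta_k)}{h_1}\,\phi\;dx\right|\leq\|\phi\|_\infty\int_{\partial\Omega}K_t(y)\,|\zeta-\zeta_k|(y)\;d\mathcal{H}(y)\leq C\|\phi\|_\infty\,\eps,
\]
uniformly in $t$, and an analogous bound controls $\bigl|\int_{\partial\Omega}\phi(\zeta-\zeta_k)\,d\mathcal{H}\bigr|$. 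Combined with Step 2 applied to $\zeta_k$, a standard $3\eps$ argument concludes the proof.

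The main obstacle is Step 1: the uniform estimate on $K_t(y)$ rests on a delicate but finite calculation, and its sharpness (the integrand behaves exactly like $t^{-1}\cdot 1$) is precisely what makes the boundary trace well-defined in the sense stated. The other two steps are essentially consequences of this bound together with Proposition \ref{bound-cont}.
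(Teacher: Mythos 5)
Your proposal is correct and follows essentially the same route as the paper: the paper also approximates $\zeta$ by continuous $\zeta_k$, runs a three-term ($3\eps$) estimate, and handles the error term via Fubini together with a uniform-in-$(t,y)$ bound on exactly your kernel $K_t(y)$ (there called $\Phi(t,y)$), proved from \eqref{poissbound}, \eqref{h1-behav} and the flat reduction of Lemma \ref{lemma39}. The only cosmetic difference is your Step 2, where you make the convergence for continuous data explicit through a coarea/Ces\`aro argument, while the paper simply invokes the uniformity of the limit in Proposition \ref{bound-cont}.
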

\begin{proof}
For a general $\zeta\in L^1(\partial\Omega)$, consider a sequence
${\{\zeta_k\}}_{k\in\N}\subset C(\partial\Omega)$ such that 
\begin{equation}\label{topos}
\int_{\partial\Omega}\left|\zeta_k(y)-\zeta(y)\right|d\HH(y)\xrightarrow[k\uparrow\infty]{} 0.
\end{equation}
For any fixed $k\in\N$, we have
\begin{align}
& \left| \frac1t\int_{\{\delta(x)<t\}}\frac{\mathbb{P}_{|\Omega}^s\zeta(x)}{h_1(x)}\,\phi(x)\:dx 
- \int_{\partial\Omega}\phi(x)\,\zeta(x)\:d\HH(x) \right| \leq \nonumber \\
& \left|\frac1t\int_{\{\delta(x)<t\}}\frac{\mathbb{P}_{|\Omega}^s\zeta(x)-\mathbb{P}_{|\Omega}^s\zeta_k(x)}{h_1(x)}\,\phi(x)\:dx \right| \label{1first}\\
& +\ \left| \frac1t\int_{\{\delta(x)<t\}}\frac{\mathbb{P}_{|\Omega}^s\zeta_k(x)}{h_1(x)}\,\phi(x)\:dx 
- \int_{\partial\Omega}\phi(x)\,\zeta_k(x)\:d\HH(x) \right| \label{2second} \\
& +\ \left|\int_{\partial\Omega}\phi(x)\,\zeta_k(x)\:d\HH(x)-\int_{\partial\Omega}\phi(x)\,\zeta(x)\:d\HH(x) \right|. \label{3third}
\end{align}
Call $\lambda_k:=\zeta_k-\zeta$: 
the term \eqref{1first} equals
\[
\frac1t\int_{\{\delta(x)<t\}}\frac{\mathbb{P}_{|\Omega}^s\lambda_k(x)}{h_1(x)}\,\phi(x)\:d\HH(x) =
\int_{\partial\Omega}\left(\frac1t\int_{\{\delta(x)<t\}}\frac{P_{|\Omega}^s(x,y)}{h_1(x)}\,\phi(x)\:dx\right)\lambda_k(y)\:d\HH(y).
\]
Call
\[
\Phi(t,y):=\frac1t\int_{\{\delta(x)<t\}}\frac{P_{|\Omega}^s(x,y)}{h_1(x)}\,\phi(x)\:dx.
\]
Combining equations \eqref{poissbound}, \eqref{h1-behav} and the boundedness of $\phi$,
we can prove that $\Phi$ is uniformly bounded in $t$ and $y$. Indeed,
\[
|\Phi(t,y)|\leq\frac{\|\phi\|_{L^\infty(\Omega)}}{t}
\int_{\{\delta(x)<t\}}\frac{\delta(x)^{3-2s}}{|x-y|^{N+2-2s}}\:dx
\leq
\frac{\|\phi\|_{L^\infty(\Omega)}}{t}
\int_{\{\delta(x)<t\}}\frac{\delta(x)}{|x-y|^N}\:dx
\]
and reducing our attention to the flat case (see Lemma \ref{lemma39} in the Appendix for the complete justification)
we estimate (the ' superscript denotes an object living in $\R^{N-1}$)
\[
\frac1t\int_0^t\int_{B'} \frac{x_N}{\left[|x'|^2+x_N^2\right]^{N/2}}\:dx'\:dx_N
\leq \frac1t\int_0^t\int_{B'_{1/x_N}}\frac{d\xi}{\left[|\xi|^2+1\right]^{N/2}}\:dx_N
\leq \int_{\R^{N-1}}\frac{d\xi}{\left[|\xi|^2+1\right]^{N/2}}.
\]
Thus $\int_{\partial\Omega}\Phi(t,y)\lambda_k(y)d\HH(y)$ 
is arbitrarily small in $k$ in view of \eqref{topos}.

The term \eqref{2second} converges to $0$ as $t\downarrow0$ because the convergence
\[
\frac{\mathbb{P}_{|\Omega}^s\zeta_k(x)}{h_1(x)}\,\phi(x)\xrightarrow[x\to z]{x\in\Omega} \zeta_k(z)\,\phi(z)
\]
is uniform in $z\in\partial\Omega$ in view of Proposition \ref{bound-cont}.

Finally, the term \eqref{3third} is arbitrarily small with $k\uparrow+\infty$, because of \eqref{topos}.
This concludes the proof of the theorem, because 
\begin{multline*}
\lim_{t\downarrow 0}\left| \frac1t\int_{\{\delta(x)<t\}}\frac{\mathbb{P}_{|\Omega}^s\zeta(x)}{h_1(x)}\,\phi(x)\:dx 
- \int_{\partial\Omega}\phi(x)\,\zeta(x)\:d\HH(x) \right|
\ \leq \\
\leq\ \|\Phi\|_{L^\infty((0,t_0)\times\partial\Omega)}\int_{\partial\Omega}|\zeta_k(y)-\zeta(y)|\;d\HH(y)
+\|\phi\|_{L^\infty(\partial\Omega)}\int_{\partial\Omega}|\zeta_k(y)-\zeta(y)|\;d\HH(y)
\end{multline*}
and letting $k\uparrow+\infty$ we deduce the thesis as a consequence of \eqref{topos}.
\end{proof}

Moreover we have also
\begin{theo}\label{bound-g} 
For any $\mu\in\mathcal{M}(\Omega)$, such that 
\begin{equation}\label{mu1}
\int_\Omega\delta\,d|\mu|<\infty,
\end{equation}
and any $\phi\in C^0(\super\Omega)$ it holds
\begin{equation}\label{333}
\frac1t\int_{\{\delta(x)\leq t\}}\frac{\mathbb{G}_{|\Omega}^s\mu(x)}{h_1(x)}\,\phi(x)
\:dx
\ \xrightarrow[t\downarrow 0]{}\ 0.
\end{equation}
\end{theo}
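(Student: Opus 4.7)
The plan is to mirror the proof of the preceding proposition, adapted to the spectral Laplacian setting. Without loss of generality one may assume $\mu\ge 0$ via Jordan decomposition. Using Fubini--Tonelli, the bound $1/h_1 \le C\,\delta^{2-2s}$ from \eqref{h1-behav}, and the boundedness of $\phi$, the thesis \eqref{333} reduces to showing
\[
\frac{1}{t}\int_\Omega I(y,t)\,d\mu(y) \xrightarrow[t\downarrow 0]{} 0,
\qquad \text{where}\qquad I(y,t):=\int_{\{\delta(x)<t\}} G_{|\Omega}^s(x,y)\,\delta(x)^{2-2s}\,dx.
\]

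The key technical estimate I plan to establish is that there exist constants $C,\sigma>0$ (depending on $N,s,\Omega$ only) such that for every $y\in\Omega$ and every $t>0$,
\[
I(y,t)\ \le\ \begin{cases} C\, t^{1+\sigma}\,\delta(y)^{1-\sigma} & \text{if } \delta(y)\ge 2t, \\ C\, t\,\delta(y) & \text{if } \delta(y)< 2t. \end{cases}
\]
Granted this, the conclusion would follow by dominated convergence: on $\{\delta(y)\ge 2t\}$ the ratio $I(y,t)/t$ is bounded by $C\delta(y)(t/\delta(y))^\sigma\le C\delta(y)$ and converges pointwise to zero, so its integral over this set tends to zero with majorant $\delta\in L^1(d\mu)$ via \eqref{mu1}; on $\{\delta(y)<2t\}$, one has $I(y,t)/t\le C\delta(y)$ and $\int_{\{\delta(y)<2t\}}\delta\,d\mu\to 0$ by absolute continuity of the finite measure $\delta\,d\mu$.

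To prove the pointwise estimate, Lemma \ref{lemma39} allows one to assume $\partial\Omega$ is locally flat and parametrise $y=(0,\delta(y))\in\R^{N-1}\times(0,\infty)$. For $\delta(y)\ge 2t$ the global Green function bound $G_{|\Omega}^s(x,y)\le C\delta(x)\delta(y)/|x-y|^{N+2-2s}$ (which follows from \eqref{green-behav} upon bounding the minimum by its second argument) is sharp enough: after computing the $x'$-integral
\[
\int_{\R^{N-1}}\frac{dx'}{(|x'|^2+(x_N-\delta(y))^2)^{(N+2-2s)/2}}\ =\ \frac{C}{|x_N-\delta(y)|^{3-2s}}
\]
and using $|x_N-\delta(y)|\ge \delta(y)/2$, one obtains $I(y,t)\le C t^{4-2s}\delta(y)^{2s-2}$, the desired bound with $\sigma=3-2s$.

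The main obstacle is the case $\delta(y)<2t$, where the product bound develops a non-integrable singularity near $x=y$. I would overcome it by splitting the integration domain into $A:=\{|x-y|^2\ge \delta(x)\delta(y)\}$ and $B:=\{|x-y|^2<\delta(x)\delta(y)\}$. On $A$ the product bound remains valid and, after the rescaling $u=x_N/\delta(y)$, $\xi=x'/\delta(y)$, the integrand $u^{3-2s}$ times the $\xi$-integral (now bounded since the singular locus $\xi=0,\,u=1$ is excluded by the constraint $|\xi|^2+(u-1)^2\ge u$) is uniformly bounded on $(0,\infty)$, yielding $I_A(y,t)\le C\,t\,\delta(y)$. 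On $B$ one has $\delta(x)\asymp \delta(y)$ and $|x-y|\le\sqrt{\delta(x)\delta(y)}\le C\delta(y)$; applying the sharp bound $G_{|\Omega}^s(x,y)\le C|x-y|^{-(N-2s)}$ gives
\[
I_B(y,t)\ \le\ C\delta(y)^{2-2s}\!\!\int_{|x-y|\le C\delta(y)} \!\! |x-y|^{-(N-2s)}\,dx\ \le\ C\delta(y)^{2}\ \le\ C\,t\,\delta(y),
\]
the last inequality using $\delta(y)<2t$.
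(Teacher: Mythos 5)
Your proposal is correct and follows the same overall architecture as the paper's proof: reduce to $\mu\ge0$ by Jordan decomposition, exchange the order of integration, establish a two-regime pointwise bound on $I(y,t)=\int_{\{\delta(x)<t\}}G^s_{|\Omega}(x,y)\,\delta(x)^{2-2s}dx$, and conclude by dominated convergence exactly as in the paper. Where you genuinely diverge is in how the key estimate is obtained. In the regime $\delta(y)\gtrsim t$ the paper invokes the weighted Green estimate of Dhifli--M\^aagli--Zribi (their Proposition 7), fixing an auxiliary $s'\in(0,s\wedge 1/2)$ and getting $I(y,t)\le C t^{2-2s'}\delta(y)^{2s'}$, whereas you compute directly from the product bound $G^s_{|\Omega}(x,y)\le C\delta(x)\delta(y)|x-y|^{-(N+2-2s)}$ and obtain $C t^{4-2s}\delta(y)^{2s-2}$; the exponents differ, but both are dominated by $C\,t\,\delta(y)$ on $\{\delta(y)\gtrsim t\}$, so either suffices for the dominated-convergence step. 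In the regime $\delta(y)\lesssim t$ the paper flattens the boundary and performs an explicit $(\rho,\eta)$-computation with a case distinction between $s\in(1/2,1)$ and $s\in(0,1/2]$; your splitting of the domain into $A=\{|x-y|^2\ge\delta(x)\delta(y)\}$ and $B=\{|x-y|^2<\delta(x)\delta(y)\}$, using the product bound on $A$ (where the rescaled constraint $|\xi|^2+(u-1)^2\ge u$ indeed makes $u^{3-2s}$ times the $\xi$-integral uniformly bounded on $(0,\infty)$) and the interior bound $G\le C|x-y|^{-(N-2s)}$ together with $\delta(x)\asymp\delta(y)$ on $B$, is a cleaner, self-contained alternative that avoids both the external citation and the $s$-dependent case analysis. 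One small point to make explicit when writing this up: the flat-case reduction via Lemma \ref{lemma39} only applies when $\delta(y)$ is below the threshold $\eps(\Omega)$ and to the portion of the strip $\{\delta(x)<t\}$ lying in the chart around the projection of $y$; for $\delta(y)\ge\eps$, or for the far portion of the strip, $|x-y|$ is bounded below and the trivial bound $G\le C\delta(x)\delta(y)$ already gives $I(y,t)\le C_\eps t^{4-2s}\delta(y)\le Ct\,\delta(y)$, so this is a one-line remark, not an obstruction.
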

\begin{proof}
By using the Jordan decomposition of $\mu=\mu^+-\mu^-$ into its positive and negative part, we can suppose without loss of generality that $\mu\geq 0$. 
Fix some $s'\in(0,s\wedge 1/2)$. Exchanging the order of integration we claim that
\begin{equation}\label{cla}
\int_{\{\delta(x)\leq t\}}G_{|\Omega}^s(x,y)\,\delta(x)^{2-2s}dx\leq
\left\lbrace\begin{aligned}
& C\,t^{2-2s'}\,\delta(y)^{2s'} & \delta(y)\geq t \\
& C\,t\,\delta(y) & \delta(y)<t
\end{aligned}\right.
\end{equation}
where $C=C(N,\Omega,s)$ and does not depend on $t$, which yields
\[
\frac1t\int_\Omega\left(\int_{\{\delta(x)<t\}}G_{|\Omega}^s(x,y)\,\frac{dx}{h_1(x)}\right)d\mu(y)
\ \leq\ C\,t^{1-2s'}\int_{\{\delta(y)\geq t\}}\delta(y)^{2s'}d\mu(y)
+C\int_{\{\delta(x)<t\}}\delta(y)\:d\mu(y).
\]
The second addend converges to $0$ as $t\downarrow0$ by \eqref{mu1}.
Since $t^{1-2s'}\delta(y)^{2s'}$ converges pointwisely to $0$ in $\Omega$ as $t\downarrow0$ and
$t^{1-2s'}\delta(y)^{2s'}\leq\delta(y)$ in $\{\delta(y)\geq t\}$, 
then the first addend converges to $0$ by dominated convergence.
This suffices to deduce our thesis \eqref{333}.

Let us turn now to the proof of the claimed estimate \eqref{cla}. 
For the first part we refer to \cite[Proposition 7]{dhifli} to say
\[
\int_{\{\delta(x)\leq t\}}G_{|\Omega}^s(x,y)\,\delta(x)^{2-2s}dx\leq
t^{2-2s'}\int_{\{\delta(x)\leq t\}}G_{|\Omega}^s(x,y)\,\delta(x)^{-2s+2s'}dx\leq C\,t^{2-2s'}\,\delta^{2s'}.
\]
We focus our attention on the case where $\partial\Omega$ is locally flat, 
i.e. we suppose that in a neighbourhood $A$ of $y$ it holds $A\cap\partial\Omega\subseteq\{x_N=0\}$
(see Lemma \ref{lemma39} in the Appendix to reduce the general case to this one).
So, since $\delta(x)=x_N$ and retrieving estimate \eqref{green-behav} on the Green function, 
we are dealing with
(the $'$ superscript denotes objects that live in $\R^{N-1}$)
\[
\int_0^t\int_{B'(y')}\frac{y_N\,x_N^{3-2s}}{|x'-y'|^2+(x_N-y_N)^2+x_N\,y_N}
\cdot\frac{dx'}{\left[|x'-y'|^2+(x_N-y_N)^2\right]^{(N-2s)/2}}\;dx_N.
\]
From now on we drop multiplicative constants depending only on $N$ and $s$.
Suppose without loss of generality $y'=0$. Set $x_N=y_N\eta$ and switch to polar coordinates in the $x'$ variable:
\[
\int_0^{t/y_N}\int_0^1\frac{y_N^{5-2s}\,\eta^{3-2s}}{r^2+y_N^2(\eta-1)^2+\eta\,y_N^2}\cdot\frac{r^{N-2}dr}{\left[r^2+y_N^2(\eta-1)^2\right]^{(N-2s)/2}}\;d\eta
\]
and then set $r=y_N\rho$ to get
\begin{multline*}
y_N^2\int_0^{t/y_N}\eta^{3-2s}\int_0^{1/y_N}\frac{\rho^{N-2}}{\left[\rho^2+(\eta-1)^2\right]^{(N-2s)/2}}
\cdot\frac{d\rho}{\rho^2+(\eta-1)^2+\eta}\;d\eta\ \leq\\
\leq\ y_N^2\int_0^{t/y_N}\eta^{3-2s}\int_0^{1/y_N}\frac{\rho}{\left[\rho^2+(\eta-1)^2\right]^{(3-2s)/2}}
\cdot\frac{d\rho}{\rho^2+(\eta-1)^2+\eta}\;d\eta.
\end{multline*}
Consider now $s\in(1/2,1)$. 
The integral in the $\rho$ variable is less than
\[
\int_0^{1/y_N}\frac{\rho}{\left[\rho^2+(\eta-1)^2\right]^{(5-2s)/2}}\;d\rho\leq|\eta-1|^{-3+2s}
\]
so that, integrating in the $\eta$ variable,
\[
y_N^2\int_0^{t/y_N}\eta^{3-2s}\,|\eta-1|^{-3+2s}\:d\eta
\leq t\,y_N
\]
and we prove \eqref{cla} in the case $s\in(1/2,1)$.
Now we study the case $s\in(0,1/2]$. Split the integration in the $\eta$ variable into $\int_0^2$ and $\int_2^{1/y_N}$: the latter can be treated in the same way as above. For the other one we exploit the inequality $\rho^2+(\eta-1)^2+\eta\geq \frac34$ to deduce\footnote{In the computation that follows, in the particular case $s=\frac12$ the term $|1-\eta|^{2s-1}$ must be replaced by $-\ln|1-\eta|$, but this is harmless.}:
\begin{multline*}
y_N^2\int_0^2\eta^{3-2s}\left(\int_0^{1/y_N}\frac{\rho}{\left[\rho^2+(\eta-1)^2+\eta\right]^{(3-2s)/2}}\cdot\frac{d\rho}{\rho^2+(\eta-1)^2}\right)d\eta\ \leq\\
\leq\ y_N^2\int_0^2\eta^{3-2s}\left(\int_0^{1/y_N}\frac{\rho}{\left[\rho^2+(\eta-1)^2\right]^{(3-2s)/2}}\:d\rho\right)d\eta\leq y_N^2\int_0^2\frac{\eta^{3-2s}}{|\eta-1|^{1-2s}}\:d\eta
\leq y_N^2.
\end{multline*}
Note now that, in our set of assumptions, $y_N=\delta(y)<t$. So $y_N^2\leq ty_N$ and we get to the desired conclusion \eqref{cla} also in the case $s\in(0,1/2]$.
\end{proof}

\section{the dirichlet problem}\label{dir-sect}
Recall the definition of test functions \eqref{test}.

\begin{lem}\label{lemma-test-spectral} $\T(\Omega)\subseteq C^1_0(\overline{\Omega})\cap C^\infty(\Omega)$. 
Moreover, for any $\psi\in\T(\Omega)$ and $z\in\partial\Omega$,
\begin{equation}\label{dertest}
-\frac{\partial\psi}{\partial\nu}(z)\ =\ \int_\Omega P_{|\Omega}^s(y,z)\,\As\psi(y)\;dy.
\end{equation}
\end{lem}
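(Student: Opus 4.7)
Plan. By definition of $\T(\Omega)$ and Lemma \ref{inverse}, I can write $\psi=\rest^{-s}f$ for some $f\in C^\infty_c(\Omega)$, and
\[
\psi(x)\ =\ \int_\Omega G^s_{|\Omega}(x,y)\,f(y)\,dy\ =\ \frac{1}{\Gamma(s)}\int_0^\infty t^{s-1}u(t,x)\,dt,
\]
where $u(t,x)=\int_\Omega p_\Omega(t,x,y)f(y)\,dy$ is the classical solution of the heat equation on $\Omega$ with initial datum $f$ and homogeneous Dirichlet conditions. In particular $\As\psi=f$. Interior smoothness $\psi\in C^\infty(\Omega)$ is immediate from Lemma \ref{clas-harm} applied iteratively (since $\As\psi=f\in C^\alpha_{\rm loc}(\Omega)$ for every $\alpha$).

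For the global regularity $\psi\in C^1_0(\overline\Omega)$, I would appeal to the heat-semigroup representation above. Because $f\in C^\infty_c(\Omega)$ and $\partial\Omega\in C^{1,1}$, parabolic regularity up to the boundary gives $u\in C^{1,\alpha}([0,\infty)\times\overline\Omega)$ with $u(t,\cdot)|_{\partial\Omega}=0$ for every $t\geq 0$, together with the exponential decay
\[
|u(t,x)|+|\grad_x u(t,x)|\ \leq\ C\,e^{-\lambda_1 t},\qquad t\geq 1,
\]
coming from the spectral expansion of $f$. The integrability of $t^{s-1}$ near zero, combined with this exponential tail, allows to differentiate under the integral sign, giving $\psi\in C^1(\overline\Omega)$. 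Since $u(t,z)=0$ whenever $z\in\partial\Omega$, one obtains $\psi|_{\partial\Omega}=0$, i.e.\ $\psi\in C^1_0(\overline\Omega)$.

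To prove \eqref{dertest}, I use that $\psi|_{\partial\Omega}=0$ together with $\psi\in C^1(\overline\Omega)$, so for $z\in\partial\Omega$
\[
-\frac{\partial\psi}{\partial\nu}(z)\ =\ \lim_{\stackrel{x\to z}{x\in\Omega}}\frac{\psi(x)}{\delta(x)}\ =\ \lim_{\stackrel{x\to z}{x\in\Omega}}\int_\Omega\frac{G^s_{|\Omega}(x,y)}{\delta(x)}\,f(y)\,dy.
\]
Let $K:=\mathrm{supp}\,f\subset\subset\Omega$ and $\eta:=\dist(K,\partial\Omega)>0$. For $x$ close to $\partial\Omega$ (say $\delta(x)<\eta/2$) and $y\in K$, one has $|x-y|\geq\eta/2$, and by \eqref{green-behav}
\[
\frac{G^s_{|\Omega}(x,y)}{\delta(x)}\ \leq\ \frac{C\,\delta(y)}{|x-y|^{N+2-2s}}\ \leq\ \frac{C\,\mathrm{diam}(\Omega)}{(\eta/2)^{N+2-2s}},
\]
so the integrand is dominated uniformly in $x$ by an $L^\infty(K)$ function. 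By symmetry of the Green function and Lemma \ref{lemma8}, the pointwise limit is
\[
\lim_{x\to z}\frac{G^s_{|\Omega}(x,y)}{\delta(x)}\ =\ \lim_{x\to z}\frac{G^s_{|\Omega}(y,x)}{\delta(x)}\ =\ P^s_{|\Omega}(y,z),
\]
and dominated convergence yields
\[
-\frac{\partial\psi}{\partial\nu}(z)\ =\ \int_\Omega P^s_{|\Omega}(y,z)\,f(y)\,dy\ =\ \int_\Omega P^s_{|\Omega}(y,z)\,\As\psi(y)\,dy,
\]
which is \eqref{dertest}.

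The main obstacle is the up-to-the-boundary $C^1$ regularity of $\psi$, as the underlying representation only gives $\psi\in C^s(\overline\Omega)$ by naive estimates. Pushing to $C^{1,\alpha}(\overline\Omega)$ requires parabolic Schauder estimates in a $C^{1,1}$ domain (used here essentially), or equivalently a sharp analysis of the heat kernel's normal derivative on $\partial\Omega$. If one wishes to avoid this, a sufficient replacement for the purpose of \eqref{dertest} is: prove (i) $\psi\in C(\overline\Omega)$ with vanishing boundary trace, and (ii) $\psi/\delta$ extends continuously to $\overline\Omega$. Step (i) is straightforward from \eqref{green-behav} and the support condition on $f$ (giving $|\psi(x)|\leq C\delta(x)$), while (ii) follows from the dominated-convergence argument above together with the continuity of $z\mapsto P^s_{|\Omega}(y,z)$ supplied by Lemma \ref{lemma8}.
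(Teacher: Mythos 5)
Your proof of \eqref{dertest} is essentially the paper's argument: write $\psi(x)=\int_\Omega G^s_{|\Omega}(x,y)f(y)\,dy$ with $f=\As\psi$, identify $-\partial\psi/\partial\nu(z)$ with $\lim_{x\to z}\psi(x)/\delta(x)$, and pass to the limit under the integral using Lemma \ref{lemma8} and dominated convergence; your domination is in fact simpler than the paper's, because you exploit $\mathrm{supp}\,f\subset\subset\Omega$ and \eqref{green-behav} to get a uniform bound, whereas the paper dominates via \eqref{prepois}, which is valid for all $y\in\Omega$. Where you genuinely diverge is the membership $\T(\Omega)\subseteq C^1_0(\overline\Omega)$. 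The paper gets this almost for free from tools it has already set up: $f\in C^\infty_c(\Omega)$ has spectral coefficients decaying faster than any power of $\lambda_j$ by \eqref{spectral-coef}, hence so does $\psi$ (since $\widehat\psi_j=\lambda_j^{-s}\widehat f_j$), and the eigenfunction gradient bound \eqref{esti-ef} then makes the eigenfunction series of $\psi$ converge in $C^1(\overline\Omega)$, each term vanishing on $\partial\Omega$. Your route through the subordination formula $\psi=\frac{1}{\Gamma(s)}\int_0^\infty t^{s-1}u(t,\cdot)\,dt$ and parabolic regularity up to the boundary also works, but it imports boundary Schauder theory for the heat equation in $C^{1,1}$ domains, machinery the paper never needs; moreover the uniformity of the $C^{1,\alpha}(\overline\Omega)$ bound on $u(t,\cdot)$ as $t\downarrow0$ (which you need in order to differentiate under the $t^{s-1}\,dt$ integral) deserves an explicit word: it holds here because $f\equiv0$ in a neighbourhood of $\partial\Omega$, so all compatibility conditions are trivially satisfied and, by the Gaussian bound \eqref{hkb}, $u(t,\cdot)$ and its gradient are exponentially small in a boundary strip for small $t$; without this observation the constant in the parabolic estimate could a priori degenerate as $t\downarrow0$. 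Finally, note that your proposed fallback (proving only that $\psi/\delta$ extends continuously to $\overline\Omega$) would suffice for \eqref{dertest} but not for the full statement of the lemma, which asserts $\T(\Omega)\subseteq C^1_0(\overline\Omega)$; the spectral-series argument above closes that gap with two lines, so it is worth adopting it (or keeping it in reserve) rather than relying on the weaker replacement.
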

\begin{proof}
Take $\psi\in\T(\Omega)$ and let $f=\As\psi$. Since $f\in C^\infty_c(\Omega)$, the spectral coefficients of $f$ have fast decay (see \eqref{spectral-coef}) and so the same holds true for $\psi$. It follows that $\psi\in C^1_0(\overline\Omega)$ and $\T(\Omega)\subseteq C^1_0(\overline{\Omega})$.
By Lemma \ref{inverse}, for all $x\in\overline\Omega$,
\[
\psi(x) = \int_\Omega G_{|\Omega}^s(x,y)\,f(y)\:dy
\]
Using Lemma \ref{lemma8}, \eqref{prepois} and the dominated convergence theorem, \eqref{dertest} follows.

Since $\As$ is self-adjoint in $H(2s)$, we know that the equality $\As\psi=f$ holds in $\mathcal{D}'(\Omega)$ and the interior regularity follows from Lemma \ref{clas-harm}.
\end{proof}

\begin{lem}[maximum principle for classical solutions]\label{maxprinc-clas}
Consider $u\in C^{2s+\eps}(\Omega)\cap L^1(\Omega,\delta(x)\,dx)$ such that
\[
\As u \geq 0\ \hbox{ in }\Omega,\qquad \liminf_{x\to\partial\Omega}u(x)\geq 0.
\]
Then $u\geq 0$ in $\Omega$. In particular this holds when $u\in\T(\Omega)$.
\end{lem}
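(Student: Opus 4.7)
The plan is to argue by contradiction, exploiting the pointwise representation \eqref{As2} together with the strict positivity of the jumping kernel $J$ and of the killing measure $\kappa$. Assume, aiming at a contradiction, that $u(x_1)<0$ at some $x_1\in\Omega$, and set $m:=\inf_\Omega u<0$. The continuity of $u$ in $\Omega$ (inherited from $u\in C^{2s+\eps}(\Omega)$), combined with $\liminf_{x\to\partial\Omega}u(x)\ge 0$, gives some $\eta>0$ for which $u>m/2$ on $\{\delta<\eta\}$; thus the infimum is attained at some $x_0\in K_\eta:=\{x\in\super\Omega:\delta(x)\ge \eta\}\subset\Omega$, and $u(x_0)=m<0\le u(y)$ for every $y$ near $\partial\Omega$, so in fact $u(x_0)=\min_\Omega u$.

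Next, I would apply formula \eqref{As2} at the interior point $x_0$, which is legitimate under the regularity $u\in C^{2s+\eps}(\Omega)\cap L^1(\Omega,\delta(x)dx)$:
\[
\As u(x_0)\ =\ PV\int_\Omega[u(x_0)-u(y)]\,J(x_0,y)\,dy+\kappa(x_0)u(x_0).
\]
Since $u(x_0)-u(y)\le 0$ for all $y\in\Omega$ and $J(x_0,y)\ge 0$ by \eqref{Jbound}, each truncated integral $\int_{\Omega\setminus B_r(x_0)}[u(x_0)-u(y)]J(x_0,y)\,dy$ is nonpositive for every $r>0$. Letting $r\downarrow 0$ (the limit existing because the pointwise formula \eqref{As2} is well-defined at $x_0$ under our hypotheses) yields that the principal value itself is $\le 0$. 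Since $\kappa(x_0)>0$ by the asymptotics recalled in \eqref{def6} and $u(x_0)<0$, one obtains
\[
\As u(x_0)\ \le\ \kappa(x_0)u(x_0)\ <\ 0,
\]
contradicting the assumption $\As u\ge 0$ in $\Omega$. Therefore $u\ge 0$ everywhere in $\Omega$.

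The last statement of the lemma follows immediately from Lemma \ref{lemma-test-spectral}: any $\psi\in\T(\Omega)$ lies in $C^\infty(\Omega)\cap C^1_0(\super\Omega)$, in particular it is bounded (so in $L^1(\Omega,\delta(x)dx)$), of class $C^{2s+\eps}$ in $\Omega$ for every $\eps>0$, and $\lim_{x\to\partial\Omega}\psi(x)=0$; hence the first part of the lemma may be applied directly. The main delicate point in the whole argument, in my view, is checking that the principal-value limit can indeed be identified with a nonpositive number: the pointwise bound $[u(x_0)-u(y)]J(x_0,y)\le 0$ on the exterior of each ball $B_r(x_0)$ yields nonpositivity for the truncations, but one must know \emph{a priori} that the limit as $r\downarrow 0$ exists in order to read off the sign of the PV, and this is precisely where the Hölder regularity assumption $u\in C^{2s+\eps}(\Omega)$ enters, taming the local behaviour $J(x_0,y)\asymp |x_0-y|^{-N-2s}$ of the kernel near the diagonal.
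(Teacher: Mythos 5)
Your argument is correct and is essentially the paper's own proof: evaluate $\As u$ via the pointwise formula \eqref{As2} at an interior negative minimum, use the sign of $u(x_0)-u(y)$ together with $J\geq 0$ and $\kappa>0$ to force $\As u(x_0)<0$, contradicting the hypothesis. Your additional care about why the minimum is attained in the interior and why the principal value has the claimed sign only fills in details the paper leaves implicit.
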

\begin{proof} Suppose $x^*\in\Omega$ such that 
$\displaystyle u(x^*)=\min_\Omega u<0$. Then
\[
\As u(x^*)=\int_\Omega [u(x^*)-u(y)]\,J(x,y)\;dy + \kappa(x^*) u(x^*) < 0,
\]
a contradiction.
\end{proof}

\begin{lem}[maximum principle for weak solutions] \label{maxprinc-weak}
Let $\mu\in\mathcal{M}(\Omega),\zeta\in\mathcal{M}(\partial\Omega)$ be two Radon measures satisfying \eqref{hypo}
with $\mu\geq 0$ and $\zeta\geq 0$.
Consider $u\in L^1_{loc}(\Omega)$ 
a weak solution to the Dirichlet problem \eqref{prob}. Then $u\geq 0$ a.e. in $\Omega$. 
\end{lem}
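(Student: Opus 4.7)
The plan is to argue by duality, exactly as one does in the classical Stampacchia setting, by producing a positive test function with nonpositive outward normal derivative for every nonnegative right-hand side.

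First I would fix an arbitrary nonnegative $\psi\in C^\infty_c(\Omega)$ and consider
\[
\phi(x)\ :=\ \Dms\psi(x)\ =\ \int_\Omega G^s_{|\Omega}(x,y)\,\psi(y)\,dy.
\]
By the very definition \eqref{test} of $\T(\Omega)$, $\phi\in\T(\Omega)$, so $\phi$ is an admissible test function in \eqref{byparts-spectral}, and by Lemma \ref{lemma-test-spectral} it belongs to $C^1_0(\overline\Omega)\cap C^\infty(\Omega)$, is bounded by a constant multiple of $\delta$, and satisfies $\As\phi=\psi$ in $\Omega$.

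Next I would verify the two key sign properties. The Green function is pointwise nonnegative by \eqref{green-behav} (or directly by the definition \eqref{green} together with the positivity of the Dirichlet heat kernel), hence $\phi\ge 0$ throughout $\Omega$. For the normal derivative, formula \eqref{dertest} together with the Poisson kernel bound \eqref{poissbound} gives, for every $z\in\partial\Omega$,
\[
-\frac{\partial\phi}{\partial\nu}(z)\ =\ \int_\Omega P^s_{|\Omega}(y,z)\,\psi(y)\,dy\ \ge\ 0.
\]

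Finally, since $\phi\in\T(\Omega)$, I can insert it into \eqref{byparts-spectral}:
\[
\int_\Omega u\,\psi\,dx\ =\ \int_\Omega u\,\As\phi\,dx\ =\ \int_\Omega \phi\,d\mu\ -\ \int_{\partial\Omega}\frac{\partial\phi}{\partial\nu}\,d\zeta.
\]
All integrals are well defined: $\phi\le C\delta$ makes $\int\phi\,d|\mu|$ finite by \eqref{hypo}, while $\phi\in C^1_0(\overline\Omega)$ makes $\partial\phi/\partial\nu$ bounded on $\partial\Omega$ so $\int|\partial\phi/\partial\nu|\,d|\zeta|$ is finite as well. The right-hand side is a sum of two nonnegative terms, because $\mu\ge0$ and $\phi\ge0$ on the one hand, and $\zeta\ge0$ and $-\partial\phi/\partial\nu\ge0$ on the other. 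Hence $\int_\Omega u\,\psi\,dx\ge 0$ for every nonnegative $\psi\in C^\infty_c(\Omega)$, and by a standard density argument this forces $u\ge 0$ a.e. in $\Omega$.

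I do not foresee a genuine obstacle here: the only point that requires a little care is checking that $\phi=\Dms\psi$ really lies in $\T(\Omega)$ as a test function (this is built into the definition \eqref{test}) and that the boundary integral in \eqref{byparts-spectral} has the correct sign, which follows cleanly from the explicit representation \eqref{dertest} of $\partial\phi/\partial\nu$ via the positive Poisson kernel.
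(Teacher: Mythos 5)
Your proof is correct and follows essentially the same duality argument as the paper: test with $\phi=\Dms\psi\in\T(\Omega)$ for nonnegative $\psi\in C^\infty_c(\Omega)$, note the sign of the boundary term via \eqref{dertest} and the positivity of the Poisson kernel, and conclude from \eqref{byparts-spectral}. The only cosmetic difference is that you derive $\phi\ge0$ from the Green-function representation and \eqref{green-behav}, whereas the paper quotes the classical maximum principle (Lemma \ref{maxprinc-clas}); both rest on facts already established in the text.
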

\begin{proof}
Take $f\in C^\infty_c(\Omega)$, $f\ge0$ and $\psi=\Dms f\in\T(\Omega)$. By Lemma \ref{maxprinc-clas}, $\psi\geq 0$ in $\Omega$ and by Lemma \ref{lemma-test-spectral} $-\frac{\partial\psi}{\partial\nu}\geq 0$ on $\partial\Omega$. Thus, by \eqref{byparts-spectral}, $\int_\Omega uf\geq 0$. Since this is true for every $f\in C^\infty_c(\Omega)$, the result follows.
\end{proof}

\subsection{Proof of Theorem \ref{point}}
Uniqueness is a direct consequence of the Maximum Principle, Lemma \ref{maxprinc-weak}.
Let us prove that formula \eqref{repr} defines the desired weak solution. Observe that if $u$ is given by \eqref{repr}, then $u\in L^1(\Omega,\delta(x)dx)$. Indeed, 

\begin{multline}\label{L1delta}
\int_\Omega\left|\varphi_1(x) \int_\Omega G^s_{|\Omega}(x,y)d\mu(y)\right|dx
\leq\int_\Omega \int_\Omega G_{|\Omega}^s(x,y)\varphi_1(x)dx\;d|\mu|(y)
\\=\frac1{\lambda_1^s}\int_\Omega\varphi_1(y)\;d|\mu|(y)\le C \Vert \delta\mu\Vert_{\mathcal M(\Omega)}.
\end{multline}
This, along with Lemma \ref{harm}, proves that $u\in L^1(\Omega,\delta(x)dx)$ and \eqref{cont}.
Now, pick $\psi\in\T(\Omega)$ and compute, via the Fubini's Theorem, Lemma \ref{inverse} and Lemma \ref{lemma-test-spectral},
\begin{multline*}
\int_\Omega u(x)\As\psi(x)\;dx = \\
=\ \int_\Omega\left(\int_\Omega G_{|\Omega}^s(x,y)d\mu(y)\right)
\As\psi(x)\;dx 
+\int_\Omega\left(\int_{\partial\Omega}P_{|\Omega}^s(x,z)\:d\zeta(z)\right)\As\psi(x)\;dx\ = \\
=\ \int_\Omega\left(\int_\Omega G_{|\Omega}^s(x,y)\As\psi(x)\:dx\right)d\mu(y) + \int_{\partial\Omega}\left(\int_\Omega P^s_{|\Omega}(x,z)\As\psi(x)\;dx\right)d\zeta(z)\ =\\
=\ \int_\Omega\psi(y)\;d\mu(y)-\int_{\partial\Omega}\frac{\partial\psi}{\partial\nu}(z)\;d\zeta(z).
\end{multline*}
\hfill$\square$

\subsection{Proof of Lemma \ref{clas-weak}}

\noindent {\it Proof of 1.}
Consider a sequence $\{\eta_k\}_{k\in\N}\subset C^\infty_c(\Omega)$ of bump functions such that
$0\leq\eta_1\leq\ldots\leq\eta_k\leq\eta_{k+1}\leq\ldots\leq 1$ and 
$\eta_k(x)\uparrow\chi_\Omega(x)$ as $k\uparrow\infty$. 
Consider $\psi\in C^\infty_c(\Omega)$ and define
$f_k:=\eta_k\As\psi\in C^\infty_c(\Omega)$, $\psi_k:=\rest^{-s}f_k\in\T(\Omega)$.

Let us first note that the integral 
\[
\int_\Omega u\,\As\psi
\]
makes sense in view of \eqref{ineq-lemma1} and \eqref{cont}. The sequence ${\{f_k\}}_{k\in\N}$ trivially converges a.e. to $\As\psi$, while
\[
|\psi_k(x)-\psi(x)|\leq\int_\Omega G_{|\Omega}^s(x,y)|\As\psi(y)|
\cdot(1-\eta_k(y))\;dy
\]
converges to 0 for any $x\in\Omega$ by dominated convergence.
Since $u$ is a weak solution, it holds
\[
\int_\Omega u\,f_k\ =\ \int_\Omega\psi_k\;d\mu-\int_{\partial\Omega}\frac{\partial\psi_k}{\partial\nu}\;d\zeta.
\]
By dominated convergence $\int_\Omega u\,f_k\to \int_\Omega u\As\psi$ and $\int_\Omega\psi_k\:d\mu\to\int_\Omega\psi\:d\mu$.
Indeed for any $k\in\N$,
\[
|f_k|\leq |\As\psi|\qquad\hbox{and}\qquad
|\psi_k|\leq\frac1{\lambda_1^s}\left\|\frac{\As\psi}{\varphi_1}\right\|_{L^\infty(\Omega)}\varphi_1,
\]
where the latter inequality follows from the maximum principle or the representation formula $\psi_k(x)=\int_\Omega G_{|\Omega}^s(x,y)f_k(y)dy$.
Finally, the convergence 
\[
\int_{\partial\Omega}\frac{\partial\psi_k}{\partial\nu}\;d\zeta\xrightarrow[k\uparrow\infty]{}0
\]
holds by dominated convergence, since
\[
\int_{\partial\Omega}\frac{\partial\psi_k}{\partial\nu}\;d\zeta
=\int_\Omega\mathbb{P}_{|\Omega}^s\zeta\,f_k
\]
by the Fubini's Theorem, and $\mathbb{P}_{|\Omega}^s\zeta\in L^1(\Omega,\delta(x)dx)$ while $|f_k|\leq |\As\psi|\leq C\delta$
by Lemma \ref{As-smooth}, so that
\[
\int_\Omega\mathbb{P}_{|\Omega}^s\zeta\,f_k
\xrightarrow[k\uparrow\infty]{}
\int_\Omega\mathbb{P}_{|\Omega}^s\zeta\,f\ =\ 0
\]
because $\mathbb{P}_{|\Omega}^s\zeta$ is $s$-harmonic and $f\in\ C^\infty_c(\Omega)$.

The proof of the boundary trace can be found in Theorems \ref{bound-l1} and \ref{bound-g}, by recalling the representation formula provided by Theorem \ref{point} for the solution to \eqref{prob}.
\hfill$\square$

\noindent {\it Proof of 2.}
Recall that $u$ is represented by
\[
u(x)=\int_\Omega G_{|\Omega}^s(x,y)\,\mu(y)\:dy
+\int_{\partial\Omega}P_{|\Omega}^s(x,y)\,\zeta(y)\:d\HH(y).
\]
By Point 1. and Lemma \ref{clas-harm}, $u\in C^{2s+\alpha}_{loc}(\Omega)$.
Moreover, $u\in L^1(\Omega,\delta(x)dx)$ thanks to \eqref{cont}. So,
we can compute pointwise $\As u$ by using \eqref{As2} and \eqref{def2}:
this entails by the self-adjointness of the operator in \eqref{def2} that
\[
\int_\Omega \As u\,\psi = \int_\Omega u\As\psi = \int_\Omega \mu\psi,
\qquad\text{for any }\psi\in C^\infty_c(\Omega)
\]
and we must conclude that $\As u=\mu$ a.e.
By continuity the equality holds everywhere.

We turn now to the boundary trace. The contribution given by $\mathbb G_{|\Omega}^s\mu$
is irrelevant, because it is a bounded function as it follows from 
\[
|\mathbb G_{|\Omega}^s\mu(x)|\leq C\|\mu\|_{L^\infty(\Omega)}\int_\Omega\frac{dy}{\left|x-y\right|^{N-2s}}
\]
where we have used \eqref{green-behav}.
Therefore, by Proposition \ref{bound-cont}, there also holds for all $x_0\in\partial\Omega$,
\[
\lim_{x\to x_0, x\in\Omega} \frac{u(x)}{h_1(x)}
= \lim_{x\to x_0, x\in\Omega} \frac{\mathbb{G}_{|\Omega}^s\mu(x)+\mathbb{P}_{|\Omega}^s\zeta(x)}{h_1(x)}
= \zeta(x_0).
\]
\hfill$\square$

\noindent {\it Proof of 3.} By Lemma \ref{u-to-lapl} $\mu\in C^\eps_{loc}(\Omega)$. In addition, we have assumed that $\zeta\in C(\partial\Omega)$. Consider 
\[
v(x)=\int_\Omega G_{|\Omega}^s(x,y)\,\mu(y)\;dy+\int_{\partial\Omega}P_{|\Omega}^s(x,z)\,\zeta(z)\;d\HH(z)
\]
the weak solution associated to data $\mu$ and $\zeta$. 
By the previous point of the Lemma, $v$ is a classical solution to the equation, 
so that in a pointwise sense it holds
\[
\As(u-v)=0\hbox{ in }\Omega,\qquad \frac{u-v}{h_1}=0\hbox{ on }\partial\Omega.
\]
By applying Lemma \ref{maxprinc-clas} we conclude that $|u-v|\leq \eps h_1$ for any $\eps>0$ and thus $u-v\equiv 0$. 
\hfill$\square$

\chapter{Nonlinear problems and large solutions for the power nonlinearity}\label{specKO}

\section{the nonlinear problem}\label{nonlin-sect}

\begin{lem}[kato's inequality]\label{kato} For $f\in L^1(\Omega,\delta(x)dx)$ let $w\in L^1(\Omega,\delta(x)dx)$
weakly solve 
\[
\left\lbrace\begin{aligned}
\As w &= f & & \hbox{in }\Omega \\
\frac{w}{h_1} &= 0 & & \hbox{on }\partial\Omega.
\end{aligned}\right.
\]
For any convex $\Phi:\R\to\R$, $\Phi\in C^2(\R)$ such that $\Phi(0)=0$
and $\Phi(w)\in L^1_{loc}(\Omega)$, it holds
\[
\As\Phi(w)\leq\Phi'(w)\As w.
\]
Moreover, the same holds for $\Phi(t)=t^+=t\wedge 0$.
\end{lem}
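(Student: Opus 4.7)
The plan is to prove Kato's inequality first pointwise for classical solutions arising from smooth data, then recover the weak statement by approximation of the datum $f$, and finally handle the case $\Phi(t)=t^+$ by smoothing the nonlinearity.

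First I would mollify the datum: pick $f_k\in C^\infty_c(\Omega)$ with $f_k\to f$ in $L^1(\Omega,\delta(x)\,dx)$ and let $w_k:=\mathbb{G}^s_{|\Omega}f_k$. By Theorem \ref{point}, Lemma \ref{clas-weak}(2) and Lemma \ref{clas-harm}, each $w_k\in C^\infty(\Omega)$ satisfies $\As w_k=f_k$ pointwise; and by the Green-function bound \eqref{green-behav} combined with the compact support of $f_k$, one has $|w_k(x)|\leq C_k\,\delta(x)$ near $\partial\Omega$. Since $\Phi\in C^1$ and $\Phi(0)=0$, this forces $|\Phi(w_k)(x)|\leq C_k'\,\delta(x)$ near $\partial\Omega$, so $\Phi(w_k)\in L^1(\Omega,\delta(x)\,dx)$ with vanishing trace $\Phi(w_k)/h_1\to 0$ on $\partial\Omega$. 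The estimate \eqref{cont} gives $w_k\to w$ in $L^1(\Omega,\delta(x)\,dx)$ and, along a subsequence, a.e.

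The pointwise Kato inequality for $w_k$ is then a direct consequence of the integral representation \eqref{As2} together with the two convexity inequalities
\[
\Phi(w_k(x))-\Phi(w_k(y))\leq\Phi'(w_k(x))\bigl(w_k(x)-w_k(y)\bigr),\qquad
\Phi(w_k(x))\leq\Phi'(w_k(x))\,w_k(x),
\]
the second one following from convexity at the base point $0$ combined with $\Phi(0)=0$. Multiplying the first by $J(x,y)\geq 0$ and integrating in the principal value sense, and applying the second to the killing term (recall that $\kappa\geq 0$), one gets
\[
\As\Phi(w_k)(x)\ \leq\ \Phi'(w_k(x))\,\As w_k(x)\ =\ \Phi'(w_k(x))\,f_k(x),\qquad x\in\Omega.
\]
Since $\Phi(w_k)\in C^{2s+\eps}(\Omega)\cap L^1(\Omega,\delta(x)\,dx)$ and has vanishing boundary trace, Lemma \ref{clas-weak}(3) identifies it with the weak solution of its own Dirichlet problem; testing against an arbitrary nonnegative $\psi\in\T(\Omega)$ the boundary term in \eqref{byparts-spectral} disappears and, using the pointwise inequality and $\psi\geq 0$,
\[
\int_\Omega\Phi(w_k)\,\As\psi\ =\ \int_\Omega\As\Phi(w_k)\,\psi\ \leq\ \int_\Omega\Phi'(w_k)\,f_k\,\psi.
\]

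I would then pass $k\to\infty$ exploiting $|\As\psi|\leq C\delta$ from Lemma \ref{As-smooth}, the a.e. convergence $w_k\to w$, and the local Lipschitz character of $\Phi,\Phi'$ on bounded intervals: dominated convergence handles the interior $\{\delta\geq\eps\}$, while the boundary layer is controlled by the $L^1(\delta\,dx)$-convergence of $w_k$ together with the standing hypothesis $\Phi(w)\in L^1_{\mathrm{loc}}$. For $\Phi(t)=t^+$, I would approximate by convex $\Phi_\eps\in C^2(\R)$ with $\Phi_\eps(0)=0$, $\Phi_\eps(t)\nearrow t^+$ and $0\leq\Phi_\eps'(t)\leq 1$ (for instance $\Phi_\eps(t)=\tfrac12(\sqrt{t^2+\eps^2}+t)-\tfrac{\eps}{2}$), apply the smooth case, and let $\eps\to 0$ by dominated convergence with majorants $|w|$ and $|f|\,\psi$. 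The main obstacle that I anticipate is the passage to the limit in $k$ when $\Phi'$ is not globally bounded: one must secure uniform integrability of both $\Phi(w_k)\,\As\psi$ and $\Phi'(w_k)\,f_k\,\psi$, which I would handle via a truncation $T_M(w_k)$ argument using the uniform control of $\Phi,\Phi'$ on $[-M,M]$ and the $L^1(\Omega,\delta(x)\,dx)$-convergence $w_k\to w$; by contrast, the smoothing for $t^+$ is routine once the smooth case is established.
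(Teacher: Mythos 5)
Your pointwise convexity computation for smooth data, the passage to the integrated form against nonnegative $\psi\in\T(\Omega)$, and the smoothing of $t^{+}$ all match the paper's argument. The genuine gap is in your single limit passage $k\to\infty$. For $f$ merely in $L^1(\Omega,\delta(x)dx)$, the mollified data $f_k$ are not locally uniformly bounded, so the interior estimates (Lemma \ref{clas-harm}, \eqref{calpha}) give no uniform local $L^\infty$ bound on $w_k$; all you have is $w_k\to w$ in $L^1(\Omega,\delta(x)dx)$ and a.e. along a subsequence, and $w$ itself need not be locally bounded. Since $\Phi$ is an arbitrary convex $C^2$ function with no growth restriction and $\Phi(w)$ is only assumed to be in $L^1_{loc}(\Omega)$, there is no dominating function for $\Phi(w_k)\,\As\psi$ even on the interior set $\{\delta\ge\eps\}$, nor for $\Phi'(w_k)\,f_k\,\psi$; uniform integrability is simply not available from $L^1(\delta\,dx)$-convergence, and the truncation $T_M(w_k)$ you invoke is not worked out (truncating $w_k$ changes $\Phi(w_k)$, and the error on $\{|w_k|>M\}$ cannot be controlled without exactly the kind of extra structure you have not provided). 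As written, neither side of your inequality is known to converge.

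The paper closes this with a two-tier scheme whose structure is what your truncation would have to become. First it treats $f\in L^\infty(\Omega)$, choosing approximations $f_j\in C^\infty_c(\Omega)$ that are uniformly bounded in $L^\infty$: then the $w_j=\mathbb{G}^s_{|\Omega}f_j$ are uniformly bounded in $L^\infty(\Omega)$, so $\Phi(w_j)$ and $\Phi'(w_j)$ are uniformly bounded and dominated convergence legitimately applies. Only then does it pass to general $f\in L^1(\Omega,\delta(x)dx)$, via the monotone truncations $f_{j,k}=(f\wedge j)\vee(-k)$: by comparison the corresponding solutions $w_{j,k}$ are monotone in $j$ and in $k$, and after writing $\Phi=\Phi_1-\Phi_2$ with $\Phi_1,\Phi_2$ monotone and splitting $\As\psi$ into its positive and negative parts $[\As\psi]^{+}$, $[\As\psi]^{-}$, every resulting integral passes to the limit by monotone convergence, so no domination is needed — which is precisely what accommodates arbitrary convex $\Phi$ and data that are only $L^1(\delta\,dx)$. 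Unless your argument is elaborated into (or replaced by) this bounded-data step plus the monotone truncation with the sign splittings, the limit $k\to\infty$ in your proof is unjustified.
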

\begin{proof}
Let us first assume that $f\in C^\alpha_{loc}(\Omega)$.
In this case, by Lemma \ref{clas-harm}, $w\in C^{2s+\alpha}_{loc}(\Omega)$
and the equality $\As w=f$ holds in a pointwise sense. Then
\begin{align*}
\As\Phi\circ w(x) & =\ \int_\Omega[\Phi(w(x))-\Phi(w(y))]\,J(x,y)\:dy+\kappa(x)\,\Phi(w(x)) \\
& =\ \Phi'(w(x))\int_\Omega[w(x)-w(y)]\,J(x,y)\:dy+\kappa(x)\,\Phi(w(x)) \\
& \ \ -\int_\Omega[w(x)-w(y)]^2\,J(x,y)
\int_0^1\Phi''(w(x)+t[w(y)-w(x)])(1-t)\:dt\:dy \\
&\leq\ \Phi'(w(x))\,\As w(x)
\end{align*}
where we have used that $\Phi''\geq 0$ in $\R$ and 
that $\Phi'(t)\leq t\Phi(t)$, which follows from $\Phi(0)=0$.

We deal now with $f\in L^\infty(\Omega)$.
Pick ${\{f_j\}}_{j\in\N}\subseteq C^\infty_c(\Omega)$ converging to $f$
in $L^1(\Omega,\delta(x),dx)$ and bounded in $L^\infty(\Omega)$. 
The corresponding ${\{w_j=\mathbb{G}_\Omega^s f_j\}}_{j\in\N}$
converges to $w$ in $L^1(\Omega,\delta(x)dx)$,
is bounded in $L^\infty(\Omega)$ and without loss of generality
we assume that $f_j\to f$ and $w_j\to w$ a.e. in $\Omega$.
We know that for any $\psi\in \T(\Omega),\ \psi\geq 0$
\[
\int_\Omega\Phi(w_j)\,\As\psi\leq\int_\Omega f_j\,\Phi'(w_j)\psi.
\]
By the continuity of $\Phi$ and $\Phi'$ we have $\Phi(w_j)\to\Phi(w)$, $\Phi'(w_j)\to\Phi'(w)$ a.e. in $\Omega$ and that ${\{\Phi(w_j)\}}_{j\in\N}$, ${\{\Phi'(w_j)\}}_{j\in\N}$ are bounded in $L^\infty(\Omega)$. Since ${\{f_j\}}_{j\in\N}$ is converging to $f$ in $L^1(\Omega,\delta(x)dx)$, then 
\[
\int_\Omega\Phi(w_j)\,\As\psi\longrightarrow
\int_\Omega\Phi(w)\,\As\psi
\qquad\hbox{and}\qquad
\int_\Omega f_j\,\Phi'(w_j)\psi\longrightarrow
\int_\Omega f\,\Phi'(w)\psi
\]
by dominated convergence.

For a general $f\in L^1(\Omega,\delta(x)dx)$ define $f_{j,k}:=(f\wedge j)\vee(-k)$, $j,k\in\N$.
Also, split the expression of $\Phi=\Phi_1-\Phi_2$ into the difference of two increasing function: this can be done in
the following way. The function $\Phi'$ is continuous and increasing in $\R$,
so that it can either have constant sign or there exists $t_0\in\R$ such that $\Phi'(t_0)=0$.
If it has constant sign than $\Phi$ can be increasing or decreasing and 
we can choose respectively $\Phi_1=\Phi,\Phi_2=0$ or $\Phi_1=0,\Phi_2=-\Phi$.
Otherwise we can take
\[
\Phi_1(t)=\left\lbrace\begin{aligned}
& \Phi(t) & t>t_0 \\
& \Phi(t_0) & t\leq t_0
\end{aligned}\right.
\qquad\hbox{and}\qquad
\Phi_2(t)=\left\lbrace\begin{aligned}
& 0 & t>t_0 \\
& \Phi(t_0)-\Phi(t) & t\leq t_0
\end{aligned}\right. .
\]
We already know that
for any $\psi\in \T(\Omega),\ \psi\geq 0$
\[
\int_\Omega\Phi(w_{j,k})\,\As\psi\leq\int_\Omega f_{j,k}\,\Phi'(w_{j,k})\psi.
\]
On the right-hand side we can use twice the monotone convergence, 
letting $j\uparrow\infty$ first and then $k\uparrow\infty$.
On the left hand side, by writing $\Phi=\Phi_1-\Phi_2$
again we can exploit several times the monotone convergence by splitting
\begin{multline*}
\int_\Omega\Phi(w_{j,k})\,\As\psi = \int_\Omega\Phi_1(w_{j,k})\,[\As\psi]^+-\int_\Omega\Phi_1(w_{j,k})\,[\As\psi]^-\ +\\
-\ \int_\Omega\Phi_2(w_{j,k})\,[\As\psi]^++\int_\Omega\Phi_2(w_{j,k})\,[\As\psi]^-
\end{multline*}
to deduce the thesis.

Finally, note that $\Phi(t)=t^+$ can be monotonically
approximated by 
\[
\Phi_j(t)=\frac12\sqrt{t^2+\frac1{j^2}}+\frac t2 -\frac1{2j}
\]
which is convex, $C^2$ and $\Phi_j(0)=0$. So 
\[
\int_\Omega\Phi_j(w)\,\As\psi\leq\int_\Omega f\,\Phi_j'(w)\psi.
\]
Since $\Phi_j(t)\uparrow t^+$ and $2\Phi_j'(t)\uparrow 1+\hbox{sgn}(t)=2\chi_{(0,+\infty)}(t)$,
we prove the last statement of the Lemma.
\end{proof}

\begin{theo}\label{exist-nonlinspec} 
Let $f(x,t):\Omega\times\R\longrightarrow\R$ be a Carath\'eodory function. 
Assume that there exists a subsolution and a supersolution $\underline u,\overline u\in L^1(\Omega,\delta(x)dx)\cap L^\infty_{loc}(\Omega)$ 
to
\begin{equation}\label{098}
\left\lbrace\begin{aligned}
\As u &= f(x,u) & \hbox{ in }\Omega, \\
\frac{u}{h_1} &= 0 & \hbox{ on }\partial\Omega.
\end{aligned}\right.
\end{equation}
Assume in addition that $f(\cdot,v)\in L^1(\Omega,\delta(x)dx)$ for every $v\in L^1(\Omega,\delta(x)dx)$ such that $\underline u\le v\le \overline u$ a.e.
Then, there exist weak solutions $u_1,u_2\in L^1(\Omega,\delta(x)dx)$ in $[\underline u,\overline u]$ 
such that any solution $u$ in the interval $[\underline u,\overline u]$ satisfies
\[
\underline u\le u_1\le u\le u_2\le \overline u \qquad\text{a.e.}
\]
Moreover, if the nonlinearity $f$ is decreasing in the second variable, then the solution is unique.
\end{theo}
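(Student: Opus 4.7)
The plan follows the classical method of sub- and supersolutions, adapted to the $L^1(\Omega,\delta(x)dx)$ weak framework provided by Theorem \ref{point}, in the spirit of the proof of Theorem \ref{NL-CS} for $\Ds$. The first step is to truncate the nonlinearity by setting
\[
\widetilde f(x,t) := f\bigl(x,\, (t \vee \underline u(x)) \wedge \overline u(x)\bigr),
\]
so that $|\widetilde f(\cdot,t)| \le |f(\cdot,\underline u)| + |f(\cdot,\overline u)|$ uniformly in $t$; the standing hypothesis ensures this upper bound lies in $L^1(\Omega,\delta(x)dx)$.

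The second step defines the solution operator $T:L^1(\Omega,\delta(x)dx)\to L^1(\Omega,\delta(x)dx)$ sending $v$ to the unique weak solution of $\As u = \widetilde f(x,v)$ in $\Omega$ with $u/h_1=0$ on $\partial\Omega$ furnished by Theorem \ref{point}. The continuity estimate \eqref{cont} bounds the range of $T$, higher integrability from Proposition \ref{Lp-reg} combined with interior H\"older regularity \eqref{calpha} yields compactness, and continuity of $T$ in $L^1(\Omega,\delta(x)dx)$ follows from dominated convergence using the Carath\'eodory property of $f$ together with the $L^1$-domination. Schauder's theorem then supplies a fixed point $u=Tu$. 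To see that this $u$ solves the original, non-truncated problem, I would apply Kato's inequality (Lemma \ref{kato}) to $w:=(u-\overline u)^+$: on $\{u>\overline u\}$ the truncation gives $\widetilde f(x,u)=f(x,\overline u)$ while the supersolution property of $\overline u$ yields $\As(u-\overline u)\le 0$ there, so Kato produces $\As w\le 0$ in $\Omega$ with $w/h_1\le 0$ on $\partial\Omega$, hence $w\equiv 0$ by Lemma \ref{maxprinc-weak}. The symmetric argument on $(\underline u - u)^+$ closes the containment $\underline u\le u\le\overline u$.

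For the minimal (resp. maximal) solution, I would run the iteration $u^{(0)}:=\underline u$, $u^{(n+1)}:=Tu^{(n)}$ (resp. starting from $\overline u$). Since $f$ need not be monotone in $u$, the scheme is not automatically monotone; a standard remedy is to replace $f$ by $f(x,u)-\lambda u$ on each compact $\omega\subset\subset\Omega$, choosing $\lambda=\lambda(\omega)$ large enough to make $u\mapsto f(x,u)-\lambda u$ nonincreasing on the bounded range $[\inf_\omega\underline u,\sup_\omega\overline u]\subset\R$ (allowed by $\underline u,\overline u\in L^\infty_{loc}(\Omega)$), and adjusting the linear part on the other side of the equation. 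This yields a monotone sequence bounded by any other solution in $[\underline u,\overline u]$; passing to the pointwise limit and using dominated convergence with $L^p$-compactness produces the minimal solution $u_1$.

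Finally, for uniqueness when $f(x,\cdot)$ is nonincreasing, given two solutions $u,v$ the difference $w:=u-v\in L^1(\Omega,\delta(x)dx)$ satisfies $\As w=f(x,u)-f(x,v)$ with $w/h_1=0$ on $\partial\Omega$. Applying Kato's inequality with $\Phi(t)=t^+$ gives
\[
\As w^+ \ \le\ \chi_{\{u>v\}}\bigl(f(x,u)-f(x,v)\bigr)\ \le\ 0,
\]
since on $\{u>v\}$ the monotonicity forces $f(x,u)\le f(x,v)$; Lemma \ref{maxprinc-weak} then yields $w^+\equiv 0$, and the symmetric argument completes the proof. The main technical obstacle will be the Kato-based containment step: because $\overline u$ may blow up at $\partial\Omega$, the function $u-\overline u$ is not a priori an admissible test object and the comparison $w/h_1\le 0$ on $\partial\Omega$ must be justified carefully, likely by truncating $\overline u$ to $\overline u\wedge k$, running the argument there using the precise sense of the boundary trace from Lemma \ref{clas-weak}, and letting $k\uparrow +\infty$ with the aid of the $L^\infty_{loc}$-bound.
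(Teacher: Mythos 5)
Most of your plan coincides with the paper's proof: the truncation $F(x,t)=f(x,[t\wedge\overline u(x)]\vee\underline u(x))$, the compact solution operator $v\mapsto \int_\Omega G^s_{|\Omega}(\cdot,y)F(y,v(y))\,dy$ built from Theorem \ref{point}, Proposition \ref{Lp-reg} and Lemma \ref{clas-harm}, Schauder's fixed point theorem, Kato's inequality (Lemma \ref{kato}) to force $\underline u\le u\le\overline u$, and Kato plus the maximum principle (Lemma \ref{maxprinc-weak}) for uniqueness when $f$ is decreasing are exactly the steps the paper takes (following Montenegro--Ponce \cite{ponce}); your extra care about the boundary trace of $u-\overline u$ is sensible and consistent with the paper's framework.

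The genuine gap is in your construction of the minimal and maximal solutions $u_1,u_2$. Your remedy for the lack of monotonicity of $f$ — replacing $f(x,u)$ by $f(x,u)-\lambda u$ with $\lambda=\lambda(\omega)$ chosen on each compact $\omega\subset\subset\Omega$ — does not work here for two reasons. First, $f$ is only a Carath\'eodory function: continuity in $t$ gives no one-sided Lipschitz bound, so even on a compact set with $\underline u,\overline u$ bounded there need not exist any $\lambda$ making $t\mapsto f(x,t)-\lambda t$ nonincreasing. Second, since $\underline u,\overline u$ are only in $L^\infty_{loc}(\Omega)$ and may blow up at $\partial\Omega$, the constants $\lambda(\omega)$ cannot be merged into a single global shift, and a spatially varying shift would change the operator to $\As+\lambda(x)$, for which none of the tools of the paper (Green function, representation formula, maximum principle, Kato) have been established; so your iteration $u^{(n+1)}=Tu^{(n)}$ is not monotone and the scheme does not close. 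The paper avoids this entirely: as in \cite{ponce}, the extremal solutions are obtained using only Kato's inequality — e.g. Kato shows that the minimum of two supersolutions lying in $[\underline u,\overline u]$ is again a supersolution, and the minimal solution is then produced by a decreasing sequence of solutions/supersolutions combined with the same compactness used for Schauder — with no monotonicity or Lipschitz assumption on $f$. You should replace the $\lambda$-shift step by an argument of this type.
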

\begin{proof}
According to Montenegro and Ponce \cite{ponce}, the mapping $v\mapsto F(\cdot,v)$, where
\[
F(x,t):=f(x,[t\wedge\overline u(x)]\vee\underline u(x)),\qquad x\in\Omega,t\in\R,
\]
acts continuously from $L^1(\Omega,\delta(x)dx)$ into itself.
In addition, the operator 
\begin{eqnarray*}
\mathcal K:\ L^1(\Omega,\delta(x)dx) & \longrightarrow &  L^1(\Omega,\delta(x)dx) \\
 v(x)\qquad & \longmapsto & \mathcal K(v)(x) = \int_\Omega G_{|\Omega}^s(x,y) F(y,v(y))dy
\end{eqnarray*}
is compact. Indeed, take a bounded sequence ${\{v_n\}}_{n\in\N}$ in $L^1(\Omega,\delta(x)dx)$. On a compact set  $K\subset\subset\Omega$, $\underline u, \overline u$ are essentially bounded and so must be the sequence ${\{F(\cdot,v_n)\}}_{n\in\N}$. By Theorem \ref{point} and Lemma \ref{clas-harm}, ${\{\mathcal K(v_n)\}}_{n\in\N}$ is bounded in $C^\alpha_{loc}(K)\cap L^p(\Omega,\delta(x)dx)$, $p\in[1,(N+1)/(N+1-2s))$. In particular, a subsequence ${\{v_{n_k}\}}_{k\in\N}$ converges locally uniformly to some $v$. By H\"older's inequality, we also have 
\[
\Vert v_{n_k}-v \Vert_{L^1(\Omega\setminus K,\delta(x)dx)} \le
\Vert v_{n_k}-v \Vert_{L^p(\Omega\setminus K,\delta(x)dx)} \Vert \chi_{\Omega\setminus K}\Vert_{L^{p'}(\Omega\setminus K,\delta(x)dx)}.
\]
Hence,
\[
\Vert v_{n_k}-v \Vert_{L^1(\Omega,\delta(x)dx)}\le \Vert v_{n_k}-v \Vert_{L^\infty(K,\delta(x)dx)}\Vert\delta\Vert_{L^1(\Omega)}+C\Vert \chi_{\Omega\setminus K}\Vert_{L^{p'}(\Omega\setminus K,\delta(x)dx}.
\]
Letting $k\to+\infty$ and then $K\to\Omega$, we deduce that $\mathcal K$ is compact and by the Schauder's Fixed Point Theorem, $\mathcal K$ has a fixed point $u\in L^1(\Omega,\delta(x)dx)$. We then may prove that $\underline u\leq u\leq \overline u$ by means of the Kato's Inequality (Lemma \ref{kato}) as it is done in \cite{ponce},
which yields that $u$ is a solution of \eqref{098}.

The proof of the existence of the minimal and a maximal solution $u_1,u_2\in L^1(\Omega,\delta(x)dx)$ can be performed in an analogous way as in \cite{ponce}, as the only needed tool is the Kato's Inequality.

As for the uniqueness, suppose $f$ is decreasing in the second variable and consider two solutions $u,v\in L^1(\Omega,\delta(x)dx)$ to \eqref{098}. By the Kato's Inequality Lemma \ref{kato}, we have
\[
\As(u-v)^+\leq \chi_{\{u>v\}}[f(x,u)-f(x,v)]\leq 0
\qquad\hbox{in }\Omega
\]
which implies $(u-v)^+\leq 0$ by the Maximum Principle Lemma \ref{maxprinc-weak}. Reversing the roles of $u$ and $v$, we get
also $(v-u)^+\leq 0$, thus $u\equiv v$ in $\Omega$.
\end{proof}

\subsection{Proof of Theorem \ref{nonhom-cor}}

Problem \eqref{nonhom-prob} is equivalent to
\begin{equation}
\left\lbrace\begin{aligned}
\As v &= g(x,\mathbb P_{|\Omega}^s\zeta-v) & \hbox{ in }\Omega \\
\frac{v}{h_1} &= 0 & \hbox{ on }\partial\Omega
\end{aligned}\right.
\end{equation}
that possesses $\overline u=\mathbb P_{|\Omega}^s\zeta$ 
as a supersolution and $\underline u=0$ as a subsolution.
Indeed, by equation \eqref{h1-behav} we have
\[
0\leq\mathbb{P}_{|\Omega}^s\zeta\leq \|\zeta\|_{L^\infty(\Omega)}h_1\leq C\|\zeta\|_{L^\infty(\Omega)}\delta^{-(2-2s)}.
\]
Thus any $v\in L^1(\Omega,\delta(x)dx)$ such that $0\leq v\leq\mathbb{P}_{\Omega}^s\zeta$ satisfies
\[
g(x,v)\leq h(v)\leq h(c\delta^{-(2-2s)})\in L^1(\Omega,\delta(x)dx).
\]
So, all hypotheses of Theorem \ref{exist-nonlinspec} are satisfied and the result follows.
\hfill$\square$

\section{large solutions}\label{extra-sect}

Consider the sequence ${\{u_j\}}_{j\in\N}$ built by solving
\begin{equation}\label{approx}
\left\lbrace\begin{aligned}
\As u_j &=\; -u_j^p & \hbox{ in }\Omega \\
\frac{u_j}{h_1} &=\; j & \hbox{ on }\partial\Omega.
\end{aligned}\right.
\end{equation}
Theorem \ref{nonhom-cor} guarantees the existence of such a sequence if
$\delta^{-(2-2s)p}\in L^1(\Omega,\delta(x)dx)$, i.e. $p<1/(1-s)$.
We claim that the sequence is increasing in $\Omega$: indeed the solution
to problem \eqref{approx} is a subsolution for the same problem with boundary datum $j+1$.
In view of this, the sequence $\{u_j\}_{j\in\N}$ admits a pointwise limit, possibly infinite.


\subsection{Construction of a supersolution}

\begin{lem} There exist $\delta_0,C>0$ such that 
\[
\As\delta^{-\alpha}\ \geq\ -C\,\delta^{-\alpha p},
\qquad\hbox{for }\delta<\delta_0\hbox{ and }\alpha=\frac{2s}{p-1}.
\]
\end{lem}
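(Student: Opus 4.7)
Since $\alpha(p-1)=2s$, the claim reads $\As\delta^{-\alpha}\ge -C\,\delta^{-\alpha-2s}$ near $\partial\Omega$. My plan is to use the pointwise representation \eqref{As2}, namely
\[
\As\delta^{-\alpha}(x)\ =\ PV\!\int_\Omega\bigl[\delta(x)^{-\alpha}-\delta(y)^{-\alpha}\bigr]J(x,y)\,dy+\kappa(x)\delta(x)^{-\alpha},
\]
together with the sharp bounds \eqref{Jbound} on $J(x,y)$ and the estimate $\kappa(x)\asymp\delta(x)^{-2s}$ from \eqref{jkappa}. The killing term $\kappa(x)\delta(x)^{-\alpha}\ge 0$ is already of the correct order $\delta^{-\alpha-2s}$ with the \emph{right} sign (nonnegative), so it causes no trouble; all the work is in bounding the singular integral from below by $-C\delta(x)^{-\alpha-2s}$.

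The first step is to flatten the boundary locally via Lemma \ref{lemma39} (applied as in the proof of Proposition \ref{impo}): fix $x\in\Omega$ with $\delta(x)$ small, pick $\theta\in\partial\Omega$ realizing the distance, and work in coordinates where $\theta=0$, $\grad\delta(x)=e_N$, $x=\delta(x)e_N$, and $\partial\Omega$ is the graph of a $C^{1,1}$ function $\gamma$ over $B'_r\subset\R^{N-1}$ with $\gamma(0)=0$, $\grad\gamma(0)=0$. Just as in the proof of Proposition \ref{impo}, the contribution of the integration over the ``far'' part of $\Omega$ (away from the local patch $\omega$) is uniformly bounded, hence absorbed in the right-hand side. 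So it suffices to estimate the integral on $\omega$. Split it into the three regions
\[
\Omega_1=\Bigl\{\delta(y)>\tfrac{3}{2}\delta(x)\Bigr\},\quad
\Omega_2=\Bigl\{\tfrac{1}{2}\delta(x)\le\delta(y)\le\tfrac{3}{2}\delta(x)\Bigr\},\quad
\Omega_3=\Bigl\{\delta(y)<\tfrac{1}{2}\delta(x)\Bigr\}.
\]
On $\Omega_1$ one has $\delta(y)^{-\alpha}<\delta(x)^{-\alpha}$ so the integrand is pointwise nonnegative and this contribution can simply be dropped from the lower bound.

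For $\Omega_2$ the function $y\mapsto\delta(y)^{-\alpha}$ is $C^{1,1}$ in a neighbourhood of $x$ not touching the boundary (since $\delta(y)\ge\delta(x)/2$), and I expect
\[
\bigl\|D^2\delta^{-\alpha}\bigr\|_{L^\infty(\Omega_2\cap\omega)}\le C\,\delta(x)^{-\alpha-2},
\]
hence, using $|x-y|^2\ge c(|\delta(x)-\delta(y)|^2+|\eta'|^2)$ exactly as in \eqref{distxy} and then integrating in polar coordinates,
\[
PV\!\int_{\Omega_2\cap\omega}\frac{\delta(x)^{-\alpha}-\delta(y)^{-\alpha}}{J(x,y)^{-1}}\,dy\ \ge\ -C\,\delta(x)^{-\alpha-2}\!\!\int_{\Omega_2\cap\omega}\!\!\!\frac{dy}{|x-y|^{N+2s-2}}\ \ge\ -C\,\delta(x)^{-\alpha-2s}.
\]
Here the factor $[\delta(x)\delta(y)/|x-y|^2\wedge 1]$ in \eqref{Jbound} is used trivially ($\le 1$), and the remaining radial integral is of order $\delta(x)^{2-2s}$ as in the second step of Proposition \ref{impo}.

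The main obstacle is $\Omega_3$, where $\delta(y)^{-\alpha}$ is arbitrarily large and gives a genuine negative contribution. The key is that on $\Omega_3$ one has $|x-y|\ge\delta(x)-\delta(y)\ge\delta(x)/2$, so $\delta(x)\delta(y)\le |x-y|^2$ and thus by \eqref{Jbound},
\[
J(x,y)\ \le\ \frac{C\,\delta(x)\,\delta(y)}{|x-y|^{N+2s+2}}.
\]
Using \eqref{distxy} again to flatten the geometry and then the coarea formula,
\[
-\!\int_{\Omega_3\cap\omega}\!\delta(y)^{-\alpha}J(x,y)\,dy\ \ge\ -C\,\delta(x)\!\int_0^{\delta(x)/2}\!\!\!t^{1-\alpha}\!\int_{B'_r}\frac{d\eta'}{\bigl(|\delta(x)-t|^2+|\eta'|^2\bigr)^{(N+2s+2)/2}}\,dt,
\]
and evaluating the inner integral yields a bound of the form $-C\,\delta(x)\int_0^{\delta(x)/2}t^{1-\alpha}(\delta(x)-t)^{-3-2s}dt\ge -C\,\delta(x)^{-\alpha-2s}$, where the final exponent comes out right precisely because the integrand is integrable at $t=0$ (here I use $\alpha<2$, which follows from $p>1+s$ since $\alpha=2s/(p-1)<2$). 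The positive piece $-\delta(x)^{-\alpha}\int_{\Omega_3}J(x,y)dy$ is controlled analogously, and matches the order $\delta(x)^{-\alpha-2s}$. Adding the three regional estimates, the killing term and the bounded far-field contribution yields the claimed lower bound for $\delta(x)<\delta_0$.
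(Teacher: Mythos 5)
Your proposal is correct and follows essentially the same route as the paper: drop the nonnegative killing term, split into the same three regions $\Omega_1,\Omega_2,\Omega_3$, discard the positive contribution on $\Omega_1$, use the crude bound $J(x,y)\le C|x-y|^{-N-2s}$ with the second-order expansion (as in the second step of Proposition \ref{impo}) on $\Omega_2$, and the refined bound $J(x,y)\le C\,\delta(x)\delta(y)|x-y|^{-N-2s-2}$ with the flattening/coarea computation (as in the third step of Proposition \ref{impo}) on $\Omega_3$. Your explicit remark that $\alpha<2$ (from $p>1+s$) guarantees integrability at $t=0$ is a nice touch that the paper leaves implicit.
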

\begin{proof}
We use the expression in equation \eqref{As2}. Obviously,
\[
\As\delta^{-\alpha}(x)=\int_\Omega[\delta(x)^{-\alpha}-\delta(y)^{-\alpha}]J(x,y)\;dy
+\delta(x)^{-\alpha}\kappa(x) \geq \int_\Omega[\delta(x)^{-\alpha}-\delta(y)^{-\alpha}]J(x,y)\;dy.
\]
For any fixed $x\in\Omega$ close to $\partial\Omega$, split the domain $\Omega$ into three parts:
\begin{align*}
& \Omega_1=\left\lbrace y\in\Omega:\delta(y)\geq\frac32\,\delta(x)\right\rbrace,\\
& \Omega_2=\left\lbrace y\in\Omega:\frac12\,\delta(x)<\delta(y)<\frac32\,\delta(x)\right\rbrace,\\
& \Omega_3=\left\lbrace y\in\Omega:\delta(y)\leq\frac12\,\delta(x)\right\rbrace.
\end{align*}
For $y\in\Omega_1$, since $\delta(y)>\delta(x)$, it holds $\delta(x)^{-\alpha}-\delta(y)^{-\alpha}>0$ and we can drop the integral on $\Omega_1$.
Also, since it holds by equation \eqref{Jbound}
\[
J(x,y)\leq \frac{C}{{|x-y|}^{N+2s}},
\]
the integration on $\Omega_2$ can be performed as in {\it Second step} in Proposition \ref{impo} providing
\[
PV\int_{\Omega_2}[\delta(x)^{-\alpha}-\delta(y)^{-\alpha}]\,
J(x,y)\:dy\geq -C\,\delta(x)^{-\alpha-2s}=-C\,\delta(x)^{-\alpha p}.
\]
To integrate on $\Omega_3$ we exploit once again \eqref{Jbound} under the form
\[
J(x,y)\leq C\cdot\frac{\delta(x)\,\delta(y)}{{|x-y|}^{N+2+2s}}
\]
to deduce
\[
\int_{\Omega_3}[\delta(x)^{-\alpha}-\delta(y)^{-\alpha}]\,
J(x,y)\:dy\geq -C\,\delta(x)\int_{\Omega_3}\frac{\delta(y)^{1-\alpha}}{{|x-y|}^{N+2+2s}}.
\]
Again, a direct computation as in {\it Third step} in Proposition \ref{impo} yields
\[
\int_{\Omega_3}[\delta(x)^{-\alpha}-\delta(y)^{-\alpha}]\,
J(x,y)\:dy\geq -C\,\delta(x)^{-\alpha-2s}=-C\,\delta(x)^{-\alpha p}.
\]
\end{proof}

\begin{lem}\label{prel-supersol}
If a function $v\in L^1(\Omega,\delta(x)dx)$ satisfies
\begin{equation}
\As v\in L^\infty_{loc}(\Omega),\qquad
\As v(x)\ \geq\ -C\,v(x)^p,\quad \hbox{when }\delta(x)<\delta_0,
\end{equation}
for some $C,\delta_0>0$, then there exists $\overline u\in L^1(\Omega,\delta(x)dx)$
such that
\begin{equation}
\As\overline u(x)\ \geq\ -\overline u(x)^p,\qquad \hbox{throughout }\Omega.
\end{equation}
\end{lem}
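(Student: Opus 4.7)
The strategy mirrors Lemma \ref{mod-supersol} from the fractional Laplacian case: perturb $v$ by adding a nonnegative bounded function whose spectral fractional Laplacian is strictly positive, and rescale $v$ to absorb the constant $C$. Specifically, I would let $\eta$ denote the solution, provided by Theorem \ref{point}, of
\[
\left\lbrace\begin{aligned}
\As\eta &= 1 & \hbox{ in }\Omega,\\
\eta/h_1 &= 0 & \hbox{ on }\partial\Omega,
\end{aligned}\right.
\]
and set $\overline{u} := \mu v + \lambda \eta$ for constants $\mu := \max\{1, C^{1/(p-1)}\}$ and $\lambda > 0$ to be chosen. By the Maximum Principle (Lemma \ref{maxprinc-weak}), $\eta \geq 0$; by Theorem \ref{point} and Lemma \ref{clas-weak}, $\eta \in L^\infty(\Omega)\cap L^1(\Omega,\delta(x)dx)$ and the identity $\As\eta(x)=1$ holds pointwise in $\Omega$.

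In the boundary strip $\{\delta(x)<\delta_0\}$, the hypothesis combined with $\mu^{p-1}\geq C$ and $v,\eta\geq 0$ (implicit for $v^p$ to be defined) yields
\[
\As\overline{u}(x)\;=\;\mu\As v(x)+\lambda\;\geq\;-\mu C\,v(x)^p+\lambda\;=\;-C\mu^{1-p}(\mu v(x))^p+\lambda\;\geq\;-(\mu v(x))^p\;\geq\;-\overline{u}(x)^p.
\]
On the complementary region $K:=\{x\in\Omega:\delta(x)\geq\delta_0\}$, which is a compact subset of $\Omega$, the local-boundedness hypothesis gives $M:=\|\As v\|_{L^\infty(K)}<\infty$. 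Selecting $\lambda\geq\mu M$ produces $\As\overline{u}=\mu\As v+\lambda\geq 0\geq -\overline{u}^{\,p}$ throughout $K$. Since both summands belong to $L^1(\Omega,\delta(x)dx)$, the same holds for $\overline{u}$.

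The principal difficulty I anticipate is not the elementary inequality but the justification of the pointwise decomposition $\As\overline{u}=\mu\As v+\lambda$. This relies on the linearity of the pointwise formula \eqref{As2}, the fact that $\As v$ is already defined pointwise by assumption, and Lemma \ref{clas-weak}\emph{(2)} applied to the smooth data of the problem defining $\eta$, which upgrades the weak identity $\As\eta=1$ to a pointwise one on all of $\Omega$. Once this identification is secured, the two case splits above close the argument.
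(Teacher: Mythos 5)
Your proof is correct and follows essentially the same route as the paper: the paper also takes $\overline u$ to be a multiple $\max\{1,C^{1/(p-1)}\}$ of $v$ plus a multiple of $\mathbb{G}_{|\Omega}^s 1$ (your $\eta$), chosen to dominate $\|\As v\|_{L^\infty(\Omega\setminus\Omega_0)}$, and concludes by the same two-region estimate. Your extra care in justifying the pointwise identity $\As\eta=1$ via Lemma \ref{clas-weak} and the linearity of \eqref{As2} is consistent with what the paper leaves implicit.
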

\begin{proof} Let $\lambda:=C^{1/(p-1)}\vee 1$ and $\Omega_0=\{x\in\Omega:\delta(x)<\delta_0\}$, then
\[
\As\left(\lambda v\right)\ \geq\ -\left(\lambda v\right)^p,\qquad \hbox{in }\Omega_0.
\]
Let also $\mu:=\lambda\|\As v\|_{L^\infty(\Omega\setminus\Omega_0)}$ and define $\overline u=\mu\mathbb{G}_{|\Omega}^s 1+\lambda v$. On $\overline u$ we have
\[
\As\overline u =\mu+\lambda \As v\geq \lambda|\As v|+\lambda\As v\geq -\overline{u}^p
\qquad\hbox{throughout }\Omega.
\]
\end{proof}

\begin{cor}\label{supersol} There exists a function $\overline{u}\in L^1(\Omega,\delta(x)dx)$
such that the inequality
\[
\As\overline u\ \geq\ -\overline{u}^p,\qquad \hbox{in }\Omega,
\]
holds in a pointwise sense.
Moreover, $\overline u\asymp\delta^{-2s/(p-1)}$.
\end{cor}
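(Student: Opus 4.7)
\medskip

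The plan is to apply Lemma \ref{prel-supersol} to the natural candidate $v(x) := \delta(x)^{-\alpha}$ with $\alpha = 2s/(p-1)$, and then read off the asymptotic from the explicit form of the supersolution produced by that lemma.

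First I would verify the three hypotheses of Lemma \ref{prel-supersol} for this choice of $v$. The integrability $v \in L^1(\Omega,\delta(x)dx)$ amounts to $\int_\Omega \delta^{1-\alpha} < \infty$, which holds iff $\alpha < 2$, i.e. $p > 1+s$; this is exactly the lower bound on $p$ in the range $(1+s, 1/(1-s))$. Interior regularity is immediate: since $\partial\Omega \in C^{1,1}$, the signed distance is smooth in a neighbourhood of any interior point, so $v \in C^\infty(\Omega)$ and, combining \eqref{As2} with the bound \eqref{Jbound}, $\As v$ is well-defined pointwise and locally bounded in $\Omega$. The key boundary inequality $\As v \geq -C v^p$ for $\delta(x) < \delta_0$ has already been proved in the immediately preceding lemma, precisely for the exponent $\alpha = 2s/(p-1)$. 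Lemma \ref{prel-supersol} then produces $\overline{u} = \mu \mathbb G^s_{|\Omega} 1 + \lambda v$ satisfying $\As \overline{u} \geq -\overline{u}^p$ pointwise in all of $\Omega$, with $\lambda = C^{1/(p-1)} \vee 1$ and $\mu = \lambda \|\As v\|_{L^\infty(\Omega \setminus \Omega_0)}$.

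It remains to verify the two-sided bound $\overline{u} \asymp \delta^{-\alpha}$. For the upper bound, I would use \eqref{green-behav} to estimate
\[
0 \leq \mathbb G^s_{|\Omega} 1(x) = \int_\Omega G^s_{|\Omega}(x,y)\,dy \leq C \int_\Omega |x-y|^{-(N-2s)}\,dy \leq C',
\]
so $\mathbb G^s_{|\Omega} 1 \in L^\infty(\Omega)$; since $\delta^{-\alpha}$ is bounded below by a positive constant on $\Omega$, this gives $\mu \mathbb G^s_{|\Omega} 1 \leq C'' \delta^{-\alpha}$ throughout $\Omega$, and hence $\overline{u} \leq C \delta^{-\alpha}$. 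The lower bound is even simpler: the term $\mu \mathbb G^s_{|\Omega} 1$ is nonnegative, so $\overline{u} \geq \lambda \delta^{-\alpha}$.

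I do not expect any serious obstacle in this corollary: the two preceding lemmas do essentially all the work, and the content here is really a bookkeeping statement that the specific supersolution produced by Lemma \ref{prel-supersol} with the specific $v = \delta^{-\alpha}$ has the sharp boundary rate $\delta^{-2s/(p-1)}$. The one point that deserves a moment of care is checking that the constant $\alpha = 2s/(p-1)$ is admissible, i.e. that it lies in the range where both $v \in L^1(\Omega,\delta(x)dx)$ and the preceding lemma apply; both conditions reduce to $p > 1+s$, which is part of the standing hypothesis of Theorem \ref{xlarge-theo}.
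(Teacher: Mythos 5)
Your proof is correct and follows the paper's own route exactly: the paper likewise disposes of the corollary in one line by applying Lemma \ref{prel-supersol} to $v=\delta^{-2s/(p-1)}$ and reading everything off the resulting $\overline u=\mu\,\mathbb{G}^s_{|\Omega}1+\lambda\,\delta^{-2s/(p-1)}$, the boundedness of $\mathbb{G}^s_{|\Omega}1$ giving $\overline u\asymp\delta^{-2s/(p-1)}$ just as you argue. The only imprecise point is your claim that the distance function is smooth near every interior point (it is merely Lipschitz on the medial axis); as in the preceding chapter one tacitly works with a smooth positive function comparable to $\dist(\cdot,\partial\Omega)$ and agreeing with it near the boundary, so this does not affect the argument.
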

\begin{proof}
Apply Lemma \ref{prel-supersol} with $v=\delta^{-2s/(p-1)}$:
the corresponding $\overline u$ will be of the form
\[
\overline u=\mu\mathbb{G}_{|\Omega}^s1+\lambda\delta^{-2s/(p-1)}.
\]
\end{proof}

\subsection{Existence}

\begin{lem}\label{ujlequ} 
For any $j\in\N$, the solution $u_j$ to problem \eqref{approx} satisfies the upper bound
\[
u_j\ \leq\ \overline{u},\qquad\hbox{in }\Omega,
\]
where $\overline{u}$ is provided by Corollary \ref{supersol}.
\end{lem}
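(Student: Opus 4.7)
The strategy is a comparison argument based on an interior maximum principle for $\As$. The essential input is that the supersolution $\overline u$ blows up at $\partial\Omega$ strictly faster than $u_j$, so that the set $\{u_j>\overline u\}$ is a compact subset of $\Omega$, on which one can run the classical pointwise contradiction.

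First, I would record that $u_j$ is locally regular. By the construction in Theorem \ref{nonhom-cor}, $0\le u_j\le \mathbb P^s_{|\Omega}(j\cdot\mathbf{1}_{\partial\Omega})$ weakly, and the right-hand side is bounded on every compact subset of $\Omega$ since $h_1$ is (see \eqref{h1-behav}). Hence $u_j\in L^\infty_{loc}(\Omega)$, so $u_j^p\in L^\infty_{loc}(\Omega)$, and a bootstrap based on Lemma \ref{clas-harm} upgrades $u_j$ to $C^{2s+\alpha}_{loc}(\Omega)$ with the equation $\As u_j=-u_j^p$ holding pointwise throughout $\Omega$. By Corollary \ref{supersol}, $\overline u\in C^\infty(\Omega)$ and $\As\overline u\ge -\overline u^p$ holds pointwise.

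Next I would establish the boundary ordering $u_j<\overline u$ near $\partial\Omega$. Comparing $u_j$ with the $s$-harmonic $\mathbb P^s_{|\Omega}(j\cdot\mathbf{1}_{\partial\Omega})$ through Lemma \ref{maxprinc-weak} applied to their difference (whose spectral fractional Laplacian equals $u_j^p\ge0$ and whose weighted boundary trace is $0$) yields $u_j\le \mathbb P^s_{|\Omega}(j\cdot\mathbf{1}_{\partial\Omega})\le Cj\,h_1\le C'j\,\delta^{-(2-2s)}$ in a neighbourhood of $\partial\Omega$. On the other hand, by Corollary \ref{supersol}, $\overline u\ge c\,\delta^{-2s/(p-1)}$ near $\partial\Omega$. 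The inequality $\tfrac{2s}{p-1}>2-2s$ is equivalent to $p<\tfrac{1}{1-s}$, which holds by the hypothesis of Theorem \ref{xlarge-theo}. Therefore $\overline u/u_j\to+\infty$ as $x\to\partial\Omega$, and so the set $A:=\{x\in\Omega:u_j(x)>\overline u(x)\}$ has $\overline A\subset\Omega$ (compact) and is open by continuity.

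Finally I would close by contradiction. Assume $A\ne\emptyset$ and set $w:=u_j-\overline u$. Then $w$ is continuous on $\Omega$, strictly negative near $\partial\Omega$, so attains $\sup_\Omega w=:w(x_0)>0$ at some $x_0\in A\subset\Omega$. Using the pointwise representation \eqref{As2},
\[
\As w(x_0)\;=\;\mathrm{PV}\!\int_\Omega[w(x_0)-w(y)]\,J(x_0,y)\,dy+\kappa(x_0)\,w(x_0),
\]
and since $w(x_0)-w(y)\ge0$ for every $y\in\Omega$, $\kappa(x_0)>0$, and $w(x_0)>0$, both terms are nonnegative (the integral being absolutely convergent thanks to $w\in C^{2s+\alpha}$ around $x_0$ plus the bound \eqref{Jbound}), hence $\As w(x_0)\ge0$. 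On the other hand, the pointwise equation and inequality for $u_j$ and $\overline u$ yield
\[
\As w(x_0)\;\le\;-u_j(x_0)^p+\overline u(x_0)^p\;<\;0,
\]
because $u_j(x_0)>\overline u(x_0)>0$ and $t\mapsto t^p$ is strictly increasing. This contradiction forces $A=\emptyset$, i.e. $u_j\le\overline u$ throughout $\Omega$.

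The only delicate point is the boundary comparison in the second step: it is precisely where the upper endpoint $p<1/(1-s)$ of the admissible range in Theorem \ref{xlarge-theo} enters, ensuring that the supersolution's boundary blow-up exponent $\tfrac{2s}{p-1}$ strictly exceeds the linear rate $2-2s$ inherited by $u_j$ from the datum $j\cdot h_1$. Once the set $\{u_j>\overline u\}$ is known to be compactly contained in $\Omega$, the interior maximum principle for $\As$ is immediate from the strict positivity of the killing measure $\kappa$ and the nonnegativity of $J$.
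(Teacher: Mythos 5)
Your proof is correct and follows essentially the same route as the paper: bootstrap interior regularity so that $u_j$ is a classical solution, use the boundary rates $u_j\le jh_1\lesssim\delta^{-(2-2s)}$ versus $\overline u\asymp\delta^{-2s/(p-1)}$ (where $p<\tfrac1{1-s}$ enters) to confine $\{u_j>\overline u\}$ to a compact subset of $\Omega$, and then contradict the pointwise formula \eqref{As2} at an interior extremum. The only difference is cosmetic: you evaluate at the maximum of $u_j-\overline u$, while the paper works at the maximum of $u_j^p-\overline u^p$; your variant is if anything slightly cleaner, since the sign of $\As(u_j-\overline u)$ at that point is immediate.
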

\begin{proof} Write $u_j=jh_1-v_j$ where
\[
\left\lbrace\begin{aligned}
\As v_j &=\ \left(jh_1-v_j\right)^p & \hbox{ in }\Omega \\
\frac{v_j}{h_1} &=\ 0 & \hbox{ on }\partial\Omega.
\end{aligned}\right.
\]
and $0\leq v_j\leq jh_1$. Since $\left(jh_1-v_j\right)^p\in L^\infty_{loc}(\Omega)$,
we deduce that $v_j\in C^\alpha_{loc}(\Omega)$ for any $\alpha\in(0,2s)$. 
By bootstrapping $v_j\in C^\infty(\Omega)$ and, by Lemma \ref{clas-harm}, also $u_j\in C^\infty(\Omega)$.
This says that $u_j$ is a classical solution to problem \eqref{approx}.
Now, we have that, 
by the boundary behaviour of $\overline{u}$ stated in Corollary \ref{supersol},
$u_j\leq \overline{u}$ close enough to $\partial\Omega$ (depending on the value of $j$)
and
\[
\As\left(\overline{u}-u_j\right)\ \geq\ u_j^p-\overline{u}^p,\qquad\hbox{in }\Omega.
\]
Since $u_j^p-\overline{u}^p\in C(\Omega)$ and
$\lim_{x\to\partial\Omega}u_j^p-\overline{u}^p=-\infty$, there exists $x_0\in\Omega$
such that $u_j(x_0)^p-\overline{u}(x_0)^p=m=:\max_{x\in\Omega}\left(u_j(x)^p-\overline{u}(x)^p\right)$.
If $m>0$ then $\As(\overline{u}-u_j)(x_0)\geq m > 0$: this is a contradiction,
as Definition \ref{As2} implies. 
Thus $m\leq 0$ and $u_j\leq \overline{u}$ throughout $\Omega$.
\end{proof}

\begin{theo} For any $\displaystyle p\in\left(1+s,\frac1{1-s}\right)$ 
there exists a function $u\in L^1(\Omega,\delta(x)dx)$
solving 
\[
\left\lbrace\begin{aligned}
\As u &=-u^p & \hbox{in }\Omega \\
\delta^{2-2s}u &=+\infty & \hbox{on }\partial\Omega
\end{aligned}\right.
\]
both in a distributional and pointwise sense.
\end{theo}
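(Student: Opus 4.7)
The plan is to define $u$ as the pointwise monotone limit of the sequence $\{u_j\}_{j\in\N}$ in \eqref{approx}, whose existence is already guaranteed by Theorem \ref{nonhom-cor} (the assumption $p<1/(1-s)$ ensures $\delta^{-(2-2s)p}\in L^1(\Omega,\delta(x)dx)$), and then to verify that $u$ inherits all the required properties.

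First I would check that $u\in L^1(\Omega,\delta(x)dx)$ and $u\le\overline u$ a.e. Lemma \ref{ujlequ} gives $u_j\le\overline u$ for every $j$, and Corollary \ref{supersol} yields $\overline u\asymp\delta^{-2s/(p-1)}$; the condition $p>1+s$ is exactly what is needed for $-2s/(p-1)+1>-1$, so $\overline u\,\delta\in L^1(\Omega)$ and monotone convergence transfers these properties to $u$. On any compact $K\subset\subset\Omega$ the sequence $\{u_j\}$ is then uniformly bounded, and so is $\{u_j^p\}$; iterating the interior regularity of Lemma \ref{clas-harm} upgrades this to uniform $C^{2s+\alpha}(K)$ bounds for every admissible $\alpha$. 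By Arzel\`a--Ascoli coupled with pointwise monotonicity, $u_j\to u$ locally uniformly in $C^{2s+\alpha'}_{loc}(\Omega)$ (for $\alpha'<\alpha$), whence $u\in C^\infty(\Omega)$ by further bootstrapping.

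Next I would pass to the limit in the equation in both the pointwise and the distributional senses. The pointwise identity $\As u(x)=-u(x)^p$ at every $x\in\Omega$ follows from the local uniform convergence in $C^{2s+\alpha'}$ combined with dominated convergence in the integral representation \eqref{As2}, using the bound \eqref{Jbound} and the fact that $u\le\overline u$ with $\overline u\,\delta\in L^1(\Omega)$. For the distributional version, pick $\psi\in C^\infty_c(\Omega)$: by Lemma \ref{As-smooth} one has $|\As\psi|\le C\delta$, so $|u_j\As\psi|\le C\overline u\,\delta\in L^1(\Omega)$ and dominated convergence gives $\int u_j\As\psi\to\int u\As\psi$; on the other hand $\psi$ is compactly supported inside $\Omega$, where $\{u_j\}$ is locally uniformly bounded, so monotone convergence yields $\int u_j^p\psi\to\int u^p\psi$. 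Passing to the limit in the weak formulation of \eqref{approx} for $u_j$ therefore produces $\int_\Omega u\As\psi=-\int_\Omega u^p\psi$ for every $\psi\in C^\infty_c(\Omega)$.

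The hard part will be the pointwise singular boundary trace. By the representation $u_j=jh_1-\mathbb G^s_{|\Omega}[u_j^p]$ and the positivity of $G^s_{|\Omega}$, one has $u_j\le jh_1$, hence $u_j^p\le (jh_1)^p$, which in turn yields
\[
\frac{u_j(x)}{h_1(x)}\ \ge\ j\;-\;\frac{\mathbb G^s_{|\Omega}[(jh_1)^p](x)}{h_1(x)}.
\]
The datum $(jh_1)^p\asymp j^p\delta^{-p(2-2s)}$ has exponent $p(2-2s)\in(2-2s,2)$ for every $p\in(1,1/(1-s))$, and an adaptation to $\As$ of the boundary asymptotics of Paragraph \ref{rhs-blowing} (see also the related estimates in \cite{dhifli}) gives $\mathbb G^s_{|\Omega}[(jh_1)^p]\asymp j^p\delta^{2s-p(2-2s)}$ near $\partial\Omega$. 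Divided by $h_1\asymp\delta^{-(2-2s)}$ the ratio behaves like $\delta^{2(1-p(1-s))}$, and since $p(1-s)<1$ the exponent is strictly positive, so the ratio tends pointwise to zero as $x\to x_0\in\partial\Omega$. Taking $\liminf_{x\to x_0}$ in the inequality above and using $u\ge u_j$ gives $\liminf_{x\to x_0}u(x)/h_1(x)\ge j$ for every $j$, so that $\lim_{x\to x_0}u(x)/h_1(x)=+\infty$, i.e.\ the singular boundary condition in the required pointwise sense. The main technical obstacle is precisely the verification of the sharp pointwise boundary decay of $\mathbb G^s_{|\Omega}[(jh_1)^p]/h_1$, which does not follow from the weighted integral trace of Theorem \ref{bound-g} alone and requires a genuine pointwise Green-kernel estimate analogous to the one carried out for the restricted fractional Laplacian in Paragraph \ref{rhs-blowing}.
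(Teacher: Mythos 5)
Your proposal is correct and follows essentially the same route as the paper's proof: the monotone sequence $\{u_j\}$ of \eqref{approx}, the uniform bound $u_j\le\overline u$ from Lemma \ref{ujlequ} with $p>1+s$ giving $u\in L^1(\Omega,\delta(x)dx)$, interior regularity via Lemma \ref{clas-harm} plus Ascoli--Arzel\`a and bootstrap, and dominated convergence for the distributional identity. The only point where you go beyond the paper is the boundary condition: the paper simply asserts $\liminf_{x\to\partial\Omega}u_j/h_1=j$, whereas you justify it quantitatively by writing $u_j\ge jh_1-\mathbb{G}^s_{|\Omega}[(jh_1)^p]$ and estimating $\mathbb{G}^s_{|\Omega}[\delta^{-(2-2s)p}]/h_1\lesssim\delta^{2-(2-2s)p}\to0$ (your exponent arithmetic, with positivity exactly when $p<1/(1-s)$, is right); this pointwise Green-kernel step, which you correctly flag as the technical core, is a legitimate and in fact more explicit treatment of the step the paper leaves implicit.
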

\begin{proof} 
Consider the sequence $\{u_j\}_{j\in\N}$ provided by problem \eqref{approx}:
it is increasing and locally bounded by Lemma \ref{ujlequ},
so it has a pointwise limit $u\leq\overline{u}$, 
where $\overline u$ is the function provided by Corollary \ref{supersol}. 
Since $p>1+s$ and $\overline u \leq C\delta^{-2s/(p-1)}$, then $u\in L^1(\Omega,\delta(x)dx)$. 
Pick now $\psi\in C^\infty_c(\Omega)$, and recall that $\delta^{-1}\As\psi\in L^\infty(\Omega)$: 
we have, by dominated convergence,
\[
\int_\Omega u_j\As\psi\xrightarrow[j\uparrow\infty]{}\int_\Omega u\As\psi,
\qquad \int_\Omega u_j^p\psi\xrightarrow[j\uparrow\infty]{}\int_\Omega u^p\psi
\]
so we deduce
\[
\int_\Omega u\As\psi\ =\ -\int_\Omega u^p\psi.
\]
Note now that for any compact $K\subset\subset\Omega$, 
applying Lemma \ref{clas-harm} we get for any $\alpha\in(0,2s)$
\[
\|u_j\|_{C^\alpha(K)}\leq C\left(
\|u_j\|^p_{L^\infty(K)}+\|u_j\|_{L^1(\Omega,\delta(x)dx)}\right)
\leq C\left(
\|\overline u\|^p_{L^\infty(K)}+\|\overline u\|_{L^1(\Omega,\delta(x)dx)}\right)
\]
which means that ${\{u_j\}}_{j\in\N}$ is equibounded and equicontinuous in $C(K)$. By the Ascoli-Arzel\`a Theorem, its pointwise limit $u$ will be in $C(K)$ too. Now, since 
\[
\As u\ =\ -u^p\qquad \hbox{in }\mathcal D'(\Omega),
\]
by bootstrapping the interior regularity in Lemma \ref{clas-harm}, we deduce $u\in C^\infty(\Omega)$. So, its spectral fractional Laplacian is pointwise well-defined and the equation is satisfied in a pointwise sense. Also,
\[
\liminf_{x\to\partial\Omega}\frac{u(x)}{h_1(x)}\geq
\liminf_{x\to\partial\Omega}\frac{u_j(x)}{h_1(x)}=j
\]
and we obtain the desired boundary datum.
\end{proof}


\section{appendix}

\subsection{Another representation for the spectral fractional Laplacian}\label{app1}

\begin{lem} For any $u\in H(2s)$ and almost every $x\in\Omega$, there holds
\[\As u(x)\ =  PV\int_\Omega[u(x)-u(y)]J(x,y)\;dy+\kappa(x)u(x),\]
where $J(x,y)$ and $\kappa(x)$ are given by \eqref{jkappa}.
\end{lem}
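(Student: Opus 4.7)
The plan is to start from the Bochner subordination formula
\begin{equation}\label{subor}
\As u\ =\ \frac{s}{\Gamma(1-s)}\int_0^\infty \frac{u-e^{t\lapl|_\Omega}u}{t^{1+s}}\;dt,
\end{equation}
valid in $L^2(\Omega)$ for every $u\in H(2s)$, and then to split the semigroup contribution using the heat kernel. First I would verify \eqref{subor} on an eigenfunction $\varphi_j$: the right-hand side equals
\[
\varphi_j(x)\,\frac{s}{\Gamma(1-s)}\int_0^\infty\frac{1-e^{-\lambda_j t}}{t^{1+s}}\;dt
\ =\ \lambda_j^s\,\varphi_j(x),
\]
by a classical identity for $\Gamma$. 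Expanding $u=\sum\widehat u_j\varphi_j\in H(2s)$ and using that $\sum\lambda_j^{2s}|\widehat u_j|^2<\infty$, one extends \eqref{subor} to all of $H(2s)$ (the partial sums converge in $L^2(\Omega)$ to both sides).

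Next, for almost every $x\in\Omega$ and every $t>0$,
\[
u(x)-e^{t\lapl|_\Omega}u(x)\ =\ u(x)\Bigl(1-\int_\Omega p_\Omega(t,x,y)\;dy\Bigr)+\int_\Omega [u(x)-u(y)]\,p_\Omega(t,x,y)\;dy.
\]
Inserting this into \eqref{subor}, the first piece immediately produces $\kappa(x)u(x)$ by the very definition \eqref{jkappa}. For the second piece I would like to apply Fubini and recognise $J(x,y)$, obtaining
\[
\int_\Omega [u(x)-u(y)]\,J(x,y)\;dy.
\]
The obstacle is that this integrand is generally not absolutely integrable in $(t,y)$ near $y=x$, which is why the principal value enters; this is the only delicate point of the argument.

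To handle it, fix $x\in\Omega$ outside a $u$-negligible set, split the $y$-integration into $\{|y-x|\le r\}$ and $\{|y-x|>r\}$ for small $r>0$, and use the two-sided heat kernel bound \eqref{hkb} together with the estimate \eqref{Jbound}. On the complement $\{|y-x|>r\}$, the integrand is dominated by $|u(x)|+|u(y)|$ times an $L^1((0,\infty)\times\Omega)$ weight (since $J(x,y)\lesssim|x-y|^{-N-2s}$ and $u\in L^2(\Omega)\hookrightarrow L^1(\Omega)$), so Fubini applies and gives the contribution $\int_{\{|y-x|>r\}}[u(x)-u(y)]J(x,y)\,dy$. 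On the ball $\{|y-x|\le r\}$, a second-order Taylor argument (after a further mollification of $u$ in the spectral basis to reduce to the smooth case, which is justified by the fast decay \eqref{spectral-coef} for a dense class and a limiting argument) shows that the ball contribution to both sides converges to $0$ with $r\downarrow 0$ in $L^2_{loc}(\Omega)$, yielding the principal-value integral in the limit. Combining the two pieces with the $\kappa(x)u(x)$ term gives \eqref{As2} almost everywhere, which is the claim.
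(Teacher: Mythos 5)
Your overall skeleton is the same as the paper's: verify the subordination identity on eigenfunctions, use the heat kernel to split $u-e^{t\lapl|_\Omega}u$ into the part producing $J(x,y)$ and the part producing $\kappa(x)u(x)$ as in \eqref{jkappa}, and extend from the spectral span to all of $H(2s)$ by density. (A minor imprecision on the way: for general $u\in H(2s)$ the $t$-integral is not absolutely convergent in $L^2$ near $t=0$ — the bound $\|u-e^{t\lapl|_\Omega}u\|_{L^2}\le t^s\|u\|_{H(2s)}$ only gives $\int_0^1 t^{-1}dt$ — so it must be read as an improper or weak integral; the paper handles this with the duality estimate $\bigl|\int_\Omega v\,\int_0^\infty (u-e^{t\lapl|_\Omega}u)\,t^{-1-s}dt\bigr|\le \|u\|_{H(2s)}\|v\|_{L^2(\Omega)}$.)

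The genuine gap is in your treatment of the near-diagonal region for a general $u\in H(2s)$, which you yourself identify as the only delicate point but then dispatch with an argument that does not work as stated. A second-order Taylor expansion requires pointwise $C^2$-type regularity that a generic element of $H(2s)$ does not possess (for small $s$, $H(2s)=H^{2s}(\Omega)$ contains discontinuous functions), and the fast decay \eqref{spectral-coef} you invoke to ``reduce to the smooth case'' is a property of $C^\infty_c(\Omega)$ functions (or finite spectral sums), not of general $u\in H(2s)$. So the Taylor argument only applies to the smooth approximants $u_n$, and the unspecified ``limiting argument'' back to $u$ is exactly the nontrivial content of the lemma: it must in particular produce, for a.e.\ $x$, the existence of the principal value $PV\int_\Omega[u(x)-u(y)]J(x,y)\,dy$ and its identification with $\As u(x)$, and your claim that the ball contribution tends to $0$ in $L^2_{loc}$ as $r\downarrow0$ for general $u\in H(2s)$ is essentially a restatement of what has to be proved rather than a consequence of the expansion. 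The paper sidesteps this entirely by a different order of operations: the whole computation, including the identification of the principal value, is carried out only on finite linear combinations of eigenfunctions, where it is classical, and the passage to general $u\in H(2s)$ is done a single time at the very end, in $L^2$, via the duality bound above together with Fubini's theorem to guarantee that for a.e.\ $x$ all the intermediate integrals converge and the identities persist. To repair your sketch, either adopt that structure, or supply the uniform control (e.g.\ $L^2$-convergence of the truncated integrals $\int_{\{\eps<|y-x|\}}[u(x)-u(y)]J(x,y)\,dy$ as $\eps\downarrow0$, uniformly along the approximating sequence) that would legitimize exchanging the mollification limit with the principal value.
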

\begin{proof}
Assume that $u=\varphi_j$ is an eigenfunction of the Dirichlet Laplacian associated to the eigenvalue $\lambda_j$. Then, $\As u =\lambda_j^s u$,
$e^{t\lapl|_\Omega}u=\int_\Omega p_\Omega(t,\cdot,y)\,u(y)\:dy=e^{-t\lambda_j}u$
and for all $x\in\Omega$
\begin{align}
& \frac{\Gamma(1-s)}s\As u(x)\ = \nonumber \\
& =\ \int_0^\infty\left(u(x)-e^{t\left.\lapl\right|_\Omega}u(x)\right)\frac{dt}{t^{1+s}} \label{def2} \\
& =\ \int_0^\infty\left(u(x)-\int_\Omega p_\Omega(t,x,y)u(y)\;dy\right)\frac{dt}{t^{1+s}} \nonumber \\
& =\ \lim_{\epsilon\to0}\int_0^\infty\int_{\Omega\setminus B(x,\epsilon)} p_\Omega(t,x,y)[u(x)-u(y)]\;dy\;\frac{dt}{t^{1+s}}
+\int_0^\infty u(x)\left(1-\int_\Omega p_\Omega(t,x,y)\;dy\right)\;\frac{dt}{t^{1+s}} \nonumber \\
& =\ PV\int_\Omega[u(x)-u(y)]J(x,y)\;dy+\kappa(x)u(x). 
\end{align}
By linearity, equality holds on the linear span of the eigenvectors. Now, if $u\in H(2s)$, a sequence ${\{u_n\}}_{n\in\N}$ of functions belonging to that span  converges to $u$ in $H(2s)$. In particular, $\As u_n$ converges to $\As u$ in $L^2(\Omega)$. Note also that for $v\in L^2(\Omega)$,
\[
\frac{s}{\Gamma(1-s)}\left\vert\int_\Omega\int_0^\infty\left(u(x)-e^{t\left.\lapl\right|_\Omega}u(x)\right)\frac{dt}{t^{1+s}}\cdot v(x)\:dx\right\vert
=\left\vert\sum_{k=1}^{+\infty}\lambda_k^s\widehat u_k \widehat v_k\right\vert
\le\Vert u\Vert_{H(2s)}\Vert v\Vert_{L^2(\Omega)}
\]
so that we may also pass to the limit in $L^2(\Omega)$ 
when computing \eqref{def2} along the sequence ${\{u_n\}}_{n\in\N}$. 
By the Fubini's theorem, for almost every $x\in\Omega$, 
all subsequent integrals are convergent 
and the identities remain valid. 
\end{proof}

\subsection{The reduction to the flat case}

In this paragraph we are going to justify the computation
of the asymptotic behaviour of integrals of the type
\[
\int_{A\cap\Omega} F(\delta(x),\delta(y),|x-y|)\;dy \qquad\hbox{as }\ \delta(x)\downarrow 0,
\]
where $A$ is a fixed neighbourhood of $x$ with $A\cap\partial\Omega\neq\emptyset$, by just looking at
\[
\int_0^1dt\int_Bdy'\; F\!\left(\delta(x),t,\sqrt{|y'|^2+|t-\delta(x)|^2}\right).
\]
The first thing to be proved is that
\[
|x-y|^2\asymp|x_0-y_0|^2+|\delta(x)-\delta(y)|^2,
\]
where $x_0,y_0$ are respectively the projections of $x,y$ on $\partial\Omega$.
\begin{lem}
There exists $\eps=\eps(\Omega)>0$ such that
for any $x\in\Omega$, $x=x_0+\delta(x)\grad\delta(x_0)$, $x_0\in\partial\Omega$, with $\delta(x)<\eps$ and any $y\in\Omega$
with $\delta(y)<\eps$ and $|y_0-x_0|<\eps$
\[
\frac12\left(|x_0-y_0|^2+|\delta(x)-\delta(y)|^2\right)\leq
|x-y|^2\leq \frac32\left(|x_0-y_0|^2+|\delta(x)-\delta(y)|^2\right).
\]
\end{lem}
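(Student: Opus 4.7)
The strategy is to write the vector $x-y$ as a sum of three explicit pieces, each of which can be controlled using the $C^{1,1}$ regularity of $\partial\Omega$, and then to square the resulting expression.

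First I fix $\eps>0$ small enough that the nearest-point projection onto $\partial\Omega$ is uniquely defined in $\{z\in\R^N:\delta(z)<2\eps\}$ (this uses $\partial\Omega\in C^{1,1}$) and such that for any $x_0\in\partial\Omega$, the portion of $\partial\Omega$ in a ball of radius $3\eps$ around $x_0$ can be written as a $C^{1,1}$ graph in coordinates adapted to $x_0$. Denote the inward unit normal field on $\partial\Omega$ by $\nu$; then $x=x_0+\delta(x)\nu(x_0)$ and $y=y_0+\delta(y)\nu(y_0)$. Since $\nu\in C^{0,1}(\partial\Omega)$, there exists $L=L(\Omega)>0$ such that $|\nu(x_0)-\nu(y_0)|\le L|x_0-y_0|$.

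Decomposing $x-y=(x_0-y_0)+(\delta(x)-\delta(y))\nu(x_0)+\delta(y)(\nu(x_0)-\nu(y_0))$ and setting $a=x_0-y_0$, $b=(\delta(x)-\delta(y))\nu(x_0)+\delta(y)(\nu(x_0)-\nu(y_0))$, I compute
\[
|x-y|^2\ =\ |a|^2+2\langle a,b\rangle+|b|^2.
\]
Working in local coordinates at $x_0$ in which $\nu(x_0)=e_N$ and $\partial\Omega$ is the graph of $\gamma\in C^{1,1}$ with $\gamma(0)=0$, $\grad\gamma(0)=0$, one has $\langle a,\nu(x_0)\rangle=\gamma(y_0')$ with $|\gamma(y_0')|\le C|y_0'|^2\le C|a|^2$, where $C$ depends only on $\|\gamma\|_{C^{1,1}}$. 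Combined with the Lipschitz bound on $\nu$, this yields the two key estimates
\[
|2\langle a,b\rangle|\ \le\ 2C|\delta(x)-\delta(y)|\cdot|a|^2+2L\delta(y)|a|^2, \qquad |b|^2\ \le\ (1+L\eps)(\delta(x)-\delta(y))^2+(L^2\eps+L^2\eps^2)|a|^2,
\]
where in the second inequality I expanded $|b|^2$ and bounded the cross term by $2L\eps|\delta(x)-\delta(y)|\cdot|a|\le L\eps(|\delta(x)-\delta(y)|^2+|a|^2)$.

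Since $|a|\le 2\eps$ and $\delta(y)<\eps$, all error factors in front of $|a|^2$ and $(\delta(x)-\delta(y))^2$ are $O(\eps)$, so for $\eps$ sufficiently small (depending only on $\Omega$) one gets
\[
|x-y|^2\ \le\ \Bigl(1+\tfrac12\Bigr)|a|^2+\Bigl(1+\tfrac12\Bigr)(\delta(x)-\delta(y))^2,
\]
and the symmetric lower bound follows by the same computation with opposite signs. The only subtle step is showing that the component of $a=x_0-y_0$ along $\nu(x_0)$ is of order $|a|^2$ rather than $|a|$ -- this is precisely where the $C^{1,1}$ assumption on $\partial\Omega$ (together with the vanishing gradient of $\gamma$ at the reference point) is used, and it is what prevents the cross term $2\langle a,b\rangle$ from spoiling the estimate.
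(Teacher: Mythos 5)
Your proof is correct and takes essentially the same route as the paper's: both expand $|x-y|^2$ from the decomposition $x-y=(x_0-y_0)+(\delta(x)-\delta(y))\nu(x_0)+\delta(y)(\nu(x_0)-\nu(y_0))$, use the Lipschitz continuity of the normal together with the quadratic bound $|\langle x_0-y_0,\nu(x_0)\rangle|\le C|x_0-y_0|^2$ coming from the $C^{1,1}$ graph with vanishing gradient, and reabsorb all $O(\eps)$ error terms for $\eps$ small. Your grouping of the last two vectors into a single $b$ (and the minor constant $L^2\eps$ versus $L\eps$ in the $|b|^2$ bound) is only a cosmetic difference from the paper's full trinomial expansion.
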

\begin{proof} Call $\Omega_\eps=\{x\in\Omega:\delta(x)<\eps\}$.
Write $x=x_0+\delta(x)\grad\delta(x_0),y=y_0+\delta(y)\grad\delta(y_0)$.
Then
\begin{multline}\label{444}
 |x-y|^2=|x_0-y_0|^2+|\delta(x)-\delta(y)|^2+\delta(y)^2|\grad\delta(x_0)-\grad\delta(y_0)|^2
+2[\delta(x)-\delta(y)]\langle x_0-y_0,\grad\delta(x_0)\rangle\ + \\
+\ 2\delta(y)\langle x_0-y_0,\grad\delta(x_0)-\grad\delta(y_0)\rangle
+2\delta(y)[\delta(x)-\delta(y)]\langle\grad\delta(x_0),\grad\delta(x_0)-\grad\delta(y_0)\rangle.
\end{multline}
Since, for $\eps>0$ small, $\delta\in C^{1,1}(\Omega_\eps)$ and
\begin{align*}
& |\grad\delta(x)-\grad\delta(y)|^2\leq \|\delta\|_{C^{1,1}(\Omega_\eps)}^2|x-y|^2 \\
& \langle x_0-y_0,\grad\delta(x_0)\rangle = O(|x_0-y_0|^2) \\
& |\langle x_0-y_0,\grad\delta(x_0)-\grad\delta(y_0)\rangle| \leq \|\delta\|_{C^{1,1}(\Omega_\eps)}|x_0-y_0|^2 \\
& |\delta(x)-\delta(y)|=\|\delta\|_{C^{1,1}(\Omega_\eps)}|x-y| \\
& |\langle\grad\delta(x_0),\grad\delta(x_0)-\grad\delta(y_0)\rangle|\leq \|\delta\|_{C^{1,1}(\Omega_\eps)}|x_0-y_0|
\end{align*}
The error term we obtained in \eqref{444} can be reabsorbed in the other ones by choosing $\eps>0$ small enough to have
\begin{multline*}
\delta(y)^2|\grad\delta(x_0)-\grad\delta(y_0)|^2+
2|\delta(x)-\delta(y)|\cdot|\langle x_0-y_0,\grad\delta(x_0)\rangle|+2\delta(y)|\langle x_0-y_0,\grad\delta(x_0)-\grad\delta(y_0)\rangle|\ +\\
+\ 2\delta(y)|\delta(x)-\delta(y)|\cdot|\langle\grad\delta(x_0),\grad\delta(x_0)-\grad\delta(y_0)\rangle|\leq\frac12\left(|x_0-y_0|^2+|\delta(x)-\delta(y)|^2\right).
\end{multline*}
\end{proof}

\begin{lem} \label{lemma39} Let $F:(0,+\infty)^3\to(0,+\infty)$ be a continuous function, decreasing in the third variable
and $\Omega_\eps=\{x\in\Omega:\delta(x)<\eps\}$,
with $\eps=\eps(\Omega)>0$ provided by the previous lemma.
Consider $x=x_0+\delta(x)\grad\delta(x_0)$, $x_0\in\partial\Omega$, and
the neighbourhood $A$ of the point $x$, defined by
$A=\{y\in \Omega_\eps: y=y_0+\delta(y)\grad\delta(y_0),|x_0-y_0|<\eps\}$.
Then there exist
constants $0<c_1<c_2$, $c_1=c_1(\Omega),c_2=c_2(\Omega)$ such that
\begin{multline*}
c_1\int_0^\eps dt\int_{B'_\eps}dy'\; F\!\left(\delta(x),t,c_2\sqrt{|y'|^2+|t-\delta(x)|^2}\right)
\leq \int_A F(\delta(x),\delta(y),|x-y|)\;dy\ \leq \\
\leq\ c_2\int_0^\eps dt\int_{B'_\eps}dy'\; F\!\left(\delta(x),t,c_1\sqrt{|y'|^2+|t-\delta(x)|^2}\right)
\end{multline*}
where the ' superscript denotes objects that live in $\R^{N-1}$.
\end{lem}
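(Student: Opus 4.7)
The plan is to perform two successive changes of variables that flatten the tubular neighbourhood $A$ onto a product $B'_\eps \times (0,\eps) \subset \R^{N-1}\times\R$, checking that all Jacobians are bounded above and below by constants depending only on $\Omega$, and then to invoke the monotonicity of $F$ in its third argument together with the comparison $|x-y|^2 \asymp |x_0-y_0|^2+|\delta(x)-\delta(y)|^2$ established in the previous lemma.

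First, I would introduce ``tubular coordinates'' on $A$: given $y \in A$, write $y = y_0 + t\,\grad\delta(y_0)$ with $y_0 \in \Gamma := \partial\Omega\cap B_\eps(x_0)$ and $t = \delta(y) \in (0,\eps)$. Since $\partial\Omega \in C^{1,1}$, taking $\eps$ small enough ensures that the map $\Phi : (y_0,t) \longmapsto y$ is a bi-Lipschitz diffeomorphism from $\Gamma \times (0,\eps)$ onto $A$, and its Jacobian $|J\Phi|$ is bounded above and below by positive constants depending only on $\|\delta\|_{C^{1,1}(\Omega_\eps)}$, hence only on $\Omega$. This yields
\[
\int_A F(\delta(x),\delta(y),|x-y|)\,dy \asymp \int_0^\eps\!\!\int_\Gamma F(\delta(x),t,|x-y_0-t\grad\delta(y_0)|)\;d\HH(y_0)\,dt.
\]

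Next, I would parametrize $\Gamma$ as a graph. After a rotation and translation sending $x_0$ to $0$ and $\grad\delta(x_0)$ to $e_N$, the $C^{1,1}$ regularity of $\partial\Omega$ provides a function $\gamma : B'_\eps \subset \R^{N-1} \to \R$ with $\gamma(0)=0$, $\grad\gamma(0) = 0$ and $\|\gamma\|_{C^{1,1}(B'_\eps)}$ controlled only by $\Omega$, such that $y_0 = (y',\gamma(y'))$ describes $\Gamma$ and $d\HH(y_0) = \sqrt{1+|\grad\gamma(y')|^2}\,dy'$ is comparable to $dy'$. Moreover
\[
|x_0 - y_0|^2 = |y'|^2 + |\gamma(y')|^2, \qquad |\gamma(y')| \leq C|y'|^2,
\]
so $|y'|^2 \leq |x_0-y_0|^2 \leq (1+C^2\eps^2)|y'|^2$, i.e. $|x_0-y_0|\asymp |y'|$ with constants depending only on $\Omega$.

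Combining these two steps with the comparison from the previous lemma gives
\[
|x-y|^2 \asymp |x_0-y_0|^2 + |\delta(x)-\delta(y)|^2 \asymp |y'|^2 + |t-\delta(x)|^2,
\]
uniformly for $y \in A$. Since $F$ is decreasing in its third argument, this two-sided comparison $c_1\sqrt{|y'|^2+|t-\delta(x)|^2} \leq |x-y| \leq c_2\sqrt{|y'|^2+|t-\delta(x)|^2}$ immediately translates into the desired two-sided bound. Finally, absorbing the (bounded) Jacobian constants into $c_1,c_2$ yields the statement. The only genuinely delicate point is to keep $\eps$ small enough, uniformly in $x_0 \in \partial\Omega$, so that both changes of variables are simultaneously bi-Lipschitz; this is where compactness of $\partial\Omega$ and the $C^{1,1}$ regularity are used in an essential way, but it is routine once the previous lemma is in hand.
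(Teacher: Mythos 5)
Your proposal is correct and follows essentially the same route as the paper: split the integral via the normal (tubular) coordinates $y=y_0+\delta(y)\grad\delta(y_0)$, use the previous lemma's comparison $|x-y|^2\asymp|x_0-y_0|^2+|\delta(x)-\delta(y)|^2$ together with the monotonicity of $F$ in its third argument, and parametrize the boundary patch over a ball in $\R^{N-1}$ with Jacobian bounded above and below. Your explicit mention of the bi-Lipschitz Jacobian of the tubular map and of the graph estimate $|x_0-y_0|\asymp|y'|$ only makes more precise what the paper compresses into its Fubini and diffeomorphism steps.
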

\begin{proof} By writing $y=y_0+\delta(y)\grad\delta(y_0),\ y_0\in\partial\Omega$
and using the Fubini's Theorem, we can split the integration into
the variables $y_0$ and $t=\delta(y)$:
\[
\int_A F(\delta(x),\delta(y),|x-y|)\;dy
\leq\int_{B_\eps(x_0)\cap\partial\Omega}\left(\int_0^{\eps} F(\delta(x),t,|x-y_0-t\grad\delta(y_0)|)\;dt \right)d\HH(y_0).
\]
Using the monotony of $F$ and the above lemma, we get
\[
\int_A F(\delta(x),\delta(y),|x-y|)\;dy\leq
\int_{B_\eps(x_0)\cap\partial\Omega}\left(\int_0^{\eps} F\!\left(\delta(x),t,c\sqrt{|x_0-y_0|^2+|\delta(x)-t|^2}\right)\;dt \right)d\HH(y_0)
\]
where $c$ is a universal constant.
Representing $B_\eps(x_0)\cap\partial\Omega$ via a diffeomorphism $\gamma$ with a ball $B'_\eps\subset\R^{N-1}$ centered at 0,
we can transform the integration in the $y_0$ variable into the 
integration onto $B'_\eps$. The volume element $|D\gamma|$ will be bounded above and below by
\[
0<c_1\leq |D\gamma|\leq c_2,
\]
in view of the smoothness assumptions on $\partial\Omega$.
\end{proof}

\part{Nonlocal curvatures}

\chapter{Nonlocal directional curvatures}

In this Chapter we study a nonlocal notion of curvature of a surface. 
For some bibliography dealing with the motion by mean curvature and minimal surfaces, 
see e.g.~\cite{deph, barrios, caputo, crs, meancurv,
regularity, asymptotics, dipierro, fusco, imbert, savin_cone}
as well as~\cite{milan} for a recent review.

The Chapter is organized as follows: 
the introductory Section \ref{classicsec} recalls some basic definitions and 
facts on classical curvatures of smooth surfaces (of course, this part can be
skipped by the expert reader but we included it in order to make a clear
comparison between the classical setting and the nonlocal one); 
in Section \ref{nonlocalsec} we introduce a definition of {\it nonlocal directional curvature} and
give some ideas of the context in which it arises;
finally, we state some theorems which compare
similarities and differences between the local and the nonlocal setting.
The remaining sections are devoted to proofs and explicit computations.

Though the motivation of the present Chapter arises
in the framework of nonlocal minimal surfaces
and integro-differential operators of fractional type, which
are subjects that involve a very advanced technology,
the Chapter itself is completely self-contained
and no prior knowledge on the topic is required
to follow the involved proofs.
Also, we put an effort in keeping all the arguments
as elementary as possible and accessible to a wide audience.

\subsection*{Notation}\label{NN}

In the following we will always use:
\begin{itemize}
\item $N$ to denote the dimension of the Euclidean space $\R^N$, with $N\ge3$,
whose points are sometimes written in the form $x=(x',x_N)\in \R^{N-1}\times\R$,
\item $\C E$ to denote the complementary set of~$E\subseteq\R^N$, i.e. $\C E:=\R^N\setminus E$,
\item $\HH^{N-2}(E)$ to denote the $(N-2)$-dimensional Hausdorff measure of a set $E$,
\item $S^{N-2}$ to denote the $(N-2)$-dimensional unit sphere in~$\R^{N-1}$, namely
\[
S^{N-2}:=\{e\in\R^{N-1}:|e|=1\};
\]
with a slight abuse of notation, we
will also identify $S^{N-2}$ and
the set
\[
\{(x',x_N)\in\R^{N-1}\times\R: |x'|=1,x_N=0\}\subseteq\R^N:
\]
note that the latter set is simply an $(N-2)$-dimensional sphere
lying in an $(N-1)$-dimensional subspace of~$\R^N$ and this justifies
our notation,
\item $\omega_{N-2}$ to denote the $(N-2)$-dimensional
Hausdorff measure of the $(N-2)$-dimensional sphere, that is
$\omega_{N-2}:=\HH^{N-2}(S^{N-2})$,
\item $\chi_E$, where $E\subseteq\R^N$, for the characteristic function of $E$, i.e. 
\[
\chi_E(x):=
\left\{\begin{matrix}
1 & {\mbox{if $x\in E$,}}\\
0 & {\mbox{if $x\in \C E$,}}
\end{matrix}\right.
\]
\item $\widetilde{\chi}_E$ for the difference $\chi_E-\chi_{\C E}$,
namely
\[
\widetilde{\chi}_E(x):=
\left\{\begin{matrix}
1 & {\mbox{if $x\in E$,}}\\
-1 & {\mbox{if $x\in \C E$,}}
\end{matrix}\right.
\]
\item $\langle Ax,x\rangle$, when $A$ is an $N\times N$ real symmetric matrix and $x\in\R^N$, 
to denote the quadratic form on $\R^N$ represented by $A$ and evaluated at~$x$, i.e.
if~$A=\{A_{ij}\}_{i,j=1,\dots,N}$, then
\[
\langle Ax,x\rangle:=\sum_{i,j=1}^N A_{ij} x_i x_j,
\]
\item $\displaystyle-\!\!\!\!\!\!\int_\Omega$, when $\Omega\subseteq\R^N$ has finite measure, 
for the integral operator $\displaystyle\frac{1}{|\Omega|}\int_\Omega$.
\end{itemize}
Also, we will sometimes
write multiple integrals by
putting in evidence the integration variables according to the expression
\[
\int_X dx \int_Y dy \int_Z dz \; f(x,y,z) \qquad\hbox{ in place of } \qquad\int_X \left[ \int_Y \left[ \int_Z f(x,y,z)\;dz
\right]\;dy\right]\;dx.
\]
Moreover, we will reserve the name $s$ for a fractional
parameter that, in our scaling, is taken in $(0,1/2)$.

\section{summary on classical curvatures}\label{classicsec}

In order to make a clear comparison between some
classical facts and their corresponding nonlocal counterparts, we recall here a few basic results.
Namely,
some well-known facts on the classical concept of curvature show a nice and deep interplay between geometry, analysis
and algebra that can risk to be not evident from the beginning.
In particular, the mean curvature, which is a geometrical object, can be described in normal coordinates by the Laplacian,
which comes from analysis, and also can be seen as the trace of a linear map, and here an algebraic notion shows up.
The interplay between these disciplines has some striking consequences: let us recall two of them.

First of all, we recall that, given a $C^2$ surface~$\Sigma$, a point~$p\in\Sigma$
and a vector~$e$ in the tangent space of~$\Sigma$ at~$p$, one may define
the classical notion of {\it directional curvature of~$\Sigma$ at~$p$ in direction $e$}
by the curvature at~$p$ of the curve~$C$ lying in the intersection between~$\Sigma$ and the two-dimensional
plane spanned by~$e$ and the normal vector of~$\Sigma$ at~$p$.
We denote by $K_e$ the directional curvature in direction $e$.

It is well-known that this directional curvature may be easily computed in normal coordinates. Namely,
suppose we are given a set $E\subseteq\R^N$ such that $0\in\partial E$,
and suppose that~$\Sigma=\partial E$ is described as a graph in normal coordinates, meaning that,
in an open ball $B_r\subseteq\R^{N}$, $\partial E$ coincides with the graph of a $C^2$ function
$\varphi:B_r\cap\R^{N-1}\rightarrow\R$ with $\varphi(0)=0$ and~$\grad\varphi(0)=0$.
Then the directional curvature in direction $e$ is given by 
\[
K_e=\langle D^2\varphi(0)\,e,e\rangle = D^2_e\varphi(0),\quad e\in\R^{N-1},|e|=1
\]
where $D^2\varphi(0)$ is the Hessian matrix of $\varphi$ evaluated at $0$.

Since $D^2\varphi(0)$ is a real symmetric matrix, 
it will admit $N-1$ real eigenvalues $\lambda_1\leq\ldots\leq\lambda_{N-1}$ called {\it principal curvatures}.
Moreover, associated to these eigenvalues, there is an orthonormal basis of eigenvectors $v_1,\ldots,v_{N-1}$
called {\it principal directions}.

The arithmetic mean of the principal curvatures is called
{\it mean curvature} and we denote it by $H$, namely
\[
H:=\frac{\lambda_1+\dots+\lambda_{N-1}}{N-1}.
\]
The above mentioned algebraic formulation implies that the principal directions
$v_1,\ldots,v_{N-1}$ can be always chosen orthogonally, 
which is a somehow surprising geometric outcome that allows to easily compute any directional
curvature once the principal curvatures are known:

\begin{theo}\label{locdirects} Let $\Sigma$ be a $C^2$ surface described as the graph 
of the $C^2$ function 
$\varphi:B_r\cap\R^{N-1}\rightarrow\R$ with $\varphi(0)=0$ and~$\grad\varphi(0)=0$. 
Every directional curvature can be calculated using principal curvatures,
i.e. the eigenvalues $\lambda_1,\ldots,\lambda_{N-1}$ of the matrix $D^2\varphi(0)$,
and principal curvatures, i.e. an orthonormal basis $v_1,\ldots,v_{N-1}$ of eigenvectors;
given a vector $e=\alpha_1 v_1+\ldots+\alpha_{N-1}v_{N-1}$, with $\alpha_1^2+\ldots+\alpha_{N-1}^2=1$, then
\[
K_e=\langle D^2\varphi(0)\,e,e\rangle=\lambda_1\alpha_1^2+\ldots+\lambda_{N-1}\alpha_{N-1}^2.
\]
\end{theo}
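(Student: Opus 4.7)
The plan is to reduce the statement to a one-variable curvature computation along the curve $C$, and then to apply the spectral theorem to the real symmetric matrix $D^2\varphi(0)$.

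First I would parametrize the plane curve $C$. Since in normal coordinates the tangent space to $\Sigma$ at $0$ is identified with $\R^{N-1}\times\{0\}$ and the normal vector is $e_N$, the two-dimensional plane spanned by $e$ and the normal is $\{(te,\tau)\in\R^{N-1}\times\R : t,\tau\in\R\}$. Its intersection with $\Sigma$ near $0$ is precisely the graph of the scalar function $t\mapsto f(t):=\varphi(te)$, so $C$ can be written as $t\mapsto\gamma(t)=(te,f(t))$. By the chain rule $f'(0)=\langle\grad\varphi(0),e\rangle=0$ and
\[
f''(0)=\langle D^2\varphi(0)\,e,e\rangle.
\]

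Next I would compute the classical planar curvature of $C$ at the origin. Since $|\gamma'(0)|^2=|e|^2+f'(0)^2=1$, the parameter $t$ is an arclength parameter at $t=0$, and the curvature of a planar graph $\tau=f(t)$ at a point where $f'=0$ is just $|f''|$; more precisely, using the signed convention induced by the choice of normal, the directional curvature equals $f''(0)$. Combining the two displays gives $K_e=\langle D^2\varphi(0)\,e,e\rangle$, which is the first identity of the theorem.

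For the second identity, I would invoke the spectral theorem: since $D^2\varphi(0)$ is a real symmetric $(N-1)\times(N-1)$ matrix, there exists an orthonormal basis $v_1,\dots,v_{N-1}$ of $\R^{N-1}$ and real numbers $\lambda_1\leq\dots\leq\lambda_{N-1}$ with $D^2\varphi(0)\,v_i=\lambda_i v_i$. Writing $e=\sum_i\alpha_i v_i$ with $\sum_i\alpha_i^2=1$ and using bilinearity together with $\langle v_i,v_j\rangle=\delta_{ij}$ yields
\[
\langle D^2\varphi(0)\,e,e\rangle=\sum_{i,j}\alpha_i\alpha_j\lambda_j\langle v_i,v_j\rangle=\sum_{i=1}^{N-1}\lambda_i\alpha_i^2,
\]
which closes the proof.

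There is no real obstacle here: the only point where one must be slightly careful is the sign convention in identifying the planar curvature of a graph with $f''(0)$, and the fact that one truly has an arclength parametrization at $t=0$ (which follows from $|e|=1$ and $\grad\varphi(0)=0$, i.e.\ from the choice of normal coordinates). Everything else is the chain rule plus the spectral theorem applied to the symmetric matrix $D^2\varphi(0)$, which automatically produces the orthogonal principal directions mentioned in the statement.
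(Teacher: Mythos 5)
Your proof is correct: the reduction to the scalar section $f(t)=\varphi(te)$, the chain-rule computation $f''(0)=\langle D^2\varphi(0)\,e,e\rangle$ together with the planar-graph curvature formula at a critical point, and the spectral theorem for the symmetric matrix $D^2\varphi(0)$ give exactly the claimed identities, and your remark about the sign convention (normal curvature signed by the choice of $\nu$) and the arclength normalization at $t=0$ is the only delicate point and you handle it correctly. Note that the paper itself offers no proof of Theorem \ref{locdirects} — it is recalled as a well-known classical fact — so your argument simply supplies, in the standard way, the justification the paper implicitly relies on; there is nothing to compare beyond that.
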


\begin{rmk}\label{remlocdirects}\rm We point out that Theorem \ref{locdirects} implies also that 
all directional curvatures are bounded below by $\lambda_1$ and above by $\lambda_{N-1}$ 
and that $\lambda_1$ and $\lambda_{N-1}$ are attained along orthogonal directions.
In particular, when~$N=3$, the two principal curvatures are the minimum and            
the maximum of the directional curvature~$K_e$ for~$e\in S^1$.
\end{rmk}

\begin{rmk}\rm In a sense,
Theorem \ref{locdirects}
shows a sort of linear phenomenon that
drives the classical directional curvatures.
As we will see in forthcoming
Remark \ref{R:8},
this linear feature cannot be recovered in the case of nonlocal
directional curvatures, that are somewhat
intrinsically nonlinear in nature.
\end{rmk}

Furthermore, the spherical
average of directional curvatures may be reconstructed by the arithmetic
mean of the principal curvatures,
that is the normalized integral of~$K_e$ over~$e\in S^{N-2}$
coincides with the normalized
trace of the Hessian matrix, thus reducing the (difficult, in general) computation
of an integral on the sphere to a (simple, in general) sum of
finitely many terms 
(that are the eigenvalues of the Hessian matrix) 
and this clearly provides an important computational simplification:

\begin{theo}\label{loccomput} Let $\Sigma$ be a $C^2$ surface described as the graph 
of the $C^2$ function 
$\varphi:B_r\cap\R^{N-1}\rightarrow\R$ with $\varphi(0)=0$ and~$\grad\varphi(0)=0$.
The mean curvature can be defined
\[
H:=\frac{\lambda_1+\ldots+\lambda_{N-1}}{N-1}
\qquad\hbox{ or equivalently }\qquad
H:=-\!\!\!\!\!\!\!\,\int_{S^{N-2}} K_e\;d\HH^{N-2}(e),
\]
since
\[
-\!\!\!\!\!\!\!\,\int_{S^{N-2}} K_e\;d\HH^{N-2}(e) =
-\!\!\!\!\!\!\!\,\int_{S^{N-2}}\langle D^2\varphi(0)\,e,e\rangle\;d\HH^{N-2}(e) =
\frac{\lambda_1+\ldots+\lambda_{N-1}}{N-1}.
\]
\end{theo}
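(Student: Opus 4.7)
The plan is to verify the nontrivial identity
\[
-\!\!\!\!\!\!\!\,\int_{S^{N-2}}\langle D^2\varphi(0)\,e,e\rangle\;d\HH^{N-2}(e) =
\frac{\lambda_1+\ldots+\lambda_{N-1}}{N-1},
\]
since once this is established, the two proposed definitions of $H$ coincide and the theorem is proved. First I would exploit the algebraic structure recalled above: the Hessian $A:=D^2\varphi(0)$ is a real symmetric $(N-1)\times(N-1)$ matrix, hence by the spectral theorem there exists an orthonormal basis $v_1,\ldots,v_{N-1}$ of $\R^{N-1}$ of eigenvectors, associated to the eigenvalues $\lambda_1,\ldots,\lambda_{N-1}$. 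Writing a unit vector $e\in S^{N-2}$ as $e=\alpha_1 v_1+\ldots+\alpha_{N-1}v_{N-1}$ with $\alpha_1^2+\ldots+\alpha_{N-1}^2=1$, Theorem~\ref{locdirects} gives
\[
\langle A e,e\rangle \;=\; \lambda_1\alpha_1^2+\ldots+\lambda_{N-1}\alpha_{N-1}^2.
\]

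Second, I would perform the change of variables $e\mapsto (\alpha_1,\dots,\alpha_{N-1})$, which is an orthogonal transformation of $\R^{N-1}$ and thus preserves both the sphere $S^{N-2}$ and the Hausdorff measure $\HH^{N-2}$. Integrating over the sphere yields
\[
\int_{S^{N-2}}\langle A e,e\rangle\;d\HH^{N-2}(e)
\;=\; \sum_{i=1}^{N-1}\lambda_i\int_{S^{N-2}}\alpha_i^2\;d\HH^{N-2}(\alpha).
\]

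The main (and actually very mild) point is then to compute $I_i:=\int_{S^{N-2}}\alpha_i^2\;d\HH^{N-2}(\alpha)$. By the symmetry of $S^{N-2}$ under permutations of the coordinates, the quantities $I_1,\dots,I_{N-1}$ are all equal to a common value $I$. Summing over $i$ and using that $\alpha_1^2+\ldots+\alpha_{N-1}^2=1$ on $S^{N-2}$ gives
\[
(N-1)\,I \;=\; \sum_{i=1}^{N-1} I_i \;=\; \int_{S^{N-2}}\sum_{i=1}^{N-1}\alpha_i^2 \;d\HH^{N-2}(\alpha) \;=\; \omega_{N-2},
\]
so $I=\omega_{N-2}/(N-1)$. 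Plugging this back and dividing by $\omega_{N-2}$ gives exactly the claimed identity, which concludes the proof. No real obstacle is expected here: the only subtle passage is the symmetry argument for $I_i$, and even that reduces to an elementary invariance of $S^{N-2}$ under coordinate permutations.
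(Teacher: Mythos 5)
Your proposal is correct and follows essentially the same route as the paper: diagonalize the Hessian, reduce $K_e$ to $\sum_i\lambda_i\alpha_i^2$ via Theorem \ref{locdirects}, and compute $\int_{S^{N-2}}\alpha_i^2\,d\HH^{N-2}$ by the permutation-symmetry trick of summing over $i$ to get $\omega_{N-2}/(N-1)$. Your only addition is to make explicit the orthogonal change of variables justifying the passage to eigencoordinates, which the paper leaves implicit.
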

\begin{proof}
By symmetry
\begin{equation}\label{i}
\int_{S^{N-2}}\alpha_i^2\;d\HH^{N-2}(\alpha)=
\int_{S^{N-2}}\alpha_1^2\;d\HH^{N-2}(\alpha)
\end{equation}
for any~$i\in\{1,\dots,N-1\}$. By summing up in~\eqref{i} we obtain
\begin{eqnarray*}
\omega_{N-2}=\int_{S^{N-2}} 1\;d\HH^{N-2}(\alpha)
=\int_{S^{N-2}}\sum_{i=1}^{N-1} \alpha_i^2\;d\HH^{N-2}(\alpha)
=(N-1)\int_{S^{N-2}}\alpha_1^2\;d\HH^{N-2}(\alpha).
\end{eqnarray*}
Using again~\eqref{i} we deduce from the identity above that
\[
\omega_{N-2}=
(N-1)\int_{S^{N-2}}\alpha_i^2\;d\HH^{N-2}(\alpha)
\]
for any~$i\in\{1,\dots,N-1\}$. As a consequence
\begin{multline*}
\frac{1}{\omega_{N-2}}\int_{S^{N-2}}\langle D^2\varphi(0)\,e,e\rangle\;d\HH^{N-2}(e)=
\frac{1}{\omega_{N-2}}\int_{S^{N-2}}\left(\lambda_1\alpha_1^2+\ldots+\lambda_{N-1}\alpha_{N-1}^2\right)\;d\HH^{N-2}(\alpha)\ =\\
=\ \frac{1}{\omega_{N-2}}\sum_{i=1}^{N-1}\lambda_i\int_{S^{N-2}}\alpha_i^2\;d\HH^{N-2}(\alpha)
=\frac{\lambda_1+\ldots+\lambda_{N-1}}{\omega_{N-2}}\cdot\frac{\omega_{N-2}}{N-1}
\end{multline*}
as desired.
\end{proof}

Associated with these concepts, there is also a theory of {\it motion by mean curvature}.
Let us think of a bounded set $\Omega\subseteq\R^N$ whose shape changes in time according to local features of its boundary, 
i.e. each point $x_0$ of the boundary moves along the normal direction to $\partial\Omega$ at $x_0$
and with a speed given by the mean curvature of $\partial\Omega$ at $x_0$.
In \cite{MBO}, with the aid of \cite{Ishii} and \cite{evans}, it is possible to find the following approximation 
of this motion. Let evolve the function $\widetilde{\chi}_\Omega$ according to the heat equation
\begin{equation}\label{Heat}
\left\{\begin{aligned}
\partial_tu(x,t)&=\lapl u(x,t), \\
u(x,0)&=\widetilde{\chi}_\Omega(x).
\end{aligned}\right.
\end{equation}
Then the set $\Omega_\varepsilon=\{u(x,\varepsilon)>0\}$, for small $\varepsilon>0$,
has a boundary close to the evolution of $\partial\Omega$ by mean curvature. 
Before passing to the nonlocal case, we would like to bring to the reader's attention two facts:
\begin{description}
\item[\it 1.] we underline how the evolution of a point $x_0\in\partial\Omega$ depends only on the shape of $\Omega$ in
a neighborhood of $x_0$,
\item[\it 2.] we recall that if a set $E$ has minimal perimeter in a region $U$, then it has zero mean curvature 
at each point of $\partial E\cap U$, see \cite{giusti}, and we can say that this is the Euler-Lagrange equation
associated to the minimization of the perimeter of a set;
therefore a set with minimal perimeter will be a stationary solution to the motion by mean curvature.
\end{description}

\section{nonlocal directional curvatures}\label{nonlocalsec}

From now on we take $s\in(0,1/2)$
and a set $E\subseteq\R^N$, with $C^2$ boundary $\partial E$.

\subsection{Nonlocal definitions}

We introduce here the nonlocal objects that will play the
role of directional and mean curvatures
(for details, heuristics and justifications of our definitions
see Section \ref{JJ}).

\begin{defi}[nonlocal mean curvature]\index{nonlocal mean curvature}\label{JD}
The nonlocal mean curvature of $\partial E$ at the point $p\in\partial E$ is
\begin{equation}\label{nonmeancurv}
H_s:=\frac{1}{\omega_{N-2}}\int_{\R^N}\frac{\widetilde{\chi}_E(x)}{|x-p|^{N+2s}}\;dx.
\end{equation}
\end{defi}

Denote now by $\nu$ be a normal unit vector for $\partial E$ at $p$.
Let also $e$ be any unit vector in the tangent space of $\partial E$
at $p$ and\footnote{Notice that $\pi(e)$
is simply the portion of the two-dimensional plane spanned by $e$ and $\nu$
given by the vectors with positive scalar product with respect to $e$.
We point out that a change of the orientation of $\nu$ does
not change $\pi(e)$ which is therefore uniquely defined.
Needless to say, such two-dimensional plane plays an important role
even in the classical setting.}
let $\pi(e)$ the two-dimensional open half-plane 
\[
\pi(e):=\{y\in\R^N:y=\rho e+h \nu,\ \rho>0,\ h\in\R\}.
\]
We endow $\pi(e)$ with the induced two-dimensional Lebesgue measure,
that is we define the integration over $\pi(e)$ by the formula
\begin{equation}\label{dy}
\int_{\pi(e)} g(y)\,dy:=\int_0^{+\infty}d\rho \int_\R dh\; g(\rho e +h\nu).
\end{equation} 

\begin{defi}[nonlocal directional curvature]\index{nonlocal directional curvature}
We define the nonlocal directional curvature of $\partial E$ at the point $p\in\partial E$ 
in direction $e$ the quantity
\begin{equation}\label{D2}
K_{s,e}:=\int_{\pi(e)}\frac{|y'-p'|^{N-2}\,
\widetilde{\chi}_E(y)}{|y-p|^{N+2s}}\;dy.
\end{equation}
\end{defi}

Without loss of generality, we can consider
now a normal frame of coordinates
in which
$p$ coincides with the origin $0$ of $\R^N$,
and the tangent space of $S$ at $0$ is the horizontal hyperplane
$\{x_N=0\}$. In this way we can take also 
\begin{equation}\label{NU}
\nu=(0,\dots,0,1).
\end{equation}
With this choice, \eqref{nonmeancurv}
and \eqref{D2} 
becomes
\begin{equation}\label{nonmeancurv.0}
H_s=\frac{1}{\omega_{N-2}}\int_{\R^N}\frac{\widetilde{\chi}_E(x)}{|x|^{N+2s}}\;dx
\end{equation}
and
\begin{equation}\label{D2.0}
K_{s,e}=\int_{\pi(e)}\frac{|y'|^{N-2}\,
\widetilde{\chi}_E(y)}{|y|^{N+2s}}\;dy.
\end{equation}
As a matter of fact, 
since the function ${\widetilde{\chi}_E(x)}/{|x|^{N+2s}}$ is not in the space $L^1(\R^N)$, 
the integral in \eqref{nonmeancurv.0} has to be taken in the principal value sense, that is
\begin{equation}\label{7.1}
\lim_{\eps\downarrow0}\int_{\C B_\eps}\frac{\widetilde\chi_E(x)}{|x|^{N+2s}}
\,dx.\end{equation}
Similarly, the integral in \eqref{D2.0} may be taken in the principal
value sense as
\begin{equation}\label{7.2}
\lim_{\eps\downarrow0}\int_{\pi(e)\setminus B_\eps}\frac{|y'|^{N-2}
\widetilde\chi_E(y)}{|y|^{N+2s}}
\,dy.\end{equation}
Next observation points out that these definitions are well-posed, in view of
the smoothness of $\partial E$:

\begin{lem}\label{goog}
The limits\footnote{Similar statements can be found in
\cite[Theorem 5.1]{crs}, where less regularity is asked on $E$
and only the $\limsup$ is calculated, and \cite[Lemma 1]{imbert}; we add here 
the finiteness of the nonlocal directional curvature.} 
in \eqref{7.1} and \eqref{7.2} exist and are finite.
\end{lem}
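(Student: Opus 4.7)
The plan is to reduce both principal-value limits to absolutely convergent Lebesgue integrals by exploiting the $C^2$ regularity of $\partial E$ together with an odd-symmetry cancellation about the tangent hyperplane. After translating and rotating so that $p=0$ and $\nu=(0,\dots,0,1)$, I would fix $r>0$ so small that $\partial E\cap B_r$ is the graph $x_N=\varphi(x')$ of a $C^2$ function with $\varphi(0)=0$ and $\nabla\varphi(0)=0$, hence $|\varphi(x')|\le C|x'|^2$. Inside $B_r$ we then have $\widetilde\chi_E(x)=\operatorname{sgn}(x_N-\varphi(x'))$, which differs from $\operatorname{sgn}(x_N)$ only on the thin set $T_r:=\{x\in B_r:|x_N|\le|\varphi(x')|\}\subseteq\{|x_N|\le C|x'|^2\}$.

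For \eqref{7.1}, assuming $\eps<r$, I would split $\R^N\setminus B_\eps=(B_r\setminus B_\eps)\cup(\R^N\setminus B_r)$. The outer piece is $\eps$-independent and absolutely convergent since $|x|^{-N-2s}\in L^1(\R^N\setminus B_r)$. On the inner piece I would write $\widetilde\chi_E=\operatorname{sgn}(x_N)+(\widetilde\chi_E-\operatorname{sgn}(x_N))$: the $\operatorname{sgn}(x_N)$ term contributes identically zero because $B_r\setminus B_\eps$ is symmetric under $x_N\mapsto-x_N$, while the remainder is bounded by $2$, supported in $T_r$ (where $|x|\asymp|x'|$ for $r$ small), and therefore dominated by the $\eps$-independent integrable majorant
\[
\int_{T_r}\frac{dx}{|x|^{N+2s}}\ \le\ C\int_{|x'|\le r}\frac{|x'|^2}{|x'|^{N+2s}}\,dx'\ =\ C'\int_0^r\rho^{-2s}\,d\rho\ <\ +\infty,
\]
finite precisely because $s<1/2$; dominated convergence as $\eps\downarrow 0$ then gives the conclusion.

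The argument for \eqref{7.2} is completely analogous, carried out on the half-plane $\pi(e)$, parameterised via $y=\rho e+h\nu$ by $(\rho,h)\in(0,+\infty)\times\R$, so that $|y'|=\rho$ and $|y|^2=\rho^2+h^2$. The outer piece $\pi(e)\setminus B_r$ is $\eps$-independent and absolutely convergent (the integrand there is bounded by $C(\rho^2+h^2)^{-1-s}$, integrable over a two-dimensional region at infinity since $s>0$). On the inner piece I would again split $\widetilde\chi_E(\rho e+h\nu)=\operatorname{sgn}(h)+(\widetilde\chi_E-\operatorname{sgn}(h))$: the first summand vanishes by oddness in $h$ on the symmetric set $(B_r\setminus B_\eps)\cap\pi(e)$, while the remainder is supported where $|h|\le|\varphi(\rho e)|\le C\rho^2$ and is controlled by
\[
C\int_0^r\rho^{N-2}\,d\rho\int_{|h|\le C\rho^2}\frac{dh}{\rho^{N+2s}}\ =\ C'\int_0^r\rho^{-2s}\,d\rho\ <\ +\infty.
\]
The only delicate point in either case is to carry out the $\operatorname{sgn}$-cancellation on the \emph{exactly} symmetric annular set $B_r\setminus B_\eps$ before passing to the limit in the remainder; only after such cancellation is the $C^2$ bound $|\varphi(x')|\le C|x'|^2$ strong enough, given $s<1/2$, to render the leftover absolutely integrable.
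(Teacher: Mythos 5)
Your proof is correct and rests on exactly the same mechanism as the paper's: the reflection symmetry across the tangent hyperplane kills the leading contribution, the $C^2$ bound confines the discrepancy to the thin set $\{|x_N|\le C|x'|^2\}$, and $s<\tfrac12$ makes $\int_0^r\rho^{-2s}\,d\rho$ finite. The only (cosmetic) differences are that the paper verifies the Cauchy criterion for the truncated integrals on small annuli, whereas you subtract the odd comparison function $\mathrm{sgn}(x_N)$ (resp.\ $\mathrm{sgn}(h)$) and conclude by dominated convergence, and that with the paper's subgraph convention one should write $\widetilde\chi_E=\mathrm{sgn}\bigl(\varphi(x')-x_N\bigr)$ locally — an immaterial sign flip since $-\mathrm{sgn}(x_N)$ is equally odd.
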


We postpone the proof of Lemma \ref{goog} to Section \ref{pf:goog}.
Though the definition of the nonlocal direction curvature
may look rather mysterious
at a first glance, it finds a concrete justification
thanks to the following result:

\begin{theo}\label{NLL} In the setting above
\[
H_s=-\!\!\!\!\!\!\int_{S^{N-2}}K_{s,e}\; d\HH^{N-2}(e).
\]
\end{theo}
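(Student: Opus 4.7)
\smallskip

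The plan is to use a polar-coordinate unfolding on $\mathbb{R}^{N-1}$ combined with Fubini's theorem, matching the two-dimensional half-planes $\pi(e)$ to slabs obtained by spinning $e\in S^{N-2}$ and thereby reconstructing the full ambient integral defining $H_s$. By translation- and rotation-invariance of the definitions, I first reduce to the normalized situation $p=0$, $\nu=(0,\ldots,0,1)$, in which $H_s$ and $K_{s,e}$ are given by \eqref{nonmeancurv.0} and \eqref{D2.0} respectively. For $y=\rho e+h\nu\in\pi(e)$ parametrized as in \eqref{dy}, one has $y'=\rho e\in\R^{N-1}$, hence $|y'|=\rho$ and $|y|^2=\rho^2+h^2$, so in particular $\{y\in\pi(e):|y|<\eps\}$ corresponds exactly to $\{(\rho,h)\in(0,\infty)\times\R:\rho^2+h^2<\eps^2\}$ under the parametrization.

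The core of the argument is carried out at the level of truncated integrals, where everything is absolutely convergent. For $\eps>0$, using \eqref{dy} I would write
\[
K_{s,e}^{(\eps)}\ :=\ \int_{\pi(e)\setminus B_\eps}\frac{|y'|^{N-2}\,\widetilde{\chi}_E(y)}{|y|^{N+2s}}\,dy
\ =\ \int_{\{\rho^2+h^2>\eps^2,\,\rho>0\}}\!\!\!\frac{\rho^{N-2}\,\widetilde{\chi}_E(\rho e+h\nu)}{(\rho^2+h^2)^{(N+2s)/2}}\,d\rho\,dh.
\]
Integrating over $e\in S^{N-2}$ and applying Fubini (legitimate since the integrand is absolutely integrable away from the origin, using that $\widetilde{\chi}_E$ is bounded), the order of integration can be swapped so that the $e$-integral is performed first. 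Then the polar identity $\int_{\R^{N-1}}g(y')\,dy'=\int_0^\infty\int_{S^{N-2}}g(\rho e)\,\rho^{N-2}\,d\HH^{N-2}(e)\,d\rho$ recombines $\rho^{N-2}\,d\rho\,d\HH^{N-2}(e)$ into $dy'$ on $\R^{N-1}$, and a further application of Fubini between $y'$ and $h$ yields
\[
\int_{S^{N-2}}K_{s,e}^{(\eps)}\,d\HH^{N-2}(e)\ =\ \int_{\R^N\setminus B_\eps}\frac{\widetilde{\chi}_E(x)}{|x|^{N+2s}}\,dx.
\]

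It remains only to let $\eps\downarrow 0$ on both sides. The right-hand side converges to $\omega_{N-2}H_s$ by the definition \eqref{nonmeancurv.0} and Lemma \ref{goog}. For the left-hand side, Lemma \ref{goog} gives $K_{s,e}^{(\eps)}\to K_{s,e}$ pointwise in $e\in S^{N-2}$ as $\eps\downarrow 0$; to interchange this limit with the $\HH^{N-2}$-integration on the compact set $S^{N-2}$ I would invoke dominated convergence, with a majorant coming from the estimate $|K_{s,e}-K_{s,e}^{(\eps)}|\leq C$ uniform in $e$ and $\eps\in(0,\eps_0)$. This uniform bound is essentially the content of the proof of Lemma \ref{goog}: since $\partial E$ is $C^2$, near the origin the symmetric-difference measure $\widetilde{\chi}_E$ enjoys cancellations of odd-order terms in the two-dimensional half-plane $\pi(e)$, which control the integrand near $0$ independently of $e$ (the $C^2$-norm of the local graph describing $\partial E$ depending only on $\partial E$, not on the chosen direction).

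The expected main obstacle is precisely the last point, namely the uniformity in $e$ of the estimates that justify passing to the limit $\eps\downarrow 0$ inside the $S^{N-2}$-integration; however, since all half-planes $\pi(e)$ share the same vertical axis $\nu$ and $\partial E$ is globally $C^2$, the local graph description around $0$ produces a bound on $|K_{s,e}-K_{s,e}^{(\eps)}|$ depending only on $\|\partial E\|_{C^2}$ and on $s$, so dominated convergence applies. Dividing by $\omega_{N-2}$ then yields the announced identity.
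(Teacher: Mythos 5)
Your proof is correct and follows essentially the same route as the paper: unfolding $\R^N$ into the half-planes $\pi(e)$ by recognizing polar coordinates in $\R^{N-1}$ and applying Fubini, so that $\int_{S^{N-2}}K_{s,e}\,d\HH^{N-2}(e)=\int_{\R^N}\widetilde{\chi}_E(x)\,|x|^{-N-2s}\,dx=\omega_{N-2}H_s$. The only difference is that you perform the exchange on the $\eps$-truncated integrals and then pass to the limit using the uniform-in-$e$ bound implicit in the proof of Lemma \ref{goog}, which is a slightly more careful treatment of the principal value than the paper's direct formal computation, but not a different argument.
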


Namely, Theorem \ref{NLL} states that the nonlocal mean curvature
is the average of the nonlocal directional curvatures, thus
providing a nonlocal counterpart of Theorem \ref{loccomput}.
See Section \ref{NLL:pf} for the proof of
Theorem \ref{NLL}.

In the particular case when the set $E$ is characterized as the subgraph of a function 
$f\in C^2(\R^{N-1})$ (that, due to our normalization
setting, satisfies $f(0)=0$ and $\grad f(0)=0$), namely if $E=\{x_N<f(x')\}$,
then formula (\ref{D2.0}) can be written directly in terms of $f$,
according to the expression
\begin{equation}\label{curv}
K_{s,e}=2\int_0^{+\infty}d\rho\;\rho^{N-2}\int_0^{f(\rho e)}\,\frac{dh}{\left(\rho^2+h^2\right)^{s+N/2}}.
\end{equation}
The proof of \eqref{curv} is deferred to Section \ref{radial:pf}.

\begin{rmk}\label{R:8}\rm
We point out that if $f$ is a radial function, i.e. $f(\rho e)=\phi(\rho)$ for some $\phi:[0,+\infty)\rightarrow\R$, 
equation (\ref{curv}) becomes
\begin{multline*}
K_{s,e}=2\int_0^{+\infty}d\rho\;\frac{1}{\rho^{2+2s}}\int_0^{\phi(\rho)}\frac{dh}{\left(1+\rho^{-2}h^2\right)^{s+N/2}}
=2\int_0^{+\infty}d\rho\;\frac{1}{\rho^{1+2s}}\int_0^{\frac{\phi(\rho)}{\rho}}\frac{d\tau}{\left(1+\tau^2\right)^{s+N/2}}\ = \\
=\int_0^{+\infty}\frac{1}{\rho^{1+2s}}\cdot F\left(\frac{\phi(\rho)}{\rho}\right)\;d\rho, 
\end{multline*}
where
\[
F(t):=\int_0^t \left(1+\tau^2\right)^{-s-N/2} d\tau.
\]
We observe that the function $F$ is nonlinear,
thus the nonlocal directional curvature
depends on the graph of the set in a nonlinear
fashion. Comparing this remark with Theorem \ref{locdirects},
we notice that this phenomenon is in sharp contrast with
the classical case.
\end{rmk}

\begin{rmk}\rm In our setting~$K_{s,-e}$ 
is, in general, not equal to~$K_{s,e}$,
differently from the classical case in which~$K_{-e}=K_{e}$.
For a notion of fractional directional curvature that is even on~$S^{N-2}$
one can consider~$\tilde K_{s,e}:=(K_{s,e}+K_{s,-e})/2$. Of course the results
presented in this paper hold for~$\tilde K_{s,e}$ too (with obvious minor
modifications).
\end{rmk}

\subsection{The context in which nonlocal curvatures naturally come forth}\label{JJ}

We now give some further motivation for the study of curvatures
of nonlocal type.
A few years ago the notion of {\it $s$-minimal set} has been introduced, see \cite{crs}. 
Roughly speaking, one can think of the problem of minimizing functionals which have a strong nonlocal flavor, 
meaning that these functionals take into account power-like
interactions between distant objects.
In particular, one is interested in the functional
\[
\J(A,B)=\frac{1}{\omega_{N-1}}\int_A\int_B\frac{dx\;dy}{|x-y|^{N+2s}}
\]
for every 
measurable sets\footnote{We refer here to sets as in \cite[footnote 1]{regularity}.}
$A,B\subseteq\R^N$,
and in the minimization of
the functional
\[
\hbox{Per}_s(E,U):=\J(E\cap U, U\setminus E)+\J(E\cap U,\C E \cap \C U)+\J(E\setminus U,U\setminus E),
\]
called the {\it $s$-perimeter of $E$ in $U$}, where $E,U\subseteq\R^N$ are measurable sets and $U$ is bounded. 

A set $E_\star\subseteq \R^N$ that minimizes
$\hbox{Per}_s(E,U)$ among all the measurable sets $E\subseteq\R^N$ such that $E\setminus U=E_\star\setminus U$ is called
$s$-minimal.
In this framework, $U$ can be viewed as an ambient space, meaning the space in which one is free to modify the set $E$, 
while the shape of $E$ is fixed outside $U$ and $E\setminus U$ plays the role of a boundary datum.

As the reader may have noticed, the notation Per$_s(E,U)$ and the name ``$s$-perimeter'' strongly remind the notation 
Per$(E,U)$ for the perimeter of a set $E$ in $U$, see \cite{giusti}
(and indeed $s$-minimal sets are the natural
nonlocal generalizations of sets with minimal perimeter). For instance, 
it is proved in \cite{asymptotics, deph} that, as
$s\uparrow\frac{1}{2}$, the $s$-perimeter reduces to the classical perimeter,
namely
\begin{equation}\label{XC} \lim_{s\uparrow\frac{1}{2}}\,(1-2s)\,\hbox{Per}_s(E,B_r)=
\hbox{Per}(E,B_r)\quad\hbox{ for a.e. }r>0.
\end{equation}
Also, asymptotics of the $s$-perimeter as $s\downarrow0$ are studied in \cite{dipierro}.

While, in the classical setting, sets with minimal perimeter
satisfy the zero mean curvature equation, it is proved in
\cite{crs} that if $E_\star$ is an $s$-minimal set and
$p\in\partial E$, then
\begin{equation}\label{els}
\int_{\R^N}\frac{\widetilde{\chi}_{E_\star}(x)}{|x-p|^{N+2s}}\;dx=0.
\end{equation}
Of course this equation makes sense if $\partial E_\star$
is smooth enough near $p$, so in general
\cite{crs} has to deal with \eqref{els} in a suitable
weak (and in fact viscosity) sense. In this setting,
one can say that (\ref{els}) is the Euler-Lagrange
equation of the functional
Per$_s$ and so, by analogy with the classical case,
it is natural to consider 
the left hand side of \eqref{els} as a nonlocal mean curvature.

This justifies Definition \ref{JD}.
Furthermore,
in \cite{imbert, meancurv} 
a nonlocal approximation scheme 
of motion by mean curvature has been developed.
This scheme differs from the classical
one recalled in Section \ref{classicsec}
since it substitutes the standard
heat equation in \eqref{Heat} with its nonlocal counterpart
\[
\left\{\begin{array}{l}
\partial_tu(x,t)=-\Ds u(x,t) \\
u(x,0)=\widetilde{\chi}_\Omega(x).
\end{array}\right.
\]
With this modification it has been proved that the counterpart of the normal velocity at a point $x_0\in\partial\Omega$
is given by the quantity (see \cite{crs} and references therein)
\begin{equation}\label{nmc}
\int_{\R^N}\frac{\widetilde{\chi}_\Omega(x)}{|x-x_0|^{N+2s}}\;dx.
\end{equation}

\subsection{Some comparisons between classical and nonlocal directional curvatures}\label{99}

Now we turn to the study of the objects that
we have introduced in the last paragraph, by stating some properties.
Our goal is threefold: 
first we study the directions in which maximal curvatures are attained, 
then we are interested in asymptotics for $s\uparrow1/2$, 
finally we present an example dealing with the relation between the
nonlocal mean curvature and 
the average of extremal nonlocal directional curvatures.

First of all, we establish that the counterparts of Theorem \ref{locdirects} and 
Remark \ref{remlocdirects} 
do not hold in the nonlocal framework. Indeed,
the direction that maximizes the nonlocal directional curvature
is not, in general, orthogonal to the
one that minimizes it. Even more, one can prescribe arbitrarily
the set of directions that maximize and minimize the nonlocal directional
curvature, according to the following result:

\begin{theo}[directions of extremal nonlocal curvatures]\label{teoprinc}
For any two disjoint, nonempty, closed subsets 
$$
\Sigma_-,\Sigma_+
\subseteq S^{N-2},
$$
there exists a set $E\subseteq\R^N$ such that $\partial E$ is $C^2$, $0\in\partial E$ and
$$
K_{s,e_-}< K_{s,e}< K_{s,e_+},\qquad \hbox{for any }e_-\in\Sigma_-,\
e\in S^{N-2}\setminus(\Sigma_+\cup\Sigma_-),\
e_+\in\Sigma_+,
$$
and the minimum and maximum of the nonlocal directional curvatures are attained 
at any point of $\Sigma_-$ and $\Sigma_+$ respectively. 
\end{theo}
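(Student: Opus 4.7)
The plan is to realize $E$ as the subgraph of an explicit smooth function vanishing in a neighborhood of the origin, so that the required boundary regularity at $0$ is automatic, and then to compute the nonlocal directional curvatures via formula \eqref{curv}. This reduces the problem to a sharp monotonicity argument for a single scalar function on the sphere $S^{N-2}$.

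First I would construct a smooth ``altitude'' on the sphere with prescribed extremal sets. I rely on the standard fact from differential topology that any closed subset of a smooth manifold is the zero set of a nonnegative smooth function. This yields $\phi_\pm \in C^\infty(S^{N-2};[0,+\infty))$ with $\phi_\pm^{-1}(0) = \Sigma_\pm$. Since $\Sigma_- \cap \Sigma_+ = \emptyset$, the denominator in
\[
g(e) := \frac{\phi_-(e)}{\phi_-(e)+\phi_+(e)}
\]
is everywhere strictly positive, so $g \in C^\infty(S^{N-2};[0,1])$ with $g^{-1}(0) = \Sigma_-$, $g^{-1}(1) = \Sigma_+$ and $0 < g(e) < 1$ elsewhere. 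Fix also a smooth bump $\chi:[0,+\infty) \to [0,+\infty)$ with compact support in some interval $[a,b] \subset (0,+\infty)$ and $\chi \not\equiv 0$, and define $f : \R^{N-1} \to \R$ by $f(\rho e) := \chi(\rho)\,g(e)$ for $\rho > 0$, $e \in S^{N-2}$, and $f(0):=0$. Because $f$ vanishes in a whole neighborhood of the origin, $f \in C^\infty(\R^{N-1})$ with $f(0) = 0$ and $\grad f(0) = 0$; hence the set $E := \{x \in \R^N : x_N < f(x')\}$ has $C^\infty$ boundary, satisfies $0 \in \partial E$, and admits $\nu = e_N$ as normal at $0$.

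By the representation \eqref{curv}, we obtain
\[
K_{s,e} = 2\int_0^{+\infty} \rho^{N-2}\,\Phi\bigl(\rho,\chi(\rho)\,g(e)\bigr)\,d\rho,
\qquad \Phi(\rho,t) := \int_0^{t} \frac{dh}{(\rho^2 + h^2)^{s+N/2}}.
\]
For every $\rho > 0$, the map $t \mapsto \Phi(\rho,t)$ is differentiable with derivative $(\rho^2+t^2)^{-s-N/2}>0$, hence strictly increasing. Thus, whenever $g(e_1) < g(e_2)$, the integrand at $e = e_1$ is pointwise no larger than the one at $e = e_2$ and strictly smaller on the open set $\{\rho : \chi(\rho) > 0\}$, which has positive Lebesgue measure; integrating yields $K_{s,e_1} < K_{s,e_2}$. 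In other words $e \mapsto K_{s,e}$ is a strictly monotone increasing function of $g(e)$, so its argmin and argmax coincide with those of $g$, namely $\Sigma_-$ and $\Sigma_+$.

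The main obstacle in this approach is not of analytic nature but topological: it lies in producing the smooth function $g$ realizing the prescribed extremal sets $\Sigma_\pm$. This is settled by the Whitney-type construction of smooth functions with arbitrary prescribed zero sets on manifolds, which grants full flexibility in the choice of $(\Sigma_-,\Sigma_+)$. Once $g$ is in hand, the rest of the argument is essentially elementary, relying only on \eqref{curv} and the strict monotonicity of $\Phi(\rho,\cdot)$, thereby confirming that nonlocal directional curvatures can exhibit qualitative behaviours completely unrelated to those of their classical counterparts.
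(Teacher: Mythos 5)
Your proof is correct and follows essentially the same route as the paper: a smooth Urysohn-type function on $S^{N-2}$ taking the value $0$ exactly on $\Sigma_-$ and $1$ exactly on $\Sigma_+$ (built, as in the paper, from squared smooth functions vanishing precisely on the prescribed closed sets), multiplied by a radial bump, and then the strict monotonicity in $t$ of $t\mapsto\int_0^{t}(\rho^2+h^2)^{-s-N/2}\,dh$ applied to formula \eqref{curv}. Your choice of a bump supported away from the origin (making $f$ trivially smooth near $0$) is a minor cosmetic simplification of the paper's cutoff, not a different argument.
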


We remark that, in the statement above,
it is not necessary to assume any smoothness on the boundary of
the sets~$\Sigma_-$ and $\Sigma_+$ in~$S^{N-2}$.

Next result points out that the definition of
nonlocal directional curvature is consistent with the classical concept
of directional curvature and reduces to it in the limit:

\begin{theo}[asymptotics to $1/2$]\label{lemasintotico}
For any $e\in S^{N-2}$
\begin{equation}\label{23}
\lim_{s\uparrow\frac{1}{2}}(1-2s)K_{s,e}=K_e
\end{equation}
and
\begin{equation*}
\lim_{s\uparrow\frac{1}{2}}(1-2s)H_s=\,H,
\end{equation*}
where $K_e$ (resp., $H$) is the directional curvature of $E$ in direction $e$
(resp., the mean curvature of $E$) at 0.
\end{theo}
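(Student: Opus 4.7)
The first limit is the substantive one: once it is established together with a bound for $(1-2s)K_{s,e}$ uniform in $e\in S^{N-2}$ and $s$ near $1/2$, the averaging identity of Theorem \ref{NLL} combined with dominated convergence and Theorem \ref{loccomput} yields the second. I would therefore focus on the directional statement. Working in the normal coordinates at $p=0$ fixed by \eqref{NU}, I would first localize: choose $r>0$ so small that $\partial E\cap B_r$ is the graph of a $C^2$ function $f:B'_r\to\R$ with $f(0)=0$ and $\grad f(0)=0$, then split the integral in \eqref{D2.0} as $\int_{\pi(e)\cap B_r}+\int_{\pi(e)\setminus B_r}$. Since $|y'|\leq|y|$, the tail piece is bounded by $\int_{\pi(e)\setminus B_r}|y|^{-(2+2s)}\,dy=\pi\,r^{-2s}/(2s)$ (after passing to polar coordinates in the half-plane $\pi(e)$), which stays bounded as $s\uparrow 1/2$ and so contributes $o(1)$ after multiplication by $(1-2s)$. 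Extending $f$ to a compactly supported $C^2$ function $\tilde f$ on $\R^{N-1}$ and replacing $E$ by $\{x_N<\tilde f(x')\}$ modifies $K_{s,e}$ only by a further similarly bounded term, so one may assume that \eqref{curv} applies globally with $f$ compactly supported.

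After the substitution $h=\rho u$ in the inner integral of \eqref{curv},
\[
K_{s,e}\ =\ 2\int_0^{+\infty}\rho^{-1-2s}\,G_s\!\left(\frac{f(\rho e)}{\rho}\right)d\rho,\qquad G_s(t):=\int_0^t\frac{du}{(1+u^2)^{s+N/2}}.
\]
The function $G_s$ is odd with $G_s(0)=0$, $G_s'(0)=1$, $G_s''(0)=0$, so $G_s(t)=t+O(t^3)$ with constants remaining bounded as $s$ approaches $1/2$; combined with the Peano expansion $f(\rho e)/\rho=(K_e/2)\rho+o(\rho)$ uniform in $e\in S^{N-2}$ (the error depending only on $\|f\|_{C^2}$), this gives $G_s(f(\rho e)/\rho)=(K_e/2)\rho+o(\rho)$. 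Splitting the integral at a fixed small $\delta>0$, the contribution from $\rho>\delta$ stays bounded independently of $s$ by compactness of $\mathrm{supp}\,f$ together with boundedness of $G_s$, while on $(0,\delta)$
\[
\int_0^\delta\rho^{-1-2s}\,G_s(f(\rho e)/\rho)\,d\rho\ =\ \frac{K_e}{2}\cdot\frac{\delta^{1-2s}}{1-2s}+R_s(e,\delta),
\]
where the remainder satisfies $(1-2s)\,R_s(e,\delta)\to 0$ as $s\uparrow 1/2$: indeed for any $\eta>0$ one picks $\delta_1\leq\delta$ with the $o(\rho)$-error of $f$ below $\eta\rho$ on $(0,\delta_1)$, so that the $(0,\delta_1)$ portion is bounded by $\eta\,\delta_1^{1-2s}/(1-2s)$ and the $(\delta_1,\delta)$ portion, bounded by $M(\delta^{1-2s}-\delta_1^{1-2s})/(1-2s)$, has prefactor going to $0$ after multiplication by $(1-2s)$. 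Multiplying everything by $2(1-2s)$ produces $K_e\,\delta^{1-2s}+o(1)\to K_e$ as $s\uparrow 1/2$, which is precisely \eqref{23}.

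This same estimate, being uniform in $e\in S^{N-2}$ (all constants depending only on $\|f\|_{C^2}$ and $r$), bounds $|(1-2s)K_{s,e}|$ by a constant independent of $e$ for $s$ close to $1/2$, so dominated convergence applies to the averaging identity of Theorem \ref{NLL} and Theorem \ref{loccomput} identifies the limit as $H$. The main technical difficulty is precisely this uniform-in-$s$ control of every error estimate as $s\to 1/2^-$, since the whole argument depends on cancelling the single factor $(1-2s)^{-1}$ arising from $\int_0^\delta\rho^{-2s}d\rho$; any hidden $s$-dependence in the constants appearing in the Taylor expansion of $G_s$ or in the tail bounds would spoil the cancellation. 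A secondary subtlety is that under only $C^2$ regularity the Peano remainder of $f$ is $o(\rho)$ rather than $O(\rho^2)$, so the error $R_s(e,\delta)$ cannot be controlled by an elementary pointwise bound and requires the two-step splitting argument sketched above.
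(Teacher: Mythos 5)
Your proposal is correct and follows essentially the same route as the paper: localization with a tail bounded uniformly as $s\uparrow\tfrac12$, a second-order Taylor expansion of $f$ controlled through the modulus of continuity of $D^2f$ (your $o(\rho)$ two-step splitting plays exactly the role of the paper's $E(\eta)$), an explicit model integral producing the $\tfrac{1}{1-2s}$ factor, and uniformity in $e\in S^{N-2}$ combined with Theorem \ref{NLL} and Theorem \ref{loccomput} for the statement on $H_s$. The only difference is cosmetic: your substitution $h=\rho u$ reduces \eqref{curv} to a single integral against the profile $G_s\bigl(f(\rho e)/\rho\bigr)$, whereas the paper keeps the double integral and performs the analogous change of variable $t=h/\rho$ only when estimating the model term.
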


Notice that Theorem \ref{lemasintotico} can be seen
as an extension of the asymptotics in \eqref{XC}
for the directional and mean curvatures.

A further remark is that, differently from the local case,
in the nonlocal one it is not possible to calculate the mean curvature 
simply by taking the arithmetic mean of the principal curvatures (this in
dimension $N=3$ reduces to
the half of the sum between the maximal and the minimal directional
curvatures). This phenomenon is a consequence of
Theorem~\ref{lemasintotico} and it
may also be detected by an explicit example:

\begin{ex}\label{es2}\rm Let $E=\{(x,y,z)\in\R^3:z\leq 8x^2y^2\}$.
Let $H_s$ be
the nonlocal mean curvature at $0\in\partial E$.
Let also $K_{s,e}$ be the nonlocal curvature at $0$ in direction $e$,
\[
\lambda_-:=\min_{e\in S^1} K_{s,e} \ {\mbox{ and }} \
\lambda_+:=\max_{e\in S^1} K_{s,e}.
\]
Then $\lambda_-$ is attained at $(0,1)$, $\lambda_+$ is attained at $(\sqrt{2}/2,
\sqrt{2}/2)$, and $H_s\ne(\lambda_- +\lambda_+)/2$.
\end{ex}
Of course, Example \ref{es2}
is in sharp contrast with the classical case, recall Remark \ref{remlocdirects}.
For the proof of the claims related to Example \ref{es2}
see Section \ref{remarks}.

\section{proof of theorem \ref{NLL}}\label{NLL:pf}

Given a function~$G$, we apply~\eqref{dy} to the function~$g(y):=|y'|^{N-2} G(y)$.
For this, we recall the normal coordinates in~\eqref{NU}
and, with a slight abuse of notation we identify the vector~$e=(e_1,\dots,e_{N-1},0)\in\R^N$
with~$(e_1,\dots,e_{N-1})\in\R^{N-1}$, so that we write
\begin{equation}\label{PO}
\pi(e)\ni y=\rho e+h\nu=(\rho e,h).\end{equation}
Then \eqref{dy} reads
$$ \int_{\pi(e)} dy \ |y'|^{N-2} G(y)=\int_0^{+\infty}d\rho\int_\R dh\ \rho^{N-2} G(\rho e,h).$$
We integrate this identity over~$e\in S^{N-2}$:
by recognizing the polar coordinates in~$\R^{N-1}$ we obtain
\begin{multline*}
\int_{S^{N-2}} d{\HH}^{N-2}(e)
\int_{\pi(e)} dy \ |y'|^{N-2} G(y)=
\int_\R dh
\int_{S^{N-2}} d{\HH}^{N-2}(e)
\int_0^{+\infty}d\rho\;\rho^{N-2} G(\rho e,h)\ =\\
=\ \int_\R dh\int_{\R^{N-1}}dx' \
G(x',h)=\int_{\R^N} dx \ G(x).
\end{multline*}
We apply this formula to~$G(y):=\widetilde\chi_E(y)/|y|^{N+2s}$ and we recall
\eqref{nonmeancurv.0}           
and~\eqref{D2.0}, so to conclude that
$$
\int_{S^{N-2}} d{\HH}^{N-2}(e) \ K_{s,e}=
\int_{S^{N-2}} d{\HH}^{N-2}(e)
\int_{\pi(e)} dy \ \frac{|y'|^{N-2} \widetilde\chi_E(y)}{|y|^{N+2s}}=
\int_{\R^N} dx \ \frac{\widetilde\chi_E(x)}{|x|^{N+2s}}
=\omega_{N-2}\,H_s,$$
establishing Theorem~\ref{NLL}.
\hfill$\square$

\section{proof of \texorpdfstring{\eqref{curv}}{eq:curv}}\label{radial:pf}

We exploit again the notation in~\eqref{PO}
and~\eqref{dy} applied to~$g(y):={|y'|^{N-2} \widetilde\chi_E(y)}/{|y|^{N+2s}}$,
to see that
\begin{equation}\label{P89}
\int_{\pi(e)} dy \ \frac{|y'|^{N-2} \widetilde\chi_E(y)}{|y|^{N+2s}}
=\int_0^{+\infty} d\rho\int_\R dh \
\frac{\rho^{N-2} \widetilde\chi_E(\rho e,h)}{(\rho^2+h^2)^{s+N/2}}.
\end{equation}
Now we observe that~$\widetilde\chi_E(\rho e,h)=1$ if~$h<-|f(\rho e)|$
and~$\widetilde\chi_E(\rho e,h)=-1$ if~$h>|f(\rho e)|$, being~$E$ the subgraph of~$f$.
Therefore, for any fixed~$e\in S^{N-2}$ the map
\[ 
h\longmapsto \frac{\rho^{N-2} \widetilde\chi_E(\rho e ,h)}{(\rho^2+h^2)^{s+N/2}}
\]
is odd for~$h\in\R\setminus [-|f(\rho e)|,|f(\rho e)|]$ and therefore
\begin{equation}\label{P899}
\int_{\R\setminus [-|f(\rho e)|,|f(\rho e)|]} dh \
\frac{\rho^{N-2} \widetilde\chi_E(\rho e,h)}{(\rho^2+h^2)^{s+N/2}}\,=\,0.
\end{equation}
The subgraph property also gives that
\begin{eqnarray*}
\int_{-|f(\rho e)|}^{|f(\rho e)|} dh \
\frac{\rho^{N-2} \widetilde\chi_E(\rho e,h)}{(\rho^2+h^2)^{s+N/2}}
&=&\left\{
\begin{matrix}
2\displaystyle\int_{0}^{|f(\rho e)|} dh \            
\frac{\rho^{N-2} }{(\rho^2+h^2)^{s+N/2}}
&
{\ \mbox{ if $f(\rho e)\ge0$,}}\\ & \\
-2\displaystyle\int_{-|f(\rho e)|}^{0} dh \            
\frac{\rho^{N-2} }{(\rho^2+h^2)^{s+N/2}}
&
{\ \mbox{ if $f(\rho e)\le0$}}
\end{matrix}
\right.\\ &=&
2\int_{0}^{f(\rho e)} dh \             
\frac{\rho^{N-2} }{(\rho^2+h^2)^{s+N/2}}.
\end{eqnarray*}
This, \eqref{P89} and~\eqref{P899} give that
\[
\int_{\pi(e)} dy \ \frac{|y'|^{N-2} \widetilde\chi_E(y)}{|y|^{N+2s}}=
2\int_0^{+\infty}d\rho
\int_{0}^{f(\rho e)} dh \ 
\frac{\rho^{N-2} }{(\rho^2+h^2)^{s+N/2}},
\]
and so~\eqref{curv} follows now from~\eqref{D2.0}.
\hfill$\square$

\section{proof of theorem \ref{teoprinc}}\label{S3}
In $\R^N$ define the set $E=\{x=(x',x_N)\in\R^{N-1}\times\R:x_N\leq f(x')\}$. 
We will construct $f$ in such a way to make it a regular function, say at least $C^2$. For this, we 
fix two closed and disjoint sets $\Sigma_-$ and $\Sigma_+$
in $S^{N-2}$, and we take
$a\in C^\infty(S^{N-2},[0,1])$ 
in such a way that 
\begin{equation}\label{llll}
\begin{array}{ll}
a(e)=0 & \hbox{ for any }e\in \Sigma_-, \\
a(e)\in(0,1) & \hbox{ for any }e\in S^{N-2}\setminus(\Sigma_-\cup\Sigma_+), \\
a(e)=1 & \hbox{ for any }e\in \Sigma_+.
\end{array}
\end{equation}
The existence of such an $a$ is warranted by a strong version of
the smooth Urysohn Lemma (notice that $a\in C^\infty(S^{N-2})$
in spite of the fact that no regularity assumption has been
taken on $\Sigma_-$ and $\Sigma_+$ and that~$a$ takes values~$0$ and~$1$
only in~$\Sigma_-\cup\Sigma_+$).
We provide the details of the construction of~$a$ for the
facility of the reader.
For this we observe that~$\Sigma_-$ is a closed set in~$\R^{N-1}$.
So, by Theorem~1.1.4 in~\cite{BR}, there exists~$f_-\in C^\infty(\R^{N-1})$
such that~$f_-(p)=0$ for any~$p\in\Sigma_-$ and~$f_-(p)\ne0$ for any~$p\in\R^{N-1}\setminus\Sigma_-$.
Then the function~$g_-(p):=\big(f_-(p)\big)^2$ satisfies
that~$g_-(p)=0$ for any~$p\in\Sigma_-$ and~$g_-(p)>0$ for any~$p\in\R^{N-1}\setminus\Sigma_-$.
Similarly, there exists~$g_+\in C^\infty(\R^{N-1})$
such that~$g_+(p)=0$ for any~$p\in\Sigma_+$ and~$g_+(p)>0$ for any~$p\in\R^{N-1}\setminus\Sigma_+$.
Then the function
$$ \R^{N-1}\ni p\longmapsto a(p):=\frac{g_-(p)}{g_+(p)+g_-(p)}$$
satisfies \eqref{llll} as desired.

Now we take an even function $\phi\in C^\infty_0(\R,[0,1])$
with $\phi(\rho)>0$ if $\rho\in (1,2)$,
\[
\lim_{\rho\rightarrow 0^+}\phi''(\rho)=\lim_{\rho\rightarrow 0^+}\phi'(\rho)=\lim_{\rho\rightarrow 0^+}\phi(\rho)=0.
\]
Then we define $f$ using the polar coordinates of $\R^{N-1}$, namely we set
\[
f(x'):=a(e)\phi(\rho),\qquad\hbox{where }\rho=|x'|\hbox{ and }e=\frac{x'}{|x'|}.
\]
By construction $f$ is $C^2$ in the whole of $\R^{N-1}$ (in particular, in a neighborhood of $0$).
Also
\begin{equation}\label{stepth}
0=a(e_-)\phi(\rho)\leq\phi(\rho)a(e)\leq\phi(\rho)a(e_+)\hbox{ for all } \rho>0,e\in S^{N-2}
\end{equation}
whenever we choose $e_-\in\Sigma_-$, $e_+\in\Sigma_+$ 
and $e\in S^{N-2}\setminus(\Sigma_-\cup\Sigma_+)$,
and strict inequalities occur whenever $\rho\in(1,2)$.
Therefore, by (\ref{curv}) and (\ref{stepth}),
\begin{multline*}
K_{s,e_-}=0=2\int_0^{+\infty}d\rho\;\rho^{N-2}\int_0^{\phi(\rho)a(e_-)}\frac{dh}{\left(\rho^2+h^2\right)^{s+N/2}}\ \leq \\
\leq\ 2\int_0^{+\infty}d\rho\;\rho^{N-2}\int_0^{\phi(\rho)a(e)}\frac{dh}{\left(\rho^2+h^2\right)^{s+N/2}}=
K_{s,e}
\end{multline*}
and
\begin{multline*}
K_{s,e}=2\int_0^{+\infty}d\rho\;\rho^{N-2}\int_0^{\phi(\rho)a(e)}\frac{dh}{\left(\rho^2+h^2\right)^{s+N/2}}\ \leq \\
\leq\ 2\int_0^{+\infty}d\rho\;\rho^{N-2}\int_0^{\phi(\rho)a(e_+)}\frac{dh}{\left(\rho^2+h^2\right)^{s+N/2}}=
K_{s,e_+},
\end{multline*}
hence $K_{s,e}$ attains its 
minimum at any point of $\Sigma_-$ 
and its maximum at any point of $\Sigma_+$ (and only there).
\hfill$\square$

\section{proof of theorem \ref{lemasintotico}}\label{S4}

For simplicity, we consider here the case in which $E$ is a subgraph, namely
that there exists $f\in C^2(\R^{N-1})$,
such that $f(0)=0$, $\grad f(0)=0$ and $E=\{(x',x_N)\in\R^{N-1}\times\R:x_N\leq f(x')\}\subseteq\R^N$. 
Such assumption can be easily dropped
a posteriori just working in local coordinates
and observing that the contribution to $K_{s,e}$ coming from far
is bounded uniformly\footnote{In further detail, recalling \eqref{dy},
\begin{multline*}
\left|\int_{\pi(e)\setminus B_1}\frac{|y'|^{N-2}\,
\widetilde{\chi}_E(y)}{|y|^{N+2s}}\,dy\right|\le
\int_{\pi(e)}\frac{|y|^{N-2}\,{\chi}_{\C B_1}(y)}{|y|^{N+2s}}\,dy \le \\ 
\leq\ \int_0^{+\infty}d\rho \int_{-\infty}^{+\infty} dh \, \chi_{\R\setminus(-1,1)}(\rho^2+h^2)
\;(\rho^2+h^2)^{-1-s}
\le \int_{\R^2\setminus B_1}dx |x|^{-2-2s}=2\pi
\int_1^{+\infty}dr\, r^{-1-2s}=\frac{\pi}{s}
\end{multline*}
that is uniformly bounded as $s\uparrow1/2$. This means that we can suppose that~$\partial E$
is a graph in, say, $B_1$ and replace it outside $B_1$ without affecting
the statement of Theorem \ref{lemasintotico}.}
when $s\uparrow1/2$ and so it does
not contribute to the limit in \eqref{23}.

So, to prove Theorem \ref{lemasintotico},
we take equation (\ref{curv}) and split the integral in $\eta>0$
\begin{equation}\label{add}
K_{s,e}=2\int_0^\eta d\rho\;\rho^{N-2}\int_0^{f(\rho e)}\frac{dh}{\left(\rho^2+h^2\right)^{s+N/2}}+
2\int_\eta^{+\infty}d\rho\;\rho^{N-2}\int_0^{f(\rho e)}\frac{dh}{\left(\rho^2+h^2\right)^{s+N/2}}.
\end{equation}
Let us start from the second addendum:
\begin{equation}\label{2ndaddend}\begin{split}
&
\left\arrowvert\int_\eta^{+\infty}d\rho\;\rho^{N-2}\int_0^{f(\rho e)}
\frac{dh}{\left(\rho^2+h^2\right)^{s+N/2}}\right\arrowvert 
\leq \displaystyle\int_\eta^{+\infty}d\rho\;\int_0^{+\infty}dh\,
\frac{\rho^{N-2}}{\left(\rho^2+h^2\right)^{s+N/2}}\ \leq \\
& \leq\ \int_\eta^{+\infty}d\rho\;\int_0^{+\infty}dh\,
\frac{(\rho^2+h^2)^{(N-2)/2}}{\left(\rho^2+h^2\right)^{s+N/2}}
\le \int_{\R^2\setminus B_\eta} |x|^{-2-2s}\;dx
=2\pi\displaystyle\int_\eta^{+\infty} r^{-1-2s}\,dr=\frac\pi{s}\,\eta^{-2s}.\end{split}
\end{equation}
Now we look at the first addendum in \eqref{add}: for this we write, for $\eta\in(0,1)$ sufficiently small,
\begin{equation}\label{75}
D^2_ef(0)\frac{\rho^2}{2}-\eps(\rho) \; \leq \; f(\rho e) \; \leq 
\; D^2_ef(0)\frac{\rho^2}{2}+\eps(\rho),\qquad\rho\in(0,\eta)
\end{equation}
where $\eps:(0,\eta)\rightarrow(0,+\infty)$ is defined as
\[
\eps(\rho):=\sup_{|\xi'|\le\rho}| D^2_ef(\xi')- D^2_ef(0)|\,\frac{\rho^2}{2}.
\]
Let also
\[
E(\eta):=\sup_{0<\rho\le\eta}\frac{\eps(\rho)}{\rho^2}=\frac12\sup_{|\xi'|\le\eta}| D^2_ef(\xi')- D^2_ef(0)|.
\]
and observe that
\begin{equation}\label{as}
E(\eta)\downarrow 0\;\hbox{ as }\eta\downarrow 0.
\end{equation}
Then, if we denote by $D:=\frac{1}{2}D^2_ef(0)$, we have that 
\begin{align}
& \left\arrowvert\int_0^\eta d\rho\;\rho^{N-2}\int_0^{f(\rho e)}\frac{dh}{\left(\rho^2+h^2\right)^{s+N/2}}-\frac{D}{1-2s}\right\arrowvert
 \ \leq\ \left\arrowvert\int_0^1d\rho\;\rho^{N-2}\int_0^{D\rho^2}\frac{dh}{\left(\rho^2+h^2\right)^{s+N/2}}-\frac{D}{1-2s}\right\arrowvert \nonumber \\
&+\ \left\arrowvert\int_0^\eta d\rho\;\rho^{N-2}\int_0^{f(\rho e)}\frac{dh}{\left(\rho^2+h^2\right)^{s+N/2}}-\int_0^\eta d\rho\;\rho^{N-2}\int_0^{D\rho^2}\frac{dh}{\left(\rho^2+h^2\right)^{s+N/2}}\right\arrowvert \label{triang} \\
&+\ \left|\int_\eta^1 d\rho\;\rho^{N-2}\int_0^{D\rho^2}\frac{dh}{\left(\rho^2+h^2\right)^{s+N/2}}\right|. \nonumber
\end{align}
The latter term is uniformly bounded as $s\uparrow1/2$: indeed
\begin{eqnarray}
\displaystyle \left|\int_\eta^1 d\rho\;\rho^{N-2}\int_0^{D\rho^2}\frac{dh}{\left(\rho^2+h^2\right)^{s+N/2}}\right|\;
\leq\; \int_\eta^1 d\rho\;\rho^{N-2}\cdot\frac{|D|\rho^2}{\rho^{N+2s}} \;
=\; \int_\eta^1 d\rho\;\frac{|D|}{\rho^{2s}} \nonumber \\
=\; |D|\,\frac{1-\eta^{1-2s}}{1-2s}\;\xrightarrow[s\uparrow\frac{1}{2}]{}\;-|D|\,\ln\eta,
\end{eqnarray}
and therefore, for~$s$ close to~$1/2$,
\begin{equation}\label{latterm}
\left|\int_\eta^1 d\rho\;\rho^{N-2}\int_0^{D\rho^2}\frac{dh}{\left(\rho^2+h^2\right)^{s+N/2}}\right|
\;\leq\;-\widetilde D\,\ln\eta,
\end{equation}
where $\widetilde D$ is a positive constant depending only on $D$.

The central term in \eqref{triang} may be estimated using \eqref{75}: indeed
\begin{eqnarray}
\displaystyle\left\arrowvert\int_0^\eta d\rho\;\rho^{N-2}\int_0^{f(\rho e)}\frac{dh}{\left(\rho^2+h^2\right)^{s+N/2}}-\int_0^\eta d\rho\;\rho^{N-2}\int_0^{D\rho^2}\frac{dh}{\left(\rho^2+h^2\right)^{s+N/2}}\right\arrowvert \nonumber \\
\nonumber \\
\leq \quad \displaystyle\int_0^\eta d\rho\;\rho^{N-2}\left\arrowvert\int_{D\rho^2}^{f(\rho e)}\frac{dh}{\left(\rho^2+h^2\right)^{s+N/2}}\right\arrowvert \quad \leq \quad \int_0^\eta\rho^{N-2}\frac{\arrowvert f(\rho e)-D\rho^2\arrowvert}{\rho^{N+2s}}\;d\rho \label{centaddend} \\
 \leq \quad \int_0^\eta\frac{\eps(\rho)}{\rho^{2+2s}}\;d\rho\quad \leq \quad
E(\eta) \int_0^\eta\rho^{-2s}\;d\rho\quad=\quad E(\eta)\cdot\frac{\eta^{1-2s}}{1-2s}. \nonumber 
\end{eqnarray}

It remains now to estimate the first term on the right hand side of
\eqref{triang}. For this we 
apply the change of variable $t=h/\rho$ and we see that
\begin{eqnarray*}
&& \left\arrowvert\rho^{N-2}
\int_0^{D\rho^2}
\frac{dh}{\left(\rho^2+h^2\right)^{s+N/2}}-D\rho^{-2s}\right\arrowvert=
|D|\rho^{-2s}\left\arrowvert\frac1{D\rho^2}\int_0^{D\rho^2}
\frac{dh}{\left(1+\frac{h^2}{\rho^2}\right)^{s+N/2}}-1\right\arrowvert 
\\ &&\qquad 
=|D|\rho^{-2s}\left\arrowvert\frac{1}{D\rho}\int_0^{D\rho}\frac{dt}{\left(1+t^2\right)^{s+N/2}}-1
\right\arrowvert.\end{eqnarray*}
Now, since the map $t\longmapsto 1/\left(1+t^2\right)^{s+N/2}$ is decreasing, we have that
\[
\frac{1}{\left(1+D^2\rho^2\right)^{s+N/2}} \ \le\
-\!\!\!\!\!\!\int_0^{|D|\rho}\frac{dt}{\left(1+t^2\right)^{s+N/2}}
\ \leq \ 1.
\]
Using this and a Taylor expansion, we obtain
\begin{eqnarray*}
&& \left\arrowvert\rho^{N-2}
\int_0^{D\rho^2}
\frac{dh}{\left(\rho^2+h^2\right)^{s+N/2}}-D\rho^{-2s}\right\arrowvert
=|D|\rho^{-2s}\left(
1-\frac{1}{|D|\rho}\int_0^{|D|\rho}\frac{dt}{\left(1+t^2\right)^{s+N/2}}
\right)\\
&&\quad\le |D|\rho^{-2s}\left(
1-\frac{1}{(1+D^2\rho^2)^{s+N/2}}
\right) \le |D|\rho^{-2s}\cdot (\beta D^2\rho^2)=\beta |D|^3 \rho^{2-2s}\end{eqnarray*}
for any $\rho\in(0,1)$, where $\beta$ is a suitable positive constant only depending on $N$.
By integrating this estimate over the interval $(0,1)$ we conclude that
\begin{equation}\label{lastone}
\begin{split}
\left\arrowvert\int_0^1 d\rho\;\rho^{N-2}\int_0^{D\rho^2}\frac{dh}{\left(\rho^2+h^2\right)^{s+N/2}}
-\frac{D}{1-2s}\right\arrowvert\ &=\\
=\ &\left\arrowvert\int_0^1 d\rho\;\left[
\rho^{N-2}\int_0^{D\rho^2}\frac{dh}{\left(\rho^2+h^2\right)^{s+N/2}}
-D\rho^{-2s}\right]\right\arrowvert\leq \widetilde\beta,
\end{split}
\end{equation}
for a suitable $\widetilde\beta$ possibly depending on $D$ and $N$,
but independent of $s$.

Resuming all this calculations in one formula and putting together the information in equations (\ref{add}), (\ref{2ndaddend}),
(\ref{triang}), (\ref{latterm}), (\ref{centaddend}) and (\ref{lastone}),
we obtain that
\begin{equation}\label{final}
\left\arrowvert K_{s,e}-\frac{2\,D}{1-2s}\right\arrowvert\;\;\leq\;\;
\frac{2\pi}{s}\eta^{-2s}
-\widetilde D\,\ln\eta
+2E(\eta)\cdot\frac{\eta^{1-2s}}{1-2s}
+2\widetilde\beta,
\end{equation}
hence
\[
\limsup_{s\uparrow\frac{1}{2}}(1-2s)\left\arrowvert K_{s,e}-\frac{2\,D}{1-2s}\right\arrowvert\:\leq\:
2\,E(\eta).
\]
But, by \eqref{as}, $E(\eta)$ is arbitrarily small. Hence we can conclude
\[
\lim_{s\uparrow\frac{1}{2}}(1-2s)\left\arrowvert K_{s,e}-\frac{2\,D}{1-2s}\right\arrowvert\:=\:0
\]
which also implies
\[
\lim_{s\uparrow\frac{1}{2}}(1-2s)K_{s,e}=2\,D=D^2_ef(0),
\]
that is the desired claim.
Moreover, since estimate (\ref{final}) is uniform on $S^{N-2}$, we have also the convergence
\[
(1-2s)H_s=\frac{1-2s}{\omega_{N-2}}\int_{S^{N-2}}K_{s,e}\;d\HH^{N-2}(e)\;\xrightarrow[s\uparrow\frac{1}{2}]{} 
\; \frac{1}{\omega_{N-2}}\int_{S^{N-2}}D^2_ef(0)\;d\HH^{N-2}(e)=H.
\]
This completes the proof of Theorem \ref{lemasintotico}.
\hfill$\square$

\section{proof of the claims in example \ref{es2}}\label{remarks}

In $\R^3$ define 
\[
E=\{ (x,y,z)\in\R^3: z\leq f(x,y)=8x^2y^2 \}
\]
and denote by $(\rho,\theta,z)$ the cylindrical coordinates, i.e. 
\[
\rho=\sqrt{x^2+y^2},\qquad
\cos\theta=\frac{x}{\sqrt{x^2+y^2}},
\qquad\sin\theta=\frac{y}{\sqrt{x^2+y^2}}.
\]
In this way, we set
$e:=(\cos\theta,\sin\theta)\in S^1$ and we have that $(x,y)=\rho e$.
Notice that
\[
1-\cos4\theta = 1-\cos^22\theta+\sin^22\theta = 2\sin^22\theta = 8\sin^2\theta\cos^2\theta\cdot\frac{\rho^4}{\rho^4} =
 \frac{8x^2y^2}{\left(x^2+y^2\right)^2}
\]
therefore
the function $f$, written in terms of $(\rho,\theta)$, becomes
\[ 
f(x,y)=f(\rho e)=
\widetilde{f}(\rho,\theta)=(1-\cos4\theta)\rho^4.
\]
With a slight abuse of notation, we denote by $K_{s,\theta}$
the nonlocal directional curvature of $E$ at $0$ in direction $e=(\cos\theta,\sin\theta)$
(i.e. $K_{s,\theta}$ is short for $K_{s,(\cos\theta,\sin\theta)}$).
We recall that, in this case, we can compute nonlocal curvatures exploiting identity (\ref{curv}), i.e.
\begin{equation}\label{X8}
K_{s,\theta}  = 2\int_0^{+\infty}d\rho\int_0^{\widetilde{f}(\rho,\theta)} dz\;
\frac{\rho}{\left(\rho^2+z^2\right)^{s+3/2}}.
\end{equation}
In the above domain of integration it holds that
\[
0\leq z\leq\widetilde{f}(\rho,\theta)=(1-\cos4\theta)\rho^4
\]
and so 
\[ 
\rho\geq\sqrt[4]{\frac{z}{1-\cos4\theta}},\qquad\hbox{when }\cos4\theta\neq1.
\]
Therefore, integrating first in the $\rho$ variable, we deduce from \eqref{X8}
that
\begin{equation}\label{GG}\begin{split}
K_{s,\theta} \,& =  
\int_0^{+\infty}dz\int_{\sqrt[4]{\frac{z}{1-\cos4\theta}}}^{+\infty}d\rho\;
\frac{2\rho}{\left(\rho^2+z^2\right)^{s+3/2}}
  = \int_0^{+\infty}dz \left.\frac{\left(\rho^2+z^2\right)^{-s-1/2}}{-s-\frac{1}{2}}\right\arrowvert_{\rho=
\sqrt[4]{\frac{z}{1-\cos4\theta}}}^{\rho=+\infty} \\
& = \frac{2}{2s+1} \int_0^{+\infty}\frac{dz}{\left(z^2+\sqrt{\frac{z}{1-\cos4\theta}}\right)^{s+1/2}} 
  = \frac{2}{2s+1} \int_0^{+\infty}
     \frac{(1-\cos4\theta)^{s/2+1/4}}{\left(z^2\sqrt{1-\cos4\theta}+\sqrt{z}\right)^{s+1/2}}\;dz.
\end{split}\end{equation}
We concentrate now on maximal and minimal nonlocal directional curvatures. 
Since $K_{s,\theta}$ is a nonnegative quantity (because $\widetilde{f}(\rho,\theta)$ is a nonnegative function) 
and since $K_{s,0}=K_{s,\pi/2}=0$, then $K_{s,\theta}$ attains its minimum in $0$ and $\pi/2$. 
Also, $\widetilde{f}(\rho,\pi/4)\geq\widetilde{f}(\rho,\theta)$ for every positive $\rho$ and $\theta\in[0,\pi)$, 
thus $K_{s,\theta}$ attains its maximum for $\theta=\pi/4$.
On the one hand we have that the arithmetic mean of the maximal and minimal nonlocal principal curvatures is given by 
\begin{equation}\label{7df87f7}
\frac{K_{s,0}+K_{s,\pi/4}}2=
\frac{1}{2}\,K_{s,\pi/4} = \frac{2^{s/2+1/4}}{2s+1}\int_0^{+\infty}\frac{dz}{\left(\sqrt{2}z^2+\sqrt{z}\right)^{s+1/2}},
\end{equation}
thanks to \eqref{GG}.

On the other hand, the nonlocal mean curvature is
\[
H_s=\frac{1}{2\pi}\int_0^{2\pi}K_{s,\theta}\;d\theta
\]
and we are going to estimate this quantity, in order to show that
it is not equal to the arithmetic mean of the principal nonlocal curvatures
(i.e. to the quantity in \eqref{7df87f7}).
Note that	
\begin{equation}\label{d66d6d}
\begin{split}
\frac{1}{2\pi}\int_0^{2\pi}K_{s,\theta}\;d\theta & 
=\frac{1}{(2s+1)\pi}\int_0^{2\pi}d\theta\int_0^{+\infty}dz\;
\frac{(1-\cos4\theta)^{s/2+1/4}}{\left(z^2\sqrt{1-\cos4\theta}+\sqrt{z}\right)^{s+1/2}} \\
& \geq \frac{1}{(2s+1)\pi}\int_0^{2\pi}(1-\cos4\theta)^{s/2+1/4}d\theta\cdot\int_0^{+\infty}\frac{dz}{\left(\sqrt{2}z^2+\sqrt{z}\right)^{s+1/2}} \\
& =\frac{1}{\pi\,2^{s/2+1/4}}\int_0^{2\pi}(1-\cos4\theta)^{s/2+1/4}d\theta\cdot
\underbrace{\frac{2^{s/2+1/4}}{2s+1}\int_0^{+\infty}\frac{dz}{\left(\sqrt{2}z^2+\sqrt{z}\right)^{s+1/2}}}_{=
K_{s,\pi/4}/2} \\
& =\frac{1}{\pi\,2^{s/2+1/4}}\int_0^{2\pi}(1-\cos4\theta)^{s/2+1/4}d\theta\cdot\frac{1}{2}\,
K_{s,\pi/4}.
\end{split}
\end{equation}
On the other hand, with the substitution $\varphi:=4\theta$
and recalling that $s+1/2\in(1/2,1)$, we see that
\begin{eqnarray*}
\frac{1}{\pi\,2^{s/2+1/4}}\int_0^{2\pi}(1-\cos4\theta)^{s/2+1/4}d\theta & = & 
\frac{1}{4\pi}\int_0^{8\pi}\left(\sqrt{\frac{1-\cos\varphi}{2}}\right)^{s+1/2}d\varphi 
=\ \frac{1}{\pi}\int_0^{2\pi}\left(\sin\frac{\varphi}{2}\right)^{s+1/2}d\varphi \\
& \geq & \frac{1}{\pi}\int_0^{2\pi}\sin\frac{\varphi}{2}\;d\varphi
=\left.-\frac{2}{\pi}\cos\frac{\varphi}{2}\right\arrowvert_0^{2\pi}=\frac{4}{\pi}> 1.
\end{eqnarray*}
By combining this with \eqref{7df87f7} and \eqref{d66d6d}
we conclude that
\[
H_s \ \gneq\ \frac{1}{2}K_{s,\pi/4}\ =\ \frac{K_{s,0}+K_{s,\pi/4}}2.
\]
This establishes the claims in Example \ref{es2}.

\section{proof of lemma \ref{goog}}\label{pf:goog}

We prove now \eqref{7.2}, while \eqref{7.1} can be deduced 
with minor modifications with respect to the proof below.
Let
\[ 
\sigma_\eps:=
\int_{\pi(e)\setminus B_\eps}\frac{|y'|^{N-2}
\widetilde\chi_E(y)}{|y|^{N+2s}}
\,dy.
\]
Notice that, for any $\eps>\eps'>0$,
\begin{equation}\label{7.7}
|\sigma_{\eps'}-\sigma_\eps|=
\left|\int_{\pi(e)\cap (B_{\eps}\setminus B_{\eps'})}\frac{|y'|^{N-2}
\widetilde\chi_E(y)}{|y|^{N+2s}}\,dy\right|.
\end{equation}
Since $\partial E$ is $C^2$, in normal coordinates we
may suppose that, for small $\eps>0$,
\[
E\cap B_\eps \subseteq \{ x=(x',x_N)\in\R^N\ : \ x_N\le M |x'|^2\}
\ {\mbox{ and }} \
(\C E)\cap B_\eps \subseteq \{ x\in\R^N\ : \ x_N\ge -M |x'|^2\},
\]
for a suitable $M>0$.
This provides a cancellation of the contributions outside the set
$E_{\eps,\eps'}:=\{ x\in B_\eps\setminus B_{\eps'} : \ x_N\le M |x'|^2\}$, namely
\begin{multline*}
\left|\int_{\pi(e)\cap (B_\eps\setminus B_{\eps'})}\frac{|y'|^{N-2}
\widetilde\chi_E(y)}{|y|^{N+2s}}
\;dy\right|=
\left|\int_{\pi(e)\cap E_{\eps,\eps'}}\frac{|y'|^{N-2}
\widetilde\chi_E(y)}{|y|^{N+2s}}
\;dy\right|\ \le\\ \leq\ 
\int_{\pi(e)\cap E_{\eps,\eps'}}\frac{|y'|^{N-2}}{|y|^{N+2s}}
\;dy\le \int_{\pi(e)}\frac{\chi_{E_{\eps,\eps'}}(y)}{|y'|^{2+2s}}
\;dy,
\end{multline*}
since $|y|\ge |y'|$. 
Note now that in $E_{\eps,\eps'}$ we have
\[
|x'|^2\geq |\eps'|^2-|x_N|^2\geq|\eps'|^2-M^2|x'|^4\geq |\eps'|^2-M^2\eps^2|x'^2|
\]
and
therefore
\[
|x'|^2\geq \frac{|\eps'|^2}{1+M^2\eps^2}=:|\eps^*|^2
\]
That is, by \eqref{dy},
\[
\left|\int_{\pi(e)\cap (B_\eps\setminus B_{\eps'})}\frac{|y'|^{N-2}
\widetilde\chi_E(y)}{|y|^{N+2s}}
\,dy\right|\le\int_{\eps^*}^\eps d\rho \int_{-M\rho^2}^{M\rho^2} dh \;\rho^{-2-2s}\le
2M \int_{\eps^*}^\eps d\rho \;\rho^{-2s}=\frac{2M\left(\eps-\eps^*\right)^{1-2s}}{1-2s}
\]
which is infinitesimal with $\eps$ (recall that $s\in(0,1/2)$ by assumption).
This and \eqref{7.7}
imply that for any $\eps_N \downarrow0$, $\sigma_{\eps_N}$
is a Cauchy sequence, as desired.
\hfill$\square$

\nocite{*}
\bibliographystyle{plain}
\bibliography{bibliothese}
\addcontentsline{toc}{chapter}{Bibliography}


\end{document}